\tikzstyle{snake}=[decorate, decoration={snake, segment length=1mm, amplitude=.3mm}]
\tikzstyle{saw}=[decorate, decoration={saw, segment length=.7mm, amplitude=.25mm}]
\newcommand{\tikzmath}[2][]
{\vcenter{\hbox{\begin{tikzpicture}[#1]#2\end{tikzpicture}}}
}
\newcommand{\roundNbox}[6]{
	\draw[rounded corners=5pt, very thick, #1] ($#2+(-#3,-#3)+(-#4,0)$) rectangle ($#2+(#3,#3)+(#5,0)$);
	\coordinate (ZZa) at ($#2+(-#4,0)$);
	\coordinate (ZZb) at ($#2+(#5,0)$);
	\node at ($1/2*(ZZa)+1/2*(ZZb)$) {#6};
}
\tikzset{super thick/.style={line width=3pt}}
\tikzstyle{mid>}=[decoration={markings, mark=at position 0.55 with {\arrow{>}}}, postaction={decorate}]
\tikzstyle{mid<}=[decoration={markings, mark=at position 0.55 with {\arrow{<}}}, postaction={decorate}]
\tikzstyle{frameR}=[preaction={draw=#1,super thick,opacity=.2,decorate,decoration={curveto,amplitude=0,raise=-1.5pt}},thick,#1]
\tikzstyle{frameL}=[preaction={draw=#1,super thick,opacity=.2,decorate,decoration={curveto,amplitude=0,raise=1.5pt}},thick,#1]
\newcommand{\xz}{\otimes}
\newcommand{\xo}{\circ}
\def\semicolon{;}
\def\applytolist#1{
    \expandafter\def\csname multi#1\endcsname##1{
        \def\multiack{##1}\ifx\multiack\semicolon
            \def\next{\relax}
        \else
            \csname #1\endcsname{##1}
            \def\next{\csname multi#1\endcsname}
        \fi
        \next}
    \csname multi#1\endcsname}
\def\calc#1{\expandafter\def\csname c#1\endcsname{{\mathcal #1}}}
\def\bbc#1{\expandafter\def\csname bb#1\endcsname{{\mathbb #1}}}
\def\bfc#1{\expandafter\def\csname bf#1\endcsname{{\mathbf #1}}}
\def\sfc#1{\expandafter\def\csname s#1\endcsname{{\sf #1}}}
\def\fc#1{\expandafter\def\csname f#1\endcsname{{\mathfrak #1}}}
\def\rmc#1{\expandafter\def\csname rm#1\endcsname{{\mathrm #1}}}
\tikzstyle{shaded}=[fill=red!10!blue!20!gray!30!white]
\tikzstyle{unshaded}=[fill=white]
\tikzstyle{over}=[double, draw=white, super thick, double=]
\tikzstyle{snake}=[decorate, decoration={snake, segment length=1mm, amplitude=.3mm}]
\tikzstyle{saw}=[decorate, decoration={saw, segment length=.7mm, amplitude=.25mm}]
\tikzstyle{coupon}=[draw, very thick, rectangle, rounded corners=5pt]
\tikzset{Rightarrow/.style={double equal sign distance,>={Implies},->},
triplecd/.style={-,preaction={draw,Rightarrow}},
quadruplecd/.style={preaction={draw,Rightarrow,
shorten >=0pt
},
shorten >=1pt,
-,double,double
distance=0.2pt}}
\tikzset{
    tripleline/.style args={[#1] in [#2] in [#3]}{
        #1,preaction={preaction={draw,#3},draw,#2}
    }
}
\tikzstyle{triple}=[tripleline={[line width=.15mm,black] in
\tikzset{
    quadrupleline/.style args={[#1] in [#2] in [#3] in [#4]}{
        #1,preaction={preaction={preaction={draw,#4},draw,#3}, draw,#2}
    }
}
\tikzstyle{quadruple}=[quadrupleline={[line width=.3mm,white] in
\tikzstyle{primedregion}[none]=[
\tikzstyle{boxregion}[none]=[
\tikzstyle{plusregion}[none]=[
\tikzstyle{starregion}[none]=[
\tikzstyle{primedregion2}[none]=[
\newcommand{\rhoColor}{black}
\newcommand{\sigmaColor}{snake}
\newcommand{\MsColor}{black}
\newcommand{\XColor}{black}
\newcommand{\QsColor}{black}
\newcommand{\AsColor}{red}
\newcommand{\BsColor}{blue}
\newcommand{\AColor}{gray!30}
\newcommand{\BColor}{white}
\newcommand{\ArColor}{red!20}
\newcommand{\BrColor}{blue!20}
\newcommand{\arColor}{gray!20}
\newcommand{\brColor}{gray!55}
\newcommand{\aColor}{white}
\newcommand{\bColor}{green!60}
\newcommand{\cColor}{blue!60}
\newcommand{\xMapsto}[2][]{\ext@arrow 0599{\Mapstofill@}{#1}{#2}}
\def\Mapstofill@{\arrowfill@{\Mapstochar\Relbar}\Relbar\Rightarrow}
\definecolor{violet}{RGB}{148,0,211}
\definecolor{DarkGreen}{RGB}{0,150,0}
\definecolor{medium-blue}{rgb}{0,0,.8}
\newcommand{\arxiv}[1]{\href{http://arxiv.org/abs/#1}{\tt arXiv:\nolinkurl{#1}}}
\newcommand{\arXiv}[1]{\href{http://arxiv.org/abs/#1}{\tt arXiv:\nolinkurl{#1}}}
\DeclareMathOperator{\coev}{coev}
\DeclareMathOperator{\End}{End}
\DeclareMathOperator{\ev}{ev}
\DeclareMathOperator{\Forget}{Forget}
\DeclareMathOperator{\Hom}{Hom}
\DeclareMathOperator{\id}{id}
\DeclareMathOperator{\Irr}{Irr}
\DeclareMathOperator{\mate}{mate}
\DeclareMathOperator{\ONB}{ONB}
\DeclareMathOperator{\op}{op}
\DeclareMathOperator{\Tr}{Tr}
\DeclareMathOperator{\tr}{tr}
\newcommand{\set}[2]{\left\{#1 \middle| #2\right\}}
\renewcommand{\cent}{\textup{\textcent}}
\newcommand{\condense}{\mathrel{\,\hspace{.75ex}\joinrel\rhook\joinrel\hspace{-.75ex}\joinrel\rightarrow}}
\newcommand{\Isom}{\mathsf{Isom}}
\newcommand{\Mod}{\mathsf{Mod}}
\newcommand{\Fun}{\mathsf{Fun}}
\newcommand{\Vect}{\mathsf{Vec}}
\newcommand{\Hilb}{\mathsf{Hilb}}
\newcommand{\Alg}{\mathsf{Alg}}
\newcommand{\QSys}{\mathsf{QSys}}
\newcommand{\HstarAlg}{\mathsf{H^*Alg}}
\newcommand{\Gray}{\mathsf{Gray}}
\newcommand{\mFC}{\mathsf{mFC}}
\newcommand{\HilbPlus}{\Hilb_\boxplus}
\def\semicolon{;}
\def\applytolist#1{
    \expandafter\def\csname multi#1\endcsname##1{
        \def\multiack{##1}\ifx\multiack\semicolon
            \def\next{\relax}
        \else
            \csname #1\endcsname{##1}
            \def\next{\csname multi#1\endcsname}
        \fi
        \next}
    \csname multi#1\endcsname}
\def\calc#1{\expandafter\def\csname c#1\endcsname{{\mathcal #1}}}
\def\bbc#1{\expandafter\def\csname bb#1\endcsname{{\mathbb #1}}}
\def\bfc#1{\expandafter\def\csname bf#1\endcsname{{\mathbf #1}}}
\def\sfc#1{\expandafter\def\csname s#1\endcsname{{\sf #1}}}
\def\fc#1{\expandafter\def\csname f#1\endcsname{{\mathfrak #1}}}
\def\rmc#1{\expandafter\def\csname rm#1\endcsname{{\mathrm #1}}}
\theoremstyle{plain}
\newtheorem{thm}{Theorem}[section]
\newtheorem*{thm*}{Theorem}
\newtheorem{thmalpha}{Theorem}
\newtheorem{cor}[thm]{Corollary}
\newtheorem{coralpha}[thmalpha]{Corollary}
\newtheorem*{cor*}{Corollary}
\newtheorem*{conj*}{Conjecture}
\newtheorem{lem}[thm]{Lemma}
\newtheorem*{lem*}{Lemma}
\newtheorem{prop}[thm]{Proposition}
\newtheorem{quest}[thm]{Question}
\newtheorem*{quest*}{Question}
\newtheorem*{claim*}{Claim}
\theoremstyle{definition}
\newtheorem{defn}[thm]{Definition}
\newtheorem{defnalpha}[thmalpha]{Definition}
\newtheorem{facts}[thm]{Facts}
\newtheorem{construction}[thm]{Construction}
\newtheorem{nota}[thm]{Notation}
\newtheorem{exs}[thm]{Examples}
\newtheorem{ex}[thm]{Example}
\newtheorem{sub-ex}[thm]{Sub-Example}
\newtheorem{counter-ex}[thm]{Counter-Example}
\newtheorem{rem}[thm]{Remark}
\newtheorem*{rem*}{Remark}
\newtheorem{remark}[thm]{Remark}
\newtheorem{warn}[thm]{Warning}
\newcommand{\bigboxplus}{\boxplus}
\title{Manifestly unitary higher Hilbert spaces}
\author{Quan Chen, Giovanni Ferrer, Brett Hungar, David Penneys, and Sean Sanford}
\date{\today}
\begin{document}

\begin{abstract}
Higher idempotent completion gives a formal inductive construction of the $n$-category of finite dimensional $n$-vector spaces starting with the complex numbers.
We propose a manifestly unitary construction of low dimensional higher Hilbert spaces, formally constructing the $\mathrm{C}^*$-3-category of 3-Hilbert spaces from Baez's 2-Hilbert spaces, which itself forms a 3-Hilbert space.
We prove that the forgetful functor from 3-Hilbert spaces to 3-vector spaces is fully faithful.
\end{abstract}

\maketitle

\tableofcontents

\section{Introduction}

An exciting recent development in higher categories is the notion of \emph{higher idempotent completion} for $n$-categories of \cite{1905.09566} (see \cite{1812.11933} for $n=2$), and the higher \emph{suspension} operation $\Sigma$.\footnote{
As we are interested in finite dimensional higher Hilbert spaces, we work here with a fully unital version of higher idempotent and suspension, which is currently not known to be equivalent to the non-unital version.}
Starting with $\bbC$, $\Sigma$ inductively constructs higher categories of finite dimensional \emph{higher vector spaces} in a completely formal way.
In turn, these higher vector spaces are receptacles for fully extended topological quantum field theories (TQFTs) by the Cobordism Hypothesis \cite{MR1355899,MR2555928}.
For $n=1$, $\Sigma \bbC=\Vect$, the category of finite dimensional vector spaces.
For $n\geq 2$, we define $n\Vect := \Sigma^n \bbC$.
For $n=2$,
we obtain $\Sigma^2 \bbC = \Alg$, the 2-category of finite dimensional multimatrix algebras, bimodules, and intertwiners,
which is equivalent to $2\Vect$, the 2-category of finite semisimple linear categories, linear functors, and natural tranformations.
One level higher,
we get the 3-category $\mFC$ of multifusion categories \cite{MR4254952,MR4444089},
which is equivalent to $3\Vect$, the 3-category of finite semisimple 2-categories \cite{1905.09566,MR4372801}.
\begin{equation}
\label{eq:HigherVectorSpaces}
\bbC
\xrightarrow{\Sigma}
\Vect
\xrightarrow{\Sigma}
\Alg
\simeq
2\Vect
\xrightarrow[\text{\cite{MR4444089}}]{\Sigma}
\mFC
\underset{\text{\cite{MR4372801}}}{\simeq}
3\Vect
\end{equation}

One may expect that a fully extended $n$D unitary TQFT should correspond to a notion of finite dimensional $n$-Hilbert space.
In theoretical condensed matter physics, it is widely accepted that higher idempotents describe phase changes between topologically ordered phases of matter \cite{MR3246855,MR4444089,MR4640433,2403.07813}.
However, these topological orders are inherently unitary theories, and thus require a unitary version of higher idempotent completion and the suspension operation.
In this article, we propose such an answer up to $n=3$, giving a manifestly unitary construction of finite\footnote{We will  investigate infinite dimensional higher Hilbert spaces in future joint work.} higher Hilbert spaces.
Here, the term \emph{unitary} incorporates both the presence of dagger structures at multiple levels, together with positivity conditions.
The former structures were investigated coherently in \cite{2403.01651} building on the coherent dagger 1-category treatment in \cite{2304.02928}.
In the present paper, we study multiple dagger structures and positivity simultaneously.

\begin{figure}[!ht]
$$
\begin{tikzcd}[column sep=3em]
&&&
\rmB2\Hilb
\arrow[r,mapsto,"\cent^\dag"]
&
\sH^*\mFC
%\underset{\Mod}{\cong}
\arrow[r, "\Mod^\dag", "\simeq"']
&
3\Hilb
\\
&
\rmB\Hilb
\arrow[r,mapsto, "\cent^\dag"]
&
\sH^*\Alg
%\underset{\Mod}{\cong}
\arrow[r,"\Mod^\dag", "\simeq"']
&
2\Hilb
\arrow[u,mapsto,"\rmB"]
\arrow[ur,mapsto,"\Sigma^\dag"]
\\
\rmB\bbC
\arrow[r,mapsto,"\cent^\dag"]
&
\Hilb
\arrow[u, mapsto,"\rmB"]
\arrow[ur,mapsto,"\Sigma^\dag"]
\\
\bbC
\arrow[u, mapsto,"\rmB"]
\arrow[ur, mapsto,"\Sigma^\dag"]
\end{tikzcd}
$$
\caption[]{
\label{fig:UnitaryStaircase}
Manifestly unitary construction of $n\Hilb$ from $(n-1)\Hilb$
}
\end{figure}
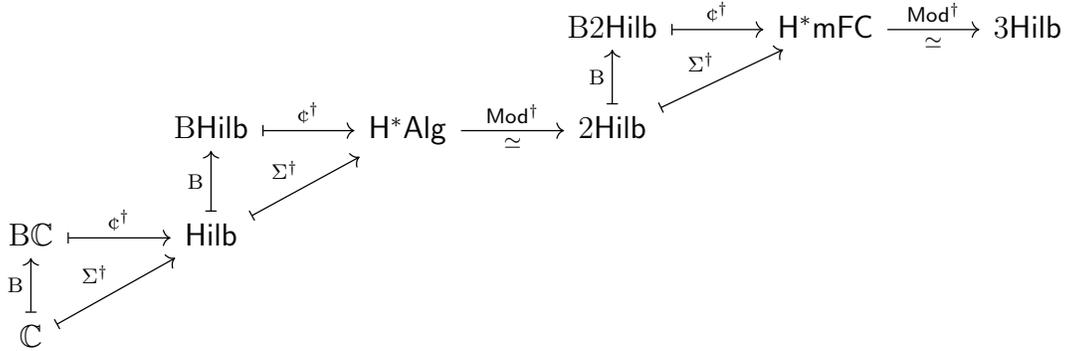
In Figure \ref{fig:UnitaryStaircase} above:
\begin{itemize}
\item
$\rmB$ means take the \emph{delooping} \cite[\S5.6]{MR2664619}, i.e., consider the unitary monoidal $k$-category as a unitary  $(k+1)$-category with one object.
\item
$\cent^\dag$ means take a \emph{unital higher unitary Cauchy completion} (unitary and unital version of \cite{1905.09566}, similar to \cite{1812.11933}).
\item
$\Sigma^\dag$ is the composite $\cent^\dag\circ \rmB$, called the \emph{unitary suspension}.
\item 
$\sH^*\Alg$ is the Morita $\rmC^*$ 2-category of finite dimensional $\rmH^*$-\emph{algebras} in the sense of \cite{MR13235}.
\item 
$2\Hilb$ is the $\rmC^*$ 2-category of Baez's \emph{2-Hilbert spaces} \cite{MR1448713}
\item $\sH^*\mFC$ is the Morita $\rmC^*$-3-category of \emph{$\rmH^*$-multifusion categories}, a notion defined below on the next page
\item $3\Hilb$ is the $\rmC^*$-3-category of \emph{3-Hilbert spaces}, which are also defined on the next page.
\item
$\Mod^\dag$ is the unitary equivalence given by taking the 1- or 2-category of unitary modules for the algebra/multifusion category respectively.
\end{itemize}

In $\rmB\Hilb$, there are several versions of higher idempotent that one might use.
A natural choice which was recently studied in \cite{MR4419534,MR4482713}
is the notion of \emph{special standard Q-systems},
originally defined by \cite{MR1257245} in the context of subfactor theory \cite{MR1027496},
which are unital separable algebras $(A,\mu,\iota)$ such that the adjoint $\mu^\dag$ of the multiplication map is an $A-A$ bimodule map,
$\mu^\dag\mu=\id_A$ (this is the \emph{special} condition),
and a standardness condition for the maps $1\to A\otimes A$ and $A\otimes A\to 1$ built from $\mu,\iota$ and their adjoints.
In this article, we use the notion of not necessarily special Q-systems, which are exactly the $\rmH^*$-algebras in the sense of \cite{MR13235} (see also \cite{MR1448713,MR3971584}).
We thus call them $\rmH^*$-\emph{algebras} to avoid any confusion with the higher idempotents studied in \cite{MR4419534,MR4482713}. 

Recall that a \emph{2-Hilbert space} in the sense of \cite{MR1448713} is a finite semisimple $\rmC^*$-category $\cA$ equipped with a faithful trace $\Tr^\cA_a: \cA(a\to a)\to \bbC$ for all $a\in \cA$ satisfying some compatibility requirements (see Definition \ref{defn:pre2Hilb} below).
The notion of 2-Hilbert space
affords many advantages 
and
satisfies certain desiderata 
for higher Hilbert spaces.
First, we get a unitary Yoneda embedding
$$
\cA\hookrightarrow \Fun^\dag(\cA^{\op}\to \Hilb),
$$
which was first used in \cite{MR3687214,MR3948170} and made explicit in \cite{MR4598730}.
Second, unitary functors between 2-Hilbert spaces admit unique \emph{unitary adjoints} \cite{MR4750417} in the sense that the adjunction natural isomorphisms are unitary:
$$
\cB(F(a)\to b) \cong^\dag \cA(a\to F^*(b))
\qquad\qquad\qquad
F\dashv^\dag F^*.
$$

In this article, we give a definition of a (finite) 3-Hilbert space.
To present our definition, we introduce the following notions.
First, given a linear 2-category $\fX$ with additive hom categories and objects $a_1,\dots, a_n\in\fX$,
we define the \emph{linking algebra}\footnote{Here, linking algebra means linking $E_1$-algebra in the 2-category $2\Vect$ of semisimple categories.}
$$
\cL(a_1,\dots, a_n)
:=
\bigoplus_{i,j=1}^n
\fX(a_j\to a_i),
$$
with tensor product defined via matrix multiplication and component-wise composition.
When $\fX$ is $\rmC^*$, so is every linking algebra.
Second, for ordinary 2-categories, rigidity (existence of both left and right adjoint 1-morphisms) is a property; any two choices of adjoints are canonically isomorphic.
However, for $\rm C^*$ 2-categories, choosing compatible unitary adjoints over the entire 2-category is a structure, namely that of a \emph{unitary adjoint 2-functor} (which is the identity on objects).
The main difference here is that the canonical isomorphism between distinct choices of adjoints need not be unitary \cite{MR4133163}.

Equipped with the notions of linking algebra and unitary adjoint 2-functor, we make the following definition.

\begin{defnalpha}\label{def:3Hilbspace}
A \emph{pre-3-Hilbert space} is a linear $\rmC^*$-2-category $\mathfrak{X}$ such that every linking algebra 
is unitary multifusion, equipped with a unitary adjoint 2-functor $\vee$ and a \emph{spherical weight} $\Psi$, which consists of positive linear maps $\Psi_a: \End_\fX(1_a)\to \bbC$ for all $a\in\fX$ 
satisfying
$$
\Psi_b\left(\,
\tikzmath{
\fill[rounded corners = 5pt, fill=\brColor] (-1.2,-.9) rectangle (.6,.9);
\fill[\arColor] (0,.3) arc (0:180:.3cm) -- (-.6,-.3) arc (-180:0:.3cm);
\draw[thick] (0,.3) node[right,yshift=.2cm]{$\scriptstyle X$} arc (0:180:.3cm) --node[left]{$\scriptstyle X^\vee$} (-.6,-.3) arc (-180:0:.3cm) node[right,yshift=-.2cm]{$\scriptstyle X$};
\roundNbox{fill=white}{(0,0)}{.3}{0}{0}{$f$}
}
\,\right)
=
\Psi_a\left(\,
\tikzmath{
\fill[rounded corners = 5pt, fill=\arColor] (1.2,-.9) rectangle (-.6,.9);
\fill[\brColor]  (0,.3) arc (180:0:.3cm) -- (.6,-.3) arc (0:-180:.3cm);
\draw[thick] (0,.3) node[left,yshift=.2cm]{$\scriptstyle X$} arc (180:0:.3cm) --node[right]{$\scriptstyle X^\vee$} (.6,-.3) arc (0:-180:.3cm) node[left,yshift=-.2cm]{$\scriptstyle X$};
\roundNbox{fill=white}{(0,0)}{.3}{0}{0}{$f$}
}
\,\right)
\qquad\qquad
\begin{aligned}
&\forall\,a,b\in\fX
\\
&\forall\, {}_aX_b\in\fX(a\to b)
\\
&\forall\, f\in\End_\fX({}_aX_b)
\end{aligned}
\qquad\qquad
\begin{aligned}
\tikzmath{\fill[fill=\arColor, rounded corners=5] (-.3,-.3) rectangle (.3,.3);}&=a
\\
\tikzmath{\fill[fill=\brColor, rounded corners=5] (-.3,-.3) rectangle (.3,.3);}&=b.
\end{aligned}
$$

A 3-Hilbert space $\fX$ is a pre-3-Hilbert space that is suitably Cauchy complete, i.e.~$\fX$ admits all \emph{Hilbert direct sums} (a notion defined in \ref{sec:HilbDirectSum}) and all $\rmH^*$-monads in $\fX$ (defined in \ref{sec:H*Monad}) split.
\end{defnalpha}

The prototypical example of a pre-3-Hilbert space is the delooping $\rmB\cC$ of a unitary multifusion category $\cC$ equipped with a unitary dual functor and a spherical weight \cite{MR4133163}.
We call such a pre-3-Hilbert space an $\rmH^*$-\emph{multifusion category}.
Our first examples of 3-Hilbert spaces are:
\begin{itemize}
    \item the Morita $\rmC^*$-2-category $\HstarAlg$ of $\rmH^*$-algebras \cite{MR13235,MR1448713} (see also \cite{MR3971584})
    \item the $\rmC^*$-2-category $2\Hilb$ of 2-Hilbert spaces in the sense of \cite{MR1448713},
\end{itemize}
As mentioned in Definition \ref{def:3Hilbspace}, we generalize the notion of $\rmH^*$-algebra to an $\rmH^*$-monad in a pre-3-Hilbert space, which gives more examples of 3-Hilbert spaces: 
\begin{itemize}
    \item the Morita $\rmC^*$-2-category $\HstarAlg(\fX)$ of $\rmH^*$-algebras in a pre-3-Hilbert space $\fX$ that is additively complete in some unitary sense, and
    \item the $\rmC^*$-2-category $\Mod^\dag(\cC)$ of unitary modules for a $\rmH^*$-multifusion category $\cC$.
\end{itemize}
We have the following theorem.

\begin{thmalpha}
\label{thmalpha:All3Hilbs}
Every 3-Hilbert space arises as $\Mod^\dag(\cC)$ for some $\rmH^*$-multifusion category $\cC$.
In particular, $\HstarAlg(\rmB\cC)\simeq \Mod^\dag(\cC)$ isometrically as 3-Hilbert spaces.
\end{thmalpha}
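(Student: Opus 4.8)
\emph{Strategy.} The plan is to prove the ``in particular'' statement first, as the Morita-theoretic equivalence $\HstarAlg(\rmB\cC)\simeq\Mod^\dag(\cC)$ is the engine that drives the general reconstruction. I would construct mutually quasi-inverse unitary $2$-functors between the two sides. In one direction, send an $\rmH^*$-algebra $A$ in $\cC$ to its $\rmC^*$-$2$-category $\Mod^\dag_\cC(A)$ of unitary right $A$-modules, a bimodule ${}_AM_B$ to the module functor $-\otimes_A M$, and an intertwiner to the evident module natural transformation. The relative tensor product over an $\rmH^*$-algebra exists and stays unitary precisely because the $\rmH^*$/Q-system structure supplies an orthogonal projection onto the balanced tensor product, so no completion issues arise. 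In the reverse direction, choose for each unitary $\cC$-module category $\cM$ a unitary dominant object $m\in\cM$ and send $\cM$ to its internal endomorphism algebra $\underline{\End}_\cC(m)$; the dagger on $\cC$, the (co)evaluations supplied by $\vee$, and the $2$-Hilbert space trace on $\cM$ together equip $\underline\End_\cC(m)$ with the structure of an $\rmH^*$-algebra, and two choices of $m$ give Morita-equivalent, hence isometrically equivalent, algebras. Checking that these assignments are quasi-inverse is the unitary analogue of the classical module-category/internal-End correspondence.

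\emph{Reconstruction.} Given a finite $3$-Hilbert space $\fX$, I would choose representatives $a_1,\dots,a_n$ of the equivalence classes of simple objects and form their Hilbert direct sum $a=\bigoplus_i a_i$, which exists by Cauchy completeness and is a \emph{dominant} object (every object is a retract, under $1$-morphisms, of $a$). Then
$$
\cC:=\End_\fX(a)=\bigoplus_{i,j}\fX(a_j\to a_i)=\cL(a_1,\dots,a_n)
$$
is literally a linking algebra, hence unitary multifusion by hypothesis, and the restrictions of $\vee$ and $\Psi$ make $\rmB\cC$ an $\rmH^*$-multifusion category; note that $\cC$ is genuinely multifusion rather than fusion when $n>1$, which is exactly what lets a disconnected $\fX$ be captured. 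Define $\Phi\colon\fX\to\Mod^\dag(\cC)$ by $b\mapsto\fX(a\to b)$, viewed as a unitary $\cC$-module via composition, sending a $1$-morphism $Y\colon b\to b'$ to $Y\circ-$ and a $2$-morphism to its whiskering. Full faithfulness of $\Phi$ on $1$- and $2$-morphisms is the unitary Yoneda/Eilenberg--Watts statement and uses dominance of $a$; essential surjectivity uses the splitting hypothesis, since a unitary $\cC$-module category corresponds by the first part to an $\rmH^*$-algebra in $\cC=\End_\fX(a)$, i.e.\ to an $\rmH^*$-monad on $a$ in $\fX$, which by assumption splits through some object $b$ with $\fX(a\to b)\simeq\cM$. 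Composing $\Phi$ with the equivalence of the first part yields $\fX\simeq\Mod^\dag(\cC)$.

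\emph{Main obstacle.} Everything beyond the classical, non-unitary module-category correspondence is the bookkeeping of daggers and positivity, and this is where the real work lies. The hard part will be to promote each equivalence of underlying $2$-categories to an \emph{isometric} equivalence of $3$-Hilbert spaces: one must verify that internal Ends are genuine $\rmH^*$-algebras (the positivity condition together with the correct behaviour of $\mu^\dag$), that the relative tensor product preserves the dagger and the spherical weight, and that $\Phi$ is isometric on $2$-morphism spaces for the inner products induced by $\Psi$ --- equivalently, that $\Phi$ intertwines the spherical weight on $\fX$ with the one on $\Mod^\dag(\cC)$ determined by $\Psi_a$ and the module traces. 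A further delicate point is matching the unitary adjoint $2$-functors $\vee$ on the two sides, since the canonical comparison isomorphism between two choices of adjoints need not be unitary; this forces one to transport $\vee$ along $\Phi$ by hand rather than appeal to uniqueness of adjoints up to (possibly non-unitary) isomorphism.
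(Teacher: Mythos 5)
Your first part is essentially the paper's own route: the correspondence $A\mapsto\cC_A$ with quasi-inverse $\cM\mapsto\underline{\End}_\cC(m)$, with unitary adjunction supplying the $\rmH^*$-structure on internal Ends, is exactly Theorem \ref{thm:EveryModuleComesFromH*Alg} together with \S\ref{sec:H*Alg==ModC}, upgraded to an isometric equivalence in Theorem \ref{thm:HstarAlgC==ModC}.

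Where you genuinely diverge is the reconstruction step, and the difference matters. You take one simple per equivalence class, form $a=\bigboxplus_i a_i$, set $\cC=\End_\fX(a)$, and map $\fX\to\Mod^\dag(\cC)$ by the hom-functor $b\mapsto\fX(a\to b)$; full faithfulness of this functor is a unitary $2$-categorical Yoneda/Eilenberg--Watts statement. That is the one real soft spot: the paper explicitly does \emph{not} establish a unitary $2$-categorical Yoneda lemma (it is deferred to future work), so your route requires building a tool the paper deliberately avoids. In Corollary \ref{cor:All3HilbsAreModC} the paper instead reduces to connected $\fX$, takes a \emph{single} simple $a$, and runs the functor in the opposite direction: by the universal property of the $\rmH^*$-algebra completion (Proposition \ref{prop:UniversalPropertyOfHstarAlgCompletion}, proved by adapting the Q-system completion machinery), the isometric inclusion $\rmB\cC\hookrightarrow\fX$ extends canonically to a fully faithful isometric functor $\HstarAlg(\cC)\hookrightarrow\fX$, so full faithfulness and isometry come for free from the completion rather than from Yoneda. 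Isometric essential surjectivity is then the same splitting argument you give: for $b\in\fX$ and dualizable ${}_aX_b$, the $\rmH^*$-monad ${}_aX\otimes_b X^\vee_a$ splits, and the splitting is promoted to an isometric adjoint equivalence (Remark \ref{rem:PromoteSplittingToIsometricEquivalence} and Lemma \ref{lem:HStarAlgCompleteIffIsometricEquivalence}). So your plan buys a direct, classical-looking equivalence but at the cost of an unproven unitary Yoneda input; the paper's plan buys everything from the universal property it has already proved.

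On your stated ``main obstacle'' (matching spherical weights): you have correctly identified the hard point, but the paper has a reduction you are missing and that makes the problem tractable. By Remark \ref{rem:WeightOfComponentAtObject}, the weight $\Psi$ on a connected 3-Hilbert space is completely determined by its value at a single simple object, and a UAF-preserving functor between connected 3-Hilbert spaces is isometric iff it preserves $\Psi$ at one simple. This collapses the entire isometry verification in Theorem \ref{thm:HstarAlgC==ModC} to a single scalar computation at the standard Q-system $1_1$ (where both weights evaluate to $d_{1_1}$), rather than the morphism-space-by-morphism-space check your proposal envisages.
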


As an immediate corollary, $\HstarAlg\simeq 2\Hilb$ isometrically as 3-Hilbert spaces.
Thus the notion of $\HstarAlg(-)$ can be viewed as a higher idempotent completion \cite{1812.11933,1905.09566,MR4419534,MR4482713} which is \emph{manifestly unitary}, fitting in the staircase diagram Figure \ref{fig:UnitaryStaircase}.

Our second main result is as follows.

\begin{thmalpha}
\label{thm:3Hilb=H*mFC}
Taking $\Mod^\dag$ is a unitary equivalence between the $\rmC^*$-3-categories $\sH^*\mFC$ and $3\Hilb$ which preserves unitary adjoints and spherical weights.\footnote{\label{Footnote:EquivalenceOf4Hilbs}We expect this equivalence is an isometric equivalence of 4-Hilbert spaces (a notion yet to be defined).
%We expect that any 3-functor which preserves spherical weights must also preserve unitary adjoints.}
}
\end{thmalpha}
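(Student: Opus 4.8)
The plan is to exhibit $\Mod^\dag$ as a $\rmC^*$-3-functor $\sH^*\mFC\to 3\Hilb$ and verify three things: it is essentially surjective on objects, it is a local unitary equivalence on every hom-2-category, and it intertwines the unitary adjoint 2-functors $\vee$ and the spherical weights $\Psi$. Essential surjectivity on objects is precisely Theorem \ref{thmalpha:All3Hilbs}: every 3-Hilbert space is isometrically equivalent to $\Mod^\dag(\cC)$ for some $\rmH^*$-multifusion category $\cC$ (and $\Mod^\dag(\cC)$ is known to be a 3-Hilbert space, so the functor does land in $3\Hilb$). A $\rmC^*$-3-functor that is essentially surjective on objects and a unitary equivalence on each hom-2-category is automatically a unitary equivalence of $\rmC^*$-3-categories, so after this reduction the real content lies in the hom-2-categories and in the preservation of $(\vee,\Psi)$.

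First I would pin down $\Mod^\dag$ at each categorical level and check that it is genuinely dagger-preserving. On objects it sends $\cC$ to the $\rmC^*$-2-category of unitary right $\cC$-modules; on 1-morphisms it sends an $\rmH^*$ $\cC$-$\cD$-bimodule category $\cM$ to the relative-tensor-product 2-functor $-\boxtimes_\cC\cM\colon\Mod^\dag(\cC)\to\Mod^\dag(\cD)$; and on 2- and 3-morphisms it sends bimodule functors and bimodule natural transformations to the induced unitary natural transformations and modifications. The key technical input here is that the unitary relative tensor product $\boxtimes_\cC$ is set up to preserve daggers and positivity, so that $\Mod^\dag$ respects the dagger structures present at each level and is a $\rmC^*$-3-functor rather than merely a linear one.

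Next, for fixed $\cC,\cD$ I would prove that
$$\Mod^\dag\colon \sH^*\mFC(\cC\to\cD)\longrightarrow 3\Hilb(\Mod^\dag(\cC)\to\Mod^\dag(\cD))$$
is a unitary equivalence of $\rmC^*$-2-categories. Unwinding the recursion, this is a unitary Eilenberg--Watts statement: every 1-morphism of 3-Hilbert spaces out of $\Mod^\dag(\cC)$ is unitarily naturally isomorphic to $-\boxtimes_\cC\cM$ for an essentially unique $\rmH^*$ bimodule $\cM$, and the assignment $\cM\mapsto(-\boxtimes_\cC\cM)$ is a unitary equivalence onto such 1-morphisms. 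The recursion bottoms out at the level of 2-Hilbert spaces, where this is the unitary Morita theory of $\rmH^*$-algebras underlying the corollary $\HstarAlg\simeq 2\Hilb$, and at the very bottom at isometric bijections of intertwiner spaces for 3-morphisms. I expect this local-equivalence step to be the main obstacle: one needs uniqueness up to \emph{isometric} equivalence rather than mere equivalence, and, precisely because the canonical comparison between distinct choices of adjoints need not be unitary \cite{MR4133163}, one cannot simply import the non-unitary module-theoretic correspondence but must carry the dagger and positivity data through the entire argument.

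Finally I would check preservation of the extra structure. For unitary adjoints, I would verify that the dual ${}^\vee$ of a bimodule category corresponds under $\boxtimes_\cC$ to the unitary adjoint of the associated 2-functor, so that $\Mod^\dag$ intertwines the unitary adjoint 2-functors $\vee$ on the two sides together with their coherence data. For spherical weights, I would show that the positive maps $\Psi_\cC$ assembled from the spherical structures of the $\rmH^*$-multifusion categories are carried to the spherical weight $\Psi_{\Mod^\dag(\cC)}$; concretely this amounts to matching categorical traces and dimensions across the equivalence, so that sphericality on the $\sH^*\mFC$ side transports to the balancing condition of Definition \ref{def:3Hilbspace} on the $3\Hilb$ side. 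Combined with essential surjectivity and the local equivalence, this shows $\Mod^\dag$ is a unitary equivalence preserving unitary adjoints and spherical weights.
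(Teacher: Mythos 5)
Your skeleton matches the paper's proof: the same 3-functor $\Mod^\dag$ (described level by level, or equivalently as the Yoneda map $\sH^*\mFC(\Hilb\to-)$), essential surjectivity taken from Theorem \ref{thmalpha:All3Hilbs}/Corollary \ref{cor:All3HilbsAreModC} (which is even isometric, so sphericality is preserved on objects), and the real content concentrated in showing the hom-2-category functor
$$\Mod^\dag\colon \sH^*\mFC(\cC\to\cD)\longrightarrow \Fun^{\dag,\vee}(\Mod^\dag(\cC)\to\Mod^\dag(\cD))$$
is a $\dag$-equivalence, i.e.\ a unitary Eilenberg--Watts statement. Your duality check at the end is also the paper's Lemma \ref{lem:boxtimesisUAFpreserving}.

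However, there is a genuine gap: you correctly flag the local equivalence as ``the main obstacle,'' but the proposal contains no mechanism for proving it, and the recursion you sketch (``unwinding'' down to 2-Hilbert spaces and unitary Morita theory of $\rmH^*$-algebras) is not how one can make this work directly. The paper's argument rests on two ideas absent from your proposal. First, the data of a unitary $\cC$-$\cD$ bimodule category \emph{equipped with a bimodule trace} is \emph{precisely} the data of a UAF-preserving $\dag$-2-functor $\rmB\cC\to\Mod^\dag(\cD)$: the bimodule-trace axiom is literally the condition $c\rhd-\dashv^\dag c^\vee\rhd-$ for all $c$, which is UAF-preservation. This is what matches the 1-morphisms of $\sH^*\mFC$ (bimodules with traces) with the 1-morphisms of $3\Hilb$, and without it the two sides of your Eilenberg--Watts statement are not even comparable. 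Second, to pass from functors out of $\rmB\cC$ to functors out of all of $\Mod^\dag(\cC)$, the paper invokes the universal property of the $\rmH^*$-algebra completion (Proposition \ref{prop:UniversalPropertyOfHstarAlgCompletion}): since $\Mod^\dag(\cC)\cong\HstarAlg(\cC)$ is the completion of $\rmB\End(\cC_\cC)\simeq\rmB\cC$ (using the renormalized trace of Proposition \ref{prop:RenormalizedTraceOnFun}), restriction
$$\Fun^{\dag,\vee}(\Mod^\dag(\cC)\to\Mod^\dag(\cD))\longrightarrow\Fun^{\dag,\vee}(\rmB\End(\cC_\cC)\to\Mod^\dag(\cD))$$
is an equivalence of $\rmC^*$-2-categories. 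This one-step restriction, backed by the $\boxplus$-dominance lemmas of Section 4, is what replaces your recursion; together with Corollary \ref{cor:RightCActionIsometric} it yields $F\cong-\boxtimes_\cC F(\cC)_\cD$ isometrically and hence full faithfulness on 2- and 3-morphisms as well. As written, your proposal asserts the conclusion of this step rather than proving it, so the argument is incomplete exactly where the theorem's difficulty lies.
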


As an application, we get the following corollary combining Theorem \ref{thm:3Hilb=H*mFC} with \cite{MR4538281} and \cite{MR4616673}.

\begin{coralpha}
\label{coralpha:ForgetFF}
The forgetful 3-functor $3\Hilb \to 3\Vect$ is fully faithful.\footnote{The forgetful functor is well-known to not be essentially surjective from the existence of non-unitarizable fusion categories \cite{MR2180373,MR4327964}.}
\end{coralpha}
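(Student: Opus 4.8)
The plan is to leverage Theorem \ref{thm:3Hilb=H*mFC}, which identifies $3\Hilb$ with $\sH^*\mFC$, and reduce the statement to a comparison of mapping spaces between the unitary and non-unitary settings one categorical level at a time. Recall that full faithfulness of a 3-functor $\Forget\colon 3\Hilb\to 3\Vect$ means that for every pair of objects $\fX,\fY\in 3\Hilb$, the induced functor on hom-2-categories $3\Hilb(\fX\to\fY)\to 3\Vect(\Forget\fX\to\Forget\fY)$ is itself an equivalence of 2-categories, i.e.\ essentially surjective, full, and faithful at the level of 1- and 2-morphisms. Under the identification $3\Hilb\simeq\sH^*\mFC$, an object is (Morita-equivalently) an $\rmH^*$-multifusion category $\cC$, and the hom-2-category $\sH^*\mFC(\rmB\cC\to\rmB\cD)$ is the 2-category of unitary $\cC$-$\cD$-bimodule categories (equivalently, via $\Mod^\dag$, unitary dagger module functors). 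The forgetful functor sends this to the 2-category of (not necessarily unitary) $\cC$-$\cD$-bimodule categories inside $3\Vect=\mFC$.

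The key steps, in order, are as follows. First, I would reduce to the delooping objects $\rmB\cC$ using Theorem \ref{thmalpha:All3Hilbs}, which guarantees every 3-Hilbert space is of the form $\Mod^\dag(\cC)$; since full faithfulness is invariant under equivalence, it suffices to check it on a skeleton of such objects. Second, I would invoke the cited rigidity results \cite{MR4538281} and \cite{MR4616673}: the former should supply that a unitary multifusion category carries an essentially unique unitary (dagger, $\rmC^*$) structure compatible with its underlying fusion structure — so that on objects the forgetful functor is injective on isomorphism classes with unitarizable image — while the latter should supply the analogous uniqueness-of-unitary-structure statement one level down, for bimodule categories and bimodule functors. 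Concretely, I would argue that every bimodule category between unitary multifusion categories admits an essentially unique unitary structure, that every bimodule functor is canonically unitarizable, and that the space of bimodule natural transformations is unchanged by forgetting daggers. Third, assembling these three levels of uniqueness yields essential surjectivity and fullness of the hom-2-category functor (every non-unitary morphism lifts, uniquely up to the relevant coherence), and faithfulness at the top level is immediate since forgetting the dagger does not change the underlying natural transformations.

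The main obstacle I anticipate is the bookkeeping of \emph{coherence across the three levels simultaneously}, rather than any single level in isolation. The subtlety flagged in the paper — that the canonical isomorphism between distinct choices of adjoints need \emph{not} be unitary, so that a unitary adjoint 2-functor is genuine structure and not merely a property — means that matching the unitary adjoints and spherical weights preserved by $\Mod^\dag$ (per Theorem \ref{thm:3Hilb=H*mFC}) against the purely formal adjoints in $3\Vect$ requires care: I must verify that the unitarizations produced at the bimodule-category and bimodule-functor levels are compatible with the dual/adjoint structure, so that the lifted morphisms genuinely live in $3\Hilb$ and not just in some dagger-free intermediate. In other words, the hard part is checking that "unitarizable" lifts can be chosen coherently enough to assemble into an honest equivalence of 2-categories, which I expect to handle by citing the essential uniqueness in \cite{MR4616673} to rigidify the choices and then propagating compatibility upward through the 2-categorical structure.
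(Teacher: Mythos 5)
Your proposal is correct and follows essentially the same route as the paper: reduce via the commuting square to showing $\sH^*\mFC\to\mFC$ is fully faithful, then obtain essential surjectivity of the hom-2-functors from unique unitarizability of finite semisimple bimodule categories (via \cite{MR4538281,MR4616673} and the folding trick) and full faithfulness from the equivalence between unitary and plain bimodule functor categories. The obstacle you anticipate---coherence of the unitarizations with the unitary adjoints and spherical weights---is dissolved by the very definition of $\sH^*\mFC$: its 2-morphisms and 3-morphisms carry no compatibility requirement with the tracial data, so equivalence of 1-morphisms is just unitary equivalence of the underlying bimodule categories and no propagation of such compatibility is needed.
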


Indeed, the above result follows from commutativity of the following diagram
$$
\begin{tikzcd}
\sH^*\mFC
\arrow[r,"\Forget"']
\arrow[d, "\simeq"']
&
\mFC
\arrow[d, "\simeq"]
\\
3\Hilb
\arrow[r,"\Forget"]
&
3\Vect
\end{tikzcd}
$$
where \cite{MR4538281} and \cite{MR4616673} can be used to prove the 3-functor on the top line is fully faithful.

In Figure \ref{fig:UnitaryStaircase} above, we implicitly claimed that $\Sigma^\dag2\Hilb$ is equivalent to $\sH^*\mFC$.
(This would be the unitary analog of the equivalence
$\Sigma2\Vect \cong \mFC$ 
by \cite[Rem.~3.2.9]{MR4600461} using
that multifusion categories in characteristic zero are separable by \cite{MR4254952}.)
In \S\ref{sec:H*algInB2Hilb} below, we define a generalizable notion of $\rmH^*$-algebra in $\rmB2\Hilb$.
Our final result in this article is as follows.

\begin{thmalpha}
\label{thmalpha:H*AlgebrasIn2Hilb}
There is an equivalence of $\rmC^*$-3-categories $U:\sH^*\mFC \cong \HstarAlg(\rmB2\Hilb):|-|$
such that $|U(\cA)|$ is isometrically equivalent to $\cA$ for every $\cA\in \sH^*\mFC$
and
${}_{|U(\cA)|}|U(\cM)|_{|U(\cB)|}$ is isometrically equivalent to ${}_\cA\cM_\cB$ for every $\rmH^*$ bimodule category.\textsuperscript{\textup{\ref{Footnote:EquivalenceOf4Hilbs}}}
\end{thmalpha}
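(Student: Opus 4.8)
The plan is to prove the theorem as a \emph{recognition result}: the two $\rmC^*$-3-categories carry essentially the same data at every categorical level, and the functors $U$ and $|-|$ merely reinterpret that data, so the substance lies in checking that this reinterpretation is well defined, isometric, and weakly invertible. I would start at the level of objects. Since $\rmB2\Hilb$ is the delooping of the monoidal 2-category $(2\Hilb,\boxtimes)$, an $\rmH^*$-algebra in $\rmB2\Hilb$ (in the sense of \S\ref{sec:H*algInB2Hilb}) is an $\rmH^*$-monad on its unique object, equivalently a pseudomonoid in $(2\Hilb,\boxtimes)$ carrying $\rmH^*$ structure: a 2-Hilbert space $A$ with a multiplication functor $\mu\colon A\boxtimes A\to A$, a unit functor $\iota\colon\Hilb\to A$ (where $\Hilb$ is the monoidal unit of $2\Hilb$), an adjoint $\mu^\dag$ that is an $A$-$A$ bimodule map, and the relevant positivity and sphericality data. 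The functor $U$ sends $\cA\in\sH^*\mFC$ to this underlying pseudomonoid, while $|-|$ sends an $\rmH^*$-algebra $A$ to its underlying 2-Hilbert space equipped with the monoidal, dual, and spherical structure induced by $(\mu,\iota,\mu^\dag,\dots)$, landing back in $\sH^*\mFC$.

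The one genuinely substantive object-level point is that the $\rmH^*$ (dagger-Frobenius, positive) structure \emph{forces rigidity}: the adjoint $\mu^\dag$, together with the unit and its adjoint, exhibits every object of $A$ as dualizable, so that the pseudomonoid is automatically a unitary multifusion category and the extracted dual functor is genuinely a \emph{unitary} dual functor in the sense of Definition~\ref{def:3Hilbspace}. (This is the unitary refinement of the equivalence $\Sigma2\Vect\simeq\mFC$ recalled above.) Here one must verify that the reconstructed dual functor and the reconstructed weight agree \emph{isometrically} with the original data of $\cA$, rather than merely up to a non-unitary isomorphism, and this is exactly where the positivity axioms in the $\rmH^*$-algebra definition are used. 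The unit equivalence $|U(\cA)|\simeq\cA$ then reduces to comparing the multiplication $\mu\colon A\boxtimes A\to A$ of the algebra object with the monoidal product $\otimes\colon\cA\times\cA\to\cA$ of the category through the universal property of $\boxtimes$; these agree only up to a canonical equivalence, which accounts for the phrase ``isometrically equivalent'' (rather than ``equal'') in the statement.

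Next I would extend the correspondence to higher morphisms. A 1-morphism $A\to B$ in $\HstarAlg(\rmB2\Hilb)$ is a Morita bimodule, i.e.~a 2-Hilbert space $M$ with commuting left-$A$ and right-$B$ actions; unwinding, this is precisely an $\rmH^*$ bimodule category ${}_\cA\cM_\cB$, and 2- and 3-morphisms correspond respectively to bimodule functors and bimodule natural transformations. I would check that composition matches: the relative tensor product of bimodules over an $\rmH^*$-algebra corresponds to the relative Deligne tensor product $\cM\boxtimes_\cB\cN$ of $\rmH^*$ bimodule categories, again up to the canonical $\boxtimes$-mediated comparison equivalences. This produces the asserted isometric equivalence ${}_{|U(\cA)|}|U(\cM)|_{|U(\cB)|}\simeq{}_\cA\cM_\cB$, and assembling all levels exhibits $U$ and $|-|$ as weakly inverse 3-functors of $\rmC^*$-3-categories, i.e.~an equivalence preserving the dagger and positivity structure.

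I expect the main obstacle to be the coherence bookkeeping rather than any single conceptual step. Precisely because relative tensor products of bimodule categories are only defined up to equivalence, and because every passage through the universal property of $\boxtimes$ introduces a comparison equivalence, the delicate part is to organize this entire web of comparison data so that it is simultaneously compatible with composition at the 1-, 2-, and 3-morphism levels and \emph{unitary} at each level. Verifying that $U$ and $|-|$ preserve all associators, unitors, and interchangers, and that the resulting equivalence is isometric throughout, is the labor-intensive heart of the argument; by contrast, the substantive input isolated above, namely rigidity from the $\rmH^*$ structure and the positivity-driven isometric matching of duals and weights, is comparatively localized.
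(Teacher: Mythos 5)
Your proposal follows essentially the same route as the paper's proof: build $U$ by regarding an $\rmH^*$-multifusion category as a pseudomonoid in $(2\Hilb,\boxtimes)$ with $\rmH^*$ structure (Proposition \ref{prop:hstarmfc-to-hstaralg}), build $|-|$ as a realization functor, show the realization inherits rigidity from the algebra-level adjunction data, upgrade its dual functor and weight to a genuine UDF and spherical weight, and close both round trips, with bimodules handled by the same realization argument. Two differences from the paper are worth flagging. First, the paper's model for the realization is the hom-category $|A|:=\fC(1_\fC\to A)$ of Construction \ref{const:MultiFusCatFromRigid2Alg} rather than your ``underlying 2-Hilbert space''; these agree isometrically by Remark \ref{rem:A==Hom(Hilb->A)Isometric}, but the hom-category model is what lets the paper run the entire verification in the Gray-monoid graphical calculus. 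Second, and more substantively, for the round trip $U(|-|)\cong\id$ the paper does \emph{not} perform the coherence bookkeeping you anticipate as the ``labor-intensive heart'': it reduces to the purely algebraic statement that rigid algebras in $2\Vect$ are exactly multifusion categories (Corollary \ref{cor:Separable2AlgebrasIn2Vec}), and then observes that every unitarity condition in Definition \ref{defn:Hstaralg-in-pre3hilb} is a \emph{property} rather than structure, so the non-unitary equivalence $U(|\cA|)\cong\cA$ is automatically an equivalence of $\rmH^*$-algebras. This one observation replaces the web of unitary comparison data you expected to have to organize. Correspondingly, your assessment of where the effort lies is inverted relative to the paper: the verifications you call ``comparatively localized'' --- the zig-zag identities for the realized duals (Proposition \ref{prop:Separable2AlgebrasInNiceGrayMonoid}), the fact that $\vee$ on $|A|$ is a dagger functor with unitary tensorator (Proposition \ref{prop:underlying-hstar-udf}), and sphericality of the realized weight, which uses the standardness axiom \ref{H:SphereStandard} rather than positivity alone (Proposition \ref{prop:underlying-hstar-spherical-weight}), together with the module analogues (Corollaries \ref{cor:underlying-isometric} and \ref{cor:underlying-module-isometric}) --- constitute the bulk of the paper's final section, in the form of very large explicit pasting diagrams, while the 3-categorical coherence is dispatched in a few lines. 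Neither difference is a gap: your plan would go through, just with the effort distributed quite differently, and at the cost of redoing unitarily what the property-not-structure observation gives for free.
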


Theorems \ref{thm:3Hilb=H*mFC} and \ref{thmalpha:H*AlgebrasIn2Hilb} together tell us that we may view $3\Hilb$ as the unitary suspension of $2\Hilb$.

%%%%%%%%%%%%%%%%%%%%%%%%%%%%%%%%%%%%%%%%%%%%%%%%%%%%%%%%%%%%%%%%%%%%%%%%%%%%%%%%%%
\subsection*{Acknowledgements}
The authors would like to thank
Andr\'e Henriques,
Theo Johnson-Freyd, and
David Reutter
for helpful conversations, along with the participants of the June 2023 virtual meeting on Dagger Higher Categories.
The authors were supported by NSF DMS 2154389.

%%%%%%%%%%%%%%%%%%%%%%%%%%%%%%%%%%%%%%%%%%%%%%%%%%%%%%%%%%%%%%%%%%%%%%%%%%%%%%%%%%
%%%%%%%%%%%%%%%%%%%%%%%%%%%%%%%%%%%%%%%%%%%%%%%%%%%%%%%%%%%%%%%%%%%%%%%%%%%%%%%%%%
%%%%%%%%%%%%%%%%%%%%%%%%%%%%%%%%%%%%%%%%%%%%%%%%%%%%%%%%%%%%%%%%%%%%%%%%%%%%%%%%%%
\section{Background}

We note that our treatment of $n$-Hilbert spaces consists only of those which are suitably finite-dimensional. 

\begin{nota}
In this article, we typically use Roman letters $A,B,C$ for sets (possibly with extra structure), caligraphic letters $\cA,\cB,\cC$ for 1-categories (possibly with extra structure), and mathfrak letters $\fA,\fB,\fC$ for 2-categories (possibly with extra structure).
We usually use a sans-serif font for categories and 2-categories with distinct names like $\Vect,\Hilb,2\Vect,2\Hilb$, etc.
\end{nota}

%%%%%%%%%%%%%%%%%%%%%%%%%%%%%%%%%%%%%%%%%%%%%%%%%%%%%%%%%%%%%%%%%%%%%%%%%%%%%%%%%%
\subsection{\texorpdfstring{$\rmH^*$}{H*}-algebras and 2-Hilbert spaces}
We begin with a rapid treatment of unitary algebras and 2-Hilbert spaces in the sense of \cite{MR1448713}.

A \emph{unitary algebra} is a finite dimensional $\rm C^*$-algebra.
A \emph{trace} on a unitary algebra $A$ is a linear functional $\Tr:A\to \bbC$ satisfying
$\Tr(ab)=\Tr(ba)$ for all $a,b\in A$ 
and
$\Tr(a^*a)\geq 0$ 
with equality if and only if $a=0$.
A unitary algebra $A$ equipped with a trace $\Tr$ gives a Hilbert space $L^2(A,\Tr)$ via the GNS construction:
$$
\langle a | b\rangle_{L^2(A,\Tr)}
:=
\Tr(a^*b).
$$
Since multiplication is bounded on $L^2(A,\Tr)$, we get an $\rmH^*$-\emph{algebra} in the sense of \cite{MR13235}.

\begin{defn}
\label{defn:1H*Alg}
An \emph{$\rmH^*$-algebra} is a unitary algebra $A$ equipped with a trace $\Tr_A:A \to \bbC$.
We identify $A$ with the GNS Hilbert space $L^2(A,\Tr_A)$.
\end{defn}

Given an $\rmH^*$-algebra $(A,\Tr_A)$ and a right action of $A$ on a Hilbert space $H$, every $\xi \in H$ gives a left creation operator $|\xi\rangle:A_A \to H_A$ by $a \mapsto \xi a$ which is manifestly a right $A$-module map.
We denote $|\xi\rangle^\dag$ by $\langle\xi|$.
Observe that $\langle \eta|\xi\rangle_A \in\End(A_A) \cong A$
gives an $A$-valued inner product.

\begin{defn}\label{def:hstar-module}
    An \emph{$\rmH^*$-module} for an $\rmH^*$-algebra $A$ is a Hilbert space $H$ equipped with a right $A$-action and a trace $\Tr_H \colon \End(H_A) \to \bbC$ satisfying 
    \begin{equation}
    \label{eq:H*ModuleTrace}
    \Tr_H( | \xi \rangle \langle \eta | ) = \Tr_A ( \langle \eta  | \xi \rangle_A ) 
    \qquad\qquad\forall\, \eta,\xi\in H.
    \end{equation}
We write $\Mod^\dag(A,\Tr_A)$ for the category of $\rmH^*$-modules over $(A,\Tr_A)$ with $A$-module maps.
\end{defn}

Since $H$ is finite dimensional, $\End(H_A)$ is the $A$-compact operators, i.e., the span of the $A$-rank one operators 
$\set{|\eta\rangle\langle\xi|}{\eta,\xi \in H}$.
Thus $\Tr_H$ is completely determined by \eqref{eq:H*ModuleTrace}.
Moreover, we have the identity
$$
\Tr_A ( \langle \eta  | \xi \rangle_A ) 
=
\langle 1_A \,|\, \langle \eta  | \xi \rangle_A \rangle_{L^2(A,\Tr_A)}
=
\langle\, | \eta \rangle_A 1_A \, | \,| \xi \rangle_A 1_A \rangle_{H}
=
\langle \eta | \xi\rangle_H.
$$

\begin{defn}
Given $a,b\in\cC$ in a linear category $\cC$, the \emph{linking algebra} $L(a,b)$ is given by formal matrices 
$$
\begin{pmatrix} \cC(a\to a) & \cC(b\to a) \\ \cC(a\to b) & \cC(b\to b) \end{pmatrix},
$$ 
with product given by matrix multiplication.
(We remind the reader that the columns correspond to the domain, and the rows correspond to the codomain, similar to how a matrix acts as a linear operator.)
When $\cC$ has a dagger structure, $L(a,b)$ is a $*$-algebra with $*$ being the $\dag$-transpose.
Similarly, we can define the linking algebra 
$L(a_1,\dots, a_n)$
for $n$ objects $a_1,\dots, a_n\in \cC$.
\end{defn}

\begin{defn}
A linear category is called \emph{pre-semisimple} if all $n$-fold linking algebras are finite dimensional semisimple.
A pre-semisimple category is then \emph{semisimple} if and only if it is Cauchy complete, i.e., it admits finite direct sums and all idempotents split.
The \emph{Cauchy completion} of a linear category is the idempotent completion of the additive envelope.
\end{defn}

\begin{defn}
A \emph{unitary (1-)category} is a linear dagger category $\cC$ such that every $n$-fold linking algebra is a unitary algebra.
A unitary category is called \emph{finite} if there is a global bound on the dimensions of the centers of all linking algebras.
That is, there is a $K>0$ such that for any linking algebra $L=L(a_1,\dots, a_n)$, $\dim(Z(L))<K$.
\end{defn}

\begin{rem}\label{rem:unitary-implies-cstar}
    The requirement that all the linking algebras be unitary algebras automatically means all unitary categories are $\mathrm{C}^*$ and $\mathrm{W}^*$ categories \cite{MR808930}.
    Moreover,  a unitary category is semisimple if and only if it is Cauchy complete.
    We note that a unitary category admits direct sums if and only if it admits orthogonal direct sums and is idempotent complete if and only if it is projection complete, i.e., all orthogonal projections split orthogonally.
    We thus define the \emph{Cauchy completion} of a unitary category as the projection completion of the orthogonal additive envelope.
\end{rem}

\begin{exs}\label{exs:PrototypicalUnitaryCategories}
The prototypical unitary category is $\Hilb$.
For any unitary algebra $A$, the \emph{delooping} $\rmB A$ (the category with one object $\star$ and $\End_{\rmB A}(\star) = A$) is a unitary category.  
The $n$-fold linking algebras in $\rmB A$ are canonically isomorphic to $M_n(A)$, and for any $M=(a_{i,j})\in M_n(A)$, $M^*=(a_{j,i}^*)$. 
\end{exs}

\begin{defn}
\label{defn:pre2Hilb}
A \emph{(unitary) trace} on a unitary category $\cC$ is a collection of linear maps $\Tr^\cC_c:\End_\cC(c) \to \bbC$ for each $c\in\cC$ such that:
    \begin{enumerate}
        \item $\Tr^\cC_c(gf) = \Tr^\cC_d(fg)$ for all $f:c \to d$ and $g:d \to c$.
        \item The sesquilinear form $\langle f |g\rangle_{c\to d}:=\Tr^\cC_c(f^\dag g)$ on $\cC(c\to d)$ is positive definite.
\end{enumerate}
A \emph{pre-2-Hilbert space} is a finite unitary category equipped with a unitary trace.
Observe that the inner products $\langle f |g\rangle_{a\to b}$ on the Hilbert spaces $\cC(a\to b)$ satisfy
\begin{equation}
\label{eq:InnerProductOnPre2Hilb}
\langle g|hf^\dag \rangle_{b\to c}
=
\langle gf|h\rangle_{a\to c}
=
\langle f|g^\dag h\rangle_{a\to b}
\qquad\qquad
\forall\,f:a\to b,\, g:b\to c,\, h:a\to c.
\end{equation}

An \emph{isometry} between two pre-2-Hilbert spaces is a fully faithful $\dag$-functor which preserves the trace.

We call two pre-2-Hilbert spaces \emph{isometrically equivalent} if there is an isometry $F \colon (\cC,\Tr^\cC) \to (\cD,\Tr^\cD)$ which is also essentially surjective.
Note that this definition is equivalent to having isometries both ways together with unitary natural isomorphisms from their composites to the appropriate identity functors.
\end{defn}

\begin{exs}\label{exs:BAIsAPre2Hilb}
    Given an $\rmH^*$-algebra $(A,\Tr_A)$, the unitary category $\rmB A$ from Example \ref{exs:PrototypicalUnitaryCategories} admits a canonical trace defined by the formula $\Tr^{\rmB A}_\star\big(\star\mathop{\longrightarrow}\limits^a \star\big):=\Tr_A(a)$.  With this definition, $(\rmB A,\Tr^{\rmB A})$ is a pre-2-Hilbert space.
\end{exs}

\begin{defn}[cf.~{\cite[Def 17]{MR1448713}}]
We call a pre-2-Hilbert space $(\cC,\Tr^\cC)$ a \emph{2-Hilbert space} if it is Cauchy complete, equivalently semisimple.
In this case, for $s\in\cC$ simple, we define its \emph{quantum dimension} $d_s :=\Tr^\cC(\id_s)$.
\end{defn}

\begin{defn}
There is a 2-category $2\Hilb$ whose objects are 2-Hilbert spaces.  The 1-morphisms $\cA\to\cB$ in $2\Hilb$ are $\dag$-functors, and the 2-morphisms are natural transformations.
This 2-category $2\Hilb$ is a $\dag$-2-category in the sense of \cite{MR1444286} \cite[Def 2.2]{MR4419534}, with the dagger structure given by $(\eta^\dag)_a=(\eta_a)^\dag$.

We say two 2-Hilbert spaces are \emph{isometrically equivalent} if they are isometrically equivalent as pre-2-Hilbert spaces.
Note that this notion of equivalence is stronger than the notion of equivalence in $2\Hilb$.
Indeed, it is straightforward to show that a $\dag$-equivalence $F:(\cC,\Tr^\cC)\to (\cD,\Tr^\cD)$ between two 2-Hilbert spaces is isometric if and only if it preserves quantum dimensions, i.e., for all $c\in \Irr(\cC)$,
$d_{F(c)}=d_c$.
\end{defn}

\begin{ex}
If $(\cC,\Tr^\cC)$ is a pre-2-Hilbert space, then the Cauchy completion of $\cC$
is a 2-Hilbert space where if $\bigoplus c_i$ is an orthogonal direct sum and $p\in \End(\bigoplus c_i)$ is an orthogonal projection, we define
$$
\Tr_{(\bigoplus c_i,p)}(f) = \Tr_{\bigoplus c_i}(pfp)=\Tr_{\bigoplus c_i}(f) := \sum_i \Tr^\cC_{c_i} (p_i f p_i^\dag)
\,,
\qquad
\forall\, f=pfp \in \End\left(\bigoplus c_i\right).
$$
\end{ex}

\begin{ex}
Given an $\rmH^*$-algebra $(A,\Tr_A)$, $\Mod^\dag(A,\Tr_A)$ from Definition \ref{def:hstar-module} is a 2-Hilbert space
which is isometrically equivalent, as a 2-Hilbert space, to the Cauchy completion of $(\rmB A,\Tr^{\rmB A})$.

Moreover, every 2-Hilbert space is of this form.
Let $\Irr(\cC)$ denote a set of representatives for the simple objects of $\cC$, and 
set $X:=\bigoplus_{c\in\Irr(\cC)} c$.
Then $(A:=\End(X), \Tr_A:=\Tr^\cC_X)$ is an $\rmH^*$-algebra such that $\cC\cong \Mod^\dag(A,\Tr_A)$ isometrically.
\end{ex}

\begin{ex}
\label{ex:FunCatsArePre2Hilbs}
Suppose $(\cC,\Tr^\cC)$ is a 2-Hilbert space and $(\cD,\Tr^\cD)$ is a pre-2-Hilbert space.
Then $\Fun^\dag(\cC\to \cD)$ is again a pre-2-Hilbert space when equipped with the trace
$$
\Tr_F(\rho:F\Rightarrow F)\coloneqq \sum_{s\in\Irr(\cC)} d_s\cdot \Tr^\cD_{F(s)}(\rho_s).
$$
Indeed, if $F,G: \cC\to \cD$ and $\rho:F\Rightarrow G$ and $\sigma:G\Rightarrow F$, then
\begin{align*}
\Tr_F(\sigma\cdot\rho)
&=
\sum_{s\in\Irr(\cC)} d_s\cdot\Tr^\cD_{F(s)}((\sigma\cdot\rho)_s)
=
\sum_{s\in\Irr(\cC)} d_s\cdot\Tr^\cD_{F(s)}(\sigma_s\circ \rho_s)
\\&=
\sum_{s\in\Irr(\cC)} d_s\cdot\Tr^\cD_{G(s)}(\rho_s\circ \sigma_s)
=
\sum_{s\in\Irr(\cC)} d_s\cdot\Tr^\cD_{G(s)}((\rho\cdot\sigma)_s)
=
\Tr_G(\rho\cdot \sigma).
\end{align*}
The scalars $d_s$ are inserted into the formula above
for a unitary Yoneda result \eqref{eq:UnitaryYoneda} below.
When $\cD$ is Cauchy complete, i.e.~a 2-Hilbert space, so is $\Fun^\dag(\cC \to \cD)$.
\end{ex}

\begin{rem}
\label{rem:A==Hom(Hilb->A)Isometric}
The equivalence $(\cA,\Tr^\cA) \cong \Fun^\dag(\Hilb \to \cA)$ given by $a \mapsto (H\mapsto H\otimes a)$ is isometric for $(\cA,\Tr^\cA)$ a pre-2-Hilbert space.
Indeed, for $f\in\cA(a\to a)$, by Example \ref{ex:FunCatsArePre2Hilbs}, we have 
\[\Tr^{\Fun^\dag(\Hilb \to \cA)}_{-\otimes a}(-\otimes f) = d_{\bbC} \cdot \Tr^\cA_{\bbC\otimes a}(\bbC\otimes f) = \Tr^{\cA}_a(f).
\]
\end{rem}

We now quickly recall the unitary Yoneda results for 2-Hilbert spaces from \cite[\S2.1]{MR4750417}.
As noted in \cite[Rem.~3.61 and footnote]{MR4598730}, and implicitly used in \cite{MR3687214,MR3948170}, a unitary trace $\Tr^\cC$
and the associated inner products and their compatibility with $\dag$ in \eqref{eq:InnerProductOnPre2Hilb}
makes the Yoneda embedding 
$$
\cC\hookrightarrow \Fun^\dag(\cC^{\op}\to \Hilb)
\qquad\text{given by}\qquad
c\mapsto \cC(-\to c)
$$ 
a dagger functor.
When $(\cC,\Tr^\cC)$ is a 2-Hilbert space and $\Irr(\cC)$ is a set of representatives of the simple objects of $\cC$, 
we have a canonical isometric equivalence 
\begin{equation}
\label{eq:UnitaryYoneda}
(\cC,\Tr^\cC)
\cong 
(\Fun^\dag(\cC^{\op}\to \Hilb),\Tr^{\Fun})
\end{equation}
where for $F: \cC^{\op}\to \Hilb$,
$$
\Tr^{\Fun}_F(\rho:F\Rightarrow F)
:=
\sum_{s\in\Irr(\cC)} d_s\cdot \Tr^{\Hilb}_{F(s)}(\rho_s)
$$
as in Example \ref{ex:FunCatsArePre2Hilbs}.
We get a canonical unitary natural isomorphism
\begin{equation}
\label{eq:GeneralizedElements}
\upsilon_c: c\longmapsto \bigoplus_{s\in\Irr(\cC)}d_s^{-1}\cC(s\to c)\otimes s,
\end{equation}
where the direct sum on the right hand side is the orthogonal direct sum.
The scalar $d_s^{-1}$ indicates that the inner product on $\cC(s\to c)$ is scaled by $d_s^{-1}$, i.e., 
$$
\langle f|g\rangle_{d_s^{-1}\cC(s\to c)}
=
d_s^{-1}
\langle f|g\rangle_{\cC(s\to c)}
=
d_s^{-1}
\Tr^\cC(f^\dag\circ g).
$$
We call the elements of $d_s^{-1}\cC(s\to c)$ on the right hand side of \eqref{eq:GeneralizedElements} the \emph{generalized elements} of $c\in\cC$.

%%%%%%%%%%%%%%%%%%%%%%%%%%%%%%%%%%%%%%%%%%%%%%%%%%%%%%%%%%%%%%%%%%%%%%%%%%%%%%%%%%
\subsection{Unitary dual functors, spherical weights, and unitary adjunction} 

We refer the reader to \cite{MR4261588} for the basics of 2-category theory.
In this article, we make heavy use of the graphical calculus for 2-categories; we refer the reader to \cite[\S8.1.2]{MR3971584} for the basics of the graphical calculus.
Our conventions are as follows:
\begin{itemize}
    \item We write 2-composition vertically from \emph{bottom-to-top}.
    \item We write 1-composition horizontally from \emph{left-to-right}.    
\end{itemize}

\begin{defn}\label{def:udf}
A \emph{unitary adjoint functor} (UAF) for a rigid $\dag$-2-category $\fX$ 
is a choice of $\ev_X,\coev_X$ for every 1-morphism $X:a\to b$ such that the induced functor $\vee:\fX\to \fX^{1,2-\op}$ given by
\[
(f\colon X \Rightarrow Y)
\,\longmapsto\,
\tikzmath{
\begin{scope}
\clip[rounded corners = 5pt] (-1.3,-.9) rectangle (1.3,.9);
\fill[fill=\brColor] (-1.3,-.9) rectangle (1.3,.9);
\fill[\arColor] (-.6,-.9) -- (-.6,.3) arc (180:0:.3) -- (0,-.3) arc (-180:0:.3) -- (.6,.9) -- (1.3,.9) -- (1.3,-.9);
\end{scope}
\draw[thick] (0,.3) node[right, yshift=.2cm]{$\scriptstyle Y$} arc (0:180:.3cm) --node[left]{$\scriptstyle Y^\vee$} (-.6,-.9);
\draw[thick] (0,-.3) node[left, yshift=-.2cm]{$\scriptstyle X$} arc (-180:0:.3cm) --node[right]{$\scriptstyle X^\vee$} (.6,.9);
\roundNbox{unshaded}{(0,0)}{.3}{0}{0}{$f$}
}
\qquad\qquad
\tikzmath{\fill[fill=\arColor, rounded corners=5] (-.3,-.3) rectangle (.3,.3);}=a
\qquad
\tikzmath{\fill[fill=\brColor, rounded corners=5] (-.3,-.3) rectangle (.3,.3);}=b
\]
is dagger and its canonical tensorators are unitary.
Here, we denote $\ev_a^\dag$ and $\coev_a^\dag$ as usual in the graphical calculus by cups and caps.
We can also denote their adjoints as their vertical reflections using the usual graphical calculus for daggers, which allows us to avoid drawing coupons labelled by $\ev^\dag$ and $\coev^\dag$.
\[
\coev_X
=
\tikzmath{
\fill[\arColor, rounded corners = 5pt]  (-.2,-.5) rectangle (.8,0);
\filldraw[fill = \brColor, thick]  (0,0) node[above]{$\scriptstyle X$} arc (-180:0:.3) node[above]{$\scriptstyle X^\vee$};
}
\qquad\qquad
\coev^\dag_X
=
\tikzmath{
\fill[\arColor, rounded corners = 5pt]  (-.2,-.5) rectangle (.8,0);
\filldraw[fill = \brColor, thick] (0,-.5) node[below]{$\scriptstyle X$}  arc (180:0:.3) node[below]{$\scriptstyle X^\vee$};
}
\qquad\qquad
\ev^\dag_X
=
\tikzmath{
\fill[\brColor, rounded corners = 5pt]  (-.2,-.5) rectangle (.8,0);
\filldraw[fill = \arColor, thick]  (0,0) node[above]{$\scriptstyle X^\vee$} arc (-180:0:.3) node[above]{$\scriptstyle X$};
}
\qquad\qquad
\ev_X
=
\tikzmath{
\fill[\brColor, rounded corners = 5pt]  (-.2,-.5) rectangle (.8,0);
\filldraw[fill = \arColor, thick] (0,-.5) node[below]{$\scriptstyle X^\vee$} arc (180:0:.3) node[below]{$\scriptstyle X$};
}
\]
\end{defn}

\begin{rem}
Although all adjoint functors are canonically equivalent to each other in a contractible way, this equivalence need not be unitary.
Thus a UAF is structure, not property, as in \cite{MR4133163}.
\end{rem}

\begin{rem}
The choice of a UAF on a rigid $\dag$-2-category determines a canonical iconic unitary pivotal structure given by
\[
\varphi_X:=
\tikzmath{
\begin{scope}
\clip[rounded corners = 5pt] (-1.3,-.9) rectangle (1.3,.9);
\fill[fill=\arColor] (-1.3,-.9) rectangle (1.3,.9);
\fill[\brColor] (-.6,-.9) -- (-.6,.3) arc (180:0:.3) -- (0,-.3) arc (-180:0:.3) -- (.6,.9) -- (1.3,.9) -- (1.3,-.9);
\end{scope}
\draw[thick] (0,.3) arc (0:180:.3cm) -- node[left, yshift=-.2cm,xshift=.1cm]{$\scriptstyle X$} (-.6,-.9);
\draw[thick] (0,.3) -- (0,-.3) arc (-180:0:.3cm) -- node[right,xshift=-.1cm]{$\scriptstyle X^{\vee\vee}$} ((.6,.9);
\roundNbox{fill=white}{(-.3,.4)}{.3}{.3}{.3}{$\scriptstyle\coev^\dag_X$};
\roundNbox{fill=white}{(.3,-.4)}{.3}{.3}{.3}{$\scriptstyle\coev_{X^\vee}$};
}\,.
\]
Thus the following maps are tracial.
\begin{align*}
\tr^\vee_L \colon \End({}_a X_b) &\to \End(1_a) &&& \tr^\vee_R \colon \End({}_a X_b) &\to \End(1_b)\\
(f \colon X \Rightarrow X) &\mapsto 
\tikzmath{
\fill[rounded corners = 5pt, \arColor] (1.2,-.9) rectangle (-.6,.9);
\fill[\brColor]  (0,.3) arc (180:0:.3cm) -- (.6,-.3) arc (0:-180:.3cm);
\draw[thick] (0,.3) node[left,yshift=.2cm]{$\scriptstyle X$} arc (180:0:.3cm) --node[right]{$\scriptstyle X^\vee$} (.6,-.3) arc (0:-180:.3cm) node[left,yshift=-.2cm]{$\scriptstyle X$};
\roundNbox{fill=white}{(0,0)}{.3}{0}{0}{$f$}
}
&&&
(f \colon X \Rightarrow X) &\mapsto 
\tikzmath{
\fill[rounded corners = 5pt, \brColor] (-1.2,-.9) rectangle (.6,.9);
\fill[\arColor] (0,.3) arc (0:180:.3cm) -- (-.6,-.3) arc (-180:0:.3cm);
\draw[thick] (0,.3) node[right,yshift=.2cm]{$\scriptstyle X$} arc (0:180:.3cm) --node[left]{$\scriptstyle X^\vee$} (-.6,-.3) arc (-180:0:.3cm) node[right,yshift=-.2cm]{$\scriptstyle X$};
\roundNbox{fill=white}{(0,0)}{.3}{0}{0}{$f$}
}
\end{align*}
\end{rem}

\begin{ex}
Recall that a \emph{unitary multifusion category} is a multifusion category $\cC$ equipped with a dagger structure $\dag$ such that the coherence isomorphisms are all unitary.
We may view $\cC$ as a $\rmC^*$-2-category $\rmB \cC$ with one object.
A UAF on $\rmB \cC$ is exactly the notion of a unitary dual functor (UDF) on $\cC$ from \cite{MR4133163}.

By \cite[Theorem A]{MR4133163}, UDFs on $\cC$ are classified by groupoid homomorphisms from the so-called \emph{universal grading groupoid} $\cU_\cC$ of $\cC$ to $\bbR_{>0}$.
Since $\bbR_{>0}$ has no torsion and $\cU_\cC$ is finite, we see that 
$$
\Hom(\cU_\cC\to \bbR_{>0})
\cong
\Hom(\cM_\cC\to \bbR_{>0})
$$
where $\cM_\cC$ is the `matrix groupoid' of $\cC$ which has one object for every 
simple summand $1_i$ of $1_\cC$ and a unique morphism from $1_i\to 1_j$ whenever $1_i\otimes \cC\otimes 1_j\neq 0$.
\end{ex}

\begin{ex}[{\cite[Def 2.2]{MR4750417}}]\label{ex:UnitaryAdjunction}
Suppose $\cA, \cB\in2\Hilb$  and $F: \cA\to \cB,\ G: \cB\to \cA$ are functors.
We say that $F,G$ are \emph{unitary adjoints}, denoted 
$F\dashv^\dag G$, if there is a family of natural unitary isomorphisms
$$
\cB(F(a)\to b)
\cong
\cA(a\to G(b)).
$$
Observe that this automatically implies that $F,G$ are linear dagger functors, and we also have $G\dashv^\dag F$.
Diagrammatically, the above unitary isomorphism is denoted by

$$
\tikzmath{
\begin{scope}
\clip[rounded corners = 5pt] (-1,-.6) rectangle (.5,.6);
\fill[\brColor]  (-1,-.6) -- (.1,-.6) -- (.1,0) -- (0,0) -- (0,.6) -- (-1,.6) -- cycle;
\fill[\arColor] (-1,-.6) -- (-.1,-.6) -- (-.1,0) -- (0,0) -- (0,.6) -- (-1,.6) -- cycle;
\end{scope}
\draw[thick] (-.1,-.6) -- node[left]{$\scriptstyle F$} (-.1,-.3);
\draw[thick] (.1,-.6) -- node[right]{$\scriptstyle a$} (.1,-.3);
\draw[thick] (0,.3) -- node[right]{$\scriptstyle b$} (0,.6);
\roundNbox{unshaded}{(0,0)}{.3}{0}{0}{$f$}
}
\mapsto 
\tikzmath{
\begin{scope}
\clip[rounded corners = 5pt] (-1,-.6) rectangle (.5,.6);
\fill[\arColor] (-1,-.6) -- (-.1,-.6) -- (-.1,0) -- (0,0) -- (0,.6) -- (-1,.6) -- cycle;
\fill[\brColor] (-1,-.6) -- (.1,-.6) -- (.1,0) -- (0,0) -- (0,.6) -- (-.1,-.3) arc (0:-180:.2) -- (-.5,.6) -- (-1,.6) -- cycle;
\end{scope}
\draw[thick] (-.1,-.3) arc (0:-180:.2) -- node[left]{$\scriptstyle G$} (-.5,.6);
\draw[thick] (.1,-.6) -- node[right]{$\scriptstyle a$} (.1,-.3);
\draw[thick] (0,.3) -- node[right]{$\scriptstyle b$} (0,.6);
\roundNbox{unshaded}{(0,0)}{.3}{0}{0}{$f$}
},
\qquad 
\tikzmath{
\fill[rounded corners=5pt, fill=gray!50] (0,0) rectangle (.6,.6);
}
=
\cA
\qquad
\tikzmath{
\fill[rounded corners=5pt, fill=gray!20] (0,0) rectangle (.6,.6);
}
=
\cB
\qquad
\tikzmath{
\draw[rounded corners=5pt,dotted] (0,0) rectangle (.6,.6);
}
=
\Hilb
$$
where we view $a\in\cA$ as a functor $\Hilb\to \cA$, and similarly $b:\Hilb\to \cB$.\footnote{
While we usually denote 1-composition in a 2-category from left-to-right, for the 2-category $2\Hilb$, we read composition of functors from right-to-left.}
Unitarity is equivalent to the condition that
$$
\Tr_{F(a)}^\cB 
\left(
\tikzmath{
\begin{scope}
\clip[rounded corners = 5pt] (-1,-.6) rectangle (.5,1.5);
\fill[\brColor]  (-1,-.6) -- (.1,-.6) -- (.1,0) -- (0,0) -- (0,.9) -- (.1,.9) -- (.1,1.5) -- (-1,1.5) -- cycle;
\fill[\arColor] (-1,-.6) -- (-.1,-.6) -- (-.1,0) -- (0,0) -- (0,.9) -- (-.1,.9) -- (-.1,1.5) -- (-1,1.5) -- cycle;
\end{scope}
\draw[thick] (-.1,-.6) -- node[left]{$\scriptstyle F$} (-.1,-.3);
\draw[thick] (.1,-.6) -- node[right]{$\scriptstyle a$} (.1,-.3);
\draw[thick] (0,.3) -- node[right]{$\scriptstyle b$} (0,.6);
\roundNbox{unshaded}{(0,0)}{.3}{0}{0}{$f$}
\roundNbox{unshaded}{(0,.9)}{.3}{0}{0}{$f^\dag$}
\draw[thick] (-.1,1.5) -- node[left]{$\scriptstyle F$} (-.1,1.2);
\draw[thick] (.1,1.5) -- node[right]{$\scriptstyle a$} (.1,1.2);
}
\right)
=
\Tr_a^\cA
\left(
\tikzmath{
\begin{scope}
\clip[rounded corners = 5pt] (-1,-.6) rectangle (.5,1.5);
\fill[\brColor]  (-1,-.6) -- (.1,-.6) -- (.1,0) -- (0,0) -- (0,.9) -- (.1,.9) -- (.1,1.5) -- (-1,1.5) -- cycle;
\fill[\arColor] (-.1,-.3) arc (0:-180:.2) -- (-.5,1.2) arc (180:0:.2) -- (0,.9) -- (0,0) -- cycle;
\end{scope}
\draw[thick] (-.1,-.3) arc (0:-180:.2) -- node[left]{$\scriptstyle G$} (-.5,1.2) arc (180:0:.2);
\draw[thick] (.1,-.6) -- node[right]{$\scriptstyle a$} (.1,-.3);
\draw[thick] (0,.3) -- node[right]{$\scriptstyle b$} (0,.6);
\roundNbox{unshaded}{(0,0)}{.3}{0}{0}{$f$}
\roundNbox{unshaded}{(0,.9)}{.3}{0}{0}{$f^\dag$}
\draw[thick] (.1,1.5) -- node[right]{$\scriptstyle a$} (.1,1.2);
}
\right)
\qquad
\forall a \in \cA,\; 
\forall b \in \cB,\; 
\forall f \in \cB(F(a) \to b).
$$
Unitary adjoints in $2\Hilb$ exist and are unique up to unique unitary natural isomorphism.
Indeed, for $F: \cA\to \cB$, $G$ is determined by unitary Yoneda \eqref{eq:UnitaryYoneda} by
\begin{equation}
\label{eq:UnitaryAdjointFormulaViaYoneda}    
G(b):= \bigoplus_{a\in\Irr(\cA)} d_a^{-1}\cB(F(a)\to b)\otimes a.
\end{equation}
By \cite[Prop 2.8]{MR4750417}, unitary adjunction is a UAF on $2\Hilb$.
\end{ex}

\begin{sub-ex}
\label{sub-ex:UnitaryAdjointOf-otimesA}
Suppose $(A,\Tr^\cA)$ is a 2-Hilbert space and
consider the isometric equivalence $\cA\cong \Fun^\dag(\Hilb\to \cA)$ from Remark \ref{rem:A==Hom(Hilb->A)Isometric}.
By \eqref{eq:UnitaryAdjointFormulaViaYoneda}, the unitary adjoint of $-\otimes a:\Hilb\to \cA$ is given by $a'\mapsto \cA(a\to a')$.
Moreover, $\eta_\bbC$, the leg of the unit of the adjunction at $\bbC\in\Hilb$, is the mate of $\id_{a}$ under the unitary isomorphism
$$
\cA(\bbC\otimes a\to a) \cong \Hilb(\bbC\to \cA(a\to a)).
$$
\end{sub-ex}

\begin{defn}
A \emph{spherical weight} for a $\dag$-2-category $\fX$ equipped with a unitary adjoint functor $\vee$ is a family of faithful positive linear functionals $\Psi_a: \End_\fX(1_a)\to \bbC$ for all $a\in\fX$ 
satisfying
$$
\Psi_b\left(\,
\tikzmath{
\fill[rounded corners = 5pt, \brColor] (-1.2,-.9) rectangle (.6,.9);
\fill[\arColor] (0,.3) arc (0:180:.3cm) -- (-.6,-.3) arc (-180:0:.3cm);
\draw[thick] (0,.3) node[right,yshift=.2cm]{$\scriptstyle X$} arc (0:180:.3cm) --node[left]{$\scriptstyle X^\vee$} (-.6,-.3) arc (-180:0:.3cm) node[right,yshift=-.2cm]{$\scriptstyle X$};
\roundNbox{fill=white}{(0,0)}{.3}{0}{0}{$f$}
}
\,\right)
=
\Psi_a\left(\,
\tikzmath{
\fill[rounded corners = 5pt, \arColor] (1.2,-.9) rectangle (-.6,.9);
\fill[\brColor]  (0,.3) arc (180:0:.3cm) -- (.6,-.3) arc (0:-180:.3cm);
\draw[thick] (0,.3) node[left,yshift=.2cm]{$\scriptstyle X$} arc (180:0:.3cm) --node[right]{$\scriptstyle X^\vee$} (.6,-.3) arc (0:-180:.3cm) node[left,yshift=-.2cm]{$\scriptstyle X$};
\roundNbox{fill=white}{(0,0)}{.3}{0}{0}{$f$}
}
\,\right)
\qquad\qquad
\begin{aligned}
&\forall\,a,b\in\fX
\\
&\forall\, {}_aX_b\in\fX(a\to b)
\\
&\forall\, f\in\End_\fX({}_aX_b)
\end{aligned}
\qquad\qquad
\begin{aligned}
\tikzmath{\fill[fill=\arColor, rounded corners=5] (-.3,-.3) rectangle (.3,.3);}&=a
\\
\tikzmath{\fill[fill=\brColor, rounded corners=5] (-.3,-.3) rectangle (.3,.3);}&=b.
\end{aligned}
$$
\end{defn}

\begin{ex}
\label{ex:WeightDeterminesUDF}
Suppose $\cC$ is a unitary multifusion category.
By \cite{MR4133163} (see \cite[Lem.~2.13]{MR4750417}),
\begin{itemize}
\item 
If $\cC$ is indecomposable, each UDF on $\cC$ admits a unique spherical faithful state
$\psi: \cC(1_\cC\to 1_\cC)\to \bbC$ normalized so that $\psi(\id_{1_\cC})=1$.
Delooping, $\Psi_\star:=\psi$ is a spherical weight on $\rmB\cC$.
\item 
When $\cC$ is arbitrary, for every faithful state $\psi$ on $\End_\cC(1_{\cC})$, there is a unique UDF for which $\psi$ is spherical.
Again, delooping, every choice of weight $\Psi_\star$ on $\rmB\cC$ admits a unique UAF for which $\Psi$ is spherical.
\end{itemize}
\end{ex}

%%%%%%%%%%%%%%%%%%%%%%%%%%%%%%%%%%%%%%%%%%%%%%%%%%%%%%%%%%%%%%%%%%%%%%%%%%%%%%%%%%
\subsection{Semisimple 2-categories}

\begin{defn}\label{def:linkingE1algebra}
Given a 2-category $\fX$ and objects $a,b\in\fX$, the \emph{linking ($E_1$-)algebra} $\cL(a,b)$ is the monoidal category
$$
\cL(a,b):=
\begin{pmatrix}
\fX(a\to a) & \fX(b\to a)
\\
\fX(a\to b) & \fX(b\to b)
\end{pmatrix}
$$
where composition is component-wise, and tensor product is defined via a matrix multiplication formula.
Similarly, we can define the $n$-fold linking algebra $\cL(a_1,\dots, a_n)$ for every $a_1,\dots, a_n\in\cC$.

When $\fX$ is $\rmC^*$, so is every linking algebra with its obvious $\dag$-transpose dagger structure.
\end{defn}

\begin{defn}
A linear 2-category is called \emph{pre-semisimple} if all $n$-fold linking algebras are semisimple multitensor categories.
A pre-semisimple 2-category is called \emph{finite} if both
\begin{itemize}
\item 
all $n$-fold linking algebras are multifusion, and 
\item 
there is a global bound on the dimensions of $\End(1)$ for the centers of all linking algebras.
That is, there is a $K>0$ such that for any linking algebra $\cL=\cL(a_1,\dots, a_n)$, $\dim(\End(1_{Z(\cL)}))<K$.
\end{itemize}
\end{defn}

\begin{rem}
Given a pre-semisimple $2$-category $\fX$,
we say $a,b\in \fX$ are \emph{connected}
when $\fX(a\to b)\neq 0$. 
Since all 1-morphisms are dualizable and the 1-composite of non-zero 1-morphisms is non-zero,
connectedness is an equivalence relation.
The equivalence classes are called the \emph{components} of $\fX$.

If $a,b\in\fX$ are in distinct components, then 
the linking algebra $\cL=\cL(a,b)$ splits as a direct sum $\cL(a)\oplus\cL(b)$, i.e., it is a decomposable multitensor category.
In turn, the center $Z(\cL)$ is decomposable as a braided multifusion category, and 
the summands of $1_{Z(\cL)}$ correspond to the indecomposable fusion blocks of $\cL$.

If $\fX$ has $n$ connected components, then we can choose representatives $\{a_i\}$ of these components, and $\cL=\cL(a_1,\cdots,a_n)$ will decompose into $n$ separate multifusion blocks.  
It follows that $\dim\End(1_{Z(\cL)})=n$.  
In this way, the second condition on a finite pre-semisimple 2-category $\fX$ can be interpreted as saying that $\fX$ has no more than $K$ components.

Finally, we note that the semisimple completion of a component is connected, and thus $K$ is also an upper bound for the number of components of the semisimple completion of $\fX$.
\end{rem}

\begin{rem}
There should be a natural coinductive generalization of (finite) pre-semsimple $n$-category for all $n$.
For example, a pre-semisimple $3$-category is a 3-category whose $n$-fold linking algebras are all semisimple 2-categories.
A pre-semisimple 3-category is finite if all $n$-fold linking algebras are finite semisimple 2-categories and there is a global bound on the dimensions of $\End_{Z(\cL)}(\id_{1_\cL})$ for the centers of all linking algebras $\cL$.
\end{rem}

By \cite[\S 1.4.2]{1812.11933}, the (unital) condensation completion of the additive completion of a (finite) pre-semisimple 2-category is (finite) semisimple.
We now briefly recall these notions.
We begin with direct sums in 2-categories.

\begin{defn}\label{defn:directsumin2category}
A direct sum of objects $a_1,a_2$ in a 2-category consists of an object $a_1 \boxplus a_2$ and 1-morphisms $I_i \colon a_i \to a_1 \boxplus a_2$, $P_i \colon a_1 \boxplus a_2 \to a_i$ for $i=1,2$ such that 
$$I_i \otimes P_i \cong 1_{a_i} \quad\text{and}\quad (P_1 \otimes I_1) \oplus (P_2 \otimes I_2) \cong 1_{a_1 \boxplus a_2}.$$
We note that admitting these 2-isomorphisms is a property rather than structure; i.e. the space of choices of 1-morphisms and 2-morphisms satisfying these conditions is contractible.
\end{defn}

\begin{rem}
\label{rem:UpgradeDirectSums}
By \cite[Prop 1.1.3]{1812.11933}, the 2-isomorphisms in Definition \ref{defn:directsumin2category} can be chosen such that they witness the fact that $I_c$ is both a left and right adjoint to $P_c$. 
\end{rem}

We now discuss (unital) condensation algebras in 2-categories.
We warn the reader that condensations in this article are always assumed to be unital.

\begin{defn}
An algebra $(A,\mu\colon A\otimes A\to A,\iota\colon 1\to A)$ in a monoidal 1-category is said to be \emph{separable Frobenius} if is equipped with an $A$-$A$ bimodule map $\Delta:A\to A\otimes A$ which splits $\mu$ and $\Delta$ admits a counit $\epsilon$.
Diagrammatically, we represent
$$
\tikzmath{
\fill[\BColor, rounded corners=5pt ] (0,0) rectangle (.6,.6);
\draw[\QsColor,thick] (.3,0) -- (.3,.6);
}= A
\qquad
\tikzmath{
\fill[\BColor, rounded corners=5pt] (-.3,0) rectangle (.9,.6);
\draw[\QsColor,thick] (0,0) arc (180:0:.3cm);
\draw[\QsColor,thick] (.3,.3) -- (.3,.6);
}=\mu
\qquad
\tikzmath{
\fill[\BColor, rounded corners=5pt] (-.3,0) rectangle (.9,-.6);
\draw[\QsColor,thick] (0,0) arc (-180:0:.3cm);
\draw[\QsColor,thick] (.3,-.3) -- (.3,-.6);
}=\Delta
\qquad
\tikzmath{
\fill[\BColor, rounded corners=5pt] (-.3,-.3) rectangle (.3,.3);
\draw[\QsColor,thick] (0,0) -- (0,.3);
\filldraw[\QsColor] (0,0) circle (.05cm);
}=\iota
\qquad
\tikzmath{
\fill[\BColor, rounded corners=5pt] (-.3,-.3) rectangle (.3,.3);
\draw[\QsColor,thick] (0,0) -- (0,-.3);
\filldraw[\QsColor] (0,0) circle (.05cm);
}=\epsilon
$$
and we represent several axioms of a separable Frobenius algebra graphically as follows.
$$
\underbrace{
\tikzmath{
\fill[\BColor, rounded corners=5pt] (-.3,-.6) rectangle (1.5,.6);
\draw[\QsColor,thick] (0,-.6) -- (0,0) arc (180:0:.3cm) arc (-180:0:.3cm) -- (1.2,.6);
\draw[\QsColor,thick] (.3,.3) -- (.3,.6);
\draw[\QsColor,thick] (.9,-.3) -- (.9,-.6);
}
=
\tikzmath{
\fill[\BColor, rounded corners=5pt] (-.3,0) rectangle (.9,1.2);
\draw[\QsColor,thick] (0,0) arc (180:0:.3cm);
\draw[\QsColor,thick] (0,1.2) arc (-180:0:.3cm);
\draw[\QsColor,thick] (.3,.3) -- (.3,.9);
}
=
\tikzmath{
\fill[\BColor, rounded corners=5pt] (-.3,.6) rectangle (1.5,-.6);
\draw[\QsColor,thick] (0,.6) -- (0,0) arc (-180:0:.3cm) arc (180:0:.3cm) -- (1.2,-.6);
\draw[\QsColor,thick] (.3,-.3) -- (.3,-.6);
\draw[\QsColor,thick] (.9,.3) -- (.9,.6);
}
}_{\text{(Frobenius)}}
\qquad\qquad\qquad
\underbrace{
\tikzmath{
\draw[\QsColor,thick] (0,-.6) -- (0,-.3);
\draw[\QsColor,thick] (0,.6) -- (0,.3);
\draw[\QsColor,thick] (0,0) circle (.3cm);
}
=
\tikzmath{
\draw[\QsColor,thick] (0,-.6) -- (0,.6);
}
}_{\text{(Separable)}}
$$

    A (unital) \emph{condensation algebra} $({}_aA_a, \mu:A\otimes A\to A, \iota:1_a\to A)$ in a 2-category $\fX$ is a separable Frobenius algebra in the monoidal 1-category $\End_{\fX}(a)$.
\end{defn}

\begin{ex}
The monad associated to a separable adjunction is a condensation algebra.
In more detail,
given $a, b\in \fX$ and an adjunction ${}_aX_b\dashv{}_bY_a$, the monad associated to this adjunction is the algebra $({}_aX\otimes Y_a,\mu = X\otimes\epsilon\otimes Y,\eta\colon 1_a\to X\otimes Y)$ in $\End_{\fX}(a)$, where $\eta$ and $\epsilon$ are the unit and the counit of the adjunction.
The adjunction ${}_aX_b\dashv{}_bY_a$ is called \emph{separable} if $\epsilon$ admits a right inverse. 
\end{ex}

\begin{defn}
A \emph{splitting} for a condensation algebra $A=({}_aA_a, \mu:A\otimes A\to A, \iota:\id_a\to A)$ is separable adjunction ${}_aX_b\dashv{}_bY_a$ whose associated monad is isomorphic to $A$ as an algebra in $\End_{\fX}(a)$.
Whenever $X\dashv Y$ is a splitting for $A$, we say 
$X\dashv Y$ \emph{splits} $A$, or simply that $A$ \emph{splits}.

A 2-category $\fX$ is said to be \emph{condensation complete} if all condensation algebras split.
\end{defn}

\begin{defn}[{\cite[Def 1.4.1]{1812.11933}}]
    A semisimple 2-category is a pre-semisimple 2-category which is additive and condensation complete.
\end{defn}

\begin{ex}
Suppose $\cC$ is a multifusion category.
The \emph{delooping} $\rmB \cC$, which is the 2-category with one object $\star$ whose endomorphisms is $\cC$, is finite pre-semisimple.
The condensation completion of the additive completion of $\rmB \cC$ is equivalent to the semisimple 2-category $\Mod(\cC)$.
\end{ex}

\begin{defn}
The 3-category $3\Vect$ has objects semisimple 2-categories, 1-morphisms linear 2-functors, 2-morphisms 2-natural transformations, and 3-morphisms modifications.
By \cite{MR4372801}, 
the map $\cC\mapsto \Mod(\cC)$ is an equivalence $\mFC\to 3\Vect$, where $\mFC$ is the Morita 3-category of multifusion categories, with 1-morphisms bimodules categories, 2-morphisms bimodule functors, and 3-morphisms bimodule natural transformations.
\end{defn}

%%%%%%%%%%%%%%%%%%%%%%%%%%%%%%%%%%%%%%%%%%%%%%%%%%%%%%%%%%%%%%%%%%%%%%%%%%%%%%%%%%
%%%%%%%%%%%%%%%%%%%%%%%%%%%%%%%%%%%%%%%%%%%%%%%%%%%%%%%%%%%%%%%%%%%%%%%%%%%%%%%%%%
%%%%%%%%%%%%%%%%%%%%%%%%%%%%%%%%%%%%%%%%%%%%%%%%%%%%%%%%%%%%%%%%%%%%%%%%%%%%%%%%%%
\section{\texorpdfstring{$\rmH^*$}{H*}-multifusion categories and \texorpdfstring{$\rmH^*$}{H*}-algebras}

In this section, we endow a unitary multifusion category $\cC$ with the extra structure of a UDF and a spherical weight, which gives it a canonical structure of a 2-Hilbert space.
We call such a triple an $\rmH^*$-multifusion category.
Unitary module categories are also equipped with unitary module traces in the sense of \cite{MR3019263,MR4598730} to endow them with the structure of 2-Hilbert spaces.
We then define the notion of an $\rmH^*$-algebra internal to $\cC$, and we prove each module category equipped with a trace arises as $\cC_A$ for an $\rmH^*$-algebra $A\in\cC$.
Here, we use unitary adjunction to endow the unitary internal hom with the structure of an $\rmH^*$-algebra.
An essential point here is to choose the correct module trace on $\cC_A$ for an $\rmH^*$-algebra 
and the correct UAF on the unitary 2-category $\HstarAlg(\cC)$ of $\rmH^*$-algebras and their bimodules
so that:
\begin{itemize}
\item
the unitary internal end $[A,A]$ is again unitarily isomorphic to $A$, and
\item
the unitary adjoint of $-\otimes_AM_B : \cC_A\to \cC_B$ is unitarily isomorphic to $-\otimes_BM^\vee_A$.
\end{itemize}

%%%%%%%%%%%%%%%%%%%%%%%%%%%%%%%%%%%%%%%%%%%%%%%%%%%%%%%%%%%%%%%%%%%%%%%%%%%%%%%%%%
\subsection{\texorpdfstring{$\rmH^*$}{H*}-multifusion categories and their module categories}

\begin{defn}
    An \emph{$\rmH^*$-multifusion category} is a 
    triple $(\cC,\vee,\psi)$ where 
    $\cC$ is a unitary multifusion category, 
    $\vee$ is a UDF, and 
    $\psi$ is a spherical weight on $\rmB\cC$.
\end{defn}

\begin{rem}
An $\rmH^*$-multifusion category has a canonical 2-Hilbert space structure given by
$$
\Tr^\cC(f:c\to c)
:=
\psi\left(
\tikzmath{
\draw[thick] (0,.3) node[right,yshift=.2cm]{$\scriptstyle c$} arc (0:180:.3cm) --node[left]{$\scriptstyle c^\vee$} (-.6,-.3) arc (-180:0:.3cm) node[right,yshift=-.2cm]{$\scriptstyle c$};
\roundNbox{fill=white}{(0,0)}{.3}{0}{0}{$f$}
}
\,\right)
=
\psi\left(\,
\tikzmath{
\draw[thick] (0,.3) node[left,yshift=.2cm]{$\scriptstyle c$} arc (180:0:.3cm) --node[right]{$\scriptstyle c^\vee$} (.6,-.3) arc (0:-180:.3cm) node[left,yshift=-.2cm]{$\scriptstyle c$};
\roundNbox{fill=white}{(0,0)}{.3}{0}{0}{$f$}
}
\right).
$$
In fact, the data of an H*-multifusion category is precisely that of a unitary multifusion category $(\cC,\vee)$ equipped with a 2-Hilbert space structure such that 
$$
- \otimes c \dashv^\dag - \otimes c^\vee \quad\text{and}\quad c \otimes - \dashv^\dag c^\vee \otimes - \qquad\text{for all } c \in \cC.
$$
\end{rem}

\begin{nota}
We write $1_\cC=\bigoplus_{i=1}^{k} 1_i$ for a decomposition into simples, $p_i$ for the orthogonal projection onto $1_i$ in $\End_\cC(1_\cC)$
As usual, we write $d_c=\Tr^\cC(\id_c)$ for $c\in\Irr(\cC)$
, and we abbreviate $d_{1_i} =: d_i$.
For $c\in\Irr(\cC)$, we write 
$s(c)=i$ and $t(c)=j$ for the unique $1\leq i,j\leq k$ such that $c =1_i\otimes c\otimes 1_j$.
Observe that the coevaluation $1_\cC\to c\otimes c^\vee$ factors through $1_{s(c)}$ and the evaluation $c^\vee\otimes c\to 1_\cC$ factors through $1_{t(c)}$.
This implies that
$$
d_{s(c)}\dim_L^\vee(c)
=
(\psi\circ \tr_L^\vee)(\id_c)
=
d_c
=
(\psi\circ \tr_R^\vee)(\id_c)
=
d_{t(c)}\dim_R^\vee(c).
$$
In diagrams, we have
\begin{equation}
\label{eq:PopBubbles}
\tikzmath{
\fill[fill=\brColor, rounded corners = 5pt] (-.7,-.5) rectangle (.7,.5);
\filldraw[blue, thick, fill=\arColor] (0,0) circle (.3cm);
\node[blue] at (-.5,0) {$\scriptstyle c$};
\node[blue] at (.5,0) {$\scriptstyle c^\vee$};
}
=
\frac{d_c}{d_{s(c)}}
\cdot
\tikzmath{
\fill[fill=\brColor, rounded corners = 5pt] (-.5,-.5) rectangle (.5,.5);
}
\qquad\text{and}\qquad
\tikzmath{
\fill[fill=\arColor, rounded corners = 5pt] (-.7,-.5) rectangle (.7,.5);
\filldraw[blue, thick, fill=\brColor] (0,0) circle (.3cm);
\node[blue] at (-.5,0) {$\scriptstyle c^\vee$};
\node[blue] at (.5,0) {$\scriptstyle c$};
}
=
\frac{d_c}{d_{t(c)}}
\cdot
\tikzmath{
\fill[fill=\arColor, rounded corners = 5pt] (-.5,-.5) rectangle (.5,.5);
}
\qquad\qquad
\begin{aligned}
\tikzmath{\fill[fill=\brColor, rounded corners=5] (-.3,-.3) rectangle (.3,.3);}&=1_{s(c)}
\\
\tikzmath{\fill[fill=\arColor, rounded corners=5] (-.3,-.3) rectangle (.3,.3);}&=1_{t(c)}.
\end{aligned}
\end{equation}
\end{nota}

We now discuss module categories for $\rmH^*$-multifusion categories.
In this section, we discuss right modules, and we leave the notions for left modules to the reader.
We will, however, use left $\cC$-modules frequently later in this article, and results proved for right $\cC$-modules will also be used for left $\cC$-modules without further comment.
%Later in \S\nn{}, left $\cC$-modules will be used when discussing modules associated to $\rmH^*$-algebras in $\cC$.

\begin{defn}[\cite{MR3019263,MR4598730}]
Let $\cC$ be a unitary multifusion category equipped with a UDF $\vee$.
Let $\cM$ be a unitary module category for a $\cC$.
A \emph{(unitary) module trace} for $\cM$ is a unitary trace $\Tr^\cM$ on $\cM$ such that
        for all $c \in \cC$ (represented with a blue string), $m \in \cM$ (represented with a black string), and maps $f:m\lhd c \to m\lhd c$, we have
        \[ 
        \Tr^\cM_m\left(
        \tikzmath{
        \draw[thick, blue] (.15,.3) arc(180:0:.2cm) --node[right]{$\scriptstyle c^\vee$} (.55,-.3) arc (0:-180:0.2);
        \draw[thick, black] (-.15,-.7) --node[left]{$\scriptstyle m$} (-.15,-.3);
        \draw[thick, black] (-.15,.7) --node[left]{$\scriptstyle m$} (-.15,.3);
        \roundNbox{unshaded}{(0,0)}{.3}{0}{0}{$f$};
        }
        \,\right) =
        \Tr^\cM_{m\lhd c}\left(
        \tikzmath{
        \draw[thick, black] (-.15,-.7) --node[left]{$\scriptstyle m$} (-.15,-.3);
        \draw[thick, black] (-.15,.7) --node[left]{$\scriptstyle m$} (-.15,.3);
        \draw[thick, blue] (.15,-.7) --node[right]{$\scriptstyle c$} (.15,-.3);
        \draw[thick, blue] (.15,.7) --node[right]{$\scriptstyle c$} (.15,.3);
        \roundNbox{unshaded}{(0,0)}{.3}{0}{0}{$f$};
        }
        \,\right).
        \]
\end{defn}

\begin{rem}
The data of a unitary $\cC$-module category $\cM$ equipped with a unitary module trace $\Tr^\cM$ is precisely the data of a 2-Hilbert space $(\cM,\Tr^\cM)$ equipped with a $\cC$-action on $\cM$ such that 
$$- \triangleleft c \dashv_\dag - \triangleleft c^\vee \qquad \forall c \in \cC.$$
\end{rem}

\begin{defn}
Given an $\rmH^*$-multifusion category $(\cC,\vee,\psi)$,
the unitary 2-category $\Mod^\dag(\cC)$ has:
\begin{itemize}
\item
objects are pairs $(\cM,\Tr^\cM)$ where $\cM$ is a unitary $\cC$-module category and $\Tr^\cM$ is a unitary $\cC$-module trace
\item
1-morphisms are unitary $\cC$-module functors, and
\item
2-morphisms are natural transformations.
\end{itemize}
\end{defn}

The next result is similar to results in \cite[\S2.3]{MR4750417}.

\begin{lem}
\label{lem:UAFonModC}
Unitary adjunction on $\Mod^\dag(\cC)$ is a well-defined UAF.
\end{lem}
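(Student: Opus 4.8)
The plan is to deduce the statement from the fact that unitary adjunction is already a UAF on $2\Hilb$ (Example \ref{ex:UnitaryAdjunction}, via \cite[Prop 2.8]{MR4750417}), transporting that structure along the forgetful 2-functor $\Mod^\dag(\cC)\to 2\Hilb$ which sends $(\cM,\Tr^\cM)$ to its underlying 2-Hilbert space, a unitary $\cC$-module functor to its underlying $\dag$-functor, and a natural transformation to itself. Because the $2$-morphisms of $\Mod^\dag(\cC)$ are \emph{all} natural transformations of underlying functors, this forgetful 2-functor is the identity on hom-$1$-morphisms and is locally fully faithful: between any two $\cC$-module functors every natural transformation of underlying functors is already a $2$-morphism of $\Mod^\dag(\cC)$. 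Consequently the evaluations and coevaluations witnessing unitary adjunction in $2\Hilb$ are automatically $2$-morphisms in $\Mod^\dag(\cC)$, and the only thing that requires checking is that the unitary adjoint $F^\vee$ of a unitary $\cC$-module functor $F$ is again a \emph{unitary $\cC$-module functor}, so that $\vee$ maps $1$-morphisms of $\Mod^\dag(\cC)$ back into $\Mod^\dag(\cC)$.

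To produce the module structure on $G:=F^\vee$, I would use that the unitary module trace makes $-\lhd c\dashv^\dag -\lhd c^\vee$ on every object, so that $(-\lhd c)^\vee=-\lhd c^\vee$ for the UAF of $2\Hilb$. The module structure of $F$ is a unitary natural isomorphism $s\colon F\circ(-\lhd c)\Rightarrow (-\lhd c)\circ F$, i.e.\ a $2$-morphism in $2\Hilb$. Applying the functor $\vee$ of $2\Hilb$ and inserting its unitary tensorators for $1$-composition together with the unitary pivotal isomorphism $c\cong c^{\vee\vee}$ yields a natural isomorphism $t_{n,c}\colon G(n\lhd c)\cong G(n)\lhd c$. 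Because $\vee$ is a dagger functor whose tensorators are unitary and the pivotal structure is the iconic unitary pivotal structure determined by the UAF, $t$ is a composite of unitaries and is therefore automatically unitary; invertibility is likewise immediate since $\vee$, the tensorators, and the pivotal isomorphism are all invertible. The module-functor coherence axioms for $t$ (compatibility with the associativity and unit constraints of the $\cC$-action) then follow formally by applying the (pseudo)functoriality of $\vee$ to the corresponding axioms for $s$; this bookkeeping is the routine part.

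With $G$ promoted to a unitary $\cC$-module functor and $\ev_F,\coev_F$ recognized as $2$-morphisms of $\Mod^\dag(\cC)$, the unitary adjunction data lives entirely inside $\Mod^\dag(\cC)$, so $\vee$ restricts to a functor $\Mod^\dag(\cC)\to\Mod^\dag(\cC)^{1,2-\op}$. That this functor is dagger and that its canonical tensorators are unitary---the two conditions of Definition \ref{def:udf}---hold because these are properties of $2\Hilb$-level natural transformations and composites, and the forgetful 2-functor is locally fully faithful and the identity on hom-$1$-morphisms, so the conditions transfer verbatim. This establishes that unitary adjunction is a well-defined UAF on $\Mod^\dag(\cC)$.

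The step I expect to be the main obstacle is the construction in the second paragraph: although unitarity of the module structure $t$ on the adjoint is handed to us for free by the dagger property of $\vee$ on $2\Hilb$, one must still verify carefully that $t$ is a genuine module-functor constraint---i.e.\ that the coherence diagrams for the $\cC$-action commute---and that the pivotal isomorphism $c\cong c^{\vee\vee}$ is threaded in correctly so that $t$ has the intended source and target $G(n\lhd c)\cong G(n)\lhd c$ rather than a $c^{\vee\vee}$-shifted version. A graphical mate-calculus computation, as in \cite[\S2.3]{MR4750417}, is the natural tool for this verification.
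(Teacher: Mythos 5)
Your construction of the module structure on $G=F^\vee$ is, in substance, the paper's own proof: there the modulator $\omega^{F^*}_{-,c}$ is defined graphically as a $\pi$-rotation of the modulator $\omega^F$, using the unit/counit of $F\dashv^\dag F^*$ together with $-\lhd c\dashv^\dag -\lhd c^\vee$, and its unitarity is deduced exactly as you say, from the fact that a rotation of a unitary under the UAF of $2\Hilb$ is again unitary. So the heart of your second paragraph agrees with the paper.

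The gap is in your first paragraph: the claim that the forgetful $2$-functor $\Mod^\dag(\cC)\to 2\Hilb$ is locally full, so that $\ev_F,\coev_F$ are \emph{automatically} $2$-morphisms of $\Mod^\dag(\cC)$. Although the definition of $\Mod^\dag(\cC)$ is worded loosely (``2-morphisms are natural transformations''), the intended $2$-morphisms are $\cC$-\emph{module} natural transformations: this is what Example \ref{ex:WeightOnModC} uses (the weight $\Psi_\cM$ is defined on $\cC$-module natural transformations $\id_\cM\Rightarrow\id_\cM$), and it is forced by the equivalence $\HstarAlg(\cC)\simeq\Mod^\dag(\cC)$ of \S\ref{sec:H*Alg==ModC}, under which $\End_{\Mod^\dag(\cC)}(\id_{\cC_A})$ must agree with $\End({}_AA_A)$. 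A natural transformation of underlying functors between module functors need \emph{not} intertwine the modulators: for $\cC=\Hilb_G$ acting on itself, the plain natural endomorphisms of $\id_\cC$ form $\bbC^G$, while the module natural endomorphisms form $\bbC$. Hence the forgetful $2$-functor is faithful but not full, and your appeal to fullness begs precisely the question that makes the lemma nontrivial: one must verify that the unit and counit of $F\dashv^\dag F^*$ intertwine the modulators of the identities and of the composites $F^*\circ F$, $F\circ F^*$ (the latter built from $\omega^F$ and your $t$). This is the step the paper's proof explicitly records (``the above definition automatically makes the unit and counit \ldots into $\cC$-module natural transformations''); it does hold for the mate-defined modulator, by a zig-zag computation, but your proposal skips it on incorrect grounds. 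Once that check is supplied, mates of module natural transformations computed from module-natural evaluations and coevaluations are again module natural, so $\vee$ genuinely restricts to $\Mod^\dag(\cC)$ and the remainder of your argument goes through.
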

\begin{proof}
Under the 2-Hilbert space structure on a $\cC$-module $(\cM,\Tr^\cM)$, the functors $-\lhd c: \cM\to \cM$ and $-\lhd c^\vee:\cM\to \cM$ are unitary adjoints. 
Given a unitary $\cC$-module functor $F\colon \cM_\cC \to \cN_\cC$, we can view the unitary modulator $\omega_{m,c}\colon F(m \lhd c) \to F(m) \lhd c$ as a unitary crossing like a `half-brading' for $F$ with every $-\lhd c: \cM\to \cM$ and $-\lhd c: \cN\to \cN$:

\[
\omega^F_{-,c}
=
\tikzmath{
\begin{scope}
\clip[rounded corners=5pt] (-.7,-.4) rectangle (.7,.4);
\fill[gray!30] (-.4,-.4) .. controls ++(90:.4cm) and ++(270:.4cm) .. (.4,.4) -- (.7,.4) -- (.7,-.4);
\fill[gray!50] (-.4,-.4) .. controls ++(90:.4cm) and ++(270:.4cm) .. (.4,.4) -- (-.7,.4) -- (-.7,-.4);
\end{scope}
\draw[thick, blue] (.4,-.4) node[below]{$\scriptstyle c$} .. controls ++(90:.4cm) and ++(270:.4cm) .. (-.4,.4) node[above]{$\scriptstyle c$};
\draw (-.4,-.4) node[below]{$\scriptstyle F$} .. controls ++(90:.4cm) and ++(270:.4cm) .. (.4,.4) node[above]{$\scriptstyle F$};
}
\qquad\qquad
(\omega^F_{-,c})^{-1}
=
\tikzmath{
\begin{scope}
\clip[rounded corners=5pt] (-.7,-.4) rectangle (.7,.4);
\fill[gray!30] (.4,-.4) .. controls ++(90:.4cm) and ++(270:.4cm) .. (-.4,.4) -- (.7,.4) -- (.7,-.4);
\fill[gray!50] (.4,-.4) .. controls ++(90:.4cm) and ++(270:.4cm) .. (-.4,.4) -- (-.7,.4) -- (-.7,-.4);
\end{scope}
\draw[thick, blue] (-.4,-.4) node[below]{$\scriptstyle c$} .. controls ++(90:.4cm) and ++(270:.4cm) .. (.4,.4) node[above]{$\scriptstyle c$};
\draw (.4,-.4) node[below]{$\scriptstyle F$} .. controls ++(90:.4cm) and ++(270:.4cm) .. (-.4,.4) node[above]{$\scriptstyle F$};
}
\]
The unitary adjoint $F^*\colon \cN \to \cM$ has unitary modulator
\[
\tikzmath{
\begin{scope}
\clip[rounded corners=5pt] (-.7,-.4) rectangle (.7,.4);
\fill[gray!50] (-.4,-.4) .. controls ++(90:.4cm) and ++(270:.4cm) .. (.4,.4) -- (.7,.4) -- (.7,-.4);
\fill[gray!30] (-.4,-.4) .. controls ++(90:.4cm) and ++(270:.4cm) .. (.4,.4) -- (-.7,.4) -- (-.7,-.4);
\end{scope}
\draw[thick, blue] (.4,-.4) node[below]{$\scriptstyle c$} .. controls ++(90:.4cm) and ++(270:.4cm) .. (-.4,.4) node[above]{$\scriptstyle c$};
\draw (-.4,-.4) node[below]{$\scriptstyle F^*$} .. controls ++(90:.4cm) and ++(270:.4cm) .. (.4,.4) node[above]{$\scriptstyle F^*$};
}
:=
\omega^{F^*}_{-,c} :=
\tikzmath{
\begin{scope}
\clip[rounded corners=5pt] (-1.9,-1.2) rectangle (1.9,1.2);
\fill[gray!50] (-1,1.2) -- (-1,-.4) arc(-180:0:.3cm) .. controls ++(90:.4cm) and ++(270:.4cm) .. (.4,.4) arc (180:0:.3cm) -- (1,-1.2)  -- (1.9,-1.2) -- (1.9,1.2);
\fill[gray!30] (-1,1.2) -- (-1,-.4) arc(-180:0:.3cm) .. controls ++(90:.4cm) and ++(270:.4cm) .. (.4,.4) arc (180:0:.3cm) -- (1,-1.2) -- (-1.9,-1.2) -- (-1.9,1.2);
\end{scope}
\draw[thick, blue] (-1.6,1.2) node[above]{$\scriptstyle c$} -- (-1.6,-.4) .. controls ++(270:.8cm) and ++(270:.8cm) .. (.4,-.4) .. controls ++(90:.4cm) and ++(270:.4cm) .. (-.4,.4) .. controls ++(90:.8cm) and ++(90:.8cm) .. (1.6,.4) -- (1.6,-1.2) node[below]{$\scriptstyle c$};
\draw (-1,1.2) node[above]{$\scriptstyle F^*$} -- (-1,-.4) arc(-180:0:.3cm) node[right]{$\scriptstyle F$} .. controls ++(90:.4cm) and ++(270:.4cm) .. (.4,.4) node[left]{$\scriptstyle F$} arc (180:0:.3cm) -- (1,-1.2) node[below]{$\scriptstyle F^*$};
}
\qquad\text{and}\qquad
\tikzmath[yscale=-1]{
\begin{scope}
\clip[rounded corners=5pt] (-.7,-.4) rectangle (.7,.4);
\fill[gray!50] (-.4,-.4) .. controls ++(90:.4cm) and ++(270:.4cm) .. (.4,.4) -- (.7,.4) -- (.7,-.4);
\fill[gray!30] (-.4,-.4) .. controls ++(90:.4cm) and ++(270:.4cm) .. (.4,.4) -- (-.7,.4) -- (-.7,-.4);
\end{scope}
\draw[thick, blue] (.4,-.4) node[above]{$\scriptstyle c$} .. controls ++(90:.4cm) and ++(270:.4cm) .. (-.4,.4) node[below]{$\scriptstyle c$};
\draw (-.4,-.4) node[above]{$\scriptstyle F^*$} .. controls ++(90:.4cm) and ++(270:.4cm) .. (.4,.4) node[below]{$\scriptstyle F^*$};
}
:=
(\omega^{F^*}_{-,c})^{-1}=
\tikzmath{
\begin{scope}
\clip[rounded corners=5pt] (-1.3,-1) rectangle (1.3,1);
\fill[gray!50] (-1,1) -- (-1,-.4) arc(-180:0:.3cm) .. controls ++(90:.4cm) and ++(270:.4cm) .. (.4,.4) arc (180:0:.3cm) -- (1,-1)  -- (1.3,-1) -- (1.3,1);
\fill[gray!30] (-1,1) -- (-1,-.4) arc(-180:0:.3cm) .. controls ++(90:.4cm) and ++(270:.4cm) .. (.4,.4) arc (180:0:.3cm) -- (1,-1) -- (-1.3,-1) -- (-1.3,1);
\end{scope}
\draw[thick, blue] (.4,-1) node[below]{$\scriptstyle c$} -- (.4,-.4) .. controls ++(90:.4cm) and ++(270:.4cm) .. (-.4,.4) -- (-.4,1) node[above]{$\scriptstyle c$};
\draw (-1,1) node[above]{$\scriptstyle F^*$} -- (-1,-.4) arc(-180:0:.3cm) node[right]{$\scriptstyle F$} .. controls ++(90:.4cm) and ++(270:.4cm) .. (.4,.4) node[left]{$\scriptstyle F$} arc (180:0:.3cm) -- (1,-1) node[below]{$\scriptstyle F^*$};
}
\,.
\]
Unitarity of the map $\omega^{F^*}$ follows as it is a $\pi$-rotation of a unitary under the UAF of unitary adjunction in $2\Hilb$.
The above definition automatically makes the unit and counit of the unitary adjunction $F\dashv^\dag F^*$ into $\cC$-module natural transformations.
\end{proof}

We now list some general facts about module categories for $\rmH^*$-multifusion categories.

\begin{facts}
Suppose $(\cM,\Tr^\cM)$ is a unitary module for an $\rmH^*$-multifusion category $(\cC,\vee,\psi)$.
\begin{enumerate}[label=(M\arabic*)]
\item 
For every $c\in\cC$, $-\lhd c: \cM\to \cM$ is a unitary functor with unitary adjoint $-\lhd c^\vee$.
\item
\label{H*Mod:UnitaryInternalHom}
For every $m\in\cM$, $m\lhd - : \cC\to \cM$ is a unitary $\cC$-module functor with unitary adjoint given by the unitary internal hom $[m,-]_\cC \in\cC$ which is determined by \eqref{eq:UnitaryAdjointFormulaViaYoneda}:
$$
\cM(m\lhd c\to n) \cong \cC(c\to [m,n]_\cC)
\qquad\qquad
[m,n]_\cC := \bigoplus_{c\in\Irr(\cC)} d_c^{-1} \cM(m\lhd c \to n) \otimes c.
$$
(Note that $[m,-]_\cC$ is also a unitary $\cC$-module functor by Lemma \ref{lem:UAFonModC} above.)
\end{enumerate}
\end{facts}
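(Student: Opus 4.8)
The plan is to derive both facts from the general theory of unitary adjunction in $2\Hilb$ recalled in Example~\ref{ex:UnitaryAdjunction}, combined with the module trace axiom and Lemma~\ref{lem:UAFonModC}. For (M1), I would unwind the defining diagram of a unitary module trace and observe that it is exactly the trace identity of Example~\ref{ex:UnitaryAdjunction} characterizing unitary adjoints: the equality of $\Tr^\cM_m$ applied to the $c^\vee$-loop around a map with $\Tr^\cM_{m\lhd c}$ applied to that map is precisely the condition witnessing that $-\lhd c \dashv^\dag -\lhd c^\vee$ is a \emph{unitary} adjunction. Since the module category is unitary, $-\lhd c$ is a $\dag$-functor, so (M1) is immediate; this is the content already recorded in the Remark following the definition of a module trace.

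For (M2), I would first verify that $m\lhd - \colon \cC \to \cM$ is a unitary $\cC$-module functor, where $\cC$ carries the regular right $\cC$-module structure and its $2$-Hilbert structure from $\Tr^\cC$. It is a $\dag$-functor because the action functor of a unitary module category is compatible with daggers, and its modulator is supplied by the module associativity constraint $(m\lhd c)\lhd c' \cong m\lhd(c\otimes c')$, whose coherence is inherited from that of the module associator. Regarding $m\lhd -$ as a $1$-morphism in $2\Hilb$, Example~\ref{ex:UnitaryAdjunction} furnishes a unitary adjoint computed by the Yoneda formula~\eqref{eq:UnitaryAdjointFormulaViaYoneda}; specializing $F = m\lhd -$ gives exactly
$$
[m,n]_\cC = \bigoplus_{c\in\Irr(\cC)} d_c^{-1}\,\cM(m\lhd c \to n)\otimes c,
$$
together with the adjunction isomorphism $\cM(m\lhd c\to n)\cong \cC(c\to [m,n]_\cC)$, which is the internal-hom adjunction. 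This identifies the abstract unitary adjoint with the unitary internal hom.

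It remains to see that $[m,-]_\cC$ is a unitary $\cC$-\emph{module} functor, not merely a unitary functor. Here I would invoke Lemma~\ref{lem:UAFonModC}: since unitary adjunction is a UAF on $\Mod^\dag(\cC)$, the unitary adjoint of the unitary $\cC$-module functor $m\lhd -$ is again a unitary $\cC$-module functor, its modulator being the mate of the modulator of $m\lhd -$. The step I expect to require the most care is reconciling this transported module structure with the one implicit in the internal-hom adjunction, i.e.\ checking that the adjunction isomorphism above is $\cC$-balanced; granting Lemma~\ref{lem:UAFonModC}, this follows formally from the uniqueness of unitary adjoints up to unique unitary (module) natural isomorphism, so the genuine work is concentrated in the verification of that lemma rather than in these facts.
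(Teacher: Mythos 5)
Your proposal is correct and assembles exactly the ingredients the paper itself points to: (M1) is the remark following the definition of a module trace (the trace axiom is the unitarity condition for $-\lhd c \dashv^\dag -\lhd c^\vee$ from Example \ref{ex:UnitaryAdjunction}), and (M2) is the Yoneda formula \eqref{eq:UnitaryAdjointFormulaViaYoneda} applied to $m\lhd - \colon \cC\to\cM$ together with Lemma \ref{lem:UAFonModC} to upgrade the adjoint to a unitary $\cC$-module functor. This is essentially the same (implicit) justification as in the paper, which states these as Facts precisely because they follow from those three previously established results.
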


Suppose $\cC$ is an $\rmH^*$-multifusion category and $(\cM_\cC,\Tr^\cM),(\cN_\cC,\Tr^\cN)$ are unitary right $\cC$-module categories.
Since $\Mod^\dag(\cC)$ admits a forgetful functor to $2\Hilb$, 
$$
\Tr^{\Fun^\dag_\cC(\cM_\cC\to \cN_\cC)}_F(\rho:F\Rightarrow F)
:=
\sum_{m\in\Irr(\cM)}
d_m \Tr^\cN_{F(m)}(\rho_{m})
$$
is a unitary trace on $\Fun^\dag_\cC(\cM_\cC\to \cN_\cC)$.
When $\cM=\cC$, we would like that the map $n\mapsto n\lhd -$ is an isometric equivalence
$\cN\to \Fun_\cC^\dag(\cC_\cC\to \cN_\cC)$.
Unfortunately, the trace above is off by a scalar for each indecomposable summand of $\cC$ as we calculate below.

Suppose for simplicity that $\cC$ is indecomposable and let $1_\cC=\bigoplus_{i=1}^d 1_k$ be a decomposition into simples.
For $1\leq i,j\leq k$, define $\cN_i := \cN\lhd 1_i$
and $\cC_{ij}:= 1_i\otimes \cC\otimes 1_j$.
Graphically, we represent $1_i,1_j$, $\id_n$ for $n\in\cN_i$, and $\id_c$ for $c\in\cC_{ij}$ by
$$
\tikzmath{\fill[fill=\brColor, rounded corners=5] (-.3,-.3) rectangle (.3,.3);}=1_i
\qquad\qquad
\tikzmath{\fill[fill=\arColor, rounded corners=5] (-.3,-.3) rectangle (.3,.3);}=1_j
\qquad\qquad
\tikzmath{
\begin{scope}
\clip[rounded corners=5] (-.3,-.3) rectangle (.3,.3);
\fill[fill=\brColor] (0,-.3) rectangle (.3,.3);
\end{scope}
\draw[dotted, rounded corners=5pt] (0,-.3) -- (-.3,-.3) -- (-.3,.3) -- (0,.3);
\draw (0,-.3) --node[left, xshift=.1cm]{$\scriptstyle n$} (0,.3);
}=\id_n,\, n\in\cN_i
\qquad\qquad
\tikzmath{
\begin{scope}
\clip[rounded corners=5] (-.3,-.3) rectangle (.3,.3);
\fill[fill=\brColor] (-.3,-.3) rectangle (0,.3);
\fill[fill=\arColor] (0,-.3) rectangle (.3,.3);
\end{scope}
\draw[thick, blue] (0,-.3) --node[left, xshift=.1cm]{$\scriptstyle c$} (0,.3);
}=\id_c,\, c\in\cC_{ij}.
$$
Fixing an $n\in\cN_i$, we see that
\begin{align*}
\Tr^{\Fun^\dag_\cC(\cC_\cC\to \cN_\cC)}_{n\lhd -}(\id_{n\lhd -})
&=
\sum_{j=1}^k
\sum_{c\in\Irr(\cC_{ij})}
d_c
\Tr^\cN_{n\lhd c}(\id_{n\lhd c})
\\&=
\sum_{j=1}^k
\sum_{c\in\Irr(\cC_{ij})}
d_c
\Tr^\cN_{n}\left(
\tikzmath{
\begin{scope}
\clip[rounded corners = 5pt] (-1,-.5) rectangle (.7,.5);
\fill[fill=\brColor] (-.8,-.5) rectangle (.7,.5);
\end{scope}
\filldraw[blue, thick, fill=\arColor] (0,0) circle (.3cm);
\node[blue] at (-.5,0) {$\scriptstyle c$};
\node[blue] at (.5,0) {$\scriptstyle c^\vee$};
\node at (0,0) {$\scriptstyle j$};
\draw (-.8,-.5) --node[left]{$\scriptstyle n$} (-.8,.5);
}
\right)
\\&\underset{\text{\eqref{eq:PopBubbles}}}{=}
\left(
\sum_{j=1}^k
\sum_{c\in\Irr(\cC_{ij})}
\frac{d_c^2}{d_i}
\right)
d_n
\end{align*}
Analyzing the above scalar, $\sum_{c\in\Irr(\cC_{ij})} d_c^2$ could reasonably be called $\dim(\cC_{ij})$.
Taking a sum over $j$ gives $\dim(1_i\otimes \cC)$, so abreviating $1_i\otimes \cC=: \cC_i$, we are off by the scalar $\dim(\cC_i)/d_i$, which is not immediately obvious to be independent of $i$.
However, we calculate
\begin{align*}
\sum_{j=1}^k
\sum_{c\in\Irr(\cC_{ij})}
\frac{d_c^2}{d_i}
&=
\sum_{j=1}^k
\sum_{c\in\Irr(\cC_{ij})}
d_i^{-1}
\psi_\cC\left(
\tikzmath{
\fill[rounded corners = 5pt, fill=\brColor] (-.7,-.5) rectangle (.7,.5);
\filldraw[thick, blue, fill=\arColor] (0,0) circle (.3cm);
\node at (-.5,0) {$\scriptstyle c$};
\node at (.5,0) {$\scriptstyle c^\vee$};
\node at (0,0) {$\scriptstyle j$};
}
\right)^2
\\&=
\sum_{j=1}^k
\sum_{c\in \Irr(\cC_{ij})} 
d_i^{-1}
\psi_\cC\left(
\tikzmath{
\fill[rounded corners = 5pt, fill=\brColor] (-.7,-.5) rectangle (1.7,.5);
\filldraw[thick, blue, fill=\arColor] (0,0) circle (.3cm);
\filldraw[thick, blue, fill=\arColor] (1,0) circle (.3cm);
\node at (-.5,0) {$\scriptstyle c$};
\node at (1.5,0) {$\scriptstyle c^\vee$};
\node at (0,0) {$\scriptstyle j$};
\node at (1,0) {$\scriptstyle j$};
}
\right)
\cdot
\psi_\cC\left(
\tikzmath{
\fill[rounded corners = 5pt, fill=\brColor] (-.5,-.5) rectangle (.5,.5);
\node at (0,0) {$\scriptstyle i$};
}
\right)
\\&=
\sum_{j=1}^k
\sum_{c\in \Irr(\cC_{ij})} 
\psi_\cC\left(
\tikzmath{
\fill[rounded corners = 5pt, fill=\arColor] (-.9,-.9) rectangle (.9,.9);
\filldraw[thick, blue, fill=\brColor] (0,0) circle (.7cm);
\filldraw[thick, blue, fill=\arColor] (0,0) circle (.3cm);
\node at (-.5,0) {$\scriptstyle c$};
\node at (.5,0) {$\scriptstyle c^\vee$};
\node at (0,0) {$\scriptstyle j$};
\node at (.7,-.7) {$\scriptstyle j$};
}
\right)
\\&=
\sum_{j=1}^k
\left(
\sum_{c\in \Irr(\cC_{ij})} 
\operatorname{FPdim}(c)^2
\right)
\cdot
\psi_\cC\left(
\tikzmath{
\fill[rounded corners = 5pt, fill=\arColor] (-.5,-.5) rectangle (.5,.5);
\node at (0,0) {$\scriptstyle j$};
}
\right)
\\&=
\operatorname{FPdim}(\cC_{11})
\cdot
\sum_{j=1}^k 
\psi_\cC\left(
\tikzmath{
\fill[rounded corners = 5pt, fill=\arColor] (-.5,-.5) rectangle (.5,.5);
\node at (0,0) {$\scriptstyle j$};
}
\right)
\\&=
\operatorname{FPdim}(\cC_{11})
\cdot
\psi_\cC(\id_{1_\cC})
\\&=
\frac{
\operatorname{FPdim}(\cC)
\cdot
\psi_\cC(\id_{1_\cC})
}{k^2},
\end{align*}
which is clearly independent of the choice of $i$.

We now renormalize our initial guess for the trace on $\Fun^\dag_\cC(\cM_\cC\to \cN_\cC)$ by this constant:
\begin{equation}
\label{eq:RenormalizedTraceOnFun}
\Tr^{\Fun^\dag_\cC(\cM_\cC\to \cN_\cC)}_F(\rho:F\Rightarrow F)
:=
\frac{\dim(\End_\cC(1_\cC))^2}{\operatorname{FPdim}(\cC)\cdot \psi_\cC(\id_{1_\cC})}\cdot
\sum_{m\in\Irr(\cM)}
d_m \Tr^\cN_{F(m)}(\rho_{m}).
\end{equation}

With this renormalized trace, we have the following result.

\begin{prop}
\label{prop:RenormalizedTraceOnFun}
Suppose $\cC$ is an indecomposable $\rmH^*$-multifusion category and $\cN_\cC\in \Mod^\dag(\cC)$.
The map $n\mapsto n\lhd -$ is an isometric equivalence
$\cN\to \Fun^\dag_\cC(\cC_\cC\to \cN_\cC)$
when the latter is equipped with the unitary trace \eqref{eq:RenormalizedTraceOnFun} in the case $\cM_\cC=\cC_\cC$.
\end{prop}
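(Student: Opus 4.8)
The plan is to show that the comparison functor $\Phi\colon \cN \to \Fun^\dag_\cC(\cC_\cC\to\cN_\cC)$, $n\mapsto (n\lhd -)$, is a $\dag$-equivalence of 2-Hilbert spaces that preserves quantum dimensions, and then to invoke the criterion (recorded in the definition of $2\Hilb$) that a $\dag$-equivalence of 2-Hilbert spaces is isometric if and only if it preserves quantum dimensions. In this way the substantive input is exactly the scalar computation already carried out in the display preceding the statement.

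First I would check that $\Phi$ is a well-defined $\dag$-functor. On objects, $n\lhd -$ is a $\cC$-module functor whose modulators $(n\lhd c)\lhd c' \cong n\lhd(c\otimes c')$ are the action associators, which are unitary because $\cN$ is a unitary $\cC$-module. On morphisms, $g\colon n\to n'$ is sent to the module natural transformation with components $g\lhd\id_c$. Since daggers in both $\cN$ and $\Fun^\dag_\cC(\cC_\cC\to\cN_\cC)$ are computed componentwise, $(g\lhd -)^\dag = g^\dag\lhd -$, so $\Phi$ is a $\dag$-functor. Note $\cN$ is a 2-Hilbert space by hypothesis and $\Fun^\dag_\cC(\cC_\cC\to\cN_\cC)$ is a 2-Hilbert space since $\cN$ is Cauchy complete, so the isometry criterion will apply to $\Phi$.

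Next I would establish that $\Phi$ is an equivalence via the module Yoneda lemma, with quasi-inverse given by evaluation at the unit $F\mapsto F(1_\cC)$. Essential surjectivity is immediate: any module functor $F$ satisfies $F(c)=F(1_\cC\lhd c)\cong F(1_\cC)\lhd c$ through its modulator, whence $F\cong\Phi(F(1_\cC))$. For full faithfulness, a module natural transformation $\rho\colon n\lhd - \Rightarrow n'\lhd -$ is determined by its component $\rho_{1_\cC}\colon n\to n'$, since module naturality forces $\rho_c=\rho_{1_\cC}\lhd\id_c$; conversely every $g\in\cN(n\to n')$ arises this way, giving $\cN(n\to n')\cong \Fun^\dag_\cC(\cC_\cC\to\cN_\cC)(\Phi(n)\to\Phi(n'))$. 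Thus $\Phi$ is a $\dag$-equivalence.

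It then remains to verify that $\Phi$ preserves quantum dimensions, i.e.\ $d_{\Phi(s)}=d_s$ for every simple $s\in\cN$. By definition $d_{\Phi(s)}=\Tr^{\Fun^\dag_\cC(\cC_\cC\to\cN_\cC)}_{s\lhd -}(\id_{s\lhd -})$ computed with the renormalized trace \eqref{eq:RenormalizedTraceOnFun}, and this is precisely the quantity computed before the statement: the un-renormalized value equals $\tfrac{\FPdim(\cC)\,\psi_\cC(\id_{1_\cC})}{\dim(\End_\cC(1_\cC))^2}\,d_s$, and the renormalization factor $\tfrac{\dim(\End_\cC(1_\cC))^2}{\FPdim(\cC)\,\psi_\cC(\id_{1_\cC})}$ cancels it exactly, yielding $d_{\Phi(s)}=d_s$. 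The one point that does real work is that this constant is \emph{independent of the component of $\cN$ containing $s$} (this is where indecomposability of $\cC$ enters, via connectedness of the $1_i$), so the single renormalization in \eqref{eq:RenormalizedTraceOnFun} simultaneously corrects the trace on every simple object at once. Everything else — well-definedness, the module Yoneda equivalence, and the reduction of full trace-preservation to equality of quantum dimensions (legitimate because $\End_\cN(s)=\bbC\,\id_s$ for simple $s$, so any unitary trace is pinned down by its values on the $\id_s$) — is formal, and I expect this component-independence of the cancelling scalar to be the only genuine obstacle.
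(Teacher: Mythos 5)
Your proof is correct and follows essentially the same route as the paper: the paper offers no separate proof beyond the displayed computation preceding the statement, which shows the un-renormalized trace of $\id_{n\lhd -}$ equals $\frac{\operatorname{FPdim}(\cC)\,\psi_\cC(\id_{1_\cC})}{\dim(\End_\cC(1_\cC))^2}\,d_n$ with a constant independent of the component of $\cN$ (by indecomposability of $\cC$), so the renormalization in \eqref{eq:RenormalizedTraceOnFun} makes the equivalence dimension-preserving and hence isometric. Your additional spelling out of the formal scaffolding (the $\dag$-functoriality, the module-Yoneda equivalence with quasi-inverse $F\mapsto F(1_\cC)$, and the reduction of isometry to preservation of quantum dimensions) is exactly what the paper leaves implicit.
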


%%%%%%%%%%%%%%%%%%%%%%%%%%%%%%%%%%%%%%%%%%%%%%%%%%%%%%%%%%%%%%%%%%%%%%%%%%%%%%%%%%
\subsection{Bimodule categories and the relative Deligne product}\label{sec:bimod-cats-relative-deligne}

For this section, $(\cC,\vee,\psi)$ and $(\cD,\vee,\psi)$ are $\rmH^*$-multifusion categories.

Similar to a module category over an $\rmH^*$-multifusion category, a bimodule category must also be equipped with a bimodule trace.
In more detail, a bimodule trace for ${}_\cC\cM_\cD$ is a unitary trace on $\cM$ such that for all $c\in\cC$, $d\in \cD$, $m\in\cM$, and $f: c\rhd m\lhd d \to c\rhd m \lhd d$,
        \[ 
        \Tr^\cM_{c\rhd m}\left(
        \tikzmath{
        \draw[thick, red] (.3,.3) arc(180:0:.2cm) --node[right]{$\scriptstyle d^\vee$} (.7,-.3) arc (0:-180:0.2);
        \draw[thick, blue] (-.3,-.7) node[below]{$\scriptstyle c$} --(-.3,.7) node[above]{$\scriptstyle c$};
        \draw[thick] (0,-.7) node[below]{$\scriptstyle m$} --(0,.7) node[above]{$\scriptstyle m$};
        \roundNbox{unshaded}{(0,0)}{.3}{.2}{.2}{$f$};
        }
        \,\right) =
        \Tr^\cM_{c\rhd m\lhd d}\left(
        \tikzmath{
        \draw[thick, blue] (-.3,-.7) node[below]{$\scriptstyle c$} --(-.3,.7) node[above]{$\scriptstyle c$};
        \draw[thick] (0,-.7) node[below]{$\scriptstyle m$} --(0,.7) node[above]{$\scriptstyle m$};
        \draw[thick, red] (.3,-.7) node[below]{$\scriptstyle d$} --(.3,.7) node[above]{$\scriptstyle d$};
        \roundNbox{unshaded}{(0,0)}{.3}{.2}{.2}{$f$};
        }
        \,\right)
        =
        \Tr^\cM_{m\lhd d}\left(
        \tikzmath{
        \draw[thick, blue] (-.3,.3) arc(0:180:.2cm) --node[left]{$\scriptstyle c^\vee$} (-.7,-.3) arc (-180:0:0.2);
        \draw[thick] (0,-.7) node[below]{$\scriptstyle m$} --(0,.7) node[above]{$\scriptstyle m$};
        \draw[thick, red] (.3,-.7) node[below]{$\scriptstyle d$} --(.3,.7) node[above]{$\scriptstyle d$};        \roundNbox{unshaded}{(0,0)}{.3}{.2}{.2}{$f$};
        }
        \,\right).
        \]
As with module categories, the data of a $\cC$-$\cD$ bimodule equipped with a $\cC$-$\cD$ bimodule  trace $\Tr^\cM$ is precisely the same data as a 2-Hilbert space $(\cM,\Tr^\cM)$ equipped with a $\cC -\cD$ bimodule action on $\cM$ such that
\begin{align*}
c\rhd - &\dashv^\dag  c^\vee\rhd - \qquad \forall c \in \cC,\\
-\lhd d &\dashv^\dag  -\lhd d ^\vee \qquad \forall d \in \cD.
\end{align*}

Given a right $\cC$-module category $(\cM_\cC,\Tr^\cM)$ and a $\cC$-$\cD$ bimodule category $({}_\cC\cN_\cD,\Tr^\cN)$, we can form the unitary Deligne product
$\cM\boxtimes_\cC \cN_\cD$, which satisfies a universal property for $\cC$-balanced $\dag$-functors $\cM \times \cN \to \cP$ for every unitary category $\cP$ \cite{MR2677836}.
Similar to \cite{MR2677836}, there are several concrete models for the relative Deligne product; we look at two of them here.
\begin{itemize}
\item 
The first is the category of left $\cC$-module functors
$$
\Fun^\dag_{\cC}({}_\cC\cM^{\op}\to {}_\cC\cN)
$$
with the trace from \eqref{eq:RenormalizedTraceOnFun}, which is manifestly a right $\cD$-module trace.
\item 
The second is the unitary Cauchy completion of the \emph{ladder category} \cite{MR3975865} whose objects are formal tensors $m\boxtimes n$ for $m\in\cM$ and $n\in\cN$ and whose morphism spaces are given by 
$$
\Hom(m_1\boxtimes n_1 \to m_2\boxtimes n_2) := \bigoplus_{c\in\Irr(\cC)} \cM(m_1\to m_2\lhd c)\otimes \cN(c\rhd n_1 \to n_2).
$$
Graphically, we represent simple tensors in a single summand of the above hom space by `ladders':
$$
\tikzmath{
\draw[thick] (0,-.8) --node[left]{$\scriptstyle m_1$} (0,-1.2);
\draw[thick] (0,-.2) --node[left]{$\scriptstyle m_2$} (0,1.2);
\draw[thick] (1,.8) --node[right]{$\scriptstyle n_2$} (1,1.2);
\draw[thick] (1,.2) --node[right]{$\scriptstyle n_1$} (1,-1.2);
\draw[thick, blue] (0,-.5) --node[above]{$\scriptstyle c$} (1,.5);
\roundNbox{fill=white}{(0,-.5)}{0.3}{.1}{.1}{$f$};
\roundNbox{fill=white}{(1,.5)}{0.3}{.1}{.1}{$g$};
}
\in
\cM(m_1\to m_2\lhd c)\otimes \cN(c\rhd n_1\to n_2).
$$
Composition is given by `stacking ladders' and decomposing using the fusion rules in $\cC$.
The dagger is given by taking dagger in each tensorand in each summand and rotating the $\cC$-strings using the UDF.
We refer the reader to \cite{MR3975865} for more details.

The right $\cD$-action here is given by
$$
(m\boxtimes n) \lhd d := m\boxtimes n\lhd d.
$$
We equip the ladder category with a right $\cD$-module trace in Lemma \ref{lem:RelativeDeligneTrace} below.

\end{itemize}
We prove these two models for the relative Deligne product are isometrically equivalent in Proposition \ref{prop:IsometricEquivalenceOfRelativeDeligne} below.
For notational convenience, we will denote the unitary Cauchy completion of the ladder category by $\cM\boxtimes_\cC \cN$ and the functor category by $\Fun^\dag_\cC({}_\cC\cM^{\op}\to {}_\cC\cN)$

To state the next lemma, we introduce the following notation.

\begin{nota}
We write $1_\cC=\bigoplus_{i=1}^{k} 1_i$ for a decomposition into simples.
We write $\cM_i := \cM\lhd 1_i$ and $\cN_i:= 1_i\rhd \cN$; 
for $m\in\cM$, we write $m_i:= m\lhd 1_i$ and for $n\in\cN$, we write $n_i:= 1_i\rhd n$.
\end{nota}

\begin{lem}
\label{lem:RelativeDeligneTrace}
The formula
\begin{align*}
\bigoplus_{c\in\Irr(\cC)} \cM(m\to m\lhd c)\otimes \cN(c\rhd n\to n)
&\xrightarrow{\bigoplus\delta_{c=1_{j}}}
\bigoplus\cM_j( m_j\to m_j) \otimes \cN_i(n_j\to n_j) 
\\&\xrightarrow{\sum d_j^{-1} \Tr^{\cM}_{m_j}\otimes \Tr^\cN_{n_j}} \bbC
\end{align*}
defines a right $\cD$-module trace on $\cM\boxtimes_\cC\cN_\cD$.
\end{lem}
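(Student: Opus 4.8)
The plan is to verify the three defining properties of a right $\cD$-module trace in turn---the tracial identity, positive-definiteness of the induced sesquilinear form, and the $\cD$-module compatibility---after first checking that the stated formula extends to a well-defined linear functional on all of $\cM\boxtimes_\cC\cN$. Since the ladder category is linear and the formula is manifestly additive over orthogonal direct sums of objects, it extends uniquely to the orthogonal additive envelope and then to the projection completion exactly as in the Cauchy-completion example following the definition of a $2$-Hilbert space. Thus it suffices to work with simple tensors $m\boxtimes n$ and ladder morphisms throughout.

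First I would establish the tracial identity $\Tr(gf)=\Tr(fg)$. By linearity, reduce to single-rung ladders $f=f_\cM\boxtimes f_\cN$ and $g=g_\cM\boxtimes g_\cN$ supported on simples $c,c'\in\Irr(\cC)$, with $f_\cM\in\cM(m_1\to m_2\lhd c)$, $f_\cN\in\cN(c\rhd n_1\to n_2)$ and symmetrically for $g$. Stacking the two ladders and resolving $c\otimes c'$ by the fusion rules in $\cC$, only the summand $e=1_j$ survives the projection $\bigoplus\delta_{e=1_j}$, and $\Hom(1_j\to c\otimes c')\neq 0$ forces $c'\cong c^\vee$ with $j$ the appropriate unit component. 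In that case the projection closes the $\cC$-string into a loop bridging the $\cM$- and $\cN$-factors, and the desired equality becomes the statement that this closed diagram may be read from either side. I would deduce this from the module-trace axioms for $\Tr^\cM$ and $\Tr^\cN$, which allow one to drag the $c^\vee$-cap and cup through $f_\cM$ and $f_\cN$, together with the pop-bubble relation \eqref{eq:PopBubbles}; the normalizing factor $d_j^{-1}$ is precisely what absorbs the bubble coefficient and makes both sides agree.

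Next I would check that the form $\langle f\mid g\rangle:=\Tr(f^\dag g)$ is positive definite. Writing a general morphism $f\colon m_1\boxtimes n_1\to m_2\boxtimes n_2$ as $\sum_{c}\sum_\alpha f^{(c)}_{\cM,\alpha}\boxtimes f^{(c)}_{\cN,\alpha}$ over orthonormal bases of each summand, the dagger of the ladder category sends a $c$-rung to a $c^\vee$-rung via the UDF, so $f^\dag f\in\End(m_1\boxtimes n_1)$ is supported on summands $c^\vee\otimes c$ whose $1_j$-part is extracted by the trace. Here the off-diagonal terms in $c\neq c'$ drop out, and within each fixed $c$ the compatibility \eqref{eq:InnerProductOnPre2Hilb} of $\Tr^\cM,\Tr^\cN$ with their daggers turns the $1_j$-component into a positive multiple (with strictly positive coefficient coming from $d_j^{-1}$ and the bubble factor $d_c$) of the positive-definite inner products on $\cM(m_1\to m_2\lhd c)$ and $\cN(c\rhd n_1\to n_2)$. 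Hence $\langle f\mid f\rangle\geq 0$, with equality only when every $f^{(c)}_{\cM,\alpha}\boxtimes f^{(c)}_{\cN,\alpha}$ vanishes, i.e. $f=0$.

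Finally, for the right $\cD$-module trace property I would note that the $\cD$-action $(m\boxtimes n)\lhd d=m\boxtimes(n\lhd d)$ only touches the $\cN$-factor, and the duality caps and cups for $d\in\cD$ act entirely within $\cN$. Hence the identity relating the partial trace over $d$ of an endomorphism of $(m\boxtimes n)\lhd d$ to the full trace is, summand by summand, exactly the right $\cD$-module trace axiom already satisfied by $\Tr^\cN$; the $\cM$-factor, the projection $\delta_{c=1_j}$, and the scalars $d_j^{-1}$ all pass through untouched. I expect the tracial identity of the second paragraph to be the main obstacle, since it is the one place where the ladder composition, the bubble-popping coefficients, and the precise choice of normalization $d_j^{-1}$ must all conspire; the positivity and module-trace properties are then comparatively formal consequences of the corresponding properties of $\Tr^\cM$ and $\Tr^\cN$. (I avoid invoking Proposition \ref{prop:IsometricEquivalenceOfRelativeDeligne}, which depends on this lemma.)
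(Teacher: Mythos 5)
Your proposal is correct and follows essentially the same route as the paper's proof: both stack ladder morphisms, observe that the fusion decomposition together with the $\delta_{c=1_j}$ projection forces the dagger rung to match (producing closed $c$-loops weighted by $d_c^{-1}$ after the bubble coefficients and the $d_j^{-1}$ normalization combine), deduce traciality from the module-trace axioms for $\Tr^\cM$ and $\Tr^\cN$, get faithfulness from positivity of the constituent inner products, and note the right $\cD$-module property is immediate since the $\cD$-action only touches the $\cN$-factor. Your explicit remark on extending the formula through the unitary Cauchy completion is a small addition the paper leaves implicit, but it does not change the argument.
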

\begin{proof}
For a morphism
$$
\sum_{c\in\Irr(\cC)} \sum_{r=1}^{k_c} f^c_r\otimes g^c_r
\in
\bigoplus_{c\in\Irr(\cC)} \cM(m\to m'\lhd c)\otimes \cN(c\rhd n\to n'),
$$
we have

\begin{align*}
\sum_{\substack{i,j \\ b,c\in\Irr(\cC_{ij}) \\ r,l}}
\Tr^{\cM\boxtimes_\cC\cN}_{m_j\boxtimes n_j}\left(
\tikzmath{
\begin{scope}
\clip (0,-.2) rectangle (1,4.2);
\fill[\arColor] (0,-.2) rectangle (1,4.2);
\fill[\brColor] (0,0.5) -- (1,1.5) -- (1,2.5) -- (0,3.5);
\end{scope}
\draw[thick] (0,-0.2) node[left,yshift=.1cm]{$\scriptstyle m_j$} -- node[left]{$\scriptstyle m'_i$} (0,4.2) node[left,yshift=-.1cm]{$\scriptstyle m_j$};
\draw[thick] (1,-0.2) node[right,yshift=.1cm]{$\scriptstyle n_j$} -- node[right]{$\scriptstyle n'_i$} (1,4.2) node[right,yshift=-.1cm]{$\scriptstyle n_j$};
\draw[thick, DarkGreen] (0,0.5) --node[above]{$\scriptstyle b$} (1,1.5);
\draw[thick, blue] (0,3.5) --node[above]{$\scriptstyle c$} (1,2.5);
\roundNbox{fill=white}{(0,0.5)}{0.3}{.1}{.1}{$f^b_r$};
\roundNbox{fill=white}{(0,3.5)}{0.3}{.1}{.1}{\scriptsize{$(f^c_l)^\dag$}};
\roundNbox{fill=white}{(1,1.5)}{0.3}{.1}{.1}{$g^b_r$};
\roundNbox{fill=white}{(1,2.5)}{0.3}{.1}{.1}{\scriptsize{$(g^c_l)^\dag$}};
}
\right) 
&=
\sum_{\substack{i,j \\ c\in\Irr(\cC_{ij})\\ r,l}}
\dim^\vee_R(c)^{-1}
\Tr^{\cM\boxtimes_\cC\cN}_{m_j\boxtimes n_j}\left(
\tikzmath{
\begin{scope}
\clip (-.15,-.2) rectangle (1.55,4.2);
\fill[\arColor] (0,-.2) -- (0,4.2) -- (1.55,4.2) -- (1.55,2.5) -- (1.4,2.5) -- (1.4,1.5) -- (1.55,1.5) -- (1.55,-.2);
\fill[\brColor] (-.15,.5) rectangle (.15,3.5);
\fill[\brColor] (1.25,2.8) arc (0:180:.3) -- (.65,1.2) arc (-180:0:.3) -- (1.4,1.5) -- (1.4,2.5);
\end{scope}
\draw[thick] (0,-0.2) node[left,yshift=.1cm]{$\scriptstyle m_j$} -- (0,.5);
\draw[thick] (0,3.5) -- (0,4.2) node[left,yshift=-.1cm]{$\scriptstyle m_j$};
\draw[thick] (-.15,.5) -- node[left]{$\scriptstyle m'_i$} (-.15,3.5);
\draw[thick, blue] (.15,.5) -- (.15,3.5);
\node[blue] at (.28,2) {$\scriptstyle c$};
\draw[thick] (1.55,-0.2) node[right,yshift=.1cm]{$\scriptstyle n_j$} -- (1.55,1.5);
\draw[thick] (1.4,1.5) -- node[right]{$\scriptstyle n'_i$} (1.4,2.5);
\draw[thick] (1.55,2.5) -- (1.55,4.2) node[right,yshift=-.1cm]{$\scriptstyle n_j$};
\draw[thick, blue] (1.25,2.8) arc (0:180:.3) --  (.65,1.2) arc (-180:0:.3);
\node[blue] at (.85,2) {$\scriptstyle c^\vee$};
\roundNbox{fill=white}{(0,0.5)}{0.3}{.1}{.1}{$f^c_r$};
\roundNbox{fill=white}{(0,3.5)}{0.3}{.1}{.1}{\scriptsize{$(f^c_l)^\dag$}};
\roundNbox{fill=white}{(1.4,1.5)}{0.3}{.1}{.1}{$g^c_r$};
\roundNbox{fill=white}{(1.4,2.5)}{0.3}{.1}{.1}{\scriptsize{$(g^c_l)^\dag$}};
}\right) 
\displaybreak[1]\\&=
\sum_{\substack{i,j \\ c\in\Irr(\cC_{ij}) \\ r,l}}
\underbrace{d_{t(c)}^{-1}\dim^\vee_R(c)^{-1}}_{=d_c^{-1}}
\Tr_{m_j}^{\cM}\left(
\tikzmath{
\begin{scope}
\clip[rounded corners = 5pt] (-.2,-.7) rectangle (.7,1.7);
\fill[\arColor] (0,-.7) rectangle (.7,1.7);
\fill[\brColor] (-.15,0) rectangle (.15,1);
\end{scope}
\draw[thick] (0,1.3) --node[left]{$\scriptstyle m_j$} (0,1.7);
\draw[thick] (0,-.7) --node[left]{$\scriptstyle m_j$} (0,-.3);
\draw[thick] (-.15,.3) --node[left]{$\scriptstyle m'_i$} (-.15,.7);
\draw[thick, blue] (.15,.3) --node[right]{$\scriptstyle c$} (.15,.7);
\roundNbox{fill=white}{(0,0)}{0.3}{0.1}{0.1}{$f^c_r$};
\roundNbox{fill=white}{(0,1)}{0.3}{0.1}{0.1}{\scriptsize{$(f^c_l)^\dag$}};
}
\right)
\Tr^\cN_{n_j}\left(
\tikzmath{
\begin{scope}
\clip[rounded corners = 5pt] (-1.15,-.7) rectangle (.2,1.7);
\fill[\arColor] (-1.15,-.7) -- (.15,-.7) -- (.15,0) -- (0,0) -- (0,1) -- (.15,1) -- (.15,1.7) -- (-1.15,1.7);
\fill[\brColor] (-.15,1.3) arc (0:180:.3cm) -- (-.75,-.3) arc (-180:0:.3cm) -- (0,0) -- (0,1);
\end{scope}
\draw[thick, blue] (-.15,1.3) arc (0:180:.3cm) --node[right,xshift=-.1cm]{$\scriptstyle c^\vee$} (-.75,-.3) arc (-180:0:.3cm);
\draw[thick] (0,.3) --node[right]{$\scriptstyle n'_i$} (0,.7);
\draw[thick] (.15,1.3) --node[right]{$\scriptstyle n_j$} (.15,1.7);
\draw[thick] (.15,-.3) --node[right]{$\scriptstyle n_j$} (.15,-.7);
\roundNbox{fill=white}{(0,0)}{0.3}{0.1}{0.1}{$g^c_r$};
\roundNbox{fill=white}{(0,1)}{0.3}{0.1}{0.1}{\scriptsize{$(g^c_l)^\dag$}};
}
\right)
\displaybreak[1]\\
\sum_{\substack{i,j \\ b,c\in\Irr(\cC_{ij}) \\ r,l}}
\Tr^{\cM\boxtimes_\cC\cN}_{m'_i\boxtimes n'_i}\left(
\tikzmath{
\begin{scope}
\clip (0,-.2) rectangle (1,4.2);
\fill[\brColor] (0,-.2) rectangle (1,4.2);
\fill[\arColor] (1,.5) -- (0,1.5) -- (0,2.5) -- (1,3.5);
\end{scope}
\draw[thick] (0,-0.2) node[left,yshift=.1cm]{$\scriptstyle m'_i$} -- node[left]{$\scriptstyle m_j$} (0,4.2) node[left,yshift=-.1cm]{$\scriptstyle m'_i$};
\draw[thick] (1,-0.2) node[right,yshift=.1cm]{$\scriptstyle n'_i$} -- node[right]{$\scriptstyle n_j$} (1,4.2) node[right,yshift=-.1cm]{$\scriptstyle n'_i$};
\draw[thick, blue] (0,1.5) --node[above]{$\scriptstyle c$} (1,0.5);
\draw[thick, DarkGreen] (0,2.5) --node[above]{$\scriptstyle b$} (1,3.5);
\roundNbox{fill=white}{(0,2.5)}{0.3}{.1}{.1}{$f^b_r$};
\roundNbox{fill=white}{(0,1.5)}{0.3}{.1}{.1}{\scriptsize{$(f^c_l)^\dag$}};
\roundNbox{fill=white}{(1,3.5)}{0.3}{.1}{.1}{$g^b_r$};
\roundNbox{fill=white}{(1,.5)}{0.3}{.1}{.1}{\scriptsize{$(g^c_l)^\dag$}};
}
\right) 
&=
\sum_{\substack{i,j \\ c\in\Irr(\cC_{ij}) \\ r,l}}
\underbrace{d_{s(c)}^{-1}\dim^\vee_L(c)^{-1}}_{=d_c^{-1}}
\Tr_{m'_i}^{\cM}\left(
\tikzmath{
\begin{scope}
\clip[rounded corners = 5pt] (-.35,-.7) rectangle (1.15,1.7);
\fill[\brColor] (-.15,-.7) -- (-.15,0) -- (0,0) -- (0,1) -- (-.15,1) -- (-.15,1.7) -- (1.15,1.7) -- (1.15,-.7);
\fill[\arColor] (.15,1.3) arc (180:0:.3cm) -- (.75,-.3) arc (0:-180:.3cm) -- (0,0) -- (0,1);
\end{scope}
\draw[thick, blue] (.15,1.3) arc (180:0:.3cm) --node[left,xshift=.1cm]{$\scriptstyle c^\vee$} (.75,-.3) arc (0:-180:.3cm);
\draw[thick] (0,.3) --node[left]{$\scriptstyle m_j$} (0,.7);
\draw[thick] (-.15,1.3) --node[left]{$\scriptstyle m'_i$} (-.15,1.7);
\draw[thick] (-.15,-.3) --node[left]{$\scriptstyle m'_i$} (-.15,-.7);
\roundNbox{fill=white}{(0,1)}{0.3}{0.1}{0.1}{$f^c_r$};
\roundNbox{fill=white}{(0,0)}{0.3}{0.1}{0.1}{\scriptsize{$(f^c_l)^\dag$}};
}
\right)
\Tr^\cN_{n'_i}\left(
\tikzmath{
\begin{scope}
\clip[rounded corners = 5pt] (-.7,-.7) rectangle (.2,1.7);
\fill[\brColor] (0,-.7) -- (0,0) -- (.15,0) -- (.15,1) -- (0,1) -- (0,1.7) -- (-.7,1.7) -- (-.7,-.7);
\fill[\arColor] (-.15,0) rectangle (.15,1);
\end{scope}
\draw[thick] (0,1.3) --node[right]{$\scriptstyle n'_i$} (0,1.7);
\draw[thick] (0,-.7) --node[right]{$\scriptstyle n'_i$} (0,-.3);
\draw[thick, blue] (-.15,.3) --node[left]{$\scriptstyle c$} (-.15,.7);
\draw[thick] (.15,.3) --node[right]{$\scriptstyle n_j$} (.15,.7);
\roundNbox{fill=white}{(0,1)}{0.3}{0.1}{0.1}{$g^c_r$};
\roundNbox{fill=white}{(0,0)}{0.3}{0.1}{0.1}{\scriptsize{$(g^c_l)^\dag$}};
}
\right).
\end{align*}
By inspection of the above formulas, it follows from the traciality and faithfulness of $\cC$-module traces $\Tr^{\cM}$ and $\Tr^\cN$ that $\Tr^{\cM\boxtimes_\cC\cN}$ is tracial and faithful.
For example,
\[
\Tr_{m_j}^{\cM}\left(
\tikzmath{
\begin{scope}
\clip[rounded corners = 5pt] (-.2,-.7) rectangle (.7,1.7);
\fill[\arColor] (0,-.7) rectangle (.7,1.7);
\fill[\brColor] (-.15,0) rectangle (.15,1);
\end{scope}
\draw[thick] (0,1.3) --node[left]{$\scriptstyle m_j$} (0,1.7);
\draw[thick] (0,-.7) --node[left]{$\scriptstyle m_j$} (0,-.3);
\draw[thick] (-.15,.3) --node[left]{$\scriptstyle m'_i$} (-.15,.7);
\draw[thick, blue] (.15,.3) --node[right]{$\scriptstyle c$} (.15,.7);
\roundNbox{fill=white}{(0,0)}{0.3}{0.1}{0.1}{$f^c_r$};
\roundNbox{fill=white}{(0,1)}{0.3}{0.1}{0.1}{\scriptsize{$(f^c_l)^\dag$}};
}
\right)
=
\Tr_{m'_i\lhd c}^{\cM}\left(
\tikzmath{
\begin{scope}
\clip[rounded corners = 5pt] (-.35,-.7) rectangle (.7,1.7);
\fill[\arColor] (-.15,-.7) -- (-.15,0) -- (0,0) -- (0,1) -- (-.15,1) -- (-.15,1.7) -- (.7,1.7) -- (.7,-.7);
\fill[\brColor] (-.15,-.7) rectangle (.15,0);
\fill[\brColor] (-.15,1) rectangle (.15,1.7);
\end{scope}
\draw[thick, blue] (.15,1.3) -- node[right]{$\scriptstyle c$} (.15,1.7);
\draw[thick, blue] (.15,-.7) -- node[right]{$\scriptstyle c$} (.15,-.3);
\draw[thick] (0,.3) --node[left]{$\scriptstyle m_j$} (0,.7);
\draw[thick] (-.15,1.3) --node[left]{$\scriptstyle m'_i$} (-.15,1.7);
\draw[thick] (-.15,-.3) --node[left]{$\scriptstyle m'_i$} (-.15,-.7);
\roundNbox{fill=white}{(0,1)}{0.3}{0.1}{0.1}{$f^c_r$};
\roundNbox{fill=white}{(0,0)}{0.3}{0.1}{0.1}{\scriptsize{$(f^c_l)^\dag$}};
}
\right)
=
\Tr_{m'_i}^{\cM}\left(
\tikzmath{
\begin{scope}
\clip[rounded corners = 5pt] (-.35,-.7) rectangle (1.15,1.7);
\fill[\brColor] (-.15,-.7) -- (-.15,0) -- (0,0) -- (0,1) -- (-.15,1) -- (-.15,1.7) -- (1.15,1.7) -- (1.15,-.7);
\fill[\arColor] (.15,1.3) arc (180:0:.3cm) -- (.75,-.3) arc (0:-180:.3cm) -- (0,0) -- (0,1);
\end{scope}
\draw[thick, blue] (.15,1.3) arc (180:0:.3cm) --node[left,xshift=.1cm]{$\scriptstyle c^\vee$} (.75,-.3) arc (0:-180:.3cm);
\draw[thick] (0,.3) --node[left]{$\scriptstyle m_j$} (0,.7);
\draw[thick] (-.15,1.3) --node[left]{$\scriptstyle m'_i$} (-.15,1.7);
\draw[thick] (-.15,-.3) --node[left]{$\scriptstyle m'_i$} (-.15,-.7);
\roundNbox{fill=white}{(0,1)}{0.3}{0.1}{0.1}{$f^c_r$};
\roundNbox{fill=white}{(0,0)}{0.3}{0.1}{0.1}{\scriptsize{$(f^c_l)^\dag$}};
}
\right).
\]
Moreover,
$\Tr^{\cM\boxtimes_\cC\cN}$ is visibly a right $\cD$-module trace because $\Tr^\cN$ is a right $\cD$-module trace.
\end{proof}

\begin{cor}
\label{cor:RightCActionIsometric}
The right $\cC$-action map
$\cM\boxtimes_\cC \cC \to \cM$
is an isometric equivalence.
\end{cor}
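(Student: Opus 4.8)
The plan is to realize the right $\cC$-action map as a trace-preserving $\dag$-equivalence $F\colon \cM\boxtimes_\cC\cC\to\cM$, working in the ladder model with the trace from Lemma~\ref{lem:RelativeDeligneTrace}. On objects $F(m\boxtimes x)=m\lhd x$, and on a single-summand ladder $f\otimes g\in\cM(m_1\to m_2\lhd c)\otimes\cC(c\otimes x_1\to x_2)$ the functor returns the composite $(\id_{m_2}\lhd g)\circ(f\lhd\id_{x_1})\colon m_1\lhd x_1\to m_2\lhd x_2$. Since $(m\boxtimes x)\lhd d=m\boxtimes(x\otimes d)$, the map $F$ is manifestly a right $\cC$-module functor, so it suffices to check that it is an essentially surjective, fully faithful, trace-preserving $\dag$-functor.

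Essential surjectivity is immediate from $m\cong F(m\boxtimes 1_\cC)$. For fully faithfulness I would start from
\[
\Hom_{\cM\boxtimes_\cC\cC}(m_1\boxtimes x_1\to m_2\boxtimes x_2)=\bigoplus_{c\in\Irr(\cC)}\cM(m_1\to m_2\lhd c)\otimes\cC(c\otimes x_1\to x_2)
\]
and identify it with $\cM(m_1\lhd x_1\to m_2\lhd x_2)$ using semisimplicity of $\cC$ together with the duality $x_1\dashv x_1^\vee$ (equivalently the internal-hom decomposition of Fact~\ref{H*Mod:UnitaryInternalHom}); this turns the assignment into a bijection, recovering the standard module unitor $\cM\boxtimes_\cC\cC\simeq\cM$. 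That $F$ is a $\dag$-functor I would verify on a single ladder: the dagger on $\cM\boxtimes_\cC\cC$ is the dagger in each tensorand followed by rotating the $\cC$-string through the UDF, and under $F$ this rotation is exactly the adjunction witnessing $-\lhd c\dashv^\dag-\lhd c^\vee$ inside the $2$-Hilbert space $\cM$, giving $F(\xi)^\dag=F(\xi^\dag)$.

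The essential content is isometry, i.e.\ $\Tr^\cM_{m\lhd x}(F(\phi))=\Tr^{\cM\boxtimes_\cC\cC}_{m\boxtimes x}(\phi)$ for every endomorphism $\phi=\sum_{c}\sum_r f^c_r\otimes g^c_r$. I would compute the left-hand side by closing the $x$-strand using the module-trace axiom for $-\lhd x\dashv^\dag -\lhd x^\vee$, which reduces $\Tr^\cM_{m\lhd x}\circ F$ to $\Tr^\cM$ of the $m$-endomorphism obtained from the right partial $\vee$-trace of each $g^c_r$ over $x$; this partial trace lands in $\cC(c\to 1_\cC)$. For $c\in\Irr(\cC)$ not isomorphic to a simple summand $1_j$ of $1_\cC$ this space vanishes, so only the $c=1_j$ terms survive (with $f^{1_j}_r$ factoring through $m_j$), exactly matching the projection $\bigoplus\delta_{c=1_j}$ in Lemma~\ref{lem:RelativeDeligneTrace}. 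On the surviving terms I would apply the pop-bubble relations~\eqref{eq:PopBubbles} together with $d_c=d_{s(c)}\dim^\vee_L(c)=d_{t(c)}\dim^\vee_R(c)$ to extract the normalizing scalar and to factor the result as $d_j^{-1}\,\Tr^\cM_{m_j}\otimes\Tr^\cC_{x_j}$, which is precisely the formula defining $\Tr^{\cM\boxtimes_\cC\cC}$.

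The step I expect to be the main obstacle is this last scalar bookkeeping: confirming that closing the $x$-loop inside $\cM$ and reading off the $\cC$-trace on $x_j$ produces the partial $\vee$-trace with the correct normalization, so that the factor emerging from~\eqref{eq:PopBubbles} and the dimension identities is exactly $d_j^{-1}$ (and not some other product of quantum dimensions), and that the two tensorands separate cleanly into $\Tr^\cM_{m_j}$ and $\Tr^\cC_{x_j}$. Once the ladder calculus and these normalizations are pinned down, the module-functor, dagger, and equivalence properties are all formal.
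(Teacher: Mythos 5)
Your proposal is correct and follows essentially the same route as the paper: both work in the ladder model with the trace of Lemma~\ref{lem:RelativeDeligneTrace}, compute the trace of the image endomorphism by closing the $\cC$-strand via the module-trace/unitary-adjunction property so that only the unit-summand components $c\cong 1_j$ survive, and then match the normalization $d_j^{-1}$ using~\eqref{eq:PopBubbles} (equivalently, simplicity of $1_{s(c)}$ forcing the partial $\vee$-trace to be $(\psi\circ\tr^\vee_L)(g)/d_{s(c)}$ times the identity). The only difference is that you propose to reprove essential surjectivity, full faithfulness, and $\dag$-compatibility from scratch, whereas the paper simply cites the unitary equivalence as well known and checks isometry alone; your extra verifications are correct but not needed for the argument.
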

\begin{proof}
This map is well known to be a unitary equivalence, so we show it is isometric.
To do so, it suffices to look at $m\boxtimes c$ for $m\in\Irr(\cM)$ and $c\in\Irr(\cC)$.
Consider a morphism
$\sum_{d,r} f^d_r\otimes g^d_r:m\boxtimes c \to m\boxtimes c$, 
where $f^d_r:m \to m\lhd d$ and $g^d_r:d\otimes c \to c$ with $d$ ranging over $\Irr(\cC)$.
By the formula in Lemma \ref{lem:RelativeDeligneTrace}, we have 
$$
\Tr^{\cM\boxtimes_\cC\cC}_{m\boxtimes c}(f) 
= 
\sum_{r} \Tr^\cM_m(f^{1_{s(c)}}_r)\Tr^\cC_c(g^{1_{s(c)}}_r)d_{s(c)}^{-1} 
= 
\sum_{r} \Tr^\cM_m(f^{1_{s(c)}}_r)(\psi\circ\tr^\vee_L)(g^{1_{s(c)}}_r)d_{s(c)}^{-1}.
$$
On the other hand, the image of each $f^d_r\otimes g^d_r$ under the action functor is $(\id_m\lhd g^d_r)\circ(f^d_r\lhd \id_c)$.
Denoting the image of $f$ in $\cM$ by $\tilde f$, we have
\begin{align*}
\Tr^{\cM}_{m\lhd c}\left(\tilde f\right) 
&= 
\sum_{d,r} \Tr^{\cM}_{m\lhd c}
\left(
\tikzmath{
\begin{scope}
\clip[rounded corners = 5pt] (-.2,-1.2) rectangle (1.7,1.2);
\fill[\brColor] (0,-1.2) rectangle (1,1.2);
\fill[\arColor] (1,-1.2) rectangle (1.8,1.2);
\end{scope}
\draw[thick] (0,-.8) --node[left]{$\scriptstyle m$} (0,-1.2);
\draw[thick] (0,-.2) --node[left]{$\scriptstyle m$} (0,1.2);
\draw[thick, blue] (1,.8) --node[right]{$\scriptstyle c$} (1,1.2);
\draw[thick, blue] (1,.2) --node[right]{$\scriptstyle c$} (1,-1.2);
\draw[thick, DarkGreen] (0,-.5) --node[above]{$\scriptstyle d$} (1,.5);
\roundNbox{fill=white}{(0,-.5)}{0.3}{.1}{.1}{$f^d_r$};
\roundNbox{fill=white}{(1,.5)}{0.3}{.1}{.1}{$g^d_r$};
}
\right) 
=
\sum_{d,r}\Tr^{\cM}_{m}
\left(
\tikzmath{
\begin{scope}
\clip[rounded corners = 5pt] (-.2,-1.2) rectangle (2,1.2);
\fill[\brColor] (0,-1.2) rectangle (2,1.2);
\fill[\arColor] (1,.8) arc(180:0:.3cm) -- (1.6,.2) arc(0:-180:.3cm);
\end{scope}
\draw[thick] (0,-.8) --node[left]{$\scriptstyle m$} (0,-1.2);
\draw[thick] (0,-.2) --node[left]{$\scriptstyle m$} (0,1.2);
\draw[thick, blue] (1,.8) arc(180:0:.3cm) --node[right,xshift=-1]{$\scriptstyle c^\vee$} (1.6,.2) arc(0:-180:.3cm);
\draw[thick, DarkGreen] (0,-.5) --node[above]{$\scriptstyle d$} (1,.5);
\roundNbox{fill=white}{(0,-.5)}{0.3}{.1}{.1}{$f^d_r$};
\roundNbox{fill=white}{(1,.5)}{0.3}{.1}{.1}{$g^d_r$};
}
\right) 
\\&=
\sum_{r}\Tr^{\cM}_{m}
\left(
\tikzmath{
\begin{scope}
\clip[rounded corners = 5pt] (-.2,-.2) rectangle (2.2,1.2);
\fill[\brColor] (0,-1.2) rectangle (2.2,1.2);
\fill[\arColor] (1.2,.8) arc(180:0:.3cm) -- (1.8,.2) arc(0:-180:.3cm);
\end{scope}
\draw[thick] (0,-.2) --node[left]{$\scriptstyle m$} (0,.2);
\draw[thick] (0,.8) --node[left]{$\scriptstyle m$} (0,1.2);
\draw[thick, blue] (1.2,.8) arc(180:0:.3cm) --node[right,xshift=-1]{$\scriptstyle c^\vee$} (1.8,.2) arc(0:-180:.3cm);
\roundNbox{fill=white}{(0,.5)}{0.3}{.15}{.15}{$\scriptstyle f^{1_{s(c)}}_i$};
\roundNbox{fill=white}{(1.2,.5)}{0.3}{.15}{.15}{$\scriptstyle g^{1_{s(c)}}_i$};
}
\right).
\end{align*}
Now since $1_{s(c)}$ is simple, a straightforward calculation shows that
$$
\tr^\vee_L(g^{1_{s(c)}}_i)
=
\frac{(\psi\circ\tr^\vee_L)(g^{1_{s(c)}}_i)}{d_{s(c)}}
\id_{1_{s(c)}}.
$$
The result follows.
\end{proof}

The next corollary shows that every unitary module category over an $\rmH^*$-multifusion category admits a unitary module trace.
One should view this as the multifusion analog to the existence result \cite{MR3019263} for module traces for modules over a pseudounitary fusion category equipped with its canonical spherical structure.

\begin{cor}
\label{cor:ExistsModuleTrace}
Suppose $(\cC,\vee,\psi_\cC)$ is an $\rmH^*$-multifusion category.
If $\cM$ is a unitary $\cC$-module, then there exists a faithful $\cC$-module trace.
If $\cC$ and $\cM$ are indecomposable, this trace is unique up to positive scalar.
\end{cor}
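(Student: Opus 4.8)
The plan is to package $\cM$ together with $\cC$ into a single unitary multifusion category and to recognize a $\cC$-module trace on $\cM$ as the restriction of a spherical weight on that category. Concretely, view $\cM$ as an off-diagonal hom-category of the $\rmC^*$-2-category $\fX$ with two objects $a,b$ where $\fX(a\to a)=\cC$, $\fX(a\to b)=\cM$, $\fX(b\to a)=\overline{\cM}$, and $\fX(b\to b)=\cC^\ast_\cM:=\Fun^\dag_\cC(\cM\to\cM)$. Since $\cM$ is a unitary module and $\cC^\ast_\cM$ is unitary multifusion, the associated linking algebra $\cL:=\cL(a,b)$ of Definition~\ref{def:linkingE1algebra} is a unitary multifusion category in which $\cC$ is the full tensor subcategory supported on the summand $1_a=1_\cC$ of the unit $1_\cL=1_a\oplus 1_b$, and $\cM$ appears as one of the off-diagonal corners. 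The module-trace axiom for $\cM$ is exactly the spherical (tracial) condition for morphisms of $\fX$ with one $\cC$-strand and one $\cM$-strand, so it suffices to produce a UDF and spherical weight on $\cL$ whose restriction to $\cC$ recovers the given $(\vee,\psi)$.

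For existence, I would first reduce to the indecomposable case by working one component of $\cM$ at a time. Since $1_a$ and $1_b$ lie in distinct corners, $\End_\cL(1_\cL)=\End_\cC(1_\cC)\oplus\End_{\cC^\ast_\cM}(\id_\cM)$, and I would extend $\psi$ to a faithful state $\Psi$ on $\End_\cL(1_\cL)$ by choosing any faithful state on the second summand. By Example~\ref{ex:WeightDeterminesUDF} (i.e.\ \cite{MR4133163}) there is a unique UDF $\vee_\cL$ on $\cL$ for which $\Psi$ is spherical. Restricting the sphericity condition to $\cC$-objects shows that $\Psi|_\cC=\psi$ is spherical for $\vee_\cL|_\cC$; by the uniqueness clause of Example~\ref{ex:WeightDeterminesUDF} the UDF on $\cC$ making $\psi$ spherical is unique, hence $\vee_\cL|_\cC=\vee$. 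The induced 2-Hilbert structure $\Tr^\cL:=\Psi\circ\tr^{\vee_\cL}$ then restricts on the $\cM$-corner to a unitary trace $\Tr^\cM$, which is a $\cC$-module trace because $\Psi$ is spherical on all of $\cL$, and which is faithful because $\Psi$ is.

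For uniqueness when $\cC$ and $\cM$ are indecomposable, note that a module trace is determined by the positive quantum dimensions $d_m=\Tr^\cM_m(\id_m)$ for $m\in\Irr(\cM)$. Evaluating the module-trace axiom on a minimal projection of $m\lhd c$ onto a simple summand $n$ (for $m,n\in\Irr(\cM)$ and $c\in\Irr(\cC)$) and computing the resulting partial trace by the bubble-popping identity \eqref{eq:PopBubbles} forces a ratio $d_n/d_m=\lambda_{m,c,n}$ that depends only on $\vee$ and the fusion data, not on the trace. Indecomposability of $\cM$ means the graph on $\Irr(\cM)$ with an edge $m\sim n$ whenever some $c$ has $\Hom_\cM(m\lhd c\to n)\neq 0$ is connected, so these forced ratios determine all $d_m$ from any single one; indecomposability of $\cC$ guarantees that $\vee$ pins the $\cC$-side of the system to one overall scale (equivalently $\End_{\cC^\ast_\cM}(\id_\cM)=\bbC$, so $\cL$ is indecomposable and its matrix groupoid is indiscrete). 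Consistency of these ratios around cycles is not automatic, but it is precisely what the existence argument supplies; granting that, the $d_m$ are determined up to a single positive scalar.

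The main obstacle is the existence half, namely the consistency of this system of forced ratios: one cannot simply assign quantum dimensions to the simples of $\cM$ by hand and expect the axiom to close up around every loop, and the genuine content is that a globally consistent positive assignment exists at all. The route above discharges this by transporting the problem to the linking multifusion category $\cL$, where the existence of a UDF carrying a spherical weight is guaranteed by \cite{MR4133163}. The two remaining checks I expect to require the most care are that $\cC^\ast_\cM$ is genuinely unitary multifusion (so that $\cL$ is) and that $\vee_\cL$ restricts to the prescribed $\vee$ and that $\Psi$-sphericity on $\cL$ specializes exactly to the module-trace axiom for the $\cM$-corner.
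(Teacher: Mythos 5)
Your uniqueness argument is fine: the forced-ratio/connectedness (Perron--Frobenius) sketch is the standard one, and it is what the paper's one-line appeal to indecomposability amounts to. The problem is the existence half, which is circular as written. To invoke Example \ref{ex:WeightDeterminesUDF} (i.e.\ \cite{MR4133163}) you must first know that the linking category $\cL$ is a unitary multifusion category. But the tensor product of $\cL$ on the off-diagonal corners is built from internal homs $[n,m]$, i.e.\ from adjoints of the functors $n\lhd -$, and for $\cL$ to be a $\rmC^*$-tensor category these adjunctions must be compatible with the dagger structures --- that is, they must be \emph{unitary} adjunctions in the sense of \ref{H*Mod:UnitaryInternalHom} and Lemma \ref{lem:UAFonModC}. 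Unitary adjunction presupposes 2-Hilbert space structures on $\cC$ and $\cM$, which is exactly the module trace you are trying to construct; without a trace on $\cM$ the notion of unitary adjoint of $n\lhd-$ is not even defined. This is why the paper's own construction of the linking multifusion category (Sub-Example \ref{ex:H*AlgOnEnd(m)}) takes as input a module category \emph{already equipped} with a unitary $\cC$-module trace. One could instead try to realize $\cL$ as $\End^\dag_\cC(\cC\boxplus\cM)$ and prove rigidity there directly, but that requires showing adjoints of dagger module functors can be arranged to be dagger module functors with unitary modulators --- essentially the unitarizability results (Proposition \ref{prop:ModulesUnitarizable}, Lemma \ref{lem:ForgetBimoduleFunctorsFullyFaithful}) which, in this paper's logical development, come \emph{downstream} of Corollary \ref{cor:ExistsModuleTrace}. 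So what you defer as a ``remaining check'' is not a check; it is the entire content of the theorem.

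For contrast, the paper sidesteps this by pushing the hard existence input into the fusion case, where it is already known: after reducing to $\cC,\cM$ indecomposable, it uses the unitary module equivalence $\cM \simeq \cM_1\boxtimes_{\cC_{11}}\cC$ of \cite[Prop.~3.12]{MR4598730} with $\cC_{11}$ a fusion corner, takes the (known, unique) faithful $\cC_{11}$-module trace on $\cM_1$ from \cite{MR3019263,MR4598730}, equips $\cC$ with the $\cC_{11}$-$\cC$ bimodule trace $\psi\circ\tr^\vee$, and then transports everything across the relative Deligne product via the explicit trace formula of Lemma \ref{lem:RelativeDeligneTrace}. If you want to rescue your linking-category route, you would need an independent, trace-free proof that $\End^\dag_\cC(\cC\boxplus\cM)$ is rigid as a $\rmC^*$-tensor category; that is a result of comparable depth to the corollary itself, not a routine verification.
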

\begin{proof}
We may assume $\cC$ and $\cM$ are indecomposable.
Since $\cM$ is indecomposable, $\cM$ is unitarily equivalent as a right $\cC$-module category to the unitary relative Deligne product $\cM_1\boxtimes_{\cC_{11}}\cC$ by \cite[Prop.~3.12]{MR4598730}
where $\cC_{11}:= 1_1\otimes \cC\otimes 1_1$.
By \cite{MR3019263,MR4598730}, there is a unique faithful unitary right $\cC_{11}$-module trace $\Tr^{\cM_1}$ on $\cM_1$ up to positive scalar.
We also know $\cC$ admits a $\cC_{11}$-$\cC$ bimodule trace given by $\psi\circ \tr_L^\vee=\psi\circ\tr_R^\vee$.
Hence applying Lemma \ref{lem:RelativeDeligneTrace}, we get a right $\cC$-module trace on $\cM_1\boxtimes_{\cC_{11}}\cC\cong \cM$.
Finally, by indecomposability, there is at most one faithful unitary right $\cC$-module trace $\Tr^{\cM}$ on $\cM$ up to positive scalar.
\end{proof}

Now to identify $\cM\boxtimes_\cC \cN$ with $\Fun^\dag_\cC(\cM^{\op}\to \cN)$, we use \ref{H*Mod:UnitaryInternalHom}.
Every $m^{\op}\in\cM^{\op}$ can be viewed as a functor $-\rhd m^{\op}: \cC\to \cM^{\op}$ which has a unitary adjoint, and similarly for $n\in\cN$.
Thus given $m\boxtimes n \in\cM\boxtimes_\cC\cN$, we get a unitary $\cC$-module functor ${}_\cC\cM^{\op}\to {}_\cC\cN$ by
$$
(-\rhd n)\circ (-\rhd m^{\op})^*
=
\left(
p^{\op}
\mapsto 
\underbrace{[m^{\op},p^{\op}]_\cC}_{=[p,m]_\cC} \rhd n
=
\bigoplus_{c\in\Irr(\cC)} d_c^{-1} 
\underbrace{\cM^{\op}(c\rhd m^{\op}\to p^{\op})}_{=\cM(p\to m\lhd c^\vee)} 
\otimes c \rhd n
\right).
$$
We explain how this functor acts on a `simple tensor' morphism $f\otimes g \in \cM(m\lhd a \to m' ) \otimes \cN(n \to a\rhd n')$ for $a\in\Irr(\cC)$.
First, observe that morphisms of $[p,m]_\cC\rhd n\to [p,m']_\cC\rhd n'$,
i.e., maps
$$
\bigoplus_{c\in\Irr(\cC)} d_c^{-1} 
\underbrace{\cM^{\op}(c\rhd m^{\op}\to p^{\op})}_{=\cM(p\to m\lhd c^\vee)} 
\otimes c \rhd n
\longrightarrow
\bigoplus_{b\in\Irr(\cC)} d_b^{-1} 
\underbrace{\cM^{\op}(b\rhd {m'}^{\op}\to p^{\op})}_{=\cM(p\to m'\lhd b^\vee)} 
\otimes b \rhd n',
$$
can be viewed as block matrices of maps;
the $b,c$-block for $b,c\in\Irr(\cC)$ 
is a block of size
$\dim(\cM(p\to m'\lhd b^\vee))\times \dim(\cM(p\to m\lhd c^\vee))$
whose entries are morphisms $c\rhd n \to b\rhd n'$.
We can think of the rows of this block as being indexed over an ONB $\{\gamma\}$ for $d_b^{-1}\cM(p\to m'\lhd b^\vee)$,
and the columns as being indexed over an ONB $\{\delta\}$ for 
$d_c^{-1}\cM(p\to m\lhd c^\vee)$.
The $b,c$-block of the morphism of $[p,m]_\cC\rhd n \to [p,m']_\cC\rhd n'$ corresponding to our simple tensor $f\otimes g$ has
$\gamma,\delta$-entry equal to 
\begin{equation}
\label{eq:NaturalTransformationFromSimpleTensor}
\sum_{\beta\in\operatorname{ONB}(a\otimes c^\vee\to b^\vee)}
\Tr^\cM_p
\left(
\tikzmath{
\draw[thick, blue] (0,0) to[out=45,in=-90] node[right]{$\scriptstyle c^\vee$} (.6,1.5);
\draw[thick, blue] (0,1) to[out=45,in=-135] (.6,1.5);
\filldraw[blue] (.6,1.5) node[right]{$\scriptstyle \beta$} circle (.05cm);
\draw[thick, blue] (.6,1.5) to[out=90, in=-45] node[right, xshift=.1cm]{$\scriptstyle b^\vee$} (0,2);
\node[blue] at (.3,1.4) {$\scriptstyle a$};
\draw (0,2.7) --node[left]{$\scriptstyle p$} (0,2.3);
\draw (0,1.7) --node[left]{$\scriptstyle m'$} (0,1.3);
\draw (0,.7) --node[left]{$\scriptstyle m$} (0,.3);
\draw (0,-.7) --node[left]{$\scriptstyle p$} (0,-.3);
\roundNbox{fill=white}{(0,0)}{.3}{0}{0}{$\delta$}
\roundNbox{fill=white}{(0,1)}{.3}{0}{0}{$f$}
\roundNbox{fill=white}{(0,2)}{.3}{0}{.2}{$\gamma^\dag$}
}
\right)
\cdot
\tikzmath{
\draw (.1,-1.6) --node[right]{$\scriptstyle n$} (.1,-.3);
\draw (.1,.3) --node[right]{$\scriptstyle n'$} (.1,.7);
\filldraw[blue] (-.6,-1) node[left]{$\scriptstyle \beta^\dag$} circle (.05cm);
\draw[thick, blue] (0,0) to[out=-135, in=135] node[left]{$\scriptstyle a$} (-.6,-1);
\draw[thick, blue] (-.6,-1) to[out=45,in=90] (-.3,-1) -- node[right]{$\scriptstyle c$} (-.3,-1.6);
\draw[thick, blue] (-.6,-1) arc(0:-180:.3cm) --node[left]{$\scriptstyle b$} (-1.2,.7);
\roundNbox{fill=white}{(0,0)}{.3}{0}{.2}{$g$}
}
\end{equation}
where the sum is independent of the choice of ONB for $\cC(a\otimes c^\vee\to b^\vee)$
(which is isometrically isomorphic to $\cC(b\otimes a\to c)$ using $\Tr^\cC=\psi\circ\tr_L$).\footnote{The meticulous reader may notice that the scalar $d_b^{-1}$ from the inner product on $d_b^{-1}\cM(p\to m\lhd b^\vee)$
cancels with the normalization $\{\sqrt{d_b}\beta\}$ needed to turn an ONB for $\cC(a\otimes c^\vee\to b^\vee)$ with the inner product from $\Tr^\cC$ into an ONB for the closely related \emph{isometry inner product} (e.g., see \cite[\S2.5]{MR4079745})
needed for functoriality.}
We thus get a unitary $\cD$-module functor 
$$
\cM\boxtimes_\cC\cN_\cD \to \Fun^\dag({}_\cC\cM^{\op}\to {}_\cC\cN)
$$
by the universal property of Cauchy completion.
It is a straightforward but tedious exercise to prove this unitary functor is an equivalence; we leave the details to the interested reader.

\begin{prop}
\label{prop:IsometricEquivalenceOfRelativeDeligne}
We have an isometric equivalence of unitary $\cD$-modules
$$
\cM\boxtimes_\cC \cN_\cD
\cong
\Fun^\dag_{\cC}(\cM^{\op}\to \cN)
$$
where the latter has the trace from  \eqref{eq:RenormalizedTraceOnFun}.
\end{prop}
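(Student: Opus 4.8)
The plan is to take the explicit unitary $\cD$-module functor
$$\Phi\colon \cM\boxtimes_\cC\cN_\cD \longrightarrow \Fun^\dag_\cC({}_\cC\cM^{\op}\to{}_\cC\cN)$$
constructed just above the statement (whose value on objects is $m\boxtimes n\mapsto(p\mapsto [p,m]_\cC\rhd n)$ and whose value on simple-tensor morphisms is recorded in \eqref{eq:NaturalTransformationFromSimpleTensor}) and, granting that it is a unitary equivalence, to prove that it is \emph{isometric}, i.e.\ that it intertwines the two $\cD$-module traces. Since both traces are $\bbC$-linear and every morphism is a finite sum of simple tensors, it suffices to compare $\Tr^{\Fun}_{\Phi(m\boxtimes n)}(\Phi(f))$ with $\Tr^{\cM\boxtimes_\cC\cN}_{m\boxtimes n}(f)$ for $f\in\End(m\boxtimes n)$ with $m\in\Irr(\cM)$ and $n\in\Irr(\cN)$.

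First I would observe that the pullback $\Phi^*\Tr^{\Fun}$ is again a faithful right $\cD$-module trace on $\cM\boxtimes_\cC\cN$: faithfulness and traciality are transported along the unitary equivalence $\Phi$, and the right $\cD$-module property holds because $\Phi$ is a $\cD$-module functor while \eqref{eq:RenormalizedTraceOnFun} (in its left-module-functor form) is itself a right $\cD$-module trace. Reducing to the case where $\cD$ and the relevant indecomposable $\cD$-submodule of $\cM\boxtimes_\cC\cN$ are indecomposable, Corollary \ref{cor:ExistsModuleTrace} then guarantees that $\Phi^*\Tr^{\Fun}$ and $\Tr^{\cM\boxtimes_\cC\cN}$ differ by a single positive scalar on each such component. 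It therefore remains only to pin this scalar to $1$ by evaluating both traces on the identity of one conveniently chosen simple object $m\boxtimes n$ per component.

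For this final computation I would feed the formula \eqref{eq:NaturalTransformationFromSimpleTensor} for $\Phi(\id_{m\boxtimes n})$ into \eqref{eq:RenormalizedTraceOnFun}: applying $\Tr^\cN$ to the component at each $p\in\Irr(\cM^{\op})$ collapses the block matrix to its diagonal, the evaluation--coevaluation bubbles on the $\cC$-strings are resolved by the pop-bubble relations \eqref{eq:PopBubbles}, and the weighted sum $\sum_{p}d_p(\cdots)$ reconstitutes $\Tr^\cM_m$ by the unitary Yoneda completeness underlying the internal hom \ref{H*Mod:UnitaryInternalHom}. Matching the result against the explicit diagonal value of $\Tr^{\cM\boxtimes_\cC\cN}_{m\boxtimes n}$ from Lemma \ref{lem:RelativeDeligneTrace} reduces the whole claim to an identity between dimension factors.

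The main obstacle is precisely this last bookkeeping: the renormalization constant $\dim(\End_\cC(1_\cC))^2/(\FPdim(\cC)\,\psi_\cC(\id_{1_\cC}))$ in \eqref{eq:RenormalizedTraceOnFun} must cancel exactly against the accumulated $d_c^{-1}$ factors coming from the internal hom $[p,m]_\cC$ together with the ONB normalizations $\{\beta\}$ in \eqref{eq:NaturalTransformationFromSimpleTensor}. This is the same sphericality/normalization analysis already carried out in the display preceding Proposition \ref{prop:RenormalizedTraceOnFun}, so I expect the cleanest route is to invoke that computation (in the case $\cM=\cC$) after using Corollary \ref{cor:RightCActionIsometric} to dispose of the right $\cC$-action, rather than to rerun the dimension sum from scratch.
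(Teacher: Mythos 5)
Your overall strategy---pull back the functor-category trace along $\Phi$, use Corollary \ref{cor:ExistsModuleTrace} to reduce to a positive scalar on each indecomposable $\cD$-module component, and then pin the scalar by a single evaluation---is genuinely different from the paper's proof (which directly computes both traces on \emph{all} simple-tensor endomorphisms $f\otimes g$ of $m\boxtimes n$). But the pinning step has a real gap. You propose to evaluate both traces on ``the identity of one conveniently chosen simple object $m\boxtimes n$ per component.'' The problem is that for $m\in\Irr(\cM)$, $n\in\Irr(\cN)$ the object $m\boxtimes n$ is in general \emph{not} simple in $\cM\boxtimes_\cC\cN$, and its simple summands need not lie in a single indecomposable $\cD$-module component; indeed there may be \emph{no} simple object of the form $m\boxtimes n$ at all. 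Concretely, take $\cC=\Hilb_G$ for a nontrivial finite group $G$, let $\cM=\cN=\Hilb$ with the fiber action, and $\cD=\Hilb$. Then by the ladder model $\End(m\boxtimes n)\cong\bbC[G]$, the Cauchy completion is $\Rep(G)$, the indecomposable $\cD$-module components are indexed by $\Irr(G)$, and the unique ladder generator $m\boxtimes n$ spreads over \emph{all} of them. Evaluating both traces on $\id_{m\boxtimes n}$ then gives only the single linear relation $\sum_\pi \lambda_\pi t_\pi=\sum_\pi t_\pi$ among the unknown component scalars $\lambda_\pi$, which does not force $\lambda_\pi=1$ (already for $G=\bbZ/2$ it has a one-parameter family of solutions).

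What is missing is exactly the off-diagonal vanishing statement: one must check that \emph{both} traces annihilate the simple tensors $(f:m\to m\lhd a)\otimes(g:a\rhd n\to n)$ with $a\neq 1_{s(m)}$ --- on the ladder side this is the $\delta_{c=1_j}$ in Lemma \ref{lem:RelativeDeligneTrace}, and on the functor side it follows from $\Tr^\cN$ being a left $\cC$-module trace, as in Corollary \ref{cor:RightCActionIsometric}. Equivalently, one must compare the traces on general endomorphisms of $m\boxtimes n$ (in particular on the minimal projections cutting out the simple summands), not just on identities. Once you do this, linearity plus simplicity of $m,n$ reduces the remaining case $a=1_{s(m)}$ to $f=\id_m$, $g=\id_n$, and your bubble/Yoneda computation finishes the job --- but at that point you have reproduced the paper's argument and the reduction via Corollary \ref{cor:ExistsModuleTrace} buys nothing. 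Your closing suggestion to instead quote Proposition \ref{prop:RenormalizedTraceOnFun} and Corollary \ref{cor:RightCActionIsometric} only covers the case $\cM=\cC$ (both factors being free), so it does not by itself handle general $\cM$ either.
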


\begin{proof}
It remains to to prove the above unitary equivalence is isometric.
It is enough to show that it is trace-preserving for endomorphisms of objects of the form $m\boxtimes n$, since all objects are direct sums of summands of objects in this form. 
We can assume $m,n$ are simple with $s(m) = t(n) = 1_i$.
Let 
$$
(f:m \to m \lhd a)\otimes (g:a\rhd n \to n)
\in
\bigoplus_{c\in\Irr(\cC)} \cM(m\to m\lhd c)\otimes \cN(c\rhd n\to n)
$$
be a `simple tensor' morphism in the ladder category.
Note we must have $a \in \cC_i$ for this to be nonzero.
By the formula in Lemma \ref{lem:RelativeDeligneTrace}, we have
\[
\Tr^{\cM\boxtimes_\cC \cN}_{m\boxtimes n}(f\otimes g) = \sum_i \Tr^\cM(f^{1_{t(a)}}_i) \Tr^\cN(g^{1_{t(a)}}_i) d_{t(a)}^{-1} = \delta_{a=1_i} d_{1_i}^{-1} \Tr^\cM(f)\Tr^\cN(g).
\]
On the other hand, we can compute the trace in
$\Fun^\dag_\cC({}_\cC\cM^{\op}\to {}_\cC\cN)$ of
the block matrix associated to our `simple tensor' morphism $f\otimes g$.
Clearly the trace only sees entries in the block diagonal of this matrix corresponding to the blocks where $b=c$.
Analyzing \eqref{eq:NaturalTransformationFromSimpleTensor},
since $\Tr^\cN$ is a left $\cC$-module trace, the trace of the image of $f\otimes g$ will only be non-zero if $a=1_i$;
the reasoning here is similar to that used in the proof of Corollary \ref{cor:RightCActionIsometric}.
In this case, since $m,n$ were assumed simple, $f,g$ are scalars, so it suffices to consider the case $f=\id_m$ and $g=\id_n$.
By the formula in \eqref{eq:RenormalizedTraceOnFun}, we have
\begin{align*}
\Tr^{\Fun^\dag_\cC({}_\cC\cM^{\op} \to {}_\cC\cN)}_{(-\rhd n)\circ (-\rhd m^{\op})^*}(\id) 
&= 
\frac{d_{1_i}}{\dim(\cC_i)} 
\sum_{p\in\Irr(\cM)} 
d_p \Tr^{\cN}_{[p,m]_\cC\rhd n}(\id_{[p,m]_\cC\rhd n})
\\&=
\frac{d_{1_i}}{\dim(\cC_i)} 
\sum_{p\in\Irr(\cM)} 
d_p \sum_{c \in \Irr(\cC)} \dim(\cM(p \to m\lhd c^\vee)) \Tr^\cN_{c\rhd n}(\id_{c\rhd n})
\\&=
\frac{d_{1_i}}{\dim(\cC_i)} 
\sum_{c \in \Irr(\cC_i)} \Tr^\cM_{m\lhd c^\vee}(\id_{m\lhd c^\vee}) \Tr^\cN_{c\rhd n}(\id_{c\rhd n})
\\&\underset{\text{\eqref{eq:PopBubbles}}}{=}
\frac{d_{1_i}}{\dim(\cC_i)} 
\sum_{c \in \Irr(\cC_i)} d_m \frac{d_c}{d_{1_i}} d_n \frac{d_c}{d_{1_i}}
\\&=
\frac{1}{d_{1_i}}d_md_n 
\\&= 
\Tr^{\cM \boxtimes_\cC \cN}_{m\boxtimes n}(\id_{m\boxtimes n}).
\end{align*}
The result follows.
\end{proof}

In what follows, we briefly describe the universal property that $\cM \boxtimes_{\cC} \cN$ enjoys.

\begin{defn}
For a left $\cC$-module $\cM_{\cC}$ and right $\cC$-module $_{\cC}\cN$, a $\cC$-balanced $\dag$-functor into a 2-Hilbert space $\cP$ consists of a functor $F \in \Fun^\dag(\cM \boxtimes \cN \to \cP)$ together with a unitary natural isomorphism
$$
\tikzmath[scale=.8]{
\draw[thick] (-.5,0) node[below]{$\scriptstyle\cM$} -- (-.5,1);
\draw[thick] (0,0) node[below]{$\scriptstyle\cC$} arc (0:90:.5);
\draw[thick] (.5,0) node[below]{$\scriptstyle\cN$} -- (.5,1);
\draw[thick] (0,1) -- (0,2) node[above]{$\scriptstyle\cP$};
\roundNbox{fill=white}{(0,1)}{.3}{.4}{.4}{$\scriptstyle F$};
}
\;\xrightarrow{F^\alpha}\;
\tikzmath[scale=.8]{
\draw[thick] (-.5,0) node[below]{$\scriptstyle\cM$} -- (-.5,1);
\draw[thick] (0,0) node[below]{$\scriptstyle\cC$} arc (180:90:.5);
\draw[thick] (.5,0) node[below]{$\scriptstyle\cN$} -- (.5,1);
\draw[thick] (0,1) -- (0,2) node[above]{$\scriptstyle\cP$};
\roundNbox{fill=white}{(0,1)}{.3}{.4}{.4}{$\scriptstyle F$};
}
$$
satisfying the obvious associativity and unitality axioms.
A $\cC$-balanced natural transformation $\eta \colon F \Rightarrow G$ between $\cC$-balanced functors is a natural transformation between the underlying functors which intertwines the actions of the unitary constraints $F^\alpha$ and $G^\alpha$. 
We denote the unitary category of $\cC$-balanced functors and $\cC$-balanced natural transformations by $\mathsf{Bal}_\cC(\cM,\cN;\cP)$.

It is easy to see that $\mathsf{Bal}$ is functorial in $\cM$, $\cN$, and $\cP$.
\end{defn}

\begin{rem}
    There exists a universal $\cC$-balanced functor 
    $$\iota \in \mathsf{Bal}_\cC(\cM,\cN;\cM \boxtimes_\cC \cN)$$
    such that for every 2-Hilbert space $\cP$, precomposition with $\iota$ is an equivalence of unitary categories
    $$
    \Fun^\dag(\cM \boxtimes_\cC \cN \to \cP) \xrightarrow{\iota^*} \mathsf{Bal}_\cC(\cM,\cN;\cP).
    $$
We note that this universal property only determines $\cM \boxtimes_\cC \cN$ up to unitary equivalence.
Once a unitary 2-categorical Yoneda lemma has been established, we expect that $\cM \boxtimes_\cC \cN$ can be determined up to isometric equivalence as the object representing the functor
$$
\mathsf{Bal}_\cC(\cM,\cN; -) \colon 2\Hilb \to 2\Hilb.
$$ 
\end{rem}

\begin{rem}\label{rem:unitary-assoc-constraint}
For H*-multifusion categories $\cC$ and $\cD$,
one may easily generalize the notion of a balanced functor from a left $\cC$-module $\cM$, a $\cC$-$\cD$ bimodule $\cN$, and a right $\cD$-module $\cO$.
For a 2-Hilbert space $\cP$, this yields a unitary category
$$\mathsf{Bal}_{\cC,\cD}(\cM,\cN,\cO;\cP).$$
It is then easy to show that both $\cM \boxtimes_\cC (\cN \boxtimes_\cD \cO)$ and $(\cM \boxtimes_\cC \cN) \boxtimes_\cD \cO$ represent the functor $\mathsf{Bal}_{\cC,\cD}(\cM,\cN,\cO; - )$. This provides a unitary associativity constraint, which we conjecture to be isometric.

In general, one can define balanced functors for arbitrarily many bimodules, which will provide unitary higher associativity constraints.
\end{rem}

%%%%%%%%%%%%%%%%%%%%%%%%%%%%%%%%%%%%%%%%%%%%%%%%%%%%%%%%%%%%%%%%%%%%%%%%%%%%%%%%%%
\subsection{\texorpdfstring{$\rmH^*$}{H*}-algebras}
\label{sec:H*Algs}
For this section, $(\cC,\vee,\psi)$ is an $\rmH^*$-multifusion category.
We now generalize the notion of $\rmH^*$-algebra to a $\rmC^*$-Frobenius algebra which is separable (multiplication splits as a bimodule map) and standard (a compatiblity condition with $\vee$).
The similar notion of special standard Q-system or standard separable $\rmC^*$-Frobenius algebra (SSFA) was studied in 
\cite{MR4419534} and \cite[\S3.1]{MR4482713}.
In both of those articles, the separator was required to be the adjoint of multiplication.
Moreover, in the former article, duality was completely ignored, and in the latter article, only the standard spherical UAF was analyzed.
To distinguish the setting we study here, we will call our objects of study $\rmH^*$-algebras.
(Notice also the notion of $\rmH^*$-algebra agrees exactly with that of a standard Q-system from \cite{MR3308880}, so we avoid using the term `Q-system' here to avoid confusion.)

\begin{defn}
An $\rmH^*$-\emph{algebra} in a 
$\rmH^*$-multifusion category $(\cC,\vee,\psi)$ 
consists of an (associative, unital) algebra $(A,\mu,\iota)$ which satisfies certain axioms.
We represent $A$, $\mu$, $\iota$ and their adjoints $\mu^\dag$, $\iota^\dag$ graphically as follows:
$$
\tikzmath{
\fill[\BColor, rounded corners=5pt ] (0,0) rectangle (.6,.6);
\draw[\QsColor,thick] (.3,0) -- (.3,.6);
}= A
\qquad
\tikzmath{
\fill[\BColor, rounded corners=5pt] (-.3,0) rectangle (.9,.6);
\draw[\QsColor,thick] (0,0) arc (180:0:.3cm);
\draw[\QsColor,thick] (.3,.3) -- (.3,.6);
}=\mu
\qquad
\tikzmath{
\fill[\BColor, rounded corners=5pt] (-.3,0) rectangle (.9,-.6);
\draw[\QsColor,thick] (0,0) arc (-180:0:.3cm);
\draw[\QsColor,thick] (.3,-.3) -- (.3,-.6);
}=\mu^\dag
\qquad
\tikzmath{
\fill[\BColor, rounded corners=5pt] (-.3,-.3) rectangle (.3,.3);
\draw[\QsColor,thick] (0,0) -- (0,.3);
\filldraw[\QsColor] (0,0) circle (.05cm);
}=\iota
\qquad
\tikzmath{
\fill[\BColor, rounded corners=5pt] (-.3,-.3) rectangle (.3,.3);
\draw[\QsColor,thick] (0,0) -- (0,-.3);
\filldraw[\QsColor] (0,0) circle (.05cm);
}=\iota^\dag.
$$
The multiplication and unit satisfy the following axioms:
\begin{enumerate}[label=($\rmH^*$\arabic*)]
\item
\label{H:Frobenius}
($\rmC^*$-Frobenius)
$\mu^\dag$ is an $A$-$A$ bimodule map, i.e.,
$
\tikzmath{
\fill[\BColor, rounded corners=5pt] (-.3,-.6) rectangle (1.5,.6);
\draw[\QsColor,thick] (0,-.6) -- (0,0) arc (180:0:.3cm) arc (-180:0:.3cm) -- (1.2,.6);
\draw[\QsColor,thick] (.3,.3) -- (.3,.6);
\draw[\QsColor,thick] (.9,-.3) -- (.9,-.6);
}
=
\tikzmath{
\fill[\BColor, rounded corners=5pt] (-.3,0) rectangle (.9,1.2);
\draw[\QsColor,thick] (0,0) arc (180:0:.3cm);
\draw[\QsColor,thick] (0,1.2) arc (-180:0:.3cm);
\draw[\QsColor,thick] (.3,.3) -- (.3,.9);
}
=
\tikzmath{
\fill[\BColor, rounded corners=5pt] (-.3,.6) rectangle (1.5,-.6);
\draw[\QsColor,thick] (0,.6) -- (0,0) arc (-180:0:.3cm) arc (180:0:.3cm) -- (1.2,-.6);
\draw[\QsColor,thick] (.3,-.3) -- (.3,-.6);
\draw[\QsColor,thick] (.9,.3) -- (.9,.6);
}
$\,.
\item\label{H:Separable} (Separable) The endomorphism $\mu\mu^\dag$ of $A$ is invertible.
\item\label{H:Standard} (Standard)
For all endomorphisms $f$ of $A$, 
$
\psi\left(
\tikzmath{
\draw[\QsColor,thick] (.4,.7) -- (.4,1);
\filldraw (.4,1) circle (.05cm);
\draw[\QsColor,thick] (.8,.3) arc (0:180:.4cm) -- +(0,-.6) arc (-180:-90:.4) coordinate (x1) arc (-90:0:.4) -- +(0,.6);
\roundNbox{fill=white}{(0,0)}{.3}{0}{0}{$f$}
\draw[\QsColor,thick] (x1) -- +(0,-.3) coordinate (x2);
\filldraw (x2) circle (.05cm);
}
\right)
=
\psi\left(
\tikzmath{
\draw[\QsColor,thick] (.4,.7) -- (.4,1);
\filldraw (.4,1) circle (.05cm);
\draw[\QsColor,thick] (.8,.3) arc (0:180:.4cm) -- +(0,-.6) arc (-180:-90:.4) coordinate (x1) arc (-90:0:.4) -- +(0,.6);
\roundNbox{fill=white}{(0.8,0)}{.3}{0}{0}{$f$}
\draw[\QsColor,thick] (x1) -- +(0,-.3) coordinate (x2);
\filldraw (x2) circle (.05cm);
}
\right).
$
\end{enumerate}
An $\rmH^*$-algebra is called a \emph{standard special Q-system} or \emph{standard separable $\rmC^*$-Frobenius algebra} if $\mu\mu^\dag=\id_A$.
\end{defn}

\begin{facts}
We now list some basic facts about $\rmH^*$-algebras.
\begin{enumerate}[label=(A\arabic*)]
\item
The $\rmH^*$-algebras in $\Hilb$ equipped with its standard UDF and trace $\Tr_H^{\rmB\Hilb}:=\Tr_{B(H)}$ are exactly $\rmH^*$-algebras in the sense of Definition \ref{defn:1H*Alg}.
First, by the GNS construction,
every such $\rmH^*$-algebra $(A,\Tr_A)$ gives a Hilbert space $L^2(A,\Tr_A)$
which is $A$ again with inner product
$$
\langle a| b\rangle_{L^2(A,\Tr_A)} := \Tr(a^*b).
$$
It is straightforward to verify that $L^2(A,\Tr_A)$ with its multiplication and unit is an $\rmH^*$-algebra.
Conversely, given an $\rmH^*$-algebra $(A,\mu,\iota)\in\Hilb$,
the \emph{realization} $|A|:=\Hom(\bbC\to A)$ \cite{MR4419534} gives a unitary algebra with a unitary trace given by
$$
\Tr_{|A|} := 
\Psi_\star\left(\,
\tikzmath{
\draw[\QsColor,thick] (0,.3) -- (0,.6);
\filldraw (0,.6) circle (.05cm);
\roundNbox{fill=white}{(0,0)}{.3}{0}{0}{$f$}
}
\,\right)
=
\Tr_{\bbC}\left(\,
\tikzmath{
\draw[\QsColor,thick] (0,.3) -- (0,.6);
\filldraw (0,.6) circle (.05cm);
\roundNbox{fill=white}{(0,0)}{.3}{0}{0}{$f$}
}
\,\right),
$$
which is clearly tracial by \ref{H:Standard} and faithful.
One verifies these two constructions are mutually inverse.
\item
\label{rem:HStarAlgsQSystems}
Every $\rmH^*$-algebra is (non-unitarily) equivalent to a standard Q-system.
Indeed, for $\rm H^*$-algebra $(A,\mu,\iota)$, 
setting $x:=(\mu\mu^\dag)^{\frac{1}{2}}$, %which is an $A$-$A$ bimodule map (see Definition \ref{defn:HStarAlgC} below),
$(A,x^{-1}\mu, x\iota)$ is a standard Q-system,
where the equivalence is given by $x$.
\item
By combining \ref{rem:HStarAlgsQSystems} and \cite[Facts~3.4]{MR4419534},
\begin{itemize}
\item 
$\rmC^*$-Frobenius follows automatically from unitality, associativity, and separability.
\item 
Associativity follows automatically from $\rmC^*$-Frobenius and separable.
\end{itemize}
\item
\label{rem:RightInvIffBubbleInv}
Given an algebra $A$ satisfying \ref{H:Frobenius}, then \ref{H:Separable} is equivalent to ordinary separability.
Indeed, \ref{H:Frobenius} implies that the category of $A$-$A$ bimodules ${}_A\cC_A$ is unitary and semisimple, and thus equivalent to $\Hilb^{\oplus n}$ for some finite $n$.
In $\Hilb^{\oplus n}$, a morphism $f:x\to y$ is right invertible if and only if $ff^\dag$ is invertible.
\item 
The standardness condition \ref{H:Standard} holds if and only if
for all $c\in\cC$, $f:c\Rightarrow A$ and $g:c^\vee\Rightarrow A$,
$$
\psi\left(
\tikzmath{
\draw[\QsColor,thick] (.4,.7) -- (.4,1);
\filldraw (.4,1) circle (.05cm);
\draw[thick, blue] (0,-.3) node[left,yshift=-.2cm]{$\scriptstyle c$} arc (-180:0:.4cm) node[right,yshift=-.2cm]{$\scriptstyle c^\vee$};
\draw[\QsColor,thick] (.8,.3) arc (0:180:.4cm);
\roundNbox{fill=white}{(0,0)}{.3}{0}{0}{$f$}
\roundNbox{fill=white}{(.8,0)}{.3}{0}{0}{$g$}
}
\right)
=
\psi\left(
\tikzmath{
\draw[\QsColor,thick] (.4,.7) -- (.4,1);
\filldraw (.4,1) circle (.05cm);
\draw[thick, blue] (0,-.3) node[left,yshift=-.2cm]{$\scriptstyle c^\vee$} arc (-180:0:.4cm) node[right,yshift=-.2cm]{$\scriptstyle c$};
\draw[\QsColor,thick] (.8,.3) arc (0:180:.4cm);
\roundNbox{fill=white}{(0,0)}{.3}{0}{0}{$g$}
\roundNbox{fill=white}{(.8,0)}{.3}{0}{0}{$f$}
}
\right),
$$
which is equivalent to either of the following morphisms (and thus both) being unitary:
$$
\tikzmath{
\draw[\QsColor,thick] (0,-.3) --node[left]{$\scriptstyle A$} (0,.3) arc (180:0:.3cm) arc (-180:0:.3cm) --node[right]{$\scriptstyle A^\vee$} (1.2,1.2);
\draw[\QsColor,thick] (.3,.9) -- (.3,.6);
\filldraw (.3,.9) circle (.05cm);
}
\qquad\qquad\text{or}\qquad\qquad
\tikzmath{
\draw[\QsColor,thick] (0,-.3) --node[right]{$\scriptstyle A$} (0,.3) arc (0:180:.3cm) arc (0:-180:.3cm) --node[left]{$\scriptstyle A^\vee$} (-1.2,1.2);
\draw[\QsColor,thick] (-.3,.9) -- (-.3,.6);
\filldraw (-.3,.9) circle (.05cm);
}
\,.
$$
Observe that the above morphisms are unitary if and only if they are equal; the dagger of the first map is the inverse of the second. 
It follows immediately that \ref{H:Standard} implies that without loss of generality, the evaluation and coevaluation morphisms for $A$ assigned by the UDF are given by $\ev_A=\iota^\dag\circ \mu$ and $\coev_A=\mu^\dag\circ \iota$.
\end{enumerate}
\end{facts}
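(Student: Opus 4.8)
The plan is to verify the five facts (A1)--(A5) in order, drawing on the duality, unitary adjunction, and trace structure already set up for $\rmH^*$-multifusion categories; several of the items will reduce to one another. For (A1) I would exhibit the two assignments and check they are mutually inverse. Starting from an $\rmH^*$-algebra $(A,\Tr_A)$ in the sense of Definition~\ref{defn:1H*Alg}, the finite-dimensionality of $A$ makes multiplication bounded on $L^2(A,\Tr_A)$, so $(A,\mu,\iota)$ is an algebra object in $\Hilb$; the $\rmC^*$-Frobenius axiom~\ref{H:Frobenius} is the GNS reformulation of $\Tr_A(ab)=\Tr_A(ba)$, separability~\ref{H:Separable} follows from faithfulness of $\Tr_A$, and standardness~\ref{H:Standard} is again cyclicity of $\Tr_A$. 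Conversely, from $(A,\mu,\iota)\in\Hilb$ I would form the realization $|A|=\Hom(\bbC\to A)$ with its convolution product and equip it with $\Tr_{|A|}(f):=\Psi_\star(\iota^\dag\circ f)$; traciality is exactly~\ref{H:Standard} and faithfulness combines faithfulness of $\Psi_\star$ with~\ref{H:Separable}. The remaining work is the routine bookkeeping that these two passages invert one another.

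For (A2) I would set $x:=(\mu\mu^\dag)^{1/2}$. By~\ref{H:Frobenius} the endomorphism $\mu\mu^\dag$ of $A$ is a positive $A$-$A$ bimodule map, and by~\ref{H:Separable} it is invertible, so $x$ is a positive invertible bimodule map commuting with $\mu\mu^\dag$. A direct computation then gives $(x^{-1}\mu)(x^{-1}\mu)^\dag=x^{-1}(\mu\mu^\dag)x^{-1}=\id_A$, so $(A,x^{-1}\mu,x\iota)$ is a standard Q-system and conjugation by $x$ is the asserted non-unitary equivalence. Fact (A3) then follows at once: having reduced to standard Q-systems via (A2), I would simply invoke \cite[Facts 3.4]{MR4419534}. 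For (A4) I would use that~\ref{H:Frobenius} makes the bimodule category ${}_A\cC_A$ unitary and semisimple, hence equivalent to some $\Hilb^{\oplus n}$; in $\Hilb^{\oplus n}$ a morphism is right invertible exactly when composing it with its adjoint is invertible, which applied to $\mu$ is precisely the equivalence of~\ref{H:Separable} with ordinary separability.

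The delicate item is (A5), and I expect it to be the main obstacle, since it is where standardness is genuinely used rather than formally rearranged. First I would probe the endomorphism $f$ of $A$ in~\ref{H:Standard} through the duality, resolving it over $c\in\Irr(\cC)$ into pieces $c\Rightarrow A$ and $c^\vee\Rightarrow A$; this converts the standardness identity into the displayed symmetry in $f$ and $g$ after bending the outer strings with $\ev$ and $\coev$. Recognizing both sides as inner products, I would conclude that the identity holds for all $f,g$ precisely when the two bent morphisms are mutually inverse, equivalently unitary, since each is the dagger of the other. Unitarity then pins down the duality data up to the stated normalization, yielding $\ev_A=\iota^\dag\circ\mu$ and $\coev_A=\mu^\dag\circ\iota$. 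The real difficulty throughout is diagrammatic rather than conceptual: one must keep the cups, caps, and UDF-induced rotations consistent so that the string-sliding manipulations underlying (A5) are legitimate.
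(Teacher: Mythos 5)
Your proposal follows essentially the same route as the paper for all five facts: the same GNS/realization pair with the trace $\Tr_{|A|}=\Psi_\star(\iota^\dag\circ-)$ for (A1), the same renormalization $x=(\mu\mu^\dag)^{1/2}$ for (A2), the same citation of \cite[Facts~3.4]{MR4419534} for (A3), the identical ${}_A\cC_A\simeq\Hilb^{\oplus n}$ argument for (A4), and the same reduction of standardness to unitarity (equivalently equality) of the two bent morphisms for (A5). One small correction: in (A2) the algebra isomorphism $(A,\mu,\iota)\to(A,x^{-1}\mu,x\iota)$ is the map $x$ itself (using that $x$ is an $A$-$A$ bimodule map, so $x\circ\mu = x^{-1}\mu\circ(x\otimes x)$), not conjugation by $x$.
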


\begin{ex}
\label{ex:separabledualyieldsH*algebra}
 Suppose $c\in \cC$, 
and recall that $\id_c\otimes \ev_c\ev_c^\dag\otimes \id_{c^\vee}$ is invertible in $\End(c\otimes c^\vee)$ by \cite[Cor.~1.19]{MR2298822}, see also \cite[Facts 3.5(Z3)]{MR4419534}.
Then
$c\otimes c^\vee$ has a canonical structure of an $\rmH^*$-algebra
with multiplication and unit given by
$$
\mu:=\id_c\otimes \ev_c\otimes \id_{c^\vee}
\qquad\text{and}\qquad 
\iota:= \coev_c.
$$
The only interesting thing to verify is  \ref{H:Standard}, which follows as $\vee$ is a UDF.
Indeed, $c\otimes c^\vee$ is canonically symmetrically self-dual, so identifying $(c\otimes c^\vee)^\vee\cong c\otimes c^\vee$,
the morphisms in the equivalent condition listed under \ref{H:Standard} are both identities.
(Here, we use that $\coev_{c^\vee}=\ev^\dag_c$ and $\ev_{c^\vee}=\coev_c^\dag$.)
\end{ex}

A particular case of the above example gives a more conceptual approach to the construction in \cite[Appendix A]{MR3933035} that the internal endomorphisms of any object in unitary module category for a unitary tensor category can be equipped with the structure of a Q-system.

\begin{sub-ex}
\label{ex:H*AlgOnEnd(m)}
Given a right unitary $\cC$-module category $\cM$ equipped with a unitary $\cC$-module trace $\Tr^\cM$,
we get a `linking' unitary multifusion category
$$
\cL(\cM_{\cC}) \coloneqq
\begin{pmatrix}
\cC & \cM^{\op}
\\
\cM & \End(\cM_\cC)
\end{pmatrix}.
$$
The tensor products in this category are given by
$$
\begin{pmatrix}
0 & n^{\op} 
\\
0 & 0
\end{pmatrix}
\otimes
\begin{pmatrix}
0 & 0
\\
m & 0
\end{pmatrix}
:=
\begin{pmatrix}
[n,m] & 0 \\
0 & 0
\end{pmatrix} 
\qquad\text{and}\qquad
\begin{pmatrix}
0 & 0
\\
m & 0
\end{pmatrix}
\otimes
\begin{pmatrix}
0 & n^{\op}
\\
0 & 0
\end{pmatrix}
:=
\begin{pmatrix}
0 & 0 \\
0 &  m \lhd [n, -]
\end{pmatrix}
$$
where the internal hom $[n,m]$ is taken under the unitary adjunction
$$
\cM(n \lhd c \to m) \cong \cC(c \to [n,m]),
$$
i.e., $[n,-]$ is the unitary adjoint of $n \lhd -$.
Observe that $\cL(\cM_\cC)$ is equivalent to the unitary multifusion category 
$$
\End_\cC(\cC_\cC\boxplus \cM_{\cC})
=
\begin{pmatrix}
\End(\cC_\cC) & \Hom(\cM_\cC \to \cC_\cC) \\
\Hom(\cC_\cC \to \cM_\cC)
& \End(\cM_\cC)
\end{pmatrix},
$$
where
$m\in\cM$ corresponds to $m\lhd -\in \Hom(\cC_\cC \to \cM_\cC)$
and
$n^{\op}\in\cM^{\op}$ corresponds to $[n,-]\in \Hom(\cM_\cC \to \cC_\cC)$.
Unitary adjunction gives a canonical UDF on this unitary multifusion category such that the inclusion $\cC\hookrightarrow \cL(\cM_\cC)$ is unitary pivotal.
Unpacking,
the dual for $m$ is $m^{\op}$.
The evaluation map $\ev_m:m\otimes m^{\op} \Rightarrow \id_\cM$ is the mate of the identity natural transformation under the unitary adjunction 
$$
\cM(m \lhd [m,-] \to -) \cong \cC([m,-] \to [m,-]).
$$
The map $\coev_m:1_\cC \to m^{\op} \otimes m$ is given by the mate of $\id_m$ under the unitary adjunction
\[ \cM(m,m) \cong \cC(1_\cC,[m,m]). \]
Note that the conventions for duals are opposite to our usual conventions as composition of functors is read right-to-left, which is opposite the left-to-right convention for usual tensor products.

We conclude that by the above Example \ref{ex:separabledualyieldsH*algebra}, for any $m\in \cM$, the unitary internal end $\underline{\End}_\cC(m)=[m,m]=m^{\op}\otimes m$ has a canonical $\rmH^*$-algebra structure.
\end{sub-ex}

\begin{defn}
\label{defn:HStarAlgC}
The $\rmH^*$-algebras in $\cC$ are the objects in a unitary 2-category $\HstarAlg(\cC)$ whose 1-morphisms are unital, associative bimodules and whose 2-morphisms are intertwiners.
The structure of this 2-category is closely related to the \emph{Q-system completion} $\QSys(\rmB\cC)$ studied in \cite{MR4419534} and the category $\mathsf{Bimod}(\cC)$ studied in \cite{MR4482713}; the main difference is that we must now correct for the bubble term $\mu\mu^\dag$ which no longer pops.
We explain where these differences occur below.

A \emph{bimodule} for two $\rmH^*$-algebras $A,B$ in $\cC$ is an object $M\in\cC$ together with maps $\lambda:A\otimes M \to M$ and $\rho:M\otimes B\to M$ which are unital and associative.
As above, we represent $\lambda,\rho$ by trivalent vertices whose vertical reflections are their adjoints.
$$
\tikzmath{
\begin{scope}
\clip[rounded corners = 5pt] (-.9,-.6) rectangle (.3,.5);
\filldraw[\BColor] (-.9,-.6) rectangle (0,.5);
\filldraw[\BColor] (0,-.6) rectangle (.3,.5);
\end{scope}
\draw[\XColor,thick] (0,-.6) -- (0,.5);
\draw[\AsColor,thick] (-.6,-.6) -- (-.6,-.4) arc (180:90:.6cm);
\draw[\AsColor,thick] (-.3,-.6) -- (-.3,-.4) arc (180:90:.3cm);
}
=
\tikzmath{
\begin{scope}
\clip[rounded corners = 5pt] (-.9,-.6) rectangle (.3,.5);
\filldraw[\BColor] (-.9,-.6) rectangle (0,.5);
\filldraw[\BColor] (0,-.6) rectangle (.3,.5);
\end{scope}
\draw[\XColor,thick] (0,-.6) -- (0,.5);
\draw[\AsColor,thick] (-.4,-.2) arc (180:90:.4cm);
\draw[\AsColor,thick] (-.6,-.6) -- (-.6,-.4)  arc (180:0:.2cm) -- (-.2,-.6);
}
\qquad
\tikzmath{
\begin{scope}
\clip[rounded corners = 5pt] (-.3,-.6) rectangle (.9,.5);
\filldraw[\BColor] (-.3,-.6) rectangle (0,.5);
\filldraw[\BColor] (0,-.6) rectangle (.9,.5);
\end{scope}
\draw[\XColor,thick] (0,-.6) -- (0,.5);
\draw[\BsColor,thick] (.6,-.6) -- (.6,-.4) arc (0:90:.6cm);
\draw[\BsColor,thick] (.3,-.6) -- (.3,-.4) arc (0:90:.3cm);
}
=
\tikzmath{
\begin{scope}
\clip[rounded corners = 5pt] (-.3,-.6) rectangle (.9,.5);
\filldraw[\BColor] (-.3,-.6) rectangle (0,.5);
\filldraw[\BColor] (0,-.6) rectangle (.9,.5);
\end{scope}
\draw[\XColor,thick] (0,-.6) -- (0,.5);
\draw[\BsColor,thick] (.4,-.2) arc (0:90:.4cm);
\draw[\BsColor,thick] (.6,-.6) -- (.6,-.4)  arc (0:180:.2cm) -- (.2,-.6);
}
\qquad
\tikzmath{
\begin{scope}
\clip[rounded corners = 5pt] (-.7,-.5) rectangle (.7,.5);
\filldraw[\BColor] (-.7,-.5) rectangle (0,.5);
\filldraw[\BColor] (0,-.5) rectangle (.7,.5);
\end{scope}
\draw[\XColor,thick] (0,-.5) -- (0,.5);
\draw[\AsColor,thick] (-.4,-.5) -- (-.4,-.2) arc (180:90:.4cm);
\draw[\BsColor,thick] (.4,-.5) arc (0:90:.4cm);
}
=
\tikzmath{
\begin{scope}
\clip[rounded corners = 5pt] (-.7,-.5) rectangle (.7,.5);
\filldraw[\BColor] (-.7,-.5) rectangle (0,.5);
\filldraw[\BColor] (0,-.5) rectangle (.7,.5);
\end{scope}
\draw[\XColor,thick] (0,-.5) -- (0,.5);
\draw[\AsColor,thick] (-.4,-.5) arc (180:90:.4cm);
\draw[\BsColor,thick] (.4,-.5) -- (.4,-.2) arc (0:90:.4cm);
}
\qquad
\tikzmath{
\begin{scope}
\clip[rounded corners = 5pt] (-.7,-.5) rectangle (.3,.5);
\filldraw[\BColor] (-.7,-.5) rectangle (0,.5);
\filldraw[\BColor] (0,-.5) rectangle (.3,.5);
\end{scope}
\draw[\XColor,thick] (0,-.5) -- (0,.5);
\draw[\AsColor,thick] (-.4,-.2) arc (180:90:.4cm);
\filldraw[\AsColor] (-.4,-.2) circle (.05cm);
}
=
\tikzmath{
\begin{scope}
\clip[rounded corners = 5pt] (-.3,-.5) rectangle (.3,.5);
\filldraw[\BColor] (-.3,-.5) rectangle (0,.5);
\filldraw[\BColor] (0,-.5) rectangle (.3,.5);
\end{scope}
\draw[\XColor,thick] (0,-.5) -- (0,.5);
}
\qquad
\tikzmath{
\begin{scope}
\clip[rounded corners = 5pt] (-.3,-.5) rectangle (.7,.5);
\filldraw[\BColor] (-.3,-.5) rectangle (0,.5);
\filldraw[\BColor] (0,-.5) rectangle (.7,.5);
\end{scope}
\draw[\XColor,thick] (0,-.5) -- (0,.5);
\draw[\BsColor,thick] (.4,-.2) arc (0:90:.4cm);
\filldraw[\BsColor] (.4,-.2) circle (.05cm);
}
=
\tikzmath{
\begin{scope}
\clip[rounded corners = 5pt] (-.3,-.5) rectangle (.3,.5);
\filldraw[\BColor] (-.3,-.5) rectangle (0,.5);
\filldraw[\BColor] (0,-.5) rectangle (.3,.5);
\end{scope}
\draw[\XColor,thick] (0,-.5) -- (0,.5);
}
$$
It follows that $M$ is separable and satisfies the $\rmC^*$-Frobenius axiom by \cite[Facts~3.16]{MR4419534}.
$$
\tikzmath{
\begin{scope}
\clip[rounded corners = 5pt] (-.5,-.5) rectangle (.3,.5);
\filldraw[\BColor] (-.6,-.5) rectangle (0,.5);
\filldraw[\BColor] (0,-.5) rectangle (.3,.5);
\end{scope}
\draw[\XColor,thick] (0,-.5) -- (0,.5);
\draw[\AsColor, thick] (0,-.3) arc (270:90:.3cm);
}
\,,\,
\tikzmath{
\begin{scope}
\clip[rounded corners = 5pt] (-.3,-.5) rectangle (.5,.5);
\filldraw[\BColor] (-.3,-.5) rectangle (0,.5);
\filldraw[\BColor] (0,-.5) rectangle (.6,.5);
\end{scope}
\draw[\XColor,thick] (0,-.5) -- (0,.5);
\draw[\BsColor, thick] (0,-.3) arc (-90:90:.3cm);
}
\in\End(M)^\times
\qquad
\tikzmath{
\begin{scope}
\clip[rounded corners = 5pt] (-1.3,-.5) rectangle (.3,.8);
\filldraw[\BColor] (-1.3,-.5) rectangle (0,.8);
\filldraw[\BColor] (0,-.5) rectangle (.3,.8);
\end{scope}
\draw[\XColor,thick] (0,-.5) -- (0,.8);
\draw[\AsColor, thick] (-1,-.5) -- (-1,.2) arc (180:0:.3cm) arc (180:270:.4cm);
\draw[\AsColor, thick] (-.7,.5) -- (-.7,.8);
}
=
\tikzmath{
\begin{scope}
\clip[rounded corners = 5pt] (-.7,-.2) rectangle (.3,1.1);
\filldraw[\BColor] (-.7,-.2) rectangle (0,1.1);
\filldraw[\BColor] (0,-.2) rectangle (.3,1.1);
\end{scope}
\draw[\XColor,thick] (0,-.2) -- (0,1.1);
\draw[\AsColor, thick] (-.4,-.2) arc (180:90:.4cm);
\draw[\AsColor, thick] (-.4,1.1) arc (180:270:.4cm);
}
=
\tikzmath{
\begin{scope}
\clip[rounded corners = 5pt] (-1.3,-.8) rectangle (.3,.5);
\filldraw[\BColor] (-1.3,-.8) rectangle (0,.5);
\filldraw[\BColor] (0,-.8) rectangle (.3,.5);
\end{scope}
\draw[\XColor,thick] (0,-.8) -- (0,.5);
\draw[\AsColor, thick] (-1,.5) -- (-1,-.2) arc (-180:0:.3cm) arc (180:90:.4cm);
\draw[\AsColor, thick] (-.7,-.5) -- (-.7,-.8);
}
\qquad
\tikzmath{
\begin{scope}
\clip[rounded corners = 5pt] (-.3,-.5) rectangle (1.3,.8);
\filldraw[\BColor] (-.3,-.5) rectangle (0,.8);
\filldraw[\BColor] (0,-.5) rectangle (1.3,.8);
\end{scope}
\draw[\XColor,thick] (0,-.5) -- (0,.8);
\draw[\BsColor, thick] (1,-.5) -- (1,.2) arc (0:180:.3cm) arc (0:-90:.4cm);
\draw[\BsColor, thick] (.7,.5) -- (.7,.8);
}
=
\tikzmath{
\begin{scope}
\clip[rounded corners = 5pt] (-.3,-.2) rectangle (.7,1.1);
\filldraw[\BColor] (-.3,-.2) rectangle (0,1.1);
\filldraw[\BColor] (0,-.2) rectangle (.7,1.1);
\end{scope}
\draw[\XColor,thick] (0,-.2) -- (0,1.1);
\draw[\BsColor, thick] (.4,-.2) arc (0:90:.4cm);
\draw[\BsColor, thick] (.4,1.1) arc (0:-90:.4cm);
}
=
\tikzmath{
\begin{scope}
\clip[rounded corners = 5pt] (-.3,-.8) rectangle (1.3,.5);
\filldraw[\BColor] (-.3,-.8) rectangle (0,.5);
\filldraw[\BColor] (0,-.8) rectangle (1.3,.5);
\end{scope}
\draw[\XColor,thick] (0,-.8) -- (0,.5);
\draw[\BsColor, thick] (1,.5) -- (1,-.2) arc (0:-180:.3cm) arc (0:90:.4cm);
\draw[\BsColor, thick] (.7,-.5) -- (.7,-.8);
}
$$
An intertwiner $f: {}_AM_B\to {}_AN_B$ of $A$-$B$ bimodules is a map $f: M\to N$ such that
$$
\tikzmath{
\begin{scope}
\clip[rounded corners = 5pt] (-.7,-1.4) rectangle (.7,.5);
\filldraw[\BColor] (-.7,-.5) rectangle (0,.5);
\filldraw[\BColor] (0,-.5) rectangle (.7,.5);
\end{scope}
\draw[\XColor,thick] (0,-.5) -- (0,.5);
\draw[\AsColor,thick] (-.6,-1.4) -- (-.6,-.4) arc (180:90:.6cm);
\draw[\BsColor,thick] (.6,-1.4) -- (.6,-.7) arc (0:90:.6cm);
\roundNbox{fill=white}{(0,-.8)}{.3}{0}{0}{$f$}
\draw[\XColor,thick] (0,-1.1) -- (0,-1.4);
}
=
\tikzmath{
\begin{scope}
\clip[rounded corners = 5pt] (-.7,-.5) rectangle (.7,1.4);
\filldraw[\BColor] (-.7,-.5) rectangle (0,.5);
\filldraw[\BColor] (0,-.5) rectangle (.7,.5);
\end{scope}
\draw[\XColor,thick] (0,-.5) -- (0,.5);
\draw[\AsColor,thick] (-.4,-.5) -- (-.4,-.2) arc (180:90:.4cm);
\draw[\BsColor,thick] (.4,-.5) arc (0:90:.4cm);
\roundNbox{fill=white}{(0,.8)}{.3}{0}{0}{$f$}
\draw[\XColor,thick] (0,1.1) -- (0,1.4);
}
$$
which also implies $f^\dag:N\to M$ is an $A$-$B$ bimodule map by \cite[Facts~3.15]{MR4419534}.

Given an $A$-$B$ bimodule $M$ and a $B$-$C$ bimodule $N$,
1-composition in $\HstarAlg(\cC)$ is achieved by splitting the \emph{separability idempotent}
$$
p_{M,N}
=
\tikzmath{
\draw[thick, black] (-.3,-.6) -- (-.3,.6);
\draw[thick, black] (.3,-.6) -- (.3,.6);
\draw[thick, \BsColor] (-.3,0) -- (.3,0);
}
:=
\tikzmath{
\draw[thick, black] (-.5,-.7) -- (-.5,.7);
\draw[thick, black] (.5,-.7) -- (.5,.7);
\draw[thick, \BsColor] (0,0) circle (.1cm);
\node[\BsColor] at (.3,.2) {$\scriptstyle -1$};
\draw[thick, \BsColor] (0,.1) to[out=90,in=-135] (.5,.5);
\draw[thick, \BsColor] (0,-.1) to[out=-90,in=45] (-.5,-.5);
}
=
\tikzmath{
\draw[thick, black] (-.5,-.7) -- (-.5,.7);
\draw[thick, black] (.5,-.7) -- (.5,.7);
\draw[thick, \BsColor] (0,0) circle (.1cm);
\node[\BsColor] at (.3,.2) {$\scriptstyle -1$};
\draw[thick, \BsColor] (0,.1) to[out=90,in=-45] (-.5,.5);
\draw[thick, \BsColor] (0,-.1) to[out=-90,in=135] (.5,-.5);
}
\in 
\End_{\fX}(M\otimes N)
\qquad\text{where}\qquad
\tikzmath{
\draw[thick, \BsColor] (0,0) circle (.1cm);
\node[\BsColor] at (.2,.2) {$\scriptstyle r$};
\draw[thick, \BsColor] (0,.1) -- (0,.3);
\draw[thick, \BsColor] (0,-.1) -- (0,-.3);
}
:=
\left(
\tikzmath{
\draw[thick, \BsColor] (0,0) circle (.2cm);
\filldraw[thick, \BsColor] (0,.2) -- (0,.4);
\filldraw[thick, \BsColor] (0,-.2) -- (0,-.4);
}
\right)^{r},\, r\in\bbR.
$$
We call the image of this idempotent $M\boxtimes_A N$, which is unique up to a contractible choice.
The unitors $A\boxtimes_A M \to M$ and $M\boxtimes_B B \to M$ are given by
$$
\tikzmath{
\draw[thick, black] (0,-1.3) -- node[right]{$\scriptstyle M$} (0,.3);
\draw[thick, \AsColor] (-.6,-.8)  to[out=90,in=-135] node[above]{$\scriptstyle A$}(0,0);
\draw[thick,\AsColor] (-.6,-.9) circle (.1cm);
\node[\AsColor] at (-.3,-.7) {$\scriptstyle -\frac{1}{2}$};
\draw[thick,\AsColor] (-.6,-1) -- (-.6,-1.3);
}
\qquad\text{and}\qquad
\tikzmath{
\draw[thick, black] (0,-1.3) -- node[left]{$\scriptstyle M$} (0,.3);
\draw[thick, \BsColor] (.6,-.8)  to[out=90,in=-45] node[right]{$\scriptstyle B$}(0,0);
\draw[thick,\BsColor] (.6,-.9) circle (.1cm);
\node[\BsColor] at (.9,-.7) {$\scriptstyle -\frac{1}{2}$};
\draw[thick,\BsColor] (.6,-1) -- (.6,-1.3);
}\,.
$$

Given an $A$-$B$ bimodule $M$, its dual $M^\vee$ carries a $B$-$A$ bimodule structure given on the right hand side in the diagram below. 
Since $M$ is separable as both a left and right module, 
$\ev_M^\cC: M^\vee\otimes M \to 1_\cC$
and 
$\coev_M^\cC: 1_\cC\to M\otimes M^\vee$
both absorb the corresponding separability idemopotents, %respectively,
as in the right hand side of the diagrams below.
$$
\tikzmath{
\draw[thick, black] (-.6,-.8) node[below]{$\scriptstyle M^\vee$} -- (-.6,.3) arc(180:0:.3cm) --node[left,xshift=.1cm]{$\scriptstyle M$} (0,0) -- (0,-.3) arc(-180:0:.3cm) --node[right]{$\scriptstyle M^\vee$} (.6,1.5);
\draw[thick, \AsColor] (-.2,-.8) node[below]{$\scriptstyle A$} to[out=90,in=-135] (0,0);
\draw[thick, \BsColor] (0,0) to[out=45,in=-90] (.4,.4) arc(0:180:.7cm) -- (-1,-.8) node[below]{$\scriptstyle B$};
\filldraw[thick, \BsColor] (-.3,1.3) circle (.05cm) -- (-.3,1.1);
}
\qquad\qquad
\tikzmath{
\draw[thick, black] (0,.3) node[above]{$\scriptstyle M$} -- (0,-.3) arc(-180:0:.3cm) -- (.6,.3) node[above]{$\scriptstyle M^\vee$};
}
=
\tikzmath{
\draw[thick, black] (0,.3) node[above]{$\scriptstyle M$} -- (0,-.3) arc(-180:0:.3cm) -- (.6,.3) node[above]{$\scriptstyle M^\vee$};
\draw[thick, \BsColor] (0,0) -- (.6,0);
}
\qquad\qquad
\tikzmath{
\draw[thick, black] (0,-.3) node[below]{$\scriptstyle M^\vee$} -- (0,.3) arc(180:0:.3cm) -- (.6,-.3) node[below]{$\scriptstyle M$};
}
=
\tikzmath{
\draw[thick, black] (0,-.3) node[below]{$\scriptstyle M^\vee$} -- (0,.3) arc(180:0:.3cm) -- (.6,-.3) node[below]{$\scriptstyle M$};
\draw[thick, \AsColor] (0,0) -- (.6,0);
}
$$
We thus get a 1-parameter family of UAFs on $\HstarAlg(\cC)$
indexed by $\delta\in \bbR$
given by
\begin{equation}
\label{eq:1ParameterUAFOnH*Alg}
\left.
\begin{aligned}
\ev_M^\delta
&:=
\tikzmath{
\draw[thick,\XColor] (.4,-.5) -- (.4,.4) arc (180:0:.5 and .4) -- (1.4,-.5);
\node at (.4,-.8) {$\scriptstyle M^\vee$};
\node at (1.6,-.8) {$\scriptstyle M$};
\draw[thick,\BsColor] (1.4,0) to[out=45,in=-90] (1.8,.6);
\draw[thick,\AsColor] (1.4,.3) -- (.4,-.3);
\filldraw[thick,draw=\AsColor,fill=white] (.9,0) circle (.1);
\node[\AsColor] at (.9,.45) {$\scriptstyle -\delta-\frac{3}{2}$};
\draw[thick,\BsColor] (1.8,.7) circle (.1cm);
\draw[thick,\BsColor] (1.8,.8) -- (1.8,1.1);
\node[\BsColor] at (2,.9) {$\scriptstyle \delta$};
} 
=
\tikzmath{
\draw[thick,\XColor] (.4,-.5) -- (.4,.4) arc (180:0:.5 and .4) -- (1.4,-.5);
\node at (.4,-.8) {$\scriptstyle M^\vee$};
\node at (1.6,-.8) {$\scriptstyle M$};
\draw[thick,\BsColor] (1.4,0) to[out=45,in=-90] (1.8,.6);
\draw[thick,\AsColor] (1.4,.3) -- (.9,0);
\filldraw[thick,draw=\AsColor,fill=white] (.9,0) circle (.1);
\node[\AsColor] at (.9,.45) {$\scriptstyle -\delta-\frac{1}{2}$};
\draw[thick,\BsColor] (1.8,.7) circle (.1cm);
\draw[thick,\BsColor] (1.8,.8) -- (1.8,1.1);
\node[\BsColor] at (2,.9) {$\scriptstyle \delta$};
} 
\qquad\text{where}\qquad
\tikzmath{
\draw[thick,\AsColor] (0,.1) -- (0,.4);
\draw[thick,white] (0,-.1) -- (0,-.3);
\filldraw[thick,draw=\AsColor,fill=white] (0,0) circle (.1);
\filldraw[white] (0,-.3) circle (.05cm);
\node[\AsColor] at (.2,.2) {$\scriptstyle r$};
}
:=
\tikzmath{
\draw[thick,\AsColor] (0,.1) -- (0,.4);
\draw[thick,\AsColor] (0,-.1) -- (0,-.3);
\filldraw[thick,draw=\AsColor,fill=white] (0,0) circle (.1);
\filldraw[\AsColor] (0,-.3) circle (.05cm);
\node[\AsColor] at (.2,.2) {$\scriptstyle r$};
},\, r\in\bbR
\\
\coev_M^\delta
&:=
\tikzmath{
\draw[thick,\XColor] (.4,.5) -- (.4,-.4) arc (-180:0:.5 and .4) -- (1.4,.5);
\node at (.4,.8) {$\scriptstyle M$};
\node at (1.6,.8) {$\scriptstyle M^\vee$};
\draw[thick,\AsColor] (.4,0) to[out=-135,in=90] (0,-.6);
\draw[thick,\BsColor] (.4,.3) -- (1.4,-.3);
\filldraw[thick,draw=\BsColor,fill=white] (.9,0) circle (.1);
\node[\BsColor] at (1,.3) {$\scriptstyle -\delta-\frac{3}{2}$};
\draw[thick,\AsColor] (0,-.7) circle (.1cm);
\draw[thick,\AsColor] (0,-.8) -- (0,-1.1);
\node[\AsColor] at (.2,-.5) {$\scriptstyle \delta$};
}
=
\tikzmath{
\draw[thick,\XColor] (.4,.5) -- (.4,-.4) arc (-180:0:.5 and .4) -- (1.4,.5);
\node at (.4,.8) {$\scriptstyle M$};
\node at (1.6,.8) {$\scriptstyle M^\vee$};
\draw[thick,\AsColor] (.4,0) to[out=-135,in=90] (0,-.6);
\draw[thick,\BsColor] (.4,.3) -- (.9,0);
\filldraw[thick,draw=\BsColor,fill=white] (.9,0) circle (.1);
\node[\BsColor] at (1,.3) {$\scriptstyle -\delta-\frac{1}{2}$};
\draw[thick,\AsColor] (0,-.7) circle (.1cm);
\draw[thick,\AsColor] (0,-.8) -- (0,-1.1);
\node[\AsColor] at (.2,-.5) {$\scriptstyle \delta$};
}
\end{aligned}
\right\}
\end{equation}
which all yield the identical coherence isomorphisms $\nu:a^\vee\otimes b^\vee\to (b\otimes a)^\vee$ and $\varphi: a\to a^{\vee\vee}$.
Moreover, for each of these UAFs on $\HstarAlg(\cC)$, the organic inclusion $\cC\hookrightarrow \HstarAlg(\cC)$ by $1_\cC\mapsto 1_\cC$ is unitary pivotal.
\end{defn}

%%%%%%%%%%%%%%%%%%%%%%%%%%%%%%%%%%%%%%%%%%%%%%%%%%%%%%%%%%%%%%%%%%%%%%%%%%%%%%%%%%
\begin{rem}
It was posited in \cite{MR4419534} that the unitary version of a condensation 
monad in the spirit of \cite{1905.09566} in a $\rmC^*$-2-category is a not necessarily unital special Q-system.
It was pointed out to us by David Reutter that if we require a not necessarily unital special Q-system to be compatible with the ambient UDF in an $\rmH^*$-multifusion category
\begin{equation}
\label{eq:UAFCompatible}
\tikzmath{
\fill[\BColor, rounded corners=5pt] (-.5,-.6) rectangle (1.8,1.6);
\draw[\QsColor,thick] (0,-.6+1) --node[left]{$\scriptstyle A^\vee$} (0,0+1) arc (180:0:.3cm) arc (-180:0:.3cm) --node[right]{$\scriptstyle A^\vee$} (1.2,.6+1);
\draw[\QsColor,thick] (.3,.3+1) -- (.3,.6+1);
\draw[\QsColor,thick] (0,.6) -- (0,0) arc (-180:0:.3cm) arc (180:0:.3cm) --node[right]{$\scriptstyle A^\vee$} (1.2,-.6);
\draw[\QsColor,thick] (.3,-.3) -- (.3,-.6);
}
=
\tikzmath{
\fill[\BColor, rounded corners=5pt] (-.5,-.6) rectangle (1.8,1.6);
\draw[\QsColor,thick] (0,-.6+1) --node[left]{$\scriptstyle A^\vee$} (0,0+1) arc (180:0:.3cm) arc (-180:0:.3cm) --node[right]{$\scriptstyle A^\vee$} (1.2,.6+1);
\draw[\QsColor,thick] (.6,+1) .. controls ++(.3,.15) .. (.9,.6+1);
\draw[\QsColor,thick] (0,.6) -- (0,0) arc (-180:0:.3cm) arc (180:0:.3cm) --node[right]{$\scriptstyle A^\vee$} (1.2,-.6);
\draw[\QsColor,thick] (.6,0) .. controls ++(.3,-.15) .. (.9,-.6);
}
\qquad\text{and}\qquad
\tikzmath{
\fill[\BColor, rounded corners=5pt] (.5,-.6) rectangle (-1.8,1.6);
\draw[\QsColor,thick] (0,-.6+1) --node[right]{$\scriptstyle A^\vee$} (0,0+1) arc (0:180:.3cm) arc (0:-180:.3cm) --node[left]{$\scriptstyle A^\vee$} (-1.2,.6+1);
\draw[\QsColor,thick] (-.3,.3+1) -- (-.3,.6+1);
\draw[\QsColor,thick] (0,.6) -- (0,0) arc (0:-180:.3cm) arc (0:180:.3cm) --node[left]{$\scriptstyle A^\vee$} (-1.2,-.6);
\draw[\QsColor,thick] (-.3,-.3) -- (-.3,-.6);
}
=
\tikzmath{
\fill[\BColor, rounded corners=5pt] (.5,-.6) rectangle (-1.8,1.6);
\draw[\QsColor,thick] (0,-.6+1) --node[right]{$\scriptstyle A^\vee$} (0,0+1) arc (0:180:.3cm) arc (0:-180:.3cm) --node[left]{$\scriptstyle A^\vee$} (-1.2,.6+1);
\draw[\QsColor,thick] (-.6,+1) .. controls ++(-.3,.15) .. (-.9,.6+1);
\draw[\QsColor,thick] (0,.6) -- (0,0) arc (0:-180:.3cm) arc (0:180:.3cm) --node[left]{$\scriptstyle A^\vee$} (-1.2,-.6);
\draw[\QsColor,thick] (-.6,0) .. controls ++(-.3,-.15) .. (-.9,-.6);
}
\end{equation}
so that any morphism between 1-composite powers of $A$'s only depends on the connectivity of the graph, then $A$ is automatically unital.
Indeed, using \ref{H:Frobenius} and capping off \eqref{eq:UAFCompatible}, one shows $A$ has both a right and a left unit given by 
\[
\tikzmath{
\fill[\BColor, rounded corners=5pt] (1.8,-1) rectangle (-.3,.5);
\draw[thick] (0,.5) -- (0,0) arc (-180:0:.3) arc (180:0:.3) --node[right]{$\scriptstyle A^\vee$} (1.2,-.35) arc (0:-180:.45) -- (.3,-.3);
}
\qquad\text{and}\qquad
\tikzmath{
\fill[\BColor, rounded corners=5pt] (-1.8,-1) rectangle (.3,.5);
\draw[thick] (0,.5) -- (0,0) arc (0:-180:.3) arc (0:180:.3) --node[left]{$\scriptstyle A^\vee$} (-1.2,-.35) arc (-180:0:.45) -- (-.3,-.3);
}
\]
respectively,
and thus they are automatically equal.
The above can also be adjusted for $\rmH^*$-algebras by including factors of the bubble inverse.
This fact justifies working solely with unital algebras.
\end{rem}

\subsection{Module categories come from \texorpdfstring{$\rmH^*$}{H*}-algebras}

For this section, $(\cC,\vee,\psi)$ is an $\rmH^*$-multifusion category.

Suppose $A\in\cC$ is an $\rmH^*$-algebra.
Observe that 
$$
\tau_M(f):= 
\tikzmath{
\draw[thick, black] (0,.3) -- (0,.9) arc(0:180:.3cm) --node[left]{$\scriptstyle M^\vee$} (-.6,-.9) arc (-180:0:.3cm) -- (0,-.3);
\draw[thick, \AsColor] (.6,-1.4)  to[out=90,in=-45] node[right]{$\scriptstyle A$}(0,-.6);
\draw[thick, \AsColor] (.6,1.4)  to[out=270,in=45] node[right]{$\scriptstyle A$}(0,.6);
\roundNbox{fill=white}{(0,0)}{.3}{0}{0}{$f$}
}
$$
defines a unitary operator-valued 
$\cC$-module trace
$\tau_M: \cC_A(M\to M) \to \End({}_AA_A)$, which takes values in an abelian unitary algebra.
Post-composing with any faithful weight $\phi: \End({}_AA_A)\to \bbC$ yields a unitary $\cC$-module trace on $\cC_A$.
By Corollary \ref{cor:ExistsModuleTrace}, every unitary module trace on $\cC_A$ is of this form, as the indecomposable summands of $\cC_A$ correspond to the simple algebra summands of $A$.

We now determine the faithful weight $\phi: \End({}_AA_A)\to \bbC$ (up to uniform scalar) such that the unitary internal hom $[A,A]$ is unitarily isomorphic to $A$.
%It suffices to treat the case of a simple $\rmH^*$-algebra $A\in\cC$, i.e., $\End({}_AA_A)\cong \bbC$. 

\begin{lem}
\label{lem:module_trace}
If $A\in \cC$ is an $\rmH^*$-algebra,
then every unitary $\cC$-module trace on $\cC_A$ is completely determined by its values on $\cC_A(A\to A)$.
\end{lem}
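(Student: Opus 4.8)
The plan is to use the factorization of module traces through the canonical operator-valued trace $\tau$ established immediately above. Recall that by that discussion together with Corollary~\ref{cor:ExistsModuleTrace}, every unitary $\cC$-module trace $\Tr^{\cC_A}$ on $\cC_A$ has the form $\Tr^{\cC_A}_M = \phi\circ\tau_M$ for a single faithful weight $\phi\colon \End({}_AA_A)\to\bbC$ on the abelian unitary algebra $\End({}_AA_A)$. Since the family $(\tau_M)_M$ is canonical and independent of the choice of module trace, the whole trace $\Tr^{\cC_A}$ is determined by $\phi$ alone. It therefore suffices to prove that $\phi$ is recovered from the restriction of $\Tr^{\cC_A}$ to $\cC_A(A\to A)=\End(A_A)$.

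First I would note that on $\End(A_A)$ we have $\Tr^{\cC_A}_A = \phi\circ\tau_A$, so the restriction $\Tr^{\cC_A}|_{\cC_A(A\to A)}$ determines $\phi$ on the image $\im(\tau_A)\subseteq \End({}_AA_A)$. Thus the lemma reduces to the claim that $\tau_A\colon \End(A_A)\to\End({}_AA_A)$ is surjective.

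To establish surjectivity I would decompose $A=\bigoplus_k A_k$ into simple algebra summands, so that the minimal central idempotents $z_k\in\End({}_AA_A)$ form a basis; each $z_k$ is in particular a right $A$-module map, hence an element of $\End(A_A)$. The key point is that $\tau_A$ is $\End({}_AA_A)$-linear: a bimodule map may be slid along the action string in the defining diagram of $\tau_A$, giving $\tau_A(zf)=z\cdot\tau_A(f)$ for $z\in\End({}_AA_A)$ and $f\in\End(A_A)$. Taking $f=\id_A$ yields $\tau_A(z_k)=z_k\cdot\tau_A(\id_A)=\mu_k z_k$, where $\tau_A(\id_A)=\sum_j\mu_j z_j$. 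Because $\tau_A$ is a faithful positive operator-valued trace, $\tau_A(\id_A)=\tau_A(\id_A^\dag\id_A)$ is a positive invertible element of the abelian algebra $\End({}_AA_A)$, so every $\mu_k>0$. Hence $z_k=\mu_k^{-1}\tau_A(z_k)\in\im(\tau_A)$, and since the $z_k$ span $\End({}_AA_A)$ this proves $\tau_A$ is surjective, completing the argument.

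The main obstacle is the diagrammatic identity $\tau_A(zf)=z\,\tau_A(f)$ together with the strict positivity $\mu_k>0$. For the former one must verify that a bimodule endomorphism genuinely commutes past the evaluation and coevaluation caps---built from the UDF $\vee$ and the algebra structure of $A$---that define $\tau_A$; this is where it matters that the $z_k$ are \emph{self-adjoint central} idempotents, so that no discrepancy between $z_k$ and $z_k^\vee$ arises. For the latter, one invokes faithfulness of the operator-valued module trace to upgrade mere nonvanishing of the diagonal scalars to strict positivity.
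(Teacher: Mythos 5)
Your proof is correct, but it takes a genuinely different route from the paper's. The paper argues directly and diagrammatically: every $M\in\cC_A$ is a retract of the free module $M\otimes A$ via the coisometry built from the action (with bubble corrections), and then traciality together with the $\cC$-module trace property lets one close off the $M$-string, rewriting $\Tr^{\cC_A}_M(f)$ explicitly as $\Tr^{\cC_A}_A$ applied to a concrete element of $\End(A_A)$. You instead start from the classification $\Tr^{\cC_A}=\phi\circ\tau$ (which the paper indeed asserts just before the lemma, resting on Corollary~\ref{cor:ExistsModuleTrace}) and reduce the lemma to surjectivity of $\tau_A\colon\End(A_A)\to\End({}_AA_A)$, which you get from $\End({}_AA_A)$-linearity of $\tau_A$ plus faithfulness/positivity. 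Both arguments are sound and non-circular (Corollary~\ref{cor:ExistsModuleTrace} is proved earlier and independently of this lemma). What each buys: the paper's proof is self-contained and \emph{constructive} --- it produces the explicit reconstruction formula for $\Tr^{\cC_A}_M(f)$ in terms of $\Tr^{\cC_A}_A$ that is actually used downstream (in Proposition~\ref{prop:InternalEndRecognition} and Theorem~\ref{thm:EveryModuleComesFromH*Alg}); your proof is shorter modulo the classification, but imports the heavier existence/uniqueness machinery behind Corollary~\ref{cor:ExistsModuleTrace} and yields only abstract determination rather than a formula. One small correction to your final paragraph: the linearity $\tau_A(zf)=z\,\tau_A(f)$ is not really about commuting $z$ past the evaluation/coevaluation caps on the closed $M^\vee$-loop; the natural move is to slide $z$ upward through the \emph{coaction attachment point}, using that $\mu^\dag$ is an $A$-$A$ bimodule map (the Frobenius property) and that $z$ is multiplication by the central element $z\circ\iota$, which gives $\mu^\dag z=(\id_A\otimes z)\mu^\dag$ and lands $z$ on the outgoing $A$-string. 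With that identity, your computation $\tau_A(z_k)=z_k\,\tau_A(\id_A)=\mu_k z_k$ and the strict positivity $\mu_k>0$ (from faithfulness applied to $z_k=z_k^\dag z_k$) go through exactly as you describe.
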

\begin{proof}
Every $M\in \cC_A$ is a retract of the free module $M\otimes A$ where the retract coisometry is given by
$$
\rho_A
:=
\tikzmath{
\draw[thick, black] (0,-1.3) -- node[left]{$\scriptstyle M$} (0,.3);
\draw[thick, \AsColor] (.4,-.8)  to[out=90,in=-45] node[right]{$\scriptstyle A$}(0,0);
\draw[thick,\AsColor] (.4,-.9) circle (.1cm);
\node[\AsColor] at (.8,-.7) {$\scriptstyle -1/2$};
\draw[thick,\AsColor] (.4,-1) -- (.4,-1.3);
}\,.
$$
If $\Tr^{\cC_A}$ is a unitary $\cC$-module trace on $\cC_A$, then for every $f\in\cC_A(M\to M)$,
$$
\Tr^{\cC_A}_M\left(
\tikzmath{
\draw[thick] (0,-.7) -- (0,.7);
\roundNbox{fill=white}{(0,0)}{.3}{0}{0}{$f$}
}
\right)
=
\Tr^{\cC_A}_M\left(
\tikzmath{
\draw[thick, black] (0,.3) -- node[left]{$\scriptstyle M$} (0,2);
\draw[thick, black] (0,-.3) -- node[left]{$\scriptstyle M$} (0,-2);
\draw[thick, \AsColor] (.6,.8)  to[out=90,in=-45] node[right]{$\scriptstyle A$}(0,1.6);
\draw[thick, \AsColor] (.6,-.8)  to[out=270,in=45] node[right]{$\scriptstyle A$}(0,-1.6);
\draw[thick,\AsColor] (.6,.7) circle (.1cm);
\draw[thick,\AsColor] (.6,-.7) circle (.1cm);
\node[\AsColor] at (1,.9) {$\scriptstyle -1/2$};
\node[\AsColor] at (1,-.5) {$\scriptstyle -1/2$};
\draw[thick,\AsColor] (.6,.6) -- (.6,-.6);
\roundNbox{fill=white}{(0,0)}{.3}{0}{0}{$f$}
}
\right)
=
\Tr^{\cC_A}_{M\otimes A}\left(
\tikzmath{
\draw[thick, black] (0,.3) -- node[left]{$\scriptstyle M$} (0,2);
\draw[thick, black] (0,-.3) -- node[left]{$\scriptstyle M$} (0,-2);
\draw[thick, \AsColor] (.6,-1.4)  to[out=90,in=-45] node[right]{$\scriptstyle A$}(0,-.6);
\draw[thick, \AsColor] (.6,1.4)  to[out=270,in=45] node[right]{$\scriptstyle A$}(0,.6);
\draw[thick,\AsColor] (.6,1.5) circle (.1cm);
\draw[thick,\AsColor] (.6,-1.5) circle (.1cm);
\node[\AsColor] at (1,1.7) {$\scriptstyle -1/2$};
\node[\AsColor] at (1,-1.3) {$\scriptstyle -1/2$};
\draw[thick,\AsColor] (.6,1.6) -- (.6,2);
\draw[thick,\AsColor] (.6,-1.6) -- (.6,-2);
\roundNbox{fill=white}{(0,0)}{.3}{0}{0}{$f$}
}
\right)
=
\Tr^{\cC_A}_{A}\left(
\tikzmath{
\draw[thick, black] (0,.3) -- (0,.9) arc(0:180:.3cm) --node[left]{$\scriptstyle M^\vee$} (-.6,-.9) arc (-180:0:.3cm) -- (0,-.3);
\draw[thick, \AsColor] (.6,-1.4)  to[out=90,in=-45] node[right]{$\scriptstyle A$}(0,-.6);
\draw[thick, \AsColor] (.6,1.4)  to[out=270,in=45] node[right]{$\scriptstyle A$}(0,.6);
\draw[thick,\AsColor] (.6,1.5) circle (.1cm);
\draw[thick,\AsColor] (.6,-1.5) circle (.1cm);
\node[\AsColor] at (1,1.7) {$\scriptstyle -1/2$};
\node[\AsColor] at (1,-1.3) {$\scriptstyle -1/2$};
\draw[thick,\AsColor] (.6,1.6) -- (.6,2);
\draw[thick,\AsColor] (.6,-1.6) -- (.6,-2);
\roundNbox{fill=white}{(0,0)}{.3}{0}{0}{$f$}
}
\right).
$$
\end{proof}

\begin{prop}
\label{prop:InternalEndRecognition}
The canonical algebra isomorphism $A \cong [A,A]_\cC$ is unitary if the module trace on $\cC_A$ is given on $\End(A_A)$ by
\begin{equation}
\label{eq:TraceOnC_A}
\Tr^{\cC_A}_A(f) :=
\psi\left(
\tikzmath{
\draw[thick,\AsColor] (0,-.2) coordinate (a1) -- +(0,1.2) coordinate (a2);
\roundNbox{fill=white}{(0,.4)}{0.3}{0}{0}{$f$};
\filldraw[\AsColor] (a1) circle (.05);
\filldraw[\AsColor] (a2) circle (.05);
}
\right)
\qquad\qquad f \in \End_{\cC_A}(A). 
\end{equation}
In particular, for all $f\in \cC_A(M_A\to M_A)$,
$
\Tr^{\cC_A}_M(f)
=
\psi\left(
\tikzmath{
\draw[thick, black] (0,.3) -- (0,.6) arc(0:180:.3cm) --node[left]{$\scriptstyle M^\vee$} (-.6,-.3) arc (-180:0:.3cm);
\node[\AsColor] at (.85,.1) {$\scriptstyle -1$};
\draw[thick, \AsColor] (.6,-.1) circle (.1cm);
\draw[thick, \AsColor] (.6,0)  to[out=90,in=-45] node[right]{$\scriptstyle A$}(0,.6);
\roundNbox{fill=white}{(0,0)}{.3}{0}{0}{$f$}
}
\right)
$.
\end{prop}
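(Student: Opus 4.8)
The plan is to first pin down the canonical map and then reduce unitarity to a single graphical identity governed by standardness. The canonical algebra isomorphism $\Phi\colon A\to[A,A]_\cC$ is the mate of the multiplication $\mu$ under the defining unitary adjunction $\cC_A(A\lhd c\to A)\cong\cC(c\to[A,A]_\cC)$ of \ref{H*Mod:UnitaryInternalHom} (taking $c=A$); concretely it is the left regular representation $a\mapsto(x\mapsto ax)$. Since $\Phi$ is already known to be an algebra isomorphism, to prove it is unitary it suffices to prove it is isometric, i.e.\ that for every $c\in\cC$ the induced map $\cC(c\to A)\to\cC(c\to[A,A]_\cC)$ preserves the Hilbert space inner products coming from the $2$-Hilbert space structure on $\cC$. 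Because the adjunction $\cC_A(A\lhd c\to A)\cong\cC(c\to[A,A]_\cC)$ is \emph{unitary} by \ref{H*Mod:UnitaryInternalHom} (its two inner products being computed from $\Tr^{\cC_A}$ and $\Tr^\cC$ respectively), postcomposing with it reduces the claim to showing that left multiplication
$$
\cC(c\to A)\longrightarrow\cC_A(A\lhd c\to A),\qquad g\longmapsto \mu\circ(g\lhd\id_A),
$$
is isometric for every $c$.

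Next I would expand both inner products in the graphical calculus. On the source, $\langle g\mid h\rangle=\Tr^\cC(g^\dag h)=\psi(\tr^\vee_L(g^\dag h))$ by definition of the canonical $2$-Hilbert structure on $\cC$. On the target I would compute $\langle \mu(g\lhd\id_A)\mid\mu(h\lhd\id_A)\rangle$ using $\Tr^{\cC_A}$; here the reduction of Lemma \ref{lem:module_trace} together with the prescribed trace \eqref{eq:TraceOnC_A} on $\End(A_A)$ rewrites this module trace as $\psi$ applied to an $A$-bubble diagram. Simplifying the two free-module retract factors $(\mu\mu^\dag)^{-1/2}$ via the $\rmC^*$-Frobenius axiom \ref{H:Frobenius} and unitality collapses them against the $\iota,\iota^\dag$ caps supplied by \eqref{eq:TraceOnC_A}, leaving a single bubble whose $\psi$-weight I must match against $\psi(\tr^\vee_L(g^\dag h))$. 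The equality of the two resulting expressions is precisely an instance of the standardness axiom \ref{H:Standard}, which equates the $\psi$-weights of the left and right $A$-bubbles and is exactly the condition forcing the module-trace normalization to agree with the ambient spherical weight. This establishes that $\Phi$ is isometric, hence unitary.

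Finally, the displayed ``in particular'' formula for general $M$ follows by the same reduction. Lemma \ref{lem:module_trace} rewrites $\Tr^{\cC_A}_M(f)$ as $\Tr^{\cC_A}_A(\tau_M(f))$, into which I substitute \eqref{eq:TraceOnC_A}. The unit and counit $\iota,\iota^\dag$ from \eqref{eq:TraceOnC_A} absorb against the module-action legs by unitality, and the two half-power bubbles $(\mu\mu^\dag)^{-1/2}$ of the retract combine, using \ref{H:Frobenius} and unitality, into the single $(\mu\mu^\dag)^{-1}$ bubble attached to the $M$-cap appearing in the statement.

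The main obstacle is the bookkeeping forced by non-speciality: since $\mu\mu^\dag\neq\id_A$ in general, every reduction carries factors of the bubble $(\mu\mu^\dag)^{\pm 1/2}$, and the crux is to route these correctly so that after collapsing the caps the leftover weight is exactly the one equated by standardness \ref{H:Standard}. Getting the power of the bubble right, and invoking \ref{H:Standard} at the precise point where the $A$-loop must be flipped, is the delicate step; the remainder is formal manipulation of the graphical calculus.
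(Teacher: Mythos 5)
Your core unitarity argument is essentially the paper's: the paper likewise reduces unitarity of $A\cong[A,A]_\cC$ to an isometry statement for the mate map $f\mapsto\mu\circ(f\otimes\id_A)$, viewed as a map $\cC(s\to A)\to\cC_A(s\otimes A\to A)$ (the paper phrases this via generalized elements/unitary Yoneda at simple objects $s$ rather than via post-composition on all hom-spaces, but the two reductions are equivalent), and then checks the resulting equality of inner products directly from \eqref{eq:TraceOnC_A}. The role you assign to standardness in that final check is also correct: it enters through the identification $\ev_A=\iota^\dag\circ\mu$ and $\coev_A=\mu^\dag\circ\iota$ of the duality data of $A$ under the UDF, which is a consequence of \ref{H:Standard}. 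Your treatment of the ``in particular'' statement via Lemma \ref{lem:module_trace} and the merging of the two $(\mu\mu^\dag)^{-1/2}$ bubbles into a single $(\mu\mu^\dag)^{-1}$ bubble also matches what the paper intends.

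There is, however, a genuine gap at the very start. Your reduction leans on the unitary internal hom adjunction $\cC_A(c\rhd A\to A)\cong\cC(c\to[A,A]_\cC)$ from \ref{H*Mod:UnitaryInternalHom}, but that adjunction is only available once $(\cC_A,\Tr^{\cC_A})$ is a 2-Hilbert space carrying a unitary $\cC$-module trace --- i.e., once one knows that the formula \eqref{eq:TraceOnC_A} actually defines a trace. This is not automatic: $f\mapsto\psi(\iota^\dag\circ f\circ\iota)$ is not manifestly tracial on $\End(A_A)$, nor is it manifestly of the form ``faithful weight post-composed with the operator-valued trace $\tau_A$'' discussed before the proposition. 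The paper devotes the first half of its proof to exactly this verification, proving $\Tr^{\cC_A}_A(f\circ g)=\Tr^{\cC_A}_A(g\circ f)$ graphically using that the bubble inverse $(\mu\mu^\dag)^{-1}$ is an $A$-$A$ bimodule map, that $f,g$ are right $A$-module maps, and --- crucially --- standardness \ref{H:Standard}. Without this step your appeal to \ref{H*Mod:UnitaryInternalHom} is unjustified, and in fact circular, since the existence of $[A,A]_\cC$ as a \emph{unitary} internal hom depends on the very trace whose legitimacy is in question. Prepending this traciality check repairs the argument; the remainder of your plan then coincides with the paper's proof.
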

\begin{proof}
To verify traciality, notice that
\[
\psi\left(
\tikzmath{
\draw[thick,\AsColor] (0,0) -- +(0,2.1);
\roundNbox{fill=white}{(0,.6)}{0.3}{0}{0}{$f$}
\roundNbox{fill=white}{(0,1.5)}{0.3}{0}{0}{$g$}
\filldraw[\AsColor] (0,0) circle (.05);
\filldraw[\AsColor] (0,2.1) circle (.05);
}
\right) =
\psi\left(
\tikzmath{
\draw[thick,\AsColor] (0,0) -- +(0,2.1) arc (-90:450:.2) -- +(0,.2) arc (-90:450:.1) -- +(0,.2) coordinate (a);
\roundNbox{fill=white}{(0,.6)}{0.3}{0}{0}{$f$}
\roundNbox{fill=white}{(0,1.5)}{0.3}{0}{0}{$g$}
\filldraw[\AsColor] (0,0) circle (.05);
\filldraw[\AsColor] (a) circle (.05);
\node[\AsColor] () at (.3,3) {$\scriptstyle -1$};
}
\right) =
\psi\left(
\tikzmath{
\draw[thick,\AsColor] (0,0.3) -- (0,2.1) arc (-90:450:.1) arc (180:90:.4) coordinate (c) -- +(0,.3) coordinate (a) -- +(0,0) arc (90:0:.4) -- +(0,-2) arc (0:-90:.4) coordinate (d) -- +(0,-.3) coordinate (b) -- +(0,0) arc (-90:-180:.4);
\roundNbox{fill=white}{(0,.6)}{0.3}{0}{0}{$f$}
\roundNbox{fill=white}{(0,1.5)}{0.3}{0}{0}{$g$}
\filldraw[\AsColor] (a) circle (.05);
\filldraw[\AsColor] (b) circle (.05);
\node[\AsColor] () at (.3,2.4) {$\scriptstyle -1$};
}
\right) =
\psi\left(
\tikzmath{
\draw[thick,\AsColor] (0,0.3) -- +(0,.9) arc (-90:450:.1) -- +(0,.9) arc (180:90:.4) coordinate (c) -- +(0,.3) coordinate (a) -- +(0,0) arc (90:0:.4) -- +(0,-2) arc (0:-90:.4) coordinate (d) -- +(0,-.3) coordinate (b) -- +(0,0) arc (-90:-180:.4);
\roundNbox{fill=white}{(0,.6)}{0.3}{0}{0}{$g$}
\roundNbox{fill=white}{(0,2)}{0.3}{0}{0}{$f$}
\filldraw[\AsColor] (a) circle (.05);
\filldraw[\AsColor] (b) circle (.05);
\node[\AsColor] () at (.3,1.5) {$\scriptstyle -1$};
}
\right)
=
\psi\left(
\tikzmath{
\draw[thick,\AsColor] (0,0.3) -- (0,2.1) arc (-90:450:.1) arc (180:90:.4) coordinate (c) -- +(0,.3) coordinate (a) -- +(0,0) arc (90:0:.4) -- +(0,-2) arc (0:-90:.4) coordinate (d) -- +(0,-.3) coordinate (b) -- +(0,0) arc (-90:-180:.4);
\roundNbox{fill=white}{(0,.6)}{0.3}{0}{0}{$g$}
\roundNbox{fill=white}{(0,1.5)}{0.3}{0}{0}{$f$}
\filldraw[\AsColor] (a) circle (.05);
\filldraw[\AsColor] (b) circle (.05);
\node[\AsColor] () at (.3,2.4) {$\scriptstyle -1$};
}
\right) 
=
\psi\left(
\tikzmath{
\draw[thick,\AsColor] (0,0) -- +(0,2.1);
\roundNbox{fill=white}{(0,.6)}{0.3}{0}{0}{$g$}
\roundNbox{fill=white}{(0,1.5)}{0.3}{0}{0}{$f$}
\filldraw[\AsColor] (0,0) circle (.05);
\filldraw[\AsColor] (0,2.1) circle (.05);
}
\right).
\]
Above, the second and fourth equalities use that the  bubble inverse is a bimodule map and $f,g$ are right $A$-module maps and 
the third equality follows from standardness of the algebra.

By \eqref{eq:GeneralizedElements}, we have canonical unitary isomorphisms
$$
A\cong \bigoplus_{s\in\Irr(\cC)}d_s^{-1}\cC(s\to A)\otimes s
\qquad\text{and}\qquad
[A,A]_\cC\cong \bigoplus_{s\in\Irr(\cC)}d_s^{-1}\cC_A(s\otimes A\to A)\otimes s,
$$
where the second isomorphism implicitly uses the unitary adjunction $\cC(s\to [A,A]_\cC)\cong \cC_A(s\otimes A\to A)$.
Under the above unitary isomorphisms, it is straightforward to verify that the canonical algebra isomorphism $A\to [A,A]_\cC$ is given on generalized elements by
$$
\tikzmath{
\roundNbox{fill=white}{(0,0)}{.3}{0}{0}{$f$}
\draw[thick, \AsColor] (0,.3) --node[right]{$\scriptstyle A$} (0,.7);
\draw[thick, \XColor] (0,-.3) --node[right]{$\scriptstyle s$} (0,-.7);
}
\,
\longmapsto
\tikzmath{
\roundNbox{fill=white}{(0,0)}{.3}{0}{0}{$f$}
\draw[thick, \AsColor] (0,.3) to[out=90,in=-135] node[above]{$\scriptstyle A$} (.6,.7);
\draw[thick, \AsColor] (.6,-.7) -- node[right]{$\scriptstyle A$} (.6,1);
\draw[thick, \XColor] (0,-.3) --node[right]{$\scriptstyle s$} (0,-.7);
}\,.
$$
Thus the canonical algebra map is unitary if and only if the above map is isometric, i.e.
$$
d_s^{-1}\cdot
\psi\left(
\tikzmath{
\roundNbox{fill=white}{(0,0)}{.3}{0}{0}{$f$}
\roundNbox{fill=white}{(0,1)}{.3}{0}{0}{$f^\dag$}
\draw[thick, \AsColor] (0,.3) --node[left]{$\scriptstyle A$} (0,.7);
\draw[thick, \XColor] (0,1.3) arc(180:0:.3cm) --node[right]{$\scriptstyle s^\vee$} (.6, -.3) arc (0:-180:.3cm);
}
\right)
\overset{?}{=}
d_s^{-1}\cdot
\Tr^{\cC_A}_A\left(
\tikzmath{
\roundNbox{fill=white}{(0,0)}{.3}{0}{0}{$f$}
\roundNbox{fill=white}{(0,-1)}{.3}{0}{0}{$f^\dag$}
\draw[thick, \AsColor] (0,.3) to[out=90,in=-135] node[above]{$\scriptstyle A$} (.6,.7);
\draw[thick, \AsColor] (.6,-2) -- node[right]{$\scriptstyle A$} (.6,1);
\draw[thick, \AsColor] (0,-1.3) to[out=-90,in=135] node[below]{$\scriptstyle A$} (.6,-1.7);
\draw[thick, \XColor] (0,-.3) --node[left]{$\scriptstyle s$} (0,-.7);
}
\right).
$$
This equality clearly holds under the formula \eqref{eq:TraceOnC_A} for $\Tr^{\cC_A}_A$.
\end{proof}

\begin{cor}
For $M,N\in\cC_A$ with the $\cC$-module trace determined by \eqref{eq:TraceOnC_A}, the unitary internal hom $[M,N]_\cC=\underline{\Hom}_\cC(M,N)$ is unitarily isomorphic to $N\boxtimes_A M^\vee$ via the unitary Yoneda isomorphism afforded by the unitary adjunction
$$
\cC_A(c\rhd M \to N) \cong \cC(c\to N\boxtimes_A M^\vee)
\qquad\text{given by}\qquad
\tikzmath{
\draw[thick] (.2,-.3) --node[right]{$\scriptstyle M$} (.2,-.7);
\draw[thick] (-.2,-.3) --node[left]{$\scriptstyle c$} (-.2,-.7);
\draw[thick] (0,.3) --node[right]{$\scriptstyle N$} (0,.7);
\roundNbox{}{(0,0)}{.3}{.1}{.1}{$f$}
}
\longmapsto
\tikzmath{
\draw[thick] (.2,-.3) -- (.2,-.8) arc(-180:0:.3cm) --node[right]{$\scriptstyle M^\vee$} (.8,.7);
\draw[thick] (-.2,-.3) --node[left]{$\scriptstyle c$} (-.2,-1.3);
\draw[thick] (0,.3) --node[left]{$\scriptstyle N$} (0,.7);
\draw[thick, \AsColor] (0,.5) -- (.8,.5);
\draw[thick, \AsColor] (.2,-.5) -- (.5,-.8);
\filldraw[thick, \AsColor, fill=white] (.5,-.8) node[above]{$\scriptstyle -\frac{1}{2}$} circle (.1cm);
\roundNbox{}{(0,0)}{.3}{.1}{.1}{$f$}
}\,.
$$
\end{cor}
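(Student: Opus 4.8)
The plan is to identify both $[M,N]_\cC$ and $N\boxtimes_A M^\vee$ as representing objects of the single functor $c\mapsto\cC_A(c\rhd M\to N)$, and then to invoke uniqueness of unitary adjoints. By \ref{H*Mod:UnitaryInternalHom}, the unitary internal hom $[M,N]_\cC$ is \emph{defined} so that $\cC_A(c\rhd M\to N)\cong\cC(c\to[M,N]_\cC)$ is a unitary natural isomorphism; by the uniqueness statement in Example \ref{ex:UnitaryAdjunction}, any object $B\in\cC$ carrying a \emph{unitary} natural isomorphism $\cC_A(c\rhd M\to N)\cong\cC(c\to B)$ is canonically unitarily isomorphic to $[M,N]_\cC$. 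Thus it suffices to prove that the explicit map displayed in the statement is a unitary natural isomorphism $\Phi_c\colon\cC_A(c\rhd M\to N)\to\cC(c\to N\boxtimes_A M^\vee)$.

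First I would check that $\Phi_c$ is a well-defined natural bijection, which is essentially the familiar (non-unitary) computation that the internal hom of a module category is a relative tensor product. The image visibly lands in the subspace cut out by the separability idempotent $p_{N,M^\vee}$ defining $N\boxtimes_A M^\vee$: the $-\tfrac{1}{2}$-power bubble attached to the bent $M$-leg is exactly what intertwines the $A$-action used to form the relative tensor product, so the module structure slides through and the output is $p_{N,M^\vee}$-invariant. Bijectivity and naturality in $c$ follow from rigidity of $M$, bending the $M$-leg back via $\ev_M,\coev_M$ being inverse up to an absorbed separability idempotent.

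The hard part will be showing that each $\Phi_c$ is an \emph{isometry}. Here I would compute both inner products diagrammatically. On the domain, $\langle f\,|\,g\rangle=\Tr^{\cC_A}_{c\rhd M}(f^\dag g)$ is governed by the module trace on $\cC_A$ of \eqref{eq:TraceOnC_A}, whose defining feature (Proposition \ref{prop:InternalEndRecognition}) is the $M^\vee$-cap together with the $(\mu\mu^\dag)^{-1}$ bubble correction. On the codomain, $\langle\Phi(f)\,|\,\Phi(g)\rangle=\Tr^\cC_c(\Phi(f)^\dag\circ\Phi(g))$ with $\Tr^\cC=\psi\circ\tr^\vee_L$, together with the relative Deligne product trace of Lemma \ref{lem:RelativeDeligneTrace} governing $N\boxtimes_A M^\vee$. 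The two $-\tfrac12$-power bubbles appearing in $\Phi(f)^\dag$ and $\Phi(g)$ fuse to the single $-1$-power bubble of \eqref{eq:TraceOnC_A}, and after sliding the $A$-strands through the separability idempotents and applying the bubble-popping identities \eqref{eq:PopBubbles} the two diagrams should become isotopic. The genuine obstacle is matching the normalizing scalars (the $d_c^{-1}$ from the generalized-element inner products \eqref{eq:GeneralizedElements}, the $d_{s(c)}^{-1}$ from Lemma \ref{lem:RelativeDeligneTrace}, and the bubble factors), so I would organize the computation so these cancel exactly.

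To tame this bookkeeping I would reduce to the free case. Every $M\in\cC_A$ is a retract of the free module $M\otimes A$ via the coisometry $\rho_A$ of Lemma \ref{lem:module_trace}, and both $[{-},N]_\cC$ and $N\boxtimes_A({-})^\vee$ carry this retract to a retract compatibly with $\Phi$; since an orthogonal compression of an isometry is again an isometry, it is enough to verify the isometry when $M=x\otimes A$ is free (and likewise resolve $N$). For free modules the adjunction collapses to the free-forgetful adjunction $\cC_A(c\rhd x\otimes A\to N)\cong\cC(c\otimes x\to N)$, while the relative tensor product collapses through the isometric right-action equivalence of Corollary \ref{cor:RightCActionIsometric}, reducing the claim to a tensored-up version of the unitarity of $A\cong[A,A]_\cC$ already established in Proposition \ref{prop:InternalEndRecognition}. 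Assembling these reductions shows $\Phi$ is a unitary natural isomorphism, and uniqueness of unitary adjoints then yields the unitary isomorphism $[M,N]_\cC\cong N\boxtimes_A M^\vee$.
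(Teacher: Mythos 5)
Your overall framing is the same as the paper's: since the displayed map is invertible, unitarity reduces to showing it is isometric, and then uniqueness of unitary adjoints (unitary Yoneda) upgrades the unitary natural isomorphism to a unitary isomorphism $[M,N]_\cC\cong N\boxtimes_A M^\vee$. Where you diverge is in how you propose to prove the isometry, and here there are two genuine problems. First, you conflate $N\boxtimes_A M^\vee$ --- the relative tensor product over the algebra object $A$, i.e.\ the image of the separability idempotent $p_{N,M^\vee}$ inside $\cC$ (Definition \ref{defn:HStarAlgC}) --- with the relative Deligne product of module \emph{categories}. Because of this you invoke Lemma \ref{lem:RelativeDeligneTrace} and Corollary \ref{cor:RightCActionIsometric}, neither of which applies: the codomain inner product is simply $\Tr^\cC_c(\Phi(f)^\dag\circ\Phi(g))$ with $\Tr^\cC=\psi\circ\tr^\vee$, and the normalizing scalars $d_c^{-1}$, $d_{s(c)}^{-1}$ you are worried about never enter. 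The paper's computation is scalar-free: one unfolds $\Tr^{\cC_A}_N(f^\dag f)$ via the formula in Proposition \ref{prop:InternalEndRecognition}, then uses the Frobenius property and the definition of the split separability idempotent to isotope the closed diagram into $\|\Phi(f)\|^2_\cC$.

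Second, your reduction to free modules outsources, rather than eliminates, the actual work. The compression argument needs (a) naturality of $\Phi$ in $M$ against module maps, and (b) that $N\boxtimes_A(-)$ applied to the coisometry $\rho_A$ (equivalently its dual) is again an isometry in $\cC$ --- i.e.\ that splitting the self-adjoint separability idempotent, with its $-\tfrac12$-bubble unitors, is compatible with daggers and orthogonal retracts. Both claims are plausible and provable, but their proofs are precisely the kind of bubble/idempotent-absorption diagram manipulations that constitute the paper's direct five-line computation; you assert them without proof, and the free case you reduce to (unitarity of the free--forgetful adjunction with its bubble corrections) is itself not easier than Proposition \ref{prop:InternalEndRecognition}'s argument. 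So as written the proposal has a gap exactly at the step that carries the mathematical content; the paper's single direct trace computation is both shorter and avoids the machinery you would first need to build.
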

\begin{proof}
It suffices to prove the above map is isometric:
\begin{align*}
\left\|
\tikzmath{
\draw[thick] (.2,-.3) --node[right]{$\scriptstyle M$} (.2,-.7);
\draw[thick] (-.2,-.3) --node[left]{$\scriptstyle c$} (-.2,-.7);
\draw[thick] (0,.3) --node[right]{$\scriptstyle N$} (0,.7);
\roundNbox{}{(0,0)}{.3}{.1}{.1}{$f$}
}
\right\|_{\cC_A}^2
&=
\Tr^{\cC_A}_N\left(
\tikzmath{
\draw[thick] (.2,-.3) --node[right]{$\scriptstyle M$} (.2,-.7);
\draw[thick] (-.2,-.3) --node[left]{$\scriptstyle c$} (-.2,-.7);
\draw[thick] (0,.3) --node[right]{$\scriptstyle N$} (0,.7);
\draw[thick] (0,-1.3) --node[right]{$\scriptstyle N$} (0,-1.7);
\roundNbox{}{(0,0)}{.3}{.1}{.1}{$f$}
\roundNbox{}{(0,-1)}{.3}{.1}{.1}{$f^\dag$}
}
\right)
=
\psi_\cC\left(
\tikzmath{
\draw[thick] (.2,-.3) --node[right]{$\scriptstyle M$} (.2,-.7);
\draw[thick] (-.2,-.3) --node[left]{$\scriptstyle c$} (-.2,-.7);
\draw[thick] (0,.3) --node[left]{$\scriptstyle N$} (0,.7) arc(0:180:.3cm) --node[left]{$\scriptstyle N^\vee$} (-.6,-1.3) arc(-180:0:.3cm);
%\draw[thick] (0,-1.3) --node[right]{$\scriptstyle N$} (0,-1.7);
\draw[thick, \AsColor] (0,.7) to[out=-45,in=90] (.6,.1);
\filldraw[thick, \AsColor, fill=white] (.6,0) node[above,xshift=.2cm]{$\scriptstyle -1$} circle (.1cm);
\roundNbox{}{(0,0)}{.3}{.1}{.1}{$f$}
\roundNbox{}{(0,-1)}{.3}{.1}{.1}{$f^\dag$}
}
\right)
=
\psi_\cC\left(
\tikzmath{
\draw[thick] (.2,-.3) -- (.2,-.8) arc(-180:0:.3cm) --node[right]{$\scriptstyle M^\vee$} (.8,.7) arc(0:180:.9 and .6) -- (-1,-3.3) arc(-180:0:.9 and .6) -- (.8,-1.8) arc(0:180:.3cm) -- (.2,-2.3);
\draw[thick] (-.2,-.3) --node[right]{$\scriptstyle c$} (-.2,-2.3);
\draw[thick] (0,.3) --node[left]{$\scriptstyle N$} (0,.7) arc(0:180:.3cm) -- (-.6,-3.3) arc(-180:0:.3cm) -- (0,-2.9);
\draw[thick, \AsColor] (0,.5) -- (.8,.5);
\draw[thick, \AsColor] (0,-3.1) -- (.8,-3.1);
\draw[thick, \AsColor] (.2,-.5) -- (.5,-.8);
\filldraw[thick, \AsColor, fill=white] (.5,-.8) node[above]{$\scriptstyle -\frac{1}{2}$} circle (.1cm);
\draw[thick, \AsColor] (.2,-1.8) -- (.5,-2.1);
\filldraw[thick, \AsColor, fill=white] (.5,-2.1) node[above]{$\scriptstyle -\frac{1}{2}$} circle (.1cm);
\roundNbox{}{(0,0)}{.3}{.1}{.1}{$f$}
\roundNbox{}{(0,-2.6)}{.3}{.1}{.1}{$f^\dag$}
}
\right)
\\&=
\left\|
\tikzmath{
\draw[thick] (.2,-.3) -- (.2,-.8) arc(-180:0:.3cm) --node[right]{$\scriptstyle M^\vee$} (.8,.7);
\draw[thick] (-.2,-.3) --node[left]{$\scriptstyle c$} (-.2,-1.3);
\draw[thick] (0,.3) --node[left]{$\scriptstyle N$} (0,.7);
\draw[thick, \AsColor] (0,.5) -- (.8,.5);
\draw[thick, \AsColor] (.2,-.5) -- (.5,-.8);
\filldraw[thick, \AsColor, fill=white] (.5,-.8) node[above]{$\scriptstyle -\frac{1}{2}$} circle (.1cm);
\roundNbox{}{(0,0)}{.3}{.1}{.1}{$f$}
}
\right\|^2_{\cC}
\qedhere
\end{align*}
\end{proof}

\begin{rem}
For an algebra $(A,\mu,\iota)\in\cC$,
there is a canonical isomorphism

\begin{align*}
\set{f\in\cC(1\to A)}{\,\,
\tikzmath{
\draw[thick,\AsColor] (.6,-.7) -- (.6,1);
\draw[thick, \AsColor] (0,.3) to[out=90,in=-135] (.6,.7);
\roundNbox{fill=white}{(0,0)}{.3}{0}{0}{$f$}
}
\,\,
=
\,\,
\tikzmath{
\draw[thick,\AsColor] (-.6,-.7) -- (-.6,1);
\draw[thick, \AsColor] (0,.3) to[out=90,in=-45] (-.6,.7);
\roundNbox{fill=white}{(0,0)}{.3}{0}{0}{$f$}
}
}
\cong
\End({}_AA_A)
\quad\text{where}\quad
\tikzmath{
\draw[thick, \AsColor] (0,.3) -- (0,.7);
\roundNbox{fill=white}{(0,0)}{.3}{0}{0}{$f$}
}
\,\,\mapsto \,\,
\tikzmath{
\draw[thick, \AsColor] (.6,-.7) -- (.6,1);
\draw[thick, \AsColor] (0,.3) to[out=90,in=-135] (.6,.7);
\roundNbox{fill=white}{(0,0)}{.3}{0}{0}{$f$}
}\,;
\quad
\tikzmath{
\draw[thick, \AsColor] (0,.3) -- (0,.7);
\draw[thick, \AsColor] (0,-.3) -- (0,-.6);
\filldraw[\AsColor] (0,-.6) circle (.05cm);
\roundNbox{fill=white}{(0,0)}{.3}{0}{0}{$g$}
}
\,\,\mapsfrom \,\,
\tikzmath{
\draw[thick, \AsColor] (0,.3) -- (0,.7);
\draw[thick, \AsColor] (0,-.3) -- (0,-.7);
\roundNbox{fill=white}{(0,0)}{.3}{0}{0}{$g$}
}\,.
\end{align*}
\end{rem}

We now combine the above results into the main result of this subsection.

\begin{thm}
\label{thm:EveryModuleComesFromH*Alg}
A unitary $\cC$-module category equipped with a unitary trace
$(\cM,\Tr^\cM)$ is isometrically equivalent to $(\cC_A,\Tr^{\cC_A})$ where $A=[m,m]_\cC$ for any generator $m\in\cM$.
Hence every $(\cM,\Tr^\cM)$ comes from an $\rmH^*$-algebra.
\end{thm}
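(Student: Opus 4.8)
The plan is to realize the equivalence through the unitary internal hom out of the generator and then reduce the isometry statement to a single scalar computation, using the near-uniqueness of module traces.

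First I would set up the underlying equivalence. Fix a generator $m\in\cM$ and put $A:=[m,m]_\cC$, which carries its canonical $\rmH^*$-algebra structure by Sub-Example \ref{ex:H*AlgOnEnd(m)}. By \ref{H*Mod:UnitaryInternalHom} the unitary internal hom $F:=[m,-]_\cC\colon\cM\to\cC$ is a unitary $\cC$-module functor, and it is a $\dag$-functor as the unitary adjoint of $m\lhd-$ via Lemma \ref{lem:UAFonModC}. The composition maps $[m,n]\otimes[m,m]\to[m,n]$ equip each $[m,n]$ with a right $A$-action, so $F$ factors through a unitary $\cC$-module functor $F\colon\cM\to\cC_A$ sending $m$ to the regular module $A_A$. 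Since $m$ generates $\cM$, the standard internal-hom characterization of module categories shows $F$ is an equivalence of $\cC$-module categories; being in addition a $\dag$-functor between semisimple $\rmC^*$-module categories, it is a unitary $\cC$-module equivalence.

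Next I would fix the trace and transport it. Equip $\cC_A$ with the module trace $\Tr^{\cC_A}$ of \eqref{eq:TraceOnC_A}; by Proposition \ref{prop:InternalEndRecognition} and the discussion preceding Lemma \ref{lem:module_trace} this is a faithful unitary $\cC$-module trace. Pulling it back along $F$ yields a faithful unitary $\cC$-module trace $F^*\Tr^{\cC_A}$ on $\cM$. By Corollary \ref{cor:ExistsModuleTrace} a faithful unitary $\cC$-module trace is unique up to a positive scalar on each indecomposable component, so $F^*\Tr^{\cC_A}=\lambda\cdot\Tr^\cM$ for a tuple $\lambda$ of positive scalars indexed by the components of $\cM$. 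Because $m$ is a generator it has a nonzero summand in every component, so it suffices to verify that $F$ preserves the trace of $\id_m$ component-by-component to conclude $\lambda\equiv 1$, i.e.\ that $F$ is isometric.

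Finally I would carry out the scalar check. Evaluating \eqref{eq:TraceOnC_A} at $f=\id_A$ gives $\Tr^{\cC_A}_{A_A}(\id_{A_A})=\psi(\iota^\dag\circ\iota)$; unwinding the internal end $A=m^{\op}\otimes m$ of Sub-Example \ref{ex:H*AlgOnEnd(m)}, where $\iota=\coev_m$, this equals $\psi(\coev_m^\dag\circ\coev_m)=\Tr^\cM_m(\id_m)=d_m$, and the analogous identity holds on each component. The main obstacle is precisely this last diagrammatic identity, namely that the $\psi$-weighted cap--cup on $A$ reproduces the module trace on the generator; however this is essentially built into the construction of $A$ and the engineered choice \eqref{eq:TraceOnC_A}, so I expect it to reduce to a short graphical manipulation using sphericality of $\psi$. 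Combining the unitary module equivalence $F$ with $\lambda\equiv 1$ then yields the isometric equivalence $(\cM,\Tr^\cM)\cong(\cC_A,\Tr^{\cC_A})$, so every $(\cM,\Tr^\cM)$ arises from the $\rmH^*$-algebra $A=[m,m]_\cC$.
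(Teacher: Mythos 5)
Your proposal is correct in substance but routes the isometry statement differently from the paper, and the comparison is instructive. The paper's proof takes the functor $c\rhd m\mapsto c\otimes A$ on the subcategory of objects $c\rhd m$ (extended by Cauchy completion) and verifies trace preservation \emph{directly for every endomorphism} $f\in\cM(c\rhd m\to c\rhd m)$: it writes $\Tr^\cM_{c\rhd m}(f)=\langle \id_{c\rhd m}\,|\,f\rangle_\cM$, uses unitarity of the adjunction $\cM(c\rhd m\to d\rhd m)\cong\cC(c\to d\otimes A)$ to convert this into $\langle \id_c\otimes\coev_m\,|\,\mate(f)\rangle_\cC$, and then recognizes the result as $\Tr^{\cC_A}_{c\otimes A}(\mate(f))$ via \eqref{eq:TraceOnC_A}. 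You instead invoke the uniqueness clause of Corollary \ref{cor:ExistsModuleTrace} to conclude that the pulled-back trace is a componentwise positive multiple of $\Tr^\cM$, reducing the problem to a scalar check. This buys a shorter verification (one trace value per component instead of all endomorphisms of all $c\rhd m$), at the cost of leaning on the uniqueness result; the paper's computation is self-contained and, as a bonus, exhibits the trace-preservation identity explicitly for all morphisms, which is what actually gets reused downstream.

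Two points need tightening. First, your scalar check as written is done at the \emph{full} generator $m$: the identity $\Tr^{\cC_A}_A(\id_A)=\psi(\iota^\dag\iota)=\Tr^\cM_m(\id_m)$ only pins down the $\lambda_i$-weighted sum of the $d_{m_i}$ over the components, not each $\lambda_i$ separately. To conclude $\lambda_i=1$ you must run the same computation with the component projections $p_i\in\End_\cM(m)$ (equivalently, at each summand $m_i$), i.e.\ show $\Tr^{\cC_A}_A(F(p_i))=\psi\bigl(\iota^\dag\circ F(p_i)\circ\iota\bigr)=\langle\mate(\id_m)\,|\,\mate(p_i)\rangle_\cC=\Tr^\cM_m(p_i)$. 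Second, the mechanism making this work is not sphericality of $\psi$ but precisely the \emph{unitarity of the adjunction} $\cM(m\to m)\cong\cC(1_\cC\to[m,m]_\cC)$ (mates preserve inner products), combined with $\Tr^\cC_{1_\cC}=\psi$ on $\End(1_\cC)$. Once you write this out, your "short graphical manipulation" is exactly the paper's key computation specialized to $c=1_\cC$ and $f=p_i$, so the two proofs share the same engine; yours simply needs it for fewer morphisms.
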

\begin{proof}
We first recall the construction of the unitary $\cC$-module equivalence $\cM\to \cC_A$.
We first define it on the full subcategory $\cM_0\subset \cM$ whose objects are of the form $c\rhd m$ for $c\in\cC$.
We then extend it to an equivalence $\cM\to\cC_A$ by the universal property of Cauchy completion.
The functor $\cM_0\to \cC_A$ is given on objects by $c\rhd m\mapsto c\otimes A$ and on morphisms by
$$
\tikzmath{
\draw[thick, black] (-.15,-.7) --node[left]{$\scriptstyle c$} (-.15,-.3);
\draw[thick, black] (-.15,.7) --node[left]{$\scriptstyle d$} (-.15,.3);
\draw[thick, blue] (.15,-.7) --node[right]{$\scriptstyle m$} (.15,-.3);
\draw[thick, blue] (.15,.7) --node[right]{$\scriptstyle m$} (.15,.3);
\roundNbox{fill=white}{(0,0)}{.3}{0}{0}{$f$}
}
\longmapsto
\tikzmath{
\draw[thick, black] (0,-.7) --node[left]{$\scriptstyle c$} (0,-.3);
\draw[thick, black] (-.5,.7) --node[left]{$\scriptstyle d$} (-.5,.3);
\draw[thick, \AsColor] (.5,.3) to[out=90,in=-135] node[above]{$\scriptstyle A$} (1,.7);
\draw[thick, \AsColor] (1,-.7) --node[right]{$\scriptstyle A$} (1,1);
\roundNbox{fill=white}{(0,0)}{.3}{.5}{.5}{$\mate(f)$}
}
$$
where the above mate is taken under the unitary adjunction
$$
\cM(c\rhd m\to d\rhd m)
\cong 
\cC(c\to d\otimes A).
$$
We also recall from Sub-Example \ref{ex:H*AlgOnEnd(m)} (which is explained in terms of right $\cC$-module categories instead of left $\cC$-module categories) that $A$ has unit given by $\coev_m: 1\to [m,m]_\cC$ defined as the mate of the identity $\id_m\in\cM(m,m)$ under the unitary adjunction
$$
\cM(m\to m)\cong \cM(1_\cC\rhd m\to m) \cong \cC(1_\cC\to [m,m]_\cC).
$$
We now compute using Proposition \ref{prop:InternalEndRecognition} that for all $f\in \cM(c\rhd m\to c\rhd m)$,
\begin{align*}
\Tr^\cM_{c\rhd m}(f)
&=
\langle \id_{c\rhd m} | f \rangle_{\cM}
=
\langle \id_c\otimes \coev_m | \mate(f) \rangle_{\cC}
\\&=
\psi\left(
\tikzmath{
\draw[thick, black] (-.5,.3) arc(0:180:.3cm) --node[left]{$\scriptstyle c^\vee$} (-1.1,-.3) arc(-180:0:.3);
\filldraw[thick, \AsColor] (.5,.3) -- node[right]{$\scriptstyle A$} (.5,.7) circle (.05cm);
\roundNbox{fill=white}{(0,0)}{.3}{.5}{.5}{$\mate(f)$}
}
\right)
\underset{\text{\eqref{eq:TraceOnC_A}}}{=}
\Tr^{\cC_A}_{c\otimes A}\left(\,
\tikzmath{
\draw[thick, black] (0,-.7) --node[left]{$\scriptstyle c$} (0,-.3);
\draw[thick, black] (-.5,.7) --node[left]{$\scriptstyle c$} (-.5,.3);
\draw[thick, \AsColor] (.5,.3) to[out=90,in=-135] node[above]{$\scriptstyle A$} (1,.7);
\draw[thick, \AsColor] (1,-.7) --node[right]{$\scriptstyle A$} (1,1);
\roundNbox{fill=white}{(0,0)}{.3}{.5}{.5}{$\mate(f)$}
}\,
\right)
.
\end{align*}
The result now follows as $m$ generates $\cM$ and $A$ generates $\cC_A$ as left $\cC$-modules.
\end{proof}

%%%%%%%%%%%%%%%%%%%%%%%%%%%%%%%%%%%%%%%%%%%%%%%%%%%%%%%%%%%%%%%%%%%%%%%%%%%%%%%%%%
\subsection{Equivalence of unitary 2-categories \texorpdfstring{ $\HstarAlg(\cC)\cong \Mod^\dag(\cC)$}{H*Alg(C)==Mod(C)}}
\label{sec:H*Alg==ModC}

By Theorem \ref{thm:EveryModuleComesFromH*Alg}, we have a unitary equivalence of unitary 2-categories $\HstarAlg(\cC)\to \Mod^\dag(\cC)$ given by
\begin{itemize}
    \item $A\mapsto \cC_A$,
    \item ${}_AM_B \mapsto -\boxtimes_A M_B$, and
    \item $(f:{}_AM_B\Rightarrow {}_AN_B) \mapsto -\boxtimes_A f$.
\end{itemize}
We saw in Lemma \ref{lem:UAFonModC} above that $\Mod^\dag(\cC)$ is equipped with a canonical UAF given by unitary adjunction, and in \eqref{eq:1ParameterUAFOnH*Alg}  above, we found a 1-parameter family of UAFs on $\HstarAlg(\cC)$.
We now choose the correct UAF on $\HstarAlg(\cC)$ making this unitary equivalence preserve the UAFs, i.e., the unitary adjoint of $-\boxtimes_A M_B$ is $-\boxtimes_B M^\vee_A$.

\begin{prop}
The choice $\delta = 0$ from \eqref{eq:1ParameterUAFOnH*Alg} 
for the UAF on $\HstarAlg(\cC)$
make the functors $-\boxtimes_A M_B:\cC_A \to \cC_B$ and $-\boxtimes_B M_A^\vee : \cC_B \to \cC_A$ unitary adjoints.
Hence the unitary equivalence $\HstarAlg(\cC)\to \Mod^\dag(\cC)$ is compatible with UAFs and thus unitary pivotal.
\end{prop}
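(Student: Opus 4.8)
The plan is to verify directly the unitarity criterion of Example~\ref{ex:UnitaryAdjunction}: an ordinary adjunction $F\dashv G$ between objects of $\Mod^\dag(\cC)$ is a \emph{unitary} adjunction precisely when the natural bijection $\cC_B(F(N)\to P)\cong\cC_A(N\to G(P))$ is an isometry for the inner products induced by the module traces. Since $-\boxtimes_A M_B$ and $-\boxtimes_B M^\vee_A$ already form an ordinary biadjunction, with unit and counit built from the maps $\coev_M^\delta\colon A\to M\boxtimes_B M^\vee$ and $\ev_M^\delta\colon M^\vee\boxtimes_A M\to B$ of \eqref{eq:1ParameterUAFOnH*Alg}, the entire content is to show that the mate bijection is isometric exactly when $\delta=0$.

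First I would reduce to free modules. Both module traces are additive over Hilbert direct sums and the isometry condition is natural, so by Lemma~\ref{lem:module_trace} and the fact that every object of $\cC_A$ is a retract of a free module it suffices to test the condition on hom-spaces between $N=c\otimes A$ and $P=d\otimes B$; on such objects the relevant morphisms are computed by the internal-hom and mate formulas underlying Theorem~\ref{thm:EveryModuleComesFromH*Alg}. There, via Proposition~\ref{prop:InternalEndRecognition} and the trace formula \eqref{eq:TraceOnC_A}, both inner products $\langle f\mid g\rangle$ and $\langle\tilde f\mid\tilde g\rangle$ can be written as $\psi$ applied to an explicit string diagram in $\cC$ containing a single $M^\vee$-loop together with bubble factors $(\mu_A\mu_A^\dag)^{r}$ and $(\mu_B\mu_B^\dag)^{s}$.

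The heart of the argument is then an exponent count. On the $\cC_B$-side the inner product picks up the factor $(\mu_B\mu_B^\dag)^{-1}$ from the trace \eqref{eq:TraceOnC_A} on $\cC_B$, the bubbles carried by $\ev_M^\delta$, and the $(\mu_B\mu_B^\dag)^{-1/2}$ unitors hidden in $\boxtimes_B$; on the $\cC_A$-side the mates $\tilde f,\tilde g$ pick up the analogous contributions from $\coev_M^\delta$, the trace on $\cC_A$, and the $\boxtimes_A$-unitors. Using the $\rmC^*$-Frobenius and standardness axioms \ref{H:Frobenius} and \ref{H:Standard} to slide the bubbles through the $M$-strings and to pop the $M^\vee$-loop against $\psi$, all $A$-bubbles collapse to a single power of $\mu_A\mu_A^\dag$ and likewise for $B$. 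A direct tally shows the two sides coincide iff these net exponents vanish, which happens exactly when $\delta=0$; for that value the two diagrams are literally equal after the Frobenius moves.

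The main obstacle is precisely this bubble bookkeeping: because the algebra is not special, $\mu\mu^\dag$ no longer pops to the identity, so each separability idempotent in a relative tensor product and each unitor contributes a fractional power that must be tracked through every isotopy, and one must check that the $A$- and $B$-contributions separate cleanly (they do, since the $\ev/\coev$ of \eqref{eq:1ParameterUAFOnH*Alg} keep the two colours on opposite sides of the cup and cap). Once the $\delta=0$ adjunction is known to be unitary, compatibility of the equivalence $\HstarAlg(\cC)\to\Mod^\dag(\cC)$ with the UAFs is immediate: the equivalence carries $\ev_M^0,\coev_M^0$ to the evaluation and coevaluation of unitary adjunction in $\Mod^\dag(\cC)$ (Lemma~\ref{lem:UAFonModC}), so the functors $\vee$ are intertwined, and since for every $\delta$ the inclusion $\cC\hookrightarrow\HstarAlg(\cC)$ is unitary pivotal, the induced iconic pivotal structures agree, yielding that the equivalence is unitary pivotal.
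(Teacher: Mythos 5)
Your proposal is correct, and its engine is the same as the paper's: reduce unitarity of the adjunction to isometry of the mate bijection (composition with the $\delta=0$ coevaluation is invertible, hence unitary iff isometric), then verify isometry by writing both inner products via the trace formula \eqref{eq:TraceOnC_A} of Proposition \ref{prop:InternalEndRecognition} and manipulating the bubbles with the Frobenius property \ref{H:Frobenius} and the separability idempotent. The difference is organizational. The paper performs no reduction: it writes $\mate(f)$ for an arbitrary $f \in \cC_B(N\boxtimes_A M \to P)$ and proves $\|\mate(f)\|^2_{\cC_A} = \|f\|^2_{\cC_B}$ in a single chain of diagram equalities valid for all $N,M,P$ at once, carried out only for $\delta=0$. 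You instead first reduce to free modules $c\otimes A$, $d\otimes B$ via the coisometric retractions of Lemma \ref{lem:module_trace} --- this is legitimate, since coisometric retracts induce isometric embeddings of hom-spaces which the mate bijection intertwines by naturality --- and then run a general-$\delta$ exponent count. What your route buys: the relative product $\boxtimes_A$ trivializes on free modules, and you obtain the sharper conclusion that $\delta=0$ is the \emph{unique} value in the family \eqref{eq:1ParameterUAFOnH*Alg} making the adjunction unitary (the paper proves only sufficiency, which is all the proposition claims). What it costs: the crucial claim in your sketch --- that after Frobenius moves all $A$- and $B$-bubbles collapse to single powers whose exponents can simply be tallied --- is asserted rather than proved; that collapse is precisely the content of the paper's explicit computation, so the hard diagrammatic work is identical in both proofs and your write-up would still need to display those diagrams. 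Both arguments treat the final statement, compatibility of the equivalence $\HstarAlg(\cC)\to\Mod^\dag(\cC)$ with the UAFs, as immediate once the adjunction is known to be unitary.
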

\begin{proof}
The isomorphism $\cC_B(N_A\boxtimes_A M_B\to P_B) \cong \cC_A(N_A\to  P_B\boxtimes_B M_A^\vee)$ is given by composing with the coevaluation. 
$$
\tikzmath{
\roundNbox{fill=white}{(0,0)}{.3}{.2}{.2}{$f$}
\draw[thick] (-.3,-.7) --node[left]{$\scriptstyle N$} (-.3,-.3);
\draw[thick] (.3,-.7) --node[right]{$\scriptstyle M$} (.3,-.3);
\draw[thick] (0,.7) --node[left]{$\scriptstyle P$} (0,.3);
\draw[thick,\AsColor] (-.3,-.5) -- (.3,-.5);
}
\longmapsto
\tikzmath{
\roundNbox{fill=white}{(0,0)}{.3}{.2}{.2}{$f$}
\draw[thick] (-.3,-1.8) --node[left]{$\scriptstyle N$} (-.3,-.3);
\draw[thick] (.3,-.3) -- (.3,-1.2) arc(-180:0:.4cm) --node[right]{$\scriptstyle M^\vee$} (1.1,.7);
\draw[thick] (0,.7) --node[left]{$\scriptstyle P$} (0,.3);
\draw[thick,\AsColor] (-.3,-.5) -- (.3,-.5);
\draw[thick,\BsColor] (0,.5) -- (1.1,.5);
\node[\BsColor] at (.8,-.3) {$\scriptstyle -\frac{1}{2}$};
\draw[thick,\BsColor] (.6,-.5) circle (.1cm);
\draw[thick, \BsColor] (.3,-.9) to[out=45,in=-90] (.6,-.6);
\draw[thick, \AsColor] (.3,-1) -- (-.3,-1.4);
\filldraw[thick,\AsColor,fill=white] (0,-1.2) circle (.1cm);
\node[\AsColor] at (0,-.85) {$\scriptstyle -\frac{1}{2}$};
}
=
\tikzmath{
\roundNbox{fill=white}{(0,0)}{.3}{.2}{.2}{$f$}
\draw[thick] (-.3,-1.8) --node[left]{$\scriptstyle N$} (-.3,-.3);
\draw[thick] (.3,-.3) -- (.3,-1.2) arc(-180:0:.4cm) --node[right]{$\scriptstyle M^\vee$} (1.1,.7);
\draw[thick] (0,.7) --node[left]{$\scriptstyle P$} (0,.3);
\draw[thick,\AsColor] (-.3,-.5) -- (.3,-.5);
%\draw[thick,\BsColor] (0,.5) -- (1.1,.5);
\node[\BsColor] at (.8,-.3) {$\scriptstyle -\frac{1}{2}$};
\draw[thick,\BsColor] (.6,-.5) circle (.1cm);
\draw[thick, \BsColor] (.3,-.9) to[out=45,in=-90] (.6,-.6);
\draw[thick, \AsColor] (.3,-1) -- (-.3,-1.4);
\filldraw[thick,\AsColor,fill=white] (0,-1.2) circle (.1cm);
\node[\AsColor] at (0,-.85) {$\scriptstyle -\frac{1}{2}$};
}
$$
Since this map is invertible, it is unitary if and only if it is isometric.
Using Proposition \ref{prop:InternalEndRecognition}, we compute
\begin{align*}
\left\|
\tikzmath{
\roundNbox{fill=white}{(0,0)}{.3}{.2}{.2}{$f$}
\draw[thick] (-.3,-1.8) --node[left]{$\scriptstyle N$} (-.3,-.3);
\draw[thick] (.3,-.3) -- (.3,-1.2) arc(-180:0:.4cm) --node[right]{$\scriptstyle M^\vee$} (1.1,.7);
\draw[thick] (0,.7) --node[left]{$\scriptstyle P$} (0,.3);
\draw[thick,\AsColor] (-.3,-.5) -- (.3,-.5);
\node[\BsColor] at (.8,-.3) {$\scriptstyle -\frac{1}{2}$};
\draw[thick,\BsColor] (.6,-.5) circle (.1cm);
\draw[thick, \BsColor] (.3,-.9) to[out=45,in=-90] (.6,-.6);
\draw[thick, \AsColor] (.3,-1) -- (-.3,-1.4);
\filldraw[thick,\AsColor,fill=white] (0,-1.2) circle (.1cm);
\node[\AsColor] at (0,-.85) {$\scriptstyle -\frac{1}{2}$};
}
\right\|^2_{\cC_A}
&=
\Tr^{\cC_A}\left(
\tikzmath{
\roundNbox{fill=white}{(0,0)}{.3}{.2}{.2}{$f$}
\draw[thick] (-.3,-3.6) --node[left]{$\scriptstyle N$} (-.3,-.3);
\draw[thick] (.3,-.3) -- (.3,-1.2) arc(-180:0:.4cm) --node[right]{$\scriptstyle M^\vee$} (1.1,.7);
\draw[thick] (0,.7) --node[left]{$\scriptstyle P$} (0,.3);
\draw[thick,\AsColor] (-.3,-.5) -- (.3,-.5);
\node[\BsColor] at (.8,-.3) {$\scriptstyle -\frac{1}{2}$};
\draw[thick,\BsColor] (.6,-.5) circle (.1cm);
\draw[thick, \BsColor] (.3,-.9) to[out=45,in=-90] (.6,-.6);
\draw[thick, \AsColor] (.3,-1) -- (-.3,-1.4);
\filldraw[thick,\AsColor,fill=white] (0,-1.2) circle (.1cm);
\node[\AsColor] at (0,-.85) {$\scriptstyle -\frac{1}{2}$};
\roundNbox{fill=white}{(0,-3.6)}{.3}{.2}{.2}{$f^\dag$}
\draw[thick] (0,-3.9) --node[left]{$\scriptstyle P$} (0,-4.3);
\draw[thick] (.3,-3.3) -- (.3,-2.4) arc(180:0:.4cm) --node[right]{$\scriptstyle M^\vee$} (1.1,-4.3);
\draw[thick,\AsColor] (-.3,-3.1) -- (.3,-3.1);
\draw[thick, \AsColor] (.3,-2.6) -- (-.3,-2.2);
\filldraw[thick,\AsColor,fill=white] (0,-2.4) circle (.1cm);
\node[\AsColor] at (0,-2.75) {$\scriptstyle -\frac{1}{2}$};
\draw[thick,\BsColor] (.6,-3.1) circle (.1cm);
\draw[thick, \BsColor] (.3,-2.7) to[out=315,in=90] (.6,-3);
\node[\BsColor] at (.8,-3.3) {$\scriptstyle -\frac{1}{2}$};
}
\right)
=
\psi\left(
\tikzmath{
\roundNbox{fill=white}{(0,0)}{.3}{.2}{.2}{$f$}
\draw[thick] (-.3,-3.6) --node[left]{$\scriptstyle N$} (-.3,-.3);
\draw[thick] (.3,-.3) -- (.3,-1.2) arc(-180:0:.4cm) --node[right]{$\scriptstyle M^\vee$} (1.1,.3) arc (0:180:1.1 and .8) -- (-1.1,-3.9) arc (-180:0:1.1 and .8); % M
\draw[thick] (0,.3) arc (0:180:.4) --  (-.8,-3.9) arc (-180:0:.4); % P
\node at (.2,.5) {$\scriptstyle P$};
\draw[thick,\AsColor] (-.3,-.5) -- (.3,-.5);
\node[\BsColor] at (.8,-.3) {$\scriptstyle -\frac{1}{2}$};
\draw[thick,\BsColor] (.6,-.5) circle (.1cm);
\draw[thick, \BsColor] (.3,-.9) to[out=45,in=-90] (.6,-.6);
\draw[thick, \AsColor] (.3,-1) -- (-.3,-1.4);
\filldraw[thick,\AsColor,fill=white] (0,-1.2) circle (.1cm);
\node[\AsColor] at (0,-.85) {$\scriptstyle -\frac{1}{2}$};
\roundNbox{fill=white}{(0,-3.6)}{.3}{.2}{.2}{$f^\dag$}
\draw[thick] (.3,-3.3) -- (.3,-2.4) arc(180:0:.4cm) -- (1.1,-3.9);
\draw[thick,\AsColor] (-.3,-3.1) -- (.3,-3.1);
\draw[thick, \AsColor] (.3,-2.6) -- (-.3,-2.2);
\filldraw[thick,\AsColor,fill=white] (0,-2.4) circle (.1cm);
\node[\AsColor] at (0,-2.75) {$\scriptstyle -\frac{1}{2}$};
\draw[thick,\BsColor] (.6,-3.1) circle (.1cm);
\draw[thick, \BsColor] (.3,-2.7) to[out=315,in=90] (.6,-3);
\node[\BsColor] at (.8,-3.3) {$\scriptstyle -\frac{1}{2}$};
\draw[thick, \AsColor] (1.1,-2.7) to[out=315,in=90] (1.4,-3);
\filldraw[thick,\AsColor,fill=white] (1.4,-3.1) circle (.1cm);
\node[\AsColor] at (1.5,-3.3) {$\scriptstyle -1$};
}
\right)
\displaybreak[1]\\
&=\psi\left(
\tikzmath{
\roundNbox{fill=white}{(0,0)}{.3}{.2}{.2}{$f$}
\draw[thick] (-.3,-3.6) --node[left]{$\scriptstyle N$} (-.3,-.3);
\draw[thick] (.3,-.3) -- (.3,-1) arc(-180:0:.4cm) --node[right]{$\scriptstyle M^\vee$} (1.1,.3) arc (0:180:1.1 and .8) -- (-1.1,-3.9) arc (-180:0:1.1 and .8); % M
\draw[thick] (0,.3) arc (0:180:.4) --  (-.8,-3.9) arc (-180:0:.4); % P
\node at (.2,.5) {$\scriptstyle P$};
\draw[thick,\AsColor] (-.3,-.5) -- (.3,-.5);
\draw[thick, \AsColor] (.3,-1) to[out=225,in=90] (.1,-1.5) -- (.1,-2.1) to[out=-90,in=135] (.3,-2.6);
\draw[thick,\AsColor] (.1,-1.8) -- (-.3,-1.8);
\roundNbox{fill=white}{(0,-3.6)}{.3}{.2}{.2}{$f^\dag$}
\draw[thick] (.3,-3.3) -- (.3,-2.6) arc(180:0:.4cm) -- (1.1,-3.9);
\draw[thick,\AsColor] (-.3,-3.1) -- (.3,-3.1);
\draw[thick,\BsColor] (.6,-3.1) circle (.1cm);
\draw[thick, \BsColor] (.3,-2.7) to[out=315,in=90] (.6,-3);
\node[\BsColor] at (.8,-3.3) {$\scriptstyle -1$};
\draw[thick, \AsColor] (1.1,-2.7) to[out=315,in=90] (1.4,-3);
\filldraw[thick,\AsColor,fill=white] (1.4,-3.1) circle (.1cm);
\node[\AsColor] at (1.5,-3.3) {$\scriptstyle -1$};
}
\right)
=\psi\left(
\tikzmath{
\roundNbox{fill=white}{(0,0)}{.3}{.2}{.2}{$f$}
\draw[thick] (-.3,-3.6) --node[left]{$\scriptstyle N$} (-.3,-.3);
\draw[thick] (.3,-.3) -- (.3,-1) arc(-180:0:.4cm) -- (1.1,.3) arc (180:0:.4) -- node[right]{$\scriptstyle M$} (1.9,-3.9) arc (0:-180:.4); % M
\draw[thick] (0,.3) arc (0:180:.4) --  (-.8,-3.9) arc (-180:0:.4); % P
\node at (.2,.5) {$\scriptstyle P$};
\draw[thick, \AsColor] (.3,-1) to[out=225,in=90] (.1,-1.5) -- (.1,-2.1) to[out=-90,in=135] (.3,-2.6);
\draw[thick,\AsColor] (.1,-1.8) -- (-.3,-1.8);
\roundNbox{fill=white}{(0,-3.6)}{.3}{.2}{.2}{$f^\dag$}
\draw[thick] (.3,-3.3) -- (.3,-2.6) arc(180:0:.4cm) -- (1.1,-3.9);
\draw[thick,\BsColor] (.6,-3.1) circle (.1cm);
\draw[thick, \BsColor] (.3,-2.7) to[out=315,in=90] (.6,-3);
\node[\BsColor] at (.8,-3.3) {$\scriptstyle -1$};
\draw[thick, \AsColor] (.1,-1.5) to[out=315,in=90] (.4,-1.8);
\filldraw[thick,\AsColor,fill=white] (.4,-1.9) circle (.1cm);
\node[\AsColor] at (.7,-2) {$\scriptstyle -1$};
}
\right) \\
& =\psi\left(
\tikzmath{
\roundNbox{fill=white}{(0,0)}{.3}{.2}{.2}{$f$}
\draw[thick] (-.3,-2.6) --node[left]{$\scriptstyle N$} (-.3,-.3);
\draw[thick] (.3,-.3) -- node[right]{$\scriptstyle M$} (.3,-2.3); % M
\draw[thick] (0,.3) arc (0:180:.4) --  (-.8,-2.9) arc (-180:0:.4); % P
\node at (.2,.5) {$\scriptstyle P$};
\draw[thick,\AsColor] (.3,-1.3) -- (-.3,-1.3);
\roundNbox{fill=white}{(0,-2.6)}{.3}{.2}{.2}{$f^\dag$}
\draw[thick,\BsColor] (.6,-2.1) circle (.1cm);
\draw[thick, \BsColor] (.3,-1.7) to[out=315,in=90] (.6,-2);
\node[\BsColor] at (.8,-2.3) {$\scriptstyle -1$};
}
\right) 
=
\Tr^{\cC_B}\left(
\tikzmath{
\roundNbox{fill=white}{(0,0)}{.3}{.2}{.2}{$f$}
\draw[thick] (-.3,-2.6) --node[left]{$\scriptstyle N$} (-.3,-.3);
\draw[thick] (.3,-.3) -- node[right]{$\scriptstyle M$} (.3,-2.3); % M
\draw[thick] (0,.3) --node[left]{$\scriptstyle P$} (0,.7); % P
\draw[thick] (0,-2.9) --node[left]{$\scriptstyle P$} (0,-3.3);
\draw[thick,\AsColor] (.3,-.5) -- (-.3,-.5);
\draw[thick,\AsColor] (.3,-2.1) -- (-.3,-2.1);
\roundNbox{fill=white}{(0,-2.6)}{.3}{.2}{.2}{$f^\dag$}
}
\right)
= \left\|
\tikzmath{
\roundNbox{fill=white}{(0,0)}{.3}{.2}{.2}{$f$}
\draw[thick] (-.3,-.7) --node[left]{$\scriptstyle N$} (-.3,-.3);
\draw[thick] (.3,-.7) --node[right]{$\scriptstyle M$} (.3,-.3);
\draw[thick] (0,.7) --node[left]{$\scriptstyle P$} (0,.3);
\draw[thick,\AsColor] (-.3,-.5) -- (.3,-.5);
}
\right\|_{\cC_B}^2
\end{align*}
The third equality uses the Frobenius property and the definition of the split idempotent.
\end{proof}

%%%%%%%%%%%%%%%%%%%%%%%%%%%%%%%%%%%%%%%%%%%%%%%%%%%%%%%%%%%%%%%%%%%%%%%%%%%%%%%%%%
%%%%%%%%%%%%%%%%%%%%%%%%%%%%%%%%%%%%%%%%%%%%%%%%%%%%%%%%%%%%%%%%%%%%%%%%%%%%%%%%%%
%%%%%%%%%%%%%%%%%%%%%%%%%%%%%%%%%%%%%%%%%%%%%%%%%%%%%%%%%%%%%%%%%%%%%%%%%%%%%%%%%%
\section{3-Hilbert spaces}

Now that we have completed our analysis of $\rmH^*$-multifusion categories and their modules, we turn our attention to the notion of a 3-Hilbert space.
We begin with pre-3-Hilbert space, with the basic example begin an $\rmH^*$-multifusion category.
We then discuss Hilbert direct sums and $\rmH^*$-monads, together with their corresponding completion operations.
Finally, 3-Hilbert spaces are introduced as `completed' pre-3-Hilbert spaces, and we show they are all of the form $\Mod^\dag(\cC)\cong \HstarAlg(\cC)$ for an $\rmH^*$-multifusion category.

%%%%%%%%%%%%%%%%%%%%%%%%%%%%%%%%%%%%%%%%%%%%%%%%%%%%%%%%%%%%%%%%%%%%%%%%%%%%%%%%%%
\subsection{pre-3-Hilbert spaces}
\label{sec:Pre3Hilb}

\begin{defn}
A \emph{unitary 2-category} is a %semisimple 
$\dag$-2-category such that every $n$-fold linking algebra from Definition \ref{def:linkingE1algebra} with its obvious dagger structure is a unitary multitensor category. 
Thus unitary 2-categories are pre-semisimple.
A unitary 2-category is called \emph{finite} if it is finite as a pre-semisimple 2-category.
\end{defn}

\begin{ex}
For a unitary multitensor category $\cC$,
the 2-category of semisimple unitary $\cC$-module categories, dualizable unitary $\cC$-module functors, and uniformly bounded $\cC$-module natural transformations is a unitary 2-category.
When $\cC$ is unitary multifusion, the 2-category $\Mod^\dag(\cC)$ of finite semsimple unitary module categories is a finite unitary 2-category.
\end{ex}

\begin{defn}
A \emph{pre-3-Hilbert space} $(\fX, \vee,\Psi)$ is a finite unitary 2-category $\fX$ equipped with a UAF $\vee$ and a spherical tracial weight $\Psi$ on $\End(1_a)$ for all objects $a\in\fX$.
\end{defn}

Notice that given a pre-3-Hilbert space $(\fX,\vee,\Psi)$, we may uniformly rescale the spherical weight $\Psi$ on each component of $\fX$ independently to obtain a new pre-3-Hilbert space.

\begin{ex}\label{ex:BCIsAPre3Hilb}
The basic example of a pre-3-Hilbert space is $\rmB \cC$ where $(\cC,\vee,\psi)$ is an $\rmH^*$-multifusion category.
\end{ex}

\begin{ex}
\label{ex:LinkingH*Alg}
Given any pre-3-Hilbert space $(\fX,\vee,\Psi)$,
for every $a_1,\dots, a_n\in\fX$, 
the linking algebra $\cL(a_1,\dots, a_n)$ has an 
organic structure of an $\rmH^*$-multifusion category with UDF induced by the UAF of $\fX$
and
spherical weight $\psi$ induced by the spherical weight $\Psi^\fX$.
\end{ex}

\begin{remark}\label{rem:pre3Hilbislocally2Hilb}
A pre-3-Hilbert space structure on $\fX$ induces a 2-Hilbert space structure on each hom-category $\fX(a \to b)$. 
Indeed, for $X \in \fX(a \to b)$, we define $\Tr_X \colon \End(X) \to \bbC$ by  
$$
\Tr_X(f:X \Rightarrow X)
:=
\Psi_b\left(\,
\tikzmath{
\fill[rounded corners = 5pt, fill=\brColor] (-1.2,-.9) rectangle (.6,.9);
\fill[\arColor] (0,.3) arc (0:180:.3cm) -- (-.6,-.3) arc (-180:0:.3cm);
\draw[thick] (0,.3) node[right,yshift=.2cm]{$\scriptstyle X$} arc (0:180:.3cm) --node[left]{$\scriptstyle X^\vee$} (-.6,-.3) arc (-180:0:.3cm) node[right,yshift=-.2cm]{$\scriptstyle X$};
\roundNbox{fill=white}{(0,0)}{.3}{0}{0}{$f$}
}
\,\right)
=
\Psi_a\left(\,
\tikzmath{
\fill[rounded corners = 5pt, fill=\arColor] (1.2,-.9) rectangle (-.6,.9);
\fill[\brColor]  (0,.3) arc (180:0:.3cm) -- (.6,-.3) arc (0:-180:.3cm);
\draw[thick] (0,.3) node[left,yshift=.2cm]{$\scriptstyle X$} arc (180:0:.3cm) --node[right]{$\scriptstyle X^\vee$} (.6,-.3) arc (0:-180:.3cm) node[left,yshift=-.2cm]{$\scriptstyle X$};
\roundNbox{fill=white}{(0,0)}{.3}{0}{0}{$f$}
}
\right).
$$
Moreover, for a 1-morphism $X$ in $\fX$, pre/post-composing with $X^\vee$ is the unitary adjoint to pre/post-composing with $X$.

\begin{equation}
\begin{tikzcd}[column sep=5em]
\fX(a\to c)
\arrow[r,shift left=1,"{}_bX^\vee_a\otimes-"]
&
\fX(b\to c)
\arrow[l,shift left=1,"{}_aX_b\otimes-"]
\end{tikzcd}
\end{equation}
\end{remark}

\begin{ex}
\label{ex:WeightOnModC}
Let $(\cC,\vee,\psi)$ be an $\rmH^*$-multifusion category, and recall from Lemma \ref{lem:UAFonModC} that unitary adjunction is a UAF on $\Mod^\dag(\cC)$.
We claim that defining
$$
\Psi_\cM^{\Mod^\dag(\cC)} (\eta) \coloneqq \sum_{m \in \Irr(\cM)} d_m \Tr^\cM_{m}(\eta_m)
$$
for $\cC$-module natural transformations $\eta: \id_\cM\Rightarrow\id_\cM$ defines a spherical weight on $\Mod^\dag(\cC)$ endowing it with the structure of a pre-3-Hilbert space.

Indeed, for a $\cC$-module functor $F\colon \cM \to \cN$ and each $m\in\Irr(\cM)$ and $n\in\Irr(\cN)$, we fix an ONB 

$$
\left\{
\,
\tikzmath{
\begin{scope}
\clip[rounded corners=5pt] (.7,-.7) rectangle (-.7,.7);
\fill[fill=gray!50] (-.2,-.7) rectangle (.2,-.3);
\fill[fill=gray!20] (0,.7) -- (0,0) -- (-.2,-.3) -- (-.2,-.7) -- (-.7,-.7) -- (-.7,.7);
\end{scope}
\draw[thick] (-.2,-.7) node[below]{$\scriptstyle F$} -- (-.2,-.3);
\draw[thick] (.2,-.7) node[below]{$\scriptstyle m$} -- (.2,-.3);
\draw[thick] (0,.3) -- (0,.7) node[above]{$\scriptstyle n$};
\roundNbox{fill=white}{(0,0)}{.3}{.1}{.1}{$\phi$}
}
\,
 \right\}
 \subset
 \cN(F(m) \to n),
\qquad\quad
\tikzmath{
\fill[rounded corners=5pt, fill=gray!50] (0,0) rectangle (.6,.6);
}
=
\cM
\qquad
\tikzmath{
\fill[rounded corners=5pt, fill=gray!20] (0,0) rectangle (.6,.6);
}
=
\cN
\qquad
\tikzmath{
\draw[rounded corners=5pt,dotted] (0,0) rectangle (.6,.6);
}
=
\cC
$$
so that we have the relations
\begin{equation}
\tikzmath{
\begin{scope}
\clip[rounded corners=5pt] (.7,-1.7) rectangle (-.7,.7);
\fill[fill=gray!50] (-.2,-.7) rectangle (.2,-.3);
\fill[fill=gray!20] (0,.7) -- (0,0) -- (-.2,-.3) -- (-.2,-.7) -- (0,-1.3) -- (0,-1.7) -- (-.7,-1.7) -- (-.7,.7);
\end{scope}
\draw[thick] (-.2,-.7) -- node[left]{$\scriptstyle F$} (-.2,-.3);
\draw[thick] (.2,-.7) -- node[right]{$\scriptstyle m$} (.2,-.3);
\draw[thick] (0,.3) -- (0,.7) node[above]{$\scriptstyle n$};
\draw[thick] (0,-1.3) -- (0,-1.7) node[below]{$\scriptstyle n$};
\roundNbox{fill=white}{(0,0)}{.3}{.1}{.1}{$\phi'$}
\roundNbox{fill=white}{(0,-1)}{.3}{.1}{.1}{$\phi^\dag$}
}
= 
\delta_{\phi=\phi'}d_n^{-1} \,
\tikzmath{
\begin{scope}
\clip[rounded corners=5pt] (.7,.7) rectangle (-.7,-1.7);
\fill[fill=gray!20] (0,.7) rectangle (-.7,-1.7);
\end{scope}
\draw[thick] (0,-1.7) node[below]{$\scriptstyle n$} -- (0,.7) node[above]{$\scriptstyle n$};
}
\qquad\text{and}\qquad
\tikzmath{
\begin{scope}
\clip[rounded corners=5pt] (.7,.7) rectangle (-.7,-1.7);
\fill[fill=gray!20] (-.2,.7) rectangle (-.7,-1.7);
\fill[fill=gray!50] (-.2,.7) rectangle (.2,-1.7);
\end{scope}
\draw[thick] (-.2,-1.7) node[below]{$\scriptstyle F$} -- (-.2,.7) node[above]{$\scriptstyle F$};
\draw[thick] (.2,-1.7) node[below]{$\scriptstyle m$} -- (.2,.7) node[above]{$\scriptstyle m$};
}
=
\sum_{n\in \Irr(\cN)}
\sum_\phi
d_n \;
\tikzmath{
\begin{scope}
\clip[rounded corners=5pt] (.7,.7) rectangle (-.7,-1.7);
\fill[fill=gray!20] (-.2,-1.7) -- (-.2,-1.2) arc (180:90:.2cm) -- (0,0) arc (-90:-180:.2cm) -- (-.2,.7) -- (-.7,.7) -- (-.7,-1.7);
\end{scope}
\fill[fill=gray!50] (-.2,-1.7) rectangle (.2,-1);
\fill[fill=gray!50] (-.2,.7) rectangle (.2,0);
\draw[thick] (.2,-1.7)  node[below]{$\scriptstyle m$} -- (.2,-1);
\draw[thick] (.2,.7)  node[above]{$\scriptstyle m$} -- (.2,0);
\draw[thick] (-.2,-1.7)  node[below]{$\scriptstyle F$} -- (-.2,-1);
\draw[thick] (-.2,.7)  node[above]{$\scriptstyle F$} -- (-.2,0);
\draw[thick] (0,0) --node[right]{$\scriptstyle n$} (0,-1);
\roundNbox{fill=white}{(0,0)}{.3}{.1}{.1}{$\phi^\dag$}
\roundNbox{fill=white}{(0,-1)}{.3}{.1}{.1}{$\phi$}
}
\;.
\label{eq:Fm_splicing}
\end{equation}
Since $F\dashv^\dag F^*$, we also have the relations
\begin{equation}
\tikzmath{
\begin{scope}
\clip[rounded corners=5pt] (.5,-1.2) rectangle (-1.2,1.2);
\fill[gray!50] (-1.2,-1.2) -- (.2,-1.2) -- (.2,-.5) -- (0,-.5) -- (0,.5) -- (.2,.5) -- (.2,1.2) -- (-1.2,1.2);
\fill[gray!20] (-.2,.8) arc (0:180:.2) -- (-.6,-.8) arc (-180:0:.2) -- (0,-.5) -- (0,.5);
\end{scope}
\draw[thick] (-.2,.8) arc (0:180:.2) -- node[left] {$\scriptstyle F^*$} (-.6,-.8) arc (-180:0:.2);
\draw[thick] (.2,.5) -- (.2,1.2) node[above] {$\scriptstyle m$};
\draw[thick] (.2,-.5) -- (.2,-1.2) node[below] {$\scriptstyle m$};
\draw[thick] (0,-.5) -- node[right] {$\scriptstyle n$} (0,.5);
\roundNbox{fill=white}{(0,-.5)}{.3}{.1}{.1}{$\phi'$};
\roundNbox{fill=white}{(0,.5)}{.3}{.1}{.1}{$\phi^\dag$};
}
=
\delta_{\phi=\phi'}d_m^{-1} \,
\tikzmath{
\begin{scope}
\clip[rounded corners=5pt] (.7,.7) rectangle (-.7,-1.7);
\fill[fill=gray!50] (0,.7) rectangle (-.7,-1.7);
\end{scope}
\draw[thick] (0,-1.7) node[below]{$\scriptstyle m$} -- (0,.7) node[above]{$\scriptstyle m$};
}
\qquad\text{and}\qquad
\tikzmath{
\begin{scope}
\clip[rounded corners=5pt] (.7,.7) rectangle (-.7,-1.7);
\fill[fill=gray!50] (-.2,.7) rectangle (-.7,-1.7);
\fill[fill=gray!20] (-.2,.7) rectangle (.2,-1.7);
\end{scope}
\draw[thick] (-.2,-1.7) node[below]{$\scriptstyle F^*$} -- (-.2,.7) node[above]{$\scriptstyle F^*$};
\draw[thick] (.2,-1.7) node[below]{$\scriptstyle n$} -- (.2,.7) node[above]{$\scriptstyle n$};
}
=
\sum_{m \in \Irr(\cN)} \sum_i d_m \;
\tikzmath{
\begin{scope}
\clip[rounded corners=5pt] (.5,-1.2) rectangle (-1,1.2);   
\fill[gray!50] (-1,-1.2) -- (0,-1.2) -- (0,-.6) -- (.2,-.3) -- (.2,.3) -- (0,.6) -- (0,1.2) -- (-1,1.2);
\fill[gray!20] (-.2,.3) arc (0:-180:.2) -- (-.6,1.2) -- (0,1.2) -- (0,.6);
\fill[gray!20] (-.2,-.3) arc (0:180:.2) -- (-.6,-1.2) -- (0,-1.2) -- (0,-.6);
\end{scope}
\draw[thick] (-.2,.3) arc (0:-180:.2) -- (-.6,1.2) node[above] {$\scriptstyle F^*$};
\draw[thick] (-.2,-.3) arc (0:180:.2) -- (-.6,-1.2) node[below] {$\scriptstyle F^*$};
\draw[thick] (.2,-.3) -- node[right] {$\scriptstyle m$} (.2,.3);
\draw[thick] (0,.6) -- (0,1.2) node[above] {$\scriptstyle n$};
\draw[thick] (0,-.6) -- (0,-1.2) node[below] {$\scriptstyle n$};
\roundNbox{fill=white}{(0,.6)}{.3}{.1}{.1}{$\phi$};
\roundNbox{fill=white}{(0,-.6)}{.3}{.1}{.1}{$\phi^\dag$};
}\,,
\label{eq:Fstarn_splicing}
\end{equation}
so that the one-click rotation of $\{\phi\}$ is an ONB for $\cM(m\to F^*(n))$.
%As in \cite[\S2.5]{MR3663592}, we may thus represent summing over an ONB and its dagger (which is independent of the choice of ONB) by a pair of red shaded nodes, and we may freely rotate the $F$-string to an $F^*$-string preserving this summation.
We then calculate that for $\eta \in \Mod^\dag(\cC)(F \Rightarrow F)$,
\begin{align*}
\Psi^{\Mod^\dag(\cC)}_\cN\left(\,
\tikzmath{
\fill[rounded corners=5pt, fill=gray!20] (-.5,-.8) rectangle (1,.8);
\filldraw[thick,fill=gray!50] (0,.3) arc (180:0:.3cm) -- (.6,-.3) arc (0:-180:.3cm);
\roundNbox{fill=white}{(0,0)}{.3}{0}{0}{$\eta$}
\node at (-.15,.5) {$\scriptstyle F$};
}
\,\right) 
&=
\sum_{n \in \Irr(\cN)} d_n \Tr^\cN_n \left( 
\,
\tikzmath{
\begin{scope}
\clip[rounded corners=5pt] (.9,-.8) rectangle (-.9,.8); \fill[fill=gray!20] (.7,-1.2) rectangle (-.9,1.2);
\fill[fill=gray!50] (0.3,.3) arc (0:180:.3cm) -- (-.3,-.3) arc (-180:0:.3cm);
\end{scope}
\draw[thick] (.7,-.8) node[below]{$\scriptstyle n$} -- (.7,.8) node[above]{$\scriptstyle n$};
\draw[thick] (-.3,.3) arc (180:0:.3cm) -- (.3,-.3) arc (0:-180:.3cm);
\roundNbox{fill=white}{(-.3,0)}{.3}{0}{0}{$\eta$};
}\,\right) 
\displaybreak[1]
\\&
\underset{\eqref{eq:Fstarn_splicing}}{=} 
\sum_{\substack{
m \in \Irr(\cM)
\\
n \in \Irr(\cN)
}} 
\sum_{\phi\in \ONB(m\to F^*(n))}
d_n d_m \Tr^\cN_n \left( 
\tikzmath{
\draw[thick] (0,-.8) node[below]{$\scriptstyle n$} -- (0,.8) node[above]{$\scriptstyle n$};
\begin{scope}
\clip[rounded corners=5pt] (-.8,-1.7) rectangle (.5,1.7); 
\fill[gray!20] (-.8,-1.7) rectangle (0,1.7);
\fill[gray!50] (-.3,-1) rectangle (.3,1);
\end{scope}
\draw[thick] (-.3,-1) -- (-.3,1);
\draw[thick] (.3,-1) -- node[right] {$\scriptstyle m$} (.3,1);
\draw[thick] (0,1) -- (0,1.7) node[above] {$\scriptstyle n$};
\draw[thick] (0,-1) -- (0,-1.7) node[below] {$\scriptstyle n$};
\roundNbox{fill=white}{(-.3,0)}{.3}{0}{0}{$\eta$};
\roundNbox{fill=white}{(0,1)}{.3}{.2}{.2}{$\phi$}
\roundNbox{fill=white}{(0,-1)}{.3}{.2}{.2}{$\phi^\dag$}
}\right) 
\displaybreak[1]\\&= 
\sum_{\substack{
m \in \Irr(\cM)
\\
n \in \Irr(\cN)
}} 
\sum_{\phi\in \ONB(m\to F^*(n))}
d_n d_m \Tr^\cN_{F(m)} \left( 
\tikzmath{
\begin{scope}
\clip[rounded corners=5pt] (-.9,-.7) rectangle (.5,2.7); 
\fill[gray!20]  (-.9,-.7) rectangle (0,2.7); 
\fill[gray!50]  (-.3,-.7) rectangle (.3,1); 
\fill[gray!50]  (-.3,2) rectangle (.3,2.7); 
\end{scope}
\draw[thick] (-.3,-.7) -- (-.3,1);
\draw[thick] (-.3,2) -- (-.3,2.7);
\draw[thick] (.3,-.7) node[below] {$\scriptstyle m$} -- (.3,1);
\draw[thick] (.3,2) -- (.3,2.7) node[above] {$\scriptstyle m$};
\draw[thick] (0,1) -- node[right] {$\scriptstyle n$} (0,2);
\roundNbox{fill=white}{(-.3,0)}{.3}{0}{0}{$\eta$};
\roundNbox{fill=white}{(0,1)}{.3}{.2}{.2}{$\phi$}
\roundNbox{fill=white}{(0,2)}{.3}{.2}{.2}{$\phi^\dag$}
}
\right) 
\displaybreak[1]\\&
\underset{\eqref{eq:Fm_splicing}}{=} \sum_{m \in \Irr(\cM)} d_m \Tr^\cN_{F(m)} \left( 
\,
\tikzmath{
\begin{scope}
\clip (.5,-1.2) rectangle (-.9,1.2); 
\fill[rounded corners=5pt, fill=gray!50] (.9,-.8) rectangle (-.8,.8);
\clip[rounded corners=5pt] (.9,-.8) rectangle (-.8,.8);
\filldraw[fill=gray!20] (-.9,-.9) rectangle (0,.9);
\end{scope}
\draw[thick] (0,-.8) node[below]{$\scriptstyle F$} -- (0,.8) node[above]{$\scriptstyle F$};
\draw[thick] (.5,-.8) node[below]{$\scriptstyle m$} -- (.5,.8) node[above]{$\scriptstyle m$};
\roundNbox{fill=white}{(0,0)}{.3}{0}{0}{$\eta$};
}
\;\;
\right)
\displaybreak[1]\\&= 
\sum_{m \in \Irr(\cM)} d_m
\Tr_m^{\cM}\left(\,
\tikzmath{
\begin{scope}
\clip[rounded corners=5pt] (.7,-.8) rectangle (-1,.8); 
\fill[gray!50] (.5,-.8) rectangle (-1,.8);
\filldraw[thick,fill=gray!20] (0,.3) arc (0:180:.3cm) -- (-.6,-.3) arc (-180:0:.3cm);
\end{scope}
\draw[thick] (.5,-.8) node[below]{$\scriptstyle m$} -- (.5,.8) node[above]{$\scriptstyle m$};
\roundNbox{fill=white}{(0,0)}{.3}{0}{0}{$\eta$};
}
\;\;\right)
\displaybreak[1]\\&= 
\Psi_\cM^{\Mod^\dag(\cC)}\left(\,
\tikzmath{
\fill[rounded corners=5pt, fill=gray!50] (.5,-.8) rectangle (-.8,.8);
\filldraw[thick,fill=gray!20] (0,.3) arc (0:180:.3cm) -- (-.6,-.3) arc (-180:0:.3cm);
\roundNbox{fill=white}{(0,0)}{.3}{0}{0}{$\eta$}
}
\,\right).
\end{align*}
Note, however, that under this choice of spherical weight, the induced 2-Hilbert space structure on $\Fun^\dag_\cC({}_\cC\cM\to {}_\cC\cN)$ does not agree with the 2-Hilbert space structure from \eqref{eq:RenormalizedTraceOnFun}.
We thus \emph{renormalize} $\Psi^{\Mod^\dag(\cC)}$ on each component so that these 2-Hilbert space structures agree. 
In explicit detail, suppose 
$\cC=\boxplus \cC_j$ is a decomposition of $\cC$ into indecomposable summands.
Then $\Mod^\dag(\cC)=\boxplus \Mod^\dag(\cC_j)$ is a decomposition into components.
We rescale $\Psi^{\Mod^\dag(\cC)}$ on the $j$-th component by multiplying by the scalar 
$$
\frac{
\dim(\End_{\cC_j}(1_{\cC_j}))^2
}{
\operatorname{FPdim}(\cC_j)
\cdot
\psi_\cC(\id_{1_{\cC_j}})
}
$$
which appeared in \eqref{eq:RenormalizedTraceOnFun} above.
\end{ex}

\begin{sub-ex}
\label{ex:WeightOn2Hilb}
Choosing $\cC=\Hilb$ in the above example with its canonical UDF and trace, we see that $2\Hilb$ is a pre-3-Hilbert space with
$$
\Psi_\cA(\eta:\id_\cA\Rightarrow\id_\cA)
:=
\sum_{a\in\Irr(\cA)}d_a \Tr^\cA_a(\eta_a).
$$
Observe that no renormalization is necessary for the $\rmH^*$-multifusion category $\Hilb$ equipped with its canonical spherical state normalized so that $\psi(\id_\bbC)=1$.
\end{sub-ex}

\begin{prop}\label{prop:uniquenessofUAF}
Suppose $\fX$ is a finite unitary 2-category with a weight $\Psi$, and let $(-)^\vee$ and $(-)^*$ be
two UAFs on $\fX$ giving it the structure of a pre-3-Hilbert space. 
Then the canonical iconic natural isomorphism $\zeta:(-)^\vee\Rightarrow (-)^*$ 
given by
$$
\zeta_a = 1_a
\quad
\forall\,a\in\fX
\qquad\text{and}\qquad
\zeta_X:=
\tikzmath{
\begin{scope}
\clip[rounded corners=5pt] (-1.2,-.7) rectangle (2.2,1.7);
\fill[\brColor] (-1.2,-.7) rectangle (2.2,1.7);
\fill[\arColor] (-.5,-.7) -- (-.5,.7) -- (.5,.7) -- (.5,.3) -- (1.5,.3) -- (1.5,1.7) -- (2.2,1.7) -- (2.2,-.7);
\end{scope}
\draw[thick] (-.5,-.7) --node[left]{$\scriptstyle X^\vee$} (-.5,.7);
\draw[thick] (.5,.3) --node[right]{$\scriptstyle X$} (.5,.7);
\draw[thick] (1.5,.3) --node[right]{$\scriptstyle X^*$} (1.5,1.7);
\roundNbox{fill=white}{(0,1)}{.3}{.5}{.5}{$\ev_X^\vee$}
\roundNbox{fill=white}{(1,0)}{.3}{.5}{.5}{$\coev_X^*$}
}
$$
is unitary.
Thus a fixed weight $\Psi$ on $\fX$ admits at most one UAF under which it becomes a pre-3-Hilbert space.
\end{prop}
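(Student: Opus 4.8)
The plan is to reduce the statement to the already-established rigidity of unitary dual functors on a single unitary multifusion category, namely Example~\ref{ex:WeightDeterminesUDF} (i.e.\ \cite{MR4133163}), by passing to linking algebras. Since $\zeta$ is iconic (it is the identity on objects), it is unitary precisely when each component $\zeta_X \in \fX(b\to a)(X^\vee\to X^*)$ is a unitary $2$-morphism, and this component only involves the objects $a,b$ and the hom-categories between them. So I would fix a $1$-morphism $X\colon a\to b$ and pass to the linking algebra $\cL:=\cL(a,b)$. By Example~\ref{ex:LinkingH*Alg}, $\cL$ is a unitary multifusion category carrying an induced spherical weight $\psi_\cL$ coming from $\Psi$; crucially $\psi_\cL$ depends only on $\Psi$, hence is the \emph{same} for both UAFs. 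Each of $\vee$ and $*$ restricts, via the matrix-multiplication monoidal structure of $\cL$, to a unitary dual functor $\vee_\cL,\ast_\cL$ on $\cL$ (that the restriction is a genuine UDF is exactly the content of Example~\ref{ex:LinkingH*Alg}), and by hypothesis $\psi_\cL$ is spherical for both.

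Next I would invoke Example~\ref{ex:WeightDeterminesUDF}: a fixed weight $\psi_\cL$ on $\rmB\cL$ determines its compatible unitary dual functor uniquely, up to unique unitary monoidal natural isomorphism by the classification of \cite{MR4133163}. Thus there is a unique unitary monoidal natural isomorphism $\vee_\cL\Rightarrow\ast_\cL$. The key remaining step is to recognize that $\zeta_X$ is exactly this comparison: reading the defining diagram bottom-to-top gives the mate formula $\zeta_X=(\ev^\vee_X\otimes\id_{X^*})\circ(\id_{X^\vee}\otimes\coev^*_X)$, which is the standard (anti-)monoidal natural isomorphism identifying two choices of dual. Because the comparison between two rigid duals is the \emph{unique} monoidal natural isomorphism between them, it must coincide with the unique unitary one produced above; hence $\zeta_X$ is unitary.

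Running this over all $X$ shows $\zeta$ is a unitary iconic natural isomorphism $(-)^\vee\Rightarrow(-)^*$. The final clause then follows: a fixed weight $\Psi$ cannot be spherical for two genuinely inequivalent compatible UAFs, since any two are canonically and unitarily identified by $\zeta$, so the compatible UAF is unique up to unique unitary iconic isomorphism.

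I expect the main obstacle to be the careful identification in the middle paragraph: one must verify both that restricting the global UAF on $\fX$ to $\cL(a,b)$ really yields an honest UDF (checking that the tensorators remain unitary under the matrix-multiplication tensor product, which Example~\ref{ex:LinkingH*Alg} packages), and that the diagrammatic $\zeta_X$ is literally the canonical monoidal comparison to which the uniqueness of \cite{MR4133163} applies, including the bookkeeping across the possibly decomposable blocks of $\cL$. A more self-contained but computationally heavier alternative would bypass the classification and instead verify $\zeta_X^\dagger=\zeta_X^{-1}$ directly, using the standardness and sphericity relations to rewrite the daggers $(\ev^\vee_X)^\dagger$ and $(\coev^*_X)^\dagger$ in terms of the opposite dual data; that rewriting is exactly where the genuine work would sit.
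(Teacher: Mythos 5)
Your overall strategy -- reduce to the linking algebra $\cL(a,b)$ and invoke the classification of unitary dual functors -- is genuinely different from the paper's proof, and the reduction itself is sound: Example \ref{ex:LinkingH*Alg} does give $\cL(a,b)$ an induced UDF and induced weight $\psi_\cL=\Psi_a\oplus\Psi_b$ for each of $\vee$ and $*$, sphericality of $\psi_\cL$ is exactly sphericality of $\Psi$ restricted to $1$-morphisms between $a$ and $b$, and the passage from states to weights is harmless because sphericality is invariant under global rescaling. The gap is in your key identification step. The claim that ``the comparison between two rigid duals is the \emph{unique} monoidal natural isomorphism between them'' is false: monoidal natural isomorphisms $\vee_\cL\Rightarrow\ast_\cL$ form a torsor under the group $\Aut_\otimes(\id_{\cL})$ of monoidal natural automorphisms of the identity functor, which is generically nontrivial (it is the group of $\bbC^\times$-valued characters of the universal grading groupoid $\cU_\cL$). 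What is unique is the natural isomorphism \emph{compatible with the evaluation/coevaluation data}, namely $\zeta$ itself. Consequently, the conclusion you extract from Example \ref{ex:WeightDeterminesUDF} -- that \emph{some} unitary monoidal natural isomorphism $\eta:\vee_\cL\Rightarrow\ast_\cL$ exists -- does not imply that $\zeta$ is unitary: one has $\zeta=\eta\circ\theta$ where $\theta$ is a monoidal natural automorphism of $\vee_\cL$, i.e.\ a $\bbC^\times$-valued character, which need not be unimodular, and nothing in your argument rules this out. (For the same reason, your phrase ``unique unitary monoidal natural isomorphism'' is also inaccurate: unitary ones form a torsor under unimodular characters.)

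Your route can be repaired, but only by citing the finer computation actually carried out in \cite{MR4133163}: for two UDFs with classifying homomorphisms $\pi^\vee,\pi^*:\cU_\cL\to\bbR_{>0}$, the canonical comparison satisfies $\zeta_X^\dag\zeta_X=(\pi^*/\pi^\vee)(\mathrm{gr}(X))\cdot\id_{X^*}$, so $\zeta$ is unitary precisely when $\pi^\vee=\pi^*$, and sphericality of both UDFs with respect to the same faithful weight forces this equality. By contrast, the paper takes exactly the ``computationally heavier alternative'' you flagged at the end, and it is lighter than you anticipate: after reducing without loss of generality to the case where $X^\vee$ and $X^*$ are the same $1$-morphism, it uses traciality of the pivotal trace attached to $(X^*,\ev^*_X,\coev^*_X)$ together with sphericality of $\Psi$ for \emph{both} UAFs to show that, for every $f\in\End(X^\vee)$, applying $\Psi_b$ to the $*$-closures of $\zeta_X^\dag\zeta_X\, f$ and of $\zeta_X^{-1}(\zeta_X^\dag)^{-1} f$ gives the same number; faithfulness of $\Psi$ then yields $\zeta_X^\dag\zeta_X=(\zeta_X^\dag\zeta_X)^{-1}$, and positivity of $\zeta_X^\dag\zeta_X$ forces $\zeta_X^\dag\zeta_X=\id_{X^*}$. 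Note this argument uses no standardness and no classification input -- only sphericality, traciality, faithfulness, and positivity -- so it is both self-contained and strictly more elementary than the repaired version of your reduction.
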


\begin{proof}
Without loss of generality, we may assume that the underlying 1-morphisms $X^\vee$ and $X^*$ are identical.
Indeed, we know they are isomorphic and hence unitarily isomorphic, and showing $\zeta_X$ is unitary is equivalent to proving its composite with another unitary is again unitary.
Even though we now identify $X^*=X^\vee$, we use the notation
\[
\coev_X^\vee
=
\tikzmath{
\fill[\arColor, rounded corners = 5pt]  (-.2,-.5) rectangle (.8,0);
\filldraw[fill = \brColor, thick]  (0,0) node[above]{$\scriptstyle X$} arc (-180:0:.3) node[above]{$\scriptstyle X^\vee$};
}
\qquad\qquad
\ev_X^\vee
=
\tikzmath{
\fill[\brColor, rounded corners = 5pt]  (-.2,-.5) rectangle (.8,0);
\filldraw[fill = \arColor, thick] (0,-.5) node[below]{$\scriptstyle X^\vee$} arc (180:0:.3) node[below]{$\scriptstyle X$};
}
\qquad\qquad
\coev_X^*
=
\tikzmath{
\fill[\arColor, rounded corners = 5pt]  (-.2,-.5) rectangle (.8,0);
\filldraw[fill = \brColor, thick]  (0,0) node[above]{$\scriptstyle X$} arc (-180:0:.3) node[above]{$\scriptstyle X^*$};
}
\qquad\qquad
\ev_X^*
=
\tikzmath{
\fill[\brColor, rounded corners = 5pt]  (-.2,-.5) rectangle (.8,0);
\filldraw[fill = \arColor, thick] (0,-.5) node[below]{$\scriptstyle X^*$} arc (180:0:.3) node[below]{$\scriptstyle X$};
}
\]
to distinguish their evaluations and coevaluations.

For all $f \in \fX(X^\vee \to X^\vee)$, we compute
\begin{align*}
\Psi_b\left(\,
\tikzmath{
\fill[rounded corners = 5pt, fill=\brColor] (-1.2,-1.5) rectangle (.6,1.5);
\fill[\arColor] (0,1.1) arc (0:180:.3cm) -- (-.6,-1.1) arc (-180:0:.3cm);
\draw[thick] (0,1.1) arc (0:180:.3cm) node[left,yshift=.2cm]{$\scriptstyle X^*$} -- (-.6,-1.1) node[left,yshift=-.2cm]{$\scriptstyle X^*$} arc (-180:0:.3cm) -- cycle;
\node at (.2,0) {$\scriptstyle X$};
\roundNbox{fill=white}{(-.6,-.8)}{.3}{0}{0}{$f$};
\roundNbox{fill=white}{(-.6,0)}{.3}{0}{0}{$\zeta_X$};
\roundNbox{fill=white}{(-.6,.8)}{.3}{0}{0}{$\zeta^\dag_X$};
}
\,\right)
&=
\Psi_b\left(\,
\tikzmath{
\fill[rounded corners = 5pt, fill=\brColor] (-1.2,-1.5) rectangle (.6,1.5);
\fill[\arColor] (0,1.1) arc (0:180:.3cm) -- (-.6,-1.1) arc (-180:0:.3cm);
\draw[thick] (0,1.1) arc (0:180:.3cm) node[left,yshift=.2cm]{$\scriptstyle X^*$} -- (-.6,-1.1) node[left,yshift=-.2cm]{$\scriptstyle X^*$} arc (-180:0:.3cm) -- cycle;
\node at (.2,0) {$\scriptstyle X$};
\roundNbox{fill=white}{(-.6,0)}{.3}{0}{0}{$f$};
\roundNbox{fill=white}{(-.6,.8)}{.3}{0}{0}{$\zeta_X$};
\roundNbox{fill=white}{(-.6,-.8)}{.3}{0}{0}{$\zeta^\dag_X$};
}
\,\right)
=
\Psi_b\left(\,
\tikzmath{
\fill[rounded corners = 5pt, fill=\brColor] (1.2,-.9) rectangle (-.6,.9);
\fill[\arColor]  (0,.3) arc (180:0:.3cm) -- (.6,-.3) arc (0:-180:.3cm);
\draw[thick] (0,.3) node[left,yshift=.2cm]{$\scriptstyle X^\vee$} arc (180:0:.3cm) --node[right]{$\scriptstyle X$} (.6,-.3) arc (0:-180:.3cm) node[left,yshift=-.2cm]{$\scriptstyle X^\vee$};
\roundNbox{fill=white}{(0,0)}{.3}{0}{0}{$f$};
}
\right)
=
\Psi_a\left(\,
\tikzmath{
\fill[rounded corners = 5pt, fill=\arColor] (-1.2,-.9) rectangle (.6,.9);
\fill[\brColor] (0,.3) arc (0:180:.3cm) -- (-.6,-.3) arc (-180:0:.3cm);
\draw[thick] (0,.3) node[right,yshift=.2cm]{$\scriptstyle X^\vee$} arc (0:180:.3cm) --node[left]{$\scriptstyle X$} (-.6,-.3) arc (-180:0:.3cm) node[right,yshift=-.2cm]{$\scriptstyle X^\vee$};
\roundNbox{fill=white}{(0,0)}{.3}{0}{0}{$f$};
}
\,\right)\\
&=
\Psi_a\left(\,
\tikzmath{
\fill[rounded corners = 5pt, fill=\arColor] (-1.2,-1.5) rectangle (.6,1.5);
\fill[\brColor] (0,1.1) arc (0:180:.3cm) -- (-.6,-1.1) arc (-180:0:.3cm);
\draw[thick] (0,1.1)  node[right,yshift=.2cm]{$\scriptstyle X^*$} arc (0:180:.3cm) -- (-.6,-1.1) arc (-180:0:.3cm) node[right,yshift=-.2cm]{$\scriptstyle X^*$} -- cycle;
\node at (.2,0) {$\scriptstyle X$};
\roundNbox{fill=white}{(0,-.8)}{.3}{.2}{.2}{$\scriptstyle \zeta_X^{-1}$};
\roundNbox{fill=white}{(0,0)}{.3}{0}{0}{$f$};
\roundNbox{fill=white}{(0,.8)}{.3}{0.2}{0.2}{$\scriptstyle (\zeta_X^\dag)^{-1}$};
\node at (-.8,0) {$\scriptstyle X$};
}
\,\right)
=
\Psi_b\left(\,
\tikzmath{
\fill[rounded corners = 5pt, fill=\brColor] (-1.2,-1.5) rectangle (.6,1.5);
\fill[\arColor] (0,1.1) arc (0:180:.3cm) -- (-.6,-1.1) arc (-180:0:.3cm);
\draw[thick] (0,1.1) arc (0:180:.3cm) node[left,yshift=.2cm]{$\scriptstyle X^*$} -- (-.6,-1.1) node[left,yshift=-.2cm]{$\scriptstyle X^*$} arc (-180:0:.3cm) -- cycle;
\node at (.2,0) {$\scriptstyle X$};
\roundNbox{fill=white}{(-.6,0)}{.3}{0}{0}{$f$};
\roundNbox{fill=white}{(-.6,.8)}{.3}{0.2}{0.2}{$\scriptstyle (\zeta_X^\dag)^{-1}$};
\roundNbox{fill=white}{(-.6,-.8)}{.3}{0.2}{0.2}{$\scriptstyle \zeta_X^{-1}$};
}
\,\right)
=
\Psi_b\left(\,
\tikzmath{
\fill[rounded corners = 5pt, fill=\brColor] (-1.2,-1.5) rectangle (.6,1.5);
\fill[\arColor] (0,1.1) arc (0:180:.3cm) -- (-.6,-1.1) arc (-180:0:.3cm);
\draw[thick] (0,1.1) arc (0:180:.3cm) node[left,yshift=.2cm]{$\scriptstyle X^*$} -- (-.6,-1.1) node[left,yshift=-.2cm]{$\scriptstyle X^*$} arc (-180:0:.3cm) -- cycle;
\node at (.2,0) {$\scriptstyle X$};
\roundNbox{fill=white}{(-.6,-.8)}{.3}{0}{0}{$f$};
\roundNbox{fill=white}{(-.6,0)}{.3}{0.2}{0.2}{$\scriptstyle (\zeta_X^\dag)^{-1}$};
\roundNbox{fill=white}{(-.6,.8)}{.3}{0.2}{0.2}{$\scriptstyle \zeta_X^{-1}$};
}
\,\right).
\end{align*}
Note that in the first and last equations, we used traciality of pivotal trace associated to $(X^*,\ev^*_X,\coev_X^*)$. 
By non-degeneracy of $\Psi$, we see that $\zeta^\dag_X \zeta_X = \zeta^{-1}_X (\zeta^\dag_X)^{-1}$. 
But since $\zeta^\dag_X \zeta_X$ is positive, the only way it can equal its own inverse is if it equals $\id_{X^*}$. 
We conclude that $\zeta_X$ is unitary.
\end{proof}
%%%%%%%%%%%%%%%%%%%%%%%%%%%%%%%%%%%%%%%%%%%%%%%%%%%%%%%%%%%%%%%%%%%%%%%%%%%%%%%%%%
\subsection{Isometries, coisometries, and Hilbert direct sums}
\label{sec:HilbDirectSum}

In a unitary 2-category, it makes sense to talk about orthogonal direct sums.
Given $a,b\in\fX$, their \emph{orthogonal direct sum} is a direct sum $a\boxplus b$ such that the isomorphisms $I_c\otimes P_c \cong 1_c$ are unitary for $c=a,b$,
and the isomorphism
$(P_a \otimes I_a) \oplus (P_b \otimes I_b) \cong 1_{a\boxplus b}$
is both unitary and witnesses $1_{a\boxplus b}$ as an orthogonal direct sum.

As in Remark \ref{rem:UpgradeDirectSums}, we may sill arrange that $I_c$ is both left and right adjoint to $P_c$ for $c=a,b$ while keeping the desired direct sum orthogonal.
However, when working in a pre-3-Hilbert space, these adjunctions need not be compatible with the ambient UAF (see Remark \ref{rem:Evil2Hilb} below).  
Thus the notion of orthogonal direct sum is not sufficient for the purposes of this article.

\begin{defn}
\label{defn:IsometryInPre3Hilb}
Suppose $(\fX,\vee,\Psi)$ is a pre-3-Hilbert space and $a,b\in\fX$.
A 1-morphism ${}_aX_b$ is called an \emph{isometry} if $\coev_X: 1_a\to {}_aX\otimes_b X^\vee_a$ 
is unitary
and
$\ev_X: {}_b X^\vee\otimes_a X_b\to 1_b$ is an isometry.
Denoting $a,b,{}_aX_b,{}_bX^\vee_a$ graphically by
$$
\tikzmath{\fill[fill=\arColor, rounded corners=5] (-.3,-.3) rectangle (.3,.3);}=a
\qquad\qquad
\tikzmath{\fill[fill=\brColor, rounded corners=5] (-.3,-.3) rectangle (.3,.3);}=b
\qquad\qquad
\tikzmath{
\begin{scope}
\clip[rounded corners=5pt] (-.3,-.3) rectangle (.3,.3);
\fill[\arColor] (0,-.3) rectangle (-.3,.3);
\fill[\brColor] (0,-.3) rectangle (.3,.3);
\end{scope}
\draw[thick] (0,-.3) -- (0,.3);
}= {}_aX_b
\qquad\qquad
\tikzmath{
\begin{scope}
\clip[rounded corners=5pt] (-.3,-.3) rectangle (.3,.3);
\fill[\brColor] (0,-.3) rectangle (-.3,.3);
\fill[\arColor] (0,-.3) rectangle (.3,.3);
\end{scope}
\draw[thick] (0,-.3) -- (0,.3);
}= {}_bX^\vee_a
$$
the relations for an isometry are
$$
\tikzmath{
\fill[rounded corners = 5pt, fill=\arColor] (-.9,-.6) rectangle (.9,.6);
\filldraw[fill=\brColor, thick] (0,0) circle (.3cm);
\node at (-.6,0) {$\scriptstyle X$};
\node at (.6,0) {$\scriptstyle X^\vee$};
}
=
\tikzmath{
\fill[rounded corners = 5pt, fill=\arColor] (-.6,-.6) rectangle (.6,.6);
}
\qquad\qquad
\tikzmath{
\fill[rounded corners = 5pt, fill=\arColor] (-.9,-.6) rectangle (.9,.6);
\filldraw[fill=\brColor, thick] (-.3,.6) arc(-180:0:.3cm);
\filldraw[fill=\brColor, thick] (-.3,-.6) arc(180:0:.3cm);
}
=
\tikzmath{
\fill[rounded corners = 5pt, fill=\arColor] (-.9,-.6) rectangle (.9,.6);
\fill[fill=\brColor] (-.3,-.6) rectangle (.3,.6);
\draw[thick] (-.3,-.6) -- (-.3,.6);
\draw[thick] (.3,-.6) -- (.3,.6);
\node at (.6,0) {$\scriptstyle X^\vee$};
\node at (-.6,0) {$\scriptstyle X$};
}
\qquad\qquad
\tikzmath{
\fill[rounded corners = 5pt, fill=\brColor] (-.9,-.6) rectangle (.9,.6);
\filldraw[fill=\arColor, thick] (-.3,.6) arc(-180:0:.3cm);
\filldraw[fill=\arColor, thick] (-.3,-.6) arc(180:0:.3cm);
}
=
\tikzmath{
\fill[rounded corners = 5pt, fill=\brColor] (-.9,-.6) rectangle (.9,.6);
\fill[fill=\arColor] (-.3,-.6) rectangle (.3,.6);
\draw[thick] (-.3,-.6) -- (-.3,.6);
\draw[thick] (.3,-.6) -- (.3,.6);
\node at (.6,0) {$\scriptstyle X$};
\node at (-.6,0) {$\scriptstyle X^\vee$};
}\,.
$$
Observe that if ${}_aX_b$ is an isometry and $f:\id_a \to \id_a$, then $\Psi_a$ is completely determined in terms of $\Psi_b$:
\begin{equation}
\label{eq:CoisometryDeterminesPsiOnSmaller}
\Psi_a(f)
=
\Psi_a\left(\,
\tikzmath{
\fill[rounded corners = 5pt, fill=\arColor] (-.7,-.7) rectangle (.7,.7);
\roundNbox{fill=white}{(0,0)}{.3}{0}{0}{$f$}
}
\,\right)
=
\Psi_a\left(\,
\tikzmath{
\fill[rounded corners = 5pt, fill=\arColor] (-1.9,-.7) rectangle (.9,.7);
\filldraw[fill=\brColor, thick] (0,0) circle (.3cm);
\roundNbox{fill=white}{(-1.3,0)}{.3}{0}{0}{$f$}
\node at (-.6,0) {$\scriptstyle X$};
\node at (.6,0) {$\scriptstyle X^\vee$};
}
\,\right)
=
\Psi_b\left(\,
\tikzmath{
\fill[rounded corners = 5pt, fill=\brColor] (-1.2,-.8) rectangle (1.2,.8);
\filldraw[fill=\arColor, thick] (0,0) circle (.6cm);
\roundNbox{fill=white}{(0,0)}{.3}{0}{0}{$f$}
\node at (-.9,0) {$\scriptstyle X^\vee$};
\node at (.9,0) {$\scriptstyle X$};
}
\,\right).
\end{equation}
A 1-morphism ${}_aX_b$ is called a \emph{co-isometry} if ${}_bX^\vee_a$ is an isometry, i.e.,
$\ev_X: {}_bX^\vee\otimes_a X_b \to 1_b$ is unitary
and
$\coev_X: 1_a\to {}_aX\otimes_b X^\vee_a$
is a coisometry.
Graphically, the relations for a co-isometry are
$$
\tikzmath{
\fill[rounded corners = 5pt, fill=\arColor] (-.9,-.6) rectangle (.9,.6);
\filldraw[fill=\brColor, thick] (-.3,.6) arc(-180:0:.3cm);
\filldraw[fill=\brColor, thick] (-.3,-.6) arc(180:0:.3cm);
}
=
\tikzmath{
\fill[rounded corners = 5pt, fill=\arColor] (-.9,-.6) rectangle (.9,.6);
\fill[fill=\brColor] (-.3,-.6) rectangle (.3,.6);
\draw[thick] (-.3,-.6) -- (-.3,.6);
\draw[thick] (.3,-.6) -- (.3,.6);
\node at (.6,0) {$\scriptstyle X^\vee$};
\node at (-.6,0) {$\scriptstyle X$};
}
\qquad\qquad
\tikzmath{
\fill[rounded corners = 5pt, fill=\brColor] (-.9,-.6) rectangle (.9,.6);
\filldraw[fill=\arColor, thick] (0,0) circle (.3cm);
\node at (-.6,0) {$\scriptstyle X^\vee$};
\node at (.6,0) {$\scriptstyle X$};
}
=
\tikzmath{
\fill[rounded corners = 5pt, fill=\brColor] (-.6,-.6) rectangle (.6,.6);
}
\qquad\qquad
\tikzmath{
\fill[rounded corners = 5pt, fill=\brColor] (-.9,-.6) rectangle (.9,.6);
\filldraw[fill=\arColor, thick] (-.3,.6) arc(-180:0:.3cm);
\filldraw[fill=\arColor, thick] (-.3,-.6) arc(180:0:.3cm);
}
=
\tikzmath{
\fill[rounded corners = 5pt, fill=\brColor] (-.9,-.6) rectangle (.9,.6);
\fill[fill=\arColor] (-.3,-.6) rectangle (.3,.6);
\draw[thick] (-.3,-.6) -- (-.3,.6);
\draw[thick] (.3,-.6) -- (.3,.6);
\node at (.6,0) {$\scriptstyle X$};
\node at (-.6,0) {$\scriptstyle X^\vee$};
}\,.
$$

An \emph{isometric equivalence} is a 1-morphism ${}_aX_b$ that is both an isometry and a co-isometry, i.e., an adjoint equivalence with respect to our UAF $\vee$.
\end{defn}

\begin{rem}
A coisometry is stronger than the notion of a dagger condensation in the sense of \cite{MR4369356}; see also \cite{1905.09566}.
\end{rem}

\begin{rem}
For any 1-morphism ${}_aX_b$ in a pre-3-Hilbert space,
\begin{equation}
\label{eq:EquivalentRecabling}
\tikzmath{
\fill[rounded corners = 5pt, fill=\arColor] (-.9,-.6) rectangle (.9,.6);
\filldraw[fill=\brColor, thick] (-.3,.6) arc(-180:0:.3cm);
\filldraw[fill=\brColor, thick] (-.3,-.6) arc(180:0:.3cm);
}
=
\tikzmath{
\fill[rounded corners = 5pt, fill=\arColor] (-.9,-.6) rectangle (.9,.6);
\fill[fill=\brColor] (-.3,-.6) rectangle (.3,.6);
\draw[thick] (-.3,-.6) -- (-.3,.6);
\draw[thick] (.3,-.6) -- (.3,.6);
\node at (.6,0) {$\scriptstyle X^\vee$};
\node at (-.6,0) {$\scriptstyle X$};
}
\qquad
\Longleftrightarrow
\qquad
\tikzmath{
\fill[rounded corners = 5pt, fill=\brColor] (-.9,-.6) rectangle (.9,.6);
\filldraw[fill=\arColor, thick] (-.3,.6) arc(-180:0:.3cm);
\filldraw[fill=\arColor, thick] (-.3,-.6) arc(180:0:.3cm);
}
=
\tikzmath{
\fill[rounded corners = 5pt, fill=\brColor] (-.9,-.6) rectangle (.9,.6);
\fill[fill=\arColor] (-.3,-.6) rectangle (.3,.6);
\draw[thick] (-.3,-.6) -- (-.3,.6);
\draw[thick] (.3,-.6) -- (.3,.6);
\node at (.6,0) {$\scriptstyle X$};
\node at (-.6,0) {$\scriptstyle X^\vee$};
}\,.
\end{equation}
Hence, $_{a}X_{b}$ is an isometry (resp. co-isometry) if and only if $\coev_X$ (resp. $\ev_X$) is unitary. 

We show the forward direction, as the two directions are symmetric.
First, observe that the relation on the left implies 
$$
\tikzmath{
\fill[rounded corners = 5pt, fill=\brColor] (-.9,-.6) rectangle (.9,.6);
\filldraw[fill=\arColor, thick] (-.3,.6) arc(-180:0:.3cm);
}
=
\tikzmath{
\fill[rounded corners = 5pt, fill=\brColor] (-1.8,-1.2) rectangle (.9,1.2);
\filldraw[fill=\arColor, thick] (-.3,1.2) -- (-.3,.6) arc(0:-180:.3cm) arc (0:180:.3cm) -- (-1.5,-.6) arc(-180:0:.3cm) arc(180:0:.3cm) arc (-180:0:.3cm) -- (.3,1.2);
}
=
\tikzmath{
\fill[rounded corners = 5pt, fill=\brColor] (-1.8,-1.2) rectangle (.9,1.2);
\filldraw[fill=\arColor, thick] (-.3,1.2) -- (-.3,-.6) arc (-180:0:.3cm) -- (.3,1.2);
\filldraw[fill=\arColor, thick] (-.9,.6) arc (0:180:.3cm) -- (-1.5,-.6) arc(-180:0:.3cm) -- (-.9,.6);
}
=
\tikzmath{
\fill[rounded corners = 5pt, fill=\brColor] (-.9,-.9) rectangle (.9,.6);
\filldraw[fill=\arColor, thick] (-.3,.6) arc(-180:0:.3cm);
\filldraw[fill=\arColor, thick] (0,-.3) circle (.3cm);
}\,.
$$
Since both diagrams on the right hand side of \eqref{eq:EquivalentRecabling} are self-adjoint and
$$
\left(\,
\tikzmath{
\fill[rounded corners = 5pt, fill=\brColor] (-.6,-.6) rectangle (.6,.6);
\fill[fill=\arColor] (-.3,-.6) rectangle (.3,.6);
\draw[thick] (-.3,-.6) -- (-.3,.6);
\draw[thick] (.3,-.6) -- (.3,.6);
}
-
\tikzmath{
\fill[rounded corners = 5pt, fill=\brColor] (-.6,-.6) rectangle (.6,.6);
\filldraw[fill=\arColor, thick] (-.3,.6) arc(-180:0:.3cm);
\filldraw[fill=\arColor, thick] (-.3,-.6) arc(180:0:.3cm);
}
\,\right)^2
=
\tikzmath{
\fill[rounded corners = 5pt, fill=\brColor] (-.6,-.6) rectangle (.6,.6);
\fill[fill=\arColor] (-.3,-.6) rectangle (.3,.6);
\draw[thick] (-.3,-.6) -- (-.3,.6);
\draw[thick] (.3,-.6) -- (.3,.6);
}
-
2\cdot
\tikzmath{
\fill[rounded corners = 5pt, fill=\brColor] (-.6,-.6) rectangle (.6,.6);
\filldraw[fill=\arColor, thick] (-.3,.6) arc(-180:0:.3cm);
\filldraw[fill=\arColor, thick] (-.3,-.6) arc(180:0:.3cm);
}
+
\tikzmath{
\fill[rounded corners = 5pt, fill=\brColor] (-.9,-.9) rectangle (.9,.9);
\filldraw[fill=\arColor, thick] (-.3,.9) arc(-180:0:.3cm);
\filldraw[fill=\arColor, thick] (0,0) circle (.3cm);
\filldraw[fill=\arColor, thick] (-.3,-.9) arc(180:0:.3cm);
}
=
\tikzmath{
\fill[rounded corners = 5pt, fill=\brColor] (-.6,-.6) rectangle (.6,.6);
\fill[fill=\arColor] (-.3,-.6) rectangle (.3,.6);
\draw[thick] (-.3,-.6) -- (-.3,.6);
\draw[thick] (.3,-.6) -- (.3,.6);
}
-
\tikzmath{
\fill[rounded corners = 5pt, fill=\brColor] (-.6,-.6) rectangle (.6,.6);
\filldraw[fill=\arColor, thick] (-.3,.6) arc(-180:0:.3cm);
\filldraw[fill=\arColor, thick] (-.3,-.6) arc(180:0:.3cm);
}\,,
$$
the result follows by taking trace and applying $\Psi$ to the above morphism:
$$
\Psi_b\left(\,
\tikzmath{
\fill[rounded corners = 5pt, fill=\brColor] (-.9,-.9) rectangle (.9,.9);
\filldraw[fill=\arColor, thick] (0,0) circle (.6cm);
\filldraw[fill=\brColor, thick] (0,0) circle (.3cm);
}
-
\tikzmath{
\fill[rounded corners = 5pt, fill=\brColor] (-.6,-.6) rectangle (.6,.6);
\filldraw[fill=\arColor, thick] (0,0) circle (.3cm);
}
\,\right)
=
\Psi_a\left(\,
\tikzmath{
\fill[rounded corners = 5pt, fill=\arColor] (-.6,-1.05) rectangle (.6,1.05);
\filldraw[fill=\brColor, thick] (0,.45) circle (.3cm);
\filldraw[fill=\brColor, thick] (0,-.45) circle (.3cm);
}
-
\tikzmath{
\fill[rounded corners = 5pt, fill=\arColor] (-.6,-.6) rectangle (.6,.6);
\filldraw[fill=\brColor, thick] (0,0) circle (.3cm);
}
\,\right)
=
\Psi_a\left(\,
\tikzmath{
\fill[rounded corners = 5pt, fill=\arColor] (-.6,-1.05) rectangle (.6,1.05);
\filldraw[fill=\brColor, thick] (.3,.45) arc (0:180:.3cm) -- (-.3,-.45) arc(-180:0:.3cm) -- (.3,.45);
}
-
\tikzmath{
\fill[rounded corners = 5pt, fill=\arColor] (-.6,-.6) rectangle (.6,.6);
\filldraw[fill=\brColor, thick] (0,0) circle (.3cm);
}
\,\right)
=0.
$$
\end{rem}

\begin{ex}
\label{ex:IsometriesIn2Hilb}
We show that the isometries in $2\Hilb$ are the fully faithful $\Tr$-preserving $\dag$-functors.
Thus the two notions of isometry for unitary functors between two 2-Hilbert spaces from Definitions \ref{defn:pre2Hilb} and \ref{defn:IsometryInPre3Hilb} agree.

Suppose $F: (\cA,\Tr^\cA)\to (\cB,\Tr^\cB)$ is a unitary functor.
When $F$ is fully faithful, then $F$ maps distinct simples in $\cA$ to distinct simples in $\cB$.
Moreover, $F^*:\cB\to \cA$ is the projection map onto the image of $\cA$, i.e., if $b\in\Irr(\cB)$, then $F^*(b)= a\in\Irr(\cA)$ if $F(a)= b$, and otherwise $F^*(b)=0$.
Since $F\dashv^\dag F^*$, we have a unitary isomorphism
$$
\cA(a\to a)=\cA(F^*F(a)\to a)\cong \cB(F(a)\to F(a)),
$$
and since $(\coev_F)_a=\mate(\id_{F(a)})$, we see that $(\coev_F)_a=\lambda_a \id_a$
for some $\lambda_a\in\bbC^\times$ which is unimodular if and only if $d_{F(a)}=d_a$.
We conclude that when $F$ is fully faithful, $\coev_F$ is unitary if and only if $F$ is trace-preserving.
It follows by a similar calculation or by \eqref{eq:EquivalentRecabling} that $\ev_F$ is an isometry.

Conversely, if $F$ is an isometry, then $\coev_F: \id_\cA\Rightarrow F^*\circ F$ is unitary and thus invertible, so $F$ is fully faithful.
The above argument then applies to show that $F$ is trace preserving.
\end{ex}

\begin{defn}
\label{Defn:IsometricEquiv}
Given two pre-3-Hilbert spaces $(\fX, \vee_\fX,\Psi^\fX)$ and $(\fY, \vee_\fY,\Psi^\fY)$, 
we write $\Fun^{\dag}(\fX \to \fY)$ for the unitary 2-category of unitary 2-functors $\fX\to \fY$.

We say a unitary 2-functor $F: \fX\to\fY$ is \emph{UAF-preserving} if for every 1-morphism ${}_aX_b\in \fX(a\to b)$, the canonical isomorphism

$$
\tikzmath{
\begin{scope}
\clip[rounded corners=5pt] (-1.6,-.7) rectangle (2.6,2.4);
\fill[\brColor] (-1.6,-.7) rectangle (2.6,2.4);
\fill[\arColor] (-.5,-.7) -- (-.5,.7) -- (.5,.7) -- (.5,.3) -- (1.5,.3) -- (1.5,2.4) -- (2.6,2.4) -- (2.6,-.7);
%\fill[\arColor] (-.1,1) -- (-.1,1.9) -- (.1,1.9) -- (.1,1) -- cycle;
\end{scope}
\draw[thick, double] (0,1) -- (0,1.9);
\draw[thick] (-.5,-.7) --node[left]{$\scriptstyle F(X^\vee)$} (-.5,.7);
\draw[thick] (.5,.3) --node[right]{$\scriptstyle F(X)$} (.5,.7);
\draw[thick] (1.5,.3) --node[right]{$\scriptstyle F(X)^{\vee}$} (1.5,2.4);
\roundNbox{fill=white}{(0,1)}{.3}{.5}{.5}{$F^2_{X^\vee,X}$};
\roundNbox{fill=white}{(0,1.9)}{.3}{.5}{.5}{$F(\ev_X)$};
\roundNbox{fill=white}{(1,0)}{.3}{.55}{.55}{$\coev_{F(X)}$};
}
$$
is unitary. 
We write 
$\Fun^{\dag,\vee}(\fX \to \fY)$
for the full unitary 2-subcategory of $\Fun^{\dag}(\fX \to \fY)$ of UAF-preserving unitary 2-functors.

We say a $\dag$ 2-functor $F: \fX\to\fY$ is  an \emph{isometry} 
if 
$$
\Psi^\fY_{F(a)}(F(f)) = \Psi^\fX_a(f) \qquad\qquad\forall\, f \in \End_\fX(1_a),\,\,\forall\,a \in \fX.
$$
We write $\Isom(\fX\to \fY)$ for the full unitary 2-subcategory of $\Fun^\dag(\fX\to \fY)$ whose objects are isometries.
By Proposition \ref{prop:uniquenessofUAF}, $\Isom(\fX\to \fY)$ is a full unitary 2-subcategory of  $\Fun^{\dag,\vee}(\fX \to \fY)$.

We say two pre-3-Hilbert spaces are \emph{isometrically equivalent} if there is an isometric $\dag$-equivalence $(\fX, \vee_\fX,\Psi^\fX) \to (\fY, \vee_\fY,\Psi^\fY)$ which is \emph{isometrically essentially surjective}, i.e., for every $b\in \fY$, there is an $a\in\fX$ and an isometric equivalence ${}_{F(a)}Y_b$.
Note that the notion of isometric equivalence is strictly stronger than $\dag$-equivalence of the underlying unitary 2-categories.
\end{defn}

\begin{rem}
We conjecture there is a canonical pre-3-Hilbert structure on $\Fun^\dag(\fX \to \fY)$ such that the following are equivalent:
\begin{enumerate}
\item $F \in \Fun^\dag(\fX \to \fY)$ is an isometric equivalence between pre-3-Hilbert spaces;
\item There exist isometric equivalences $\eta \colon F^* F \Rightarrow \id_\fX$ in $\Fun^\dag(\fX \to \fX)$ and $\epsilon \colon \id_\fY \Rightarrow F F^*$ in $\Fun^\dag(\fY \to \fY)$, where $F^*$ is the unitary adjoint of $F$.
\end{enumerate}
\end{rem}

\begin{ex}
Given $\rmH^*$-multifusion categories $(\cC,\vee,\psi)$ and $(\cD,\vee,\psi)$, and a $\cC$-$\cD$ bimodule ${}_\cC \cN _\cD$ equipped with a $\cC$-$\cD$ bimodule trace $\Tr^\cN$, there is a $\dag$-2-functor
$$
- \boxtimes_{\cC} \cN_\cD 
\colon 
\Mod^\dag(\cC) \to \Mod^\dag(\cD)
$$
given by
\begin{align*}
m \boxtimes n &\mapsto F(m) \boxtimes n\\
\tikzmath{
\draw[thick] (0,-.8) -- node[right]{$\scriptstyle m$} (0,-1.2);
\draw[thick] (0,-.2) -- (0,1.2) node[left]{$\scriptstyle m'$};
\draw[thick] (1,.8) -- (1,1.2) node[right]{$\scriptstyle n'$};
\draw[thick] (1,.2) --node[right]{$\scriptstyle n$} (1,-1.2);
\draw[thick, blue] (0,-.5) --node[above]{$\scriptstyle c$} (1,.5);
\roundNbox{fill=white}{(0,-.5)}{0.3}{.1}{.1}{$f$};
\roundNbox{fill=white}{(1,.5)}{0.3}{.1}{.1}{$g$};
}    
&\mapsto 
\tikzmath{
\draw[thick] (.1,-.8) -- node[right]{$\scriptstyle m$} (.1,-1.2);
\draw[thick] (-.6,-1.2) --node[left]{$\scriptstyle F$} (-.6,1.2);
\draw[thick] (0,-.2) -- (0,1.2) node[right]{$\scriptstyle m'$};
\draw[thick] (1,.8) -- (1,1.2) node[right]{$\scriptstyle n'$};
\draw[thick] (1,.2) --node[right]{$\scriptstyle n$} (1,-1.2);
\draw[thick, blue] (0,-.5) --node[above]{$\scriptstyle c$} (1,.5);
\roundNbox{fill=white}{(0,-.5)}{0.3}{.1}{.1}{$f$};
\roundNbox{fill=white}{(1,.5)}{0.3}{.1}{.1}{$g$};
}
\end{align*}
We later show in Lemma \ref{lem:boxtimesisUAFpreserving} that this map is UAF-preserving.
\end{ex}

\begin{defn}
A \emph{Hilbert direct sum} of objects $a_1,a_2$ in a pre-3-Hilbert space consists of an object $a_1\boxplus a_2$ and isometries 
$I_j\colon a_j \to a_1\boxplus a_2$
for $j=1,2$
such that 
$\ev_{1}\ev_{1}^\dag+\ev_2\ev_{2}^\dag = \id_{1_{a_1\boxplus a_2}}$ where $\ev_j=\ev_{I_j}$.
This means that $I_1,I_2$ with their evaluations and coevaluations satisfy isometry relations, and we have the additional graphical relation
$$
\sum_{j=1}^2
\tikzmath{
\fill[rounded corners = 5pt, fill=green!20] (-.9,-.6) rectangle (.9,.6);
\filldraw[fill=blue!20, thick] (0,0) circle (.3cm);
\node at (-.6,0) {$\scriptstyle I_j^\vee$};
\node at (.6,0) {$\scriptstyle I_j$};
\node at (0,0) {$\scriptstyle j$};
}
=
\tikzmath{
\fill[rounded corners = 5pt, fill=green!20] (-.6,-.6) rectangle (.6,.6);
}
\qquad\qquad
\tikzmath{
\fill[fill=blue!20, rounded corners=5] (-.3,-.3) rectangle (.3,.3);
\node at (0,0) {$\scriptstyle j$};
}=a_j
\qquad\qquad
\tikzmath{\fill[fill=green!20, rounded corners=5] (-.3,-.3) rectangle (.3,.3);}=a_1\boxplus a_2.
$$
Observe that $\Psi_{a_1\boxplus a_2}$ is completely determined by $\Psi_{a_1}$ and $\Psi_{a_2}$.
Indeed,
if $f:1_{a_1\boxplus a_2}\to 1_{a_1\boxplus a_2}$,
then
\begin{equation}
\label{eq:PsiOfHilbertDirectSum}
\Psi_{a_1\boxplus a_2}\left(\,
\tikzmath{
\fill[rounded corners = 5pt, fill=green!20] (-.7,-.7) rectangle (.7,.7);
\roundNbox{fill=white}{(0,0)}{.3}{0}{0}{$f$}
}
\,\right)
=
\sum_{j=1}^2
\Psi_{a_1\boxplus a_2}\left(\,
\tikzmath{
\fill[rounded corners = 5pt, fill=green!20] (-1.9,-.7) rectangle (.9,.7);
\filldraw[fill=blue!20, thick] (0,0) circle (.3cm);
\roundNbox{fill=white}{(-1.3,0)}{.3}{0}{0}{$f$}
\node at (-.6,0) {$\scriptstyle I_{j}^\vee$};
\node at (.6,0) {$\scriptstyle I_{j}$};
\node at (0,0) {$\scriptstyle j$};
}
\,\right)
=
\sum_{j=1}^2
\Psi_{a_j}\left(\,
\tikzmath{
\fill[rounded corners = 5pt, fill=blue!20] (-1.2,-.8) rectangle (1.2,.8);
\filldraw[fill=green!20, thick] (0,0) circle (.6cm);
\roundNbox{fill=white}{(0,0)}{.3}{0}{0}{$f$}
\node at (-.9,0) {$\scriptstyle I_j$};
\node at (.9,0) {$\scriptstyle I_j^\vee$};
}
\,\right).
\end{equation}

A pre-3-Hilbert space is called \emph{Hilbert direct sum complete} ($\boxplus$-complete) if every two objects $a_1,a_2\in\fX$ admit a Hilbert direct sum.
\end{defn}

\begin{rem}
Given two Hilbert direct sums $a_1\boxplus a_2$ and $a_1\boxplus' a_2$, which we represent graphically by
$$
\tikzmath{
\fill[fill=blue!20, rounded corners=5] (-.3,-.3) rectangle (.3,.3);
\node at (0,0) {$\scriptstyle j$};
}=a_j
\qquad\qquad
\tikzmath{\fill[fill=green!20, rounded corners=5] (-.3,-.3) rectangle (.3,.3);}=a_1\boxplus a_2
\qquad\qquad
\tikzmath{\fill[fill=green!30!black!20, rounded corners=5] (-.3,-.3) rectangle (.3,.3);}=a_1\boxplus' a_2,
$$
there is a canonical isometric equivalence 
$a_1\boxplus a_2 \to a_1\boxplus' a_2$ given by the orthogonal direct sum
$$
\tikzmath{
\begin{scope}
\clip[rounded corners=5pt] (-.9,-.6) rectangle (.9,.6);
\fill[fill=green!20] (-.9,-.6) rectangle (-.3,.6);    
\fill[fill=blue!20] (-.3,-.6) rectangle (.3,.6);
\fill[fill=green!30!black!20] (.9,-.6) rectangle (.3,.6);    
\end{scope}
\draw[thick] (-.3,-.6) -- (-.3,.6);
\draw[thick] (.3,-.6) -- (.3,.6);
\node at (.6,0) {$\scriptstyle I_1'$};
\node at (-.6,0) {$\scriptstyle I_1^\vee$};
\node at (0,0) {$\scriptstyle 1$};
}
\,
\oplus
\,
\tikzmath{
\begin{scope}
\clip[rounded corners=5pt] (-.9,-.6) rectangle (.9,.6);
\fill[fill=green!20] (-.9,-.6) rectangle (-.3,.6);    
\fill[fill=blue!20] (-.3,-.6) rectangle (.3,.6);
\fill[fill=green!30!black!20] (.9,-.6) rectangle (.3,.6);    
\end{scope}
\draw[thick] (-.3,-.6) -- (-.3,.6);
\draw[thick] (.3,-.6) -- (.3,.6);
\node at (.6,0) {$\scriptstyle I_2'$};
\node at (-.6,0) {$\scriptstyle I_2^\vee$};
\node at (0,0) {$\scriptstyle 2$};
}\,.
$$
These isometric equivalences are compatible under composition, showing that the space of Hilbert direct sums of $a_1,a_2$ is isometrically contractible.
\end{rem}

\begin{rem}
\label{rem:Evil2Hilb}
The notion of Hilbert direct sum is strictly stronger than orthogonal direct sum.
Consider the full 2-subcategory of $2\Hilb$ whose objects are
$(\Hilb^{\boxplus n},n\Tr)$ for $n\geq 0$
with its restricted UAF given by unitary adjunction and spherical weight given by Sub-Example \ref{ex:WeightOn2Hilb}.
This 2-subcategory admits orthogonal direct sums, but not Hilbert direct sums.
In particular, the adjoint functors $I_j:\Hilb \to\Hilb\boxplus \Hilb$ and $P_j:\Hilb\boxplus \Hilb \to \Hilb$ are not unitary adjoints with respect to the scaled traces.
\end{rem}

\begin{ex}
\label{ex:ModCAdmitsHilbertDirectSums}
Given an $\rmH^*$-multifusion category $(\cC,\vee,\psi)$, the pre-3-Hilbert space $\Mod^\dag(\cC)$ from Example \ref{ex:WeightOnModC} admits Hilbert direct sums.
Moreover, every $(\cM,\Tr^\cM)\in \Mod^\dag(\cC)$ is a Hilbert direct sum of simple modules.
In particular, these results hold for $2\Hilb$.
\end{ex}

\begin{ex}
Given a pre-3-Hilbert space $(\fX,\vee,\Psi)$ and $a_1,\dots, a_n\in\fX$,
the linking algebra
$$
\cL(a_1,\dots, a_n)
=
\bigboxplus_{i,j=1}^n\fX(a_j\to a_i)
$$ 
is the Hilbert direct sum of the 2-Hilbert spaces $\fX(a_j\to a_i)$.
\end{ex}

\begin{rem}
If the pre-3-Hilbert space $(\fX,\vee,\Psi)$ is Hilbert direct sum complete, then the linking algebra $\cL(a_1,\dots, a_n)$ (which is organically an $\rmH^*$-multifusion category by Example \ref{ex:LinkingH*Alg}) is isometrically equivalent to $\End_\fX(\bigboxplus a_i)$.
\end{rem}

\begin{lem}
\label{lem:PreservesHilbertDirectSums}
Suppose $F:\fX\to \fY$ is a unitary functor between pre-3-Hilbert spaces.
If $F$ preserves the UAFs, then $F$ maps (co)isometries to (co)isometries.
In particular, if $F$ preserves the UAFs, then $F$ preserves Hilbert direct sums.
\end{lem}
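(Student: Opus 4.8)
The plan is to reduce the whole lemma to unitarity of the structural $2$-isomorphisms of $F$. Since $F$ is a unitary $2$-functor, its tensorators $F^2_{-,-}$ and its unitors are unitary, and since $F$ is in particular a $\dag$-$2$-functor it sends unitary $2$-morphisms to unitary $2$-morphisms. By the Remark following Definition~\ref{defn:IsometryInPre3Hilb}, a $1$-morphism ${}_aX_b$ is an isometry exactly when $\coev_X$ is unitary and a co-isometry exactly when $\ev_X$ is unitary, so it suffices to compare $\coev_{F(X)}$ and $\ev_{F(X)}$ with $F(\coev_X)$ and $F(\ev_X)$.

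First I would record the mate identities satisfied by the duality comparison $\gamma_X\colon F(X^\vee)\Rightarrow F(X)^\vee$ whose unitarity is precisely the UAF-preserving hypothesis from Definition~\ref{Defn:IsometricEquiv}. Reading off the coupon diagram there, the defining $2$-isomorphism is exactly $\gamma_X$, and it satisfies
$$
\ev_{F(X)}\circ(\gamma_X\otimes \id_{F(X)}) = F(\ev_X)\circ F^2_{X^\vee,X}.
$$
Rearranging gives $\ev_{F(X)} = F(\ev_X)\circ F^2_{X^\vee,X}\circ(\gamma_X\otimes\id_{F(X)})^{-1}$; when $X$ is a co-isometry the factor $F(\ev_X)$ is unitary and the remaining factors are unitary by hypothesis, so $\ev_{F(X)}$ is unitary and $F(X)$ is a co-isometry. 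For isometries I would use the companion identity
$$
(\id_{F(X)}\otimes\gamma_X)\circ \coev_{F(X)} = (F^2_{X,X^\vee})^{-1}\circ F(\coev_X),
$$
valid up to the unitary unit comparison $1_{F(a)}\cong F(1_a)$, which presents $\coev_{F(X)}$ as a composite of unitaries whenever $\coev_X$, hence $F(\coev_X)$, is unitary. This yields that $F(X)$ is an isometry, and isometric equivalences, being simultaneously isometries and co-isometries, are preserved as well.

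The main obstacle is establishing this companion coevaluation identity with the \emph{same} comparison $\gamma_X$: a priori the coevaluation mate and the evaluation mate $\gamma_X$ are two different $2$-isomorphisms $F(X^\vee)\Rightarrow F(X)^\vee$. I would resolve this using the canonical iconic unitary pivotal structure $\varphi$ that a UAF determines (the Remark after Definition~\ref{def:udf}): the two mates differ only by pre- and post-composition with the pivotal isomorphisms on $\fX$ and $\fY$, which are unitary and which $F$ respects because it is UAF-preserving, so the coevaluation comparison is unitary precisely when $\gamma_X$ is. Equivalently, one applies the already-proved co-isometry statement to $X^\vee$ in place of $X$ and transports along $\gamma$. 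This coherence bookkeeping is the only genuinely computational point.

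For the Hilbert direct sum statement, let $(a_1\boxplus a_2,I_1,I_2)$ be a Hilbert direct sum, so each $I_j$ is an isometry and $\ev_{I_1}\ev_{I_1}^\dag+\ev_{I_2}\ev_{I_2}^\dag=\id_{1_{a_1\boxplus a_2}}$. By the first part each $F(I_j)$ is an isometry. Applying the linear $\dag$-$2$-functor $F$ to the completeness relation and using that $F$ preserves identities (up to the unitary unit comparison), composition, and daggers gives $\sum_j F(\ev_{I_j})F(\ev_{I_j})^\dag = \id_{1_{F(a_1\boxplus a_2)}}$. The evaluation mate identity writes $F(\ev_{I_j})=\ev_{F(I_j)}\circ u_j$ with $u_j:=(\gamma_{I_j}\otimes\id_{F(I_j)})\circ(F^2_{I_j^\vee,I_j})^{-1}$ unitary, so
$$
F(\ev_{I_j})F(\ev_{I_j})^\dag=\ev_{F(I_j)}\,u_ju_j^\dag\,\ev_{F(I_j)}^\dag=\ev_{F(I_j)}\ev_{F(I_j)}^\dag,
$$
the comparison unitaries cancelling. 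Therefore $\sum_j \ev_{F(I_j)}\ev_{F(I_j)}^\dag=\id_{1_{F(a_1\boxplus a_2)}}$, and together with the fact that the $F(I_j)$ are isometries this exhibits $F(a_1\boxplus a_2)$, with structure maps $F(I_1),F(I_2)$, as a Hilbert direct sum of $F(a_1)$ and $F(a_2)$.
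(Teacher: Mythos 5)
Your proof is correct and follows essentially the same route as the paper's: the paper's two-sentence proof simply observes that since $F$ preserves identity 2-morphisms, daggers, and the UAFs, it preserves the (co)isometry relations and the completeness relation $\ev_{1}\ev_{1}^\dag+\ev_{2}\ev_{2}^\dag=\id$, and your argument just makes the comparison 2-isomorphisms and their cancellation explicit. One cosmetic point: your displayed coevaluation identity does not typecheck as written — it should read $\coev_{F(X)} = (\id_{F(X)}\otimes\gamma_X)\circ (F^2_{X,X^\vee})^{-1}\circ F(\coev_X)$ up to the unit comparison, which is the standard fact (via the zig-zag identities) that the evaluation-mate and coevaluation-mate are mutually inverse, so the pivotal-structure bookkeeping you propose for the "main obstacle" is not actually needed.
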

\begin{proof}
Since $F$ preserves identity 2-morphisms, daggers, and the UAFs, it preserves the (co)isometry relations from Definition \ref{defn:IsometryInPre3Hilb}.
In particular, $F$ preserves the relation $$\ev_1\ev_1^\dag +\ev_2\ev_2^\dag = \id,$$ and thus $F$ preserves Hilbert direct sums.
\end{proof}

\begin{construction}[Hilbert direct sum completion]
Given a pre-3-Hilbert space $(\fX,\vee,\Psi)$, its \emph{Hilbert direct sum completion} is defined similarly to the matrix category of $\fX$ defined in \cite{MR4482713}.
It has
\begin{itemize}
\item
objects finite lists $(a_1,\dots, a_r)$ of objects of $\fX$,
\item
1-morphisms $(a_1,\dots, a_t)\to (b_1,\dots, b_s)$ are formal matrices $X=(X_{ij} :a_i \to b_j)$ of 1-morphisms in $\fX$ 
\item
2-morphisms $X=(X_{ij})\Rightarrow Y=(Y_{ij})$ are formal matrices $f=(f_{ij}: X_{ij}\Rightarrow Y_{ij})$ of 2-morphisms in $\fX$.
\end{itemize}
Composition of 2-morphisms happens component-wise, i.e., $(f\circ g)_{ij}=f_{ij}\circ g_{ij}$.
Composition of 1-morphisms is a matrix multiplication formula taking an orthogonal direct sum over $\otimes_\fX$:
$$
(X\otimes Y)_{ik} := \bigoplus_j X_{ij} \otimes_{b_j} Y_{jk} 
\qquad\qquad\text{if}\qquad\qquad
\begin{aligned}
X : (a_1,\dots, a_r)&\to (b_1,\dots, b_s)
\\
Y : (b_1,\dots, b_s)&\to (c_1,\dots, c_t).
\end{aligned}
$$
The unit of $(a_1,\dots, a_r)$ is $\operatorname{diag}(1_{a_1},\dots, 1_{a_r})$.

The UAF of $\Hilb_\boxplus(\fX)$ on $X=(X_{ij})$ is given $(X^\vee)_{ij}:=(X^\vee_{ji})$ and 
$$
(\ev_X: X^\vee\otimes X\Rightarrow 1_{(b_1,\dots, b_s)})_{ij}:= 
\delta_{i=j}\cdot
\bigoplus_k\ev_{X_{ik}}: X_{ik}^\vee\otimes_{a_k} X_{ki}\Rightarrow 1_{b_i}
$$
and similarly for $\coev_X$.
The spherical weight $\Psi^{\HilbPlus(\fX)}$ is given on 
$$
(f_{ij})\in\End_{\HilbPlus(\fX)}(1_{(a_1,\dots, a_r)})=\bigoplus \End_\fX(1_{a_j})
$$ 
by $\bigoplus_j \Psi^\fX_{a_j}(f_{jj})$.

To see that this weight is indeed spherical, for $X=(X_{ij}):(a_1,\dots, a_r) \to (b_1,\dots, b_s)$ and $f=(f_{ij})\in \End_{\HilbPlus(\fX)}(X)$,
$$
\Psi^{\HilbPlus(\fX)}_{(b_1,\dots, b_s)}(f)
=
\sum_{i,j}
\Psi^\fX_{b_j}
\left(\,
\tikzmath{
\fill[rounded corners = 5pt, fill=\brColor] (-1.2,-.9) rectangle (.6,.9);
\fill[\arColor] (0,.3) arc (0:180:.3cm) -- (-.6,-.3) arc (-180:0:.3cm);
\draw[thick] (0,.3) node[right,yshift=.2cm]{$\scriptstyle X_{ij}$} arc (0:180:.3cm) --node[left]{$\scriptstyle X_{ij}^\vee$} (-.6,-.3) arc (-180:0:.3cm) node[right,yshift=-.2cm]{$\scriptstyle X_{ij}$};
\roundNbox{fill=white}{(0,0)}{.3}{0}{0}{$f$}
}
\,\right)
=
\sum_{i,j}
\Psi^\fX_{a_i}
\left(\,
\tikzmath{
\fill[rounded corners = 5pt, fill=\arColor] (1.2,-.9) rectangle (-.6,.9);
\fill[\brColor]  (0,.3) arc (180:0:.3cm) -- (.6,-.3) arc (0:-180:.3cm);
\draw[thick] (0,.3) node[left,yshift=.2cm]{$\scriptstyle X_{ij}$} arc (180:0:.3cm) --node[right]{$\scriptstyle X_{ij}^\vee$} (.6,-.3) arc (0:-180:.3cm) node[left,yshift=-.2cm]{$\scriptstyle X_{ij}$};
\roundNbox{fill=white}{(0,0)}{.3}{0}{0}{$f$}
}
\right)
=
\Psi^{\HilbPlus(\fX)}_{(a_1,\dots, a_r)}(f).
$$

Observe that $\Hilb_\boxplus(\fX)$ comes equipped with an organic isometric inclusion $\iota_X:\fX\hookrightarrow \Hilb_\boxplus(\fX)$ given by $a\mapsto (a)$, ${}_aX_b\mapsto ({}_aX_b)$, and $(f: X\Rightarrow Y)\mapsto (f)$.
\end{construction}

\begin{rem}
The object $(x_1,\dots, x_n)\in \Hilb_\boxplus(\fX)$ is manifestly the Hilbert direct sum of $(x_1),\dots, (x_n)\in\iota(\fX)$.
\end{rem}

\begin{lem}
\label{lem:HilbCompletionIsHilbComplete}
$\Hilb_\boxplus(\fX)$ is Hilbert direct sum complete.
\end{lem}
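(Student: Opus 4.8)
The plan is to produce, for any two objects $(a_1,\dots,a_r)$ and $(b_1,\dots,b_s)$ of $\HilbPlus(\fX)$, an explicit Hilbert direct sum given by concatenation. Set
$$
(a_1,\dots,a_r)\boxplus(b_1,\dots,b_s):=(a_1,\dots,a_r,b_1,\dots,b_s),
$$
and define the candidate inclusion $1$-morphisms $I_1,I_2$ as block matrices of $1$-morphisms in $\fX$: the entries of $I_1$ are $(I_1)_{ij}=\delta_{i=j}1_{a_i}$ for $j\le r$ and the zero $1$-morphism for $j>r$, and symmetrically $I_2$ includes into the last $s$ slots. Here I use that each hom-category $\fX(a\to b)$ is a $2$-Hilbert space, hence additive with a zero object, so the off-block entries are genuine (zero) $1$-morphisms.

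First I would check that $I_1$ (and likewise $I_2$) is an isometry. Using the matrix-multiplication formula for $1$-composition and the isomorphism $1_{a_i}\otimes 1_{a_i}\cong 1_{a_i}$, one computes $I_1\otimes I_1^\vee\cong 1_{(a_1,\dots,a_r)}$, with the coevaluation $\coev_{I_1}$ assembled slot-by-slot from the component coevaluations of the units $1_{a_i}$ under the UAF of $\HilbPlus(\fX)$. Since these component coevaluations are the (unitary) unitors of $\End_\fX(a_i)$, the morphism $\coev_{I_1}$ is unitary; by the Remark following Definition \ref{defn:IsometryInPre3Hilb} (equation \eqref{eq:EquivalentRecabling}), a $1$-morphism is an isometry as soon as its coevaluation is unitary, so $I_1$ is an isometry.

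Next I would verify the completeness relation $\ev_1\ev_1^\dag+\ev_2\ev_2^\dag=\id_{1_{(a_1,\dots,a_r,b_1,\dots,b_s)}}$, where $\ev_j=\ev_{I_j}$. The key computation is that $I_1^\vee\otimes I_1$ is supported only on the first $r$ diagonal slots, equal to $\operatorname{diag}(1_{a_1},\dots,1_{a_r},0,\dots,0)$; consequently $\ev_{I_1}\ev_{I_1}^\dag$ is the diagonal idempotent in $\End(1_{(a_1,\dots,a_r,b_1,\dots,b_s)})=\bigoplus_l\End_\fX(1_{c_l})$ acting as the identity on the first $r$ summands and as zero on the last $s$. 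Symmetrically, $\ev_{I_2}\ev_{I_2}^\dag$ is the complementary projection onto the last $s$ summands, so the two sum to the identity.

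The only real bookkeeping---and the step I expect to be the main obstacle---is translating the evaluation and coevaluation of the block $1$-morphisms $I_j$ into the component UAF data of $\HilbPlus(\fX)$ and checking that the off-support diagonal entries genuinely vanish. Once the block structure of $I_j^\vee\otimes I_j$ and $I_j\otimes I_j^\vee$ is pinned down, the isometry relations and the completeness relation both reduce to the statement that the self-duality data of each unit $1_{a_i}$ are unitary, which is immediate. This exhibits a Hilbert direct sum of any two objects, so $\HilbPlus(\fX)$ is Hilbert direct sum complete.
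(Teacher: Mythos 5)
Your proposal is correct and takes essentially the same route as the paper: the paper's proof is exactly the one-line claim that concatenation is the Hilbert direct sum, and your block-matrix construction of the isometries $I_1,I_2$ together with the computation $\ev_1\ev_1^\dag+\ev_2\ev_2^\dag=\id$ is precisely the verification it leaves to the reader. The only point worth stating more carefully is that the component coevaluations $\coev_{1_{a_i}}$ are not literally unitors; their unitarity follows from the UAF axioms (the dagger functor $\vee$ has unitary coherence data, so identity $1$-morphisms are isometric equivalences), a fact the paper also uses implicitly elsewhere.
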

\begin{proof}
One verifies that the Hilbert direct sum of $(a_1,\dots, a_m)$ and $(b_1,\dots, b_n)$ is the concatenation
$(a_1,\dots, a_m,b_1,\dots, b_n)$.
\end{proof}

\begin{rem}
In fact, $\HilbPlus$ is a $\dag$-3-functor on the $\rm C^*$-3-category of pre-3-Hilbert spaces.
We omit the proof which is similar to the proof that Q-system completion is a 3-functor on the $\rm C^*$ 3-category of $\rmC^*$ 2-categories from \cite{MR4369356}.
\end{rem}

The Hilbert direct sum completion satisfies a universal property in terms of the canonical isometric inclusion of pre-3-Hilbert spaces $\iota_\fX:\fX \hookrightarrow \HilbPlus(\fX)$.

\begin{prop}
\label{prop:HilbertDirectSumUniversal}
For any Hilbert direct sum complete pre-3-Hilbert space $\fY$,
precomposition with $\iota_\fX$ is an equivalence of unitary 2-categories
$$
\Fun^\dag(\HilbPlus(\fX)\to \fY) 
\xrightarrow{\iota_\fX^*}
\Fun^\dag(\fX\to \fY)
$$
which maps 
\begin{itemize}
\item 
$\Fun^{\dag,\vee}(\HilbPlus(\fX)\to \fY)$ onto $\Fun^{\dag,\vee}(\fX\to \fY)$, and
\item 
$\Isom(\HilbPlus(\fX)\to \fY)$ onto $\Isom(\fX\to \fY)$.
\end{itemize}
\end{prop}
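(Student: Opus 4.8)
The plan is to prove the universal property by exhibiting an explicit quasi-inverse to $\iota_\fX^*$ and then checking it respects the two relevant sub-2-categories. First I would establish essential surjectivity on objects (i.e.\ on unitary $2$-functors). Given $F\in\Fun^\dag(\fX\to\fY)$, I would define an extension $\hat F\colon\HilbPlus(\fX)\to\fY$ by sending a list $(a_1,\dots,a_r)$ to a chosen Hilbert direct sum $\bigboxplus_{i} F(a_i)$ in $\fY$, which exists since $\fY$ is $\boxplus$-complete. On a matrix $1$-morphism $X=(X_{ij})$ I would use the isometry data $I_i,\ev_{I_i},\coev_{I_i}$ of these chosen Hilbert direct sums to conjugate the family $F(X_{ij})$ into a $1$-morphism between $\bigboxplus F(a_i)$ and $\bigboxplus F(b_j)$, and I would define $\hat F$ on $2$-morphisms entrywise. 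The tensorator $\hat F^2$ is assembled from $F$'s tensorators together with the Hilbert-direct-sum relation $\sum_j \ev_{I_j}\ev_{I_j}^\dag=\id$, and the natural isomorphism $\hat F\circ\iota_\fX\cong F$ comes from comparing singleton lists with the chosen direct sums.

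Next I would prove that $\iota_\fX^*$ is fully faithful on the hom-categories of $2$-natural transformations and modifications. The key point is that every object of $\HilbPlus(\fX)$ is \emph{manifestly} the Hilbert direct sum of objects in $\iota_\fX(\fX)$, so a $2$-natural transformation (resp.\ modification) between $2$-functors out of $\HilbPlus(\fX)$ is determined by its components at the $\iota_\fX(a_i)$ together with naturality against the inclusion isometries $I_i$; conversely, any transformation defined on $\fX$ assembles uniquely across each Hilbert direct sum, using that the space of Hilbert direct sums of a fixed pair is isometrically contractible. This makes $\iota_\fX^*$ both essentially surjective and an equivalence on each hom-category, hence an equivalence of unitary $2$-categories.

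Finally I would treat the two restriction claims. The ``forward'' inclusions are immediate: $\iota_\fX$ is itself UAF-preserving and an isometry, and both properties are closed under composition, so $\iota_\fX^*$ carries $\Fun^{\dag,\vee}$ into $\Fun^{\dag,\vee}$ and $\Isom$ into $\Isom$. For surjectivity onto these subcategories I would use that the UAF and weight on $\HilbPlus(\fX)$ are built \emph{diagonally} from those of $\fX$: if $F=\iota_\fX^*\hat F$ is UAF-preserving, then by Lemma \ref{lem:PreservesHilbertDirectSums} the extension $\hat F$ preserves Hilbert direct sums and hence (co)isometries, and since every evaluation/coevaluation in $\HilbPlus(\fX)$ decomposes component-wise through the $\ev_{I_j},\coev_{I_j}$, UAF-preservation of $\hat F$ on matrix $1$-morphisms reduces to UAF-preservation of $F$ on the entries $X_{ij}$. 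The isometry claim is analogous: $\Psi^{\HilbPlus(\fX)}$ is the diagonal sum $\bigoplus_j\Psi^\fX_{a_j}$, and formulas \eqref{eq:PsiOfHilbertDirectSum} and \eqref{eq:CoisometryDeterminesPsiOnSmaller} show that an extension $\hat F$ of an isometry $F$, being Hilbert-direct-sum-preserving, computes $\Psi^\fY$ on $\hat F(1_{(a_1,\dots,a_r)})$ as $\sum_j\Psi^\fY_{F(a_j)}(F(f_{jj}))=\sum_j\Psi^\fX_{a_j}(f_{jj})$, so $\hat F$ is again an isometry.

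The main obstacle will be the first step: verifying that the assembled $\hat F$ is a genuine \emph{unitary} $2$-functor, i.e.\ that its tensorators satisfy the associativity and unitality coherence axioms and are unitary. This is where the bookkeeping with the isometry relations of Hilbert direct sums is unavoidable, and where one must confirm that the contractible-choice ambiguity in $\bigboxplus F(a_i)$ does not obstruct coherence; the remaining verifications are formal consequences of the component-wise description of the UAF and weight on $\HilbPlus(\fX)$.
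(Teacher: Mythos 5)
Your proposal is correct in substance, but it takes a genuinely different route from the paper at the key step of essential surjectivity on objects. Where you build the extension $\hat F$ by hand --- choosing Hilbert direct sums $\bigboxplus_i F(a_i)$ in $\fY$, conjugating matrix entries through the isometry data, and then facing the coherence verification you rightly flag as the main obstacle --- the paper sidesteps this entirely: it defines the lift as the composite $\iota_\fY^{-1}\circ \Hilb_\boxplus(F)$, using that $\Hilb_\boxplus$ is a $\dag$-3-functor and that $\iota_\fY\colon \fY\to\HilbPlus(\fY)$ is invertible (with inverse the evaluation functor) because $\fY$ is $\boxplus$-complete. This abstract route makes the two restriction claims nearly automatic, since $\Hilb_\boxplus(F)$ and $\iota_\fY^{-1}$ are each UAF-preserving (resp.\ isometric) whenever $F$ is, whereas you must re-verify these properties entry-by-entry for your hand-built $\hat F$. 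The trade-off is that the paper's argument leans on the 3-functoriality of $\Hilb_\boxplus$, which is only asserted in a remark by analogy with the Q-system completion of \cite{MR4369356}; your construction is self-contained, and indeed the paper explicitly concedes that ``one can also write down a candidate $A'$ directly.'' For the hom-category equivalence, your ``determined by components at $\iota_\fX(a_i)$ plus naturality against the $I_j$'' argument is essentially the paper's, which formalizes it as $\boxplus$-0-dominance (Lemma \ref{lem:DominanceAndFaithfulness}) for faithfulness and fullness on modifications, together with an explicit assembly of 2-transformations exactly as you describe.

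One local repair is needed in your last step: you invoke Lemma \ref{lem:PreservesHilbertDirectSums} to conclude that $\hat F$ preserves Hilbert direct sums, but that lemma takes UAF-preservation of $\hat F$ as its \emph{hypothesis}, which is precisely what you are trying to prove --- as written the citation is circular. The gap is cosmetic rather than fatal: your $\hat F$ preserves Hilbert direct sum data \emph{by construction}, since you built it out of chosen Hilbert direct sums in $\fY$ and their inclusion isometries, so the component-wise reduction of $\ev$ and $\coev$ (and of $\Psi^{\HilbPlus(\fX)}=\bigoplus_j\Psi^\fX_{a_j}$, via \eqref{eq:PsiOfHilbertDirectSum}) goes through without appealing to that lemma at all. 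You should simply delete the lemma citation and argue directly from the construction.
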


To prove this proposition, we use the `overlay' graphical calculus for $\dag$-2-functors from \cite[\S2.4]{MR4369356}.
We represent 2-functors $A,B: \fZ\to \fY$ by \emph{textured regions}, transformations $\rho,\sigma: A\Rightarrow B$ by \emph{textured strings}, and modifications $m: \rho \Rrightarrow \sigma$ by coupons.

\begin{equation}
\label{eq:TexturedConventions}
\tikzmath{
\begin{scope}
\clip[rounded corners = 5pt] (-.7,-.7) rectangle (.7,.7);
\filldraw[primedregion=white] (0,.7) rectangle (-.7,-.7);
\filldraw[plusregion=white] (0,-.7) rectangle (.7,.7);
\end{scope}
\draw[\rhoColor,thick] (0,-.7) -- (0,-.3);
\draw[\sigmaColor,thick] (0,.3) -- (0,.7);
\draw[thin, dotted, rounded corners = 5pt] (-.7,-.7) rectangle (.7,.7);
\roundNbox{fill=white}{(0,0)}{.3}{0}{0}{$m$}
}
=
m:{}_A\rho_B\Rrightarrow{}_A\sigma_B
\qquad
\text{where}
\qquad
\begin{aligned}
\tikzmath{
\begin{scope}
\clip[rounded corners = 5pt] (0,0) rectangle (.6,.6);
\fill[primedregion=white] (0,0) rectangle (.3,.6);
\fill[plusregion=white] (.3,0) rectangle (.6,.6);
\end{scope}
\draw[\rhoColor,thick] (.3,0) -- (.3,.6);
\draw[thin, dotted, rounded corners = 5pt] (0,0) rectangle (.6,.6);
}
&=
\rho: A\Rightarrow B
\\
\tikzmath{
\begin{scope}
\clip[rounded corners = 5pt] (0,0) rectangle (.6,.6);
\fill[primedregion=white] (0,0) rectangle (.3,.6);
\fill[plusregion=white] (.3,0) rectangle (.6,.6);
\end{scope}
\draw[\sigmaColor,thick] (.3,0) -- (.3,.6);
\draw[thin, dotted, rounded corners = 5pt] (0,0) rectangle (.6,.6);
}
&=
\sigma: A\Rightarrow B
\end{aligned}
\qquad
\text{and}
\qquad
\begin{aligned}
\tikzmath{
\filldraw[primedregion=white, rounded corners = 5pt] (0,0) rectangle (.6,.6);
\draw[thin, dotted, rounded corners = 5pt] (0,0) rectangle (.6,.6);
}
&=
A
\\
\tikzmath{
\filldraw[plusregion=white, rounded corners = 5pt] (0,0) rectangle (.6,.6);
\draw[thin, dotted, rounded corners = 5pt] (0,0) rectangle (.6,.6);
}
&=
B
\end{aligned}
\end{equation}
A 2-morphism in $\fY$ in the image of $F$ or associated to a 2-functor $\fZ\to \fY$, 2-transformation, or 2-modification is depicted by overlaying these textured diagrams with diagrams internal to $\fZ$.
For example, 
$$
\left(
\tikzmath{
\filldraw[primedregion=white, rounded corners = 5pt] (0,0) rectangle (.6,.6);
\draw[thin, dotted, rounded corners = 5pt] (0,0) rectangle (.6,.6);
\node at (.3,-.2) {$\scriptstyle A$};
\node at (.3,.8) {$\scriptstyle \phantom A$};}
\right)
\left(
\tikzmath{
\begin{scope}
\clip[rounded corners = 5pt] (-.7,-.7) rectangle (.7,.7);
\filldraw[\arColor] (0,.7) rectangle (-.7,-.7);
\filldraw[\brColor] (0,-.7) rectangle (.7,.7);
\end{scope}
\draw[red,thick] (0,-.7) node[below]{$\scriptstyle X$}-- (0,-.3);
\draw[orange,thick] (0,.3) -- (0,.7) node[above]{$\scriptstyle Y$};
\draw[thin, dotted, rounded corners = 5pt] (-.7,-.7) rectangle (.7,.7);
\roundNbox{fill=white}{(0,0)}{.3}{0}{0}{$f$}
\node at (-.5,0) {$\scriptstyle a$};
\node at (.5,0) {$\scriptstyle b$};
}
\right)
=:
\tikzmath{
\begin{scope}
\clip[rounded corners = 5pt] (-.7,-.7) rectangle (.7,.7);
\filldraw[primedregion=\arColor] (0,.7) rectangle (-.7,-.7);
\filldraw[primedregion=\brColor] (0,-.7) rectangle (.7,.7);
\end{scope}
\draw[red,thick] (0,-.7) node[below]{$\scriptstyle A(X)$}-- (0,-.3);
\draw[orange,thick] (0,.3) -- (0,.7) node[above]{$\scriptstyle A(Y)$};
\draw[thin, dotted, rounded corners = 5pt] (-.7,-.7) rectangle (.7,.7);
\roundNbox{fill=white}{(0,0)}{.3}{.2}{.2}{$A(f)$}
}
=
A(f): {}_{A(a)}A(X)_{A(b)}\Rightarrow {}_{A(a)}A(Y)_{A(b)}
\qquad\qquad
\begin{aligned}
\tikzmath{
\filldraw[primedregion=\arColor, rounded corners = 5pt] (0,0) rectangle (.6,.6);
}
&=
A(a)
\\
\tikzmath{
\filldraw[primedregion=\brColor, rounded corners = 5pt] (0,0) rectangle (.6,.6);
}
&=
A(b).
\end{aligned}
$$
and
$$
\left(
\tikzmath{
\begin{scope}
\clip[rounded corners = 5pt] (-.7,-.7) rectangle (.7,.7);
\filldraw[primedregion=white] (0,.7) rectangle (-.7,-.7);
\filldraw[plusregion=white] (0,-.7) rectangle (.7,.7);
\end{scope}
\draw[\rhoColor,thick] (0,-.7) node[below]{$\scriptstyle \rho$}-- (0,-.3);
\draw[\sigmaColor,thick] (0,.3) -- (0,.7) node[above]{$\scriptstyle \sigma$};
\draw[thin, dotted, rounded corners = 5pt] (-.7,-.7) rectangle (.7,.7);
\roundNbox{fill=white}{(0,0)}{.3}{0}{0}{$m$}
}
\right)
\left(
\tikzmath{
\filldraw[\arColor, rounded corners = 5pt] (0,0) rectangle (.6,.6);
\node at (.3,-.2) {$\scriptstyle a$};
\node at (.3,.8) {$\scriptstyle \phantom a$};
}
\right)
=:
\tikzmath{
\begin{scope}
\clip[rounded corners = 5pt] (-.7,-.7) rectangle (.7,.7);
\filldraw[primedregion=\arColor] (0,.7) rectangle (-.7,-.7);
\filldraw[plusregion=\arColor] (0,-.7) rectangle (.7,.7);
\end{scope}
\draw[\rhoColor,thick] (0,-.7) node[below]{$\scriptstyle \rho_a$}-- (0,-.3);
\draw[\sigmaColor,thick] (0,.3) -- (0,.7) node[above]{$\scriptstyle \sigma_a$};
\roundNbox{fill=white}{(0,0)}{.3}{0}{0}{$m_a$}
}
\qquad\qquad\qquad
\begin{aligned}
\tikzmath{
\filldraw[primedregion=\arColor, rounded corners = 5pt] (0,0) rectangle (.6,.6);
}
&=
A(a)
\\
\tikzmath{
\filldraw[plusregion=\arColor, rounded corners = 5pt] (0,0) rectangle (.6,.6);
}
&=
B(a).
\end{aligned}
$$
One should think of this as identifying $\fZ\cong \Fun^\dag(*\to \fZ)$, where $*$ is the category with one object $*$, one 1-morphism $1_*$, and 2-morphisms $\End(1_*)=\bbC$,
and identifying $A: \fZ\to \fY$ with $A\circ -: \Fun^\dag(*\to \fZ)\to \Fun^\dag(*\to \fY)$.
We refer the reader to \cite[\S2.4]{MR4369356} for further details.

We now extend the results on dominance and truncation from \cite[\S4.6]{MR4369356} to the setting of pre-3-Hilbert spaces.
\begin{defn}
\label{defn:BoxplusDominance}
Suppose $\fX,\fZ$ are pre-3-Hilbert spaces where $\fZ$ admits Hilbert direct sums.
A $\dag$ 2-functor $G:\fX\to \fZ$ is called
\begin{itemize}
\item 
$\boxplus$-0-dominant 
if for all $z\in\fZ$, there are
$x_1,\dots, x_n\in\fX$ and a coisometry 
$\bigboxplus G(x_i) \to z$.
\item 
$\boxplus$-dominant if it is both $\boxplus$-0-dominant and locally dominant, i.e., every hom $\dag$-functor $G_{x\to x'}$ is dominant (further unpacked, every 1-morphism ${}_{G(x)}Z_{G(x')}$ is unitarily isomorphic to an orthogonal direct summand of some ${}_{G(x)}G(X)_{G(x')}$).
\end{itemize}
\end{defn}

\begin{rem}
In the algebraic linear 2-category setting, when $\fX,\fZ$ are locally Cauchy complete linear 2-categories such that $\fZ$ admits direct sums of objects, $\boxplus$-0-dominance means that for every $z\in \fZ$, there are $x_1,\dots, x_n\in\fX$ and a condensation $\boxplus G(x_i)\condense z$.
When $\fX,\fZ$ are unitary 2-categories such that $\fZ$ admits orthogonal direct sums, we modify the algebraic definition above by using orthogonal direct sums and $\dag$-condensations.
\end{rem}

\begin{lem}
\label{lem:DominanceAndFaithfulness}
Suppose $\fX,\fZ$ are pre-3-Hilbert spaces where $\fZ$ admits Hilbert direct sums.
If a $\dag$-2-functor $G:\fX\to \fZ$ is $\boxplus$-0-dominant, then 
$$
-\circ G : \Fun^\dag(\fZ\to \fY) \to \Fun^\dag(\fX\to \fY)
$$
is faithful on 2-morphisms.
If $G$ is $\boxplus$-dominant, then $-\circ G$ is fully faithful on 2-morphisms.
\end{lem}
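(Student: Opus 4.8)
The plan is to follow the strategy of \cite[\S4.6]{MR4369356}, replacing condensations by coisometries and direct sums by Hilbert direct sums throughout. Recall that a modification $m\colon \rho\Rrightarrow\sigma$ between transformations $\rho,\sigma\colon A\Rightarrow B$ of $\dag$-2-functors $A,B\colon\fZ\to\fY$ is a family of 2-morphisms $m_z\colon\rho_z\Rightarrow\sigma_z$ in $\fY$, one for each $z\in\fZ$, compatible with the unitary naturality 2-cells of $\rho,\sigma$ along every 1-morphism of $\fZ$; the whiskered modification $m\circ G$ has components $(m\circ G)_x=m_{G(x)}$. Thus faithfulness on 2-morphisms amounts to the implication: if $m_{G(x)}=m'_{G(x)}$ for all $x\in\fX$, then $m_z=m'_z$ for all $z\in\fZ$; and fullness amounts to extending an arbitrary modification $n\colon\rho\circ G\Rrightarrow\sigma\circ G$ to a modification of $\rho,\sigma$.

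First I would prove faithfulness. Setting $d:=m-m'$, we have a modification with $d_{G(x)}=0$ for all $x$, and the goal is $d_z=0$ for all $z$. I would argue in two transport steps. Given $z$, $\boxplus$-0-dominance supplies a coisometry $w\colon a\to z$ with $a=\bigboxplus_i G(x_i)$. Step one transports $d_{G(x_i)}=0$ to $d_a=0$: since $a$ is a Hilbert direct sum, the isometries $I_i\colon G(x_i)\to a$ satisfy $\sum_i\ev_{I_i}\ev_{I_i}^\dag=\id_{1_a}$, and applying the functor $A$ gives the resolution $\sum_i A(\ev_{I_i})A(\ev_{I_i})^\dag=\id_{1_{A(a)}}$; the modification squares for the $I_i$ then express $d_a$, conjugated by the unitary naturality cells $\rho_{I_i},\sigma_{I_i}$, in terms of the vanishing $d_{G(x_i)}$. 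Step two transports $d_a=0$ to $d_z=0$: because $w$ is a coisometry, $\ev_w\colon w^\vee\otimes w\Rightarrow 1_z$ is unitary, and since $A,B$ are unitary $\dag$-2-functors their tensorators and the 2-morphisms $A(\ev_w),B(\ev_w)$ are again unitary, so $\id_{1_{A(z)}}$ and $\id_{1_{B(z)}}$ factor through $A(w^\vee)\otimes A(w)$ and $B(w^\vee)\otimes B(w)$; feeding this factorization into the modification square for $w$ recovers $d_z$ from $d_a$, giving $d_z=0$.

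For fullness I would use the same transport to construct the extension. Given $n\colon\rho\circ G\Rrightarrow\sigma\circ G$, for each $z$ choose a coisometry $w\colon\bigboxplus_i G(x_i)\to z$, assemble the components $n_{x_i}$ into a 2-morphism at $a=\bigboxplus_i G(x_i)$ using the Hilbert direct sum structure, and define $m_z$ by transporting this along $w$ via $\ev_w$ and the naturality cells exactly as above. The bulk of the work is then to check (i) that $m_z$ is independent of the chosen coisometry and decomposition, (ii) that $\{m_z\}$ satisfies the modification square for every 1-morphism ${}_zW_{z'}$ of $\fZ$, and (iii) that $m_{G(x)}=n_x$. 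Here local dominance is essential: to verify the square for a general $W$, one chooses coverings of $z$ and $z'$ and observes that, relative to these, $W$ decomposes into orthogonal summands of images $G(X)$ of 1-morphisms of $\fX$; the square for each such summand reduces to the modification square $M(G(X))$, which equals a square of $n$ and so holds by hypothesis, and orthogonality reassembles these into the square for $W$. Well-definedness in (i) follows similarly by comparing two choices through a common refinement and using naturality of the transport in the coisometry.

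The main obstacle will be the fullness half, and within it the bookkeeping for (i) and (ii): verifying that the transport formula is insensitive to all the choices and that it genuinely satisfies the modification axiom for \emph{every} 1-morphism of $\fZ$, not merely those in the image of $G$. This is precisely the point where local dominance must be invoked, and where the overlay graphical calculus of \cite[\S2.4]{MR4369356} is most useful for organizing the compatibility 2-cells of $\rho,\sigma$ together with the isometry and coisometry relations into a manageable computation. The faithfulness half, by contrast, is essentially a direct two-step application of the Hilbert direct sum relation and the coisometry relation and should be routine.
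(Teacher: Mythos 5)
Your proposal is correct and follows essentially the same route as the paper: faithfulness is proved by exactly the same two-step transport, first using the Hilbert direct sum relation $\sum_i \ev_{I_i}\ev_{I_i}^\dag=\id$ together with the naturality cells of $\rho,\sigma$ to pass from the components $m_{G(x_i)}$ to $m_{\boxplus G(x_i)}$, and then using the coisometry relation $\ev_Z\ev_Z^\dag=\id_{1_z}$ to pass to $m_z$. For fullness, the paper simply defers to an adaptation of \cite[Prop.~4.11]{MR4369356}, which is precisely the construction-and-verification scheme (transport of $n$ along coisometries, well-definedness, modification squares via local dominance) that you sketch.
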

\begin{proof}
To prove the first result, we modify the proof of \cite[Prop.~4.10]{MR4369356} using $\boxplus$-0-dominance instead of $0$-dominance.
The second result is then a straightforward adaptation of \cite[Prop.~4.11]{MR4369356} mutatis mutandis which we leave to the reader.

Suppose $A,B\in \Fun^\dag(\fZ\to \fY)$, $\rho,\sigma: A\Rightarrow B$, and $m: \rho\Rrightarrow \sigma$, all represented graphically as in \eqref{eq:TexturedConventions}.
We prove that if $m\circ G = 0$, then $m=0$.
As $G$ is $\boxplus$-0-dominant,
for each $z\in \fZ$, there are $x_1,\dots, x_n\in \fX$  and a coisometry $Z: \boxplus G(x_j) \to z$, which we represent graphically by
$$
\begin{aligned}
\tikzmath{
\fill[fill=blue!20, rounded corners=5] (-.3,-.3) rectangle (.3,.3);
\node at (0,0) {$\scriptstyle j$};
}&=G(x_j)
\\
\tikzmath{\fill[fill=green!20, rounded corners=5] (-.3,-.3) rectangle (.3,.3);}&=\bigboxplus G(x_j)
\end{aligned}
\qquad\qquad
\begin{aligned}
\tikzmath{
\begin{scope}
\clip[rounded corners = 5pt] (0,0) rectangle (.6,.6);
\fill[fill=blue!20] (0,0) rectangle (.3,.6);
\fill[fill=green!20] (.3,0) rectangle (.6,.6);
\end{scope}
\draw[blue,thick] (.3,0) -- (.3,.6);
\node at (.15,.3) {$\scriptstyle j$};
}
&=
{}_{G(x_j)}(I_j)_{\boxplus G(x_j)}
\\
\tikzmath{
\begin{scope}
\clip[rounded corners = 5pt] (0,0) rectangle (.6,.6);
\fill[fill=green!20] (0,0) rectangle (.3,.6);
\fill[fill=\AColor] (.3,0) rectangle (.6,.6);
\end{scope}
\draw[DarkGreen,thick] (.3,0) -- (.3,.6);
}
&=
{}_{\boxplus G(x_j)}Z_z
\end{aligned}
\qquad\qquad
\tikzmath{
\fill[fill=\AColor, rounded corners=5] (-.3,-.3) rectangle (.3,.3);
}=z
.
$$
We now compute that $m_{\boxplus G(x_j)}=0$ as it is determined by the $m_{G(x_j)}=0$ by
\begin{align*}
\tikzmath{
\begin{scope}
\clip[rounded corners=5pt] (-1,-.7) rectangle (1,.7);
\fill[primedregion=green!20] (-1,-.7) rectangle (0,.7);
\fill[plusregion=green!20] (1,-.7) rectangle (0,.7);
\end{scope}
\roundNbox{fill=white}{(0,0)}{.3}{.5}{.5}{$m_{\boxplus G(x_j)}$}
\draw[\rhoColor,thick] (0,-.7) node[below]{$\scriptstyle \rho_{\boxplus G(x_j)}$}-- (0,-.3);
\draw[\sigmaColor,thick] (0,.3) -- (0,.7) node[above]{$\scriptstyle \sigma_{\boxplus G(x_j)}$};
}
&=
\sum_j
\tikzmath{
\begin{scope}
\clip[rounded corners=5pt] (-.6,-.7) rectangle (2.3,.7);
\fill[primedregion=green!20] (-.6,-.7) rectangle (1.3,.7);
\fill[plusregion=green!20] (2.3,-.7) rectangle (1.3,.7);
\end{scope}
\draw[preaction={primedregion=blue!20},blue,thick] (0,0) circle (.3cm);
\roundNbox{fill=white}{(1.3,0)}{.3}{.5}{.5}{$m_{\boxplus G(x_j)}$}
\draw[\rhoColor,thick] (1.3,-.7) node[below]{$\scriptstyle \rho_{\boxplus G(x_j)}$}-- (1.3,-.3);
\draw[\sigmaColor,thick] (1.3,.3) -- (1.3,.7) node[above]{$\scriptstyle \sigma_{\boxplus G(x_j)}$};
\node at (0,0) {$\scriptstyle j$};
}
=
\sum_j
\tikzmath{
\begin{scope}
\clip[rounded corners=5pt] (-1,-1.1) rectangle (1,1.1);
\fill[primedregion=green!20] (-1,-1.1) rectangle (0,1.1);
\fill[plusregion=green!20] (0,-1.1) rectangle (1,1.1);
\end{scope}
\draw[preaction={primedregion=blue!20},blue,thick] (0,-.2) arc(90:270:.3cm);
\draw[preaction={plusregion=blue!20},blue,thick] (0,-.2) arc(90:-90:.3cm);
\roundNbox{fill=white}{(0,.5)}{.3}{.5}{.5}{$m_{\boxplus G(x_j)}$}
\draw[\rhoColor,thick] (0,-1.1) node[below]{$\scriptstyle \rho_{\boxplus G(x_j)}$}-- (0,.2);
\draw[\sigmaColor,thick] (0,.8) -- (0,1.1) node[above]{$\scriptstyle \sigma_{\boxplus G(x_j)}$};
\node at (-.15,-.5) {$\scriptstyle j$};
}
=
\sum_j
\tikzmath{
\begin{scope}
\clip[rounded corners=5pt] (-1.5,-1.5) rectangle (1.5,1.5);
\fill[primedregion=green!20] (-1.5,-1.5) rectangle (0,1.5);
\fill[plusregion=green!20] (0,-1.5) rectangle (1.5,1.5);
\end{scope}
\draw[preaction={primedregion=blue!20},blue,thick] (0,1) arc(90:270:1cm);
\draw[preaction={plusregion=blue!20},blue,thick] (0,1) arc(90:-90:1cm);
\roundNbox{fill=white}{(0,0)}{.3}{.4}{.4}{$m_{G(x_j)}$}
\draw[\rhoColor,thick] (0,-1.5) node[below]{$\scriptstyle \rho_{\boxplus G(x_j)}$}-- (0,-.3);
\draw[\sigmaColor,thick] (0,.3) -- (0,1.5) node[above]{$\scriptstyle \sigma_{\boxplus G(x_j)}$};
\node at (-.3,-.6) {$\scriptstyle j$};
}
=
0.
\end{align*}
By a similar calculation as above replacing
the green $\boxplus G(x_j)$ region with the gray $z$ region, 
the sum over the blue $G(x_j)$ bubbles with a single green $\boxplus G(x_j)$ bubble,
and the isometries $I_j$ with the coisometry $Z$,
one sees that
\[
\tikzmath{
\begin{scope}
\clip[rounded corners=5pt] (-.7,-.7) rectangle (.7,.7);
\fill[primedregion=\AColor] (-.7,-.7) rectangle (0,.7);
\fill[plusregion=\AColor] (.7,-.7) rectangle (0,.7);
\end{scope}
\roundNbox{fill=white}{(0,0)}{.3}{0}{0}{$m_{z}$}
\draw[\rhoColor,thick] (0,-.7) node[below]{$\scriptstyle \rho_{z}$}-- (0,-.3);
\draw[\sigmaColor,thick] (0,.3) -- (0,.7) node[above]{$\scriptstyle \sigma_{z}$};
}
=
\tikzmath{
\begin{scope}
\clip[rounded corners=5pt] (-.2,-.7) rectangle (1.9,.7);
\fill[primedregion=\AColor] (-.2,-.7) rectangle (1.3,.7);
\fill[plusregion=\AColor] (1.9,-.7) rectangle (1.3,.7);
\end{scope}
\draw[thick, DarkGreen, preaction={primedregion=green!20}] (.4,0) circle (.3cm);
\roundNbox{fill=white}{(1.3,0)}{.3}{0}{0}{$m_{z}$}
\draw[\rhoColor,thick] (1.3,-.7) node[below]{$\scriptstyle \rho_{z}$}-- (1.3,-.3);
\draw[\sigmaColor,thick] (1.3,.3) -- (1.3,.7) node[above]{$\scriptstyle \sigma_{z}$};
}
=
\tikzmath{
\begin{scope}
\clip[rounded corners=5pt] (-.6,-1.1) rectangle (.6,1.1);
\fill[primedregion=\AColor] (-.6,-1.1) rectangle (0,1.1);
\fill[plusregion=\AColor] (0,-1.1) rectangle (.6,1.1);
\end{scope}
\draw[thick, DarkGreen, preaction={primedregion=green!20}] (0,-.2) arc(90:270:.3cm);
\draw[thick, DarkGreen, preaction={plusregion=green!20}] (0,-.2) arc(90:-90:.3cm);
\roundNbox{fill=white}{(0,.5)}{.3}{0}{0}{$m_{z}$}
\draw[\rhoColor,thick] (0,-1.1) node[below]{$\scriptstyle \rho_{z}$}-- (0,.2);
\draw[\sigmaColor,thick] (0,.8) -- (0,1.1) node[above]{$\scriptstyle \sigma_{z}$};
}
=
\tikzmath{
\begin{scope}
\clip[rounded corners=5pt] (-1.5,-1.5) rectangle (1.5,1.5);
\fill[primedregion=\AColor] (-1.5,-1.5) rectangle (0,1.5);
\fill[plusregion=\AColor] (0,-1.5) rectangle (1.5,1.5);
\end{scope}
\draw[thick, DarkGreen, preaction={primedregion=green!20}] (0,1) arc(90:270:1cm);
\draw[thick, DarkGreen, preaction={plusregion=green!20}] (0,1) arc(90:-90:1cm);
\roundNbox{fill=white}{(0,0)}{.3}{.5}{.5}{$m_{\boxplus G(x_j)}$}
\draw[\rhoColor,thick] (0,-1.5) node[below]{$\scriptstyle \rho_{z}$}-- (0,-.3);
\draw[\sigmaColor,thick] (0,.3) -- (0,1.5) node[above]{$\scriptstyle \sigma_{z}$};
}
=
0.
\qedhere\]
\end{proof}

\begin{proof}[Proof of Proposition \ref{prop:HilbertDirectSumUniversal}]
The proof is similar to the proof of the universal property of Q-system completion from \cite[\S4]{MR4369356}.

\item[\underline{Step 1:}] 
Since every object of $\Hilb_\boxplus(\fX)$ is a Hilbert direct sum of objects in the image of $\iota_\fX$, $\iota_\fX$ is $\boxplus$-0-dominant.
By Lemma \ref{lem:DominanceAndFaithfulness}, $-\circ \iota$ is faithful on 2-morphisms.

\item[\underline{Step 2:}] 
Observe that $\iota_\fX$ is a local $\dag$-equivalence.
Again by Lemma \ref{lem:DominanceAndFaithfulness},  $-\circ \iota_\fX$ is fully faithful on 2-morphisms.

\item[\underline{Step 3:}] 
We now show
$-\circ \iota_\fX$ is a $\dag$-equivalence on hom-categories.
It remains to prove essential surjectivity on hom-categories.
Suppose we have a 2-transformation $\rho:A\circ \iota_\fX \Rightarrow B\circ \iota_\fX$ for two functors $A,B: \Hilb_\boxplus(\fX)\to \fY$.
We construct a 2-transformation $\sigma: A\Rightarrow B$ such that $\sigma\circ \iota_\fX \cong \rho$.
For each $x_1,\dots, x_n\in\fX$,
$(x_1,\dots, x_n)\in \Hilb_\boxplus(\fX)$ is the Hilbert direct sum $\bigboxplus \iota_\fX(x_j)$.
Hence we have a canonical isometric equivalence $A(x_1,\dots, x_n)\cong \bigboxplus A(x_j)$
(where we abbreviate $A(x_j):=(A\circ \iota_\fX)(x_j)$) whose canonical isometries are denoted by $I_j^{A(x)}: A(x_j)\to A(x_1,\dots, x_n)$, and similarly for $B$.
We define $\sigma_{(x_1,\dots, x_n)}:=\bigoplus {I^{A(x)}_j}^\vee\otimes \rho_{x_j}\otimes I^{B(x)}_j : \bigboxplus A(x_j)\to \bigboxplus B(x_j)$, which we represent graphically by
$$
\sigma_{(x_1,\dots, x_n)}
=
\bigoplus
\tikzmath{
\begin{scope}
\clip[rounded corners = 5pt] (-1.7,-.5) rectangle (1.5,.5);
\fill[primedregion=blue!40] (-.5,-.5) rectangle (-1.7,.5);
\fill[primedregion=blue!20] (-.5,-.5) rectangle (0,.5);
\fill[plusregion=blue!20] (0,-.5) rectangle (.5,.5);
\fill[plusregion=blue!40] (.5,-.5) rectangle (1.5,.5);
\end{scope}
\draw[thick, blue] (-.5,-.5) --node[left]{$\scriptstyle {I_j^{A(x)}}^\vee$} (-.5,.5);
\draw[thick, black, snake] (0,-.5) --node[left]{$\scriptstyle j$} node[right]{$\scriptstyle j$} (0,.5);
\draw[thick, blue] (.5,-.5) --node[right]{$\scriptstyle I_j^{B(x)}$} (.5,.5);
}
\qquad\qquad
\begin{aligned}
\tikzmath{
\filldraw[primedregion=white, rounded corners = 5pt] (0,0) rectangle (.6,.6);
\draw[thin, dotted, rounded corners = 5pt] (0,0) rectangle (.6,.6);
}
&=
A
\\
\tikzmath{
\filldraw[plusregion=white, rounded corners = 5pt] (0,0) rectangle (.6,.6);
\draw[thin, dotted, rounded corners = 5pt] (0,0) rectangle (.6,.6);
}
&=
B
\\
\tikzmath{
\begin{scope}
\clip[rounded corners = 5pt] (0,0) rectangle (.6,.6);
\fill[primedregion=white] (0,0) rectangle (.3,.6);
\fill[plusregion=white] (.3,0) rectangle (.6,.6);
\end{scope}
\draw[\sigmaColor,thick] (.3,0) -- (.3,.6);
\draw[thin, dotted, rounded corners = 5pt] (0,0) rectangle (.6,.6);
}
&=
\rho: A\circ\iota\Rightarrow B\circ\iota
\end{aligned}
\qquad\qquad
\begin{aligned}
\tikzmath{
\fill[blue!40, rounded corners = 5pt] (0,0) rectangle (.6,.6);
}
&=
(x_1,\dots, x_n)
\\
\tikzmath{
\fill[blue!20, rounded corners = 5pt] (0,0) rectangle (.6,.6);
\node at (.3,.3) {$\scriptstyle j$};
}
&=
x_j
\end{aligned}
$$
For a 1-morphism $X=(X_{ij}): (x_1,\dots, x_n) \to (y_1,\dots, y_m)$,
keeping the graphical notation above, we define 
$$
\sigma_X:= \sum_{i,j}
\tikzmath{
\begin{scope}
\clip[rounded corners=5pt] (-2.2,-1) rectangle (2,1);
\fill[primedregion=blue!40] (-1,-1) rectangle (-2.2,1);
\fill[primedregion=blue!20] (-1,-1) rectangle (-.6,1);
\fill[plusregion=blue!20] (-.6,1) -- (-.3,1) arc(-180:0:.3cm) -- (.6,1) -- (.6,.3) --  (-.6,.3);
\fill[plusregion=red!40] (1,-1) rectangle (2,1);
\fill[plusregion=red!20] (1,-1) rectangle (.6,1);
\fill[primedregion=red!20] (-.6,-1) -- (-.3,-1) arc(180:0:.3cm) -- (.6,-1) -- (.6,-.3) --  (-.6,-.3);
\draw[thick, red, preaction={primedregion=red!40}] (-.3,-1) arc(180:0:.3cm);
\draw[thick, blue, preaction={plusregion=blue!40}] (-.3,1) arc(-180:0:.3cm);
\end{scope}
\draw[thick, blue] (-1,-1) --node[left]{$\scriptstyle {I_i^{A(x)}}^\vee$} (-1,1);
\draw[thick] (-.6,-.3) -- (-.6,-1) node[below]{$\scriptstyle A(X_{ij})$};
\draw[thick, black, snake] (-.6,.3) -- (-.6,1) node[above]{$\scriptstyle \rho_{x_i}$};
\draw[thick, black, snake] (.6,-.3) -- (.6,-1) node[below]{$\scriptstyle \rho_{x_j}$};
\draw[thick] (.6,.3) -- (.6,1) node[above]{$\scriptstyle B(X_{ij})$};
\draw[thick, red] (1,-1) --node[right]{$\scriptstyle I_j^{B(y)}$} (1,1);
%\draw[thick, red] (-.2,-.7) arc(180:0:.2cm);
\roundNbox{fill=white}{(0,0)}{.3}{.5}{.5}{$\rho_{X_{ij}}$}
}
\qquad\qquad\qquad\qquad
\begin{aligned}
\tikzmath{
\fill[red!40, rounded corners = 5pt] (0,0) rectangle (.6,.6);
}
&=
(y_1,\dots, y_m)
\\
\tikzmath{
\fill[red!20, rounded corners = 5pt] (0,0) rectangle (.6,.6);
\node at (.3,.3) {$\scriptstyle j$};
}
&=
y_j
\end{aligned}
$$
again abbreviating $A(X_{ij}):=(A\circ\iota)(X_{ij})$, etc.
One readily checks $\sigma: A\Rightarrow B$ is a 2-transformation such that $\sigma\circ\iota_\fX \cong\rho$.

\item[\underline{Step 4:}] 
$-\circ \iota_\fX$ is essentially surjective.
Given $A\in \Fun^\dag(\fX\to \fY)$, using that $\Hilb_\boxplus$ is a 3-functor, one easily constructs a 
lift $A'\in \Fun^\dag(\Hilb_\boxplus(\fX)\to \fY)$ 
with $A'\circ \iota_\fX \cong A$
as in \cite[Proof of Thm.~1.2]{MR4369356} by
$$
\begin{tikzcd}
\Hilb_\boxplus(\fX)
\arrow[rr, "\Hilb_\boxplus(A)"]
&&
\Hilb_\boxplus(\fY)
\arrow[dr, "\iota_\fY^{-1}"]
\\
\fX
\arrow[r,swap, "A"]
\arrow[u, "\iota_\fX"]
&
\fY
\arrow[rr,swap,"\id_\fY"]
\arrow[ur, "\iota_\fY"]
\arrow[ul,Rightarrow,shorten <= 1em, shorten >= 1em,"\cong"']
&
\arrow[u,Rightarrow,shorten <= .5em, 
shorten >= .5em,
"\cong"]%,"\theta"']
&
\fY.
\end{tikzcd}
$$
One can also write down a candidate $A'$ directly.

\item[\underline{Step 5:}] 
$-\circ \iota_\fX$ restricted to $\Fun^{\dag,\vee}(\Hilb_\boxplus(\fX) \to \fY)$ is essentially surjective onto $\Fun^{\dag,\vee}(\fX \to \fY).$

Notice that $\iota_\fX$ is UAF-preserving. Since the composite of UAF-preserving functors is again UAF-preserving, the restriction of $\iota^*_\fX$ is well-defined. 

Now, if $A \in \Fun^{\dag,\vee}(\fX \to \fY)$, it is clear that $\Hilb_\boxplus(A)$ is UAF-preserving. Since $\iota_{\fY}^{-1}$ is also UAF-preserving, we thus have that the lift $\iota_{\fY}^{-1} \circ \Hilb_{\boxplus}(\cA)$ of $A$ is UAF-preserving.

\item[\underline{Step 6:}] 
$-\circ \iota_\fX$ restricted to $\Isom(\HilbPlus(\fX)\to \fY)$ is essentially surjective onto $\Isom(\fX\to \fY)$.

Notice that $\iota_\fX$ is in fact isomtetric. Since the composite of isomtetric functors is again isometric, the restriction of $\iota^*_\fX$ is well-defined. 

Now if $A \in \Isom(\fX \to \fY)$, it is clear that $\Hilb_\boxplus(A)$ is isometric.
Since $\iota_\fY^{-1}$ can be taken to be the isometric `evaluation' functor $\Hilb_\boxplus(\fY)\to \fY$ given by $(y_1,\dots, y_n)\mapsto \boxplus y_j$ as $\fY$ is Hilbert direct sum complete, we conclude that the lift $\iota_{\fY}^{-1} \circ \Hilb_{\boxplus}(\cA)$ of $A$ is isometric.
\end{proof}

\begin{rem}
We expect that the universal property in Proposition \ref{prop:HilbertDirectSumUniversal} may be stated in terms of an isometric equivalence of pre-3-Hilbert spaces in certain cases.
In order to do so, one would need an organic pre-3-Hilbert space structure on $\Fun^\dag(\fX\to \fY)$.
We refer the reader to \S\ref{sec:OpenQuestions} below for further discussion.
\end{rem}

%%%%%%%%%%%%%%%%%%%%%%%%%%%%%%%%%%%%%%%%%%%%%%%%%%%%%%%%%%%%%%%%%%%%%%%%%%%%%%%%%%
\subsection{\texorpdfstring{$\rmH^*$}{H*}-monads and \texorpdfstring{$\rmH^*$}{H*}-monad completeness}
\label{sec:H*Monad}

An \emph{$\rmH^*$-monad} in a pre-3-Hilbert space $\fX$ is an object $a\in\fX$ together with an $\rmH^*$-algebra $({}_aA_a,\mu,\iota)\in \End_\fX(a)$. Notice when $\fX$ is the delooping $\rmB \cC$ for an H*-multifusion category $\cC$, the H*-monads in $\fX$ correspond to the H*-algebras in $\cC$.

\begin{ex}\label{ex:summandsof1}
Given an object $a$ in a pre-3-Hilbert space $\fX$, any summand of $1_a$ has a canonical structure of a standard Q-system.
\end{ex}

\begin{ex}
\label{ex:CondensationH*algebra}
Similar to Example \ref{ex:separabledualyieldsH*algebra}, for any 1-morphism ${}_aX_b$,
${}_aX\otimes_b X^\vee_a$ has a canonical structure of an $\rmH^*$-monad
with multiplication and unit given by
$$
\mu:=\id_X\otimes \ev_X\otimes \id_{X^\vee}
\qquad\text{and}\qquad 
\iota:= \coev_X.
$$
When ${}_aX_b$ is a coisometry, ${}_aX\otimes_b X^\vee_a$ is a standard Q-system as $\ev_X\ev^\dag_X=\id_{1_b}$.
\end{ex}

\begin{defn}
A \emph{splitting}
of an $\rmH^*$-monad $A\in\fX(a\to a)$ in a pre-3-Hilbert space $\fX$ is 
an object $b\in\fX$ 
and a 1-morphism ${}_aX_b$ such that $\ev_X\ev_X^\dag\in\End(1_b)$ is invertible
together with a unitary algebra isomorphism $u: {}_aA_a\to {}_aX\otimes_b X^\vee_a$ .

We say a pre-3-Hilbert space is \emph{idempotent complete} if every $\rmH^*$-algebra splits.
\end{defn}

\begin{rem}
If $({}_aX_b,u)$ splits the $\rmH^*$-monad ${}_aA_a$,
then $X$ is canonically a unital left $A$-module and $X^\vee$ is canonically a unital right $A$-module where
\begin{equation}
\label{eq:XisA-Abimod}
\tikzmath{
\begin{scope}
\clip[rounded corners=5pt] (-.7,-.5) rectangle (.5,.5);
\fill[fill=\arColor] (-.7,-.5) rectangle (0,.5);
\fill[fill=\brColor] (0,-.5) rectangle (.5,.5);
\end{scope}
\draw[thick, \AsColor] (-.5,-.5) -- (0,0);
\draw[thick] (0,-.5) --node[right]{$\scriptstyle X$} (0,.5);
}
:=
\tikzmath{
\begin{scope}
\clip[rounded corners=5pt] (-.7,-.7) rectangle (1.1,.7);
\fill[fill=\arColor] (-.7,-.7) rectangle (.6,.7);
\fill[fill=\brColor] (1.1,.7) -- (-.2,.7) -- (-.2,.3) -- (.2,.3) arc(180:0:.2cm) -- (.6,-.7) -- (1.1,-.7);
\end{scope}
\draw[thick, \AsColor] (0,-.7) -- (0,-.3);
\draw[thick] (-.2,.3) --node[left]{$\scriptstyle X$} (-.2,.7);
\draw[thick] (.2,.3) arc(180:0:.2cm) --node[right]{$\scriptstyle X$} (.6,-.7);
\roundNbox{fill=white}{(0,0)}{.3}{.1}{.1}{$u$}
}
\qquad\text{and}\qquad
\tikzmath{
\begin{scope}
\clip[rounded corners=5pt] (-.7,-.5) rectangle (.7,.5);
\fill[\brColor] (-.7,-.5) rectangle (0,.5);
\fill[\arColor] (0,-.5) rectangle (.7,.5);
\end{scope}
\draw[thick, \AsColor] (.5,-.5) -- (0,0);
\draw[thick] (0,-.5) --node[left]{$\scriptstyle X^\vee$} (0,.5);
}
:=
\tikzmath{
\begin{scope}
\clip[rounded corners=5pt] (-1.3,-.7) rectangle (.8,.7);
\fill[\brColor] (-1.3,-.7) rectangle (.8,.7);
\fill[\arColor] (-.2,.3) arc(0:180:.2cm) -- (-.6,-.7) -- (.8,-.7) -- (.8,.7) -- (.2,.7) -- (.2,.3);
\end{scope}
\draw[thick, \AsColor] (0,-.7) -- (0,-.3);
\draw[thick] (.2,.3) --node[right]{$\scriptstyle X^\vee$} (.2,.7);
\draw[thick] (-.2,.3) arc(0:180:.2cm) --node[left]{$\scriptstyle X^\vee$} (-.6,-.7);
\roundNbox{fill=white}{(0,0)}{.3}{.1}{.1}{$u$}
}
\,.
\end{equation}
Moreover, under these actions, $u: {}_aA_a\to {}_aX\otimes_b X^\vee_a$ is an $A$-$A$ bimodule map.
\end{rem}

\begin{lem}
\label{lem:DecomposeIntoSimplesInA3Hilb}
In an idempotent complete pre-3-Hilbert space, every object is isometrically equivalent to a Hilbert direct sum of simple objects.
\end{lem}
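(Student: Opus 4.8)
The plan is to decompose the unit $1_a$ of the unitary multifusion category $\End_\fX(a)$ into simple summands, split each summand as an $\rmH^*$-monad using idempotent completeness, and then recognize the resulting splitting objects as simple objects that assemble into a Hilbert direct sum isometrically equivalent to $a$.

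First I would write $1_a = \bigoplus_{i=1}^k 1_{a,i}$ as an orthogonal direct sum of simple objects in the unitary multifusion category $\End_\fX(a)=\cL(a)$, with $p_i\in\End_\fX(1_a)$ the orthogonal projections onto the summands; these are mutually orthogonal and satisfy $\sum_i p_i=\id_{1_a}$. By Example \ref{ex:summandsof1}, each $1_{a,i}$ carries a canonical standard Q-system structure, hence is an $\rmH^*$-monad on $a$; moreover, since $1_{a,i}\otimes 1_{a,i}\cong 1_{a,i}$ with multiplication this isomorphism, it is special, i.e. $\mu\mu^\dag=\id$. As $\fX$ is idempotent complete, each $1_{a,i}$ splits: there is an object $b_i\in\fX$, a $1$-morphism ${}_a(X_i)_{b_i}$ with $\ev_{X_i}\ev_{X_i}^\dag$ invertible, and a unitary algebra isomorphism $u_i\colon 1_{a,i}\xrightarrow{\sim} X_i\otimes X_i^\vee$, where the right-hand side carries the monad structure of Example \ref{ex:CondensationH*algebra}.

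Next I would verify that each $b_i$ is simple and that each $X_i$ is a coisometry. The splitting identifies $\End_\fX(b_i)$ with the category of $1_{a,i}$-$1_{a,i}$ bimodules in $\End_\fX(a)$, equivalently the corner $1_{a,i}\otimes\End_\fX(a)\otimes 1_{a,i}$ via $Z\mapsto X_i\otimes Z\otimes X_i^\vee$, under which $1_{b_i}$ corresponds to $1_{a,i}$. Since $1_{a,i}$ is simple in $\End_\fX(a)$, this gives $\End_\fX(1_{b_i})=\bbC$, so $b_i$ is simple and $\ev_{X_i}\ev_{X_i}^\dag=\lambda_i\id_{1_{b_i}}$ for some $\lambda_i>0$. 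Transporting the monad multiplication $\id_{X_i}\otimes\ev_{X_i}\otimes\id_{X_i^\vee}$ through $u_i$, the specialness $\mu\mu^\dag=\id$ becomes $\id_{X_i}\otimes(\ev_{X_i}\ev_{X_i}^\dag)\otimes\id_{X_i^\vee}=\lambda_i\id_{X_i\otimes X_i^\vee}=\id$, forcing $\lambda_i=1$. Hence $\ev_{X_i}$ is unitary, so $X_i$ is a coisometry and $I_i:=X_i^\vee\colon b_i\to a$ is an isometry in the sense of Definition \ref{defn:IsometryInPre3Hilb}.

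Finally I would assemble the Hilbert direct sum. For the isometry $I_i$, the map $\ev_{I_i}$ is itself an isometry, so $\ev_{I_i}\ev_{I_i}^\dag$ is the orthogonal projection in $\End_\fX(1_a)$ onto $I_i^\vee\otimes I_i\cong X_i\otimes X_i^\vee\cong 1_{a,i}$; tracking $\ev_{I_i}$ back to the unit $\coev_{X_i}$ of the Q-system identifies this projection with $p_i$. Since the $p_i$ are orthogonal with $\sum_i p_i=\id_{1_a}$, the data $(b_i,I_i)$ exhibits $a$ as the Hilbert direct sum $\bigboxplus_i b_i$ of the simple objects $b_i$, which is the claim. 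I expect the main obstacle to be the bookkeeping of the second step: carefully establishing the equivalence $\End_\fX(b_i)\simeq 1_{a,i}\otimes\End_\fX(a)\otimes 1_{a,i}$ and that $1_{b_i}$ corresponds to $1_{a,i}$, so that simplicity transfers and specialness pins down $\lambda_i=1$; the remaining steps are routine orthogonality computations with the projections $p_i$.
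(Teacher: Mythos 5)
Your proof follows the same skeleton as the paper's: decompose $1_a$ into simple summands $1_{a,i}$, regard each as a standard Q-system (Example \ref{ex:summandsof1}), split each by idempotent completeness, and exhibit $a$ as a Hilbert direct sum via $\ev_{I_i}\ev_{I_i}^\dag = p_i$. However, there is a genuine gap at the step where you conclude that ``$\ev_{X_i}$ is unitary.'' Simplicity of $b_i$ plus specialness only yield $\ev_{X_i}\ev_{X_i}^\dag=\id_{1_{b_i}}$; unitarity also requires $\ev_{X_i}^\dag\ev_{X_i}=\id_{X_i^\vee\otimes X_i}$, and the second identity does not follow from the first (a coisometric $2$-morphism need not be unitary: if $X_i^\vee\otimes X_i$ decomposed as $1_{b_i}\oplus Y$ with $Y\neq 0$ and $\ev_{X_i}$ were the coisometry onto the first summand, the first identity would hold while the second fails). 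Closing this is exactly where the paper uses \emph{unitality} of $u_i$: the unit of the Q-system $X_i\otimes_{b_i}X_i^\vee$ is $\coev_{X_i}$, so unitality gives $\coev_{X_i}=u_i\circ\iota_i$, whence $\coev_{X_i}\coev_{X_i}^\dag=u_iu_i^\dag=\id_{X_i\otimes X_i^\vee}$ and $\coev_{X_i}^\dag\coev_{X_i}=\iota_i^\dag\iota_i=p_i$; the recabling equivalence \eqref{eq:EquivalentRecabling} then converts $\coev_{X_i}\coev_{X_i}^\dag=\id$ into the missing identity $\ev_{X_i}^\dag\ev_{X_i}=\id$. You invoke unitality of $u_i$ only at the very end (``tracking $\ev_{I_i}$ back to the unit''), to identify $\ev_{I_i}\ev_{I_i}^\dag$ with $p_i$, but not at the point where it is indispensable.

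A secondary issue is that your route to simplicity of $b_i$ rests on the asserted equivalence $\End_\fX(b_i)\simeq 1_{a,i}\otimes\End_\fX(a)\otimes 1_{a,i}$, which you defer as bookkeeping. That statement is true in this setting, but it is itself a nontrivial piece of $2$-categorical algebra (fullness and essential surjectivity require the recovery manipulations built from $\ev_{X_i}$, $\ev_{X_i}^\dag$ and $(\ev_{X_i}\ev_{X_i}^\dag)^{-1}$), and the paper's proof avoids it entirely: it never passes through simplicity of the splitting objects, obtaining the coisometry property directly from unitality, recabling, and standardness. (Your attention to simplicity is not misplaced --- the paper leaves it implicit even though the statement asserts simple summands; note that for simplicity alone, faithfulness of $Z\mapsto X_i\otimes Z\otimes X_i^\vee$ suffices, since it embeds the unital algebra $\End_\fX(1_{b_i})$ into $\End(X_i\otimes X_i^\vee)\cong\End(1_{a,i})=\bbC$.) With two repairs --- deploy unitality of $u_i$ up front to get $\coev_{X_i}\coev_{X_i}^\dag=\id$ and hence $\ev_{X_i}^\dag\ev_{X_i}=\id$ via \eqref{eq:EquivalentRecabling}, and either prove the corner equivalence or discard it in favor of that recabling argument --- your proof closes and essentially reproduces the paper's.
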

\begin{proof}
Let $a$ be an object in an idempotent complete pre-3-Hilbert space $\fX$.
We can write $1_a = \bigoplus 1_i$ as an orthogonal direct sum of simple objects in the $\rmH^*$-multifusion category $\End(a)$.
Each $1_i$ has the structure of a standard Q-system with multiplication $\mu_i$ and unit $\iota_i$  given by the canonical isomorphism and isometry
$$
\mu_i:=
1_i\otimes 1_i \xrightarrow{\cong} 1_i
\qquad\qquad
\iota_i:=
1_\fX \xrightarrow{\coev_{1_i}} 1_i\otimes 1_i \xrightarrow{\cong} 1_i.
$$
Since $\fX$ is idempotent complete, we have objects $a_i$ and 1-morphisms ${}_{a}(X_i)_{a_i}$ such that $\ev_{X_i}\ev_{X_i}^\dag$ is invertible, together with a unitary unital algebra maps $u_i: 1_i\to {}_aX_i\otimes_{a_i}(X_i)_a^\vee$.
Unitality of $u_i$ implies that
$$
\tikzmath{
\begin{scope}
\clip[rounded corners=5pt] (-.9,-1.1) rectangle (.9,.7);
\fill[green!20] (-.9,-1.1) rectangle (.9,.7);
\fill[blue!20] (-.3,.3) rectangle (.3,.7);
\end{scope}
\draw[dotted] (0,-1.1) --node[right]{$\scriptstyle 1_\fX$} (0,-.7) --node[right]{$\scriptstyle 1_i$} (0,-.3);
\filldraw (0,-.7) node[left]{$\scriptstyle \iota_i$} circle (.05cm);
\draw[thick] (.3,.3) --node[right]{$\scriptstyle X_i^\vee$} (.3,.7);
\draw[thick] (-.3,.3) --node[left]{$\scriptstyle X_i$} (-.3,.7);
\roundNbox{fill=white}{(0,0)}{.3}{.2}{.3}{$u_i$}
\node at (0,.5) {$\scriptstyle i$};
}
=
\tikzmath{
\begin{scope}
\clip[rounded corners=5pt] (-.9,-.3) rectangle (.9,.7);
\fill[green!20] (-.9,-.3) rectangle (.9,.7);
\fill[blue!20] (-.3,.7) -- (-.3,.3) arc(-180:0:.3cm) -- (.3,.7);
\end{scope}
\draw[thick] (-.3,.7) --node[left]{$\scriptstyle X_i$} (-.3,.3) arc(-180:0:.3cm) --node[right]{$\scriptstyle X_i^\vee$} (.3,.7);
\node at (0,.5) {$\scriptstyle i$};
}
\qquad\qquad
\tikzmath{
\fill[fill=blue!20, rounded corners=5] (-.3,-.3) rectangle (.3,.3);
\node at (0,0) {$\scriptstyle i$};
} = a_i
\qquad\qquad
\tikzmath{\fill[fill=green!20, rounded corners=5] (-.3,-.3) rectangle (.3,.3);} = a,
$$
so $\coev_{X_i}$ is manifestly a coisometry satisfying $\coev_{X_i}^\dag\coev_{X_i}=p_i$, the orthogonal projection onto $1_i$.
It immediately follows that $\sum \coev_{X_i}^\dag\coev_{X_i} = \sum p_i =\id_{1_a}$.
By \eqref{eq:EquivalentRecabling}, we also get that $\ev_{X_i}$ is also an isometry.
Since ${}_aX_i\otimes_{a_i}(X_i)_a^\vee$ is a standard Q-system, we see that $\ev_{X_i}\ev_{X_i}^\dag=\id_{a_i}$.
Thus each $X_i$ is a coisometry and $\sum \coev^\dag_{X_i}\coev_{X_i}=\id_{1_a}$, so the $X_i$ witness $a$ as the Hilbert direct sum $\bigboxplus a_i$.
\end{proof}

\begin{warn}
When $\fX$ is a finite idempotent complete 3-Hilbert space, there are infinitely many isometric equivalence classes of simple objects.
For example, $2\Hilb$ has infinitely many simples $(\Hilb, r\Tr)$ for $r>0$ up to isometric equivalence.
\end{warn}

\begin{construction}[$\rmH^*$-monad completion]
Similar to the $\rmH^*$-algebra completion of an $\rmH^*$-multifusion category from Definition \ref{defn:HStarAlgC}, we may take the $\rmH^*$-\emph{monad completion} $\HstarAlg(\fX)$ of a pre-3-Hilbert space $\fX$.
Again, it is closely related to the Q-system completion $\QSys(\fX)$ from \cite{MR4419534}, but we must now correct for bubbles.
As before, 
\begin{itemize}
\item 
the objects are
$\rmH^*$-monads
$(a,{}_aA_a)$ in $\fX$,
\item 
1-morphisms ${}_AM_B:A\to B$ are ${}_aA_a-{}_bB_b$ bimodules (these can be interpreted as $A$-$B$ bimodules in the $\rmH^*$-linking algebra $\cL(a,b)$), and
\item 
2-morphisms are intertwiners.
\end{itemize}
Composition of 2-morphisms is composition of intertwiners.
Composition of 1-morphisms is the relative product achieved by splitting the appropriate separability idempotent.
As before, $\HstarAlg(\fX)$ carries a canonical UAF given by choosing $\delta=0$ in \eqref{eq:1ParameterUAFOnH*Alg}.

We now endow $\HstarAlg(\fX)$ with the structure of a pre-3-Hilbert space.
The spherical trace $\Psi^{\HstarAlg(\fX)}$ is defined by
$$
\Psi^{\HstarAlg(\fX)}_A(f: {}_AA_A\Rightarrow {}_AA_A)
:=
\Psi^\fX_a\left(
\tikzmath{
\begin{scope}
\clip[rounded corners=5pt] (-.5,-1) rectangle (.5,.8);
\fill[fill=\arColor] (-.5,-1) rectangle (.5,.8);
\end{scope}
\draw[thick,\AsColor] (0,-.6) -- (0,-.3);
\draw[thick,\AsColor] (0,.3) -- (0,.6);
\roundNbox{fill=white}{(0,0)}{0.3}{0}{0}{$f$};
\filldraw[\AsColor] (0,.6) circle (.05);
\draw[\AsColor, thick] (0,-.7) circle (.1);
\node[\AsColor] at (.25,-.5) {$\scriptstyle -1$};
}
\right).
$$

Indeed, for an $A$-$B$ bimodule map $f: {}_AX_B\Rightarrow {}_AX_B$, where $A$ is an H*-monad on $a \in \fX$ and $B$ is an H*-monad on $b \in \fX$, we graphically denote
\begin{align*}
& 
&&& 
& 
&&& 
\tikzmath{\fill[fill=\ArColor, rounded corners=5] (-.3,-.3) rectangle (.3,.3);}
&= 
A 
&&& 
\tikzmath{\fill[fill=\BrColor, rounded corners=5] (-.3,-.3) rectangle (.3,.3);}
&= 
B  
&&& 
\tikzmath{
\begin{scope}
\clip[rounded corners=5] (-.3,-.3) rectangle (.3,.3);
\fill[fill=\ArColor] (-.3,-.3) rectangle (0,.3);
\fill[fill=\BrColor] (0,-.3) rectangle (.3,.3);
\end{scope}
\draw[thick] (0,-.3) -- (0,.3);} 
&=
X
&&& 
\text{in } \mathsf{H}^*\mathsf{Alg}(\fX),
\\  
\tikzmath{\fill[fill=\arColor, rounded corners=5] (-.3,-.3) rectangle (.3,.3);} 
&= 
a  
&&& 
\tikzmath{\fill[fill=\brColor, rounded corners=5] (-.3,-.3) rectangle (.3,.3);}
&=
b
&&& 
\tikzmath{\fill[fill=\arColor, rounded corners=5] (-.3,-.3) rectangle (.3,.3);
\draw[thick,red] (0,-.3) -- (0,.3);} 
&= 
A
&&& 
\tikzmath{\fill[fill=\brColor, rounded corners=5] (-.3,-.3) rectangle (.3,.3);
\draw[thick,blue] (0,-.3) -- (0,.3);} 
&= 
B 
&&& 
\tikzmath{
\begin{scope}
\clip[rounded corners=5] (-.3,-.3) rectangle (.3,.3);
\fill[fill=\arColor] (-.3,-.3) rectangle (0,.3);
\fill[fill=\brColor] (0,-.3) rectangle (.3,.3);
\end{scope}
\draw[thick] (0,-.3) -- (0,.3);} 
&=
X
&&&
\text{in } \fX;  
\end{align*}
and compute
$$
\Psi^{\HstarAlg(\fX)}_B\left(
\tikzmath{
\fill[rounded corners = 5pt, fill=\BrColor] (-1.2,-.9) rectangle (.6,.9);
\fill[\ArColor] (0,.3) arc (0:180:.3cm) -- (-.6,-.3) arc (-180:0:.3cm);
\draw[thick] (0,.3) node[right,yshift=.2cm]{$\scriptstyle X$} arc (0:180:.3cm) --node[left]{$\scriptstyle X^\vee$} (-.6,-.3) arc (-180:0:.3cm) node[right,yshift=-.2cm]{$\scriptstyle X$};
\roundNbox{fill=white}{(0,0)}{.3}{0}{0}{$f$}
}
\,\right)
=
\Psi_b^\fX\left(
\tikzmath{
\begin{scope}
\clip[rounded corners=5pt] (-.9,-1.8) rectangle (.7,1.8);
\fill[\brColor] (-.9,-2) rectangle (.7,2);
\fill[\arColor] (0,.3) -- (0,.9) arc(0:180:.3cm) -- (-.6,-.9) arc (-180:0:.3cm) -- (0,-.3);
\end{scope}
\draw[thick, black] (0,.3) node[right,yshift=.2cm]{$\scriptstyle X$}-- (0,.9) arc(0:180:.3cm) -- (-.6,-.9) arc (-180:0:.3cm) -- (0,-.3) node[right,yshift=-.2cm]{$\scriptstyle X$};
\draw[thick, \BsColor] (.3,-1.4)  to[out=90,in=-45] node[above]{$\scriptstyle B$}(0,-.9);
\draw[thick, \BsColor] (.3,1.4)  to[out=270,in=45] node[right]{$\scriptstyle B$}(0,.6);
\filldraw[thick,\BsColor] (.3,1.4) circle (.05cm);
\draw[thick,\BsColor] (.3,-1.5) node[above,xshift=.2cm]{$\scriptstyle -1$} circle (.1cm);
\draw[thick, \AsColor] (0,.9) -- (-.3,.6);
\filldraw[thick,\AsColor,fill=\arColor] (-.3,.6) node[above, xshift = -1,yshift = 1]{$\scriptstyle -\frac{1}{2}$} circle (.1cm);
\draw[thick, \AsColor] (0,-.6) -- (-.3,-.9);
\filldraw[thick,\AsColor,fill=\arColor] (-.3,-.9) node[above, xshift = -1,yshift = 1]{$\scriptstyle -\frac{1}{2}$} circle (.1cm);
\roundNbox{fill=white}{(0,0)}{.3}{0}{0}{$f$}
}
\right)
=
\Psi_a^\fX\left(
\tikzmath{
\begin{scope}
\clip[rounded corners=5pt] (-.7,-1.8) rectangle (.9,1.8);
\fill[\arColor] (-.7,-2) rectangle (.9,2);
\fill[\brColor] (0,.3) -- (0,.9) arc(180:0:.3cm) -- (.6,-.9) arc (0:-180:.3cm) -- (0,-.3);
\end{scope}
\draw[thick, black] (0,.3) node[left,yshift=.2cm]{$\scriptstyle X$}-- (0,.9) arc(180:0:.3cm) -- (.6,-.9) arc (0:-180:.3cm) -- (0,-.3) node[left,yshift=-.2cm]{$\scriptstyle X$};
\draw[thick, \AsColor] (-.3,-1.4)  to[out=90,in=-135] node[above]{$\scriptstyle A$}(0,-.9);
\draw[thick, \AsColor] (-.3,1.4)  to[out=270,in=135] node[left]{$\scriptstyle A$}(0,.6);
\filldraw[thick,\AsColor] (-.3,1.4) circle (.05cm);
\draw[thick,\AsColor] (-.3,-1.5) node[above,xshift=.25cm]{$\scriptstyle -1$} circle (.1cm);
\draw[thick, \BsColor] (0,.9) -- (.3,.6);
\filldraw[thick,\BsColor,fill=\brColor] (.3,.6)node[above]{$\scriptstyle -\frac{1}{2}$} circle (.1cm);
\draw[thick, \BsColor] (0,-.6) -- (.3,-.9);
\filldraw[thick,\BsColor,fill=\brColor] (.3,-.9)node[above]{$\scriptstyle -\frac{1}{2}$} circle (.1cm);
\roundNbox{fill=white}{(0,0)}{.3}{0}{0}{$f$}
}
\right)
=
\Psi^{\HstarAlg(\fX)}_A\left(\,
\tikzmath{
\fill[rounded corners = 5pt, \ArColor] (1.2,-.9) rectangle (-.6,.9);
\fill[\BrColor]  (0,.3) arc (180:0:.3cm) -- (.6,-.3) arc (0:-180:.3cm);
\draw[thick] (0,.3) node[left,yshift=.2cm]{$\scriptstyle X$} arc (180:0:.3cm) --node[right]{$\scriptstyle X^\vee$} (.6,-.3) arc (0:-180:.3cm) node[left,yshift=-.2cm]{$\scriptstyle X$};
\roundNbox{fill=white}{(0,0)}{.3}{0}{0}{$f$}
}
\right).
$$
Hence $\HstarAlg(\fX)$ is a pre-3-Hilbert space.

Moreover, there is a canonical isometric embedding $\iota_\fX: \fX\hookrightarrow \HstarAlg(\fX)$ by $a\mapsto 1_a$.
\end{construction}

\begin{rem}
\label{rem:PromoteSplittingToIsometricEquivalence}
If $({}_aX_b,u)$ splits the $\rmH^*$-monad ${}_aA_a$ in $\fX$,
then considering $X$ as an $A$-$1_b$ bimodule in $\HstarAlg(\fX)$,
$\ev_X^{\HstarAlg(\fX)}: {}_{1_b}X^\vee\otimes_{A} X_{1_b} \to 1_b$
is a unitary isomorphism between standard Q-systems.
To see this, we first calculate that

$$
(\ev_X^{\HstarAlg(\fX)})^\dag\ev_X^{\HstarAlg(\fX)}
=
\tikzmath{
\begin{scope}
\clip[rounded corners=5pt] (-.9,-1.2) rectangle (.9,1.2);
\fill[\brColor] (-.9,-1.2) rectangle (.9,1.2);
\fill[\arColor] (-.3,1.2) -- (-.3,.6) arc(-180:0:.3cm) -- (.3,1.2);
\fill[\arColor] (-.3,-1.2) -- (-.3,-.6) arc(180:0:.3cm) -- (.3,-1.2);
\end{scope}
\draw[thick] (-.3,1.2) --node[left]{$\scriptstyle X^\vee$} (-.3,.6) arc(-180:0:.3cm) --node[right]{$\scriptstyle X$} (.3,1.2);
\draw[thick] (-.3,-1.2) --node[left]{$\scriptstyle X^\vee$} (-.3,-.6) arc(180:0:.3cm) --node[right]{$\scriptstyle X$} (.3,-1.2);
\draw[thick, \AsColor] (.3,.9) -- (0,.6);
\filldraw[thick, \AsColor, fill=\arColor] (0,.6) node[above,yshift=.1cm]{$\scriptstyle -\frac{1}{2}$} circle (.1cm);
\draw[thick, \AsColor] (.3,-.9) -- (0,-.6);
\filldraw[thick, \AsColor, fill=\arColor] (0,-.6) node[below, yshift=-.1cm]{$\scriptstyle -\frac{1}{2}$} circle (.1cm);
}
=
\tikzmath{
\begin{scope}
\clip[rounded corners=5pt] (-.9,-1.2) rectangle (1.7,1.2);
\fill[\brColor] (-.9,-1.2) rectangle (1.7,1.2);
\fill[\arColor] (-.3,1.2) -- (-.3,-.6) arc(-180:0:.3cm) -- (.3,1.2);
\fill[\arColor] (.6,-1.2) -- (.6,.6) arc(180:0:.3cm) -- (1.2,-1.2);
\end{scope}
\draw[thick] (-.3,1.2) --node[left]{$\scriptstyle X^\vee$} (-.3,-.6) arc(-180:0:.3cm) -- (.3,1.2);
\draw[thick] (.6,-1.2) -- (.6,.6) arc(180:0:.3cm) --node[right]{$\scriptstyle X$} (1.2,-1.2);
\draw[thick, \AsColor] (.3,.8) -- (0,.5);
\filldraw[thick, \AsColor, fill=\arColor] (0,.5) node[above,yshift=.1cm]{$\scriptstyle -\frac{1}{2}$} circle (.1cm);
\draw[thick, \AsColor] (1.2,-.6) -- (.9,-.9);
\filldraw[thick, \AsColor, fill=\arColor] (.9,-.9) node[above,yshift=.1cm]{$\scriptstyle -\frac{1}{2}$} circle (.1cm);
}
=
\tikzmath{
\begin{scope}
\clip[rounded corners=5pt] (-.9,-1.2) rectangle (1.7,1.2);
\fill[\brColor] (-.9,-1.2) rectangle (1.7,1.2);
\fill[\arColor] (-.3,1.2) -- (-.3,-.8) arc(-180:0:.3cm) -- (.6,-.8) -- (.6,-1.2) -- (1.2,-1.2) -- (1.2,.8) arc (0:180:.3cm) -- (.3,.8) -- (.3,1.2);
\end{scope}
\draw[thick] (-.3,1.2) --node[left]{$\scriptstyle X^\vee$} (-.3,-.8) arc(-180:0:.3cm);
\draw[thick] (.3,.8) -- (.3,1.2);
\draw[thick] (.6,-1.2) -- (.6,-.8);
\draw[thick] (.6,.8) arc(180:0:.3cm) --node[right]{$\scriptstyle X$} (1.2,-1.2);
\draw[thick, \AsColor] (.45,.2) -- (.45,-.2);
\draw[thick, \AsColor] (.3,1) -- (0,.7);
\filldraw[thick, \AsColor, fill=\arColor] (0,.7) node[above,xshift=-.05cm]{$\scriptstyle -\frac{1}{2}$} circle (.1cm);
\draw[thick, \AsColor] (1.2,-.6) -- (.9,-.9);
\filldraw[thick, \AsColor, fill=\arColor] (.9,-.9) node[above,yshift=.15cm,xshift=.1cm]{$\scriptstyle -\frac{1}{2}$} circle (.1cm);
\roundNbox{fill=white}{(.45,.5)}{.3}{0}{0}{$u$}
\roundNbox{fill=white}{(.45,-.5)}{.3}{0}{0}{$u^\dag$}
}
\underset{(\ref{eq:XisA-Abimod})}{=}
\tikzmath{
\begin{scope}
\clip[rounded corners=5pt] (-.9,-.5) rectangle (.9,.5);
\fill[\brColor] (-.9,-1.2) rectangle (.9,1.2);
\fill[\arColor] (-.3,-1.2) rectangle (.3,1.2);
\end{scope}
\draw[thick] (-.3,-.5) --node[left]{$\scriptstyle X^\vee$} (-.3,.5);
\draw[thick,\AsColor] (-.3,0) -- (.3,0);
\draw[thick] (.3,-.5) --node[right]{$\scriptstyle X$} (.3,.5);
}\,,
$$
so $\ev_X^{\HstarAlg(\fX)}$ is a partial isometry.
Since the blue bubble is invertible and positive in $\End({}_AA_A)$, both it and its inverse are bounded below, i.e., there is a $C>0$ such that
$$
\tikzmath{
\begin{scope}
\clip[rounded corners=5pt] (-.4,-.5) rectangle (.4,.5);
\fill[fill=\arColor] (-.5,-1) rectangle (.5,.8);
\end{scope}
\draw[thick, \AsColor] (0,-.5) -- (0,.5);
\filldraw[thick, \AsColor, fill=\arColor] (0,0) node[above,xshift=.25cm]{$\scriptstyle -1$} circle (.1cm);
}
\geq
C\cdot 
\tikzmath{
\begin{scope}
\clip[rounded corners=5pt] (-.3,-.5) rectangle (.3,.5);
\fill[fill=\arColor] (-.5,-1) rectangle (.5,.8);
\end{scope}
\draw[thick, \AsColor] (0,-.5) -- (0,.5);
}
\qquad\qquad
\Longrightarrow
\qquad\qquad
\tikzmath{
\begin{scope}
\clip[rounded corners=5pt] (-.4,-.5) rectangle (.4,.5);
\fill[fill=\arColor] (-.5,-1) rectangle (.5,.8);
\end{scope}
\draw[thick, \AsColor] (0,-.5) -- (0,.5);
\filldraw[thick, \AsColor, fill=\arColor] (0,0) node[above,xshift=.25cm]{$\scriptstyle -2$} circle (.1cm);
}
\geq
C\cdot 
\tikzmath{
\begin{scope}
\clip[rounded corners=5pt] (-.4,-.5) rectangle (.4,.5);
\fill[fill=\arColor] (-.5,-1) rectangle (.5,.8);
\end{scope}
\draw[thick, \AsColor] (0,-.5) -- (0,.5);
\filldraw[thick, \AsColor, fill=\arColor] (0,0) node[above,xshift=.25cm]{$\scriptstyle -1$} circle (.1cm);
}\,.
$$
We now see that the projection
$$
\ev_X^{\HstarAlg(\fX)}(\ev_X^{\HstarAlg(\fX)})^\dag
=
\tikzmath{
\begin{scope}
\clip[rounded corners=5pt] (-.9,-1.1) rectangle (.9,1.1);
\fill[fill=\brColor]  (-.9,-1.1) rectangle (.9,1.1);
\fill[fill=\arColor] (-.3,-.6) arc(-180:0:.3cm) -- (.3,.6) arc(0:180:.3cm) -- (-.3,-.6);
\end{scope}
\draw[thick] (-.3,-.6) arc(-180:0:.3cm) --node[right]{$\scriptstyle X$} (.3,.6) arc(0:180:.3cm) --node[left]{$\scriptstyle X^\vee$} (-.3,-.6);
\draw[thick, \AsColor] (.3,.6) -- (0,.3);
\filldraw[thick, \AsColor, fill=\arColor] (0,.3) node[above,xshift=-.05cm,yshift=0.05cm]{$\scriptstyle -\frac{1}{2}$} circle (.1cm);
\draw[thick, \AsColor] (.3,-.3) -- (0,-.6);
\filldraw[thick, \AsColor, fill=\arColor] (0,-.6) node[above,xshift=-.05cm,yshift=0.05cm]{$\scriptstyle -\frac{1}{2}$} circle (.1cm);
}
=
\tikzmath{
\begin{scope}
\clip[rounded corners=5pt] (-.9,-1) rectangle (.9,1);
\fill[fill=\brColor]  (-.9,-1.1) rectangle (.9,1.1);
\fill[fill=\arColor] (-.3,-.4) arc(-180:0:.3cm) -- (.3,.4) arc(0:180:.3cm) -- (-.3,-.4);
\end{scope}
\draw[thick] (-.3,-.4) arc(-180:0:.3cm) --node[right]{$\scriptstyle X$} (.3,.4) arc(0:180:.3cm) --node[left]{$\scriptstyle X^\vee$} (-.3,-.4);
\draw[thick, \AsColor] (.3,.4) to[out=-135,in=90] (0,0) to[out=-90,in=135] (.3,-.4);
\filldraw[thick, \AsColor, fill=\arColor] (0,0) node[above,yshift=.2cm]{$\scriptstyle -2$} circle (.1cm);
}
\geq
C\cdot
\tikzmath{
\begin{scope}
\clip[rounded corners=5pt] (-.9,-1) rectangle (.9,1);
\fill[fill=\brColor]  (-.9,-1.1) rectangle (.9,1.1);
\fill[fill=\arColor] (-.3,-.4) arc(-180:0:.3cm) -- (.3,.4) arc(0:180:.3cm) -- (-.3,-.4);
\end{scope}
\draw[thick] (-.3,-.4) arc(-180:0:.3cm) --node[right]{$\scriptstyle X$} (.3,.4) arc(0:180:.3cm) --node[left]{$\scriptstyle X^\vee$} (-.3,-.4);
\draw[thick, \AsColor] (.3,.4) to[out=-135,in=90] (0,0) to[out=-90,in=135] (.3,-.4);
\filldraw[thick, \AsColor, fill=\arColor] (0,0) node[above,yshift=.2cm]{$\scriptstyle -1$} circle (.1cm);
}
=
C\cdot
\tikzmath{
\begin{scope}
\clip[rounded corners=5pt] (-.9,-.6) rectangle (.9,.6);
\fill[fill=\brColor]  (-.9,-1.1) rectangle (.9,1.1);
\fill[fill=\arColor] (0,0) circle (.3cm);
\end{scope}
\draw[thick] (0,0) circle (.3cm);
\node at (-.6,0) {$\scriptstyle X^\vee$};
\node at (.5,0) {$\scriptstyle X$};
}
$$
which is invertible by assumption.
Thus $\ev_X^{\HstarAlg(\fX)}(\ev_X^{\HstarAlg(\fX)})^\dag=\id_{1_b}$.
Finally, compatibility with multiplication follows from
$$
\tikzmath{
\begin{scope}
\clip[rounded corners=5pt] (-.6,-1.2) rectangle (1.5,.3);
\fill[\brColor] (-.6,-1.2) rectangle (1.5,.3);
\fill[\arColor] (-.3,-1.2) -- (-.3,-.3) arc(180:0:.3cm) -- (.3,-1.2);
\fill[\arColor] (.6,-1.2) -- (.6,-.3) arc(180:0:.3cm) -- (1.2,-1.2);
\end{scope}
\draw[thick] (-.3,-1.2) -- (-.3,-.3) arc(180:0:.3cm) -- (.3,-1.2);
\draw[thick] (.6,-1.2) -- (.6,-.3) arc(180:0:.3cm) -- (1.2,-1.2);
\draw[thick, \AsColor] (.3,-.7) -- (0,-1);
\filldraw[thick, \AsColor, fill=\arColor] (0,-1) node[above,xshift=-.05cm,yshift=0.05cm]{$\scriptstyle -\frac{1}{2}$} circle (.1cm);
\draw[thick, \AsColor] (1.2,-.7) -- (.9,-1);
\filldraw[thick, \AsColor, fill=\arColor] (.9,-1) node[above,xshift=-.05cm,yshift=0.05cm]{$\scriptstyle -\frac{1}{2}$} circle (.1cm);
}
=
\tikzmath{
\begin{scope}
\clip[rounded corners=5pt] (-.6,-1.4) rectangle (1.5,1.4);
\fill[\brColor] (-.6,-1.4) rectangle (1.5,1.4);
\fill[\arColor] (-.3,-1.4) -- (-.3,.8) arc(180:0:.3cm) -- (.6,.8) arc(180:0:.3cm) -- (1.2,-1.4) -- (.6,-1.4) -- (.6,-.8) -- (.3,-.8) -- (.3,-1.4);
\end{scope}
\draw[thick] (-.3,-1.4) -- (-.3,.8) arc(180:0:.3cm);
\draw[thick] (.3,-.8) -- (.3,-1.4);
\draw[thick] (.6,-1.4) -- (.6,-.8);
\draw[thick] (.6,.8) arc(180:0:.3cm) -- (1.2,-1.4);
\draw[thick, \AsColor] (.45,-.2) -- (.45,.2);
\draw[thick, \AsColor] (.3,-1) -- (0,-1.2);
\filldraw[thick, \AsColor, fill=\arColor] (0,-1.2) node[above,xshift=-.05cm,yshift=0.05cm]{$\scriptstyle -\frac{1}{2}$} circle (.1cm);
\draw[thick, \AsColor] (1.2,-1) -- (.9,-1.2);
\filldraw[thick, \AsColor, fill=\arColor] (.9,-1.2) node[above,yshift=0.05cm]{$\scriptstyle -\frac{1}{2}$} circle (.1cm);
\roundNbox{fill=white}{(.45,.5)}{.3}{0}{0}{$u$}
\roundNbox{fill=white}{(.45,-.5)}{.3}{0}{0}{$u^\dag$}
}
\underset{(\ref{eq:XisA-Abimod})}{=}
\tikzmath{
\begin{scope}
\clip[rounded corners=5pt] (-1.2,-.9) rectangle (1.2,1.6);
\fill[\brColor] (-1.2,-.9) rectangle (1.2,1.6);
\fill[\arColor] (-.9,-.9) -- (-.9,-.3) to[out=90,in=270] (-.3,1) arc(180:0:.3cm) to[out=270,in=90] (.9,-.3) -- (.9,-.9);
\fill[\brColor] (-.3,-.9) -- (-.3,-.3) arc(180:0:.3cm) -- (.3,-.9);
\end{scope}
\draw[thick] (-.3,-.9) -- (-.3,-.3) arc(180:0:.3cm) -- (.3,-.9);
\draw[thick] (-.9,-.9) -- (-.9,-.3) to[out=90,in=270] (-.3,1) arc(180:0:.3cm) to[out=270,in=90] (.9,-.3) -- (.9,-.9);
\draw[thick, \AsColor] (-.9,-.3) -- (-.3,-.3);
\draw[thick, \AsColor] (.9,-.6) -- (.3,-.6);
\draw[thick, \AsColor] (.3,-.3) to[out=45,in=-135] (.6,.37);
\filldraw[thick, \AsColor, fill=\arColor] (.45,.05) circle (.1cm);
\node[\AsColor] at (.64,-.15) {$\scriptstyle -\frac{1}{2}$};
\draw[thick, \AsColor] (.3,1) -- (0,.6);
\filldraw[thick, \AsColor, fill=\arColor] (0,.6) node[above,xshift=-.05cm,yshift=0.05cm]{$\scriptstyle -\frac{1}{2}$} circle (.1cm);
}\,.
$$
Hence the $\rmH^*$-algebra ${}_aA_a$ is adjoint equivalent to the standard Q-system $1_b$ in $\HstarAlg(\fX)$.
\end{rem}

\begin{lem}
\label{lem:HStarAlgCompleteIffIsometricEquivalence}
A pre-3-Hilbert space $\fX$ is idempotent complete if and only if the canonical isometric inclusion $\iota_\fX:\fX\hookrightarrow \HstarAlg(\fX)$ is isometrically essentially surjective and thus an isometric equivalence of pre-3-Hilbert spaces.
\end{lem}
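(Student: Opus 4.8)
The plan is to prove the two implications separately and then read off the final clause from the fact that $\iota_\fX$ is, by construction, isometric and locally fully faithful.

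\emph{Forward direction.} Suppose $\fX$ is idempotent complete and let $(a,{}_aA_a)$ be an arbitrary object of $\HstarAlg(\fX)$, i.e.\ an $\rmH^*$-monad in $\fX$. By idempotent completeness this monad splits, producing an object $b\in\fX$, a $1$-morphism ${}_aX_b$ with $\ev_X\ev_X^\dag$ invertible, and a unitary algebra isomorphism $u\colon{}_aA_a\to{}_aX\otimes_bX^\vee_a$. Remark \ref{rem:PromoteSplittingToIsometricEquivalence} then shows that, viewing $X$ as an $A$-$1_b$ bimodule in $\HstarAlg(\fX)$, the map $\ev_X^{\HstarAlg(\fX)}$ is a unitary isomorphism of standard Q-systems, so $X$ is an isometric equivalence (an adjoint equivalence for the UAF, in the sense of Definition \ref{defn:IsometryInPre3Hilb}) between $A$ and the trivial monad $1_b=\iota_\fX(b)$. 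Its dual ${}_{1_b}X^\vee_A$ is then an isometric equivalence witnessing isometric essential surjectivity at $A$; since $A$ was arbitrary, $\iota_\fX$ is isometrically essentially surjective.

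\emph{Backward direction.} Conversely, assume $\iota_\fX$ is isometrically essentially surjective and let ${}_aA_a$ be an $\rmH^*$-monad. Choose $b\in\fX$ and an isometric equivalence ${}_{1_b}Y_A$ in $\HstarAlg(\fX)$, and let ${}_b\bar Y_a$ denote the underlying $1$-morphism of $Y$ in $\fX$ (the left $1_b$-action is trivial and is discarded). I set $X:={}_a\bar Y^\vee_b$, so that the condensation monad of Example \ref{ex:CondensationH*algebra} is ${}_aX\otimes_bX^\vee_a={}_a\bar Y^\vee\otimes_b\bar Y_a$. Because $1_b$ is the trivial monad, relative composition over $1_b$ is ordinary composition over $b$ in $\fX$, so the unitary $\ev_Y^{\HstarAlg(\fX)}\colon Y^\vee\otimes_{1_b}Y\to 1_A$ becomes a unitary $A$-$A$ bimodule isomorphism ${}_aX\otimes_bX^\vee_a\to A$; I take $u$ to be its adjoint and check that it is an algebra map by recognizing $\ev_Y$ as the mate exhibiting $A$ as the monad of the adjoint equivalence $Y\dashv Y^\vee$. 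For the invertibility of $\ev_X\ev_X^\dag\in\End(1_b)$, I use that $\coev_Y^{\HstarAlg(\fX)}$ is unitary, giving $\bar Y\otimes_A\bar Y^\vee\cong 1_b$; since the bubble $\mu_A\mu_A^\dag$ is invertible and positive (axiom \ref{H:Separable}), the $\HstarAlg(\fX)$-bubbles differ from their plain $\fX$-counterparts by invertible positive factors, so invertibility transfers to $\ev_X\ev_X^\dag$, exactly reversing the bubble estimate of Remark \ref{rem:PromoteSplittingToIsometricEquivalence}. Thus $({}_aX_b,u)$ splits $A$ and $\fX$ is idempotent complete.

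Finally, $\iota_\fX$ is isometric and locally fully faithful: its hom-categories are the categories of $1_a$-$1_{a'}$ bimodules, which coincide with $\fX(a\to a')$ since the trivial monads impose no extra structure. Hence $\iota_\fX$ is a local isometric equivalence, and combined with isometric essential surjectivity this makes it an isometric $\dag$-equivalence of pre-3-Hilbert spaces in the sense of Definition \ref{Defn:IsometricEquiv}. I expect the main obstacle to be the backward direction: one must verify that the abstract unitary evaluation and coevaluation of an isometric equivalence in $\HstarAlg(\fX)$ descend to genuine splitting data in $\fX$ — concretely, that the mate of $\ev_Y$ intertwines the condensation multiplication with $\mu_A$, and that $\ev_X\ev_X^\dag$ is invertible — which is precisely the converse of the diagrammatic computation carried out in Remark \ref{rem:PromoteSplittingToIsometricEquivalence}.
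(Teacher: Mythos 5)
Your proposal is correct and takes essentially the same route as the paper: the forward direction is the same appeal to Remark \ref{rem:PromoteSplittingToIsometricEquivalence}, and your backward direction is a faithful fleshing-out of what the paper compresses into ``a standard calculation shows that $X$ splits ${}_aA_a$'' — namely taking (the dual of) the underlying $1$-morphism of the isometric equivalence ${}_{1_b}Y_A$, transporting the algebra structure along the unitary $\ev_Y^{\HstarAlg(\fX)}$ (noting $\otimes_{1_b}$ is ordinary composition), and recovering invertibility of $\ev_X\ev_X^\dag$ from the unitary coevaluation together with positivity and invertibility of the bubbles. Your closing verification that $\iota_\fX$ is a local equivalence (since $1_a$-$1_{a'}$ bimodules with unital actions are just $1$-morphisms in $\fX$) is also correct, and simply makes explicit what the paper leaves implicit in the phrase ``and thus an isometric equivalence.''
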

\begin{proof}
If $\fX$ is idempotent complete, then every $\rmH^*$-monad ${}_aA_a$ splits.
By Remark \ref{rem:PromoteSplittingToIsometricEquivalence}, any splitting $({}_aX_b,u)$ can be promoted to an isometric adjoint equivalence $({}_AX_{1_b},u)$, and thus every $\rmH^*$-monad is in the isometric essential image of the canonical isometric inclusion $\iota_\fX:\fX\hookrightarrow\HstarAlg(\fX)$.

Conversely, if $\iota_\fX$ is isometrically essentially surjective, then given any $\rmH^*$-monad ${}_aA_a$ in $\fX$, there is an isometric adjoint equivalence ${}_AX_{1_b}$ between ${}_aA_a$ and a standard Q-system $1_b$ in the image of $\iota_\fX$.
A standard calculation shows that $X$ splits ${}_aA_a$.
\end{proof}

\begin{lem}
\label{lem:HstarAlgIsIdempotentComplete}
The $\rmH^*$-algebra completion $\HstarAlg(\fX)$ of a pre-3-Hilbert space $\fX$ is idempotent complete.
\end{lem}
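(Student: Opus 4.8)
The plan is to apply Lemma \ref{lem:HStarAlgCompleteIffIsometricEquivalence} to the pre-3-Hilbert space $\HstarAlg(\fX)$ itself: idempotent completeness of $\HstarAlg(\fX)$ is equivalent to the canonical isometric inclusion $\iota_{\HstarAlg(\fX)}\colon \HstarAlg(\fX)\hookrightarrow \HstarAlg(\HstarAlg(\fX))$ being isometrically essentially surjective, i.e.\ to every $\rmH^*$-monad in $\HstarAlg(\fX)$ splitting. So I would start with such a monad: an object $A=(a,{}_aA_a)$ of $\HstarAlg(\fX)$ together with an $\rmH^*$-algebra $({}_A\mathbb{A}_A,\mu_{\mathbb A},\iota_{\mathbb A})$ internal to the endomorphism category $\End_{\HstarAlg(\fX)}(A)$ of $A$-$A$ bimodules in $\fX$. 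The goal is to show $\mathbb{A}$ is isometrically equivalent to a standard $\rmH^*$-monad in the image of $\iota_{\HstarAlg(\fX)}$.

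The key move is to \emph{flatten} this iterated structure to a single $\rmH^*$-monad on $a\in\fX$, in exact analogy with the transitivity of condensations underlying the proof that $\QSys$-completion is $\QSys$-complete in \cite{MR4369356}. Forgetting the $A$-bimodule structure, write ${}_a\underline{\mathbb A}_a\in\End_\fX(a)$ for the underlying $1$-endomorphism. I would equip $\underline{\mathbb A}$ with the unit $1_a\xrightarrow{\iota_A}A\xrightarrow{\iota_{\mathbb A}}\underline{\mathbb A}$ and with the multiplication obtained by precomposing $\mu_{\mathbb A}$ with the canonical projection $\underline{\mathbb A}\otimes_a\underline{\mathbb A}\to\mathbb A\otimes_A\mathbb A$ that splits the separability idempotent $p_{\mathbb A,\mathbb A}$ for the inner copy of $A$, inserting the bubble-inverse factors $(\mu_A\mu_A^\dag)^{-1/2}$ exactly as in the $\HstarAlg$ unitors and in \eqref{eq:XisA-Abimod}. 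Then I would verify the $\rmH^*$-axioms for $\underline{\mathbb A}$: associativity and $\rmC^*$-Frobenius \ref{H:Frobenius} descend from those of $A$ and $\mathbb A$ together with the Frobenius compatibility of the bubble maps; separability \ref{H:Separable} holds since $\mu_{\underline{\mathbb A}}\mu_{\underline{\mathbb A}}^\dag$ is a composite of invertible bubble endomorphisms; and standardness \ref{H:Standard} follows from standardness of $A$ and of $\mathbb A$ via the compatibility of $\Psi^\fX$ with the weight on $\End_{\HstarAlg(\fX)}(A)$ defining $\Psi^{\HstarAlg(\fX)}$. This realizes $\underline{\mathbb A}$ as an object $B:=(a,{}_a\underline{\mathbb A}_a)$ of $\HstarAlg(\fX)$.

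Finally, I would exhibit the splitting: viewing $\mathbb A$ as an $A$-$B$ bimodule (left action from its $A$-module structure, right action from $\mu_{\mathbb A}$) gives a $1$-morphism ${}_A\mathbb A_B$ in $\HstarAlg(\HstarAlg(\fX))$, and I would check that $\ev_{\mathbb A}\ev_{\mathbb A}^\dag$ is invertible and that the associated monad $\mathbb A\otimes_B\mathbb A^\vee$ is isomorphic, as an $\rmH^*$-algebra in $\End_{\HstarAlg(\fX)}(A)$, to $\mathbb A$. By Remark \ref{rem:PromoteSplittingToIsometricEquivalence} this promotes to an isometric adjoint equivalence between $\mathbb A$ and the standard $\rmH^*$-monad $\iota_{\HstarAlg(\fX)}(B)$, placing $\mathbb A$ in the isometric essential image of $\iota_{\HstarAlg(\fX)}$; Lemma \ref{lem:HStarAlgCompleteIffIsometricEquivalence} then yields the claim. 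The conceptual content, transitivity of splitting, is standard; the main obstacle I anticipate is the diagrammatic bookkeeping of the non-popping bubbles $\mu\mu^\dag$ when verifying standardness of the flattened monad and the algebra isomorphism $\mathbb A\otimes_B\mathbb A^\vee\cong\mathbb A$, which is precisely where the $\rmH^*$ setting departs from the special Q-system setting of \cite{MR4369356}.
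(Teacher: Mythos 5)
Your proposal is correct and takes essentially the same route as the paper: the paper's (one-sentence) proof likewise flattens an $\rmH^*$-monad ${}_AB_A$ in $\HstarAlg(\fX)$ lying over ${}_aA_a$ to an $\rmH^*$-monad structure on ${}_aB_a$ in $\fX$, and then observes that the bimodule ${}_AB_B$ splits ${}_AB_A$, citing the analogous transitivity argument for Q-system completion in \cite[Cor.~3.37]{MR4419534}. Your closing detour through Remark \ref{rem:PromoteSplittingToIsometricEquivalence} and Lemma \ref{lem:HStarAlgCompleteIffIsometricEquivalence} is harmless but unnecessary, since once the splitting bimodule and the unitary algebra isomorphism are exhibited, idempotent completeness holds directly by definition.
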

\begin{proof}
The proof is entirely similar to \cite[Cor.~3.37]{MR4419534}; one shows that if ${}_AB_A$ is an $\rmH^*$-monad in $\HstarAlg(\fX)$ where ${}_aA_a$ is an $\rmH^*$-monad in $\fX$, then ${}_aB_a$ already has an $\rmH^*$-monad structure in $\fX$, and ${}_AB_B$ splits ${}_AB_A$ in $\HstarAlg(\fX)$.
\end{proof}

We now state the universal property of $\HstarAlg(\fX)$ which is similar to the universal property of Q-system completion from \cite{MR4369356} (see also \cite{MR4372801}).
The proof is entirely similar to the above proof and the proof of Proposition \ref{prop:HilbertDirectSumUniversal}, so we will not include it here.

\begin{prop}
\label{prop:UniversalPropertyOfHstarAlgCompletion}
For any idempotent complete pre-3-Hilbert space $\fY$,
precomposition with $\iota_\fX$ is an equivalence of unitary 2-categories
$$
\Fun^\dag(\HstarAlg(\fX)\to \fY) 
\xrightarrow{\iota^*}
\Fun^\dag(\fX\to \fY)
$$
which maps 
\begin{itemize}
\item 
$\Fun^{\dag,\vee}(\HstarAlg(\fX)\to \fY)$ onto $\Fun^{\dag,\vee}(\fX\to \fY)$, and
\item 
$\Isom(\HstarAlg(\fX)\to \fY)$ onto $\Isom(\fX\to \fY)$.
\end{itemize}
\end{prop}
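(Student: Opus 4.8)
The plan is to reproduce the six-step argument used for the Hilbert direct sum completion in Proposition \ref{prop:HilbertDirectSumUniversal}, with Hilbert-direct-sum dominance replaced by the dagger-condensation dominance appropriate to splitting $\rmH^*$-monads, exactly as Q-system completion is handled in \cite{MR4369356}. The two structural inputs I would use throughout are that $\HstarAlg(\fX)$ is idempotent complete (Lemma \ref{lem:HstarAlgIsIdempotentComplete}) and that $\HstarAlg$ is a $\dag$-$3$-functor, so that lifts can be built by the same commuting square as before.

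First I would establish the two faithfulness statements for $\iota_\fX^*$ on $2$-morphisms. The key observation is that $\iota_\fX$ is $0$-dominant in the condensation sense: for every object $(a,{}_aA_a)$ of $\HstarAlg(\fX)$, viewing $A$ as an ${}_aA_a$--$1_a$ bimodule exhibits $(a,{}_aA_a)$ as a dagger condensate of $\iota_\fX(a)=(a,1_a)$, and by Remark \ref{rem:PromoteSplittingToIsometricEquivalence} this condensation can be promoted to an isometric adjoint equivalence. The adaptation of Lemma \ref{lem:DominanceAndFaithfulness} (equivalently \cite[Prop.~4.10]{MR4369356}) to condensation dominance then shows $-\circ\iota_\fX$ is faithful on $2$-morphisms. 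Since a $1_a$--$1_b$ bimodule in $\cL(a,b)$ is just an object of $\fX(a\to b)$, the functor $\iota_\fX$ is moreover a local $\dag$-equivalence, hence locally dominant, and the fully-faithful analogue (\cite[Prop.~4.11]{MR4369356}) upgrades this to full faithfulness on $2$-morphisms.

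Next I would handle essential surjectivity on hom-categories and then on objects. Given $\rho\colon A\circ\iota_\fX\Rightarrow B\circ\iota_\fX$, I would transport $\rho$ along the condensation data of each $\rmH^*$-monad to define a candidate $\sigma\colon A\Rightarrow B$, conjugating by the module actions (and inserting the bubble-inverse corrections $\mu\mu^\dag$) in place of the isometries $I_j$ used in Step 3 of Proposition \ref{prop:HilbertDirectSumUniversal}; one then checks $\sigma$ is a $2$-transformation with $\sigma\circ\iota_\fX\cong\rho$. For essential surjectivity on objects, since $\fY$ is idempotent complete the inclusion $\iota_\fY\colon\fY\hookrightarrow\HstarAlg(\fY)$ is an isometric equivalence (Lemma \ref{lem:HStarAlgCompleteIffIsometricEquivalence}), so for $A\in\Fun^\dag(\fX\to\fY)$ the $3$-functoriality of $\HstarAlg$ yields the lift $A':=\iota_\fY^{-1}\circ\HstarAlg(A)$ with $A'\circ\iota_\fX\cong A$, by the same commuting diagram as in the earlier proof.

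Finally, the two refinements follow formally. Because $\iota_\fX$ and the realization functor $\iota_\fY^{-1}\colon\HstarAlg(\fY)\to\fY$ are both UAF-preserving, and $\HstarAlg(A)$ is UAF-preserving whenever $A$ is, the lift $A'$ lands in $\Fun^{\dag,\vee}$; similarly, choosing $\iota_\fY^{-1}$ to be the isometric splitting/realization functor and noting that $\HstarAlg$ preserves isometries shows $A'$ is isometric when $A$ is. The main obstacle I expect is Step 3: the explicit construction and coherence check for $\sigma$ in the condensation setting is bookkeeping-heavy because the $\rmH^*$-monads force bubble-correction factors $\mu\mu^\dag$ that do not pop, and one must verify that these corrections are compatible with the bimodule unitors and associators. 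Equivalently, the genuine work is in checking that the realization functor $\iota_\fY^{-1}$ is well-defined and isometric with all bubble corrections in place, which is precisely where the $\HstarAlg$ case departs from the cleaner $\HilbPlus$ case.
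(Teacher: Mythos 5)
Your proposal follows exactly the route the paper intends: the paper omits this proof, stating it is ``entirely similar'' to the six-step proof of Proposition \ref{prop:HilbertDirectSumUniversal} and to the Q-system completion argument of \cite{MR4369356}, and your adaptation (condensation dominance for faithfulness, the local $\dag$-equivalence $\HstarAlg(\fX)(\iota(a)\to\iota(b))\cong\fX(a\to b)$ for fullness, transport of $2$-transformations along the condensation data, the lift $A'=\iota_\fY^{-1}\circ\HstarAlg(A)$ via $3$-functoriality of $\HstarAlg$ and Lemma \ref{lem:HStarAlgCompleteIffIsometricEquivalence}, and the formal UAF/isometry refinements) is the intended argument, with the bubble corrections correctly identified as the only new bookkeeping.

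One claim in your dominance step is wrong, however, and should be removed: you assert that the dagger condensation ${}_{1_a}A_A\colon\iota_\fX(a)\to(a,A)$ ``can be promoted to an isometric adjoint equivalence'' by Remark \ref{rem:PromoteSplittingToIsometricEquivalence}. That remark only applies when the $\rmH^*$-monad $A$ admits a splitting $({}_aX_b,u)$ \emph{in} $\fX$; for a general pre-$3$-Hilbert space $\fX$ no such splitting exists, and the conclusion is false: if every object of $\HstarAlg(\fX)$ were isometrically adjoint equivalent to an object in the image of $\iota_\fX$, then $\iota_\fX$ would be isometrically essentially surjective, forcing $\fX$ to be idempotent complete by Lemma \ref{lem:HStarAlgCompleteIffIsometricEquivalence} --- precisely the situation the completion is designed to remedy. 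In fact ${}_{1_a}A_A$ is not even a coisometry in the sense of Definition \ref{defn:IsometryInPre3Hilb}: since the relative product over the trivial algebra $1_a$ involves no idempotent splitting, one computes $\ev_Z^\dag\ev_Z$ to be the separability idempotent of $A$ on $A\otimes_a A$, which differs from the identity whenever $A\neq 1_a$. What is true, and is all that the proof of Lemma \ref{lem:DominanceAndFaithfulness} actually uses for the target region, is the bubble-popping relation $\ev_Z\ev_Z^\dag=\id_{1_{(a,A)}}$, which holds because the $\delta=0$ normalization in \eqref{eq:1ParameterUAFOnH*Alg} inserts exactly the factor $(\mu\mu^\dag)^{-1/2}$ needed to cancel $\mu\mu^\dag$. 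So your primary framing (``$0$-dominant in the condensation sense'') is the correct one; the parenthetical promotion to an equivalence is both false and unnecessary, and keeping it would obscure why the dominance lemma must be run with dagger condensations rather than coisometries in this setting.
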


Finally, the next result shows that Hilbert direct sum completeness is preserved by taking the $\rmH^*$-monad completion.

\begin{prop}
\label{prop:HstarCompletionOfHilbCompleteIsHilbComplete}
If $\fX$ is a Hilbert direct sum complete pre-3-Hilbert space, then $\HstarAlg(\fX)$ is also Hilbert direct sum complete.
\end{prop}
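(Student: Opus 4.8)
The plan is to build the Hilbert direct sum of two objects of $\HstarAlg(\fX)$ by transporting both along the isometries that witness a Hilbert direct sum in $\fX$, and then taking an honest direct sum of $\rmH^*$-monads on a \emph{single} object. Fix two objects of $\HstarAlg(\fX)$, namely $\rmH^*$-monads $(a,A)$ and $(b,B)$ in $\fX$. Since $\fX$ is Hilbert direct sum complete, choose a Hilbert direct sum $c := a\boxplus b$ with isometries $I_a\colon a\to c$ and $I_b\colon b\to c$. Recall from Example \ref{ex:LinkingH*Alg} (applied to the single object $c$) that $\End_\fX(c)$ is an $\rmH^*$-multifusion category, with UDF and weight induced by those of $\fX$.

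First I would transport $A$ and $B$ into $\End_\fX(c)$. Set $A' := I_a^\vee\otimes A\otimes I_a$ and $B' := I_b^\vee\otimes B\otimes I_b$, with algebra structures whose multiplications apply $\mu_A,\mu_B$ after collapsing the middle $I_a\otimes I_a^\vee\cong 1_a$ (resp.\ $I_b\otimes I_b^\vee\cong 1_b$) via the unitary $\coev_{I_a}^{-1}$ (resp.\ $\coev_{I_b}^{-1}$), and whose units are built from $\ev_{I_a}^\dag,\ev_{I_b}^\dag$. Because $I_a,I_b$ are isometries, these coevaluations are unitary, so this transport is a dagger operation and carries the axioms \ref{H:Frobenius}--\ref{H:Standard} of $A,B$ to $A',B'$; standardness is preserved because the induced weight on $\End_\fX(c)$ is compatible with $I_a,I_b$ by \eqref{eq:CoisometryDeterminesPsiOnSmaller}. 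I then form the orthogonal direct sum $C := A'\oplus B'$ in $\End_\fX(c)$. A direct sum of $\rmH^*$-algebras is again an $\rmH^*$-algebra, since each axiom is checked blockwise and the weight is additive over the decomposition; thus $(c,C)$ is an object of $\HstarAlg(\fX)$.

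Next I would exhibit the isometry $1$-morphisms. Define the $A$-$C$ bimodule $J_A := {}_A(A\otimes I_a)_C$, with left action $\mu_A\otimes\id_{I_a}$ and right action factoring through the $A'$-summand of $C$ (and zero on $B'$), and symmetrically $J_B := {}_B(B\otimes I_b)_C$. Computing the relative tensor products over $A$ and over $C$ (the latter reducing to $\otimes_{A'}$, since $J_A$ is supported on the $A'$-corner) gives $J_A^\vee\otimes_A J_A\cong A'$ and $J_A\otimes_C J_A^\vee\cong {}_AA_A$; unitarity of $\coev_{I_a}$ together with the choice $\delta=0$ in \eqref{eq:1ParameterUAFOnH*Alg} makes $\coev_{J_A}$ unitary and identifies $\ev_{J_A}$ with the summand inclusion $A'\hookrightarrow C$, which is an isometry. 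Hence $J_A$ and $J_B$ are isometry $1$-morphisms in the sense of Definition \ref{defn:IsometryInPre3Hilb}. Finally, $\ev_{J_A}\ev_{J_A}^\dag$ and $\ev_{J_B}\ev_{J_B}^\dag$ are the orthogonal projections of ${}_CC_C$ onto the $A'$- and $B'$-summands, so $\ev_{J_A}\ev_{J_A}^\dag+\ev_{J_B}\ev_{J_B}^\dag=\id_{1_C}$ because $C=A'\oplus B'$. This is precisely the defining relation of a Hilbert direct sum, so $(c,C)$ with $J_A,J_B$ is a Hilbert direct sum of $A$ and $B$ in $\HstarAlg(\fX)$.

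The main obstacle is the bookkeeping in the previous paragraph. Because our $\rmH^*$-algebras are not assumed special, every relative tensor product and every evaluation in $\HstarAlg(\fX)$ carries bubble-inverse corrections from the separability idempotents, and the UAF uses the specific power factors of the $\delta=0$ calculus. The delicate point is confirming that these factors cancel precisely, so that $\coev_{J_A}$ is genuinely \emph{unitary} rather than merely invertible and $\ev_{J_A}$ is an honest isometry onto the $A'$-corner; this is where the isometry property of $I_a$ in $\fX$ (unitarity of $\coev_{I_a}$) enters essentially, mirroring the computations in Remark \ref{rem:PromoteSplittingToIsometricEquivalence} and in the proof of the preceding proposition. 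Alternatively, one may package the steps by first noting that $J_A$ realizes an isometric equivalence of $A$ with the $\rmH^*$-monad $A'$ on $c$, after which the statement reduces to the evident fact that a direct sum of $\rmH^*$-monads on a fixed object splits as a Hilbert direct sum of its summands.
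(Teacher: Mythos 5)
Your proposal is correct and is essentially the paper's own proof: your $C = A'\oplus B' = I_a^\vee\otimes_a A\otimes_a I_a \oplus I_b^\vee\otimes_b B\otimes_b I_b$ is exactly the $\rmH^*$-monad $A\boxplus B$ the paper constructs, and your bimodules $J_A = A\otimes I_a$, $J_B = B\otimes I_b$ are precisely the isometries the paper uses to witness the Hilbert direct sum. The extra details you supply (transport of the algebra structure along the unitary $\coev_{I_a}$, vanishing of the mixed corners, and the bubble-correction bookkeeping for $\ev_{J_A}$, $\coev_{J_A}$) are just an unpacking of what the paper calls ``a straightforward calculation.''
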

\begin{proof}
Suppose ${}_aA_a$ and ${}_bB_b$ are $\rmH^*$-monads in $\fX$.
Then there are canonical isometries $I_a: a\to a\boxplus b$ and $I_b: b\to a\boxplus b$, and we can form the $\rmH^*$-monad
$$
A\boxplus B
:=
I_a^\vee \otimes_a A\otimes_a I_a \oplus I_b^\vee \otimes_b B\otimes_b I_b
$$ 
with component-wise multiplication and unit.
A straightforward calculation shows that
$A\otimes_a I_a : a\to a\boxplus b$ is an $A$-$A\boxplus B$ bimodule which is an isometry in $\HstarAlg(\fX)$,
and similarly for $B\otimes_b I_b : b\to a\boxplus b$ as a $B$-$A\boxplus B$ bimodule,
and that $A\otimes_a I_a , B\otimes_b I_b$ witness 
$A\boxplus B$ as the Hilbert direct sum of $A$ and $B$ in $\HstarAlg(\fX)$.
\end{proof}

%%%%%%%%%%%%%%%%%%%%%%%%%%%%%%%%%%%%%%%%%%%%%%%%%%%%%%%%%%%%%%%%%%%%%%%%%%%%%%%%%%
\subsection{3-Hilbert spaces}

We are finally ready to introduce the notion of a 3-Hilbert space.

\begin{defn}
A pre-3-Hilbert space $\fX$ is called a \emph{3-Hilbert space} if it admits Hilbert direct sums and all $\rmH^*$-algebras split.
The notion of isometric equivalence of 3-Hilbert spaces is the same as that for pre-3-Hilbert spaces.
\end{defn}

\begin{rem}
The underlying 2-category of a 3-Hilbert space $\fX$ is finite semisimple.
To see this, it suffices to show that every unital condensation monad ${}_aA_a$ in $\fX$ the sense of \cite{1812.11933} splits.
By \cite{MR4724964}, every finite semisimple module category over a unitary multifusion category is uniquely unitarizable. 
Hence, $A$ is equivalent to a standard Q-system in the unitary multifusion category $\End(a)$, and we know every standard Q-system splits.
In the spirit of this paper, we provide a manifestly unitary approach to these results in Proposition \ref{prop:ModulesUnitarizable} and Corollary \ref{cor:AlgebraModulesUnitarizable}.
\end{rem}

\begin{defn}\label{def:unitaryCauchyCompletion3Hil}
If $(\fX,\vee,\Psi)$ is a pre-3-Hilbert space, then $\HstarAlg(\Hilb_\boxplus(\fX))$ is a 3-Hilbert space by Lemmas \ref{lem:HilbCompletionIsHilbComplete} and \ref{lem:HstarAlgIsIdempotentComplete} and Proposition \ref{prop:HstarCompletionOfHilbCompleteIsHilbComplete}. We denote this unitary Cauchy completion 3-Hilbert space by $\cent^\dag(\fX)$.
\end{defn}

\begin{ex}
If $(\cC,\vee,\psi)$ is an $\rmH^*$-multifusion category,
then the 2-category
$\HstarAlg(\cC)$ of $\rmH^*$-algebras in $\cC$ is a 3-Hilbert space as it already admits Hilbert direct sums.
Indeed, if $A,B\in\cC$ are $\rmH^*$-algebras, then 
as in the proof of Proposition \ref{prop:HstarCompletionOfHilbCompleteIsHilbComplete}, 
${}_A(A\oplus B)_{A\oplus B}$ and ${}_B(A\oplus B)_{A\oplus B}$ witness $A\oplus B$ as a Hilbert direct sum of $A,B$ in $\HstarAlg(\cC)$.
\end{ex}

\begin{ex}
If $(\cC,\vee,\psi)$ is an $\rmH^*$-multifusion category,
then $\Mod^\dag(\cC)$ is also a 3-Hilbert space.
We already saw that $\Mod^\dag(\cC)$ admits Hilbert direct sums in Example \ref{ex:ModCAdmitsHilbertDirectSums} above.
To see that every $\rmH^*$-monad splits, observe that 
by Theorem \ref{thm:EveryModuleComesFromH*Alg} and Example \ref{ex:IsometriesIn2Hilb}, the functor $\HstarAlg(\cC)\to \Mod^\dag(\cC)$ from \S\ref{sec:H*Alg==ModC} is automatically isometrically essentially surjective, as every $(\cM,\Tr^\cM)$ is isometrically adjoint equivalent to a $\cC$-module of the form $(\cC_A,\Tr^{\cC_A})$.
Hence one can pull back an $\rmH^*$-monad from 
$\Mod^\dag(\cC)$ to $\HstarAlg(\cC)$ and split it in $\HstarAlg(\cC)$.
Since the functor $\HstarAlg(\cC)\to \Mod^\dag(\cC)$  preserves the UAF, pushing forward the splitting achieves our goal.
\end{ex}

\begin{rem}
\label{rem:WeightOfComponentAtObject}
Suppose $(\fX,\vee,\Psi)$ is a 3-Hilbert space.
Then $\Psi$ is completely determined on each component of $\fX$ by its value at a single (simple) object.
Indeed, if $a,b\in\fX$ are both simple and ${}_aX_b$ is dualizable, then the formula
$$
\Psi_b\left(\,
\tikzmath{
\fill[rounded corners = 5pt, fill=\brColor] (-.9,-.7) rectangle (.9,.7);
\filldraw[fill=\arColor,thick] (0,0) circle (.3cm);
\node at (-.6,0) {$\scriptstyle X^\vee$};
\node at (.6,0) {$\scriptstyle X$};
}
\,\right)
=
\Psi_a\left(\,
\tikzmath{
\fill[rounded corners = 5pt, fill=\arColor] (-.9,-.7) rectangle (.9,.7);
\filldraw[fill=\brColor,thick] (0,0) circle (.3cm);
\node at (-.6,0) {$\scriptstyle X$};
\node at (.6,0) {$\scriptstyle X^\vee$};
}
\,\right)
\qquad\qquad
\qquad\qquad
\begin{aligned}
\tikzmath{\fill[fill=\arColor, rounded corners=5] (-.3,-.3) rectangle (.3,.3);}&=a
\\
\tikzmath{\fill[fill=\brColor, rounded corners=5] (-.3,-.3) rectangle (.3,.3);}&=b
\end{aligned}
$$
completely determines $\Psi_b$ in terms of $\Psi_a$ and ${}_aX_b$.
Hence $\Psi$ is completely determined by combining Lemma \ref{lem:DecomposeIntoSimplesInA3Hilb} to write an arbitrary $c\in\fX$ in the component of $a$ as a Hilbert direct sum of simples together with \eqref{eq:PsiOfHilbertDirectSum}.

Now if $F: \fX\to \fY$ is a functor of connected 3-Hilbert spaces which preserves the UAF, it is isometric if and only if it preserves $\Psi$ at a single simple $a\in\fX$.
To see this, first note that if $b\in\fX$ is another simple, then
$$
\Psi_{F(b)}^\fY\left(\,
\tikzmath{
\fill[rounded corners = 5pt, fill=\brColor] (-1.1,-.5) rectangle (1.1,.5);
\filldraw[fill=\arColor,thick] (0,0) circle (.2cm);
\node at (-.68,0) {$\scriptstyle F(X)^\vee$};
\node at (.65,0) {$\scriptstyle F(X)$};
}
\,\right)
=
\Psi_{F(a)}^\fY\left(\,
\tikzmath{
\fill[rounded corners = 5pt, fill=\arColor] (-1.1,-.5) rectangle (1.1,.5);
\filldraw[fill=\brColor,thick] (0,0) circle (.2cm);
\node at (-.65,0) {$\scriptstyle F(X)$};
\node at (.68,0) {$\scriptstyle F(X)^\vee$};
}
\,\right)
=
\Psi_a^\fX\left(\,
\tikzmath{
\fill[rounded corners = 5pt, fill=\arColor] (-.7,-.5) rectangle (.7,.5);
\filldraw[fill=\brColor,thick] (0,0) circle (.2cm);
\node at (-.45,0) {$\scriptstyle X$};
\node at (.48,0) {$\scriptstyle X^\vee$};
}
\,\right)
=
\Psi_b^\fX\left(\,
\tikzmath{
\fill[rounded corners = 5pt, fill=\brColor] (-.7,-.5) rectangle (.7,.5);
\filldraw[fill=\arColor,thick] (0,0) circle (.2cm);
\node at (-.48,0) {$\scriptstyle X^\vee$};
\node at (.45,0) {$\scriptstyle X$};
}
\,\right),
$$
and thus $F$ preserves $\Psi$ at $b$.
The result now follows as every object in $\fX$ is a Hilbert direct sum of simples by Lemma \ref{lem:DecomposeIntoSimplesInA3Hilb},
and $F$ preserves Hilbert direct sums by Lemma \ref{lem:PreservesHilbertDirectSums}.
\end{rem}

\begin{thm}
\label{thm:HstarAlgC==ModC}
Suppose $(\cC,\vee,\psi)$ is an $\rmH^*$-multifusion category.
The unitary equivalence $\HstarAlg(\cC)\to \Mod^\dag(\cC)$
from \S\ref{sec:H*Alg==ModC}
is an isometric equivalence of 3-Hilbert spaces.
\end{thm}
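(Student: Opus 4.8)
The plan is to establish that the unitary equivalence $\HstarAlg(\cC)\to \Mod^\dag(\cC)$ constructed in \S\ref{sec:H*Alg==ModC} preserves the spherical weights, since it was already shown to preserve the UAFs (it is unitary pivotal). By Remark \ref{rem:WeightOfComponentAtObject}, for a functor between 3-Hilbert spaces that preserves the UAF, it suffices to check the spherical weight is preserved at a single simple object in each component. First I would reduce to the case where $\cC$ is indecomposable, so that both $\HstarAlg(\cC)$ and $\Mod^\dag(\cC)$ are connected; the general case follows componentwise, keeping track of the renormalization scalars from \eqref{eq:RenormalizedTraceOnFun} and Example \ref{ex:WeightOnModC}.

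Second, I would pick a convenient simple object to test. The natural choice is the monoidal unit: the object $1_\cC\in\HstarAlg(\cC)$ (viewed as the trivial standard Q-system) maps to $\cC_{1_\cC}=\cC\in\Mod^\dag(\cC)$ under the equivalence $A\mapsto \cC_A$. The spherical weight on $\HstarAlg(\cC)$ evaluated on $\End({}_{1_\cC}(1_\cC)_{1_\cC})=\End_\cC(1_\cC)$ is, by the construction of $\Psi^{\HstarAlg(\fX)}$ (here $\fX=\rmB\cC$), essentially $\psi$ itself, while the spherical weight on $\Mod^\dag(\cC)$ evaluated at $\cC$ is $\Psi^{\Mod^\dag(\cC)}_\cC(\eta)=\sum_{m\in\Irr(\cC)} d_m\Tr^\cC_m(\eta_m)$ as in Example \ref{ex:WeightOnModC}, after the renormalization. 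The key computation is to verify these two expressions agree at $1_\cC$. For a natural endotransformation of $\id_\cC$ corresponding to an element $f\in\End_\cC(1_\cC)$ acting by tensoring, I would expand $\sum_m d_m\Tr^\cC_m(f\otimes\id_m)$ using the module trace formula $\Tr^\cM_m=\psi\circ\tr^\vee$ and the bubble-popping relation \eqref{eq:PopBubbles}; this should collapse—after incorporating the renormalization scalar $\dim(\End_\cC(1_\cC))^2/(\FPdim(\cC)\cdot\psi_\cC(\id_{1_\cC}))$—to $\psi(f)$, matching the $\HstarAlg(\cC)$ side.

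The main obstacle I expect is bookkeeping the normalization constants correctly: the weight on $\Mod^\dag(\cC)$ was deliberately renormalized (Example \ref{ex:WeightOnModC}) precisely so that the induced 2-Hilbert structure on $\Fun^\dag_\cC$ matches \eqref{eq:RenormalizedTraceOnFun}, and the $\HstarAlg(\cC)$ weight carries the bubble-inverse factors built into $\Psi^{\HstarAlg(\fX)}$. Verifying that these two independently-motivated normalizations coincide at $1_\cC$ is where the real content lies; the calculation in \S\ref{sec:H*algInB2Hilb} showing $\sum_{j,c}d_c^2/d_i=\FPdim(\cC)\psi_\cC(\id_{1_\cC})/k^2$ is exactly the identity that makes this work, so I would invoke that computation directly rather than redo it.

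Finally, having checked preservation of $\Psi$ at $1_\cC$, Remark \ref{rem:WeightOfComponentAtObject} propagates it to all simples and then to all objects via Hilbert direct sums (Lemma \ref{lem:DecomposeIntoSimplesInA3Hilb} and \eqref{eq:PsiOfHilbertDirectSum}), using that the equivalence preserves the UAF and hence Hilbert direct sums (Lemma \ref{lem:PreservesHilbertDirectSums}). Combined with the fact that both sides are genuine 3-Hilbert spaces and that the underlying $\dag$-equivalence is isometrically essentially surjective (since every $\cM$ is isometrically equivalent to some $\cC_A$ by Theorem \ref{thm:EveryModuleComesFromH*Alg}), this yields the desired isometric equivalence of 3-Hilbert spaces.
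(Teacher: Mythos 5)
Your proposal follows essentially the same route as the paper's proof: reduce to indecomposable $\cC$, invoke Remark \ref{rem:WeightOfComponentAtObject} to reduce the isometry check to a single test object (using that the equivalence preserves the UAF and is isometrically essentially surjective), and close the computation with the normalization identity
$\sum_{j}\sum_{c\in\Irr(\cC_{ij})}d_c^2/d_i=\FPdim(\cC)\,\psi_\cC(\id_{1_\cC})/k^2$.
(Incidentally, that identity is proved in the subsection on module categories just before \eqref{eq:RenormalizedTraceOnFun}, not in \S\ref{sec:H*algInB2Hilb} as you cite.)

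The one genuine imprecision is your choice of test object. You call $1_\cC$ ``a simple object,'' but when $\cC$ is properly multifusion, i.e.\ $1_\cC=\bigoplus_{j=1}^k 1_j$ with $k>1$, the Q-system $1_\cC$ is \emph{not} simple in $\HstarAlg(\cC)$: it is the Hilbert direct sum of the $k$ simple Q-systems $1_j$, and correspondingly $\cC_{1_\cC}=\cC$ is not simple in $\Mod^\dag(\cC)$ (it decomposes into the columns $\cC\otimes 1_j$). So Remark \ref{rem:WeightOfComponentAtObject}, which requires a \emph{simple} test object, does not apply verbatim to your choice. The gap is easily patched in either of two ways: test instead at a single simple summand $1_1$, whose corresponding module is the column $\cC_1=\cC\otimes 1_1$ and for which $\End({}_{1_1}(1_1)_{1_1})=\bbC$, so only the identity needs checking --- this is exactly the paper's choice; or keep your test at $1_\cC$ and observe that, since the equivalence preserves UAFs and hence isometries (Lemma \ref{lem:PreservesHilbertDirectSums}), preservation of $\Psi$ at $1_\cC$ propagates to each simple summand $1_j$ via the isometries $I_j\colon 1_j\to 1_\cC$ and formula \eqref{eq:CoisometryDeterminesPsiOnSmaller}, after which Remark \ref{rem:WeightOfComponentAtObject} applies. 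With that repair, your computation --- which amounts to checking agreement on the minimal projections $p_i\in\End_\cC(1_\cC)$, where both sides evaluate to $d_i$ --- goes through and coincides with the paper's argument.
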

\begin{proof}
The unitary 2-equivalence $\HstarAlg(\cC)\to \Mod^\dag(\cC)$ preserves the UAF and is isometrically essentially surjective by Theorem \ref{thm:EveryModuleComesFromH*Alg} and Example \ref{ex:IsometriesIn2Hilb}.
It remains to prove it is isometric.
On each component, we know that the unitary equivalence
$\HstarAlg(\cC)\cong\Mod^\dag(\cC)$ must 
map
$\Psi^{\HstarAlg(\cC)}$
to
$\Psi^{\Mod^\dag(\cC)}$ 
up to a uniform scalar by Remark \ref{rem:WeightOfComponentAtObject}.
We may thus reduce to the case that $\cC$ is indecomposable.
Again by Remark \ref{rem:WeightOfComponentAtObject}, it suffices to check the values of the $\Psi$s at a single simple object in $\HstarAlg(\cC)$; we choose the standard Q-system $1_1$ where $1_\cC=\bigoplus_{j=1}^k 1_j$ is an orthogonal direct sum decomposition into simples.

The corresponding module category in $\Mod^\dag(\cC)$ is $\cC_1:=\cC\otimes 1_1$, the `first column' of $\cC$ where $\cC$ acts on the left.
The trace on $\cC_1$ from \eqref{eq:TraceOnC_A} from Proposition \ref{prop:InternalEndRecognition}
is exactly
$\Tr^{\cC_1}=(\psi_\cC\circ \tr^\vee_\cC)|_{\cC_1}$.
Since $\End_\cC(1_1)=\bbC$, we compute
\begin{align*}
\Psi^{\Mod^\dag(\cC)}_{(\cC_1,\Tr^{\cC_1})}
(\id_{\id_{\cC_1}})
&
\underset{\text{(Ex.~\ref{ex:WeightOnModC})}}{=}
\frac{d_{1_1}}{\dim(\cC_1)}
\sum_{c\in \Irr(\cC_1)} d_{c}^2
=
d_{1_1}
=
\Psi^{\HstarAlg(\cC)}_{1_1}
(\id_{1_1}) 
\end{align*}
as claimed.
\end{proof}

\begin{cor}
\label{cor:All3HilbsAreModC}
Every 3-Hilbert space is of the form $\HstarAlg(\cC)\cong\Mod^\dag(\cC)$ for some $\rmH^*$-multifusion category $\cC$.
\end{cor}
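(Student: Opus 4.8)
The plan is to realize a given 3-Hilbert space $\fX$ directly as $\HstarAlg(\cC)$ for an explicit $\rmH^*$-multifusion category $\cC$, and then to invoke Theorem \ref{thm:HstarAlgC==ModC} to rewrite $\HstarAlg(\cC)\cong\Mod^\dag(\cC)$. Since $\fX$ is finite it has finitely many components, so I would choose simple representatives $a_1,\dots,a_m$ of the components (these exist by Lemma \ref{lem:DecomposeIntoSimplesInA3Hilb}) and set $a:=\bigboxplus_i a_i$ and $\cC:=\End_\fX(a)$. As $\End_\fX(\bigboxplus_i a_i)$ is isometrically the linking algebra $\cL(a_1,\dots,a_m)$, Example \ref{ex:LinkingH*Alg} endows $\cC$ with an organic $\rmH^*$-multifusion structure whose UDF is induced by the UAF of $\fX$ and whose spherical weight is induced by $\Psi^\fX$. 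Consequently the delooping inclusion $j\colon\rmB\cC\hookrightarrow\fX$ sending $\star\mapsto a$ is an isometric, UAF-preserving $\dag$-2-functor which is the identity on the single hom-category $\rmB\cC(\star\to\star)=\cC=\End_\fX(a)$.

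Next I would build and analyze the comparison functor. Since $\fX$ is a 3-Hilbert space it is idempotent complete, so $\iota_\fX\colon\fX\to\HstarAlg(\fX)$ is an isometric equivalence by Lemma \ref{lem:HStarAlgCompleteIffIsometricEquivalence}; fix an inverse $\iota_\fX^{-1}$. Applying the $\rmH^*$-monad completion functorially to $j$ and postcomposing with $\iota_\fX^{-1}$ produces
$$
G:=\iota_\fX^{-1}\circ\HstarAlg(j)\colon\HstarAlg(\rmB\cC)=\HstarAlg(\cC)\longrightarrow\fX,
$$
which is isometric and UAF-preserving as a composite of such. Because $j$ is a local $\dag$-equivalence and $\HstarAlg$ assembles its hom-categories out of bimodules in the linking algebras, $\HstarAlg(j)$ is again a local $\dag$-equivalence; composing with the equivalence $\iota_\fX^{-1}$ shows $G$ is fully faithful on hom-categories. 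Under $G$ an $\rmH^*$-monad $A$ on $a$ is carried to its splitting in $\fX$, so the isometric essential image of $G$ is exactly the class of objects of $\fX$ arising as splittings of $\rmH^*$-monads on $a$.

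The remaining point, which I expect to be the main obstacle, is isometric essential surjectivity of $G$: I must show that $a$ generates all of $\fX$ under splitting of $\rmH^*$-monads and Hilbert direct sums. For any simple $c$, connectivity to its representative $a_i$ (a summand of $a$) gives a nonzero dualizable ${}_aX_c$, which I may take simple in the 2-Hilbert space $\fX(a\to c)$. Then $\ev_X\ev_X^\dag\in\End_\fX(1_c)=\bbC$ is positive and nonzero, hence invertible, so by Example \ref{ex:CondensationH*algebra} the $\rmH^*$-monad ${}_aX\otimes_c X^\vee_a$ splits precisely onto $c$, placing $c$ in the isometric essential image of $G$. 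A general object of $\fX$ is a Hilbert direct sum of simples by Lemma \ref{lem:DecomposeIntoSimplesInA3Hilb}, and since $G$ preserves Hilbert direct sums (Lemma \ref{lem:PreservesHilbertDirectSums}, as $G$ is UAF-preserving) while $\HstarAlg(\cC)$ is Hilbert direct sum complete, every object of $\fX$ is hit. Thus $G$ is an isometric equivalence, giving $\fX\cong\HstarAlg(\cC)$, and applying Theorem \ref{thm:HstarAlgC==ModC} yields $\fX\cong\HstarAlg(\cC)\cong\Mod^\dag(\cC)$, as desired.
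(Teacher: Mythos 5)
Your proposal is correct and follows the same blueprint as the paper's proof: take $\cC=\End_\fX(a)$ for (a sum of) simple object(s) $a$, produce an isometric UAF-preserving functor $\HstarAlg(\cC)\to\fX$ from the $\rmH^*$-monad completion machinery, verify isometric essential surjectivity by splitting the monads ${}_aX\otimes X^\vee_a$ via Remark \ref{rem:PromoteSplittingToIsometricEquivalence} and Lemma \ref{lem:HStarAlgCompleteIffIsometricEquivalence}, and finish with Theorem \ref{thm:HstarAlgC==ModC}. Three points of divergence are worth recording. First, the paper reduces at the outset to connected $\fX$ and takes a single simple $a$, leaving that reduction implicit; you instead take $a=\boxplus_i a_i$ over component representatives, so the disconnected case is handled head-on (using the isometric identification of $\End_\fX(\boxplus_i a_i)$ with the linking algebra $\cL(a_1,\dots,a_m)$ from Example \ref{ex:LinkingH*Alg}, which the paper supplies). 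Second, where the paper invokes the universal property (Proposition \ref{prop:UniversalPropertyOfHstarAlgCompletion}) to lift $\rmB\cC\hookrightarrow\fX$ to $\HstarAlg(\cC)\hookrightarrow\fX$, you build the lift explicitly as $\iota_\fX^{-1}\circ\HstarAlg(j)$; but this is exactly how such lifts are constructed in the paper (cf.\ Step 4 of the proof of Proposition \ref{prop:HilbertDirectSumUniversal}), so the difference is cosmetic. Third, and most substantively, your essential-surjectivity argument is more careful than the paper's: the paper takes an arbitrary $b\in\fX$ and an arbitrary dualizable ${}_aX_b$, but for $({}_aX_b,\id)$ to split $X\otimes_b X^\vee$ one needs $\ev_X\ev_X^\dag$ invertible in $\End(1_b)$, which fails for general $X$ (e.g.\ $X$ supported on a proper summand of $b$), so the paper implicitly requires a generating choice of $X$. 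Your detour through simple $c$ — where $\End(1_c)=\bbC$ makes invertibility automatic for nonzero $X$ — followed by Hilbert direct sums (Lemmas \ref{lem:DecomposeIntoSimplesInA3Hilb} and \ref{lem:PreservesHilbertDirectSums}, together with Hilbert direct sum completeness of $\HstarAlg(\cC)$) closes this gap cleanly.
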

\begin{proof}
It suffices to consider the case that $\fX$ is a connected 3-Hilbert space.
Let $a\in\fX$ be simple, and consider the $\rmH^*$-multifusion category $\cC=\Omega_a=\End_\fX(a)$.
By Proposition \ref{prop:UniversalPropertyOfHstarAlgCompletion}, precomposition with the canonical isometric inclusion $\iota_\cC: \cC\hookrightarrow \HstarAlg(\cC)$ gives a canonical equivalence
$$
\Isom(\rmB\cC \to \fX)
\cong
\Isom(\HstarAlg(\cC)\to \fX);
$$
we thus have a canonical fully faithful isometric unitary functor 
$\HstarAlg(\cC)\hookrightarrow \fX$; we identify $\HstarAlg(\cC)$ with its image in $\fX$.
To see that this functor is isometrically essentially surjective, suppose $b\in\fX$ and let ${}_aX_b$ be a dualizable 1-morphism.
The $\rmH^*$-monad $A:={}_aX\otimes_bX^\vee_a \in \Omega_a$ splits in $\fX$ and also defines an object of $\HstarAlg(\cC)\subset \fX$.
By Remark \ref{rem:PromoteSplittingToIsometricEquivalence} and Lemma \ref{lem:HStarAlgCompleteIffIsometricEquivalence}, ${}_AX_b$ is an isometric adjoint equivalence in $\fX$, as desired.
\end{proof}

The proof of Theorem \ref{thmalpha:All3Hilbs} follows immediately from Theorem \ref{thm:HstarAlgC==ModC} and Corollary \ref{cor:All3HilbsAreModC}.
\qed

%%%%%%%%%%%%%%%%%%%%%%%%%%%%%%%%%%%%%%%%%%%%%%%%%%%%%%%%%%%%%%%%%%%%%%%%%%%%%%%%%%
%%%%%%%%%%%%%%%%%%%%%%%%%%%%%%%%%%%%%%%%%%%%%%%%%%%%%%%%%%%%%%%%%%%%%%%%%%%%%%%%%%
%%%%%%%%%%%%%%%%%%%%%%%%%%%%%%%%%%%%%%%%%%%%%%%%%%%%%%%%%%%%%%%%%%%%%%%%%%%%%%%%%%
\section{The 3-category of 3-Hilbert spaces}

In this section, we promote the map $(\cC,\vee,\psi)\mapsto (\Mod^\dag(\cC),\Psi)$
to a unitary equivalence of $\rmC^*$ 3-categories
$\sH^*\mFC\to 3\Hilb$.
We refer the reader to \cite{2404.05193} for the recently introduced notion of $\rmC^*$ 3-category.
In future work, we plan to promote this unitary equivalence to an isometric equivalence of 4-Hilbert spaces.

\begin{defn}
The $\rmC^*$ 3-category $\sH^*\mFC$ is given as follows.
\begin{itemize}
\item 
The objects are the $\rmH^*$-multifusion categories $(\cC,\vee,\psi)$.
\item 
The 1-morphisms $\cC\to \cD$ are simply finite semisimple unitary $\cC$-$\cD$ bimodule categories equipped with $\cC$-$\cD$ bimodule traces.
The 1-composition operation is the relative Deligne product ${}_\cC\cM\boxtimes_\cD \cN_\cE$ equipped with its unitary trace from Lemma \ref{lem:RelativeDeligneTrace}
or \eqref{eq:RenormalizedTraceOnFun} depending on the model chosen for the relative Deligne product.
\item 
The 2-morphisms ${}_\cC\cM_\cD\Rightarrow {}_\cC\cN_\cD$ are unitary $\cC$-$\cD$ bimodule functors with no compatibility requirement with the tracial data.
The 2-composition is composition of functors, which is strictly associative.
\item 
Given $\cC$-$\cD$ bimodule functors $F,G:{}_\cC\cM_\cD\to {}_\cC\cN_\cD$,
the 3-morphisms $F\Rrightarrow G$ are bimodule natural transformations, and 3-composition is the usual composition of natural transformations.
\item 
The $\dag$-operation on 3-morphisms is the adjoint of natural transformations, which is again a natural transformations.
\end{itemize}
All coherence data for this 3-category is unitary as discussed in Remark \ref{rem:unitary-assoc-constraint}.
\end{defn}

\begin{rem}
Observe that the hom $\rmC^*$ 2-categories in $\sH^*\mFC$ can be equipped with 3-Hilbert space structures by Example \ref{ex:WeightOnModC} and the folding trick:
$$
\mathsf{Bim}^{\dag}(\cC,\cD) 
\cong 
\Mod^\dag(\cC\boxtimes \cD^{\rm mp}).
$$
\end{rem}

\begin{rem}
\label{rem:ForgetTraces}
As the 2-morphisms in $\sH^*\mFC$ are not required to be compatible with the bimodule traces, equivalence for 1-morphisms in $\sH^*\mFC$ is unitary equivalence of the underlying bimodule categories forgetting the tracial data.
\end{rem}

\begin{defn}
The $\rmC^*$ 3-category $3\Hilb$ is given as follows.
\begin{itemize}
\item 
The objects are the 3-Hilbert spaces $(\fX,\vee,\Psi)$.
\item 
The 1-morphisms $\fX\to \fY$ are the UAF-preserving $\dag$-2-functors.
The 1-composition operation is composition of 2-functors.
\item 
Given $F,G:\fX\to \fY$, the 2-morphisms $F\Rightarrow G$ are $\dag$-2-transformations,
and 2-composition is composition of 2-transformations.
\item 
Given 2-transformations $\rho,\sigma: F\Rightarrow G$, a 3-morphism $m: \rho \Rrightarrow \sigma$ is a 2-modification, and 3-composition is the usual composition of 2-modifications.
\item 
The $\dag$-operation on 3-morphisms is the adjoint of modifications, which is again a modification due to the unitarity of the source and target $\dag$-2-natural transformations constraint data.
\end{itemize}
All coherence data for this 3-category is unitary.
\end{defn}

\begin{rem}
For 3-Hilbert spaces $\fX$ and $\fY$, we expect $\Fun^\dag(\fX \to \fY)$ and $\Fun^{\dag,\vee}(\fX \to \fY)$ to be equivalent as unitary 2-categories, but not isometrically so as 3-Hilbert spaces for some putative 3-Hilbert space structure.
Non-pivotal unitary tensor functors between $\rmH^*$-multifusion categories $\cC\to \cD$ provide one obstruction.
By completing $\cC,\cD$, these yield 2-functors $\Mod(\cC)\to \Mod(\cD)$ which does not preserve UAFs.
However, for every such 2-functor, one may equip $\cD$ with a different $\cC$-$\cD$ bimodule trace such that it will be UAF-preserving.
These have the same underlying 2-functor, and they are unitarily equivalent, but they will not be isometrically so.
\end{rem}

%%%%%%%%%%%%%%%%%%%%%%%%%%%%%%%%%%%%%%%%%%%%%%%%%%%%%%%%%%%%%%%%%%%%%%%%%%%%%%%%%%
\subsection{Unitary 3-equivalence between \texorpdfstring{$3\Hilb$}{3Hilb} and \texorpdfstring{$\sH^*\mFC$}{H*mFC}}
\label{sec:3Hilb==H*mFC}

We prove Theorem \ref{thm:3Hilb=H*mFC} which states that $\Mod^\dag$ gives a unitary equivalence between the $\rmC^*$ 3-categories $\sH^*\mFC$ and $3\Hilb$.
We expect this equivalence to be as 4-Hilbert spaces. 
Defining a spherical structure and isometries within 4-Hilbert spaces requires a detailed treatment of unitary adjoints for 2-functors, which we leave to future work.

\begin{lem}\label{lem:boxtimesisUAFpreserving}
Given $\rmH^*$-multifusion categories $(\cC,\vee,\psi)$ and $(\cD,\vee,\psi)$, and a $\cC$-$\cD$ bimodule ${}_\cC \cN _\cD$ equipped with a $\cC$-$\cD$ bimodule trace $\Tr^\cN$, the $\dag$-2-functor
$$
- \boxtimes_{\cC} \cN_\cD 
\colon 
\Mod^\dag(\cC) \to \Mod^\dag(\cD)
$$
is UAF-preserving.
\end{lem}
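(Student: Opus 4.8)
The plan is to show that the canonical comparison 2-isomorphism witnessing that $-\boxtimes_\cC\cN_\cD$ preserves the UAF is unitary, by reducing to a statement about unitary adjunction in $2\Hilb$ that we have already established. Recall from Lemma \ref{lem:UAFonModC} that the UAF on both $\Mod^\dag(\cC)$ and $\Mod^\dag(\cD)$ is unitary adjunction, and that the UAF-preserving condition of Definition \ref{Defn:IsometricEquiv} asks precisely that the canonical isomorphism built from $F^2_{M^\vee,M}$, $F(\ev_M)$, and $\coev_{F(M)}$ is unitary, where here $F=-\boxtimes_\cC\cN_\cD$. Since the dual of a right $\cC$-module category $\cM$ in $\Mod^\dag(\cC)$ is the opposite module $\cM^{\op}$ with evaluation and coevaluation given by the unitary internal-hom adjunctions of \ref{H*Mod:UnitaryInternalHom}, the first step is to identify $F(\cM)^\vee=(\cM\boxtimes_\cC\cN)^\vee$ with $\cN^{\op}\boxtimes_{\cC}\cM^{\op}$ (equivalently $\cN^{\text{mp}}\boxtimes_\cD\cdots$ after folding), using the concrete ladder model for the relative Deligne product from \S\ref{sec:bimod-cats-relative-deligne}.

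First I would set up the computation at the level of generators. By the universal property of the Cauchy completion of the ladder category, it suffices to check unitarity of the comparison map on objects of the form $m\boxtimes n$ with $m\in\cM$, $n\in\cN$, and then on the `simple tensor' morphisms; and by Proposition \ref{prop:uniquenessofUAF}, since a fixed weight admits at most one compatible UAF, it is in fact enough to verify that the comparison 2-isomorphism intertwines the spherical weights $\Psi^{\Mod^\dag(\cC)}$ and $\Psi^{\Mod^\dag(\cD)}$ appropriately — i.e.\ that the putative adjoint $-\boxtimes_\cC\cN^{\op}$ really is the \emph{unitary} adjoint rather than merely an adjoint. Concretely, I would compute the inner product $\langle \iota^*(f)\mid\iota^*(g)\rangle$ of images of two balanced morphisms under $F$ and compare it with $\langle f\mid g\rangle$, invoking the trace formula of Lemma \ref{lem:RelativeDeligneTrace} together with the renormalized functor trace \eqref{eq:RenormalizedTraceOnFun}. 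The evaluations and coevaluations for $\cN$ are unitary-adjoint data because $\Tr^\cN$ is a $\cC$-$\cD$ bimodule trace, so $c\rhd-\dashv^\dag c^\vee\rhd-$ and $-\lhd d\dashv^\dag-\lhd d^\vee$; this is exactly what feeds the unitarity of $F(\ev_M)$ after applying $F$.

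The cleanest route, which I would pursue, is to use the function-category model $\cM\boxtimes_\cC\cN\simeq\Fun^\dag_\cC(\cM^{\op}\to\cN)$ of Proposition \ref{prop:IsometricEquivalenceOfRelativeDeligne}, under which $-\boxtimes_\cC\cN_\cD$ becomes postcomposition $\Fun^\dag_\cC(-^{\op}\to\cN)$ by the bimodule $\cN$, and then to observe that its unitary adjoint is postcomposition with the unitary adjoint bimodule, which exists and is unique by the $2\Hilb$-theory of Example \ref{ex:UnitaryAdjunction}. Since unitary adjunction is a UAF on $2\Hilb$ by \cite{MR4750417} and the forgetful functor $\Mod^\dag(\cD)\to 2\Hilb$ is trace-compatible, the comparison isomorphism for $F$ is a $\pi$-rotation of a unitary under the ambient UAF, hence unitary — the same mechanism used at the end of the proof of Lemma \ref{lem:UAFonModC}. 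The main obstacle will be bookkeeping the normalization scalars: the renormalizing constant $\dim(\End_\cC(1_\cC))^2/(\operatorname{FPdim}(\cC)\cdot\psi_\cC(\id_{1_\cC}))$ from \eqref{eq:RenormalizedTraceOnFun} must be tracked carefully through the identifications, and one must confirm that the factors of $d_c$, $d_m$, $d_n$ arising from the popped-bubble relations \eqref{eq:PopBubbles} cancel precisely so that $\ev_{F(M)}$ and $F(\ev_M)$ differ by a unitary rather than merely an invertible scalar; this is where the choice $\delta=0$ and the careful trace matching from the proof of Proposition \ref{prop:IsometricEquivalenceOfRelativeDeligne} do the real work.
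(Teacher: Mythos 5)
There is a genuine gap, and it is conceptual: you have misidentified the level at which the UAF acts. By Lemma \ref{lem:UAFonModC}, the UAF on $\Mod^\dag(\cC)$ assigns unitary adjoints to \emph{1-morphisms}, i.e.\ to unitary $\cC$-module functors $G\colon \cM\to\cM'$; it does not assign duals to objects. So for the 2-functor $-\boxtimes_\cC\cN_\cD$, the condition of Definition \ref{Defn:IsometricEquiv} requires: for every module functor $G$ with unitary adjoint $G^*$, the canonical comparison exhibiting $G^*\boxtimes\id_\cN$ as an adjoint of $G\boxtimes\id_\cN$ must be unitary, i.e.\ $G^*\boxtimes\id_\cN$ must be the \emph{unitary} adjoint of $G\boxtimes\id_\cN$ in $\Mod^\dag(\cD)$. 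Your ``first step'' — identifying $(\cM\boxtimes_\cC\cN)^\vee$ with $\cN^{\op}\boxtimes_\cC\cM^{\op}$ — and your later claim that ``its unitary adjoint is postcomposition with the unitary adjoint bimodule'' concern duals of \emph{objects} and an adjoint of the \emph{2-functor itself}. Those are 3-categorical notions (duals/adjoints of 1-morphisms in $\sH^*\mFC$ or $3\Hilb$) that are not what the lemma asserts; indeed the paper explicitly defers unitary adjoints of 2-functors to future work. Nowhere in your three paragraphs do you fix a module functor $G$ and analyze the mate map for $G\boxtimes\id_\cN$, which is the entire content of the statement.

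The proposed shortcut via Proposition \ref{prop:uniquenessofUAF} also does not close the gap: the inclusion $\Isom(\fX\to\fY)\subset\Fun^{\dag,\vee}(\fX\to\fY)$ applies to weight-preserving functors, but $-\boxtimes_\cC\cN_\cD$ is not an isometry in general — rescaling $\Tr^\cN$ by $r>0$ rescales every trace produced by Lemma \ref{lem:RelativeDeligneTrace}, hence the weights $\Psi^{\Mod^\dag(\cD)}$ on the image, while leaving the source weights untouched; even a ``preservation up to component-wise scalar'' variant would require computing how $\Psi$ transforms under $-\boxtimes_\cC\cN$, which is no easier than the direct argument. The proof that works (and is the paper's) is the computation you only gesture at: fix $G\in\Fun_\cC(\cM\to\cM')$ with $G\dashv^\dag G^*$, note that the mate map $(\cM'\boxtimes_\cC\cN)(Gm\boxtimes n\to m'\boxtimes n')\to(\cM\boxtimes_\cC\cN)(m\boxtimes n\to G^*m'\boxtimes n')$ is invertible, hence unitary iff isometric, and verify isometry by expanding a ladder morphism $\sum_{c,r} f^c_r\otimes g^c_r$ and computing both norms with the trace formula of Lemma \ref{lem:RelativeDeligneTrace}. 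The two computations differ only in replacing loops closed by a $G$-strand (traces $\Tr^{\cM'}_{Gm_i}$) with loops closed by a $G^*$-strand (traces $\Tr^\cM_{m_i}$), and these agree precisely because $G\dashv^\dag G^*$. Note that neither the renormalization constant of \eqref{eq:RenormalizedTraceOnFun} nor the choice $\delta=0$ of \eqref{eq:1ParameterUAFOnH*Alg} enters: the latter concerns the UAF on $\HstarAlg(\cC)$, which is irrelevant to this lemma.
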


\begin{proof}
Consider $F \in \Fun_\cC(\cM \to \cM')$ with unitary adjoint $F^*$.
We wish to show the map
\begin{align*}
(\cM' \boxtimes_\cC \cN_\cD)( Fm \boxtimes n, m' \boxtimes n') 
&\rightarrow(\cM \boxtimes_\cC \cN_\cD)(m \boxtimes n, F^*m' \boxtimes n')\\
\tikzmath{
\draw[thick] (.1,-.8) -- node[right]{$\scriptstyle m$} (.1,-1.2);
\draw[thick] (-.1,-.8) --node[left]{$\scriptstyle F$} (-.1,-1.2);
\draw[thick] (0,-.2) -- (0,1.2) node[left]{$\scriptstyle m'$};
\draw[thick] (1,.8) -- (1,1.2) node[right]{$\scriptstyle n'$};
\draw[thick] (1,.2) --node[right]{$\scriptstyle n$} (1,-1.2);
\draw[thick, blue] (0,-.5) --node[above]{$\scriptstyle c$} (1,.5);
\roundNbox{fill=white}{(0,-.5)}{0.3}{.1}{.1}{$f$};
\roundNbox{fill=white}{(1,.5)}{0.3}{.1}{.1}{$g$};
}
&\mapsto 
\tikzmath{
\draw[thick] (0.1,-.8) --node[right]{$\scriptstyle m$} (0.1,-1.2);
\draw[thick] (-0.1,-.8) arc (0:-180:.2) -- (-.5,1.2) node[left,yshift=-.125]{$\scriptstyle F^*$};
\draw[thick] (.1,-.2) -- (.1,1.2) node[right]{$\scriptstyle m'$};
\draw[thick] (1,.8) -- (1,1.2) node[right]{$\scriptstyle n'$};
\draw[thick] (1,.2) --node[right]{$\scriptstyle n$} (1,-1.2);
\draw[thick, blue] (0,-.5) --node[above]{$\scriptstyle c$} (1,.5);
\roundNbox{fill=white}{(0,-.5)}{0.3}{.1}{.1}{$f$};
\roundNbox{fill=white}{(1,.5)}{0.3}{.1}{.1}{$g$};
}
\end{align*}
is unitary for $m, \in \cM$ , $m' \in \cM'$, $n, n' \in \cN$.
Since this map admits an inverse, it suffices to show it is isometric.
Consider a morphism
$$
\sum_{c\in\Irr(\cC)} \sum_{r=1}^{k_c} f^c_r\otimes g^c_r
\in
\bigoplus_{c\in\Irr(\cC)} \cM'(Fm\to m'\lhd c)\otimes \cN(c\rhd n\to n').
$$
Observe
\begin{align*}
\sum_{\substack{i,j \\ b,c\in\Irr(\cC_{ij}) \\ r,l}}
\Tr^{\cM\boxtimes_\cC\cN}_{Fm_i\boxtimes n_i}\left(
\tikzmath{
\begin{scope}
\clip (0,-.2) rectangle (1,4.2);
\fill[\arColor] (0,-.2) rectangle (1,4.2);
\fill[\brColor] (0,0.5) -- (1,1.5) -- (1,2.5) -- (0,3.5);
\end{scope}
\draw[thick] (0,-0.2) node[right,yshift=.1cm]{$\scriptstyle m_i$} -- node[left]{$\scriptstyle m'_j$} (0,4.2) node[right,yshift=-.1cm]{$\scriptstyle m_i$};
\draw [thick] (-.2,-0.2) -- (-.2,0.2) node[left,yshift=-.25cm]{$\scriptstyle F$};
\draw [thick] (-.2,3.8) -- (-.2,4.2) node[left,yshift=-.125cm]{$\scriptstyle F$};
\draw[thick] (1,-0.2) node[right,yshift=.1cm]{$\scriptstyle n_i$} -- node[right]{$\scriptstyle n'_j$} (1,4.2) node[right,yshift=-.1cm]{$\scriptstyle n_i$};
\draw[thick, DarkGreen] (0,0.5) --node[above]{$\scriptstyle b$} (1,1.5);
\draw[thick, blue] (0,3.5) --node[above]{$\scriptstyle c$} (1,2.5);
\roundNbox{fill=white}{(0,0.5)}{0.3}{.1}{.1}{$f^b_r$};
\roundNbox{fill=white}{(0,3.5)}{0.3}{.1}{.1}{\scriptsize{$(f^c_l)^\dag$}};
\roundNbox{fill=white}{(1,1.5)}{0.3}{.1}{.1}{$g^b_r$};
\roundNbox{fill=white}{(1,2.5)}{0.3}{.1}{.1}{\scriptsize{$(g^c_l)^\dag$}};
}
\right) 
&=
\sum_{\substack{i,j \\ c\in\Irr(\cC_{ij}) \\ r,l}}
d_c^{-1}
\Tr_{Fm_i}^{\cM}\left(
\tikzmath{
\begin{scope}
\clip[rounded corners = 5pt] (-.2,-.7) rectangle (.7,1.7);
\fill[\arColor] (0,-.7) rectangle (.7,1.7);
\fill[\brColor] (-.15,0) rectangle (.15,1);
\end{scope}
\draw[thick] (0,1.3) --node[right]{$\scriptstyle m_i$} (0,1.7);
\draw[thick] (0,-.7) --node[right]{$\scriptstyle m_i$} (0,-.3);
\draw [thick] (-.2,-0.7) -- (-.2,-.3) node[left,yshift=-.15cm]{$\scriptstyle F$};
\draw [thick] (-.2,1.3) -- (-.2,1.7) node[left,yshift=-.125cm]{$\scriptstyle F$};
\draw[thick] (-.15,.3) --node[left]{$\scriptstyle m'_j$} (-.15,.7);
\draw[thick, blue] (.15,.3) --node[right]{$\scriptstyle c$} (.15,.7);
\roundNbox{fill=white}{(0,0)}{0.3}{0.1}{0.1}{$f^c_r$};
\roundNbox{fill=white}{(0,1)}{0.3}{0.1}{0.1}{\scriptsize{$(f^c_l)^\dag$}};
}
\right)
\Tr^\cN_{n_i}\left(
\tikzmath{
\begin{scope}
\clip[rounded corners = 5pt] (-1.15,-.7) rectangle (.2,1.7);
\fill[\arColor] (-1.15,-.7) -- (.15,-.7) -- (.15,0) -- (0,0) -- (0,1) -- (.15,1) -- (.15,1.7) -- (-1.15,1.7);
\fill[\brColor] (-.15,1.3) arc (0:180:.3cm) -- (-.75,-.3) arc (-180:0:.3cm) -- (0,0) -- (0,1);
\end{scope}
\draw[thick, blue] (-.15,1.3) arc (0:180:.3cm) --node[right,xshift=-.1cm]{$\scriptstyle c^\vee$} (-.75,-.3) arc (-180:0:.3cm);
\draw[thick] (0,.3) --node[right]{$\scriptstyle n'_j$} (0,.7);
\draw[thick] (.15,1.3) --node[right]{$\scriptstyle n_i$} (.15,1.7);
\draw[thick] (.15,-.3) --node[right]{$\scriptstyle n_i$} (.15,-.7);
\roundNbox{fill=white}{(0,0)}{0.3}{0.1}{0.1}{$g^c_r$};
\roundNbox{fill=white}{(0,1)}{0.3}{0.1}{0.1}{\scriptsize{$(g^c_l)^\dag$}};
}
\right)
\displaybreak[1]\\
\sum_{\substack{i,j \\ b,c\in\Irr(\cC_{ij}) \\ r,l}}
\Tr^{\cM\boxtimes_\cC\cN}_{m_i\boxtimes n_i}\left(
\tikzmath{
\begin{scope}
\clip (0,-.2) rectangle (1,4.2);
\fill[\arColor] (0,-.2) rectangle (1,4.2);
\fill[\brColor] (0,0.5) -- (1,1.5) -- (1,2.5) -- (0,3.5);
\end{scope}
\draw[thick] (0,-0.2) node[right,yshift=.1cm]{$\scriptstyle m_i$} -- node[right]{$\scriptstyle m'_j$} (0,4.2) node[right,yshift=-.1cm]{$\scriptstyle m_i$};
\draw [thick] (-.2,3.8) arc (0:180:.2) -- node[left]{$\scriptstyle F^*$} (-.6,0.2) arc (180:360:.2); 
\draw[thick] (1,-0.2) node[right,yshift=.1cm]{$\scriptstyle n_i$} -- node[right]{$\scriptstyle n'_j$} (1,4.2) node[right,yshift=-.1cm]{$\scriptstyle n_i$};
\draw[thick, DarkGreen] (0,0.5) --node[above]{$\scriptstyle b$} (1,1.5);
\draw[thick, blue] (0,3.5) --node[above]{$\scriptstyle c$} (1,2.5);
\roundNbox{fill=white}{(0,0.5)}{0.3}{.1}{.1}{$f^b_r$};
\roundNbox{fill=white}{(0,3.5)}{0.3}{.1}{.1}{\scriptsize{$(f^c_l)^\dag$}};
\roundNbox{fill=white}{(1,1.5)}{0.3}{.1}{.1}{$g^b_r$};
\roundNbox{fill=white}{(1,2.5)}{0.3}{.1}{.1}{\scriptsize{$(g^c_l)^\dag$}};
}
\right) 
&=
\sum_{\substack{i,j \\ c\in\Irr(\cC_{ij}) \\ r,l}}
d_c^{-1}
\Tr_{m_i}^{\cM}\left(
\tikzmath{
\begin{scope}
\clip[rounded corners = 5pt] (-.2,-.7) rectangle (.7,1.7);
\fill[\arColor] (0,-.7) rectangle (.7,1.7);
\fill[\brColor] (-.15,0) rectangle (.15,1);
\end{scope}
\draw[thick] (0,1.3) --node[right]{$\scriptstyle m_i$} (0,1.7);
\draw[thick] (0,-.7) --node[right]{$\scriptstyle m_i$} (0,-.3);
\draw [thick] (-.2,1.3) arc (0:180:.3) -- node [left]{$\scriptstyle F^*$} (-.8,-.3) arc (180:360:.3);
\draw[thick] (-.15,.3) --node[left]{$\scriptstyle m'_j$} (-.15,.7);
\draw[thick, blue] (.15,.3) --node[right]{$\scriptstyle c$} (.15,.7);
\roundNbox{fill=white}{(0,0)}{0.3}{0.1}{0.1}{$f^c_r$};
\roundNbox{fill=white}{(0,1)}{0.3}{0.1}{0.1}{\scriptsize{$(f^c_l)^\dag$}};
}
\right)
\Tr^\cN_{n_i}\left(
\tikzmath{
\begin{scope}
\clip[rounded corners = 5pt] (-1.15,-.7) rectangle (.2,1.7);
\fill[\arColor] (-1.15,-.7) -- (.15,-.7) -- (.15,0) -- (0,0) -- (0,1) -- (.15,1) -- (.15,1.7) -- (-1.15,1.7);
\fill[\brColor] (-.15,1.3) arc (0:180:.3cm) -- (-.75,-.3) arc (-180:0:.3cm) -- (0,0) -- (0,1);
\end{scope}
\draw[thick, blue] (-.15,1.3) arc (0:180:.3cm) --node[right,xshift=-.1cm]{$\scriptstyle c^\vee$} (-.75,-.3) arc (-180:0:.3cm);
\draw[thick] (0,.3) --node[right]{$\scriptstyle n'_j$} (0,.7);
\draw[thick] (.15,1.3) --node[right]{$\scriptstyle n_i$} (.15,1.7);
\draw[thick] (.15,-.3) --node[right]{$\scriptstyle n_i$} (.15,-.7);
\roundNbox{fill=white}{(0,0)}{0.3}{0.1}{0.1}{$g^c_r$};
\roundNbox{fill=white}{(0,1)}{0.3}{0.1}{0.1}{\scriptsize{$(g^c_l)^\dag$}};
}
\right)
\end{align*}
Since $F \dashv_\dag F^*$, 
we have
$
\Tr_{Fm_i}^{\cM}\left(
\tikzmath{
\begin{scope}
\clip[rounded corners = 5pt] (-.2,-.7) rectangle (.7,1.7);
\fill[\arColor] (0,-.7) rectangle (.7,1.7);
\fill[\brColor] (-.15,0) rectangle (.15,1);
\end{scope}
\draw[thick] (0,1.3) --node[right]{$\scriptstyle m_i$} (0,1.7);
\draw[thick] (0,-.7) --node[right]{$\scriptstyle m_i$} (0,-.3);
\draw [thick] (-.2,-0.7) -- (-.2,-.3) node[left,yshift=-.15cm]{$\scriptstyle F$};
\draw [thick] (-.2,1.3) -- (-.2,1.7) node[left,yshift=-.125cm]{$\scriptstyle F$};
\draw[thick] (-.15,.3) --node[left]{$\scriptstyle m'_j$} (-.15,.7);
\draw[thick, blue] (.15,.3) --node[right]{$\scriptstyle c$} (.15,.7);
\roundNbox{fill=white}{(0,0)}{0.3}{0.1}{0.1}{$f^c_r$};
\roundNbox{fill=white}{(0,1)}{0.3}{0.1}{0.1}{\scriptsize{$(f^c_l)^\dag$}};
}
\right)
=
\Tr_{m_i}^{\cM}\left(
\tikzmath{
\begin{scope}
\clip[rounded corners = 5pt] (-.2,-.7) rectangle (.7,1.7);
\fill[\arColor] (0,-.7) rectangle (.7,1.7);
\fill[\brColor] (-.15,0) rectangle (.15,1);
\end{scope}
\draw[thick] (0,1.3) --node[right]{$\scriptstyle m_i$} (0,1.7);
\draw[thick] (0,-.7) --node[right]{$\scriptstyle m_i$} (0,-.3);
\draw [thick] (-.2,1.3) arc (0:180:.3) -- node [left]{$\scriptstyle F^*$} (-.8,-.3) arc (180:360:.3);
\draw[thick] (-.15,.3) --node[left]{$\scriptstyle m'_j$} (-.15,.7);
\draw[thick, blue] (.15,.3) --node[right]{$\scriptstyle c$} (.15,.7);
\roundNbox{fill=white}{(0,0)}{0.3}{0.1}{0.1}{$f^c_r$};
\roundNbox{fill=white}{(0,1)}{0.3}{0.1}{0.1}{\scriptsize{$(f^c_l)^\dag$}};
}
\right)$. 
\end{proof}

We now describe the 3-functor $\Mod^\dag \colon \sH^*\mFC\to 3\Hilb$.
We recall that the non-unitary version $\mFC\to 3\Vect$ was shown to be an equivalence in \cite[Thm.~2.2.2]{MR4372801}.

\begin{itemize}
\item 
An $\rmH^*$-multifusion category $\cC\in\rmH^*\mFC$ maps to $\Mod^\dag(\cC)\in 3\Hilb$.
\item 
A unitary finite semisimple $\cC$-$\cD$ bimodule category ${}_\cC\cM_\cD$ with a bimodule trace maps to the UAF-preserving $\dag$-2-functor $-\boxtimes_\cC\cM_\cD : \Mod^\dag(\cC)\to \Mod^\dag(\cD)$.

\item 
A unitary $\cC$-$\cD$ bimodule functor $F: {}_\cC\cM_\cD\to {}_\cC\cN_\cD$ maps to the $\dag$-2-transformation $-\boxtimes F: -\boxtimes_\cC\cM_\cD\Rightarrow -\boxtimes_\cC\cN_\cD$.
\item 
A $\cC$-$\cD$ bimodule natural tranformation $\rho: F\Rightarrow G$ maps to the 2-modification $-\boxtimes \rho : -\boxtimes F\Rrightarrow -\boxtimes G$.
\end{itemize}
In fact, $\Mod^\dag \colon \sH^*\mFC \to 3\Hilb$ can be described as the Yoneda map
\begin{align*}
\sH^*\mFC( \Hilb \to -) \colon \sH^*\mFC  &\to 3\Hilb
\\
\cC &\mapsto \sH^*\mFC( \Hilb \to \cC) = \Mod^\dag(\cC),
\end{align*}
which is clearly a (weak) $\dag$-3-functor.

\begin{proof}[Proof of Theorem \ref{thm:3Hilb=H*mFC}]
By Corollary \ref{cor:All3HilbsAreModC}, the functor $\Mod^\dag\colon \sH^*\mFC\to 3\Hilb$ is unitarily essentially surjective (even isometrically so!).
It remains to prove it is fully faithful, i.e.
$$
\Mod^\dag:
\sH^*\mFC(\cC\to \cD)
\to
\Fun^{\dag,\vee}(\Mod^\dag(\cC)\to \Mod^\dag(\cD))
$$
is a $\dag$-equivalence on every hom 2-category.

We will use the fact that, viewing $\cC$ as a right $\cC$-module $\cC_\cC \in \Mod^\dag(\cC)$, Proposition \ref{prop:RenormalizedTraceOnFun} yields an isometric equivalence $\rmB\cC \to \rmB \End(\cC_{\cC})$ given on hom-spaces by $c \mapsto c \otimes -$.
Moreover, the data of a unitary $\cC$-$\cD$ bimodule $_{\cC}\cM_{\cD}$ equipped with a $\cC$-$\cD$ bimodule trace $\Tr^\cM$ is precisely the data of a UAF-preserving $\dag$-2-functor 
\begin{align*}
\rmB \cC &\to \Mod^\dag(\cD)\\
\star &\mapsto \cM_{\cD}\\
(c \colon \star \to \star) &\mapsto (c \triangleright - \colon \cM_{\cD} \to \cM_{\cD})
\end{align*}
In particular, recall that a right $\cD$-module trace on $\cM$ is a $\cC$-$\cD$ bimodule trace exactly when $c \triangleright - \dashv_\dag c^\vee \triangleright -$ for all $c \in \cC$.
This observation yields an equivalence of $\rmC^*$-2-categories 
$$
\Fun^{\dag,\vee}(\rmB\cC \to \Mod^\dag(\cD)) \cong \sH^*\mFC(\cC \to \cD).
$$
(We conjecture this to be an isometric equivalence of 3-Hilbert spaces. 
We leave this to future work.)

Moreover, by Proposition \ref{prop:UniversalPropertyOfHstarAlgCompletion} we know
$$
\Fun^{\dag,\vee}(\Mod^\dag(\cC) \to \Mod^\dag(\cD)) \cong \Fun^{\dag,\vee}(\rmB\End(\cC_{\cC}) \to \Mod^\dag(\cD))
$$
given by restriction to the subcategory $\rmB\End(\cC_{\cC})$ of $\Mod^\dag(\cC)$. 
Putting this all together, this defines the pseudo-inverse
$$
\Fun^{\dag,\vee}(\Mod^\dag(\cC) \to \Mod^\dag(\cD)) \to \sH^*\mFC(\cC \to \cD).
$$
We will only unpack this equivalence at the level of functors and bimodules, as the desired facts for higher morphisms follow similarly. 
Indeed, given some $F \colon \Mod^\dag(\cC) \to \Mod^\dag(\cD)$, we determine a $\cC$-$\cD$ bimodule $_{\cC}F(\cC)_{\cD}$ by 
$$
\begin{tikzcd}[row sep=0]
\rmB\cC 
\arrow[r]
&
\rmB\End(\cC_\cC) 
\arrow[r,hook]
&
\Mod^\dag(\cC) 
\arrow[r,"F"]
&
\Mod^\dag(\cD)
\\ 
\star
\arrow[r,mapsto]
&
\star
\arrow[r,mapsto]
&
\cC_\cC 
\arrow[r,mapsto]
&
F(\cC)_\cD
\end{tikzcd}
$$
By Corollary \ref{cor:RightCActionIsometric}, $F(\cC)_\cD \cong \cC \boxtimes_{\cC} F(\cC)_\cD$ isometrically, hence $F \cong - \boxtimes_\cC F(\cC)_\cD$ when restricted to $\rmB\End(\cC_\cC)$. 
The universal property of $\Mod^\dag(\cC) = \sH^*\Alg(\cC)$ then implies $F \cong - \boxtimes_\cC F(\cC)_\cD$ isometrically. 
\end{proof}

%%%%%%%%%%%%%%%%%%%%%%%%%%%%%%%%%%%%%%%%%%%%%%%%%%%%%%%%%%%%%%%%%%%%%%%%%%%%%%%%%%
\subsection{The forgetful 3-functor \texorpdfstring{$3\Hilb\to 3\Vect$}{3Hilb->3Vect} is fully faithful}

In this section, we prove Corollary \ref{coralpha:ForgetFF} which states that the forgetful functor $3\Hilb \to 3\Vect$ is fully faithful.

The unitary equivalence $\cM\cong \cM_1\boxtimes_{\cC_{11}}\cC$ of right $\cC$-module categories used in the proof of Corollary \ref{cor:ExistsModuleTrace} can also be used to prove that every module category over a unitary multifusion category is unitarizable.
The main ingredient is the fact that every connected separable algebra in a unitary fusion category is equivalent to a Q-system by \cite{MR4616673}.

\begin{prop}[{\cite{MR4724964}}]
\label{prop:ModulesUnitarizable}
Every finite semisimple module category $\cM$ over a unitary multifusion category $\cC$ is uniquely unitarizable.
\end{prop}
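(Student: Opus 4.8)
The plan is to reduce the statement about module categories to a statement about algebras, where the unitarizability result of \cite{MR4616673} can be applied directly. First I would invoke the structure theorem that every indecomposable finite semisimple module category $\cM$ over a multifusion category $\cC$ arises as $\cM_1 \boxtimes_{\cC_{11}} \cC$, where $\cC_{11} = 1_1 \otimes \cC \otimes 1_1$ is a fusion category and $\cM_1 = \cM \lhd 1_1$ is a module over it; this is exactly the decomposition used in the proof of Corollary \ref{cor:ExistsModuleTrace}. This allows us to reduce to the case where $\cC$ is a unitary \emph{fusion} category and $\cM$ is indecomposable, since the relative Deligne product and its unitary structure interact well with the ambient data.

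Next, I would translate the module category into algebraic data. An indecomposable module category $\cM$ over a fusion category $\cC$ is equivalent to $\cC_A$ for a connected algebra $A \in \cC$ (take $A = \underline{\End}_\cC(m)$ for any generator $m$, exactly as in Theorem \ref{thm:EveryModuleComesFromH*Alg}, though here we do not yet have unitary structure so $A$ is merely a connected separable algebra). The key input is then the result of \cite{MR4616673}: in a unitary fusion category, every connected separable algebra is equivalent (as an algebra) to a Q-system, i.e.\ it admits a unitary structure making it a standard separable $\rmC^*$-Frobenius algebra. Equivalently, the $\rmH^*$-algebra structures developed in \S\ref{sec:H*Algs} become available. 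Pulling this unitary algebra structure back through the equivalence $\cM \simeq \cC_A$ equips $\cM$ with the structure of a unitary $\cC$-module category.

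For existence in the general (decomposable) case, I would assemble the indecomposable pieces: decompose $\cM$ into indecomposable summands, unitarize each using the above, and take the orthogonal direct sum. For \textbf{uniqueness}, the idea is that two unitarizations of the same $\cC$-module category give two $\rmH^*$-algebra structures on algebras that are equivalent in $\cC$; one then shows any two such unitary structures are related by a unitary module equivalence. Here I would lean on the rigidity of unitary structure: by the classification of unitary structures on separable algebras (again from \cite{MR4616673} together with \cite{MR4724964}, which is the cited source for this very proposition), any two unitarizations differ by an automorphism that can be chosen unitary. Essential uniqueness of the UDF and module trace data, via Corollary \ref{cor:ExistsModuleTrace} (uniqueness of the module trace up to positive scalar when $\cC,\cM$ are indecomposable), pins down the unitary module category up to unitary equivalence.

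The main obstacle I anticipate is the uniqueness half rather than existence. Existence follows fairly mechanically once \cite{MR4616673} is invoked, but uniqueness requires showing that the unitarization is canonical up to a \emph{unique} unitary equivalence, which means controlling the automorphism group of the unitary structure and verifying that the choices of module trace do not introduce extra moduli. The reduction via $\cM_1 \boxtimes_{\cC_{11}} \cC$ must also be checked to be compatible with unitary structures on both sides — that the unitarization of $\cM_1$ over the fusion category $\cC_{11}$ induces, via the relative Deligne product and Lemma \ref{lem:RelativeDeligneTrace}, the correct unitary structure on $\cM$ over the multifusion category $\cC$. Since this proposition is attributed to \cite{MR4724964}, I expect the cleanest route is to cite that result directly for the core statement and use the machinery above only to give the promised ``manifestly unitary'' reproof.
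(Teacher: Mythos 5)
Your existence argument is essentially the paper's proof: the same reduction of an indecomposable module over indecomposable multifusion $\cC$ to a module over the fusion corner $\cC_{11}$ via $\cM\cong\cM_1\boxtimes_{\cC_{11}}\cC$, the same translation of the fusion/indecomposable case into a connected separable algebra $A=\underline{\End}_\cC(m)$ handled by \cite{MR4616673}, and the same assembly of the general case from indecomposable summands by direct sums (the paper merely runs the fusion case first and the corner reduction second, which is immaterial).

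The genuine gap is in the uniqueness half. Your proposed argument is circular: you invoke \cite{MR4724964} --- the very result being (re)proved --- as an ingredient, so nothing is actually established. The paper instead derives uniqueness from \cite[Rem.~4]{MR4538281}, an independent unique-unitarizability result (which the paper notes also applies to Theorem 8 therein). Moreover, your appeal to Corollary \ref{cor:ExistsModuleTrace} conflates two different layers of structure: that corollary says the unitary $\cC$-module \emph{trace} on an already unitary module category is unique up to positive scalar, which is a statement about the 2-Hilbert space datum sitting on top of a fixed dagger structure. Unique unitarizability, by contrast, is the assertion that any two $\rmC^*$/dagger structures on the same underlying $\cC$-module category are related by a unitary module equivalence; pinning down the trace up to scalar says nothing about this, since the trace is only defined after a dagger structure has been chosen. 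So the uniqueness step needs the external input from \cite{MR4538281} (or an actual argument controlling dagger structures, not traces), and as written your proposal does not supply it.
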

\begin{proof}
The proof proceeds in steps on properties of the module category $\cM$ and the unitary multifusion category $\cC$.
\item[\underline{Step 1:}]
If $\cC$ is fusion and $\cM$ is indecomposable,
then $\cM$ is unitarizable by \cite{MR4616673}.
Indeed, $\cM$ is equivalent to ${}_A\cC$ where $A=\underline{\End}_\cC(m)$ for any simple object $m\in\cM$.
Since $m$ is simple, $A$ is connected, i.e., $\cC(1_\cC\to A)\cong \cM(m\to m)=\bbC$.
By \cite{MR4616673}, $A$ is equivalent to a Q-system, so there is a Q-system $B\in\cC$ and an equivalence $\cM\cong {}_B\cC$, which is unitary.
    
\item[\underline{Step 2:}]
If $\cC$ is fusion and $\cM$ is arbitrary, then $\cM$ is a direct sum of indecomposable module categories.
Each of these indecomposable summands is unitarizable, so $\cM$ is too.

\item[\underline{Step 3:}]
If $\cC$ is indecomposable unitary multifusion and $\cM$ is indecomposable, then
$\cC$ is unitarily Morita equivalent to any of its unitary fusion corners of the form $\cC_{ii}=1_i\otimes \cC\otimes 1_i$ for a simple $1_i\subseteq 1_\cC$.
Consider $\cM_1:= \cM\otimes 1_1 \in \Mod(\cC_{11})$.
Since $\cC_{11}$ is unitary fusion, every module category is unitarizable by Step 2, so we may assume $\cM_1$ is unitary.
Since $\cM$ is indecomposable, $\cM$ is equivalent as a right $\cC$-module category to the unitary relative Deligne product $\cM_1\boxtimes_{\cC_{11}}\cC$ \cite{MR3975865}, \cite[Prop.~3.12]{MR4598730}, which is again unitary.
Hence $\cM$ is unitarizable.

\item[\underline{Step 4:}]
If $\cC$ is multifusion and $\cM$ is arbitrary, then $\cM$ is a direct sum of indecomposable module categories.
Each of these indecomposable summands is unitarizable, so $\cM$ is too.

\item[\underline{Uniqueness:}]
This follows from \cite[Rem.~4]{MR4538281} which also applies to Theorem 8 therein.
\end{proof}

By the folding trick, we get the following immediate corollary.

\begin{cor}
\label{cor:BimodulesUniquelyUnitarizable}
For unitary multifusion categories $\cC,\cD$, every finite semisimple $\cC$-$\cD$ bimodule category is uniquely unitarizable.
\end{cor}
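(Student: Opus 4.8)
The plan is to reduce the bimodule statement to the one-sided module statement of Proposition~\ref{prop:ModulesUnitarizable} via the folding trick. First I would verify that the Deligne product $\cC\boxtimes\cD^{\rm mp}$ of the two unitary multifusion categories is again a unitary multifusion category: the Deligne product of finite semisimple linear categories is finite semisimple, the tensor product of multifusion categories is multifusion, and both the dagger structure and the unitarity of the coherence isomorphisms are inherited componentwise from $\cC$ and $\cD$. Reversing the tensor product in the ``$\rm mp$'' factor preserves these, since it only transposes the order of tensorands and leaves each dagger unchanged.

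Next I would make precise the folding equivalence: a finite semisimple $\cC$-$\cD$ bimodule category $\cM$ is the same datum as a finite semisimple left $\cC\boxtimes\cD^{\rm mp}$-module category, via $(c\boxtimes d)\rhd m := c\rhd m\lhd d$, with the module associator assembled from the left $\cC$-associator, the right $\cD$-associator, and the bimodule middle-associativity constraint. Crucially, the underlying linear category is unchanged, so finite semisimplicity passes back and forth verbatim.

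Then I would apply Proposition~\ref{prop:ModulesUnitarizable} to the unitary multifusion category $\cC\boxtimes\cD^{\rm mp}$ to conclude that $\cM$, viewed as a $\cC\boxtimes\cD^{\rm mp}$-module category, admits a unique unitarization. It remains to unfold this unitary structure into a unitary $\cC$-$\cD$ bimodule structure. Here the key observation is that $\cC\boxtimes 1$ and $1\boxtimes\cD^{\rm mp}$ are unitary tensor subcategories of $\cC\boxtimes\cD^{\rm mp}$, so restricting the unitary module action along these inclusions yields a unitary left $\cC$-action and a unitary right $\cD$-action whose mutual commutativity constraint is unitary; this is exactly the datum of a unitary $\cC$-$\cD$ bimodule structure. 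Uniqueness transfers directly, since the folding and unfolding operations are mutually inverse and compatible with unitary (bi)module equivalences.

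The main obstacle I anticipate is organizational rather than conceptual: one must check that the unitarization produced on the folded module genuinely respects the separate $\cC$- and $\cD$-actions (that the restricted actions are again unitary module actions and that their commutativity constraint is unitary), and hence that ``unitarizable as a $\cC\boxtimes\cD^{\rm mp}$-module'' is equivalent to ``unitarizable as a $\cC$-$\cD$ bimodule.'' This compatibility is what justifies calling the corollary ``immediate,'' but it is the one point that genuinely uses that the dagger structure on $\cC\boxtimes\cD^{\rm mp}$ splits as the product of the daggers on the two factors.
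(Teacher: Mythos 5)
Your proposal is correct and takes essentially the same route as the paper: the paper obtains this corollary immediately from Proposition \ref{prop:ModulesUnitarizable} via the folding trick $\mathsf{Bim}^{\dag}(\cC,\cD)\cong\Mod^\dag(\cC\boxtimes\cD^{\rm mp})$, which is exactly the reduction you carry out. The compatibility checks you flag (that $\cC\boxtimes\cD^{\rm mp}$ is unitary multifusion, that folding/unfolding preserve unitary structures and equivalences) are precisely the routine verifications the paper leaves implicit in calling the corollary immediate.
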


We also have the following immediate corollary about unitarizability of finite semisimple 2-categories.

\begin{cor}
If a finite semisimple 2-category $\fX$ is unitarizable (is equivalent to the underlying 2-category of a $\rmC^*$ 2-category),
then it is uniquely unitarizable.
That is, if $\fY,\fZ$ are two $\rmC^*$ 2-categories whose underlying 2-categories  are equivalent to $\fX$, then $\fY$ is unitarily equivalent to $\fZ$.
\end{cor}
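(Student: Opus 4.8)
The plan is to reduce the statement to the unique unitarizability of bimodule categories (Corollary~\ref{cor:BimodulesUniquelyUnitarizable}) by passing through $\Mod^\dag$ of the endomorphism category of a generating object. Write $F\colon \fY\to\fZ$ for the (non-unitary) $2$-equivalence obtained by composing $\fY\simeq\fX\simeq\fZ$. First I would decompose $\fY$ and $\fZ$ into connected components. A finite semisimple $2$-category splits as a finite orthogonal direct sum of its connected components, and $F$ matches the components of $\fY$ with those of $\fZ$ up to equivalence, since the number of components and their equivalence classes are $2$-categorical invariants. Because there are no nonzero $1$-morphisms between distinct components, a family of unitary equivalences on matched components assembles blockwise into a global $\dag$-equivalence. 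This reduces us to the case that $\fY$ and $\fZ$ are connected.

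Next, fix a simple object $a\in\fY$ and set $b:=F(a)\in\fZ$. Because $\fY$ and $\fZ$ are $\rmC^*$, the categories $\cC:=\End_\fY(a)$ and $\cD:=\End_\fZ(b)$ are unitary multifusion. Since a connected finite semisimple $\rmC^*$ $2$-category is generated (under condensation) by any one of its objects, the $\HstarAlg$-completion machinery of the previous sections---the $\rmC^*$ analogue of the construction in Corollary~\ref{cor:All3HilbsAreModC}, with the spherical weight forgotten---gives unitary equivalences $\fY\simeq\Mod^\dag(\cC)$ and $\fZ\simeq\Mod^\dag(\cD)$. Restricting $F$ to $\End$ of the generator yields a monoidal equivalence of the underlying multifusion categories $\cC\simeq\cD$; in particular $\cC$ and $\cD$ are Morita equivalent, so there is an invertible $\cC$-$\cD$ bimodule category $\cM$ (for instance $\cD$ regarded as a $\cC$-$\cD$ bimodule by restriction along $\cC\simeq\cD$), together with an inverse $\cD$-$\cC$ bimodule $\cN$ and bimodule equivalences $\cM\boxtimes_\cD\cN\simeq\cC$ and $\cN\boxtimes_\cC\cM\simeq\cD$.

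Now I would invoke Corollary~\ref{cor:BimodulesUniquelyUnitarizable}: the finite semisimple bimodule category $\cM$ admits a unitary structure, unique up to unitary equivalence, and likewise for $\cN$. Invertibility of $\cM$ is a property of its underlying bimodule category (witnessed by the equivalences above), so it is preserved by unitarization; hence $\cM$ becomes a \emph{unitary} invertible bimodule. The induced functor $-\boxtimes_\cC\cM_\cD\colon\Mod^\dag(\cC)\to\Mod^\dag(\cD)$, formed with the unitary relative Deligne product exactly as in the proof of Theorem~\ref{thm:3Hilb=H*mFC}, is then a unitary equivalence. Composing $\fY\simeq\Mod^\dag(\cC)\xrightarrow{-\boxtimes_\cC\cM}\Mod^\dag(\cD)\simeq\fZ$ produces the desired unitary equivalence, settling the reduced connected case and hence the general one.

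The main obstacle I anticipate is the interaction between Step~2 and Step~3: making the identification of a connected finite semisimple $\rmC^*$ $2$-category with $\Mod^\dag(\End(a))$ genuinely unitary, and checking that unique unitarizability of bimodule categories really upgrades the Morita equivalence to a \emph{unitary} Morita equivalence---i.e. that the unitary structures supplied by Corollary~\ref{cor:BimodulesUniquelyUnitarizable} on $\cM$ and $\cN$ are compatible with the unitary $\boxtimes$ so that the invertibility witnesses themselves can be taken unitary. This is precisely where Proposition~\ref{prop:ModulesUnitarizable} and Corollary~\ref{cor:BimodulesUniquelyUnitarizable} carry the essential weight; the component bookkeeping of Step~1 and the Yoneda-type identification of Step~2 should be routine once those are in hand.
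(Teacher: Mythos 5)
Your proposal is essentially sound and shares its skeleton with the paper's proof---both reduce to multifusion-category data over the endomorphism category of a generator and then lean on the unique-unitarizability results---but the two arguments finish differently, and the paper's route is noticeably shorter. The paper first cites \cite[Thm.~1.4.9]{1812.11933} to write the underlying 2-category as $\Mod(\cC)$ for a single (possibly decomposable) multifusion category $\cC$, which also makes your component-by-component reduction unnecessary; it then observes that $\cC$ is unitarizable because $\End(\cC_\cC)\cong\cC$ inherits a $\rmC^*$ structure from $\fY$, cites \cite[Rem.~4]{MR4538281} for \emph{unique} unitarizability of $\cC$, and concludes via the equivalence $\sH^*\mFC\cong 3\Hilb$. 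You instead keep the two unitary multifusion categories $\cC=\End_\fY(a)$ and $\cD=\End_\fZ(b)$ separate, use only the non-unitary monoidal equivalence $\cC\simeq\cD$, unitarize the resulting invertible bimodule by Corollary~\ref{cor:BimodulesUniquelyUnitarizable}, and produce the unitary equivalence explicitly as $-\boxtimes_\cC\cM$. That ending is valid: invertibility does transfer, since Lemma~\ref{lem:ForgetBimoduleFunctorsFullyFaithful} identifies the unitary and algebraic relative Deligne products, and any non-unitary bimodule equivalence $\cM\boxtimes_\cD\cN\simeq\cC$ upgrades to a unitary one by the uniqueness in Corollary~\ref{cor:BimodulesUniquelyUnitarizable}. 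What your route buys is an explicit unitary Morita equivalence without ever needing the two unitary structures to live on literally the same underlying category; what it costs is length, and note it could be short-circuited, since \cite{MR4538281} already upgrades the monoidal equivalence $\cC\simeq\cD$ itself to a unitary one, which is exactly the paper's move. Finally, the obstacle you flag---identifying a bare finite semisimple $\rmC^*$ 2-category unitarily with $\Mod^\dag(\End(a))$ ``with the spherical weight forgotten''---is indeed not discharged in your write-up, but the paper's proof rests on the same point (its closing appeal to $\sH^*\mFC\cong 3\Hilb$, i.e.\ to the mechanism of Corollary~\ref{cor:All3HilbsAreModC}, tacitly equips $\fY$ and $\fZ$ with UAFs and spherical weights), so this is a shared burden rather than a defect peculiar to your argument.
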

\begin{proof}
By \cite[Thm.~1.4.9]{1812.11933}, % page 29
there is a multifusion category $\cC$ such that $\fX$ is equivalent to $\Mod(\cC)$.
Observe that $\Mod(\cC)$ is unitarizable if and only if $\cC$ is unitarizable.
Indeed, if $\Mod(\cC)$ is unitarizable, then so is $\End(\cC_\cC)\cong \cC$.
Conversely, if $\cC$ is unitarizable, then $\Mod^\dag(\cC)$ is equivalent to $\Mod(\cC)$ by Proposition \ref{prop:ModulesUnitarizable}.
By \cite[Rem.~4]{MR4538281}, a unitarizable multifusion category is uniquely unitarizable.
The result now follows from the equivalence $\sH^*\mFC\cong 3\Hilb$.
\end{proof}

Proposition \ref{prop:ModulesUnitarizable} affords a second quick proof of the corollary from \cite{MR4724964} that every separable algebra in a unitary multifusion category is equivalent to a Q-system.
Our proof relies on Sub-Example \ref{ex:H*AlgOnEnd(m)} which used unitary adjunction to prove that the internal end $[m,m]_\cC$ has a canonical $\rmH^*$-algebra structure.
We remark that unitary adjunction is implicitly used in the proof of \cite{MR4724964}, where certain adjunction isomorphims are first shown to be unitary; our approach is manifestly unitary.

\begin{lem}
\label{lem:QSystemIffUnitarizable}
A separable algebra $A$ in a unitary multifusion category $\cC$ is equivalent to a standard Q-system if and only if its category of right modules $\cC_A$ is unitarizable.
\end{lem}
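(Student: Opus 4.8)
The plan is to prove the two implications separately, the reverse direction resting on the reconstruction result Theorem \ref{thm:EveryModuleComesFromH*Alg}, which identifies every unitary module category carrying a trace with $\cC_{A'}$ for the internal-end $\rmH^*$-algebra $A'=[m,m]_\cC$.

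For the forward direction, suppose $A$ is equivalent to a standard Q-system $B$ via an algebra isomorphism $\phi\colon A\to B$. Restriction of scalars along $\phi$ is a $\cC$-module equivalence $\cC_B\xrightarrow{\sim}\cC_A$ (a right $B$-module becomes a right $A$-module through $\phi$, compatibly with the left $\cC$-action). Since $B$ is a standard Q-system, hence in particular an $\rmH^*$-algebra, the category $\cC_B$ of $B$-modules is unitary — this is precisely the $\Mod^\dag$ construction endowing it with a $2$-Hilbert space structure. Transporting this unitary structure along the equivalence exhibits $\cC_A$ as unitarizable. The only point to spell out is that an algebra isomorphism, which need not be unitary, nonetheless induces a $\cC$-linear equivalence of module categories; this is routine.

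For the reverse direction, assume $\cC_A$ is unitarizable, and fix a unitary module category $\cM$ together with a $\cC$-module equivalence $G\colon\cC_A\xrightarrow{\sim}\cM$. By Corollary \ref{cor:ExistsModuleTrace}, $\cM$ admits a faithful unitary $\cC$-module trace $\Tr^\cM$. Let $m:=G(A_A)$ be the image of the free module; since $A_A$ generates $\cC_A$ under the left $\cC$-action (every right $A$-module is a retract of some $c\otimes A$, using separability) and $G$ is an equivalence, $m$ generates $\cM$. By Sub-Example \ref{ex:H*AlgOnEnd(m)} the internal end $[m,m]_\cC$ carries a canonical $\rmH^*$-algebra structure, and Theorem \ref{thm:EveryModuleComesFromH*Alg} identifies $\cM$ isometrically with $\cC_{[m,m]_\cC}$. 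The crucial step is to identify $[m,m]_\cC$ with $A$ \emph{as an algebra}: the internal end is defined purely through the $\cC$-module adjunction $\cM(c\rhd m\to n)\cong\cC(c\to[m,n]_\cC)$, so the module equivalence $G$ yields an algebra isomorphism $[m,m]_\cC\cong[A_A,A_A]_\cC$, and the latter is canonically isomorphic to $A$ by the standard reconstruction $\cC_A(c\otimes A\to A)\cong\cC(c\to A)$ of an algebra from its free module. Finally, by \ref{rem:HStarAlgsQSystems} every $\rmH^*$-algebra, and in particular $[m,m]_\cC$, is (non-unitarily) equivalent to a standard Q-system via the rescaling $x=(\mu\mu^\dag)^{1/2}$. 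Composing the algebra isomorphism $A\cong[m,m]_\cC$ with this equivalence shows $A$ is equivalent to a standard Q-system, as desired.

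I expect the main obstacle to be the algebra-level identification $[m,m]_\cC\cong A$ in the reverse direction. Conceptually this is reconstruction theory — the internal end of the free module recovers the algebra, and internal ends transport along $\cC$-module equivalences — but one must verify that the canonical isomorphism $[A_A,A_A]_\cC\cong A$ and the transport of internal ends along $G$ are compatible as \emph{algebra} maps, not merely as isomorphisms of underlying objects, while keeping track that $G$ is only assumed $\cC$-linear and is not a priori trace-preserving. The unitary/non-unitary bookkeeping is ultimately harmless precisely because the conclusion asserts only a (non-unitary) equivalence of algebras, so no compatibility with daggers or traces is required at this final stage.
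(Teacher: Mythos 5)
Your proof is correct and follows essentially the same route as the paper's: the forward direction is the standard restriction-of-scalars argument behind the paper's citation, and the reverse direction uses the same chain --- Corollary \ref{cor:ExistsModuleTrace} to get a module trace, Sub-Example \ref{ex:H*AlgOnEnd(m)} to equip the internal end with an $\rmH^*$-algebra structure, the canonical algebra isomorphism of the internal end of (the image of) $A_A$ with $A$, and Fact \ref{rem:HStarAlgsQSystems} to pass to a standard Q-system. The only difference is cosmetic: you work in an abstract unitary model $\cM$ and transport along $G$ (also invoking Theorem \ref{thm:EveryModuleComesFromH*Alg}, which is not actually needed), whereas the paper implicitly transports the unitary structure onto $\cC_A$ itself and takes the internal end of $A$ there.
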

\begin{proof}
For a Q-system $A$, $\cC_A$ is well-known to be unitarizable, e.g., see \cite[\S3.2]{MR4419534}.
On the other hand, if $A$ is an algebra in $\cC$ such that $\cC_A$ is unitarizable, there exists a faithful $\cC$-module trace on $\cC_A$ by Corollary \ref{cor:ExistsModuleTrace}.
By Sub-Example \ref{ex:H*AlgOnEnd(m)}, the internal end 
$[A,A]_\cC$ has a canonical $\rmH^*$-algebra structure, and it is well-known to be isomorphic as an algebra to $A$ (perhaps non-unitarily).
Finally, $[A,A]_\cC$ is equivalent to a Q-system by Fact \ref{rem:HStarAlgsQSystems}.
\end{proof}

\begin{cor}
\label{cor:AlgebraModulesUnitarizable}
Every separable algebra is equivalent to a standard Q-system.
\end{cor}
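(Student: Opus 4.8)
The plan is to read this off directly from the two preceding results, Proposition \ref{prop:ModulesUnitarizable} and Lemma \ref{lem:QSystemIffUnitarizable}, which between them carry all the substantive content; the corollary is essentially a repackaging. Fixing a separable algebra $A$ in a unitary multifusion category $\cC$, the first step is to recall that separability forces the category $\cC_A$ of right $A$-modules to be finite semisimple. Indeed, the separability idempotent splits the multiplication as a bimodule map, so every $A$-module is a retract of a free module and $\cC_A$ is semisimple, with finiteness inherited from $\cC$. Crucially, $\cC_A$ is a left $\cC$-module category via the ambient tensor product, so it is exactly the kind of object to which Proposition \ref{prop:ModulesUnitarizable} applies.

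The second step is to invoke Proposition \ref{prop:ModulesUnitarizable}, which asserts that every finite semisimple module category over a unitary multifusion category is (uniquely) unitarizable. Applying this to $\cM = \cC_A$ shows immediately that $\cC_A$ is unitarizable. The final step is to feed this into the backward direction of Lemma \ref{lem:QSystemIffUnitarizable}, which states precisely that a separable algebra $A$ is equivalent to a standard Q-system if and only if $\cC_A$ is unitarizable. Since unitarizability of $\cC_A$ has just been established, we conclude that $A$ is equivalent to a standard Q-system, which is the claim.

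I do not expect a genuine obstacle here, since the difficulty has been front-loaded into the cited results, where Proposition \ref{prop:ModulesUnitarizable} rests on the theorem of \cite{MR4616673} that connected separable algebras in a unitary fusion category are equivalent to Q-systems, together with the unitary relative Deligne product reduction of \cite{MR4598730,MR3975865}. The only point worth stating explicitly in the write-up is the reduction from algebras to their module categories via Lemma \ref{lem:QSystemIffUnitarizable}; everything else is bookkeeping. As emphasized in the surrounding text, the conceptual value of this route (as opposed to the argument of \cite{MR4724964}) is that the passage from $\cC_A$ back to $A$ is manifestly unitary, using the canonical $\rmH^*$-algebra structure on the internal end $[A,A]_\cC$ from Sub-Example \ref{ex:H*AlgOnEnd(m)} rather than an a posteriori unitarization of adjunction isomorphisms.

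\begin{proof}
Let $A$ be a separable algebra in a unitary multifusion category $\cC$. By separability, $\cC_A$ is a finite semisimple left $\cC$-module category, so by Proposition \ref{prop:ModulesUnitarizable} it is unitarizable. Lemma \ref{lem:QSystemIffUnitarizable} then shows that $A$ is equivalent to a standard Q-system.
\end{proof}
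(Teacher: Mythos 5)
Your proof is correct and takes exactly the same route as the paper, whose entire proof reads ``Immediate from Proposition \ref{prop:ModulesUnitarizable} and Lemma \ref{lem:QSystemIffUnitarizable}.'' The only addition you make is to spell out that separability forces $\cC_A$ to be finite semisimple so that Proposition \ref{prop:ModulesUnitarizable} applies, a point the paper leaves implicit.
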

\begin{proof}
Immediate from Proposition \ref{prop:ModulesUnitarizable} and Lemma \ref{lem:QSystemIffUnitarizable}.
\end{proof}

\begin{lem}
\label{lem:ForgetBimoduleFunctorsFullyFaithful}
Given any two finite semisimple unitary $\cC$-$\cD$ bimodule categories $\cM,\cN$, the forgetful functor
$\Fun_{\cC-\cD}^{\dag}(\cM\to \cN) \to \Fun_{\cC-\cD}(\cM\to \cN)$ is an equivalence.
\end{lem}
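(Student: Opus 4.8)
The plan is to observe that the forgetful functor is automatically fully faithful, and then to prove it is essentially surjective by unitarizing bimodule functors. For full faithfulness, note that the $2$-morphisms in both $\Fun_{\cC-\cD}^{\dag}(\cM\to \cN)$ and $\Fun_{\cC-\cD}(\cM\to \cN)$ are $\cC$-$\cD$ bimodule natural transformations, and the datum of a bimodule natural transformation between two functors makes no reference to any dagger structure on those functors. Hence the forgetful functor is the identity on each hom-set and is fully faithful. It therefore remains to prove essential surjectivity: every $\cC$-$\cD$ bimodule functor $F\colon \cM\to\cN$ admits a $\cC$-$\cD$ bimodule natural isomorphism to a $\dag$-bimodule functor.

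To set this up, I would introduce the \emph{conjugate functor} $F^\star$, which agrees with $F$ on objects and is defined on morphisms by $F^\star(f):=F(f^\dag)^\dag$. Since $\cC,\cD$ are unitary and $\cM,\cN$ are unitary bimodule categories, the action functors $c\rhd(-)\lhd d$ are $\dag$-functors and the module constraints are unitary, so a routine check shows $F^\star$ is again a $\cC$-$\cD$ bimodule functor, and $F$ is a $\dag$-functor precisely when $F=F^\star$. The key reduction is that a \emph{positive invertible bimodule natural transformation} $z\colon F\Rightarrow F^\star$ yields a $\dag$-bimodule functor $\tilde F$ with $\tilde F(f):=z_{n}^{1/2}\,F(f)\,z_{m}^{-1/2}$ for $f\colon m\to n$, together with the bimodule natural isomorphism $z^{1/2}\colon F\Rightarrow\tilde F$; indeed, the naturality square for $z$, namely $z_n F(f)=F(f^\dag)^\dag z_m$, is exactly the identity that forces $\tilde F(f^\dag)=\tilde F(f)^\dag$.

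To construct $z$, I would choose a generator $X:=\bigboxplus_{m\in\Irr(\cM)}m$, so that $E:=\End_\cM(X)$ is a finite dimensional $\rmC^*$-algebra and $F$ induces a unital algebra homomorphism $\phi\colon E\to\End_\cN(F(X))$, which in general fails to be a $*$-homomorphism. Averaging over the compact unitary group $U(E)$ produces the canonical positive invertible element
\[
p:=\int_{U(E)}\phi(u)^\dag\,\phi(u)\,du\in\End_\cN(F(X)),
\]
which satisfies $p\,\phi(x)\,p^{-1}=\phi(x^\dag)^\dag=\phi^\star(x)$ for all $x\in E$. Transporting $p$ along the inclusions of the simple summands of $X$ defines the components $z_m$ of a positive invertible natural transformation $z\colon F\Rightarrow F^\star$, with naturality on all of $\cM$ following from semisimplicity since every morphism of $\cM$ factors through a power of $X$.

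The main obstacle is upgrading $z$ to a \emph{bimodule} natural transformation. Here I would exploit the canonicity of the Haar average: $p$ is built purely from $F$ and the $\rmC^*$-structure, both of which the $\cC$- and $\cD$-actions respect, so $p$ is forced to intertwine the bimodule structure isomorphisms of $F$ and $F^\star$. Concretely, I would run the argument after the folding trick, regarding $\cM,\cN$ as left modules over the unitary multifusion category $\cE:=\cC\boxtimes\cD^{\mathrm{mp}}$ and $F$ as an $\cE$-module functor; choosing the generator $\cE$-equivariantly --- for instance via the realization $\cM\cong\cE_A$ with $A=[m,m]$ from Theorem~\ref{thm:EveryModuleComesFromH*Alg} --- makes $\phi$, and hence its canonical conjugator $p$, module maps, so that $z$ is an $\cE$-module, i.e.\ a $\cC$-$\cD$ bimodule, natural isomorphism. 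Conjugating $F$ by $z^{1/2}$ then yields the desired $\dag$-bimodule functor naturally isomorphic to $F$, establishing essential surjectivity and completing the proof.
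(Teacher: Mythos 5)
Your reduction to essential surjectivity is correct: in both categories the morphisms are bimodule natural transformations, so the forgetful functor is the inclusion of a full subcategory. Your key reduction is also sound, and in fact gives slightly more than you claim: if $z\colon F\Rightarrow F^\star$ is a positive invertible \emph{bimodule} natural transformation, then bimodule naturality reads $z_{m\lhd c}=\omega_{m,c}^{\dag}\,(z_m\lhd\id_c)\,\omega_{m,c}$, where $\omega$ is the modulator of $F$ and $F^\star$ carries $(\omega^\dag)^{-1}$, and a one-line computation shows that the transported modulator $\tilde\omega_{m,c}=(z_m^{1/2}\lhd\id_c)\,\omega_{m,c}\,z_{m\lhd c}^{-1/2}$ of $\tilde F$ satisfies $\tilde\omega^\dag\tilde\omega=\id$. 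So $\tilde F$ is a genuinely \emph{unitary} bimodule functor (dagger functor with unitary modulator), which is what membership in $\Fun_{\cC-\cD}^{\dag}(\cM\to\cN)$ requires; you never checked this, but it does follow.

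The genuine gap is the construction of $z$. The Haar average $p=\int_{U(E)}\phi(u)^\dag\phi(u)\,du$ is built only from the algebra $E=\End_\cM(X)$, i.e.\ from the morphisms of $\cM$; it contains no information about the modulator $\omega$ of $F$. The resulting $z$ is therefore natural for the underlying \emph{linear} functors, but the bimodule condition $z_{m\lhd c}=\omega_{m,c}^\dag(z_m\lhd\id_c)\,\omega_{m,c}$ --- which pins down $z$ on $m\lhd c$ by conjugation with the possibly non-unitary $\omega$ --- has no reason to hold, and this condition is precisely where the content of the lemma lives: the failure of $F$ to be unitary is concentrated in how it interacts with the actions, which the average never sees, so ``canonicity'' cannot force the compatibility. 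Your fallback of choosing the generator ``$\cE$-equivariantly via $\cM\cong\cE_A$'' is circular as stated: Theorem \ref{thm:EveryModuleComesFromH*Alg} gives an equivalence of \emph{unitary} module categories, and asking that $\phi$ ``be a module map'' presupposes exactly the compatibility of $F$ with that unitary structure which you are trying to produce. Note that the paper flags this very obstruction --- the averaging-style proof of \cite[Prop.~2.11]{MR4538281} for module \emph{equivalences} does not obviously generalize to general module functors --- and instead argues completely differently: after folding, it identifies $\Fun^\dag_\cC(\cM\to\cN)$ and $\Fun_\cC(\cM\to\cN)$ with the unitary and algebraic relative Deligne products $\cM\boxtimes^\dag_\cC\cN^{\op}$ and $\cM\boxtimes_\cC\cN^{\op}$, and compares these via the ladder category model of \cite{MR3975865}, where the forgetful functor is an equivalence essentially because the Cauchy completion and the unitary Cauchy completion of a pre-semisimple unitary category agree. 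To salvage your approach you would need a second averaging, over the $\cC$- and $\cD$-actions (using separability of the relevant internal algebras), to project $z$ into the subspace of bimodule natural transformations while preserving positivity and invertibility; that is genuine additional work, roughly equivalent to the internal-module comparison that the Deligne product argument packages for you.
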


\begin{proof}
By the folding trick, it suffices to prove the forgetful functor
$\Fun_{\cC}^{\dag}(\cM_\cC\to \cN_\cC) \to \Fun_{\cC}(\cM_\cC\to \cN_\cC)$
is an equivalence for right $\cC$-modules.
If we only cared about $\cC$-module equivalences, this would follow directly from \cite[Prop.~2.11]{MR4538281}, but the proof there does not obviously generalize to general $\cC$-module functors.
Instead, we note that the category of (unitary) $\cC$-module functors is a model for the (unitary) relative Deligne product \cite[\S3]{MR2677836}, and we have a commuting square
$$
\begin{tikzcd}
\cM\boxtimes^\dag_\cC \cN^{\rm op}
\arrow[d, "\simeq"']
\arrow[r,"\Forget"']
&
\cM\boxtimes_\cC \cN^{\rm op}
\arrow[d, "\simeq"]
\\
\Fun_{\cC}^\dag(\cM_\cC\to \cN_\cC)
\arrow[r,"\Forget"]
&
\Fun_{\cC}(\cM_\cC\to \cN_\cC)
\end{tikzcd}
$$
where the top left corner denotes the unitary relative Deligne product.
One can show that the top forgetful functor is an equivalence using the ladder category model of the relative Deligne product \cite{MR3975865} (see also \cite[\S3]{2307.13822}), and thus the bottom forgetful functor is also an equivalence.
\end{proof}

\begin{cor}
\label{cor:ForgetH*mFC->mFC is FullyFaithful}
The forgetful functor $\sH^*\mFC\to \mFC$ is fully faithful.
\end{cor}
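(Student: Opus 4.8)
The plan is to unwind what fully faithfulness asks of the 3-functor $\Forget\colon \sH^*\mFC\to\mFC$: for every pair of $\rmH^*$-multifusion categories $\cC,\cD$, the induced $\dag$-2-functor on hom-2-categories
$$
\sH^*\mFC(\cC\to\cD)\longrightarrow \mFC(\cC\to\cD)
$$
must be an equivalence of the underlying 2-categories. The source has objects unitary $\cC$-$\cD$ bimodule categories with bimodule traces, $1$-morphisms unitary bimodule functors, and $2$-morphisms bimodule natural transformations; the target has arbitrary finite semisimple bimodule categories, bimodule functors, and bimodule natural transformations; and $\Forget$ merely discards the unitary structure, the traces, and the dagger condition on functors. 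Since a $2$-functor is an equivalence exactly when it is essentially surjective on objects and induces an equivalence on every hom-category, I would reduce the corollary to verifying these two conditions.

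First I would handle essential surjectivity. By Remark \ref{rem:ForgetTraces}, equivalence of $1$-morphisms in $\sH^*\mFC$ is just unitary equivalence of the underlying bimodule categories, so the tracial data is invisible to the essential image; it therefore suffices that every finite semisimple $\cC$-$\cD$ bimodule category be unitarizable and carry \emph{some} bimodule trace. Unitarizability is Corollary \ref{cor:BimodulesUniquelyUnitarizable}, and the existence of a bimodule trace follows from Corollary \ref{cor:ExistsModuleTrace} through the folding trick $\mathsf{Bim}(\cC,\cD)\cong\Mod(\cC\boxtimes\cD^{\rm mp})$. Together these place every object of $\mFC(\cC\to\cD)$ in the essential image of $\Forget$.

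Next I would establish the local equivalence, which is precisely Lemma \ref{lem:ForgetBimoduleFunctorsFullyFaithful}: for finite semisimple unitary bimodule categories $\cM,\cN$ the forgetful functor
$$
\Fun^{\dag}_{\cC-\cD}(\cM\to\cN)\longrightarrow \Fun_{\cC-\cD}(\cM\to\cN)
$$
is an equivalence of categories, the morphism-level data (bimodule natural transformations) agreeing on the two sides. Feeding essential surjectivity and this local equivalence into the abstract criterion shows the hom-$2$-functor is an equivalence for each $\cC,\cD$, which is exactly fully faithfulness of $\Forget$.

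The genuine mathematical content is front-loaded into the cited results, so the present corollary is mostly an assembly, and the hard part will be only the bookkeeping that aligns the three inputs with the definitions of the hom-$2$-categories. The single subtle point is that Lemma \ref{lem:ForgetBimoduleFunctorsFullyFaithful} must cover \emph{all} bimodule functors rather than just equivalences, which is why its proof passes through the ladder-category model of the relative Deligne product instead of the module-functor description; that genericity is the substantive input, and everything else is formal.
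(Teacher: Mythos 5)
Your proposal is correct and takes essentially the same route as the paper's proof: the local equivalence on hom-categories is Lemma \ref{lem:ForgetBimoduleFunctorsFullyFaithful}, and essential surjectivity on hom-2-categories comes from Corollary \ref{cor:BimodulesUniquelyUnitarizable}. Your explicit appeal to Corollary \ref{cor:ExistsModuleTrace} (via the folding trick) to equip the unitarization with a bimodule trace fills in a detail the paper leaves implicit, but the argument is otherwise identical.
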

\begin{proof}
Suppose $\cC,\cD$ are $\rmH^*$-multifusion categories.
The 2-functor $\sH^*\mFC(\cC\to \cD) \to \mFC(\cC\to \cD)$ is fully faithful by Lemma \ref{lem:ForgetBimoduleFunctorsFullyFaithful}
and essentially surjective by Corollary \ref{cor:BimodulesUniquelyUnitarizable}, which states that 
every finite semisimple $\cC$-$\cD$ bimodule category is uniquely unitarizable.
\end{proof}

\begin{rem}
The reader may be worried here that the existence of distinct $\cC$-$\cD$ bimodule traces on a $\cC$-$\cD$ bimodule category may be a counter-example to the forgetful functor $\sH^*\mFC\to \mFC$ being fully faithful. 
However, equivalence of 1-morphisms is determined by existence of a 2-isomorphism, and the 2-morphisms in $\sH^*\mFC$ are not required to be compatible with bimodule traces.
\end{rem}

\begin{proof}[Proof of Corollary \ref{coralpha:ForgetFF}]
By commutativity of the diagram
$$
\begin{tikzcd}
\sH^*\mFC
\arrow[r,"\Forget"']
\arrow[d, "\simeq"']
&
\mFC
\arrow[d, "\simeq"]
\\
3\Hilb
\arrow[r,"\Forget"]
&
3\Vect
\end{tikzcd}
$$
it suffices to prove that the top arrow is fully faithful.
This is exactly the content of Corollary \ref{cor:ForgetH*mFC->mFC is FullyFaithful}.
\end{proof}

%%%%%%%%%%%%%%%%%%%%%%%%%%%%%%%%%%%%%%%%%%%%%
\subsection{Is \texorpdfstring{$3\Hilb$}{3Hilb} enriched in 3-Hilbert spaces?}
\label{sec:OpenQuestions}

We end this section with an question on 3-Hilbert spaces that we will consider in the future.
In light of Example \ref{ex:FunCatsArePre2Hilbs} which gives an organic 2-Hilbert space structure on dagger functors between 2-Hilbert spaces, we ask:

\begin{quest}
Suppose $\fX$ is a pre-3-Hilbert space and $\fY$ is a 3-Hilbert space.
Is there an organic 3-Hilbert space structure on $\Fun^{\dag,\vee}(\fX\to \fY)$?
\end{quest}

Here are several desiderata for this putative 3-Hilbert space structure on $\Fun^{\dag,\vee}(\fX\to \fY)$.

\begin{itemize}
\item 
By the universal properties in Propositions \ref{prop:HilbertDirectSumUniversal} and \ref{prop:UniversalPropertyOfHstarAlgCompletion}, we have a unitary equivalence
$$
\Fun^\dag(\fX\to \fY)
\cong
\Fun^\dag(\HstarAlg(\Hilb_\boxplus(\fX))\to \fY),
$$
which preserves UAF-preserving functors and isometries,
so we may consider the case that $\fX$ is a 3-Hilbert space.
Given $\rmH^*$-multifusion categories $\cC,\cD$ and isometric equivalences
$\fX\cong \Mod^\dag(\cC)$ and $\fY\cong \Mod^\dag(\cD)$,
the unitary 2-equivalence 
$$
\Fun^{\dag,\vee}(\fX\to \fY)
\cong
\Fun^{\dag,\vee}(\Mod^\dag(\cC)\to \Mod^\dag(\cD))
\underset{\text{(\S\ref{sec:3Hilb==H*mFC})}}{\cong}
\mathsf{Bim}^\dag(\cC,\cD)
\cong
\Mod^\dag(\cC\boxtimes \cD^{\rm mp})
$$
should be an isometric equivalence where the latter category is equipped with the renormalized 3-Hilbert space structure from Example \ref{ex:WeightOnModC}.
\item 
Given a pre-3-Hilbert space $\fX$,
the Yoneda embedding $\fX\hookrightarrow \Fun^{\dag,\vee}(\fX^{\op}\to 2\Hilb)$ 
should be isometric.
Moreover, the Yoneda embedding should be an isometric equivalence if and only if $\fX$ is a 3-Hilbert space.
\item 
For a pre-3-Hilbert space $\fX$ and a 3-Hilbert space $\fY$, 
precomposition with $\fX\hookrightarrow \Hilb_\boxplus(\fX)$ should be an isometric equivalence
$$
\Fun^{\dag,\vee}(\fX\to \fY)
\cong
\Fun^{\dag,\vee}(\Hilb_\boxplus(\fX)\to \fY),
$$
and
precomposition with $\fX\hookrightarrow \HstarAlg(\fX)$ should be an isometric equivalence
$$
\Fun^{\dag,\vee}(\fX\to \fY)
\cong
\Fun^{\dag,\vee}(\HstarAlg(\fX)\to \fY).
$$
\end{itemize}

%%%%%%%%%%%%%%%%%%%%%%%%%%%%%%%%%%%%%%%%%%%%%%%%%%%%%%%%%%%%%%%%%%%%%%%%%%%%%%%%%%
%%%%%%%%%%%%%%%%%%%%%%%%%%%%%%%%%%%%%%%%%%%%%%%%%%%%%%%%%%%%%%%%%%%%%%%%%%%%%%%%%%
%%%%%%%%%%%%%%%%%%%%%%%%%%%%%%%%%%%%%%%%%%%%%%%%%%%%%%%%%%%%%%%%%%%%%%%%%%%%%%%%%%
\section{\texorpdfstring{$\rmH^*$}{H*}-multifusion categories are the \texorpdfstring{$\rmH^*$}{H*}-algebras in \texorpdfstring{$\rmB2\Hilb$}{B2Hilb}}

In this section, we prove Theorem \ref{thmalpha:H*AlgebrasIn2Hilb} which asserts a unitary equivalence of $\rmC^*$ 3-categories $\sH^*\mFC\cong 3\Hilb$.

%%%%%%%%%%%%%%%%%%%%%%%%%%%%%%%%%%%%%%%%%%%%%%%%%%%%%%%%%%%%%%%%%%%%%%%%%%%%%%%%%%
\subsection{\texorpdfstring{$E_1$}{E1}-algebras in \texorpdfstring{$\rmB2\Vect$}{B2Vect}}
\label{sec:E1algIn2Vect}

In this section, we assume $2\Vect$ is a $\Gray$-monoid by \cite{MR3076451}, and we work with the graphical calculus for $\Gray$-monoids from \cite{1409.2148}.

\begin{defn}
An \emph{algebra} $(A,\mu,\alpha)$ in a $\Gray$-monoid $\fC$\footnote{One can use the results of \cite{1903.05777} to apply graphical calculus to any weak monoidal 2-category.} consists of:
\begin{itemize}
\item
An object $A \in \fC$,
\item
A \emph{monoidal product} 1-morphism $\mu : A\xz A \to A$ denoted graphically by a trivalent vertex
$$
\mu =
\tikzmath{
\draw (-.3,-.3) arc (180:0:.3cm);
\draw (0,0) -- (0,.3);
\filldraw (0,0) circle (.05cm);
}
$$
\item
An \emph{associator} 2-isomorphism $\alpha: \mu \xo (\mu \xz \id_M) \Rightarrow \mu \xo (\id_M \xz \mu)$ such that the following diagram commutes:
\begin{equation}
\label{eq:2AlgebraAssociator}
\begin{tikzpicture}[baseline= (a).base]\node[scale=1] (a) at (0,0){\begin{tikzcd}
\tikzmath{
\draw (-.3,-.3) arc (180:0:.3cm);
\draw (0,0) arc (180:0:.3cm) -- (.6,-.3);
\draw (.3,.3) arc (180:0:.3cm) -- (.9,-.3);
\draw (.6,.6) -- (.6,.9);
\filldraw (0,0) circle (.05cm);
\filldraw (.3,.3) circle (.05cm);
\filldraw (.6,.6) circle (.05cm);
\draw[dashed, red, rounded corners = 5pt] (-.4,-.2) rectangle (.7,.45);
\draw[dashed, blue, rounded corners = 5pt] (-.2,.15) rectangle (1,.8);
}
\arrow[r, red, Rightarrow, "\alpha"]
\arrow[d, blue, Rightarrow, "\alpha"]
&
\tikzmath{
\draw (0,-.3) arc (180:0:.3cm);
\draw (-.3,-.3) -- (-.3,0) arc (180:0:.3cm);
\draw (0,.3) arc (180:0:.45cm) -- (.9,-.3);
\draw (.45,.75) -- (.45,1.05);
\filldraw (.3,0) circle (.05cm);
\filldraw (0,.3) circle (.05cm);
\filldraw (.45,.75) circle (.05cm);
\draw[dashed, rounded corners = 5pt] (-.4,.15) rectangle (1,.95);
}
\arrow[r,Rightarrow,"\alpha"]
&
\tikzmath[xscale=-1]{
\draw (0,-.3) arc (180:0:.3cm);
\draw (-.3,-.3) -- (-.3,0) arc (180:0:.3cm);
\draw (0,.3) arc (180:0:.45cm) -- (.9,-.3);
\draw (.45,.75) -- (.45,1.05);
\filldraw (.3,0) circle (.05cm);
\filldraw (0,.3) circle (.05cm);
\filldraw (.45,.75) circle (.05cm);
\draw[dashed, rounded corners = 5pt] (-.4,-.2) rectangle (.7,.45);
}
\arrow[d,Rightarrow,"\alpha"]
\\
\tikzmath{
\draw (-.3,-.3) arc (180:0:.3cm);
\draw (0,0) -- (0,.3) arc (180:0:.45cm);
\draw (.6,-.3) -- (.6,0) arc (180:0:.3cm) -- (1.2,-.3);
\draw (.45,.75) -- (.45,1.05);
\filldraw (0,0) circle (.05cm);
\filldraw (.9,.3) circle (.05cm);
\filldraw (.45,.75) circle (.05cm);
\draw[dashed, rounded corners = 5pt] (-.4,-.2) rectangle (1.3,.5);
}
\arrow[r,Rightarrow,"\phi^{-1}"]
&
\tikzmath[xscale=-1]{
\draw (-.3,-.3) arc (180:0:.3cm);
\draw (0,0) -- (0,.3) arc (180:0:.45cm);
\draw (.6,-.3) -- (.6,0) arc (180:0:.3cm) -- (1.2,-.3);
\draw (.45,.75) -- (.45,1.05);
\filldraw (0,0) circle (.05cm);
\filldraw (.9,.3) circle (.05cm);
\filldraw (.45,.75) circle (.05cm);
\draw[dashed, rounded corners = 5pt] (-.1,.15) rectangle (1.3,.95);
}
\arrow[r,Rightarrow,"\alpha"]
&
\tikzmath[xscale=-1]{
\draw (-.3,-.3) arc (180:0:.3cm);
\draw (0,0) arc (180:0:.3cm) -- (.6,-.3);
\draw (.3,.3) arc (180:0:.3cm) -- (.9,-.3);
\draw (.6,.6) -- (.6,.9);
\filldraw (0,0) circle (.05cm);
\filldraw (.3,.3) circle (.05cm);
\filldraw (.6,.6) circle (.05cm);
}
\end{tikzcd}};\end{tikzpicture}
\end{equation}
\end{itemize}
We call an algebra \emph{unital} if if there exist
\begin{itemize}
\item
A \emph{unit} 1-morphism $\iota: \mathbf{1}_\fC \to A$ denoted graphically by a univalent vertex
$$
\iota =
\tikzmath{
\draw (0,0) -- (0,.3);
\filldraw (0,0) circle (.05cm);
}
$$
\item
and \emph{unitor} 2-morphisms
$\lambda: \mu \xo (\iota \xz \id_A) \Rightarrow \id_A$ and $\rho: \mu\xo (\id_A \xz \iota)\Rightarrow \id_A$
such that the following diagram commutes:
\begin{equation}
\label{eq:2AlgebraUnit}
\begin{tikzpicture}[baseline= (a).base]\node[scale=1] (a) at (0,0){\begin{tikzcd}
\tikzmath{
\draw (-.3,-.6) -- (-.3,-.3) arc (180:0:.3cm);
\draw (0,0) arc (180:0:.3cm) -- (.6,-.6);
\draw (.3,.3) -- (.3,.6);
\filldraw (0,0) circle (.05cm);
\filldraw (.3,-.3) circle (.05cm);
\filldraw (.3,.3) circle (.05cm);
\draw[dashed, red, rounded corners = 5pt] (-.4,-.15) rectangle (.7,.45);
\draw[dashed, blue, rounded corners = 5pt] (-.4,-.5) rectangle (.45,.15);
}
\arrow[rr, red, Rightarrow,"\alpha"]
\arrow[dr, blue,Rightarrow, "\rho"']
&&
\tikzmath[xscale=-1]{
\draw (-.3,-.6) -- (-.3,-.3) arc (180:0:.3cm);
\draw (0,0) arc (180:0:.3cm) -- (.6,-.6);
\draw (.3,.3) -- (.3,.6);
\filldraw (0,0) circle (.05cm);
\filldraw (.3,-.3) circle (.05cm);
\filldraw (.3,.3) circle (.05cm);
\draw[dashed, rounded corners = 5pt] (-.4,-.5) rectangle (.45,.15);
}
\arrow[dl,Rightarrow,"\lambda"]
\\
&
\tikzmath{
\draw (-.3,-.3) arc (180:0:.3cm);
\draw (0,0) -- (0,.3);
\filldraw (0,0) circle (.05cm);
}
\end{tikzcd}};\end{tikzpicture}
\end{equation}
\end{itemize}
Observe that unitality of an algebra $A$ is a property and not additional structure; the space of choices of units and unitors is either empty or contractible.

We call an algebra \emph{rigid} if it is unital and $\mu$ admits a right adjoint $\mu^R: A \to A\xz A$ as an $A$-$A$ bimodule map.
We call the 
unit of this adjunction $\eta_\mu: \id_A \Rightarrow \mu^R\xo \mu$ and 
we call the counit of this adjuntion $\varepsilon_\mu: \mu\xo \mu^R \Rightarrow \id_A$.
The $A$-$A$ bimodularity means we have \emph{Frobeniator} natural isomorphisms 
\begin{equation}
\label{eq:Frobeniators}
\tikzmath{
\draw (-.4,.5) -- (-.4,0) arc (-180:0:.2cm) arc (180:0:.2cm) -- (.4,-.5);
\draw (-.2,-.2) -- (-.2,-.5);
\draw (.2,.2) -- (.2,.5);
\filldraw (-.2,-.2) circle (.05cm);
\filldraw (.2,.2) circle (.05cm);
}
\overset{\theta}{\Longrightarrow}
\tikzmath{
\draw (-.3,-.5) arc (180:0:.3cm);
\draw (-.3,.5) arc (-180:0:.3cm);
\draw (0,-.2) -- (0,.2);
\filldraw (0,-.2) circle (.05cm);
\filldraw (0,.2) circle (.05cm);
}
\qquad\text{and}\qquad
\tikzmath[xscale=-1]{
\draw (-.4,.5) -- (-.4,0) arc (-180:0:.2cm) arc (180:0:.2cm) -- (.4,-.5);
\draw (-.2,-.2) -- (-.2,-.5);
\draw (.2,.2) -- (.2,.5);
\filldraw (-.2,-.2) circle (.05cm);
\filldraw (.2,.2) circle (.05cm);
}
\overset{\kappa}{\Longrightarrow}
\tikzmath{
\draw (-.3,-.5) arc (180:0:.3cm);
\draw (-.3,.5) arc (-180:0:.3cm);
\draw (0,-.2) -- (0,.2);
\filldraw (0,-.2) circle (.05cm);
\filldraw (0,.2) circle (.05cm);
}\,,
\end{equation}
which endow $\mu^R$ with the structure of an $A$-$A$ bimodule functor such that $\eta_\mu,\varepsilon_\mu$ witness an adjunction of $A$-$A$ bimodule functors.
Observe that rigidity of an algebra is also a property and not additional structure; the space of choices of $\mu^R$ is contractible.

A rigid algebra is called \emph{separable} if for any choice of $\mu^R$, the counit $\varepsilon_\mu$ admits a splitting as an $A$-$A$ bimodule natural transformation, i.e., there exists $\delta : \id_A \Rightarrow \mu \xo \mu^R$ such that $\varepsilon \circ \delta=\id_{\id_A}$.
Observe that separability is \emph{independent} of the choice of $\mu^R$. 
\end{defn}

\begin{rem}
\label{rem:FrobeniusatorOverdetermined}
It may appear that choices of Frobeniators $\theta,\kappa$ for $\mu^R$ as in \eqref{eq:Frobeniators}  are additional structure.
It was pointed out to us by David Reutter that the Frobeniator $\kappa$ is determined by $\alpha$ and the adjunction data by the following commuting diagram:
$$
\begin{tikzpicture}[baseline= (a).base]\node[scale=1] (a) at (0,0){\begin{tikzcd}
\tikzmath{
\draw (.4,1.1) -- (.4,0) arc (0:-180:.2cm) arc (0:180:.2cm) -- (-.4,-.6);
\draw (.2,-.2) -- (.2,-.6);
\draw (-.2,.2) -- (-.2,1.1);
\filldraw (.2,-.2) circle (.05cm);
\filldraw (-.2,.2) circle (.05cm);
\draw[dashed, blue, rounded corners=5pt] (.6,.45) rectangle (-.4,.9);
\draw[dashed, red, rounded corners=5pt] (.6,-.4) rectangle (-.6,.35);
}
\arrow[rr, red, Rightarrow, "\kappa"]
\arrow[dd, blue, Rightarrow, "\eta"]
&&
\tikzmath{
\draw (-.3,.2) arc (180:0:.3cm);
\draw (-.3,2) -- (-.3,1.1) arc (-180:0:.3cm) -- (.3,2);
\draw (0,.5) -- (0,.8);
\filldraw (0,.5) circle (.05cm);
\filldraw (0,.8) circle (.05cm);
\draw[dashed, blue, rounded corners=5pt] (-.6,1.2) rectangle (.6,1.8);
}
\arrow[dl, blue, bend right=30, Rightarrow, "\eta"]
\\
&
\tikzmath{
\draw (-.3,-.9) arc (180:0:.3cm);
\draw (-.3,.9) arc (-180:0:.3cm);
\draw (0,0) circle (.3cm);
\draw (0,.3) -- (0,.6);
\draw (0,-.3) -- (0,-.6);
\filldraw (0,-.3) circle (.05cm);
\filldraw (0,-.6) circle (.05cm);
\filldraw (0,.3) circle (.05cm);
\filldraw (0,.6) circle (.05cm);
\draw[dashed, rounded corners=5pt] (-.6,-.45) rectangle (.6,.45);
}
\arrow[ur, bend right=30, Rightarrow, "\varepsilon"]
\arrow[dr, phantom, "\text{\tiny $\kappa$ an $\cA-\cA$ bimod nat iso}\qquad\qquad"]
\arrow[ur, phantom, "\text{\tiny zig-zag}"]
\\
\tikzmath{
\draw (.4,0) arc (0:-180:.2cm) arc (0:180:.2cm) -- (-.4,-.6);
\draw (.2,-.2) -- (.2,-.6);
\draw (-.2,.2) arc (180:0:.3cm) -- (.4,0);
\draw (.4,1.1) arc (0:-180:.3cm);
\draw (.1,.5) -- (.1,.8);
\filldraw (.2,-.2) circle (.05cm);
\filldraw (-.2,.2) circle (.05cm);
\filldraw (.1,.5) circle (.05cm);
\filldraw (.1,.8) circle (.05cm);
\draw[dashed, blue, rounded corners=5pt] (.6,0) rectangle (-.6,.65);
\draw[dashed, red, rounded corners=5pt] (.6,-.4) rectangle (-.6,.35);
}
\arrow[ur, red, Rightarrow, "\kappa"]
\arrow[rr, blue, Rightarrow, "\alpha"]
&&
\tikzmath{
\draw (.3,0) circle (.2cm);
\draw (.3,-.2) -- (.3,-.6);
\draw (.3,.2) arc (0:180:.3cm) -- (-.3,-.6);
\draw (-.3,1.1) arc (-180:0:.3cm);
\draw (0,.5) -- (0,.8);
\filldraw (.3,-.2) circle (.05cm);
\filldraw (.3,.2) circle (.05cm);
\filldraw (0,.5) circle (.05cm);
\filldraw (0,.8) circle (.05cm);
\draw[dashed, rounded corners=5pt] (.6,-.4) rectangle (0,.4);
}
\arrow[uu, Rightarrow, "\varepsilon"]
\end{tikzcd}};\end{tikzpicture}
$$
That the lower right triangle commutes is best seen by inverting the bottom associator arrow.
A similar statement holds for $\theta$.
\end{rem}

\begin{ex}
\label{ex:RigidAlgebrasIn2Vect}
The rigid algebras in $\rmB2\Vect$ are exactly the multifusion categories \cite[\S II]{MR4444089}.
We provide a complete proof below summarized in Corollary \ref{cor:Separable2AlgebrasIn2Vec}.
Construction \ref{const:MultiFusCatFromRigid2Alg} constructs a monoidal product on $|\cA|:=\Hom(\Vect\to \cA)$ for any algebra $\cA\in 2\Vect$, and it is straightforward to prove that the canonical equivalence $|\cA|\to \cA$ given by evaluation at $\bbC$ is monoidal.
When $\cA$ is rigid, we construct explicit evaluation and coevaluation morphisms on $|\cA|$ and we prove they satisfy the zig-zag identities in Proposition \ref{prop:Separable2AlgebrasInNiceGrayMonoid}.

Our main reason for providing the explicit detail below is that we will check later in Section \ref{sec:H*algInB2Hilb} that the duals constructed above give a UDF on $|\cA|$ in the unitary setting.
\end{ex}

Our construction bears many similarities with \cite{MR4535015}.

\begin{construction}
\label{const:MultiFusCatFromRigid2Alg}
Suppose $\fC$ is a $\Gray$-monoid and $A\in \fC$ is a unital algebra.
Define $|A|:= \fC(1_\fC \to A)$.
In the graphical calculus, we represent $A$ by a black strand, and we represent objects $a,b,c\in |A|$ by shaded disks with an $A$-strand emanating from the top:
$$
\tikzmath{
    \draw (0,0)  -- (0,.3);
    \filldraw[thick, fill=\aColor] (0,0) circle (.1cm);
}
:=a
\qquad
\qquad
\tikzmath{
    \draw (0,0)  -- (0,.3);
    \filldraw[thick, fill=\bColor] (0,0) circle (.1cm);
}
:=b
\qquad
\qquad
\tikzmath{
    \draw (0,0)  -- (0,.3);
    \filldraw[fill=\cColor, thick] (0,0) circle (.1cm);
}
:=c.
$$
We endow $|A|$ with a monoidal product by
$$ 
\tikzmath{
\draw (-.4,0) -- (-.4,.4);
\draw (.4,0) -- (.4,.4);
\filldraw[thick, fill=\aColor] (-.4,0) circle (.1cm);
\filldraw[thick, fill=\bColor] (.4,0) circle (.1cm);
\node at (0,.2) {$\times$};
}
\longmapsto
\tikzmath{
    \draw (-.3,-.1) -- (-.3,0) arc (180:0:.3cm) -- (.3,-.3);
    \draw (0,.3) -- (0,.6);
    \filldraw (0,.3) circle (.05cm);
    \filldraw[thick, fill=\aColor] (-.3,-.1) circle (.1cm);
    \filldraw[thick, fill=\bColor] (.3,-.3) circle (.1cm);
}\,,
$$
and we define an associator by
\begin{equation}
\label{eq:HomCatAssociator}
\tikzmath{
\draw (-.3,-.5) -- (-.3,-.3) arc (180:0:.3cm)  to (.3,-.7);
\draw (0,0) arc (180:0:.45) -- ++(0,-.8);
\draw (.45,.45) -- (.45,.7);
\filldraw (0,0) circle (.05cm);
\filldraw (.45,.45) circle (.05cm);
\filldraw[thick, fill=\aColor] (-.3,-.5) circle (.1cm);
\filldraw[thick, fill=\bColor] (0.3,-.7) circle (.1cm);
\filldraw[fill=\cColor, thick] (0.9,-.9) circle (.1cm);
\draw[rounded corners= 5pt, dashed, thick] (-.6, -.2) rectangle (1.2, .6);
}
\overset{\alpha}{\Longrightarrow}
\tikzmath{
\draw (.3,-.7) -- (.3,-.3) arc (180:0:.3cm) to (.9,-.9);
\draw (.6,0) arc (0:180:.45) -- ++(0,-.3);
\draw (.15,.45) -- (.15,.7);
\filldraw (.6,0) circle (.05cm);
\filldraw (.15,.45) circle (.05cm);
\filldraw[thick, fill=\aColor] (-.3,-.3) circle (.1cm);
\filldraw[thick, fill=\bColor] (0.3,-.7) circle (.1cm);
\filldraw[fill=\cColor, thick] (0.9,-.9) circle (.1cm);
\draw[rounded corners= 5pt, dashed, thick] (-.7, -.5) rectangle (1.2, .2);
}
\overset{\phi^{-1}}{\Longrightarrow}
\tikzmath{
\draw (.3,-.7) arc (180:0:.3cm) to (.9,-.9);
\draw (.6,-.4) -- (.6,0) arc (0:180:.45) -- ++ (0,0);
\draw (.15,.45) -- (.15,.7);
\filldraw (.6,-.4) circle (.05cm);
\filldraw (.15,.45) circle (.05cm);
\filldraw[thick, fill=\aColor] (-.3,0) circle (.1cm);
\filldraw[thick, fill=\bColor] (0.3,-.7) circle (.1cm);
\filldraw[fill=\cColor, thick] (0.9,-.9) circle (.1cm);
}\,.
\end{equation}
Observe that \eqref{eq:2AlgebraAssociator} implies that the associators \eqref{eq:HomCatAssociator} satisfy the pentagon axiom.

We define the unit object $1_{|A|} := \iota\in \fC(1_\fC\to A)$. 
The unitors are given by
$$
\tikzmath{
    \draw (-.3,-.1) -- (-.3,0) arc (180:0:.3cm)  -- (.3,-.5);
    \draw (0,.3) -- (0,.7);
    \draw[rounded corners=5pt, thick, dashed] (-.9,-.3) rectangle (.9,.5);
    \filldraw (0,.3) circle (.05cm);
    \filldraw (-.3,-.1) circle (.05cm);
    \filldraw[thick, fill=\aColor] (.3,-.5) circle (.1cm);
}
\,\,\,
\overset{\lambda}{\Longrightarrow}
\tikzmath{
\draw (-.4,0) -- (-.4,1);
\filldraw[thick, fill=\aColor] (-.4,0) circle (.1cm);
}
\qquad\qquad\text{and}\qquad\qquad
\tikzmath[xscale=-1]{
    \draw (-.3,-.5) -- (-.3,0) arc (180:0:.3cm) -- (.3,-.3);
    \draw (0,.3) -- (0,.7);
    \draw[rounded corners=5pt, thick, dashed] (-.9,-.8) rectangle (.9,0); 
    \filldraw (0,.3) circle (.05cm);
    \filldraw (-.3,-.5) circle (.05cm);
    \filldraw[thick, fill=\aColor] (.3,-.3) circle (.1cm);
}
\overset{\phi}{\Longrightarrow}\,\,\,
\tikzmath[xscale=-1]{
    \draw (-.3,-.1) -- (-.3,0) arc (180:0:.3cm)  -- (.3,-.5);
    \draw (0,.3) -- (0,.7);
    \draw[rounded corners=5pt, thick, dashed] (-.9,-.3) rectangle (.9,.5);
    \filldraw (0,.3) circle (.05cm);
    \filldraw (-.3,-.1) circle (.05cm);
    \filldraw[thick, fill=\aColor] (.3,-.5) circle (.1cm);
}
\,\,\,
\overset{\rho}{\Longrightarrow}
\tikzmath{
\draw (0,0) -- (0,1);
\filldraw[thick, fill=\aColor] (0,0) circle (.1cm);
}\,.
$$
Observe that \eqref{eq:2AlgebraUnit} implies that the unitors satisfy the triangle axiom.
Hence $|A|$ is a semisimple monoidal category.

Now suppose that 
$\fC$ admits right adjoints for all 1-morphisms
and
$A\in \fC$ is rigid.
Every $a\in |A|=\fC(1_\fC\to A)$ has a right adjoint $a^R\in \fC(A\to 1_\fC)$, which we denote by a shaded disk with a string emanating from the bottom:
$$
\tikzmath{
\draw (0,0) -- (0,-.5);
\filldraw[thick, fill=\aColor] (0,0) circle (.1cm);
}
:=
a^R
\qquad\qquad
\tikzmath{
\draw (0,0) -- (0,-.5);
\filldraw[thick, fill=\bColor] (0,0) circle (.1cm);
}
:=
b^R
\qquad\qquad
\tikzmath{
\draw (0,0) -- (0,-.5);
\filldraw[thick, fill=\cColor] (0,0) circle (.1cm);
}
:=
c^R.
$$
Using the graphical calculus for $\Gray$-monoids, we define left and right duals $a^\vee$ and ${}^\vee a$ of $a\in|A|$ by
\[
a^\vee := 
\tikzmath{
\draw (0,-.1) arc (0:-180:.2cm) -- (-.4,.4);
\filldraw[thick, fill=\aColor] (0,0) circle (.1cm);
\filldraw (-.2,-.3) circle (.05cm);
\draw (-.2,-.3) -- (-.2,-.5);
\filldraw (-.2,-.5) circle (.05cm);
}
\qquad\text{and}\qquad
{}^\vee a:= 
\tikzmath{
\draw (0,-.1) arc (-180:0:.2cm) -- (.4,.4);
\filldraw[thick, fill=\aColor] (0,0) circle (.1cm);
\filldraw (.2,-.3) circle (.05cm);
\draw (.2,-.3) -- (.2,-.5);
\filldraw (.2,-.5) circle (.05cm);
}
\,.
\]
The adjunction $a\dashv a^R$ allows the following evaluation and coevaluation morphisms to exhibit 
$a^\vee$ and ${}^\vee a$ as left and right duals of $a$ respectively.
\begin{align*}
\ev_a^L
&:
a^\vee\otimes a
=
\,\,\,
\tikzmath{
\draw (.6,-.9) -- (.6,-.2) arc (0:180:.4cm) -- (-.2,-.3) arc (-180:0:.2cm);
\draw (0,-.5) -- (0,-.7);
\filldraw[thick, fill=\aColor] (.2,-.2) circle (.1cm);
\filldraw[thick, fill=\aColor] (.6,-.9) circle (.1cm);
\filldraw (0,-.5) circle (.05cm);
\filldraw (.2,.2) circle (.05cm);
\draw (0,-.5) -- (0,-.7);
\draw (.2,.2) -- (.2,.5);
\filldraw (0,-.7) circle (.05cm);
\draw[dashed, rounded corners=5pt] (.8,-1.1) rectangle (-.3,0);
}
\,\,\,
\overset{\phi}{\Longrightarrow}
\,\,\,
\tikzmath{
\draw (.2,.3) arc (0:180:.2cm) -- (-.2,-.3) arc (-180:0:.2cm);
\draw (0,.5) -- (0,.7);
\draw (0,-.5) -- (0,-.7);
\filldraw[thick, fill=\aColor] (.2,-.2) circle (.1cm);
\filldraw[thick, fill=\aColor] (.2,.2) circle (.1cm);
\filldraw (0,-.5) circle (.05cm);
\filldraw (0,.5) circle (.05cm);
\filldraw (0,-.7) circle (.05cm);
\draw[dashed, rounded corners=5pt] (.4,-.4) rectangle (0,.4);
}
\,\,\,
\overset{\varepsilon_a}{\Longrightarrow}
\,\,\,
\tikzmath{
\draw (-.2,.3) arc (180:0:.2cm) -- (.2,-.3) arc (0:-180:.2cm) -- (-.2,.3);
\draw (0,.5) -- (0,.7);
\draw (0,-.5) -- (0,-.7);
\filldraw (0,-.5) circle (.05cm);
\filldraw (0,.5) circle (.05cm);
\filldraw (0,-.7) circle (.05cm);
\draw[dashed, rounded corners=5pt] (-.3,-.6) rectangle (.3,.6);
}
\,\,\,
\overset{\varepsilon_\mu}{\Longrightarrow}
\,\,\,
\tikzmath{
\draw (0,-.7) -- (0,.7);
\filldraw (0,-.7) circle (.05cm);
}
\,\,\,
=
1_{|A|}
\\
\coev_a^L
&:
1_{|A|}=
\tikzmath{
\draw (0,-.8) -- (0,.8);
\filldraw (0,-.8) circle (.05cm);
\draw[dashed, rounded corners=5pt] (.2,-.6) rectangle (.5,.6);
}
\,\,\,
\overset{\eta_a}{\Longrightarrow}
\,\,\,
\tikzmath{
\draw (0,-.8) -- (0,.8);
\filldraw (0,-.8) circle (.05cm);
\draw (.4,-.4) -- (.4,.4);
\filldraw[thick, fill=\aColor] (.4,-.5) circle (.1cm);
\filldraw[thick, fill=\aColor] (.4,.5) circle (.1cm);
\draw[dashed, rounded corners=5pt] (-.2,-.3) rectangle (.6,.3);
}
\,\,\,
\overset{\eta_\mu}{\Longrightarrow}
\,\,\,
\tikzmath{
\draw (.4,-.4) arc (0:180:.2cm) -- (0,-.8);
\draw (.4,.4) arc (0:-180:.2cm) -- (0,.8);
\draw (.2,.2) -- (.2,-.2);
\filldraw (.2,-.2) circle (.05cm);
\filldraw (.2,.2) circle (.05cm);
\filldraw (0,-.8) circle (.05cm);
\filldraw[thick, fill=\aColor] (.4,-.5) circle (.1cm);
\filldraw[thick, fill=\aColor] (.4,.5) circle (.1cm);
}
\,\,\,
\overset{\text{\eqref{eq:AbbreviatedCoheretor}}}{\Longrightarrow}
\,\,\,
\tikzmath{
\draw (0,-.1) arc (0:-180:.2cm) -- (-.4,.5) arc (0:180:.2cm);
\filldraw[thick, fill=\aColor] (0,0) circle (.1cm);
\filldraw[thick, fill=\aColor] (-.8,.4) circle (.1cm);
\draw (-.6,.7) -- (-.6,1);
\filldraw (-.6,.7) circle (.05cm);
\filldraw (-.2,-.3) circle (.05cm);
\draw (-.2,-.3) -- (-.2,-.5);
\filldraw (-.2,-.5) circle (.05cm);
}
=
a\otimes a^\vee
\end{align*}
Here, the exclamation point \eqref{eq:AbbreviatedCoheretor} in $\coev_a^L$ denotes the composite of coheretors
\begin{equation}
\label{eq:AbbreviatedCoheretor}
\tikzmath{
\draw (.4,-.4) arc (0:180:.2cm) -- (0,-.8);
\draw (.4,.4) arc (0:-180:.2cm) -- (0,.8);
\draw (.2,.2) -- (.2,-.2);
\filldraw (.2,-.2) circle (.05cm);
\filldraw (.2,.2) circle (.05cm);
\filldraw (0,-.8) circle (.05cm);
\filldraw[thick, fill=\aColor] (.4,-.5) circle (.1cm);
\filldraw[thick, fill=\aColor] (.4,.5) circle (.1cm);
\draw[dashed, rounded corners=5pt] (.6,-1) rectangle (-.2,-.3);
}
\,\,\,
\overset{\phi}{\Longrightarrow}
\,\,\,
\tikzmath{
\draw (.4,-.7) -- (.4,-.4) arc (0:180:.2cm) -- (0,-.5);
\draw (.4,.4) arc (0:-180:.2cm) -- (0,.8);
\draw (.2,.2) -- (.2,-.2);
\filldraw (.2,-.2) circle (.05cm);
\filldraw (.2,.2) circle (.05cm);
\filldraw (0,-.5) circle (.05cm);
\filldraw[thick, fill=\aColor] (.4,-.8) circle (.1cm);
\filldraw[thick, fill=\aColor] (.4,.5) circle (.1cm);
\draw[dashed, rounded corners=5pt] (.6,0) rectangle (-.2,-.65);
}
\,\,\,
\overset{\lambda}{\Longrightarrow}
\,\,\,
\tikzmath{
\draw (.4,.4) arc (0:-180:.2cm) -- (0,.8);
\draw (.2,.2) -- (.2,-.6);
\filldraw (.2,.2) circle (.05cm);
\filldraw[thick, fill=\aColor] (.2,-.6) circle (.1cm);
\filldraw[thick, fill=\aColor] (.4,.5) circle (.1cm);
\draw[dashed, rounded corners=5pt] (0,0) rectangle (.4,-.4);
}
\,\,\,
\overset{\rho^{-1}}{\Longrightarrow}
\,\,\,
\tikzmath{
\draw (.4,-.5) -- (.4,-.4) arc (0:180:.2cm) -- (0,-.8);
\draw (.4,.4) arc (0:-180:.2cm) -- (0,.8);
\draw (.2,.2) -- (.2,-.2);
\filldraw (.2,-.2) circle (.05cm);
\filldraw (.2,.2) circle (.05cm);
\filldraw (.4,-.5) circle (.05cm);
\filldraw[thick, fill=\aColor] (0,-.8) circle (.1cm);
\filldraw[thick, fill=\aColor] (.4,.5) circle (.1cm);
\draw[dashed, rounded corners=5pt] (-.2,-.4) rectangle (.6,.3);
}
\,\,\,
\overset{\kappa^{-1}}{\Longrightarrow}
\,\,\,
\tikzmath{
\draw (.8,.5) -- (.8,0) arc (0:-180:.2cm) arc (0:180:.2cm) -- (0,-.8);
\draw (.2,.2) -- (.2,.8);
\draw (.6,-.2) -- (.6,-.5);
\filldraw (.6,-.2) circle (.05cm);
\filldraw (.2,.2) circle (.05cm);
\filldraw (.6,-.5) circle (.05cm);
\filldraw[thick, fill=\aColor] (0,-.8) circle (.1cm);
\filldraw[thick, fill=\aColor] (.8,.5) circle (.1cm);
}
\,\,\,
\overset{\phi}{\Longrightarrow}
\,\,\,
\tikzmath{
\draw (0,-.1) arc (0:-180:.2cm) -- (-.4,.5) arc (0:180:.2cm);
\filldraw[thick, fill=\aColor] (0,0) circle (.1cm);
\filldraw[thick, fill=\aColor] (-.8,.4) circle (.1cm);
\draw (-.6,.7) -- (-.6,1);
\filldraw (-.6,.7) circle (.05cm);
\filldraw (-.2,-.3) circle (.05cm);
\draw (-.2,-.3) -- (-.2,-.5);
\filldraw (-.2,-.5) circle (.05cm);
}
\tag{!}
\end{equation}
Similarly, we can define evaluation and coevaluation for the right dual ${}^\vee a$, where a similar composite of coheretors is needed for $\coev_a^R$
\begin{align*}
\ev_a^R
&:
a\otimes {}^\vee a
=
\,\,\,
\tikzmath{
\draw (-.2,.3) arc (180:0:.2cm) -- (.2,-.3) arc (0:-180:.2cm);
\draw (0,.5) -- (0,.7);
\draw (0,-.5) -- (0,-.7);
\filldraw[thick, fill=\aColor] (-.2,-.2) circle (.1cm);
\filldraw[thick, fill=\aColor] (-.2,.2) circle (.1cm);
\filldraw (0,-.5) circle (.05cm);
\filldraw (0,.5) circle (.05cm);
\filldraw (0,-.7) circle (.05cm);
\draw[dashed, rounded corners=5pt] (-.4,-.4) rectangle (0,.4);
}
\,\,\,
\overset{\varepsilon_a}{\Longrightarrow}
\,\,\,
\tikzmath{
\draw (-.2,.3) arc (180:0:.2cm) -- (.2,-.3) arc (0:-180:.2cm) -- (-.2,.3);
\draw (0,.5) -- (0,.7);
\draw (0,-.5) -- (0,-.7);
\filldraw (0,-.5) circle (.05cm);
\filldraw (0,.5) circle (.05cm);
\filldraw (0,-.7) circle (.05cm);
\draw[dashed, rounded corners=5pt] (-.3,-.6) rectangle (.3,.6);
}
\,\,\,
\overset{\varepsilon_\mu}{\Longrightarrow}
\,\,\,
\tikzmath{
\draw (0,-.7) -- (0,.7);
\filldraw (0,-.7) circle (.05cm);
}
\,\,\,
=
1_{|A|}
\\
\coev_a^R
&:
1_{|A|}=
\tikzmath{
\draw (0,-.8) -- (0,.8);
\filldraw (0,-.8) circle (.05cm);
\draw[dashed, rounded corners=5pt] (-.2,-.6) rectangle (-.5,.6);
}
\,\,\,
\overset{\eta_a}{\Longrightarrow}
\,\,\,
\tikzmath{
\draw (0,-.8) -- (0,.8);
\filldraw (0,-.8) circle (.05cm);
\draw (-.4,-.4) -- (-.4,.4);
\filldraw[thick, fill=\aColor] (-.4,-.5) circle (.1cm);
\filldraw[thick, fill=\aColor] (-.4,.5) circle (.1cm);
\draw[dashed, rounded corners=5pt] (-.6,-.3) rectangle (.2,.3);
}
\,\,\,
\overset{\eta_\mu}{\Longrightarrow}
\,\,\,
\tikzmath{
\draw (-.4,-.4) arc (180:0:.2cm) -- (0,-.8);
\draw (-.4,.4) arc (-180:0:.2cm) -- (0,.8);
\draw (-.2,.2) -- (-.2,-.2);
\filldraw (-.2,-.2) circle (.05cm);
\filldraw (-.2,.2) circle (.05cm);
\filldraw (0,-.8) circle (.05cm);
\filldraw[thick, fill=\aColor] (-.4,-.5) circle (.1cm);
\filldraw[thick, fill=\aColor] (-.4,.5) circle (.1cm);
}
\,\,\,
\overset{\text{(!!)}}{\Longrightarrow}
\,\,\,
\tikzmath{
\draw (0,-.1) arc (-180:0:.2cm) -- (.4,.1) arc (180:0:.2cm) -- (.8,-.8);
\filldraw[thick, fill=\aColor] (0,0) circle (.1cm);
\filldraw[thick, fill=\aColor] (.8,-.8) circle (.1cm);
\draw (.6,.3) -- (.6,.6);
\filldraw (.6,.3) circle (.05cm);
\filldraw (.2,-.3) circle (.05cm);
\draw (.2,-.3) -- (.2,-.5);
\filldraw (.2,-.5) circle (.05cm);
}
=
{}^\vee a\otimes a
\end{align*}
for another similarly defined composite of coheretors (!!).
In Proposition \ref{prop:Separable2AlgebrasInNiceGrayMonoid} below, we prove that the zig-zag axiom is satisfied.
Thus $|A|$ is a rigid finitely semisimple monoidal category, i.e., a multifusion category.
\end{construction}

\begin{prop}
\label{prop:Separable2AlgebrasInNiceGrayMonoid}
Suppose $A$ is a rigid algebra in a $\Gray$-monoid $\fC$ in which all 1-morphisms admit right adjoints.
The monoidal category $|A|=\fC(1_\fC\to A)$ from Construction \ref{const:MultiFusCatFromRigid2Alg} above is rigid.
\end{prop}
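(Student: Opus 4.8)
The structural data have all been produced in Construction~\ref{const:MultiFusCatFromRigid2Alg}: the left and right duals $a^\vee,{}^\vee a$, together with the candidate evaluations and coevaluations $\ev^L_a,\coev^L_a,\ev^R_a,\coev^R_a$. Thus the entire content of the proposition is the verification of the triangle (zig-zag) identities for these morphisms, after which $|A|$ has both left and right duals and, being semisimple by Construction~\ref{const:MultiFusCatFromRigid2Alg}, is multifusion. Since the left and right dual constructions are interchanged by the reflection symmetry of the $\Gray$-monoid graphical calculus (which reflects the $a\dashv a^R$ adjunction data and exchanges the two Frobeniators $\theta,\kappa$ of \eqref{eq:Frobeniators}), it suffices to prove the two zig-zag identities exhibiting $a^\vee$ as a left dual of $a$; those for ${}^\vee a$ then follow mutatis mutandis.

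The plan is to expand each of $\ev^L_a$ and $\coev^L_a$ into its defining composite of $2$-morphisms in $\fC$ and then cancel using the two adjunction triangle identities built into the construction, namely the one for $a\dashv a^R$ (the pair $\eta_a,\varepsilon_a$) and the one for $\mu\dashv\mu^R$ (the pair $\eta_\mu,\varepsilon_\mu$). For the first zig-zag $(\id_a\xz\ev^L_a)\xo(\coev^L_a\xz\id_a)=\id_a$, I would first use $\Gray$-monoid interchange to bring the $\eta_a$ inserted by $\coev^L_a$ adjacent to the $\varepsilon_a$ appearing inside $\ev^L_a$, apply the triangle identity for $a\dashv a^R$ to delete this pair, and then do the same with the $\eta_\mu/\varepsilon_\mu$ pair. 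What remains is a composite of coheretors $\phi$, unitors $\lambda,\rho$, and associators $\alpha$, which collapses to the identity by coherence of the $\Gray$-monoid together with the triangle and pentagon axioms \eqref{eq:2AlgebraUnit} and \eqref{eq:2AlgebraAssociator} for the algebra.

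The second zig-zag $(\ev^L_a\xz\id_{a^\vee})\xo(\id_{a^\vee}\xz\coev^L_a)=\id_{a^\vee}$ is more delicate, because it requires commuting the multiplication $\mu$ past the duality cups and caps; this is exactly where a Frobeniator enters, and I would control it using the identification of $\kappa$ in terms of $\alpha$ and the $\mu$-adjunction data from Remark~\ref{rem:FrobeniusatorOverdetermined}. The principal obstacle is organizational rather than conceptual: the coevaluation $\coev^L_a$ contains the long string of coheretors abbreviated by the exclamation mark in \eqref{eq:AbbreviatedCoheretor}, and the bulk of the work is peeling these off one at a time so that the two adjunction cancellations become applicable. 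Everything ultimately reduces to the two triangle identities, but verifying that each coheretor, unitor, and associator is correctly accounted for — and that none obstructs the cancellation — is the real labor; I expect this bookkeeping, rather than any single step, to be the hard part of the proof.
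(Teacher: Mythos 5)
Your overall architecture matches the paper's: the paper likewise reduces the proposition to the zig-zag identities, proves only the relation $(\id_a\otimes\ev^L_a)\circ(\coev^L_a\otimes\id_a)=\id_a$ explicitly (leaving the other three to the reader), and does so via a large pasting diagram (Figure \ref{fig:RealizationDualsZigZag}) whose outer boundary is the zig-zag and whose upshot is exactly your plan: the composite reduces to $\lambda^{-1}$, then $\eta_a$ cancelled against $\varepsilon_a$, then $\eta_\mu$ cancelled against $\varepsilon_\mu$, then $\lambda$.

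There is, however, a genuine gap in your treatment of the first zig-zag. You assert that after the two adjunction cancellations ``what remains is a composite of coheretors $\phi$, unitors $\lambda,\rho$, and associators $\alpha$'' which collapses by coherence, and you defer all Frobenius input to the second zig-zag. This is not correct: the Frobeniator is already present in the first zig-zag, since $\coev^L_a$ itself contains $\kappa^{-1}$ through the abbreviated composite \eqref{eq:AbbreviatedCoheretor}. Moreover, the cancellations cannot be achieved by interchange alone: in the paper's diagram, $\varepsilon_\mu$ sits on the wrong side of a multiplication vertex from its partner $\eta_\mu$, and the cell that moves it across (marked $(\#)$ in Figure \ref{fig:RealizationDualsZigZag}) commutes precisely because $\varepsilon_\mu$ is an $A$-$A$ bimodule natural transformation --- that is, because $A$ is \emph{rigid}, not merely because $\mu$ admits a right adjoint. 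A residual composite containing $\kappa^{-1}$ is not a coherence composite, so it will not collapse from \eqref{eq:2AlgebraAssociator}, \eqref{eq:2AlgebraUnit} and $\Gray$-monoid coherence alone; one must either invoke bimodularity of $\varepsilon_\mu$ directly or eliminate $\kappa$ via its characterization in terms of $\alpha$ and the adjunction data from Remark \ref{rem:FrobeniusatorOverdetermined} --- the very ingredient you reserved for the second zig-zag. As written, your first-zig-zag argument would stall at this point; once this ingredient is inserted, your plan coincides with the paper's proof.
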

\begin{proof}
All that remains is to verify the zig-zag axioms.
We explicitly prove the relation $(\id_{a}\otimes \ev^L_a)\circ (\coev^L_{a} \otimes \id_a)= \id_a$; the other 3 relations are left to the reader.

The zig-zag relation follows from commutativity of the diagram in Figure \ref{fig:RealizationDualsZigZag} below; the composite map along the outside of the diagram is the zig-zag formula, and each 2-cell commutes.
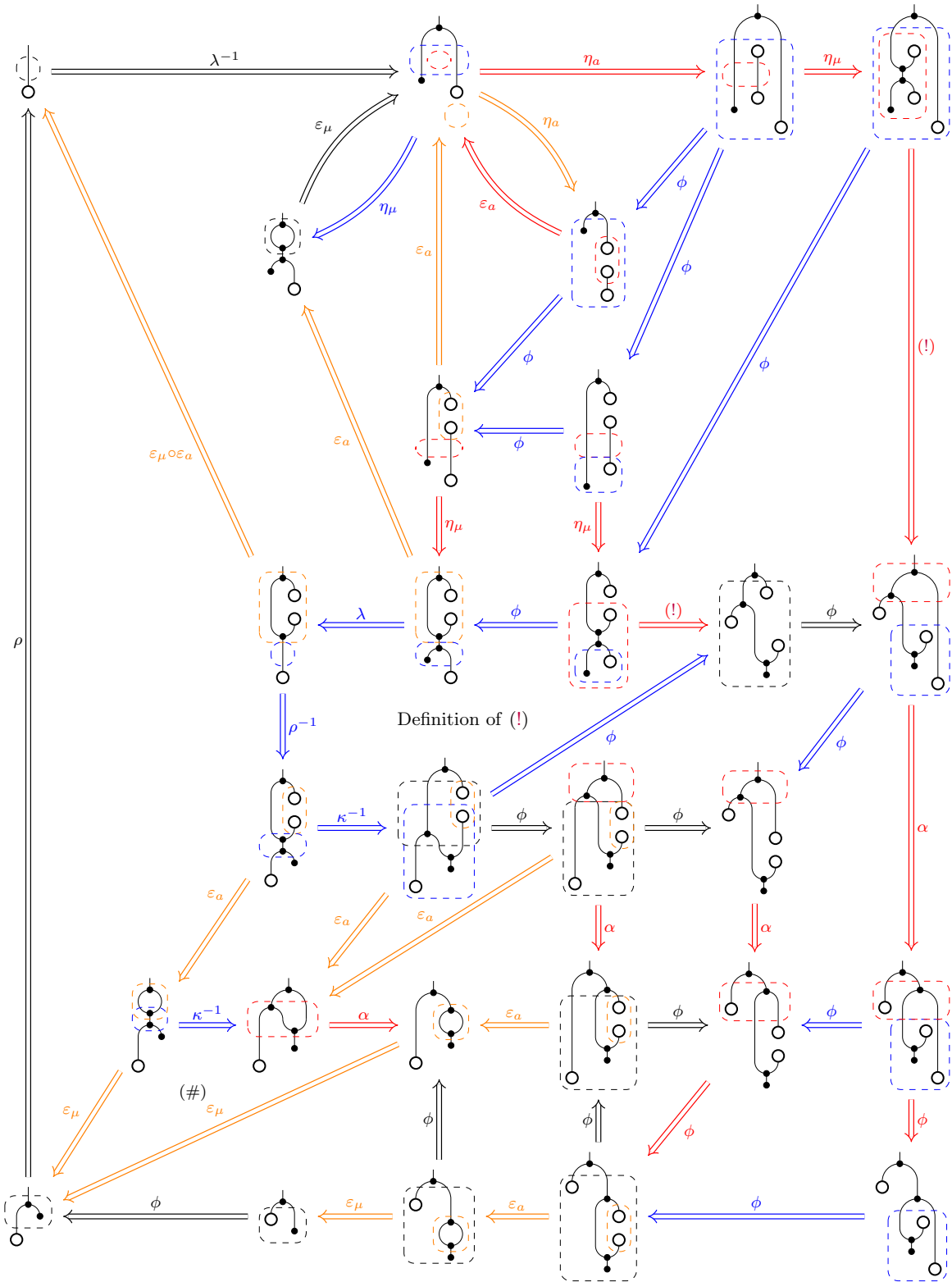
\begin{figure}
\begin{tikzpicture}[baseline= (a).base]\node[scale=.95] (a) at (0,0){\begin{tikzcd}
\tikzmath{
\draw (0,0) -- (0,.8);
\filldraw[thick, fill=\aColor] (0,0) circle (.1cm);
\draw[dashed, rounded corners=5pt] (-.2,.2) rectangle (.2,.6);
}
\arrow[rrr, Rightarrow, "\lambda^{-1}"]
&&&
\tikzmath{
\draw (-.5,.4) -- (-.5,1) arc (180:0:.3cm) -- (.1,.3);
\draw (-.2,1.3) -- (-.2,1.5);
\filldraw (-.2,1.3) circle (.05cm);
\filldraw (-.5,.4) circle (.05cm);
\filldraw[thick, fill=\aColor] (.1,.2) circle (.1cm);
\draw[orange, dashed, rounded corners=5pt] (-.1,0) rectangle (.3,-.4);
\draw[red, dashed, rounded corners=5pt] (-.4,.6) rectangle (0,.9);
\draw[blue, dashed, rounded corners=5pt] (-.7,.5) rectangle (.3,1);
}
\arrow[dl, blue, bend left=20, Rightarrow,"\eta_\mu"]
\arrow[rr, red, Rightarrow,"\eta_a"]
\arrow[dr, orange, bend left=20, Rightarrow,"\eta_a"]
&&
\tikzmath{
\draw (.6,-1.4) -- (.6,0) arc (0:180:.4cm) -- (-.2,-1.2);
\draw (.2,.4) -- (.2,.6);
\draw (.2,-.2) -- (.2,-1);
\filldraw[thick, fill=\aColor] (.2,-.2) circle (.1cm);
\filldraw[thick, fill=\aColor] (.2,-1) circle (.1cm);
\filldraw (.2,.4) circle (.05cm);
\filldraw (-.2,-1.2) circle (.05cm);
\filldraw[thick, fill=\aColor] (.6,-1.5) circle (.1cm);
\draw[dashed, red, rounded corners=5pt] (-.4,-.4) rectangle (.4,-.8);
\draw[dashed, blue, rounded corners=5pt] (-.5,-1.7) rectangle (.8,0);
}
\arrow[r, red, Rightarrow,"\eta_\mu"]
\arrow[ddl, blue, Rightarrow,"\phi"]
\arrow[dl, blue, Rightarrow,"\phi"]
&
\tikzmath{
\draw (.6,-1.4) -- (.6,0) arc (0:180:.4cm) -- (-.2,-.3) arc (-180:0:.2cm);
\draw (-.2,-1.2) -- (-.2,-.9) arc (180:0:.2cm);
\draw (.2,.4) -- (.2,.6);
\draw (0,-.5) -- (0,-.7);
\filldraw[thick, fill=\aColor] (.2,-.2) circle (.1cm);
\filldraw[thick, fill=\aColor] (.2,-1) circle (.1cm);
\filldraw (0,-.5) circle (.05cm);
\filldraw (.2,.4) circle (.05cm);
\filldraw (0,-.7) circle (.05cm);
\filldraw (-.2,-1.2) circle (.05cm);
\filldraw[thick, fill=\aColor] (.6,-1.5) circle (.1cm);
\draw[dashed, red, rounded corners=5pt] (-.4,-1.35) rectangle (.4,.1);
\draw[dashed, blue, rounded corners=5pt] (-.5,-1.7) rectangle (.8,.2);
}
\arrow[dddll, blue, Rightarrow,"\phi"]
\arrow[ddd, red, Rightarrow, "\text{\eqref{eq:AbbreviatedCoheretor}}"]
\\
&&
\tikzmath{
\draw (0,-.3) circle (.2cm);
\draw (0,-.1) -- (0,.1);
\draw (0,-.5) -- (0,-.7);
\filldraw (0,-.5) circle (.05cm);
\filldraw (0,-.1) circle (.05cm);
\filldraw (0,-.7) circle (.05cm);
\filldraw (-.2,-.9) circle (.05cm);
\draw (-.2,-.9) arc (180:0:.2cm) -- (.2,-1.1);
\filldraw[thick, fill=\aColor] (.2,-1.2) circle (.1cm);
\draw[dashed, rounded corners=5pt] (-.3,-.6) rectangle (.3,0);
}
\arrow[ur, bend left=20, Rightarrow, "\varepsilon_\mu"]
&&
\tikzmath{
\draw (-.2,-.2) -- (-.2,-.1) arc (180:0:.2cm) -- (.2,-.5);
\draw (0,.1) -- (0,.3);
\draw (.2,-1) -- (.2,-1.2);
\filldraw (-.2,-.2) circle (.05cm);
\filldraw (0,.1) circle (.05cm);
\filldraw[thick, fill=\aColor] (.2,-.5) circle (.1cm);
\filldraw[thick, fill=\aColor] (.2,-.9) circle (.1cm);
\filldraw[thick, fill=\aColor] (.2,-1.3) circle (.1cm);
\draw[red, dashed, rounded corners=5pt] (0,-1.1) rectangle (.4,-.3);
\draw[blue, dashed, rounded corners=5pt] (-.4,-1.5) rectangle (.5,0);
}
\arrow[ul,red, bend left=20, Rightarrow,"\varepsilon_a"]
\arrow[dl,blue, Rightarrow,"\phi"]
\\
&&
&
\tikzmath{
\draw (-.4,-.3) -- (-.4,.8) arc (180:0:.2cm);
\draw (-.2,1) -- (-.2,1.2);
\draw (0,-.5) -- (0,.2);
\filldraw (-.2,1) circle (.05cm);
\filldraw (-.4,-.3) circle (.05cm);
\filldraw[thick, fill=\aColor] (0,.7) circle (.1cm);
\filldraw[thick, fill=\aColor] (0,.3) circle (.1cm);
\filldraw[thick, fill=\aColor] (0,-.6) circle (.1cm);
\draw[orange, dashed, rounded corners=5pt] (-.2,.1) rectangle (.2,.9);
\draw[red, dashed, rounded corners=5pt] (-.6,-.2) rectangle (.2,.1);
}
\arrow[uu,orange, Rightarrow,"\varepsilon_a"]
\arrow[d, red, Rightarrow, "\eta_\mu"]
&
\tikzmath{
\draw (-.2,-1.8) -- (-.2,-.2) arc (180:0:.2cm);
\draw (0,0) -- (0,.2);
\draw (.2,-.8) -- (.2,-1.4);
\filldraw (-.2,-1.8) circle (.05cm);
\filldraw (0,0) circle (.05cm);
\filldraw[thick, fill=\aColor] (.2,-.3) circle (.1cm);
\filldraw[thick, fill=\aColor] (.2,-.7) circle (.1cm);
\filldraw[thick, fill=\aColor] (.2,-1.5) circle (.1cm);
\draw[red, dashed, rounded corners=5pt] (-.4,-.9) rectangle (.4,-1.3);
\draw[blue, dashed, rounded corners=5pt] (-.4,-1.9) rectangle (.4,-1.3);
}
\arrow[l, blue, Rightarrow, "\phi"]
\arrow[d, red, Rightarrow, "\eta_\mu", swap]
\\
&&
\tikzmath{
\draw (.2,.3) arc (0:180:.2cm) -- (-.2,-.3) arc (-180:0:.2cm);
\draw (0,.5) -- (0,.7);
\draw (0,-.5) -- (0,-1.1);
\filldraw[thick, fill=\aColor] (.2,-.2) circle (.1cm);
\filldraw[thick, fill=\aColor] (.2,.2) circle (.1cm);
\filldraw (0,-.5) circle (.05cm);
\filldraw (0,.5) circle (.05cm);
\filldraw[thick, fill=\aColor] (0,-1.2) circle (.1cm);
\draw[orange, dashed, rounded corners=5pt] (-.4,-.6) rectangle (.4,.6);
\draw[blue, dashed, rounded corners=5pt] (-.2,-.6) rectangle (.2,-1);
}
\arrow[uuull, orange, Rightarrow, near start, "\varepsilon_\mu\circ\varepsilon_a"]
\arrow[d, blue, Rightarrow, "\rho^{-1}"]
&
\tikzmath{
\draw (.2,.3) arc (0:180:.2cm) -- (-.2,-.3) arc (-180:0:.2cm);
\draw (0,.5) -- (0,.7);
\draw (0,-.5) -- (0,-.7);
\filldraw[thick, fill=\aColor] (.2,-.2) circle (.1cm);
\filldraw[thick, fill=\aColor] (.2,.2) circle (.1cm);
\filldraw (0,-.5) circle (.05cm);
\filldraw (0,.5) circle (.05cm);
\filldraw (0,-.7) circle (.05cm);
\filldraw (-.2,-.9) circle (.05cm);
\draw (-.2,-.9) arc (180:0:.2cm) -- (.2,-1.1);
\filldraw[thick, fill=\aColor] (.2,-1.2) circle (.1cm);
\draw[orange, dashed, rounded corners=5pt] (-.4,-.6) rectangle (.4,.6);
\draw[blue, dashed, rounded corners=5pt] (-.4,-.6) rectangle (.4,-1);
}
\arrow[l, blue, Rightarrow, "\lambda"']
\arrow[uul, orange, Rightarrow, "\varepsilon_a"]
&
\tikzmath{
\draw (.2,.3) arc (0:180:.2cm) -- (-.2,-.3) arc (-180:0:.2cm);
\draw (0,.5) -- (0,.7);
\draw (0,-.5) -- (0,-.7);
\filldraw[thick, fill=\aColor] (.2,-.2) circle (.1cm);
\filldraw[thick, fill=\aColor] (.2,.2) circle (.1cm);
\filldraw (0,-.5) circle (.05cm);
\filldraw (0,.5) circle (.05cm);
\filldraw (0,-.7) circle (.05cm);
\filldraw (-.2,-1.2) circle (.05cm);
\draw (-.2,-1.2) -- (-.2,-.9) arc (180:0:.2cm);
\filldraw[thick, fill=\aColor] (.2,-1) circle (.1cm);
\draw[dashed, blue, rounded corners=5pt] (-.4,-1.35) rectangle (.4,-.8);
\draw[dashed, red, rounded corners=5pt] (-.5,-1.45) rectangle (.5,0);
}
\arrow[l, blue, Rightarrow, "\phi"']
\arrow[r, red, Rightarrow, "\text{\eqref{eq:AbbreviatedCoheretor}}"]
&
\tikzmath{
\draw (0,-.1) arc (0:-180:.2cm) -- (-.4,.5) arc (0:180:.2cm);
\filldraw[thick, fill=\aColor] (0,0) circle (.1cm);
\filldraw[thick, fill=\aColor] (-.8,.4) circle (.1cm);
\filldraw[thick, fill=\aColor] (-.2,.9) circle (.1cm);
\draw (-.6,.7) -- (-.6,1) arc (180:0:.2cm);
\draw (-.4,1.2) -- (-.4,1.4);
\filldraw (-.4,1.2) circle (.05cm);
\filldraw (-.6,.7) circle (.05cm);
\filldraw (-.2,-.3) circle (.05cm);
\draw (-.2,-.3) -- (-.2,-.5);
\filldraw (-.2,-.5) circle (.05cm);
\draw[dashed, rounded corners=5pt] (-1,-.7) rectangle (.2,1.1);
}
\arrow[r, Rightarrow, "\phi"]
&
\tikzmath{
\draw (0,-.1) arc (0:-180:.2cm) -- (-.4,.5) arc (0:180:.2cm);
\filldraw[thick, fill=\aColor] (0,0) circle (.1cm);
\filldraw[thick, fill=\aColor] (-.8,.4) circle (.1cm);
\filldraw[thick, fill=\aColor] (.2,-.8) circle (.1cm);
\draw (-.6,.7) arc (180:0:.4cm) -- (.2,-.7);
\draw (-.2,1.1) -- (-.2,1.4);
\filldraw (-.2,1.1) circle (.05cm);
\filldraw (-.6,.7) circle (.05cm);
\filldraw (-.2,-.3) circle (.05cm);
\draw (-.2,-.3) -- (-.2,-.5);
\filldraw (-.2,-.5) circle (.05cm);
\draw[dashed, red, rounded corners=5pt] (-.9,.6) rectangle (.4,1.25);
\draw[dashed, blue, rounded corners=5pt] (-.6,.2) rectangle (.4,-1);
}
\arrow[dl, blue, Rightarrow, "\phi"]
\arrow[dd, red, Rightarrow, "\alpha"]
\\
&&
\tikzmath{
\draw (.2,.3) arc (0:180:.2cm) -- (-.2,-.3) arc (-180:0:.2cm);
\draw (0,.5) -- (0,.7);
\draw (0,-.5) -- (0,-.7);
\filldraw[thick, fill=\aColor] (.2,-.2) circle (.1cm);
\filldraw[thick, fill=\aColor] (.2,.2) circle (.1cm);
\filldraw (0,-.5) circle (.05cm);
\filldraw (0,.5) circle (.05cm);
\filldraw (0,-.7) circle (.05cm);
\filldraw (.2,-.9) circle (.05cm);
\draw (-.2,-1.1) -- (-.2,-.9) arc (180:0:.2cm);
\filldraw[thick, fill=\aColor] (-.2,-1.2) circle (.1cm);
\draw[orange, dashed, rounded corners=5pt] (0,-.4) rectangle (.4,.4);
\draw[blue, dashed, rounded corners=5pt] (-.4,-.4) rectangle (.4,-.8);
}
\arrow[dl, orange, Rightarrow, near start, "\varepsilon_a"']
\arrow[r, blue, Rightarrow, "{\kappa^{-1}}"]
&
\tikzmath{
\draw (.2,.7) arc (0:180:.3cm) -- (-.4,-.1);
\draw (-.6,-.3) arc (180:0:.2cm) -- (-.2,-.3) arc (-180:0:.2cm) -- (.2,.1);
\draw (-.1,1) -- (-.1,1.2);
\draw (0,-.5) -- (0,-.7);
\draw (-.6,-.9) -- (-.6,-.3);
\filldraw[thick, fill=\aColor] (-.6,-1) circle (.1cm);
\filldraw[thick, fill=\aColor] (.2,.2) circle (.1cm);
\filldraw[thick, fill=\aColor] (.2,.6) circle (.1cm);
\filldraw (-.4,-.1) circle (.05cm);
\filldraw (0,-.5) circle (.05cm);
\filldraw (-.1,1) circle (.05cm);
\filldraw (0,-.7) circle (.05cm);
\draw[dashed, rounded corners=5pt] (-.9,-.3) rectangle (.5,.8);
\draw[dashed, blue, rounded corners=5pt] (-.8,-1.2) rectangle (.4,.4);
\draw[dashed, orange, rounded corners=5pt] (0,0) rectangle (.4,.8);
}
\arrow[u, phantom, "\text{\qquad\scriptsize{Definition of \eqref{eq:AbbreviatedCoheretor}}}"]
\arrow[r, Rightarrow, "\phi"]
\arrow[urr, blue, Rightarrow, swap, "\phi"]
\arrow[dl, orange, Rightarrow, "\varepsilon_a"']
&
\tikzmath{
\draw (.2,.3) -- (.2,.5) arc (0:180:.3cm);
\draw (-.6,.3) arc (180:0:.2cm) -- (-.2,-.3) arc (-180:0:.2cm);
\draw (-.1,.8) -- (-.1,1.1);
\draw (0,-.5) -- (0,-.7);
\draw (-.6,-.9) -- (-.6,.3);
\filldraw[thick, fill=\aColor] (-.6,-1) circle (.1cm);
\filldraw[thick, fill=\aColor] (.2,-.2) circle (.1cm);
\filldraw[thick, fill=\aColor] (.2,.2) circle (.1cm);
\filldraw (-.4,.5) circle (.05cm);
\filldraw (0,-.5) circle (.05cm);
\filldraw (-.1,.8) circle (.05cm);
\filldraw (0,-.7) circle (.05cm);
\draw[dashed, red, rounded corners=5pt] (-.7,.4) rectangle (.4,1.05);
\draw[dashed, rounded corners=5pt] (-.8,.4) rectangle (.4,-1.2);
\draw[dashed, orange, rounded corners=5pt] (0,-.4) rectangle (.4,.4);
}
\arrow[d, red, Rightarrow, "\alpha"]
\arrow[r, Rightarrow, "\phi"]
\arrow[dll, orange, Rightarrow, "\varepsilon_a"']
&
\tikzmath{
\draw (0,-.5) arc (0:-180:.2cm) -- (-.4,.5) arc (0:180:.2cm);
\filldraw[thick, fill=\aColor] (0,-.4) circle (.1cm);
\filldraw[thick, fill=\aColor] (-.8,.4) circle (.1cm);
\filldraw[thick, fill=\aColor] (0,0) circle (.1cm);
\draw (-.6,.7) arc (180:0:.3cm) -- (0,.1);
\draw (-.3,1) -- (-.3,1.3);
\filldraw (-.3,1) circle (.05cm);
\filldraw (-.6,.7) circle (.05cm);
\filldraw (-.2,-.7) circle (.05cm);
\draw (-.2,-.7) -- (-.2,-.9);
\filldraw (-.2,-.9) circle (.05cm);
\draw[dashed, red, rounded corners=5pt] (-.9,.6) rectangle (.2,1.15);
}
\arrow[d, red, Rightarrow, "\alpha"]
\\
&
\tikzmath{
\draw (-.2,1) circle (.2cm);
\draw (0,.4) arc (0:180:.2cm) -- (-.4,0);
\draw (-.2,.6) -- (-.2,.8);
\draw (-.2,1.2) -- (-.2,1.4);
\filldraw (-.2,.6) circle (.05cm);
\filldraw (0,.4) circle (.05cm);
\filldraw (-.2,.8) circle (.05cm);
\filldraw (-.2,1.2) circle (.05cm);
\filldraw[thick, fill=\aColor] (-.4,-.1) circle (.1cm);
\draw[dashed, blue, rounded corners=5pt] (-.5,.5) rectangle (.1,.9);
\draw[dashed, orange, rounded corners=5pt] (-.5,.7) rectangle (.1,1.3);
}
\arrow[dl, orange, Rightarrow, "\varepsilon_\mu"']
\arrow[r, blue, Rightarrow, "\kappa^{-1}"]
&
\tikzmath{
\draw (.2,.3) -- (.2,.5) arc (0:180:.3cm);
\draw (-.2,.3) arc (-180:0:.2cm);
\draw (-.2,.3) arc (0:180:.2cm) -- (-.6,-.4);
\draw (-.1,.8) -- (-.1,1);
\draw (0,-.2) -- (0,.1);
\filldraw (0,.1) circle (.05cm);
\filldraw (-.4,.5) circle (.05cm);
\filldraw (0,-.2) circle (.05cm);
\filldraw (-.1,.8) circle (.05cm);
\filldraw[thick, fill=\aColor] (-.6,-.5) circle (.1cm);
\draw[dashed, red, rounded corners=5pt] (-.8,0) rectangle (.4,.6);
}
\arrow[r, red, Rightarrow, "\alpha"]
\arrow[dll,phantom,near start,"\scriptstyle(\#)"]
&
\tikzmath{
\draw (0,.3) circle (.2cm);
\draw (0,.5) arc (0:180:.3cm) -- (-.6,-.3);
\draw (-.3,.8) -- (-.3,1);
\draw (0,-.1) -- (0,.1);
\filldraw (0,-.1) circle (.05cm);
\filldraw (0,.5) circle (.05cm);
\filldraw (0,.1) circle (.05cm);
\filldraw (-.3,.8) circle (.05cm);
\filldraw[thick, fill=\aColor] (-.6,-.4) circle (.1cm);
\draw[dashed, orange, rounded corners=5pt] (-.3,0) rectangle (.3,.6);
}
\arrow[dlll, orange, Rightarrow, "\varepsilon_\mu"']
&
\tikzmath{
\draw (0,-.5) arc (0:-180:.2cm) -- (-.4,.1) arc (180:0:.2cm);
\filldraw[thick, fill=\aColor] (0,-.4) circle (.1cm);
\filldraw[thick, fill=\aColor] (-.8,-1.2) circle (.1cm);
\filldraw[thick, fill=\aColor] (0,0) circle (.1cm);
\draw (-.2,.3) arc (0:180:.3cm) -- (-.8,-1.1);
\draw (-.5,.6) -- (-.5,.8);
\filldraw (-.2,.3) circle (.05cm);
\filldraw (-.5,.6) circle (.05cm);
\filldraw (-.2,-.7) circle (.05cm);
\draw (-.2,-.7) -- (-.2,-.9);
\filldraw (-.2,-.9) circle (.05cm);
\draw[orange, dashed, rounded corners=5pt] (-.2,-.6) rectangle (.2,.2);
\draw[dashed, rounded corners=5pt] (-1,-1.4) rectangle (.3,.2);
}
\arrow[r, Rightarrow, "\phi"]
\arrow[l, orange, Rightarrow, "\varepsilon_a"']
&
\tikzmath{
\draw (0,-.5) arc (0:-180:.2cm) -- (-.4,.5) arc (180:0:.2cm)-- (0,.1);
\filldraw[thick, fill=\aColor] (0,-.4) circle (.1cm);
\filldraw[thick, fill=\aColor] (-.8,.4) circle (.1cm);
\filldraw[thick, fill=\aColor] (0,0) circle (.1cm);
\draw (-.8,.5) -- (-.8,.7) arc (180:0:.3cm);
\draw (-.5,1) -- (-.5,1.2);
\filldraw (-.5,1) circle (.05cm);
\filldraw (-.2,.7) circle (.05cm);
\filldraw (-.2,-.7) circle (.05cm);
\draw (-.2,-.7) -- (-.2,-.9);
\filldraw (-.2,-.9) circle (.05cm);
\draw[dashed, red, rounded corners=5pt] (-1,.2) rectangle (.2,.85);
}
\arrow[dl, red, Rightarrow, "\phi"]
&
\tikzmath{
\draw (0,-.1) arc (0:-180:.2cm) -- (-.4,.4) arc (180:0:.3cm) -- (.2,-.7);
\filldraw[thick, fill=\aColor] (0,0) circle (.1cm);
\filldraw[thick, fill=\aColor] (-.7,.4) circle (.1cm);
\filldraw[thick, fill=\aColor] (.2,-.8) circle (.1cm);
\draw (-.7,.5) -- (-.7,.7) arc (180:0:.3cm);
\draw (-.4,1) -- (-.4,1.2);
\filldraw (-.4,1) circle (.05cm);
\filldraw (-.1,.7) circle (.05cm);
\filldraw (-.2,-.3) circle (.05cm);
\draw (-.2,-.3) -- (-.2,-.5);
\filldraw (-.2,-.5) circle (.05cm);
\draw[dashed, blue, rounded corners=5pt] (-.6,.2) rectangle (.4,-1);
\draw[dashed, red, rounded corners=5pt] (-.9,.2) rectangle (.4,.85);
}
\arrow[l, blue, Rightarrow, "\phi"']
\arrow[d, red, Rightarrow, "\phi"]
\\
\tikzmath{
\draw (0,.4) arc (0:180:.2cm) -- (-.4,.1);
\draw (-.2,.6) -- (-.2,.9);
\filldraw (-.2,.6) circle (.05cm);
\filldraw (0,.4) circle (.05cm);
\filldraw[thick, fill=\aColor] (-.4,0) circle (.1cm);
\draw[dashed, rounded corners=5pt] (-.6,.75) rectangle (.2,.2);
}
\arrow[uuuuuu, Rightarrow, "\rho"]
&&
\tikzmath{
\draw (0,.1) -- (0,.4) arc (0:180:.2cm);
\draw (-.2,.6) -- (-.2,.8);
\filldraw (-.2,.6) circle (.05cm);
\filldraw (0,.1) circle (.05cm);
\filldraw[thick, fill=\aColor] (-.4,.3) circle (.1cm);
\draw[dashed, rounded corners=5pt] (-.6,-.1) rectangle (.2,.5);
}
\arrow[ll, Rightarrow, "\phi"']
&
\tikzmath{
\draw (0,-.1) circle (.2cm);
\draw (0,.1) -- (0,.5) arc (0:180:.3cm);
\draw (-.3,.8) -- (-.3,1);
\draw (0,-.3) -- (0,-.5);
\filldraw (0,-.5) circle (.05cm);
\filldraw (0,.1) circle (.05cm);
\filldraw (0,-.3) circle (.05cm);
\filldraw (-.3,.8) circle (.05cm);
\filldraw[thick, fill=\aColor] (-.6,.4) circle (.1cm);
\draw[dashed, rounded corners=5pt] (-.8,-.6) rectangle (.4,.7);
\draw[dashed, orange, rounded corners=5pt] (-.3,.2) rectangle (.3,-.4);
}
\arrow[l, orange, Rightarrow, "\varepsilon_\mu"']
\arrow[u, Rightarrow, "\phi"]
&
\tikzmath{
\draw (0,-.5) arc (0:-180:.2cm) -- (-.4,.1) arc (180:0:.2cm);
\filldraw[thick, fill=\aColor] (0,-.4) circle (.1cm);
\filldraw[thick, fill=\aColor] (-.8,.5) circle (.1cm);
\filldraw[thick, fill=\aColor] (0,0) circle (.1cm);
\draw (-.8,.6) arc (180:0:.3cm) -- (-.2,.3);
\draw (-.5,.9) -- (-.5,1.1);
\filldraw (-.2,.3) circle (.05cm);
\filldraw (-.5,.9) circle (.05cm);
\filldraw (-.2,-.7) circle (.05cm);
\draw (-.2,-.7) -- (-.2,-.9);
\filldraw (-.2,-.9) circle (.05cm);
\draw[orange, dashed, rounded corners=5pt] (-.2,-.6) rectangle (.2,.2);
\draw[dashed, rounded corners=5pt] (-1,-1.1) rectangle (.3,.7);
}
\arrow[l, orange, Rightarrow, "\varepsilon_a"']\arrow[u, Rightarrow, "\phi"]
&&
\tikzmath{
\draw (0,-.1) arc (0:-180:.2cm) -- (-.4,.1) arc (180:0:.3cm) -- (.2,-.7);
\filldraw[thick, fill=\aColor] (0,0) circle (.1cm);
\filldraw[thick, fill=\aColor] (-.7,.6) circle (.1cm);
\filldraw[thick, fill=\aColor] (.2,-.8) circle (.1cm);
\draw (-.7,.7) arc (180:0:.3cm) -- (-.1,.4);
\draw (-.4,1) -- (-.4,1.2);
\filldraw (-.4,1) circle (.05cm);
\filldraw (-.1,.4) circle (.05cm);
\filldraw (-.2,-.3) circle (.05cm);
\draw (-.2,-.3) -- (-.2,-.5);
\filldraw (-.2,-.5) circle (.05cm);
\draw[dashed, blue, rounded corners=5pt] (-.6,.2) rectangle (.4,-1);
}
\arrow[ll, blue, Rightarrow, "\phi"']
\end{tikzcd}};\end{tikzpicture}
\caption{\label{fig:RealizationDualsZigZag} Pasting diagram showing that $\vee$ on $|A|$ satisfies the zig-zag relations.}
\end{figure}
The square marked $(\#)$ commutes as $\varepsilon_\mu$ is an $A$-$A$ bimodule map.
This means that going along the outside of the diagram (the zig-zag relation) is equal to the following composite of 2-morphisms
$$
\tikzmath{
\draw (0,0) -- (0,.8);
\filldraw[thick, fill=\aColor] (0,0) circle (.1cm);
\draw[dashed, rounded corners=5pt] (-.2,.2) rectangle (.2,.6);
}
\overset{\lambda^{-1}}{\Longrightarrow}
\tikzmath{
\draw (-.2,0) arc (180:0:.2cm) -- (.2,-.2);
\draw (0,.2) -- (0,.4);
\filldraw (-.2,0) circle (.05cm);
\filldraw (0,.2) circle (.05cm);
\filldraw[thick, fill=\aColor] (.2,-.3) circle (.1cm);
\draw[dashed, rounded corners=5pt] (0,-.5) rectangle (.4,-.9);
}
\overset{\eta_a}{\Longrightarrow}
\tikzmath{
\draw (-.2,-.2) -- (-.2,-.1) arc (180:0:.2cm) -- (.2,-.5);
\draw (0,.1) -- (0,.3);
\draw (.2,-1) -- (.2,-1.2);
\filldraw (-.2,-.2) circle (.05cm);
\filldraw (0,.1) circle (.05cm);
\filldraw[thick, fill=\aColor] (.2,-.5) circle (.1cm);
\filldraw[thick, fill=\aColor] (.2,-.9) circle (.1cm);
\filldraw[thick, fill=\aColor] (.2,-1.3) circle (.1cm);
\draw[dashed, rounded corners=5pt] (0,-1.1) rectangle (.4,-.3);
}
\overset{\varepsilon_a}{\Longrightarrow}
\tikzmath{
\draw (-.4,.4) -- (-.4,1) arc (180:0:.2cm) -- (0,.3);
\draw (-.2,1.2) -- (-.2,1.4);
\filldraw (-.2,1.2) circle (.05cm);
\filldraw (-.4,.4) circle (.05cm);
\filldraw[thick, fill=\aColor] (0,.2) circle (.1cm);
\draw[dashed, rounded corners=5pt] (-.6,.5) rectangle (.2,1);
}
\overset{\eta_\mu}{\Longrightarrow}
\tikzmath{
\draw (0,-.3) circle (.2cm);
\draw (0,-.1) -- (0,.1);
\draw (0,-.5) -- (0,-.7);
\filldraw (0,-.5) circle (.05cm);
\filldraw (0,-.1) circle (.05cm);
\filldraw (0,-.7) circle (.05cm);
\filldraw (-.2,-.9) circle (.05cm);
\draw (-.2,-.9) arc (180:0:.2cm) -- (.2,-1.1);
\filldraw[thick, fill=\aColor] (.2,-1.2) circle (.1cm);
\draw[dashed, rounded corners=5pt] (-.3,-.6) rectangle (.3,0);
%\draw[blue, dashed, rounded corners=5pt] (-.4,-.6) rectangle (.4,-1);
}
\overset{\varepsilon_\mu}{\Longrightarrow}
\tikzmath{
\draw (-.2,0) arc (180:0:.2cm) -- (.2,-.3);
\draw (0,.2) -- (0,.6);
\filldraw (-.2,0) circle (.05cm);
\filldraw (0,.2) circle (.05cm);
\filldraw[thick, fill=\aColor] (.2,-.4) circle (.1cm);
\draw[dashed, rounded corners=5pt] (-.4,-.2) rectangle (.4,.4);
}
\overset{\lambda}{\Longrightarrow}
\tikzmath{
\draw (0,0) -- (0,.8);
\filldraw[thick, fill=\aColor] (0,0) circle (.1cm);
\draw[dashed, rounded corners=5pt] (-.2,.2) rectangle (.2,.6);
}
$$
which is the identity as claimed.
\end{proof}

We have the following immediate corollary.

\begin{cor}
Suppose $A$ is a rigid algebra in a $\Gray$-monoid $\fC$ in which all hom categories are finitely semisimple linear categories, and all 1-morphisms admit right adjoints.
The semisimple tensor category $|A|=\fC(1_\fC\to A)$ from Construction \ref{const:MultiFusCatFromRigid2Alg} above is a multifusion category.
When $A$ is \emph{connected} (the unit $\iota\in \fC(1_\fC \to A)$ is simple), $|A|$ is fusion.
\end{cor}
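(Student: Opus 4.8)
The plan is to treat this as an essentially immediate consequence of Construction \ref{const:MultiFusCatFromRigid2Alg} and Proposition \ref{prop:Separable2AlgebrasInNiceGrayMonoid}, with the only remaining work being to match the abstract hypotheses on $\fC$ to the concrete finiteness axioms in the definition of a multifusion category. First I would record that Construction \ref{const:MultiFusCatFromRigid2Alg} already furnishes $|A| = \fC(1_\fC \to A)$ with the structure of a semisimple monoidal category: the monoidal product, associator, unit, and unitors are built from $\mu$, $\alpha$, $\iota$, $\lambda$, $\rho$, and the pentagon and triangle axioms are shown there to follow from the algebra coherences \eqref{eq:2AlgebraAssociator} and \eqref{eq:2AlgebraUnit}. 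The product is $\bbC$-bilinear on hom-spaces because $1$-composition and whiskering in a $\Gray$-monoid are linear functors, so $|A|$ is genuinely a monoidal $\bbC$-linear category.

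Next I would invoke Proposition \ref{prop:Separable2AlgebrasInNiceGrayMonoid}, whose hypotheses are exactly ours: $A$ is rigid and every $1$-morphism of $\fC$ admits a right adjoint. That proposition produces, for each $a \in |A|$, left and right duals $a^\vee$ and ${}^\vee a$ together with evaluations and coevaluations satisfying the zig-zag identities (the content of Figure \ref{fig:RealizationDualsZigZag}). Hence $|A|$ is a rigid monoidal category. At this stage the only thing separating $|A|$ from a multifusion category is finiteness.

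For the finiteness upgrade I would simply specialize the standing hypothesis that every hom-category of $\fC$ is a finitely semisimple linear category to the hom-category $\fC(1_\fC \to A) = |A|$ itself. This gives finitely many isomorphism classes of simple objects and finite-dimensional hom-spaces; in particular $\End_{|A|}(1_{|A|}) = \End_{|A|}(\iota)$ is a finite-dimensional semisimple (multimatrix) algebra automatically. Combined with the rigidity established above, this is precisely the definition of a multifusion category over $\bbC$, proving the first claim. For the connected case I would observe that if the unit $\iota \in \fC(1_\fC \to A)$ is simple, then $\End_{|A|}(1_{|A|}) = \bbC$, which is exactly the condition singling out a fusion category among multifusion categories.

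The main obstacle, such as it is, is not a new calculation but the bookkeeping verification that the categorical hypotheses on $\fC$ (linearity and finite semisimplicity of hom-categories, existence of right adjoints) transfer cleanly into the axiom list for a multifusion category, with nothing left unchecked; in particular one should confirm that no separability or additional connectedness assumption is secretly required beyond rigidity, since duals—and hence the rigidity needed for the multifusion conclusion—were obtained in Proposition \ref{prop:Separable2AlgebrasInNiceGrayMonoid} from rigidity alone. I expect this to be routine given the earlier results.
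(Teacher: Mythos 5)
Your proposal is correct and follows essentially the same route as the paper, which states this corollary as an immediate consequence of Construction \ref{const:MultiFusCatFromRigid2Alg} (semisimple monoidal structure), Proposition \ref{prop:Separable2AlgebrasInNiceGrayMonoid} (rigidity), and the hypothesis that hom categories are finitely semisimple (finiteness, with the fusion case following from $\End_{|A|}(1_{|A|})=\End(\iota)=\bbC$ when $\iota$ is simple). Your added bookkeeping—bilinearity of the product and the observation that no separability hypothesis is secretly used—is exactly the verification the paper leaves implicit.
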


To connect Construction \ref{const:MultiFusCatFromRigid2Alg} with the classification of rigid algebras in $2\Vect$, we note that for an algebra $(\cA,\mu, \alpha, 1_\cA, \lambda,\rho) \in 2\Vect$, 
we have a monoidal equivalence
$\cA \cong |\cA|:=\Fun(\Vect \to \cA)$
intertwining the monoidal structure $\mu$ and the monoidal structure from Construction \ref{const:MultiFusCatFromRigid2Alg}.

\begin{cor}
\label{cor:Separable2AlgebrasIn2Vec}
Rigid algebras $(\cA,\mu, \alpha, 1_\cA, \lambda,\rho) \in 2\Vect$ are exactly multifusion categories.
\end{cor}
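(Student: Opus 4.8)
The plan is to read this corollary as the identification of two notions, and so to prove it in two directions, both of which are largely bookkeeping on top of the machinery already assembled. Writing $|\cA| := \Fun(\Vect \to \cA)$, the observation preceding the statement supplies, for any algebra $\cA \in 2\Vect$, a monoidal equivalence $\cA \cong |\cA|$ (the canonical equivalence given by evaluation at $\bbC \in \Vect$, shown to be monoidal as in Example \ref{ex:RigidAlgebrasIn2Vect}) intertwining $\mu$ with the monoidal structure of Construction \ref{const:MultiFusCatFromRigid2Alg}. The two things I must establish are that every rigid algebra in $2\Vect$ yields a multifusion category, and conversely that every multifusion category is a rigid algebra in $2\Vect$.

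For the forward direction I would first check that $2\Vect$ satisfies the hypotheses of the preceding corollary: its hom-categories of linear functors between finite semisimple linear categories are again finite semisimple, and every such linear functor admits a right (indeed two-sided) adjoint. Given a rigid algebra $\cA \in 2\Vect$, rigidity of $\cA$ as an algebra is a property transported across the monoidal equivalence $\cA \cong |\cA|$, so $|\cA|$ is itself a rigid algebra satisfying these hypotheses; the preceding corollary then gives that $|\cA|$, hence $\cA$, is multifusion.

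For the converse I would start from a multifusion category $\cC$, regard its underlying finite semisimple linear category as an object of $2\Vect$, and equip it with $\mu := \otimes_\cC \colon \cC \boxtimes \cC \to \cC$, associator $\alpha$ the associativity constraint, unit $\iota := 1_\cC$, and unitors the left/right unit constraints. Then the algebra pentagon \eqref{eq:2AlgebraAssociator} and unit triangle \eqref{eq:2AlgebraUnit} are exactly the pentagon and triangle axioms of $\cC$. The substantive point is rigidity: I must produce a right adjoint $\mu^R$ to $\otimes_\cC$ which is an $\cA$-$\cA$ bimodule functor. I would assemble $\mu^R$ objectwise from the duals of $\cC$ via its $\ev$/$\coev$ data, build the Frobeniator $\kappa$ of \eqref{eq:Frobeniators} from the duality structure, and invoke Remark \ref{rem:FrobeniusatorOverdetermined} so that $\theta$ (and compatibility of the bimodule adjunction) is then forced; the adjunction unit and counit will be bimodule natural transformations by construction, so $\cC$ is rigid as an algebra.

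The main obstacle I expect is this converse, and specifically the verification that the right adjoint of the tensor product, built from the internal duals of $\cC$, genuinely carries an $\cA$-$\cA$ bimodule structure for which the adjunction unit and counit are bimodule maps — in other words that rigidity of $\cC$ as a monoidal category corresponds \emph{exactly} to rigidity of $\cC$ as an algebra in $2\Vect$. This is where the Gray-monoid coherence data must be reconciled with the internal duality, and it is essentially the computation underlying \cite[\S II]{MR4444089}. Finally I would note that $\cA \mapsto |\cA|$ and $\cC \mapsto (\cC, \otimes_\cC, \alpha, 1_\cC, \lambda, \rho)$ are mutually inverse up to the monoidal equivalence $\cA \cong |\cA|$, so the two notions coincide, completing the proof.
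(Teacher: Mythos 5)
Your proposal is correct in substance, and one of its two directions coincides with the paper's; the other takes a genuinely different route. For the direction ``rigid algebra $\Rightarrow$ multifusion,'' you do exactly what the paper does: apply Construction \ref{const:MultiFusCatFromRigid2Alg} and Proposition \ref{prop:Separable2AlgebrasInNiceGrayMonoid} (packaged in the corollary preceding the statement, whose hypotheses $2\Vect$ indeed satisfies) and transport the conclusion across the monoidal equivalence $\cA \cong |\cA|$. One small logical wrinkle: you do not need to ``transport rigidity of $\cA$ to $|\cA|$'' and then apply the corollary to $|\cA|$ --- applied to $|\cA|$ the corollary would say something about $||\cA||$, one level off. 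The corollary should be applied directly to the rigid algebra $\cA$, whose conclusion is precisely that $|\cA|$ is multifusion; your middle clause is redundant rather than wrong, and the intended argument is clear.

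For the converse ``multifusion $\Rightarrow$ rigid algebra,'' you diverge from the paper. The paper disposes of this in one line: for a multifusion category $\cA$, the 2-category of $\cA$-$\cA$ bimodule categories, bimodule functors, and bimodule natural transformations admits adjoints, so the right adjoint of $\mu = \otimes_\cA$ taken \emph{in that 2-category} is automatically an $\cA$-$\cA$ bimodule functor with bimodule unit and counit --- which is verbatim the definition of rigidity of the algebra. You instead propose to build $\mu^R$ by hand from the internal duals of $\cA$, construct the Frobeniator $\kappa$ of \eqref{eq:Frobeniators} from the $\ev$/$\coev$ data, and invoke Remark \ref{rem:FrobeniusatorOverdetermined} to force the remaining compatibility. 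This is viable and correctly identified by you as the substantive point, but it is considerably more work: you must reconcile the Gray-monoid coherence data with the internal duality, essentially redoing the computation of \cite[\S II]{MR4444089}, whereas the paper's appeal to the standard fact that adjoints of bimodule functors between finite semisimple bimodule categories over multifusion categories are again bimodule functors sidesteps all of that coherence bookkeeping. What your route buys in exchange is self-containedness: it stays entirely inside the graphical calculus already set up in \S\ref{sec:E1algIn2Vect} rather than importing external multifusion module theory.
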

\begin{proof}
Every multifusion category gives a rigid algebra in $2\Vect$ as the 2-category of $\cA$-$\cA$ bimodules, bimodule functors, and bimodule natural transformations admits adjoints.
The converse direction is exactly Construction \ref{const:MultiFusCatFromRigid2Alg} above, noting that if $\cA\in 2\Vect$, then $\cA\cong |\cA|$.
\end{proof}

%%%%%%%%%%%%%%%%%%%%%%%%%%%%%%%%%%%%%%%%%%%%%%%%%%%%%%%%%%%%%%%%%%%%%%%%%%%%%%%%%%
\subsection{\texorpdfstring{$\rmH^*$}{H*}-algebras in \texorpdfstring{$\rmB2\Hilb$}{B2Hilb}}
\label{sec:H*algInB2Hilb}
In this section, we assume $2\Hilb$ is a $\sC^*\Gray$-monoid by \cite{2404.05193}; we refer the reader to that article for the definition.

We now give a definition of an $\rmH^*$-algebra in $\rmB2\Hilb$ in the more general setting of a $\sC^*\Gray$-monoid 
whose underlying strict $\rmC^*$ 2-category is a pre-3-Hilbert space.
We expect the following definition can be generalized to any unitary multifusion 2-category, a notion which will be defined in future work.

\begin{defn}\label{defn:Hstaralg-in-pre3hilb}
Suppose $\fC$ is a $\sC^*\Gray$-monoid 
whose underlying strict $\rmC^*$ 2-category is a pre-3-Hilbert space.
An $\rmH^*$-\emph{algebra} in $\fC$ is a unital algebra
$(A,\mu,\alpha, \iota,\lambda,\rho)$ 
such that
\begin{enumerate}[label=($\rmH^*$\arabic*)]
\setcounter{enumi}{-1} 
    \item (Unitary) the 2-isomorphisms $\alpha,\lambda,\rho$ are all unitary.
    \item 
    \label{H:UnitarilyRigid}
    (Unitarily rigid) the unitary adjoint $\mu^*$ of $\mu$ is an $A$-$A$ bimodule functor (which is a property by Remark \ref{rem:FrobeniusatorOverdetermined}) and the unit $\eta_\mu: \id_A \Rightarrow \mu^*\xo \mu$ and counit $\varepsilon_\mu: \mu\xo \mu^* \Rightarrow \id_A$ are $A$-$A$ bimodule natural transformations.
    \item 
    \label{H:UnitarilySeparable}
    (Unitarily separable) $\varepsilon_\mu^\dag$ is an $A$-$A$ bimodule natural transformation and the bubble on the $A$-sheet $\varepsilon_\mu\circ \varepsilon_\mu^\dag : \id_A\Rightarrow\id_A$ is invertible.
\item\label{H:SphereStandard}
(Standard) for all $\alpha:\id_A \Rightarrow \id_A$, we have
$$
\Psi^{\fC}_{1_\fC}\left(
\tikzmath{
\draw[rounded corners=5pt,dashed] (0,0) rectangle +(.4,.4);
}
\xRightarrow{\epsilon_\mu^\dag\circ\eta_\iota}
\tikzmath{
\draw[thick] (0,-.4) -- (0,-.6);
\draw[thick] (0,.4) -- (0,.6);
\filldraw (0,.6) circle (.05);
\filldraw (0,.4) circle (.05);
\filldraw (0,-.4) circle (.05);
\filldraw (0,-.6) circle (.05);
\draw[thick] (-.2,-.2) arc (-180:0:.2cm) -- (.2,.2) arc(0:180:.2cm) -- (-.2,-.2);
\draw[rounded corners=5pt,dashed] (-.4,-.2) rectangle (0,.2);
}
\xRightarrow{\alpha}
\tikzmath{
\draw[thick] (0,-.4) -- (0,-.6);
\draw[thick] (0,.4) -- (0,.6);
\filldraw (0,.6) circle (.05);
\filldraw (0,.4) circle (.05);
\filldraw (0,-.4) circle (.05);
\filldraw (0,-.6) circle (.05);
\draw[thick] (-.2,-.2) arc (-180:0:.2cm) -- (.2,.2) arc(0:180:.2cm) -- (-.2,-.2);
}
\xRightarrow{\eta_\iota^\dag\circ\epsilon_\mu}
\tikzmath{
\draw[rounded corners=5pt,dashed] (0,0) rectangle +(.4,.4);
}
\right)
=
\Psi^{\fC}_{1_\fC}\left(
\tikzmath{
\draw[rounded corners=5pt,dashed] (0,0) rectangle +(.4,.4);
}
\xRightarrow{\epsilon_\mu^\dag\circ\eta_\iota}
\tikzmath{
\draw[thick] (0,-.4) -- (0,-.6);
\draw[thick] (0,.4) -- (0,.6);
\filldraw (0,.6) circle (.05);
\filldraw (0,.4) circle (.05);
\filldraw (0,-.4) circle (.05);
\filldraw (0,-.6) circle (.05);
\draw[thick] (-.2,-.2) arc (-180:0:.2cm) -- (.2,.2) arc(0:180:.2cm) -- (-.2,-.2);
\draw[rounded corners=5pt,dashed] (.4,-.2) rectangle (0,.2);
}
\xRightarrow{\alpha}
\tikzmath{
\draw[thick] (0,-.4) -- (0,-.6);
\draw[thick] (0,.4) -- (0,.6);
\filldraw (0,.6) circle (.05);
\filldraw (0,.4) circle (.05);
\filldraw (0,-.4) circle (.05);
\filldraw (0,-.6) circle (.05);
\draw[thick] (-.2,-.2) arc (-180:0:.2cm) -- (.2,.2) arc(0:180:.2cm) -- (-.2,-.2);
}
\xRightarrow{\eta_\iota^\dag\circ\epsilon_\mu}
\tikzmath{
\draw[rounded corners=5pt,dashed] (0,0) rectangle +(.4,.4);
}
\right).
$$
\end{enumerate}
\end{defn}

\begin{rem}
Recall that the standardness condition \ref{H:Standard} in Section \ref{sec:H*Algs} is identical to the sphericality of $\psi$ with evaluation 
constructed from the counit and multiplcation ($\ev=i^\dag\mu$)
and the coevaluation 
constructed from the unit and comultiplcation ($\coev=\mu^\dag i$).
If we similarly modify the notion of sphericality in \cite[\S2.3]{1812.11933}, we obtain the condition \ref{H:SphereStandard} above. 
David Reutter has made the intriguing suggestion that the non-unitary analog of \ref{H:SphereStandard} may be automatic in a fusion 2-category.
\end{rem}

\begin{prop}\label{prop:hstarmfc-to-hstaralg}
Every $(\cC,\vee,\psi_\cC)\in\sH^*\mFC$ is an $\rmH^*$-algebra in $\rmB2\Hilb$.
\end{prop}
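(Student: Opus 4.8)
The plan is to exhibit $\cC$ itself as the algebra, transporting its monoidal structure into the algebra data, and then to verify the four axioms of Definition \ref{defn:Hstaralg-in-pre3hilb} one at a time, leaning on the non-unitary analysis of \S\ref{sec:E1algIn2Vect} for the purely algebraic content and on the UDF $\vee$ and spherical weight $\psi$ for the unitary content. First I would set up the data. Via its canonical $2$-Hilbert space structure (obtained from $\psi$ and $\vee$), $\cC$ is an object of $2\Hilb$, hence a $1$-morphism $A:=\cC$ in $\rmB2\Hilb$, with monoidal unit $1_\fC=\Hilb$. I take $\mu:=\otimes_\cC\colon \cC\boxtimes\cC\to\cC$, which is a $\dag$-functor since $\cC$ is unitary multifusion; I take $\iota:=1_\cC$ viewed as a functor $\Hilb\to\cC$; and I let $\alpha,\lambda,\rho$ be the associativity and unit constraints of $\cC$. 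Axiom $(\rmH^*0)$ is then immediate, since the coherence isomorphisms of a unitary multifusion category are unitary by definition. Moreover, by Corollary \ref{cor:Separable2AlgebrasIn2Vec} the tuple $(\cC,\mu,\alpha,\iota,\lambda,\rho)$ is already a rigid, in particular unital, algebra in $2\Vect$, so the underlying coherence conditions \eqref{eq:2AlgebraAssociator} and \eqref{eq:2AlgebraUnit} hold.

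For $(\rmH^*1)$ I would produce the bimodule structure on the unitary adjoint $\mu^*$. By the theory of unitary adjunction in $2\Hilb$ (Example \ref{ex:UnitaryAdjunction}), $\mu^*$ exists and is unique up to unique unitary isomorphism, and as a plain functor it is canonically isomorphic to the bimodule right adjoint $\mu^R$ furnished by rigidity of $\cC$ in $2\Vect$. Since being a bimodule functor is a property rather than structure (Remark \ref{rem:FrobeniusatorOverdetermined}), I transport the Frobeniators along this isomorphism to equip $\mu^*$ with a bimodule structure; because the transport isomorphism $\mu^*\cong\mu^R$ is a bimodule isomorphism by construction, the unit $\eta_\mu$ and counit $\varepsilon_\mu$ of the unitary adjunction are then bimodule natural transformations. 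The relevant duality data here is exactly the evaluation and coevaluation supplied by the UDF $\vee$, which is precisely what makes the two-sided adjunction simultaneously unitary and bimodular.

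Axiom $(\rmH^*2)$ splits into two checks. That $\varepsilon_\mu^\dag$ is again a bimodule natural transformation follows from the general principle, already used repeatedly above, that in the unitary setting the dagger of a bimodule map is a bimodule map, the module constraints being unitary. Invertibility of the bubble $\varepsilon_\mu\circ\varepsilon_\mu^\dag\in\End(\id_\cC)$ is exactly separability of $\cC$, which holds because multifusion categories in characteristic zero are separable by \cite{MR4254952}. Finally, for the standardness axiom $(\rmH^*3)$ I would unwind the displayed $3$-morphism of $\rmB2\Hilb$ into the internal graphical calculus of $\cC$: the composite built from $\eta_\iota,\varepsilon_\mu$ and their daggers with an inserted $\alpha\in\End(\id_\cC)$ evaluates, under $\Psi^\fC_{1_\fC}=\Psi_\Hilb$ (which is evaluation at $\bbC$ by Sub-Example \ref{ex:WeightOn2Hilb}), to a $\psi$-weighted left or right partial trace of $\alpha$; the required equality of the two sides is then precisely the sphericality of $\psi$ with respect to $\vee$, i.e.\ the defining datum of an $\rmH^*$-multifusion category.

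The main obstacle I expect is $(\rmH^*3)$: one must compute the coevaluation and evaluation data $\eta_\iota,\varepsilon_\mu$ explicitly enough to identify the two sides of the displayed equation with the $\psi$-weighted left and right traces of the sphericality condition. This is where the combinatorial bookkeeping of the $\sC^*\Gray$-monoid coheretors, of the kind appearing in Proposition \ref{prop:Separable2AlgebrasInNiceGrayMonoid} and Figure \ref{fig:RealizationDualsZigZag}, meets the internal duality of $\cC$, and getting the normalizations and the left-versus-right matching exactly right is the crux of the argument.
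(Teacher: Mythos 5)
Your overall skeleton (verify the four axioms, importing the purely algebraic content from \S\ref{sec:E1algIn2Vect}) is reasonable, but there are two genuine gaps, both surfacing in your treatment of unitary separability \ref{H:UnitarilySeparable}. First, the claim that ``invertibility of the bubble $\varepsilon_\mu\circ\varepsilon_\mu^\dag$ is exactly separability of $\cC$'' is false as stated: separability of a multifusion category \cite{MR4254952} produces a right inverse $\delta_\mu$ of $\varepsilon_\mu$ as a bimodule natural transformation, and $\delta_\mu$ has a priori nothing to do with $\varepsilon_\mu^\dag$. The two conditions are equivalent only after an operator-algebraic argument: one works in the unitary category $\End^\dag_{\cC\text{-}\cC}(\cC\oplus\cC\boxtimes\cC)$, which is equivalent to $\Hilb^{\oplus n}$; one transfers the algebraic splitting $\delta_\mu$ into this dagger setting (this is where Lemma \ref{lem:ForgetBimoduleFunctorsFullyFaithful} is needed, since \cite{MR4254952} lives in the non-dagger world); and one then invokes the argument of Remark \ref{rem:RightInvIffBubbleInv}, that in $\Hilb^{\oplus n}$ a morphism $f$ is right invertible if and only if $ff^\dag$ is invertible. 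This chain of reductions is precisely the paper's proof of \ref{H:UnitarilySeparable}; omitting it omits the actual mathematical content of unitary separability.

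Second, your verification of \ref{H:UnitarilyRigid} by transporting the Frobeniators from the $2\Vect$-adjoint $\mu^R$ along the canonical isomorphism $\mu^*\cong\mu^R$ undermines your own later step. That canonical isomorphism between adjoints need not be unitary (the paper stresses exactly this point, citing \cite{MR4133163}), so the transported modulators on $\mu^*$ need not be unitary; yet your argument that $\varepsilon_\mu^\dag$ is again a bimodule natural transformation explicitly invokes ``the module constraints being unitary.'' The paper sidesteps this by taking the adjoint inside the $\rmC^*$ 2-category of unitary $\cC$-$\cC$ bimodule categories equipped with bimodule traces (folding plus Lemma \ref{lem:UAFonModC}): there the unitary adjoint $\mu^*$ comes with unitary modulators, the unit and counit are bimodule transformations by construction, and daggers of bimodule maps are bimodule maps for free. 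Finally, concerning \ref{H:SphereStandard}: your plan of unwinding the 3-morphism and reducing to sphericality of $\psi_\cC$ is indeed the intended justification (see the remark following Definition \ref{defn:Hstaralg-in-pre3hilb})---the paper's terse proof does not spell this out either---but in your write-up it remains an unexecuted sketch, so as it stands the proposal does not yet constitute a complete proof.
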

\begin{proof}
To see that $\cC$ is unitarily rigid, we note that the category of $\cC$-$\cC$ unitary bimodules equipped with bimodule traces, unitary bimodule functors, and bimodule natural transformations admits adjoints, and in particular, unitary adjoints.

To see that $\cC$ is unitarily separable, we 
work in the category $\End^\dag_{\cC\text{-}\cC}(\cC\oplus \cC\boxtimes \cC)$,
which is equivalent to $\End_{\cC\text{-}\cC}(\cC\oplus \cC\boxtimes \cC)$ by Lemma \ref{lem:ForgetBimoduleFunctorsFullyFaithful}.
Since every multifusion category is separable, $\varepsilon_\mu$ admits a right inverse $\delta_\mu: \id_{\cC}\Rightarrow \otimes \circ \otimes^*$ in the latter category, and thus in the former.
Since $\End^\dag_{\cC\text{-}\cC}(\cC\oplus \cC\boxtimes \cC)\cong \Hilb^{\oplus n}$ for some finite $n$,
the argument of Remark \ref{rem:RightInvIffBubbleInv} applies, i.e., $\varepsilon_\mu$ admits a right inverse if and only if $\varepsilon_\mu\circ \varepsilon_\mu^\dag$ is invertible.
\end{proof}

\begin{construction}
\label{const:DaggerStructure}
Suppose $\fC$ is a $\sC^*\Gray$-monoid and $A\in\fC$ is a unital algebra.
As in Construction \ref{const:MultiFusCatFromRigid2Alg}, we define the monoidal category
$|A|:=\Hom_\fC(1_\fC\to A)$.
We endow $|A|$ with a dagger structure given by the dagger in $\Hom_\fC(1_\fC\to A)$, i.e.,
$$
\left(
f:
\tikzmath{
    \draw (0,0)  -- (0,.3);
    \filldraw[thick, fill=\aColor] (0,0) circle (.1cm);
}
\Rightarrow
\tikzmath{
    \draw (0,0)  -- (0,.3);
    \filldraw[thick, fill=\bColor] (0,0) circle (.1cm);
}
\right)^\dag
:=
\left(
f^\dag :
\tikzmath{
    \draw (0,0)  -- (0,.3);
    \filldraw[thick, fill=\bColor] (0,0) circle (.1cm);
}
\Rightarrow
\tikzmath{
    \draw (0,0)  -- (0,.3);
    \filldraw[thick, fill=\aColor] (0,0) circle (.1cm);
}
\right)\,.
$$
One verifies that this dagger structure is monoidal for $|A|$ using the unitarity of $\phi,\alpha,\lambda,\rho$, together with the calculation
$$
(f\otimes g)^\dag
=
\left(
\tikzmath{
    \draw (-.3,-.1) -- (-.3,0) arc (180:0:.3cm) -- (.3,-.3);
    \draw (0,.3) -- (0,.6);
    \filldraw (0,.3) circle (.05cm);
    \filldraw[thick, fill=\aColor] (-.3,-.1) circle (.1cm);
    \filldraw[thick, fill=\bColor] (.3,-.3) circle (.1cm);
    \draw[dashed, rounded corners=5pt] (-.5, -.5) rectangle (.5,.1);
}
\overset{f\otimes g}{\Longrightarrow}
\tikzmath{
    \draw (-.3,-.1) -- (-.3,0) arc (180:0:.3cm) -- (.3,-.3);
    \draw (0,.3) -- (0,.6);
    \filldraw (0,.3) circle (.05cm);
    \filldraw[fill=\cColor, thick] (-.3,-.1) circle (.1cm);
    \filldraw[fill=red, thick] (.3,-.3) circle (.1cm);
}
\right)^\dag
=
\left(
\tikzmath{
    \draw (-.3,-.1) -- (-.3,0) arc (180:0:.3cm) -- (.3,-.3);
    \draw (0,.3) -- (0,.6);
    \filldraw (0,.3) circle (.05cm);
    \filldraw[fill=\cColor, thick] (-.3,-.1) circle (.1cm);
    \filldraw[fill=red, thick] (.3,-.3) circle (.1cm);
    \draw[dashed, rounded corners=5pt] (-.5, -.5) rectangle (.5,.1);
}
\overset{f^\dag\otimes g^\dag}{\Longrightarrow}
\tikzmath{
    \draw (-.3,-.1) -- (-.3,0) arc (180:0:.3cm) -- (.3,-.3);
    \draw (0,.3) -- (0,.6);
    \filldraw (0,.3) circle (.05cm);
    \filldraw[thick, fill=\aColor] (-.3,-.1) circle (.1cm);
    \filldraw[thick, fill=\bColor] (.3,-.3) circle (.1cm);
}
\right).
$$
We thus see that $|A|$ is a finite semisimple $\rmC^*$ monoidal category, and when $\fC$ admits right adjoints for all 1-morphisms, $|A|$ is a unitary multifusion category.
\end{construction}

\begin{prop}\label{prop:underlying-hstar-udf}
Suppose $\fC$ is a $\sC^*\Gray$-monoid 
whose underlying strict $\rmC^*$ 2-category is equipped with a UAF $*$
and $A\in\fC$ is an $\rmH^*$-algebra.
Using the UAF on $\fC$, the dual functor $\vee$ from Construction \ref{const:MultiFusCatFromRigid2Alg} on $|A|$ is a UDF under the dagger structure from Construction \ref{const:DaggerStructure}.
\end{prop}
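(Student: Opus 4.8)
The plan is to verify directly the two defining conditions of a unitary dual functor on $|A|$, equivalently of a UAF on $\rmB|A|$ as in Definition \ref{def:udf}: that the induced functor $\vee$ is a dagger functor, and that its canonical tensorators $\nu_{a,b}\colon a^\vee\otimes b^\vee\to (b\otimes a)^\vee$ together with the unit constraint are unitary. (The pivotal structure $\varphi$ is then automatically unitary, being induced by the UAF.)

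First I would upgrade Construction \ref{const:MultiFusCatFromRigid2Alg} to the unitary setting. Since the underlying $\rmC^*$ 2-category of $\fC$ carries a UAF, I take the right adjoint $a^R$ of each $a\in|A|=\fC(1_\fC\to A)$ to be its unitary adjoint $a^*$, and by \ref{H:UnitarilyRigid} I take $\mu^R=\mu^*$ to be the unitary adjoint of $\mu$, which is an $A$-$A$ bimodule functor whose unit $\eta_\mu$ and counit $\varepsilon_\mu$ are bimodule natural transformations; the Frobeniators are then fixed by $\alpha$ and the adjunction data as in Remark \ref{rem:FrobeniusatorOverdetermined}. The point of these choices is that every $2$-morphism occurring in the formulas for $\ev_a^L,\coev_a^L,\ev_a^R,\coev_a^R$ is now of one of two manifestly controllable types: either a coherence isomorphism ($\phi$, $\alpha$, $\lambda$, $\rho$, and the coheretor composite \eqref{eq:AbbreviatedCoheretor}), which is unitary by the $\sC^*\Gray$-monoid axioms together with the unitarity condition $(\rmH^*0)$, or a unit/counit of a \emph{unitary} adjunction, namely $\eta_a,\varepsilon_a$ of $a\dashv^\dag a^*$ and $\eta_\mu,\varepsilon_\mu$ of $\mu\dashv^\dag\mu^*$.

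Second, I would establish that $\vee$ is a dagger functor, i.e.\ $(f^\dag)^\vee=(f^\vee)^\dag$ for every $2$-morphism $f$ of $|A|$, where the dagger on $|A|$ is the ambient dagger in $\fC(1_\fC\to A)$ from Construction \ref{const:DaggerStructure}. Writing $f^\vee$ as the usual composite of $\coev_a^L$, $f$, and $\ev_b^L$, the claim reduces to the single graphical identity that the dagger of $\ev_a^L$ agrees, up to the unitary coheretors, with the coevaluation read against the opposite dual — precisely the statement that the chosen solutions to the conjugate equations are dagger-balanced. This in turn follows from the snake relations already proved in Proposition \ref{prop:Separable2AlgebrasInNiceGrayMonoid} combined with the fact that for a unitary adjunction the unit and counit dagger into one another compatibly: concretely one replaces each $\eta_\bullet,\varepsilon_\bullet$ by its dagger, uses unitarity of the coherence isomorphisms to commute the daggers past them, and recognizes the resulting diagram as the defining composite for the dual of $f^\dag$. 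Unitarity of the tensorator $\nu_{a,b}$ and of the unit constraint then follows because each is assembled from exactly the same unitary building blocks: I would show $\nu^\dag=\nu^{-1}$ using the zig-zag identities of Proposition \ref{prop:Separable2AlgebrasInNiceGrayMonoid} together with the dagger-functoriality just established, and treat the unit constraint identically via $\lambda,\rho$. Having verified both conditions of Definition \ref{def:udf}, $\vee$ is a UDF; the right-dual case is symmetric.

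The hard part will be the dagger-functoriality step, since the evaluations and coevaluations are long pasting composites involving the coheretor chains \eqref{eq:AbbreviatedCoheretor}, so the genuine work is the bookkeeping of showing each such chain is unitary and that the unitary-adjunction units and counits dagger correctly through the pasting — essentially a dagger-decorated analogue of the diagram in Figure \ref{fig:RealizationDualsZigZag}. I expect the unitarity condition $(\rmH^*0)$ and \ref{H:UnitarilyRigid} to be exactly what make this bookkeeping go through, whereas the separability condition \ref{H:UnitarilySeparable} and the standardness condition \ref{H:SphereStandard} should not be needed for this statement; they enter only when equipping $|A|$ with its spherical weight.
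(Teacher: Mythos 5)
Your proposal is correct and takes essentially the same approach as the paper: the paper likewise verifies the two conditions of Definition \ref{def:udf} (dagger functoriality of $\vee$ and unitarity of the canonical tensorator \eqref{eq:CanonicalDualTensorator}) by daggering the defining pasting composites, with the key inputs being unitarity of all coherence 2-cells and the dagger-compatibility of the ambient UAF --- your ``units and counits dagger into one another compatibly,'' which the paper encodes as $f^{*\dag}=f^{\dag*}$ in \eqref{eq:UnitaryAdjointsIn2Cat} together with unitarity of the tensorators $*^2$ --- the bookkeeping being carried out in the large diagrams of Figures \ref{fig:RealizationUDF-DaggerFunctor} and \ref{fig:UnitaryTensorator}. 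Your accounting of the hypotheses also matches the paper: only the unitarity axiom and \ref{H:UnitarilyRigid} are used here, while \ref{H:UnitarilySeparable} and \ref{H:SphereStandard} are not needed for this statement, standardness first entering in Proposition \ref{prop:underlying-hstar-spherical-weight} to make the weight spherical.
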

\begin{proof}
We must prove that the dual functor $\vee$ is a dagger functor, and the canonical tensorator 
$
\vee^2_{a,b}: a^{\vee} \otimes_{\rm mop} b^\vee = b^\vee\otimes a^\vee \to (a\otimes b)^\vee
$
given (in the graphical calculus of a monoidal category) by
\begin{equation}
\label{eq:CanonicalDualTensorator}
\tikzmath{
	\draw (.8,.6) -- (.8,0) arc (0:-180:.4cm) arc (0:180:.2cm) -- (-.4,-.6);
	\draw (.7,.6) -- (.7,0) arc (0:-180:.3cm) arc (0:180:.5cm) -- (-.9,-.6);
	\node at (1.4,.4) {$\scriptstyle (a\otimes b)^\vee$};
	\node at (-.6,-.4) {$\scriptstyle a^\vee$};
	\node at (-1.1,-.4) {$\scriptstyle b^\vee$};
}
=
\begin{aligned}
(\ev_b\otimes \id_{(a\otimes b)^\vee})
&\circ 
(\id_{b^\vee}\otimes \ev_a \otimes \id_b \otimes \id_{(a\otimes b)^\vee})
\\&\circ
(\id_{b^\vee\otimes a^\vee} \otimes \coev_{a\otimes b})
\end{aligned}
\end{equation}
is unitary.
Checking both of these amount to showing very large pasting diagrams commute.

Since the underlying strict $\rmC^*$ 2-category of $\fC$ is equipped with a unitary (right) adjoint 2-functor $*$ (which is the identity on objects), for all 2-morphisms $f:X\Rightarrow Y$, we have $f^{*\dag}=f^{\dag *}$.
In the graphical calculus for the underlying 2-category $\fC$ in the context of the graphical calculus for $\Gray$-monoids from \cite{1409.2148}, this means the following diagram commutes for all $f:a\Rightarrow b$ for  $a,b: 1\to A$ in $\fC$:
\begin{equation}
\label{eq:UnitaryAdjointsIn2Cat}
\begin{tikzpicture}[baseline= (a).base]\node[scale=1] (a) at (0,0){\begin{tikzcd}
\tikzmath{
\draw (0,0) -- (0,-.8);
\filldraw[thick, fill=\aColor] (0,0) circle (.1cm);
\draw[dashed, red, rounded corners=5pt] (-.2,.2) rectangle (.2,.6);
\draw[dashed, rounded corners=5pt] (-.2,-.2) rectangle (.2,-.6);
}
\arrow[r, Rightarrow, "\varepsilon_b^\dag"]
\arrow[d, red, Rightarrow, "\eta_b"]
&
\tikzmath{
\draw (.2,.2) -- (.2,.5);
\draw (.2,1) -- (.2,1.2);
\filldraw[thick, fill=\bColor] (.2,.5) circle (.1cm);
\filldraw[thick, fill=\bColor] (.2,.9) circle (.1cm);
\filldraw[thick, fill=\aColor] (.2,1.3) circle (.1cm);
\draw[dashed, rounded corners=5pt] (0,1.1) rectangle (.4,.7);
}
\arrow[r, Rightarrow, "f^\dag"]
&
\tikzmath{
\draw (.2,.2) -- (.2,.5);
\draw (.2,1) -- (.2,1.2);
\filldraw[thick, fill=\bColor] (.2,.5) circle (.1cm);
\filldraw[thick, fill=\aColor] (.2,.9) circle (.1cm);
\filldraw[thick, fill=\aColor] (.2,1.3) circle (.1cm);
\draw[dashed, rounded corners=5pt] (0,1.5) rectangle (.4,.7);
}
\arrow[d, Rightarrow, "\eta_a^\dag"]
\\
\tikzmath{
\draw (.2,.2) -- (.2,.5);
\draw (.2,1) -- (.2,1.2);
\filldraw[thick, fill=\aColor] (.2,.5) circle (.1cm);
\filldraw[thick, fill=\bColor] (.2,.9) circle (.1cm);
\filldraw[thick, fill=\bColor] (.2,1.3) circle (.1cm);
\draw[dashed, rounded corners=5pt] (0,1.1) rectangle (.4,.7);
}
\arrow[r, Rightarrow, "f^\dag"]
&
\tikzmath{
\draw (.2,.2) -- (.2,.5);
\draw (.2,1) -- (.2,1.2);
\filldraw[thick, fill=\aColor] (.2,.5) circle (.1cm);
\filldraw[thick, fill=\aColor] (.2,.9) circle (.1cm);
\filldraw[thick, fill=\bColor] (.2,1.3) circle (.1cm);
\draw[dashed, rounded corners=5pt] (0,1.1) rectangle (.4,.3);
}
\arrow[r, Rightarrow, "\varepsilon_a"]
&
\tikzmath{
\draw (0,0) -- (0,-.8);
\filldraw[thick, fill=\bColor] (0,0) circle (.1cm);
}
\end{tikzcd}};\end{tikzpicture}
\qquad\qquad\qquad
\forall
f:
\tikzmath{
    \draw (0,0)  -- (0,.3);
    \filldraw[thick, fill=\aColor] (0,0) circle (.1cm);
}
\Rightarrow
\tikzmath{
    \draw (0,0)  -- (0,.3);
    \filldraw[thick, fill=\bColor] (0,0) circle (.1cm);
}
\end{equation}
We also know that the canonical tensorators for $*$ are unitary. That is, suppose we have 1-morphisms ${}_xX_y$ and ${}_yY_z$, 
represented graphically:
\[
\tikzmath{
\draw[thick,black] (0,0) -- +(0,.6);
}
=
x
\,,\qquad
\tikzmath{
\draw[thick,red] (0,0) -- +(0,.6);
}
=
y\,,\qquad
\tikzmath{
\draw[thick,blue] (0,0) -- +(0,.6);
}
=
z\,,\qquad
\tikzmath{
\draw[thick,black] (0,0) -- +(0,.4) coordinate (a);
\draw[thick,red] (a) -- +(0,.4);
\filldraw[draw=black,thick, fill=\aColor] (a) circle (.1);
}
=
X\,,\qquad
\tikzmath{
\draw[thick,red] (0,0) -- +(0,.4) coordinate (a);
\draw[thick,blue] (a) -- +(0,.4);
\filldraw[draw=black,thick, fill=\bColor] (a) circle (.1);
}
=
Y\,.
\]
Then representing the unitary adjoint of
$
\tikzmath{
\draw[thick,black] (0,0) -- +(0,.4) coordinate (a1);
\draw[thick,red] (a1) -- +(0,.4) coordinate (b1);
\draw[thick,blue] (b1) -- +(0,.4);
\filldraw[draw=black,thick, fill=\aColor] (a1) circle (.1);
\filldraw[draw=black,thick, fill=\bColor] (b1) circle (.1);
}
$
with
$
\tikzmath[yscale=-1]{
\draw[thick,black] (0,0) -- +(0,.4) coordinate (a1);
\draw[thick,red] (a1) -- +(0,.4) coordinate (b1);
\draw[thick,blue] (b1) -- +(0,.4);
\draw (-.1,.2) rectangle (.1,1);
\filldraw[draw=black,thick, fill=\aColor] (a1) circle (.1);
\filldraw[draw=black,thick, fill=\bColor] (b1) circle (.1);
},
$
the following map is unitary:
\[
*^2:
\tikzmath{
\draw[thick,blue] (0,0) -- +(0,.4) coordinate (b);
\draw[thick,red] (b) -- +(0,.4) coordinate (a);
\draw[thick,black] (a) -- +(0,.8);
\draw[draw=none] (a) -- +(-.2,.2) coordinate (x);
\draw[rounded corners=5pt,dashed] (x) rectangle +(.4,.4);
\filldraw[draw=black,thick, fill=\aColor] (a) circle (.1);
\filldraw[draw=black,thick, fill=\bColor] (b) circle (.1);
}
\xRightarrow{\eta_{b\circ a}}
\tikzmath{
\draw[thick,blue] (0,0) -- +(0,.4) coordinate (b1);
\draw[thick,red] (b1) -- +(0,.4) coordinate (a1);
\draw[thick,black] (a1) -- +(0,.4) coordinate (a2);
\draw[thick,red] (a2) -- +(0,.4) coordinate (b2);
\draw[thick,blue] (b2) -- +(0,.4) coordinate (b3);
\draw[thick,red] (b3) -- +(0,.4) coordinate (a3);
\draw[thick,black] (a3) -- +(0,0.4);
\draw (-.1,1.8) rectangle (.1,2.6);
\draw[draw=none] (a1) -- +(-.2,-.2) coordinate (x);
\draw[rounded corners=5pt,dashed] (x) rectangle +(.4,.8);
\filldraw[draw=black,thick, fill=\aColor] (a1) circle (.1);
\filldraw[draw=black,thick, fill=\aColor] (a2) circle (.1);
\filldraw[draw=black,thick, fill=\aColor] (a3) circle (.1);
\filldraw[draw=black,thick, fill=\bColor] (b1) circle (.1);
\filldraw[draw=black,thick, fill=\bColor] (b2) circle (.1);
\filldraw[draw=black,thick, fill=\bColor] (b3) circle (.1);
}
\xRightarrow{\varepsilon_a}
\tikzmath{
\draw[thick,blue] (0,0) -- +(0,.4) coordinate (b1);
\draw[thick,red] (b1) -- +(0,.4) coordinate (b2);
\draw[thick,blue] (b2) -- +(0,.4) coordinate (b3);
\draw[thick,red] (b3) -- +(0,.4) coordinate (a3);
\draw[thick,black] (a3) -- +(0,0.4);
\draw (-.1,1) rectangle (.1,1.8);
\draw[draw=none] (b1) -- +(-.2,-.2) coordinate (x);
\draw[rounded corners=5pt,dashed] (x) rectangle +(.4,.8);
\filldraw[draw=black,thick, fill=\aColor] (a3) circle (.1);
\filldraw[draw=black,thick, fill=\bColor] (b1) circle (.1);
\filldraw[draw=black,thick, fill=\bColor] (b2) circle (.1);
\filldraw[draw=black,thick, fill=\bColor] (b3) circle (.1);
}
\xRightarrow{\varepsilon_b}
\tikzmath[yscale=-1]{
\draw[thick,black] (0,0) -- +(0,.4) coordinate (a1);
\draw[thick,red] (a1) -- +(0,.4) coordinate (b1);
\draw[thick,blue] (b1) -- +(0,.4);
\draw (-.1,.2) rectangle (.1,1);
\filldraw[draw=black,thick, fill=\aColor] (a1) circle (.1);
\filldraw[draw=black,thick, fill=\bColor] (b1) circle (.1);
}\,.
\]
(As above, a vertically reflected node represents the unitary adjoint of the node).

We first prove $f^{\vee\dag}=f^{\dag\vee}$ for all $f:a\to b$ in $|A|$.
This amounts to proving the pasting diagram in Figure \ref{fig:RealizationUDF-DaggerFunctor} below commutes.
Going to the right and down is $f^{\vee\dag}$, and going down and then right is $f^{\dag \vee}$.
The middle hexagon commutes by \eqref{eq:UnitaryAdjointsIn2Cat}.
The remaining cells of the pasting diagram also commute, but for the portions of the diagram below and above the middle hexagon, we suppress canonical isomorphisms of the form $\alpha,\kappa,\lambda,\rho,\phi$, and include at most one type of interesting morphism of the form $f^\dag,\varepsilon,\eta$ per cell of the pasting diagram.
We leave this straightforward exercise to the reader.
\end{proof}

\begin{figure}[!ht]
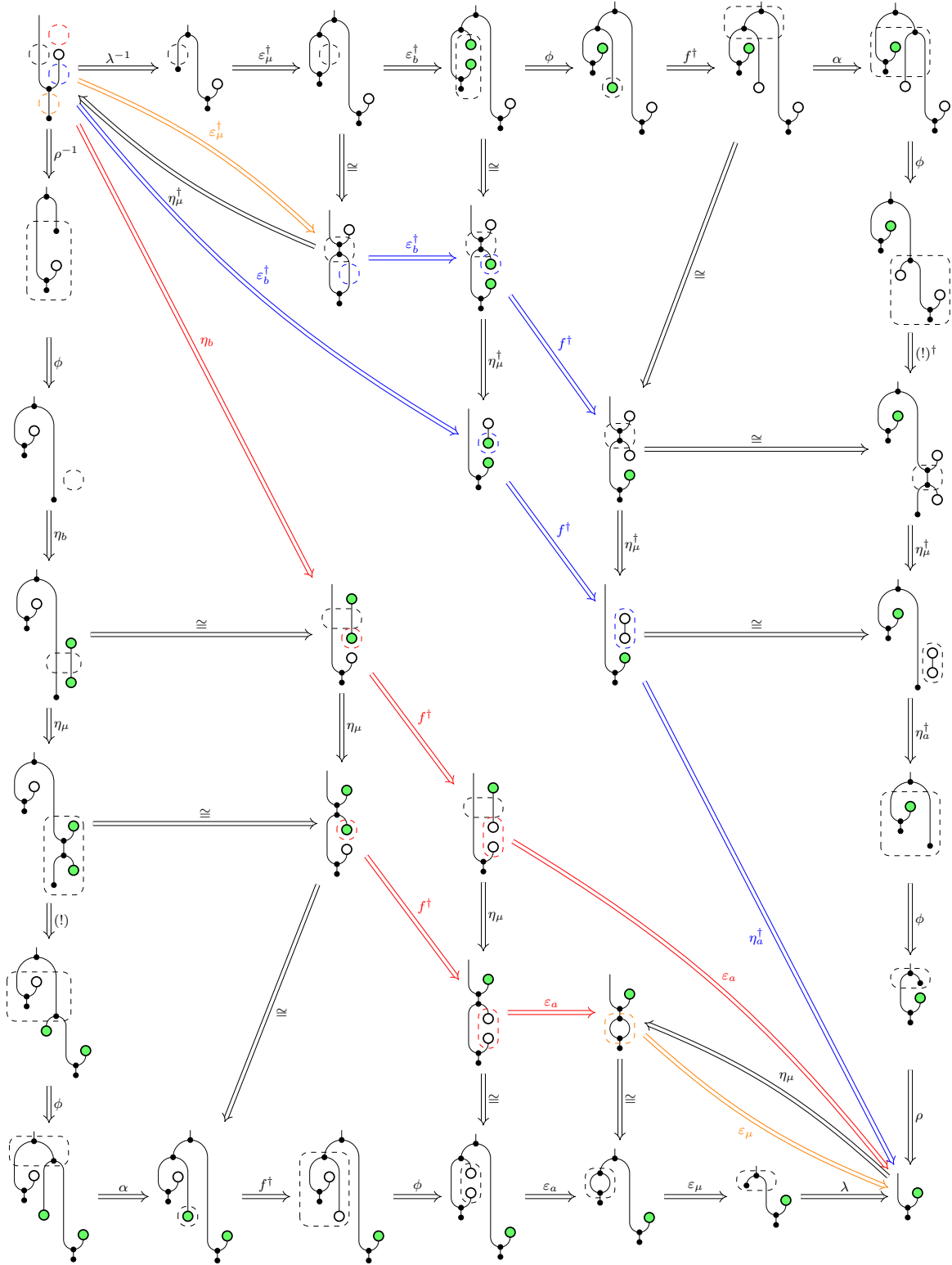
 
% [inline block 0: 1 envs, 20677 chars -> data_tex | \begin{tikzpicture}[baseline= (a).base]\node[scale=.8] (a) at (0,0){\begin{tikzcd} \tikzmath{...]

\caption{\label{fig:RealizationUDF-DaggerFunctor}Pasting diagram showing that $\vee$ on $A$ is a dagger functor.}
\end{figure}

We next prove that the canonical tensorators are unitary by considering the pasting diagram in Figure \ref{fig:UnitaryTensorator} below.
Again, we suppress all coherence isomorphisms of the form $\alpha,\kappa,\lambda,\rho,\phi$ and include at most one type of interesting morphism of the form $f^\dag,\varepsilon,\eta$ per cell of the pasting diagram.
Starting at the top left and going right, then down, then left again is the definition of the canonical tensorator.
As the diagram commutes, this composite is the same as going straight downward, which is manifestly a composite of unitaries.
\qedhere

\begin{figure}
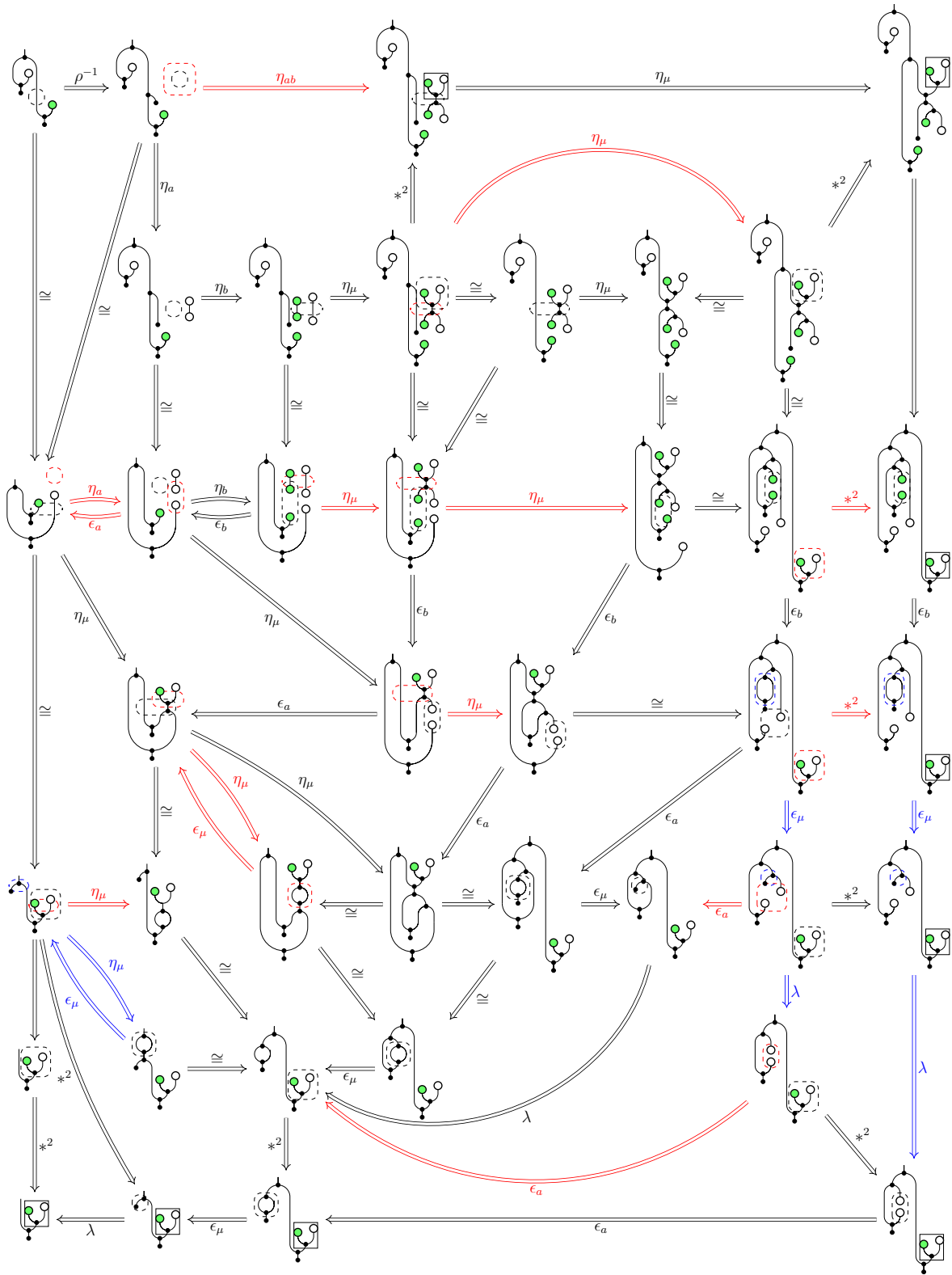

\[
% [inline block 1: 1 envs, 45271 chars -> data_tex | \begin{tikzpicture}[baseline= (a).base]\node[scale=.65] (a) at (0,0){ \begin{tikzcd}[every label/.append style = {font =...]

\]
\caption{\label{fig:UnitaryTensorator} Pasting diagram showing the canonical tensorator for $\vee$ on $|A|$ is unitary.}
\end{figure}

\begin{prop}\label{prop:underlying-hstar-spherical-weight}
Suppose $\fC$ is a $\sC^*\Gray$ monoid 
whose underlying strict $\rmC^*$ 2-category is a pre-3-Hilbert space, and $A$ is a standard $\rmH^*$-algebra in $\fC$.
The weight
\[
\psi_{|A|}(f) := \Psi^{\fC}_{1_\fC}
\left(
\tikzmath{
\draw[rounded corners=5pt, dotted] (0,0) rectangle (.5,.5);
}
\xRightarrow{\eta_\iota}
\tikzmath{
\filldraw (0,0) circle (.05);
\draw (0,0) -- (0,.4);
\filldraw (0,.4) circle (.05);
\draw[rounded corners=5pt,dashed] (-.2,-.2) rectangle (.2,.2);
}
\xRightarrow{f}
\tikzmath{
\filldraw (0,0) circle (.05);
\draw (0,0) -- (0,.4);
\filldraw (0,.4) circle (.05);
}
\xRightarrow{\eta_\iota^\dag}
\tikzmath{
\draw[rounded corners=5pt, dotted] (0,0) rectangle (.5,.5);
}
\right)
\]
where $f$ is an endomorphism of $\iota$ defines a spherical weight on $|A|$.
\end{prop}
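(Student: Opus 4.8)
The plan is to verify the three defining properties of a spherical weight in turn: faithfulness, positivity, and sphericality, with the last being the only nontrivial point. Recall from Construction~\ref{const:DaggerStructure} that $|A|$ is a unitary multifusion category and from Proposition~\ref{prop:underlying-hstar-udf} that $\vee$ is a UDF on it, so the left and right pivotal traces $\tr^\vee_L,\tr^\vee_R\colon \End_{|A|}(x)\to \End_{|A|}(1_{|A|})$ are defined. Since $1_{|A|}=\iota$ and $\End_{|A|}(1_{|A|})=\End_\fC(\iota)$, and since the unit $\eta_\iota$ of the adjunction $\iota\dashv\iota^*$ is precisely the coevaluation $\coev_\iota$ coming from the UAF of $\fC$, the functional $\psi_{|A|}$ is literally the canonical trace $\Tr_\iota$ on the hom-category $\fC(1_\fC\to A)$ furnished by the pre-3-Hilbert structure of $\fC$ (Remark~\ref{rem:pre3Hilbislocally2Hilb}). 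Faithfulness and positivity are then immediate: by Definition~\ref{defn:pre2Hilb} the form $\langle f\mid g\rangle=\Tr_\iota(f^\dag g)$ on $\End_\fC(\iota)$ is positive definite, so $\psi_{|A|}(f^\dag f)=\lVert f\rVert^2\geq 0$ with equality iff $f=0$.

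For sphericality I must show $\psi_{|A|}(\tr^\vee_L(g))=\psi_{|A|}(\tr^\vee_R(g))$ for every $x\in|A|$ and $g\in\End_{|A|}(x)$. The approach is to show that both sides equal the $\fC$-trace $\Tr^\fC_x(g)$ of the hom-$2$-Hilbert-space $\fC(1_\fC\to A)$, viewing $x\colon 1_\fC\to A$ as a $1$-morphism of $\fC$ and $g\colon x\Rightarrow x$. Unwinding the $|A|$-dual $x^\vee$ together with its evaluations and coevaluations via the explicit formulas of Construction~\ref{const:MultiFusCatFromRigid2Alg}, each pivotal trace $\tr^\vee_{L/R}(g)$ becomes an endomorphism of $\iota$ in $\fC$ assembled from $g$, the adjunction data $\eta_x,\varepsilon_x$ of $x$ in $\fC$, and the algebra data $\mu,\mu^*,\eta_\mu,\varepsilon_\mu$ of $A$. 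Precomposing with $\psi_{|A|}=\Tr_\iota$ closes these into diagrams in $\End_\fC(\id_{1_\fC})$ to which $\Psi^\fC_{1_\fC}$ is applied.

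The two ingredients that collapse these diagrams are the standardness axiom \ref{H:SphereStandard} of the $\rmH^*$-algebra $A$ and the spherical weight axiom of the pre-3-Hilbert space underlying $\fC$. The $A$-sheet produced by $\mu$ and $\mu^*$ forms a closed bubble attached to the $\iota$-loop; standardness \ref{H:SphereStandard} is exactly the statement that $\Psi^\fC_{1_\fC}$ is insensitive to transporting a morphism across this bubble (with the bubble-inverse normalization used for non-special algebras throughout Definition~\ref{defn:HStarAlgC}), and applying it strips the $A$-structure off the trace. What remains is $\Psi^\fC_A$ of one $\fC$-pivotal-trace of $g$ on one side and $\Psi^\fC_{1_\fC}$ of the opposite $\fC$-pivotal-trace on the other; these are precisely the two sides of the spherical weight identity for $\Psi^\fC$ applied to the $1$-morphism $x\colon 1_\fC\to A$, hence equal. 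Both therefore equal $\Tr^\fC_x(g)$, giving sphericality.

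The main obstacle is the graphical bookkeeping in the middle step: translating the pivotal traces of $|A|$ — which are built from the coheretor composites \eqref{eq:AbbreviatedCoheretor} and the Frobenius and adjunction data of $\mu$ — into honest string diagrams in $\fC$, and pinning down exactly the bubble on the $A$-sheet that \ref{H:SphereStandard} governs. This is a large pasting-diagram verification of the same flavor as Figures~\ref{fig:RealizationDualsZigZag}, \ref{fig:RealizationUDF-DaggerFunctor}, and \ref{fig:UnitaryTensorator}, and as there I expect the coherence $2$-isomorphisms $\alpha,\lambda,\rho,\phi,\kappa$ to cancel, leaving the single essential move controlled by standardness together with one application of the sphericality of $\Psi^\fC$.
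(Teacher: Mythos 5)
Your treatment of the easy parts is sound: $\psi_{|A|}$ is indeed the canonical hom-category trace $\Tr_\iota$ of Remark \ref{rem:pre3Hilbislocally2Hilb}, which gives positivity and faithfulness, and your reduction of the \emph{right} pivotal trace to the hom-trace $\Tr^\fC_x(g)$ is correct and needs only unitarity of the tensorators of the UAF (this is exactly the computation \eqref{eq:FormulaForPsi|A|} that the paper carries out later, in Corollary \ref{cor:underlying-isometric}). The gap is in your treatment of the \emph{left} trace. You claim that standardness \ref{H:SphereStandard} ``strips the $A$-structure off the trace,'' leaving $\Psi^\fC_A$ of a $\fC$-pivotal trace of $g$, after which sphericality of $\Psi^\fC$ finishes the argument. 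But \ref{H:SphereStandard} does no such thing: it is an invariance statement for $\Psi^\fC_{1_\fC}$-evaluations under transporting a $2$-endomorphism from one sheet of the $\iota$--$\mu$ sphere to the other; it neither removes the sphere nor changes the ambient weight from $\Psi^\fC_{1_\fC}$ to $\Psi^\fC_A$. Both pivotal traces of $|A|$, once closed by $\psi_{|A|}$, are by construction $\Psi^\fC_{1_\fC}$-evaluations of closed diagrams at $1_\fC$ (these are written out explicitly as \eqref{eq:LeftTraceForSphericality} and \eqref{eq:RightTraceForSphericality}); the weight $\Psi^\fC_A$ never appears. To convert ``$\Psi^\fC_{1_\fC}$ of the sphere-capped $\epsilon_a$-closure of $g$'' into ``$\Psi^\fC_A$ of the $\fC$-right trace of $g$'' you would need to show that capping with the $\iota$--$\mu$ sphere computes $\Psi^\fC_A$ (with the correct bubble normalizations, since $A$ is not assumed special); this is not an axiom, it is not supplied by sphericality of $\Psi^\fC$ alone (applying sphericality to $\iota$ produces $\iota$- and $\mu$-bubble corrections that must be cancelled by hand), and it is essentially equivalent to the statement you are trying to prove. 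So the pivotal step of your plan is unsupported, and the role you assign to the sphericality of $\Psi^\fC$ does not match where the difficulty actually lies.

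For comparison, the paper's proof never invokes the sphericality of $\Psi^\fC$ at all: everything happens in $\End_\fC(\id_{1_\fC})$. It writes both traces as \eqref{eq:LeftTraceForSphericality} and \eqref{eq:RightTraceForSphericality} and compares them directly. Standardness \ref{H:SphereStandard} is used exactly once, to identify the left trace with its horizontal reflection; the reflected left trace is then shown to equal the right trace by a commuting diagram \eqref{eq:sphericality-claim-diagram} whose two hexagonal faces are daggers of each other (because the pivotal map $p$ and the coheretor composite \eqref{eq:AbbreviatedCoheretor} are unitary), so that only one pasting diagram, Figure \ref{fig:RealizationSphericalityHelperDiagram}, built from $p$, the coheretors, and the adjunction data, has to be verified. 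If you wish to keep your route of reducing both sides to $\Tr^\fC_x(g)$, you must prove the left-trace reduction by a pasting argument of this same kind --- at which point the detour through $\Psi^\fC_A$ and the sphericality of $\Psi^\fC$ becomes unnecessary.
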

\begin{proof}
We must prove that
for any $a:1 \to A$ and any endomorphism $f:a \Rightarrow a$,
the left and right traces given by
\begin{align}
\label{eq:LeftTraceForSphericality}
(\psi_{|A|}\circ \tr^\vee_L)(f)
&=
\Psi^\fC_{1_\fC}\left(
\tikzmath{
\draw[rounded corners, dotted] (0,0) rectangle (.5,.5);
}
\xRightarrow{\eta_\iota}
\tikzmath{
\filldraw (0,0) circle (.05);
\draw (0,0) -- (0,.2);
\filldraw (0,.2) circle (.05);
}
\xRightarrow{\epsilon_\mu^\dag}
\tikzmath{
\filldraw (0,0) circle (.05);
\draw (0,0) -- (0,.2) arc (-90:450:.2) -- +(0,.2);
\filldraw (0,.8) circle (.05);
\filldraw (0,.2) circle (.05);
\filldraw (0,.6) circle (.05);
}
\xRightarrow{\epsilon_a^\dag}
\tikzmath{
\draw (0,0) coordinate (d1) -- +(0,.2) coordinate (d2) arc (-90:0:.2) -- +(0,.1) coordinate (a1) -- +(0,0) arc (0:-180:.2) -- +(0,.6) arc (180:90:.2) coordinate (d3) -- +(0,.2) coordinate (d4) -- +(0,0) arc (90:0:.2) -- +(0,-.1) coordinate (a2);
\filldraw (d1) circle (.05);
\filldraw (d2) circle (.05);
\filldraw (d3) circle (.05);
\filldraw (d4) circle (.05);
\filldraw[thick, fill=\aColor] (a1) circle (.1);
\filldraw[thick, fill=\aColor] (a2) circle (.1);
\draw[rounded corners, dashed] (0,.75) rectangle +(.4,.3);
}
\xRightarrow{f}
\tikzmath{
\draw (0,0) coordinate (d1) -- +(0,.2) coordinate (d2) arc (-90:0:.2) -- +(0,.1) coordinate (a1) -- +(0,0) arc (0:-180:.2) -- +(0,.6) arc (180:90:.2) coordinate (d3) -- +(0,.2) coordinate (d4) -- +(0,0) arc (90:0:.2) -- +(0,-.1) coordinate (a2);
\filldraw (d1) circle (.05);
\filldraw (d2) circle (.05);
\filldraw (d3) circle (.05);
\filldraw (d4) circle (.05);
\filldraw[thick, fill=\aColor] (a1) circle (.1);
\filldraw[thick, fill=\aColor] (a2) circle (.1);
}
\xRightarrow{\epsilon_a}
\tikzmath{
\filldraw (0,0) circle (.05);
\draw (0,0) -- (0,.2) arc (-90:450:.2) -- +(0,.2);
\filldraw (0,.8) circle (.05);
\filldraw (0,.2) circle (.05);
\filldraw (0,.6) circle (.05);
}
\xRightarrow{\epsilon_\mu}
\tikzmath{
\filldraw (0,0) circle (.05);
\draw (0,0) -- (0,.2);
\filldraw (0,.2) circle (.05);
}
\xRightarrow{\eta_\iota^\dag}
\tikzmath{
\draw[rounded corners, dotted] (0,0) rectangle (.5,.5);
}
\right)
\\
\label{eq:RightTraceForSphericality}
(\psi_{|A|}\circ \tr^\vee_{R})(f)
&=
\Psi^\fC_{1_\fC}\left(
\tikzmath{
\draw[rounded corners, dotted] (0,0) rectangle (.5,.5);
}
\xRightarrow{\eta_\iota}
\tikzmath{
\filldraw (0,0) circle (.05);
\draw (0,0) -- (0,.2);
\filldraw (0,.2) circle (.05);
}
\xRightarrow{\eta_a}
\tikzmath{
\draw (0,-.1) -- (0,.3);
\draw[thick, fill=white] (0,-.1) circle (.1);
\draw[thick, fill=white] (0,.3) circle (.1);
\filldraw (-.3,-.2) circle (.05);
\draw (-.3,-.2) -- (-.3,.4);
\filldraw (-.3,.4) circle (.05);
}
\xRightarrow{\eta_\mu}
\tikzmath{
\draw (0,0) coordinate (d1) -- +(0,.2) arc (180:90:.2) coordinate (d2) arc (90:0:.2) -- +(0,-.1) coordinate (a1) -- +(0,0) arc (0:90:.2) -- +(0,.2) coordinate (d3) arc (-90:0:.2) -- +(0,.1) coordinate (a2) -- +(0,0) arc (0:-180:.2) -- +(0,.2) coordinate (d4);
\filldraw (d1) circle (.05);
\filldraw (d2) circle (.05);
\filldraw (d3) circle (.05);
\filldraw (d4) circle (.05);
\filldraw[thick, fill=\aColor] (a1) circle (.1);
\filldraw[thick, fill=\aColor] (a2) circle (.1);
\draw[rounded corners, dashed] (.2,-.05) rectangle +(.4,.3);
}
\xRightarrow{f}
\tikzmath{
\draw (0,-.1) coordinate (d1) -- +(0,.2) arc (180:90:.2) coordinate (d2) arc (90:0:.2) -- +(0,-.1) coordinate (a1) -- +(0,0) arc (0:90:.2) -- +(0,.2) coordinate (d3) arc (-90:0:.2) -- +(0,.1) coordinate (a2) -- +(0,0) arc (0:-180:.2) -- +(0,.2) coordinate (d4);
\filldraw (d1) circle (.05);
\filldraw (d2) circle (.05);
\filldraw (d3) circle (.05);
\filldraw (d4) circle (.05);
\filldraw[thick, fill=\aColor] (a1) circle (.1);
\filldraw[thick, fill=\aColor] (a2) circle (.1);
}
\xRightarrow{\eta_\mu^\dag}
\tikzmath{
\draw (0,-.1) -- (0,.3);
\draw[thick, fill=white] (0,-.1) circle (.1);
\draw[thick, fill=white] (0,.3) circle (.1);
\filldraw (-.3,-.2) circle (.05);
\draw (-.3,-.2) -- (-.3,.4);
\filldraw (-.3,.4) circle (.05);
}
\xRightarrow{\eta_a^\dag}
\tikzmath{
\filldraw (0,0) circle (.05);
\draw (0,0) -- (0,.2);
\filldraw (0,.2) circle (.05);
}
\xRightarrow{\eta_\iota^\dag}
\tikzmath{
\draw[rounded corners, dotted] (0,0) rectangle (.5,.5);
}\right)
\end{align}
agree.
It is enough to show that the diagram
% anchor point
\begin{equation}\label{eq:sphericality-claim-diagram}
\begin{tikzcd}
&&
\tikzmath{
\draw (0,-.1) -- (0,.3);
\draw[thick, fill=white] (0,-.1) circle (.1);
\draw[thick, fill=white] (0,.3) circle (.1);
\filldraw (-.3,-.2) circle (.05);
\draw (-.3,-.2) -- (-.3,.4);
\filldraw (-.3,.4) circle (.05);
\draw[rounded corners, dashed] (-.5,0) rectangle +(.7,.2);
}
\arrow[r,Rightarrow,"\eta_\mu"]
&
\tikzmath{
\draw (0,0) coordinate (d1) -- +(0,.2) arc (180:90:.2) coordinate (d2) arc (90:0:.2) -- +(0,-.1) coordinate (a1) -- +(0,0) arc (0:90:.2) -- +(0,.2) coordinate (d3) arc (-90:0:.2) -- +(0,.1) coordinate (a2) -- +(0,0) arc (0:-180:.2) -- +(0,.4) coordinate (d4);
\filldraw (d1) circle (.05);
\filldraw (d2) circle (.05);
\filldraw (d3) circle (.05);
\filldraw (d4) circle (.05);
\filldraw[thick, fill=\aColor] (a1) circle (.1);
\filldraw[thick, fill=\aColor] (a2) circle (.1);
\draw[rounded corners, dashed, red] (.2,-.05) rectangle +(.4,.3);
}
\arrow[d,"(!)"',Rightarrow]
\arrow[r,red, Rightarrow,"f"]
&
\tikzmath{
\draw (0,0) coordinate (d1) -- +(0,.2) arc (180:90:.2) coordinate (d2) arc (90:0:.2) -- +(0,-.1) coordinate (a1) -- +(0,0) arc (0:90:.2) -- +(0,.2) coordinate (d3) arc (-90:0:.2) -- +(0,.1) coordinate (a2) -- +(0,0) arc (0:-180:.2) -- +(0,.4) coordinate (d4);
\filldraw (d1) circle (.05);
\filldraw (d2) circle (.05);
\filldraw (d3) circle (.05);
\filldraw (d4) circle (.05);
\filldraw[thick, fill=\aColor] (a1) circle (.1);
\filldraw[thick, fill=\aColor] (a2) circle (.1);
\draw[rounded corners, dashed,blue] (-.2,.25) rectangle +(.8,.5);
\draw[rounded corners, dashed] (-.3,-.2) rectangle +(1,1.3);
}
\arrow[r,Rightarrow,"\eta_\mu^\dag",blue]
\arrow[d,Rightarrow,"(!)"]
&
\tikzmath{
\draw (0,-.1) -- (0,.3);
\draw[thick, fill=white] (0,-.1) circle (.1);
\draw[thick, fill=white] (0,.3) circle (.1);
\filldraw (-.3,-.2) circle (.05);
\draw (-.3,-.2) -- (-.3,.4);
\filldraw (-.3,.4) circle (.05);
\draw[rounded corners, dashed] (-.15,-.3) rectangle +(.3,.8);
}
\arrow[dr,Rightarrow,"\eta_a^\dag"]
\\
\tikzmath{
\draw[rounded corners, dotted] (0,0) rectangle (.5,.5);
}
\arrow[r,Rightarrow,"\eta_\iota"]
&
\tikzmath{
\filldraw (0,0) circle (.05);
\draw (0,0) -- (0,.6);
\filldraw (0,.6) circle (.05);
\draw[rounded corners, dashed,orange] (-.15,.15) rectangle +(.3,.3);
\draw[rounded corners, dashed,blue] (.25,.15) rectangle +(.3,.3);
}
\arrow[ur,"\eta_a",Rightarrow,blue]
\arrow[dr,"\epsilon_\mu^\dag"',Rightarrow,orange]
&&
\tikzmath[xscale=-1]{
\draw (0,0) coordinate (d1) {} -- +(0,.2) coordinate (d2) {} arc (-90:-180:.2) -- +(0,.1) coordinate (a1) -- +(0,0) arc (-180:0:.2) -- +(0,.4) arc (180:90:.2) coordinate (d3) {} -- +(0,.2) coordinate (d4) {} -- +(0,0) arc (90:0:.2) -- +(0,-.1) coordinate (a2) {};
\draw[draw=none] (a1) -- +(-.2,-.15) coordinate (x1);
\draw[rounded corners, dashed, red] (x1) rectangle +(.4,.3);
\filldraw (d1) circle (.05);
\filldraw (d2) circle (.05);
\filldraw (d3) circle (.05);
\filldraw (d4) circle (.05);
\filldraw[thick,fill=\aColor] (a1) circle (.1);
\filldraw[thick,fill=\aColor] (a2) circle (.1);
}
\arrow[r,Rightarrow,"f",red]
&
\tikzmath[xscale=-1]{
\draw (0,0) coordinate (d1) {} -- +(0,.2) coordinate (d2) {} arc (-90:-180:.2) -- +(0,.1) coordinate (a1) -- +(0,0) arc (-180:0:.2) -- +(0,.4) arc (180:90:.2) coordinate (d3) {} -- +(0,.2) coordinate (d4) {} -- +(0,0) arc (90:0:.2) -- +(0,-.1) coordinate (a2) {};
\filldraw (d1) circle (.05);
\filldraw (d2) circle (.05);
\filldraw (d3) circle (.05);
\filldraw (d4) circle (.05);
\filldraw[thick,fill=\aColor] (a1) circle (.1);
\filldraw[thick,fill=\aColor] (a2) circle (.1);
}
&&
\tikzmath{
\filldraw (0,0) circle (.05);
\draw (0,0) -- (0,.6);
\filldraw (0,.6) circle (.05);
}
\arrow[r,Rightarrow,"\eta_\iota^\dag"]
&
\tikzmath{
\draw[rounded corners, dotted] (0,0) rectangle (.5,.5);
}
\\
&&
\tikzmath{
\filldraw (0,0) circle (.05);
\draw (0,0) -- (0,.2) arc (-90:450:.2) -- +(0,.2);
\filldraw (0,.8) circle (.05);
\filldraw (0,.2) circle (.05);
\filldraw (0,.6) circle (.05);
\draw[rounded corners, dashed] (-.3,.25) rectangle +(.3,.3);
}
\arrow[r,Rightarrow,"\epsilon_a^\dag"']
&
\tikzmath[xscale=-1]{
\draw (0,0) coordinate (d1) -- +(0,.2) coordinate (d2) arc (-90:0:.2) -- +(0,.1) coordinate (a1) -- +(0,0) arc (0:-180:.2) -- +(0,.6) arc (180:90:.2) coordinate (d3) -- +(0,.2) coordinate (d4) -- +(0,0) arc (90:0:.2) -- +(0,-.1) coordinate (a2);
\filldraw (d1) circle (.05);
\filldraw (d2) circle (.05);
\filldraw (d3) circle (.05);
\filldraw (d4) circle (.05);
\draw[rounded corners, dashed] (-.4,-.2) rectangle +(.8,.9);
\filldraw[thick, fill=\aColor] (a1) circle (.1);
\filldraw[thick, fill=\aColor] (a2) circle (.1);
\draw[rounded corners, dashed, red] (0,.75) rectangle +(.4,.3);
}
\arrow[u,Rightarrow,"p"]
\arrow[r,red,Rightarrow,"f"']
&
\tikzmath[xscale=-1]{
\draw (0,0) coordinate (d1) -- +(0,.2) coordinate (d2) arc (-90:0:.2) -- +(0,.1) coordinate (a1) -- +(0,0) arc (0:-180:.2) -- +(0,.6) arc (180:90:.2) coordinate (d3) -- +(0,.2) coordinate (d4) -- +(0,0) arc (90:0:.2) -- +(0,-.1) coordinate (a2);
\filldraw (d1) circle (.05);
\filldraw (d2) circle (.05);
\filldraw (d3) circle (.05);
\filldraw (d4) circle (.05);
\draw[rounded corners, dashed] (-.4,-.2) rectangle +(.8,.9);
\filldraw[thick, fill=\aColor] (a1) circle (.1);
\filldraw[thick, fill=\aColor] (a2) circle (.1);
\draw[rounded corners, dashed,orange] (-.1,.3) rectangle +(.6,.8);
}
\arrow[u,Rightarrow,"p"']
\arrow[r,Rightarrow,"\epsilon_a"',orange]
&
\tikzmath{
\filldraw (0,0) circle (.05);
\draw (0,0) -- (0,.2) arc (-90:450:.2) -- +(0,.2);
\filldraw (0,.8) circle (.05);
\filldraw (0,.2) circle (.05);
\filldraw (0,.6) circle (.05);
\draw[rounded corners, dashed] (-.3,.1) rectangle +(.6,.6);
}
\arrow[ur,Rightarrow,"\epsilon_\mu"']
\end{tikzcd}
\end{equation}
commutes, where $p$ denotes the natural pivotal map coming from the unitary dual functor on $\fC(1_{\fC}\to A)$ as defined above.
Indeed, the $\Psi^\fC_{1_\fC}$ applied to the top path in \eqref{eq:sphericality-claim-diagram} is \eqref{eq:RightTraceForSphericality},
and $\Psi^\fC_{1_\fC}$ to bottom path in \eqref{eq:sphericality-claim-diagram} is the horizontal reflection of \eqref{eq:LeftTraceForSphericality}, which is then equal to \eqref{eq:LeftTraceForSphericality} by the standardness condition \ref{H:SphereStandard}.
As the right hexagonal face in \eqref{eq:sphericality-claim-diagram} is the dagger of the one on the left (as $p$ and $(!)$ are both unitary isomorphisms), we only need to show the left hexagon commutes.
Unpacking (and suppressing interchangers, as before), the map $p$ is given by
the marked blue edges in the pasting diagram in Figure \ref{fig:RealizationSphericalityHelperDiagram} below.
Notice the perimeter of Figure \ref{fig:RealizationSphericalityHelperDiagram} is the left hexagon in \eqref{eq:sphericality-claim-diagram}, and thus \eqref{eq:sphericality-claim-diagram} commutes.
\qedhere
\begin{figure}[!ht]
\begin{tikzcd}
\tikzmath{
\filldraw (0,0) circle (.05);
\draw (0,0) -- (0,.6);
\filldraw (0,.6) circle (.05);
\draw[rounded corners, dashed,orange] (-.15,.2) rectangle +(.3,.3);
\draw[rounded corners, dashed] (.25,.2) rectangle +(.3,.3);
}
\arrow[r,Rightarrow,"\epsilon_\mu^\dag",orange]
\arrow[d,Rightarrow,"\eta_a"]
&
\tikzmath{
\filldraw (0,0) circle (.05);
\draw (0,0) -- (0,.2) arc (-90:450:.2) -- +(0,.2);
\filldraw (0,.8) circle (.05);
\filldraw (0,.2) circle (.05);
\filldraw (0,.6) circle (.05);
\draw[rounded corners, dashed,orange] (-.3,.25) rectangle +(.3,.3);
\draw[rounded corners, dashed] (.3,.25) rectangle +(.3,.3);
}
\arrow[rr,Rightarrow,"\epsilon_a^\dag",orange]
\arrow[d,Rightarrow,"\eta_a"]
&&
\tikzmath[xscale=-1]{
\draw (0,0) -- +(0,.2) arc (-90:0:.2) arc (0:-180:.2) -- +(0,.6) arc (180:90:.2) -- +(0,.2) -- +(0,0) arc (90:0:.2);
\filldraw (0,0) circle (.05);
\filldraw (0,.2) circle (.05);
\filldraw[thick, fill=\aColor] (.2,.5) circle (.1);
\filldraw[thick, fill=\aColor] (.2,.9) circle (.1);
\filldraw (0,1.4) circle (.05);
\filldraw (0,1.2) circle (.05);
\draw[rounded corners, dashed,blue] (-0.35,.55) rectangle +(.3,.3);
\draw[rounded corners, dashed] (-0.7,.55) rectangle +(.3,.3);
}
\arrow[d,Rightarrow,"\eta_a"]
\arrow[r,Rightarrow,"\rho^{-1}",blue]
&
\tikzmath[xscale=-1]{
\draw (0,0) -- +(0,.2) arc (-90:0:.2) arc (0:-180:.2) -- +(0,.6) arc (180:90:.2) -- +(0,.2) -- +(0,0) arc (90:0:.2);
\draw (-.2,.8) arc (90:180:.2);
\filldraw (0,0) circle (.05);
\filldraw (0,.2) circle (.05);
\filldraw[thick, fill=\aColor] (.2,.5) circle (.1);
\filldraw[thick, fill=\aColor] (.2,.9) circle (.1);
\filldraw (0,1.4) circle (.05);
\filldraw (0,1.2) circle (.05);
\filldraw (-.4,.6) circle (.05);
\filldraw (-.2,.8) circle (.05);
\draw[rounded corners, dashed,blue] (-0.8,.55) rectangle +(.3,.3);
}
\arrow[d,Rightarrow,"\eta_a",blue]
\\
\tikzmath[xscale=-1]{
\draw (0,0) -- +(0,.6);
\draw (-.3,.1) -- +(0,.4);
\filldraw (0,0) circle (.05);
\filldraw (0,.6) circle (.05);
\filldraw[thick, fill=\aColor] (-.3,.1) circle (.1);
\filldraw[thick, fill=\aColor] (-.3,.5) circle (.1);
\draw[rounded corners, dashed,orange] (-.15,0.15) rectangle +(.3,.3);
}
\arrow[r,Rightarrow,"\epsilon_\mu^\dag",orange]
\arrow[d,Rightarrow,"\eta_\mu"]
&
\tikzmath[xscale=-1]{
\draw (0,0) -- (0,.2) arc (-90:450:.2) -- +(0,.2);
\draw (-.5,.2) -- +(0,.3) node (end) {};
\filldraw[thick, fill=\aColor] (-.5,.2) circle (.1);
\filldraw[thick, fill=\aColor] (end) circle (.1);
\filldraw (0,0) circle (.05);
\filldraw (0,.8) circle (.05);
\filldraw (0,.2) circle (.05);
\filldraw (0,.6) circle (.05);
\draw[rounded corners, dashed,orange] (0,.25) rectangle +(.3,.3);
}
\arrow[rr,Rightarrow,"\epsilon_a^\dag",orange]
\arrow[d,Rightarrow,"\eta_\mu"]
&&
\tikzmath[xscale=-1]{
\draw (0,0) -- +(0,.2) arc (-90:0:.2) arc (0:-180:.2) -- +(0,.6) arc (180:90:.2) -- +(0,.2) -- +(0,0) arc (90:0:.2);
\draw (-.5,0.05) node (begin) {} -- +(0,1.1) node (end) {};
\filldraw[thick, fill=\aColor] (begin) circle (.1);
\filldraw[thick, fill=\aColor] (end) circle (.1);
\filldraw (0,0) circle (.05);
\filldraw (0,.2) circle (.05);
\filldraw[thick, fill=\aColor] (.2,.5) circle (.1);
\filldraw[thick, fill=\aColor] (.2,.9) circle (.1);
\filldraw (0,1.4) circle (.05);
\filldraw (0,1.2) circle (.05);
\draw[rounded corners, dashed,orange] (-0.35,.55) rectangle +(.3,.3);
}
\arrow[r,Rightarrow,"\rho^{-1}",orange]
\arrow[dl,Rightarrow,"\eta_\mu"]
&
\tikzmath[xscale=-1]{
\draw (0,0) -- +(0,.2) arc (-90:0:.2) arc (0:-180:.2) -- +(0,.6) arc (180:90:.2) -- +(0,.2) -- +(0,0) arc (90:0:.2);
\draw (-.2,.8) arc (90:180:.2);
\draw (-.7,.05) node (begin) {} -- +(0,1.1) node (end) {};
\filldraw[thick, fill=\aColor] (begin) circle (.1);
\filldraw[thick, fill=\aColor] (end) circle (.1);
\filldraw (0,0) circle (.05);
\filldraw (0,.2) circle (.05);
\filldraw[thick, fill=\aColor] (.2,.5) circle (.1);
\filldraw[thick, fill=\aColor] (.2,.9) circle (.1);
\filldraw (0,1.4) circle (.05);
\filldraw (0,1.2) circle (.05);
\filldraw (-.4,.6) circle (.05);
\filldraw (-.2,.8) circle (.05);
}
\arrow[d,Rightarrow,"\eta_\mu",blue]
\arrow[dl,Rightarrow,"\eta_\mu"]
\\
\tikzmath[xscale=-1]{
\draw (0,0) coordinate (a1) -- +(0,.1) arc (180:0:.2) -- +(0,-.2) coordinate (d1) {} -- +(0,0) arc (0:90:.2) coordinate (d3) {} -- +(0,.2) coordinate (d4) arc (-90:-180:.2) -- +(0,.1) coordinate (a2) -- +(0,0) arc (-180:0:.2) -- +(0,.6) coordinate (d2) {};
\filldraw (d1) circle (.05);
\filldraw (d2) circle (.05);
\filldraw (d3) circle (.05);
\filldraw (d4) circle (.05);
\filldraw[thick, fill=\aColor] (a1) circle (.1);
\filldraw[thick, fill=\aColor] (a2) circle (.1);
\draw[rounded corners, dashed,orange] (.25,.9) rectangle +(.3,.3);
\draw[rounded corners, dashed] (-.2,-.2) rectangle +(.8,1.1);
}
\arrow[r,Rightarrow,"\epsilon_\mu^\dag",orange]
\arrow[dddrr,Rightarrow,"\cong"]
&
\tikzmath[xscale=-1]{
\draw (0,0) coordinate (a1) -- +(0,.1) arc (180:0:.2) -- +(0,-.2) coordinate (d1) {} -- +(0,0) arc (0:90:.2) coordinate (d2) {} -- +(0,.2) coordinate (d3) {} arc (-90:-180:.2) -- +(0,.1) coordinate (a2) {} -- +(0,0) arc (-180:0:.2) -- +(0,.2) coordinate (d4) {} arc (-90:450:.2) coordinate (d5) {} -- +(0,.2) coordinate (d6) {};
\filldraw (d1) circle (.05);
\filldraw (d2) circle (.05);
\filldraw (d3) circle (.05);
\filldraw (d4) circle (.05);
\filldraw (d5) circle (.05);
\filldraw (d6) circle (.05);
\filldraw[thick, fill=\aColor] (a1) circle (.1);
\filldraw[thick, fill=\aColor] (a2) circle (.1);
\draw[rounded corners, dashed,orange] (.4,.95) rectangle +(.3,.3);
\draw[rounded corners, dashed] (-.2,-.2) rectangle +(.8,1.1);
}
\arrow[r,Rightarrow,"\epsilon_a^\dag",orange]
\arrow[dr,Rightarrow,"\cong"]
&
\tikzmath[xscale=-1]{
\draw (0,0) coordinate (a1) -- +(0,.1) arc (180:0:.2) -- +(0,-.2) coordinate (d1) {} -- +(0,0) arc (0:90:.2) coordinate (d2) {} -- +(0,.2) coordinate (d3) {} arc (-90:-180:.2) -- +(0,.1) coordinate (a2) {} -- +(0,0) arc (-180:0:.2) -- +(0,.2) coordinate (d4) {} arc (-90:0:.2) -- +(0,.1) coordinate (a3) {} -- +(0,0) arc (0:-180:.2) -- +(0,.6) arc (180:0:.2) -- +(0,-.1) coordinate (a4) {} -- +(0,0) arc (0:90:.2) coordinate (d5) {} -- +(0,.2) coordinate (d6) {};
\filldraw (d1) circle (.05);
\filldraw (d2) circle (.05);
\filldraw (d3) circle (.05);
\filldraw (d4) circle (.05);
\filldraw (d5) circle (.05);
\filldraw (d6) circle (.05);
\filldraw[thick, fill=\aColor] (a1) circle (.1);
\filldraw[thick, fill=\aColor] (a2) circle (.1);
\filldraw[thick, fill=\aColor] (a3) circle (.1);
\filldraw[thick, fill=\aColor] (a4) circle (.1);
\draw[rounded corners, dashed,orange] (0.05,1.25) rectangle +(.3,.3);
\draw[rounded corners, dashed] (-.2,-.2) rectangle +(.8,1.1);
}
\arrow[dr,Rightarrow,"\cong"]
\arrow[r,Rightarrow,"\rho^{-1}",orange]
&
\tikzmath[xscale=-1]{
\draw (0,0) coordinate (a1) -- +(0,.1) {} arc (180:0:.2) -- +(0,-.2) coordinate (d1) {} -- +(0,0) arc (0:90:.2) coordinate (d2) {} -- +(0,.2) coordinate (d3) {} arc (-90:-180:.2) -- +(0,.1) coordinate (a2) {} -- +(0,0) arc (-180:0:.2) -- +(0,.2) coordinate (d4) {} arc (-90:0:.2) -- +(0,.1) coordinate (a3) -- +(0,0) arc (0:-180:.2) -- +(0,.4) arc (0:180:.2) coordinate (d5) {} arc (180:90:.2) coordinate (d6) -- +(0,.2) arc (180:0:.2) -- +(0,-.1) coordinate (a4) -- +(0,0) arc (0:90:.2) coordinate (d7) {} -- +(0,.2) coordinate (d8) {};
\filldraw (d1) circle (.05);
\filldraw (d2) circle (.05);
\filldraw (d3) circle (.05);
\filldraw (d4) circle (.05);
\filldraw (d5) circle (.05);
\filldraw (d6) circle (.05);
\filldraw (d7) circle (.05);
\filldraw (d8) circle (.05);
\filldraw[thick, fill=\aColor] (a1) circle (.1);
\filldraw[thick, fill=\aColor] (a2) circle (.1);
\filldraw[thick, fill=\aColor] (a3) circle (.1);
\filldraw[thick, fill=\aColor] (a4) circle (.1);
\draw[rounded corners, dashed] (-.3,-.1) rectangle +(.8,1.8);
}
\arrow[r,Rightarrow,"\cong"]
&
\tikzmath[xscale=-1]{
\draw (0,0) coordinate (d1) {} -- +(0,.2) coordinate (d2) {} arc (-90:0:.2) -- +(0,.1) coordinate (a1) -- +(0,0) arc (0:-180:.2) -- +(0,1.4) arc (0:90:.2) coordinate (d6) -- +(0,.2) arc (180:0:.2) -- +(0,-.1) coordinate (a4) -- +(0,0) {} arc (0:90:.2) coordinate (d7) {} -- +(0,.2) coordinate (d8) {} -- +(0,0) arc (90:180:.2) -- +(0,-.2) arc (90:180:.2) -- +(0,-.2) arc (0:-180:.2) -- +(0,.1) coordinate (a2) {} -- +(0,0) arc (-180:-90:.2) coordinate (d3) {} -- +(0,-.2) coordinate (d4) {} arc (90:0:.2) -- +(0,-.2) coordinate (d5) {} -- +(0,0) arc (0:180:.2) -- +(0,-.1) coordinate (a3) {};
\filldraw (d1) circle (.05);
\filldraw (d2) circle (.05);
\filldraw (d3) circle (.05);
\filldraw (d4) circle (.05);
\filldraw (d5) circle (.05);
\filldraw (d6) circle (.05);
\filldraw (d7) circle (.05);
\filldraw (d8) circle (.05);
\filldraw[thick, fill=\aColor] (a1) circle (.1);
\filldraw[thick, fill=\aColor] (a2) circle (.1);
\filldraw[thick, fill=\aColor] (a3) circle (.1);
\filldraw[thick, fill=\aColor] (a4) circle (.1);
\draw[rounded corners, dashed,blue] (-1.2,.6) rectangle +(.8,1.25);
}
\arrow[d,Rightarrow,"(!)",blue]
\\
&&
\tikzmath[xscale=-1]{
\draw (0,0) coordinate (a1) {} -- +(0,.2) arc (0:180:.2) -- +(0,-.1) coordinate (d1) -- +(0,0) arc (180:90:.2) coordinate (d2) {} -- +(0,.8) arc (0:180:.2) -- +(0,-.2) arc (0:-90:.2) coordinate (d3) {} -- +(0,-.2) coordinate (d4) {} -- +(0,0) arc (-90:-180:.2) -- +(0,.1) coordinate (a2) -- +(0,0) arc (-180:0:.2) -- +(0,.2) arc (180:90:.2) coordinate (d5) {} -- +(0,.2) coordinate (d6) arc (-90:450:.2) coordinate (d7) {} -- +(0,.2) coordinate (d8) {};
\filldraw (d1) circle (.05);
\filldraw (d2) circle (.05);
\filldraw (d3) circle (.05);
\filldraw (d4) circle (.05);
\filldraw (d5) circle (.05);
\filldraw (d6) circle (.05);
\filldraw (d7) circle (.05);
\filldraw (d8) circle (.05);
\filldraw[thick, fill=\aColor] (a1) circle (.1);
\filldraw[thick, fill=\aColor] (a2) circle (.1);
\draw[rounded corners, dashed,red] (-.8,1.2) rectangle +(.8,.6);
\draw[rounded corners, dashed] (-1.2,.5) rectangle +(1.3,1.2);
\draw[rounded corners, dashed,orange] (-.4,1.65) rectangle +(.3,.3);
}
\arrow[r,Rightarrow,"\epsilon_a^\dag",orange]
\arrow[dr,Rightarrow,"\cong"]
\arrow[dd,Rightarrow,"\eta_\mu^\dag",bend left=10,red]
&
\tikzmath[xscale=-1]{
\draw (0,0) coordinate (a1) {} -- +(0,.2) arc (0:180:.2) -- +(0,-.1) coordinate (d1) -- +(0,0) arc (180:90:.2) coordinate (d2) {} -- +(0,.8) arc (0:180:.2) -- +(0,-.2) arc (0:-90:.2) coordinate (d3) {} -- +(0,-.2) coordinate (d4) {} -- +(0,0) arc (-90:-180:.2) -- +(0,.1) coordinate (a2) -- +(0,0) arc (-180:0:.2) -- +(0,.2) arc (180:90:.2) coordinate (d5) {} -- +(0,.2) coordinate (d6) arc (-90:0:.2) -- +(0,.1) coordinate (a3) -- +(0,0) arc (0:-180:.2) -- +(0,.6) arc (180:0:.2) -- +(0,-.1) coordinate (a4) -- +(0,0) arc (0:90:.2) coordinate (d7) {} -- +(0,.2) coordinate (d8) {};
\filldraw (d1) circle (.05);
\filldraw (d2) circle (.05);
\filldraw (d3) circle (.05);
\filldraw (d4) circle (.05);
\filldraw (d5) circle (.05);
\filldraw (d6) circle (.05);
\filldraw (d7) circle (.05);
\filldraw (d8) circle (.05);
\filldraw[thick, fill=\aColor] (a1) circle (.1);
\filldraw[thick, fill=\aColor] (a2) circle (.1);
\filldraw[thick, fill=\aColor] (a3) circle (.1);
\filldraw[thick, fill=\aColor] (a4) circle (.1);
\draw[rounded corners, dashed] (-1.2,.5) rectangle +(1.3,1.2);
}
\arrow[ddr,Rightarrow,"\cong"]
&
\tikzmath[xscale=-1]{
\draw (0,0) coordinate (d1) {} -- +(0,.2) coordinate (d8) {} arc (-90:0:.2) -- +(0,.1) coordinate (a1) -- +(0,0) arc (0:-180:.2) -- +(0,1.4) arc (0:90:.2) coordinate (d2) {} -- +(0,.2) arc (180:90:.2) coordinate (d3) {} -- +(0,.2) coordinate (d4) {} -- +(0,0) arc (90:0:.2) -- +(0,-.1) coordinate (a2) -- +(0,0) arc (0:180:.2) -- +(0,-.2) arc (90:180:.2) coordinate (d5) {} arc (90:0:.2) -- +(0,-.1) coordinate (a3) -- +(0,0) arc (0:180:.2) -- +(0,-.4) arc (0:-90:.2) coordinate (d6) {} -- +(0,-.2) coordinate (d7) -- +(0,0) arc (-90:-180:.2) -- +(0,.1) coordinate (a4) {};
\filldraw (d1) circle (.05);
\filldraw (d2) circle (.05);
\filldraw (d3) circle (.05);
\filldraw (d4) circle (.05);
\filldraw (d5) circle (.05);
\filldraw (d6) circle (.05);
\filldraw (d7) circle (.05);
\filldraw (d8) circle (.05);
\filldraw[thick, fill=\aColor] (a1) circle (.1);
\filldraw[thick, fill=\aColor] (a2) circle (.1);
\filldraw[thick, fill=\aColor] (a3) circle (.1);
\filldraw[thick, fill=\aColor] (a4) circle (.1);
\draw[rounded corners, dashed,blue] (-.95,1.3) rectangle +(.8,.8);
}
\arrow[d,Rightarrow,"\alpha",blue]
\\
&&&
\tikzmath[xscale=-1]{
\draw (0,0) coordinate (a1) {} -- +(0,.2) arc (0:180:.2) -- +(0,-.1) coordinate (d1) -- +(0,0) arc (180:90:.2) coordinate (d2) {} -- +(0,.2) coordinate (d3) {} arc (-90:-180:.2) -- +(0,.7) -- +(0,0) arc (-180:0:.2) -- +(0,.9) arc (0:90:.3) coordinate (d4) {} -- +(0,.2) coordinate (d5) {} -- +(0,0) arc (90:180:.3) coordinate (d6) {} arc (90:0:.2) arc (0:180:.2) -- +(0,-.2) arc (0:-90:.2) coordinate (d7) {} -- +(0,-.2) coordinate (d8) {} -- +(0,0) arc (-90:-180:.2) -- +(0,.1) coordinate (a2) {};
\filldraw (d1) circle (.05);
\filldraw (d2) circle (.05);
\filldraw (d3) circle (.05);
\filldraw (d4) circle (.05);
\filldraw (d5) circle (.05);
\filldraw (d6) circle (.05);
\filldraw (d7) circle (.05);
\filldraw (d8) circle (.05);
\filldraw[thick, fill=\aColor] (a1) circle (.1);
\filldraw[thick, fill=\aColor] (a2) circle (.1);
\draw[rounded corners, dashed,red] (-.6,.2) rectangle +(.8,.6);
\draw[rounded corners, dashed,orange] (-.15,1) rectangle +(.3,.3);
}
\arrow[dl,Rightarrow,"\eta_\mu^\dag",red]
\arrow[dr,Rightarrow,"\epsilon_a^\dag",orange]
&
\tikzmath[xscale=-1]{
\draw (0,0) coordinate (d1) {} -- +(0,.2) coordinate (d2) {} arc (-90:0:.2) -- +(0,.1) coordinate (a1) -- +(0,0) arc (0:-180:.2) -- +(0,.4) arc (0:180:.2) -- +(0,-.1) coordinate (a2) -- +(0,0) arc (180:90:.2) coordinate (d3) -- +(0,.8) arc (0:180:.2) -- +(0,-.2) arc (0:-90:.2) coordinate (d4) {} -- +(0,-.2) coordinate (d5) {} -- +(0,0) arc (-90:-180:.2) -- +(0,.1) coordinate (a3) -- +(0,0) arc (-180:0:.2) -- +(0,.2) arc (180:90:.2) coordinate (d6) {} -- +(0,.2) arc (180:90:.2) coordinate (d7) {} -- +(0,.2) coordinate (d8) {} -- +(0,0) arc (90:0:.2) -- +(0,-.1) coordinate (a4) {};
\filldraw (d1) circle (.05);
\filldraw (d2) circle (.05);
\filldraw (d3) circle (.05);
\filldraw (d4) circle (.05);
\filldraw (d5) circle (.05);
\filldraw (d6) circle (.05);
\filldraw (d7) circle (.05);
\filldraw (d8) circle (.05);
\filldraw[thick, fill=\aColor] (a1) circle (.1);
\filldraw[thick, fill=\aColor] (a2) circle (.1);
\filldraw[thick, fill=\aColor] (a3) circle (.1);
\filldraw[thick, fill=\aColor] (a4) circle (.1);
\draw[rounded corners, dashed,blue] (-.8,-.1) rectangle +(1.2,1.25);
}
\arrow[d,Rightarrow,"(!)^\dag",blue]
\\
\tikzmath[xscale=-1]{
\draw (0,0) coordinate (d1) {} -- +(0,.2) coordinate (d2) {} arc (-90:-180:.2) -- +(0,.1) coordinate (a1) -- +(0,0) arc (-180:0:.2) -- +(0,.4) arc (180:90:.2) coordinate (d3) {} -- +(0,.2) coordinate (d4) {} -- +(0,0) arc (90:0:.2) -- +(0,-.1) coordinate (a2) {};
\filldraw (d1) circle (.05);
\filldraw (d2) circle (.05);
\filldraw (d3) circle (.05);
\filldraw (d4) circle (.05);
\filldraw[thick, fill=\aColor] (a1) circle (.1);
\filldraw[thick, fill=\aColor] (a2) circle (.1);
\draw[rounded corners, dashed] (-.4,-0.1) rectangle +(1.2,1.2);
}
\arrow[uuu,Rightarrow,"(!)"]
&&
\tikzmath[xscale=-1]{
\draw (0,0) coordinate (d1) {} -- +(0,.2) coordinate (d2) {} arc (-90:-180:.2) -- +(0,.1) coordinate (a1) -- +(0,0) arc (-180:0:.2) -- +(0,.4) arc (180:0:.2) coordinate (d3) {} arc (0:90:.2) coordinate (d4) {} -- +(0,.2) arc (180:90:.2) coordinate (d5) {} -- +(0,.2) coordinate (d6) {} -- +(0,0) arc (90:0:.2) -- +(0,-.1) coordinate (a2) {};
\filldraw (d1) circle (.05);
\filldraw (d2) circle (.05);
\filldraw (d3) circle (.05);
\filldraw (d4) circle (.05);
\filldraw (d5) circle (.05);
\filldraw (d6) circle (.05);
\filldraw[thick, fill=\aColor] (a1) circle (.1);
\filldraw[thick, fill=\aColor] (a2) circle (.1);
\draw[rounded corners, dashed,orange] (.3,1.2) rectangle +(.6,.35);
\draw[rounded corners, dashed,blue] (.1,.7) rectangle +(.6,.35);
}
\arrow[ll,Rightarrow,"\lambda",blue]
\arrow[uu,Rightarrow,"\epsilon_\mu^\dag",bend left=10,orange]
\arrow[r,Rightarrow, "\epsilon_a^\dag", bend left=10,orange]
&
\tikzmath[xscale=-1]{
\draw (0,0) coordinate (d1) {} -- +(0,.2) coordinate (d2) {} arc (-90:-180:.2) -- +(0,.1) coordinate (a1) -- +(0,0) arc (-180:0:.2) -- +(0,.2) arc (180:0:.2) -- +(0,-1.8) coordinate (d3) {} -- +(0,0) arc (0:90:.2) coordinate (d4) {} arc (180:90:.2) coordinate (d5) {} -- +(0,.2) coordinate (d6) {} -- +(0,0) arc (90:0:.2) -- +(0,-1) coordinate (a2) {};
\draw (.8,-1) coordinate (a3) {} -- +(0,.4) coordinate (a4) {};
\filldraw (d1) circle (.05);
\filldraw (d2) circle (.05);
\filldraw (d3) circle (.05);
\filldraw (d4) circle (.05);
\filldraw (d5) circle (.05);
\filldraw (d6) circle (.05);
\filldraw[thick, fill=\aColor] (a1) circle (.1);
\filldraw[thick, fill=\aColor] (a2) circle (.1);
\filldraw[thick, fill=\aColor] (a3) circle (.1);
\filldraw[thick, fill=\aColor] (a4) circle (.1);
\draw[rounded corners, dashed,blue] (.65,-1.2) rectangle +(.3,.8);
}
\arrow[l,Rightarrow, "\eta_a^\dag",bend left=10,blue]
&
\tikzmath[xscale=-1]{
\draw (0,0) coordinate (a1) {} -- +(0,.1) arc (0:180:.2) -- +(0,-.2) coordinate (d1) -- +(0,0) arc (180:90:.2) coordinate (d2) {} -- +(0,.2) coordinate (d3) {} arc (-90:-180:.2)-- +(0,1) -- +(0,0) arc (-180:0:.2) -- +(0,.1) coordinate (a3) {};
\draw (0,2) coordinate (a4) -- +(0,.1) arc (0:90:.3) coordinate (d4) {} -- +(0,.2) coordinate (d5) {} -- +(0,0) arc (90:180:.3) -- +(0,-.2) coordinate (d6) {} arc (90:0:.2) arc (0:180:.2) -- +(0,-.2) arc (0:-90:.2) coordinate (d7) {} -- +(0,-.2) coordinate (d8) {} -- +(0,0) arc (-90:-180:.2) -- +(0,.1) coordinate (a2) {};
\filldraw (d1) circle (.05);
\filldraw (d2) circle (.05);
\filldraw (d3) circle (.05);
\filldraw (d4) circle (.05);
\filldraw (d5) circle (.05);
\filldraw (d6) circle (.05);
\filldraw (d7) circle (.05);
\filldraw (d8) circle (.05);
\filldraw[thick, fill=\aColor] (a1) circle (.1);
\filldraw[thick, fill=\aColor] (a2) circle (.1);
\filldraw[thick, fill=\aColor] (a3) circle (.1);
\filldraw[thick, fill=\aColor] (a4) circle (.1);
\draw[rounded corners, dashed,blue] (-.6,0.1) rectangle +(.8,.6);
}
\arrow[l,Rightarrow,"\eta_\mu^\dag",blue]
\end{tikzcd}
\caption{\label{fig:RealizationSphericalityHelperDiagram} Pasting diagram proving that the left hexagon in \eqref{eq:sphericality-claim-diagram} commutes. 
The map is blue is the unitary pivotal structure induced by $\vee$ on $|A|$.}
\end{figure}
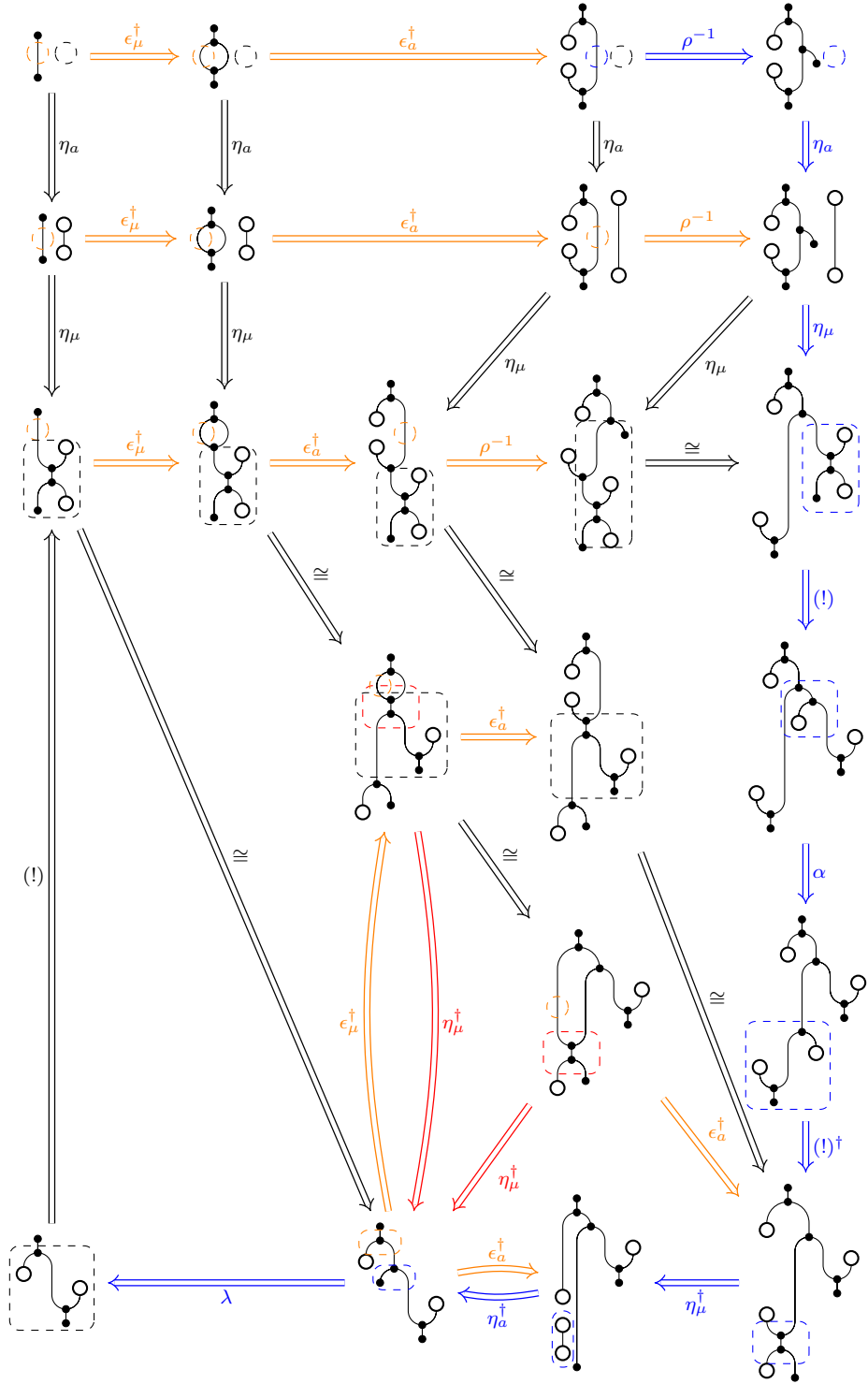
\end{proof}

\begin{cor}\label{cor:underlying-isometric}
When $\fC=2\Hilb$ and $(\cA,\Tr^\cA)\in 2\Hilb$ is an $\rmH^*$-algebra,
$(|\cA|, \vee,\psi_{|\cA|})$ is an
$\rmH^*$-multifusion category.
Moreover, the canonical unitary monoidal equivalence $\cA\cong |\cA|$ is isometric.
\end{cor}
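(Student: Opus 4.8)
The plan is to assemble Corollary \ref{cor:underlying-isometric} from the three preceding propositions together with the isometric-equivalence machinery already developed. First I would establish that $(|\cA|,\vee,\psi_{|\cA|})$ is an $\rmH^*$-multifusion category. By Proposition \ref{prop:hstarmfc-to-hstaralg}, every object of $\sH^*\mFC$ is an $\rmH^*$-algebra in $\rmB2\Hilb$; conversely, an $\rmH^*$-algebra $(\cA,\Tr^\cA)\in 2\Hilb$ in the sense of Definition \ref{defn:Hstaralg-in-pre3hilb} gives us exactly the hypotheses needed to invoke the earlier constructions. Concretely, Construction \ref{const:DaggerStructure} endows $|\cA|=\Hom_{2\Hilb}(\Hilb\to \cA)$ with a dagger structure making it a unitary multifusion category (here $2\Hilb$ is a $\sC^*\Gray$-monoid with right adjoints for all 1-morphisms, so the underlying monoidal category is rigid by Corollary \ref{cor:Separable2AlgebrasIn2Vec} and Proposition \ref{prop:Separable2AlgebrasInNiceGrayMonoid}). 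Then Proposition \ref{prop:underlying-hstar-udf} shows the dual functor $\vee$ from Construction \ref{const:MultiFusCatFromRigid2Alg} is a UDF with respect to this dagger structure, and Proposition \ref{prop:underlying-hstar-spherical-weight} shows $\psi_{|\cA|}$ is a spherical weight on $\vee$. Assembling these gives the triple $(|\cA|,\vee,\psi_{|\cA|})\in\sH^*\mFC$.

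Second, I would address the monoidal equivalence $\cA\cong |\cA|$. As noted in the remark preceding Corollary \ref{cor:Separable2AlgebrasIn2Vec}, for any algebra in $2\Hilb$ evaluation at $\bbC$ gives a canonical equivalence $|\cA|\xrightarrow{\sim}\cA$ intertwining the two monoidal structures; in the unitary setting this evaluation functor is manifestly a $\dag$-functor and its monoidal structure maps are unitary since all the coherence data of Construction \ref{const:DaggerStructure} were built to be unitary. The remaining content is therefore the word \emph{isometric}: I must check that this unitary monoidal equivalence preserves the 2-Hilbert space traces, where the trace on $|\cA|$ is the canonical one $\Tr^{|\cA|}=\psi_{|\cA|}\circ\tr^\vee_\cC$ attached to an $\rmH^*$-multifusion category, and the trace on $\cA$ is $\Tr^\cA$.

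The key step is to unwind both traces at the level of generalized elements and match them. On the $\cA$ side, the relevant structure is the pre-2-Hilbert trace $\Tr^\cA$ together with the induced trace on $\End_{2\Hilb}(\Hilb\to\cA)$ via Example \ref{ex:FunCatsArePre2Hilbs} and Remark \ref{rem:A==Hom(Hilb->A)Isometric}, which already identifies $\cA\cong\Fun^\dag(\Hilb\to\cA)$ isometrically. On the $|\cA|$ side, the spherical weight $\psi_{|\cA|}$ was defined in Proposition \ref{prop:underlying-hstar-spherical-weight} by capping off with the unit data $\eta_\iota,\eta_\iota^\dag$ and evaluating $\Psi^{2\Hilb}_{\Hilb}$; composing with the left pivotal trace $\tr^\vee_L$ then yields $\Tr^{|\cA|}$. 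I would compute $\Tr^{|\cA|}(f)$ for $f\in\End_{|\cA|}(a)$ by tracing through the coevaluation and evaluation morphisms of Construction \ref{const:MultiFusCatFromRigid2Alg} and using that $\Psi^{2\Hilb}_{\cA}(\eta)=\sum_{a\in\Irr(\cA)}d_a\Tr^\cA_a(\eta_a)$ from Sub-Example \ref{ex:WeightOn2Hilb}, showing it reduces to $\Tr^\cA$ transported across evaluation-at-$\bbC$.

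The main obstacle I anticipate is the bookkeeping in this last trace-matching step: the definition of $\psi_{|\cA|}$ involves the counit $\varepsilon_\mu$, its adjoint, and the unit $\eta_\iota$ woven through the graphical calculus for $\sC^*\Gray$-monoids, so verifying that the pivotal trace $\psi_{|\cA|}\circ\tr^\vee_L$ collapses to the naive $\Tr^\cA$ under evaluation requires carefully tracking the coheretors and the identification of $\Psi^{2\Hilb}_{\Hilb}$ with $\Tr^{\Hilb}=\Tr_{B(H)}$. A clean way to sidestep raw computation is to invoke Remark \ref{rem:WeightOfComponentAtObject}: since both $(\cA,\Tr^\cA)$ and $(|\cA|,\Tr^{|\cA|})$ are 2-Hilbert spaces viewed as connected objects and the equivalence preserves the UDF $\vee$, it suffices to check the traces agree at a single simple object, i.e.\ that $d^{|\cA|}_{1}=d^{\cA}_{\bbC}$, reducing the whole problem to one normalization identity $\psi_{|\cA|}(\id_{1_{|\cA|}})=\Tr^\cA(\id)$ that follows directly from the definition of $\psi_{|\cA|}$ at the unit $\iota$ together with the normalization of $\Psi^{2\Hilb}_{\Hilb}$ in Sub-Example \ref{ex:WeightOn2Hilb}.
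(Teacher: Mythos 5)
Your first paragraph, and your first route via direct trace-matching, are exactly the paper's argument. The paper obtains the $\rmH^*$-multifusion structure on $|\cA|$ by assembling Construction \ref{const:DaggerStructure} with Propositions \ref{prop:underlying-hstar-udf} and \ref{prop:underlying-hstar-spherical-weight}, and its proof that $\cA\cong|\cA|$ is isometric is the computation you outline: writing the equivalence as $a\mapsto -\otimes a$, it uses unitarity of the tensorator of the UAF and the unitary isomorphism $\iota\otimes_{|\cA|}a\cong a$ to rewrite $(\psi_{|\cA|}\circ\tr^\vee_R)(f)$ as $\Psi^{2\Hilb}_{\Hilb}\bigl(\eta_a^\dag\circ(\id_{a^*}\otimes f)\circ\eta_a\bigr)$, and then identifies this with $\Tr^\cA_a(f)$ via Sub-Example \ref{sub-ex:UnitaryAdjointOf-otimesA} (i.e.\ $(\eta_a)_\bbC=\mate(\id_a)$) together with the unitary-adjunction inner-product argument of \cite[Lem.~2.7]{MR4750417} --- precisely the combination of Remark \ref{rem:A==Hom(Hilb->A)Isometric}, Example \ref{ex:FunCatsArePre2Hilbs}, and Sub-Example \ref{ex:WeightOn2Hilb} that you point to. (One slip there: $\psi_{|\cA|}$ is defined via $\Psi^{2\Hilb}_{\Hilb}$, the weight at the monoidal unit of $2\Hilb$, not via $\Psi^{2\Hilb}_{\cA}$.)

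The genuine gap is in the shortcut you then advocate to ``sidestep raw computation,'' and it fails for three reasons. First, Remark \ref{rem:WeightOfComponentAtObject} concerns UAF-preserving $\dag$-2-functors between 3-Hilbert spaces; $\cA$ and $|\cA|$ are not 3-Hilbert spaces, and their deloopings are only pre-3-Hilbert spaces (not idempotent complete, whereas the remark relies on decomposing objects into simples via Lemma \ref{lem:DecomposeIntoSimplesInA3Hilb}), so the remark does not apply at this level. Second, the reduction is circular: any reduction-to-one-point argument rests on uniqueness of spherical weights for a \emph{fixed} UDF on an indecomposable block (Example \ref{ex:WeightDeterminesUDF}), and to invoke it you must already know that $\Tr^\cA$ is the trace of a spherical weight for the UDF transported from $|\cA|$ --- equivalently, by the remark following the definition of $\rmH^*$-multifusion category, that $-\otimes a\dashv^\dag -\otimes a^\vee$ holds with respect to $\Tr^\cA$. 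But $\cA$ is not handed to you with any UDF: the $\rmH^*$-algebra axioms of Definition \ref{defn:Hstaralg-in-pre3hilb} only provide unitary adjunction data for $\mu$, and establishing the compatibility of $\Tr^\cA$ with the constructed duals is exactly the content of the corollary, not an admissible input. Third, even granting proportionality of the two weights, a single normalization cannot finish the job: $\cA$ may be decomposable multifusion, so $\End_\cA(1_\cA)\cong\bbC^k$ with $k>1$ and the unit $\iota$ need not be simple; the one scalar identity $\psi_{|\cA|}(\id_{1_{|\cA|}})=\Tr^\cA(\id_{1_\cA})$ cannot determine a weight on $\bbC^k$, and your ``$d^{\cA}_{\bbC}$'' is not meaningful since $\bbC$ is not an object of $\cA$. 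You would need one check per indecomposable block at a genuinely simple object, after first proving sphericality of $\Tr^\cA$ --- at which point you have done the work of the direct computation anyway. So the computation cannot be sidestepped; the mate/unitary-Yoneda argument of your first route is the proof, and it is short once set up, which is how the paper carries it out.
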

\begin{proof}
Recall that the canonical map $\cA\to |\cA|$ is given by 
$a\mapsto (-\otimes a : H\mapsto H\otimes a)$
on objects and by
$(f:a\to a')\mapsto (-\otimes f : -\otimes a\Rightarrow -\otimes a')$.
It remains to show this equivalence is isometric from $(\cA,\Tr^\cA)$ to $(|\cA|, \psi_{|\cA|}\circ \tr_R^\vee)$.

We begin by noticing that since $\eta_\mu\circ\eta_a\circ\eta_\iota$ and $\eta_{\iota\otimes_{|\cA|} a}$ are related by a unitary (the canonical tensorator of the UAF), we have

\[
(\psi_{|A|}\circ \tr^\vee_{R})(f)
=
\Psi^{2\Hilb}_{\Hilb}\left(
\tikzmath{
\draw[rounded corners, dotted] (0,0) rectangle (.5,.5);
}
\xRightarrow{\eta_{\iota\otimes_{|\cA|} a}}
\tikzmath{
\draw (0,.1) coordinate (d1) -- +(0,.1) arc (180:90:.2) coordinate (d2) arc (90:0:.2) -- +(0,-.2) coordinate (a1) -- +(0,0) arc (0:90:.2) -- +(0,.2) coordinate (d3) arc (-90:0:.2) -- +(0,.2) coordinate (a2) -- +(0,0) arc (0:-180:.2) -- +(0,.1) coordinate (d4);
\filldraw (d1) circle (.05);
\filldraw (d2) circle (.05);
\filldraw (d3) circle (.05);
\filldraw (d4) circle (.05);
\filldraw[thick, fill=\aColor] (a1) circle (.1);
\filldraw[thick, fill=\aColor] (a2) circle (.1);
\draw[rounded corners, dashed] (.2,-.15) rectangle +(.4,.3);
\draw (.55,.5) rectangle +(-.65,.7);
}
\xRightarrow{f}
\tikzmath{
\draw (0,.1) coordinate (d1) -- +(0,.1) arc (180:90:.2) coordinate (d2) arc (90:0:.2) -- +(0,-.2) coordinate (a1) -- +(0,0) arc (0:90:.2) -- +(0,.2) coordinate (d3) arc (-90:0:.2) -- +(0,.2) coordinate (a2) -- +(0,0) arc (0:-180:.2) -- +(0,.1) coordinate (d4);
\filldraw (d1) circle (.05);
\filldraw (d2) circle (.05);
\filldraw (d3) circle (.05);
\filldraw (d4) circle (.05);
\filldraw[thick, fill=\aColor] (a1) circle (.1);
\filldraw[thick, fill=\aColor] (a2) circle (.1);
\draw (.55,.5) rectangle +(-.65,.7);
}
\xRightarrow{\eta_{\iota\otimes_{|\cA|} a}^\dag}
\tikzmath{
\draw[rounded corners, dotted] (0,0) rectangle (.5,.5);
}\right),
\]
where the box indicates the unitary adjoint, as in the proof of Proposition \ref{prop:underlying-hstar-udf}.
Now $\iota\otimes_{|\cA|} a$ and $a$ are unitarily isomorphic, so the above is equal to
\begin{equation}
\label{eq:FormulaForPsi|A|}
\Psi^{2\Hilb}_{\Hilb}\left(
\tikzmath{
\draw[rounded corners, dotted] (0,0) rectangle (.5,.5);
}
\xRightarrow{\eta_a}
\tikzmath{
\draw (0,0) coordinate (a1) -- +(0,.4) coordinate (a2);
\draw[rounded corners, dashed] (-.2,-.15) rectangle +(.4,.3);
\filldraw[thick, fill=\aColor] (a1) circle (.1);
\filldraw[thick, fill=\aColor] (a2) circle (.1);
}
\xRightarrow{f}
\tikzmath{
\draw (0,0) coordinate (a1) -- +(0,.4) coordinate (a2);
\filldraw[thick, fill=\aColor] (a1) circle (.1);
\filldraw[thick, fill=\aColor] (a2) circle (.1);
}
\xRightarrow{\eta_a^\dag}
\tikzmath{
\draw[rounded corners, dotted] (0,0) rectangle (.5,.5);
}
\right).
\end{equation}
Note that by Sub-Example \ref{sub-ex:UnitaryAdjointOf-otimesA}, $(\eta_a)_\bbC$ is the mate of $\id_a$ under the unitary adjunction
$$
\cA(\bbC\otimes a\to a) \cong \Hilb(\bbC\to \cA(a\to a)).
$$
We can now directly use the proof of \cite[Lem.~2.7]{MR4750417} and the definition of $\Psi^{2\Hilb}_\Hilb$ in Sub-Example \ref{ex:WeightOn2Hilb}:
\begin{align*}
\Tr^\cA_a(f)
&=
\langle \id_a|f\rangle_{\cA(a \to a)} 
\\&=
\langle \mate(\id_a)|\mate(f)\rangle_{\Hilb(\bbC\to \cA(a\to a))} 
\\
&=
\langle (\eta_a)_\bbC|(\id_{a^*}\otimes f)\circ (\eta_a)_\bbC\rangle_{\Hilb(\bbC\to \cA(a\to a))} 
\\
&= 
\Tr^{\Hilb}_{\bbC}( (\eta_a)_\bbC^\dag \circ (\id_{a^*}\otimes f) \circ (\eta_a)_\bbC)
\\
&=
\Psi^{2\Hilb}_\Hilb(\eta_a^\dag \circ (\id_{a^*}\otimes f)\circ \eta_a )
=
\eqref{eq:FormulaForPsi|A|}.
\qedhere
\end{align*}
\end{proof}

\begin{prop}
\label{prop:underlying-module-trace}
Suppose $\fC$ is a $\sC^*\Gray$ monoid 
whose underlying strict $\rmC^*$ 2-category is a pre-3-Hilbert space.
Suppose $A\in\fC$ is an $\rmH^*$-algebra and $M\in\fC$ is a unital $A$-module.
Similar to Construction \ref{const:MultiFusCatFromRigid2Alg}, $|M|:=\fC(1_\fC\to M)$ is a unital unitary $|A|$-module category.
The map
\[
\Tr^{|\cM|}_m(f) := \Psi^{\fC}_{1_\fC}\left(
\tikzmath{
\draw[rounded corners=5pt, dotted] (0,0) rectangle (.5,.5);
}
\xRightarrow{\eta_m}
\tikzmath{
\draw (0,0) -- (0,.4);
\filldraw[fill=white, thick] (0,0) circle (.1);
\filldraw[fill=white, thick] (0,.4) circle (.1);
\draw[rounded corners=5pt,dashed] (-.2,-.2) rectangle (.2,.2);
}
\xRightarrow{f}
\tikzmath{
\draw (0,0) -- (0,.4);
\filldraw[fill=white, thick] (0,0) circle (.1);
\filldraw[fill=white, thick] (0,.4) circle (.1);
}
\xRightarrow{\eta_m^\dag}
\tikzmath{
\draw[rounded corners=5pt, dotted] (0,0) rectangle (.5,.5);
}
\right)
\qquad\qquad
f:m\to m,
\quad
m\in |M|
\]
is an $|A|$-module trace on $|M|$
\end{prop}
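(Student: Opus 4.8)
The plan is to verify the two defining properties of a unitary $|A|$-module trace from the definition of \cite{MR3019263,MR4598730}: first, that $\Tr^{|M|}$ is a unitary trace making $(|M|,\Tr^{|M|})$ a 2-Hilbert space; and second, that it satisfies the module-trace compatibility with the UDF $\vee$ on $|A|$ built in Proposition \ref{prop:underlying-hstar-udf}. Just as the formula for $\psi_{|A|}$ in Proposition \ref{prop:underlying-hstar-spherical-weight} and the formula for $\Tr^\cA_a$ in Corollary \ref{cor:underlying-isometric}, the map $\Tr^{|M|}_m$ is the ``categorical trace'' obtained from the adjunction unit $\eta_m$ (the mate of $\id_m$ for $m\dashv m^R$) by closing up the $m$-bubble and applying the ambient spherical weight $\Psi^\fC_{1_\fC}$. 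Since $|M|$ is already a unital unitary $|A|$-module category by the module analog of Construction \ref{const:MultiFusCatFromRigid2Alg}, it remains only to install and check the trace.

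First I would establish positive-definiteness. Writing the defining bubble as $\eta_m^\dag\circ\hat f\circ\eta_m$, where $\hat f$ denotes $f$ whiskered by $\id_{m^R}$ on the $m$-strand, one reads off $\Tr^{|M|}_m(f^\dag f)=\Psi^\fC_{1_\fC}(g^\dag g)$ with $g:=\hat f\circ\eta_m$, since whiskering preserves composition and $\dag$. Positivity is then immediate from positivity of $\Psi^\fC_{1_\fC}$, and definiteness follows from faithfulness of $\Psi^\fC_{1_\fC}$ together with the fact that $f\mapsto\hat f\circ\eta_m$ is injective (one recovers $f$ by a zig-zag, as $\eta_m$ is an adjunction unit). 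Traciality, namely $\Tr^{|M|}_m(gf)=\Tr^{|M|}_{m'}(fg)$ for $f\colon m\to m'$ and $g\colon m'\to m$, is proved by sliding the inner $2$-morphism around the $m$-bubble; this is the direct module analog of the traciality computation carried out for $\psi_{|A|}$, using naturality of the adjunction units/counits and sphericality of $\Psi^\fC$. This gives that $(|M|,\Tr^{|M|})$ is a 2-Hilbert space.

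The substantive step is the module-trace compatibility: for all $c\in|A|$, $m\in|M|$, and $f\colon m\lhd c\to m\lhd c$, the partial trace of $f$ over $c$ using $\ev_c,\coev_c$ from Construction \ref{const:MultiFusCatFromRigid2Alg} has $\Tr^{|M|}_m$ equal to $\Tr^{|M|}_{m\lhd c}(f)$; equivalently, by the reformulation recalled after the definition of a module trace, one must show $-\lhd c\dashv^\dag -\lhd c^\vee$ with respect to $\Tr^{|M|}$. I would prove this by unpacking both sides in the $\Gray$-monoid graphical calculus: the right-hand side is $\Psi^\fC_{1_\fC}$ applied to the bubble built from $\eta_{m\lhd c}$, and the key geometric input is that, because $m\lhd c$ is the composite of $m\xz c$ with the action $1$-morphism and hence $(m\lhd c)^R$ is assembled from $m^R$ and $c^R$, the unit $\eta_{m\lhd c}$ factors through $\eta_m$, the coevaluation data of $c$, and the action adjunction. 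Feeding in this factorization and cancelling the $c$-strings against the UDF evaluation/coevaluation on $|A|$ produces exactly the partial-trace expression on the left. The compatibilities invoked are precisely those already assembled for the UDF and spherical weight on $|A|$: the $\rmH^*$-algebra structure and standardness \ref{H:SphereStandard}, unitary rigidity \ref{H:UnitarilyRigid}, and sphericality of $\Psi^\fC$.

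The main obstacle will be this last pasting diagram, which requires careful bookkeeping of the coherence isomorphisms $\alpha,\lambda,\rho,\phi$ and the coheretors \eqref{eq:AbbreviatedCoheretor}, as well as the interaction of the module adjunction unit $\eta_m$ with the UDF data on $|A|$. However, it is entirely analogous in structure to the diagrams already completed in Figures \ref{fig:RealizationUDF-DaggerFunctor}, \ref{fig:UnitaryTensorator}, and \ref{fig:RealizationSphericalityHelperDiagram}: each $2$-cell commutes by naturality, a zig-zag identity, or bimodularity of $\varepsilon_\mu$, so the techniques needed are exactly those already developed. I therefore expect the verification to be routine but lengthy, and I would present it as a single pasting diagram with interchangers suppressed, as in the earlier proofs.
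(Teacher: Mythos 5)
Your proposal is correct and takes essentially the same route as the paper: the paper's proof likewise reduces everything to the module-compatibility identity $\Tr^{|M|}_{a\rhd m}(f) = \Tr^{|M|}_{m}\bigl((\coev_a^\dag\rhd m)\circ(a^\vee\rhd f)\circ(\coev_a\rhd m)\bigr)$, and proves it by factoring the unit $\eta_{a\lhd m}$ of the composite adjunction through $\eta_m$, $\eta_a$, and the action adjunction unit $\eta_{\mu_\cM}$, then identifying this composite (up to the coherences $\lambda,\kappa,\alpha,\rho$) with $\coev^R_a\rhd m$ post-composed onto $\eta_m$ in a single pasting diagram with interchangers suppressed --- exactly the factorization-and-cancellation you describe. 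The only divergence is that the paper does not write out the unitary-trace axioms (traciality, positive-definiteness), treating them as automatic from the ambient pre-3-Hilbert structure (cf.\ Remark \ref{rem:pre3Hilbislocally2Hilb}), whereas you verify them directly; both are legitimate.
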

\begin{proof}
We need to verify that for any endomorphism $f:a\lhd m \to a\lhd m$ (where $a \in |A|$), we have
\begin{equation}\label{eq:underlying-module-trace}
    \Tr^{|M|}_{a\rhd m}(f) = \Tr^{|M|}_{m}\left((\coev_a^\dag\rhd m )\circ (a^\vee\rhd f)\circ(\coev_a\rhd m)\right).
\end{equation}
The left side is given by evaluating $\Psi^{\fC}_{1_\fC}$ on
%\dave{$a\rhd m$ should also include the red dot}
\[
\tikzmath{
\draw[rounded corners=5pt, dashed] (0,0) rectangle (.5,.5);
}
\xRightarrow{\eta_{a\lhd m}}
\tikzmath{
\draw[thick,\AsColor] (0,0) coordinate (a1) -- +(0,.1) arc (180:90:.2) coordinate (x1);
\draw[thick,\MsColor] (x1) -- +(0,-.5) coordinate (m1) -- +(0,.2) coordinate (x2) -- +(0,.6) coordinate (m2);
\draw[thick,\AsColor] (x2) arc (-90:-180:.2) coordinate (a2);
\filldraw[thick,draw=\AsColor,fill=white] (a1) circle (.1);
\filldraw[thick,draw=\AsColor,fill=white] (a2) circle (.1);
\filldraw[thick,draw=\MsColor,fill=white] (m1) circle (.1);
\filldraw[thick,draw=\MsColor,fill=white] (m2) circle (.1);
\filldraw[thick,\AsColor] (x1) circle (.05);
\filldraw[thick,\AsColor] (x2) circle (.05);
\draw[rounded corners=5pt,dashed] (-.2,-.4) rectangle (.4,.4);
}
\xRightarrow{f}
\tikzmath{
\draw[thick,\AsColor] (0,0) coordinate (a1) -- +(0,.1) arc (180:90:.2) coordinate (x1);
\draw[thick,\MsColor] (x1) -- +(0,-.5) coordinate (m1) -- +(0,.2) coordinate (x2) -- +(0,.6) coordinate (m2);
\draw[thick,\AsColor] (x2) arc (-90:-180:.2) coordinate (a2);
\filldraw[thick,draw=\AsColor,fill=white] (a1) circle (.1);
\filldraw[thick,draw=\AsColor,fill=white] (a2) circle (.1);
\filldraw[thick,draw=\MsColor,fill=white] (m1) circle (.1);
\filldraw[thick,draw=\MsColor,fill=white] (m2) circle (.1);
\filldraw[thick,\AsColor] (x1) circle (.05);
\filldraw[thick,\AsColor] (x2) circle (.05);
}
\xRightarrow{\eta_{a\lhd m}^\dag}
\tikzmath{
\draw[rounded corners=5pt, dashed] (0,0) rectangle (.5,.5);
}\,,
\qquad\qquad
\tikzmath{
\draw[thick,\MsColor] (0,0) -- +(0,.4);
} = M,\qquad
\tikzmath{
\draw[thick,\AsColor] (0,0) -- +(0,.4);
} = A,\qquad
\tikzmath{
\draw[thick,\MsColor] (0,0) -- +(0,.4);
\filldraw[thick,fill=white,draw=\MsColor] (0,0) circle (.1);
}=m,\qquad
\tikzmath{
\draw[thick,\AsColor] (0,0) -- +(0,.4);
\filldraw[thick,fill=white,draw=\AsColor] (0,0) circle (.1);
}=a.
\]
Slightly abusing the notation from Construction \ref{const:MultiFusCatFromRigid2Alg}, we can write the right side of Equation \eqref{eq:underlying-module-trace} as the evaluation of $\Psi^{\fC}_{1_\fC}$ on
\[
\tikzmath{
\draw[rounded corners=5pt, dashed] (0,0) rectangle (.5,.5);
}
\xRightarrow{\eta_m}
\tikzmath{
\draw[thick,\MsColor] (0,0) coordinate (m1) -- +(0,.4) coordinate (m2);
\filldraw[thick,fill=white,draw=\MsColor] (m1) circle (.1);
\filldraw[thick,fill=white,draw=\MsColor] (m2) circle (.1);
}
\xRightarrow{\coev^R_{a}}
\tikzmath{
\draw[thick,\AsColor] (0,0) coordinate (a1) -- +(0,.1) arc (180:90:.2) coordinate (x1);
\draw[thick,\MsColor] (x1) -- +(0,-.5) coordinate (m1) -- +(0,.8) coordinate (x2) -- +(0,1.1) coordinate (m2);
\draw[thick,\AsColor] (x2) arc (90:180:.2) arc (0:-90:.2) coordinate (x3) -- +(0,-.2) coordinate (x4) -- +(0,0) arc (-90:-180:.2) -- +(0,.1) coordinate (a2);
\filldraw[thick,fill=white,draw=\MsColor] (m1) circle (.1);
\filldraw[thick,fill=white,draw=\MsColor] (m2) circle (.1);
\filldraw[thick,fill=white,draw=\AsColor] (a1) circle (.1);
\filldraw[thick,fill=white,draw=\AsColor] (a2) circle (.1);
\filldraw[thick,\AsColor] (x1) circle (.05);
\filldraw[thick,\AsColor] (x2) circle (.05);
\filldraw[thick,\AsColor] (x3) circle (.05);
\filldraw[thick,\AsColor] (x4) circle (.05);
\draw[rounded corners=5pt,dashed] (-.2,-.4) rectangle (.4,.4);
}
\xRightarrow{f}
\tikzmath{
\draw[thick,\AsColor] (0,0) coordinate (a1) -- +(0,.1) arc (180:90:.2) coordinate (x1);
\draw[thick,\MsColor] (x1) -- +(0,-.5) coordinate (m1) -- +(0,.8) coordinate (x2) -- +(0,1.1) coordinate (m2);
\draw[thick,\AsColor] (x2) arc (90:180:.2) arc (0:-90:.2) coordinate (x3) -- +(0,-.2) coordinate (x4) -- +(0,0) arc (-90:-180:.2) -- +(0,.1) coordinate (a2);
\filldraw[thick,fill=white,draw=\MsColor] (m1) circle (.1);
\filldraw[thick,fill=white,draw=\MsColor] (m2) circle (.1);
\filldraw[thick,fill=white,draw=\AsColor] (a1) circle (.1);
\filldraw[thick,fill=white,draw=\AsColor] (a2) circle (.1);
\filldraw[thick,\AsColor] (x1) circle (.05);
\filldraw[thick,\AsColor] (x2) circle (.05);
\filldraw[thick,\AsColor] (x3) circle (.05);
\filldraw[thick,\AsColor] (x4) circle (.05);
}
\xRightarrow{(\coev_a^R)^\dag}
\tikzmath{
\draw[thick,\MsColor] (0,0) coordinate (m1) -- +(0,.4) coordinate (m2);
\filldraw[thick,fill=white,draw=\MsColor] (m1) circle (.1);
\filldraw[thick,fill=white,draw=\MsColor] (m2) circle (.1);
}
\xRightarrow{\eta_m^\dag}
\tikzmath{
\draw[rounded corners=5pt, dashed] (0,0) rectangle (.5,.5);
}.
\]
Suppressing interchangers, $\coev^R_a\rhd m$ is the composite
\[
\tikzmath{
\draw[thick,\MsColor] (0,0) coordinate (m1) -- +(0,.4) coordinate (m2);
\filldraw[thick,fill=white,draw=\MsColor] (m1) circle (.1);
}
\xRightarrow{\lambda^{-1}}
\tikzmath{
\draw[thick,\MsColor] (0,0) coordinate (m1) -- +(0,.4) coordinate (x1) -- +(0,.8);
\draw[thick,\AsColor] (x1) arc (90:180:.2) coordinate (x2);
\filldraw[thick,fill=\AsColor,draw=\AsColor] (x1) circle (.05);
\filldraw[thick,fill=\AsColor,draw=\AsColor] (x2) circle (.05);
\filldraw[thick,fill=white,draw=\MsColor] (m1) circle (.1);
}
\xRightarrow{\eta_a}
\tikzmath{
\draw[thick,\MsColor] (0,0) coordinate (m1) -- +(0,1) coordinate (x1) -- +(0,1.2);
\draw[thick,\AsColor] (x1) arc (90:180:.2) -- +(0,-.6) coordinate (x2);
\draw[draw=none] (x2) -- +(-.2,.2) coordinate (a1);
\draw[thick,\AsColor] (a1) -- +(0,.4) coordinate (a2);
\filldraw[thick,fill=\AsColor,draw=\AsColor] (x1) circle (.05);
\filldraw[thick,fill=\AsColor,draw=\AsColor] (x2) circle (.05);
\filldraw[thick,fill=white,draw=\MsColor] (m1) circle (.1);
\filldraw[thick,fill=white,draw=\AsColor] (a1) circle (.1);
\filldraw[thick,fill=white,draw=\AsColor] (a2) circle (.1);
}
\xRightarrow{\eta_\mu}
\tikzmath{
\draw[thick,\AsColor] (0,0) coordinate (a1) -- +(0,.1) arc (180:90:.2) coordinate (m1) arc (90:0:.2) coordinate (m2) arc (0:90:.2) -- +(0,.2) coordinate (m3) arc (-90:-180:.2) -- +(0,.1) coordinate (a2) -- +(0,0) arc (-180:0:.2) arc (180:90:.2) coordinate (m4);
\draw[thick,\MsColor] (m4) -- +(0,.3) coordinate (x1) -- +(0,-1.2) coordinate (x2);
\filldraw[thick,fill=white,draw=\AsColor] (a1) circle (.1);
\filldraw[thick,fill=white,draw=\AsColor] (a2) circle (.1);
\filldraw[thick,fill=\AsColor,draw=\AsColor] (m1) circle (.05);
\filldraw[thick,fill=\AsColor,draw=\AsColor] (m2) circle (.05);
\filldraw[thick,fill=\AsColor,draw=\AsColor] (m3) circle (.05);
\filldraw[thick,fill=\AsColor,draw=\AsColor] (m4) circle (.05);
\filldraw[thick,fill=white,draw=\MsColor] (x2) circle (.1);
}
\xRightarrow{\lambda^{-1}\circ\rho}
\tikzmath{
\draw[thick,\AsColor] (0,0) coordinate (aa1) arc (180:90:.2) coordinate (m1) arc (90:0:.2) -- +(0,-.1) coordinate (mm2) -- +(0,0) arc (0:90:.2) -- +(0,.2) coordinate (m3) arc (-90:-180:.2) -- +(0,.1) coordinate (a2) -- +(0,0) arc (-180:0:.2) arc (180:90:.2) coordinate (m4);
\draw[thick,\MsColor] (m4) -- +(0,.3) coordinate (x1) -- +(0,-1.2) coordinate (x2);
\filldraw[thick,fill=white,draw=\AsColor] (mm2) circle (.1);
\filldraw[thick,fill=white,draw=\AsColor] (a2) circle (.1);
\filldraw[thick,fill=\AsColor,draw=\AsColor] (m1) circle (.05);
\filldraw[thick,fill=\AsColor,draw=\AsColor] (aa1) circle (.05);
\filldraw[thick,fill=\AsColor,draw=\AsColor] (m3) circle (.05);
\filldraw[thick,fill=\AsColor,draw=\AsColor] (m4) circle (.05);
\filldraw[thick,fill=white,draw=\MsColor] (x2) circle (.1);
}
\xRightarrow{\kappa}
\tikzmath{
\draw[thick,\AsColor] (0,0) coordinate (a1) -- +(0,.7) arc (0:90:.2) coordinate (x1) arc (90:180:.2) -- +(0,-.2) arc (0:-90:.2) coordinate (x2) arc (-90:-180:.2) -- +(0,.1) coordinate (a2);
\draw[thick,\AsColor] (x2) -- +(0,-.2) coordinate (x3);
\draw[thick,\AsColor] (x1) arc (180:90:.4) coordinate (x4);
\draw[thick,\MsColor] (x4) -- +(0,.3) coordinate (m1) -- +(0,-1.5) coordinate (m2);
\filldraw[thick,fill=white,draw=\MsColor] (m2) circle (.1);
\filldraw[thick,fill=white,draw=\AsColor] (a1) circle (.1);
\filldraw[thick,fill=white,draw=\AsColor] (a2) circle (.1);
\filldraw[thick,\AsColor] (x1) circle (.05);
\filldraw[thick,\AsColor] (x2) circle (.05);
\filldraw[thick,\AsColor] (x3) circle (.05);
\filldraw[thick,\AsColor] (x4) circle (.05);
}
\xRightarrow{\alpha}
\tikzmath{
\draw[thick,\AsColor] (0,0) coordinate (a1) -- +(0,.1) arc (180:90:.2) coordinate (x1);
\draw[thick,\MsColor] (x1) -- +(0,-.5) coordinate (m1) -- +(0,.8) coordinate (x2) -- +(0,1.1) coordinate (m2);
\draw[thick,\AsColor] (x2) arc (90:180:.2) arc (0:-90:.2) coordinate (x3) -- +(0,-.2) coordinate (x4) -- +(0,0) arc (-90:-180:.2) -- +(0,.1) coordinate (a2);
\filldraw[thick,fill=white,draw=\MsColor] (m1) circle (.1);
\filldraw[thick,fill=white,draw=\AsColor] (a1) circle (.1);
\filldraw[thick,fill=white,draw=\AsColor] (a2) circle (.1);
\filldraw[thick,\AsColor] (x1) circle (.05);
\filldraw[thick,\AsColor] (x2) circle (.05);
\filldraw[thick,\AsColor] (x3) circle (.05);
\filldraw[thick,\AsColor] (x4) circle (.05);
}.
\]
We now show that the two morphisms on the left and right hand sides of Equation \eqref{eq:underlying-module-trace} are in fact the same by commutativity of the diagram

\[
\begin{tikzcd}
&
\tikzmath{
\draw[thick,\MsColor] (0,0) coordinate (m1) -- +(0,.4) coordinate (m2);
\filldraw[thick,fill=white,draw=\MsColor] (m1) circle (.1);
\filldraw[thick,fill=white,draw=\MsColor] (m2) circle (.1);
}
\arrow[r,Rightarrow,"\coev^R_a",blue]
\arrow[dr,Rightarrow,"\eta_{\mu_{\cM}}\circ\eta_a"',blue]
&
\tikzmath{
\draw[thick,\AsColor] (0,0) coordinate (a1) -- +(0,.1) arc (180:90:.2) coordinate (x1);
\draw[thick,\MsColor] (x1) -- +(0,-.5) coordinate (m1) -- +(0,.8) coordinate (x2) -- +(0,1.1) coordinate (m2);
\draw[thick,\AsColor] (x2) arc (90:180:.2) arc (0:-90:.2) coordinate (x3) -- +(0,-.2) coordinate (x4) -- +(0,0) arc (-90:-180:.2) -- +(0,.1) coordinate (a2);
\filldraw[thick,fill=white,draw=\MsColor] (m1) circle (.1);
\filldraw[thick,fill=white,draw=\MsColor] (m2) circle (.1);
\filldraw[thick,fill=white,draw=\AsColor] (a1) circle (.1);
\filldraw[thick,fill=white,draw=\AsColor] (a2) circle (.1);
\filldraw[thick,\AsColor] (x1) circle (.05);
\filldraw[thick,\AsColor] (x2) circle (.05);
\filldraw[thick,\AsColor] (x3) circle (.05);
\filldraw[thick,\AsColor] (x4) circle (.05);
\draw[rounded corners=5pt,dashed] (-.2,-.4) rectangle (.4,.4);
}
\arrow[r,Rightarrow,"f"]
\arrow[d,Rightarrow,"\lambda\circ\kappa",blue]
&
\tikzmath{
\draw[thick,\AsColor] (0,0) coordinate (a1) -- +(0,.1) arc (180:90:.2) coordinate (x1);
\draw[thick,\MsColor] (x1) -- +(0,-.5) coordinate (m1) -- +(0,.8) coordinate (x2) -- +(0,1.1) coordinate (m2);
\draw[thick,\AsColor] (x2) arc (90:180:.2) arc (0:-90:.2) coordinate (x3) -- +(0,-.2) coordinate (x4) -- +(0,0) arc (-90:-180:.2) -- +(0,.1) coordinate (a2);
\filldraw[thick,fill=white,draw=\MsColor] (m1) circle (.1);
\filldraw[thick,fill=white,draw=\MsColor] (m2) circle (.1);
\filldraw[thick,fill=white,draw=\AsColor] (a1) circle (.1);
\filldraw[thick,fill=white,draw=\AsColor] (a2) circle (.1);
\filldraw[thick,\AsColor] (x1) circle (.05);
\filldraw[thick,\AsColor] (x2) circle (.05);
\filldraw[thick,\AsColor] (x3) circle (.05);
\filldraw[thick,\AsColor] (x4) circle (.05);
}
\arrow[r,Rightarrow,"(\coev^R_a)^\dag",blue]
\arrow[d,Rightarrow,"\lambda\circ\kappa",blue]
&
\tikzmath{
\draw[thick,\MsColor] (0,0) coordinate (m1) -- +(0,.4) coordinate (m2);
\filldraw[thick,fill=white,draw=\MsColor] (m1) circle (.1);
\filldraw[thick,fill=white,draw=\MsColor] (m2) circle (.1);
}
\arrow[dr,Rightarrow,"\eta_m^\dag"]
\\
\tikzmath{
\draw[rounded corners=5pt, dashed] (0,0) rectangle (.5,.5);
}
\arrow[ur,Rightarrow,"\eta_m"]
\arrow[rr,Rightarrow,"\eta_{a\lhd m}"]
&&
\tikzmath{
\draw[thick,\AsColor] (0,0) coordinate (a1) -- +(0,.1) arc (180:90:.2) coordinate (x1);
\draw[thick,\MsColor] (x1) -- +(0,-.5) coordinate (m1) -- +(0,.2) coordinate (x2) -- +(0,.6) coordinate (m2);
\draw[thick,\AsColor] (x2) arc (-90:-180:.2) coordinate (a2);
\filldraw[thick,draw=\AsColor,fill=white] (a1) circle (.1);
\filldraw[thick,draw=\AsColor,fill=white] (a2) circle (.1);
\filldraw[thick,draw=\MsColor,fill=white] (m1) circle (.1);
\filldraw[thick,draw=\MsColor,fill=white] (m2) circle (.1);
\filldraw[thick,\AsColor] (x1) circle (.05);
\filldraw[thick,\AsColor] (x2) circle (.05);
\draw[rounded corners=5pt,dashed] (-.2,-.4) rectangle (.4,.4);
}
\arrow[r,Rightarrow,"f"]
&
\tikzmath{
\draw[thick,\AsColor] (0,0) coordinate (a1) -- +(0,.1) arc (180:90:.2) coordinate (x1);
\draw[thick,\MsColor] (x1) -- +(0,-.5) coordinate (m1) -- +(0,.2) coordinate (x2) -- +(0,.6) coordinate (m2);
\draw[thick,\AsColor] (x2) arc (-90:-180:.2) coordinate (a2);
\filldraw[thick,draw=\AsColor,fill=white] (a1) circle (.1);
\filldraw[thick,draw=\AsColor,fill=white] (a2) circle (.1);
\filldraw[thick,draw=\MsColor,fill=white] (m1) circle (.1);
\filldraw[thick,draw=\MsColor,fill=white] (m2) circle (.1);
\filldraw[thick,\AsColor] (x1) circle (.05);
\filldraw[thick,\AsColor] (x2) circle (.05);
}
\arrow[rr,Rightarrow,"(\eta_{a\lhd m})^\dag"]
\arrow[ur,Rightarrow,"(\eta_{\mu_{\cM}}\circ\eta_a)^\dag"',blue]
&&
\tikzmath{
\draw[rounded corners=5pt, dashed] (0,0) rectangle (.5,.5);
}
\end{tikzcd}.
\]
Notice the similarity to Proposition \ref{prop:underlying-hstar-spherical-weight}.
The blue triangles can be seen to commute by the diagram
\[
\begin{tikzcd}
\tikzmath{
\draw[thick,draw=\AsColor] (0,-0.1) coordinate (a1) -- +(0,.6) coordinate (a2);
\draw[thick,draw=\MsColor] (.5,-.3) coordinate (m1) -- +(0,.6) coordinate (x) -- +(0,1) coordinate (m2);
\draw[thick,draw=\AsColor] (x) arc (90:180:.2) -- +(0,-.1) coordinate (a3);
\filldraw[thick,fill=white,draw=\AsColor] (a1) circle (.1);
\filldraw[thick,fill=white,draw=\AsColor] (a2) circle (.1);
\filldraw[thick,fill=\AsColor,draw=\AsColor] (a3) circle (.05);
\filldraw[thick,fill=\AsColor,draw=\AsColor] (x) circle (.05);
\filldraw[thick,fill=white,draw=\MsColor] (m1) circle (.1);
\filldraw[thick,fill=white,draw=\MsColor] (m2) circle (.1);
\draw[rounded corners,dashed,orange] (-.2,0.1) rectangle +(.6,.25);
}
\arrow[r,Rightarrow,"\eta_{\mu_\cA}",orange]
\arrow[ddr,Rightarrow,"\eta_{\mu_\cM}"]
&
\tikzmath{
\draw[thick,\AsColor] (0,0) coordinate (a1) -- +(0,.1) arc (180:90:.2) coordinate (m1) arc (90:0:.2) coordinate (m2) arc (0:90:.2) -- +(0,.2) coordinate (m3) arc (-90:-180:.2) -- +(0,.1) coordinate (a2) -- +(0,0) arc (-180:0:.2) arc (180:90:.2) coordinate (m4);
\draw[thick,\MsColor] (m4) -- +(0,.3) coordinate (x1) -- +(0,-1.2) coordinate (x2);
\filldraw[thick,fill=white,draw=\AsColor] (a1) circle (.1);
\filldraw[thick,fill=white,draw=\AsColor] (a2) circle (.1);
\filldraw[thick,fill=\AsColor,draw=\AsColor] (m1) circle (.05);
\filldraw[thick,fill=\AsColor,draw=\AsColor] (m2) circle (.05);
\filldraw[thick,fill=\AsColor,draw=\AsColor] (m3) circle (.05);
\filldraw[thick,fill=\AsColor,draw=\AsColor] (m4) circle (.05);
\filldraw[thick,fill=white,draw=\MsColor] (x1) circle (.1);
\filldraw[thick,fill=white,draw=\MsColor] (x2) circle (.1);
\draw[rounded corners,dashed,blue] (-.2,0.1) rectangle +(.75,.3);
}
\arrow[drr,Rightarrow,"\eta_{\mu_\cM}"]
\arrow[rrr,Rightarrow,"\rho",blue]
&&&
\tikzmath{
\draw[thick,\AsColor] (0,0) coordinate (a1) -- +(0,.3) coordinate (m1) arc (-90:-180:.2) -- +(0,.3) coordinate (a2) -- +(0,0) arc (-180:0:.2) arc (180:90:.2) coordinate (m2);
\draw[thick,\MsColor] (m2) -- +(0,.3) coordinate (x1) -- +(0,-1) coordinate (x2);
\filldraw[thick,fill=white,draw=\AsColor] (a1) circle (.1);
\filldraw[thick,fill=white,draw=\AsColor] (a2) circle (.1);
\filldraw[thick,fill=\AsColor,draw=\AsColor] (m1) circle (.05);
\filldraw[thick,fill=\AsColor,draw=\AsColor] (m2) circle (.05);
\filldraw[thick,fill=white,draw=\MsColor] (x1) circle (.1);
\filldraw[thick,fill=white,draw=\MsColor] (x2) circle (.1);
\draw[rounded corners,dashed,blue] (-.15,0) rectangle +(.3,.3);
\draw[rounded corners,dashed,orange] (-.3,0.2) rectangle +(.85,.55);
}
\arrow[r,Rightarrow,"\lambda^\dag",blue]
\arrow[d,Rightarrow,"\eta_{\mu_\cM}",swap]
\arrow[ddd,Rightarrow,bend left=20,"\kappa",orange]
&
\tikzmath{
\draw[thick,\AsColor] (0,0) coordinate (aa1) arc (180:90:.2) coordinate (m1) arc (90:0:.2) -- +(0,-.1) coordinate (mm2) -- +(0,0) arc (0:90:.2) -- +(0,.2) coordinate (m3) arc (-90:-180:.2) -- +(0,.3) coordinate (a2) -- +(0,0) arc (-180:0:.2) arc (180:90:.2) coordinate (m4);
\draw[thick,\MsColor] (m4) -- +(0,.3) coordinate (x1) -- +(0,-1.2) coordinate (x2);
\filldraw[thick,fill=white,draw=\AsColor] (mm2) circle (.1);
\filldraw[thick,fill=white,draw=\AsColor] (a2) circle (.1);
\filldraw[thick,fill=\AsColor,draw=\AsColor] (m1) circle (.05);
\filldraw[thick,fill=\AsColor,draw=\AsColor] (aa1) circle (.05);
\filldraw[thick,fill=\AsColor,draw=\AsColor] (m3) circle (.05);
\filldraw[thick,fill=\AsColor,draw=\AsColor] (m4) circle (.05);
\filldraw[thick,fill=white,draw=\MsColor] (x1) circle (.1);
\filldraw[thick,fill=white,draw=\MsColor] (x2) circle (.1);
\draw[rounded corners,dashed,orange] (-.1,0.3) rectangle +(.85,.55);
\draw[rounded corners,dashed] (-.05,0) rectangle +(.55,.55);
}
\arrow[r,Rightarrow,"\kappa^{-1}"]
\arrow[dd,Rightarrow,"\kappa",orange]
&
\tikzmath{
\draw[thick,\AsColor] (0,0) coordinate (a1) -- +(0,.7) arc (0:90:.2) coordinate (x1) arc (90:180:.2) -- +(0,-.2) arc (0:-90:.2) coordinate (x2) arc (-90:-180:.2) -- +(0,.1) coordinate (a2);
\draw[thick,\AsColor] (x2) -- +(0,-.2) coordinate (x3);
\draw[thick,\AsColor] (x1) arc (180:90:.4) coordinate (x4);
\draw[thick,\MsColor] (x4) -- +(0,.3) coordinate (m1) -- +(0,-1.5) coordinate (m2);
\filldraw[thick,fill=white,draw=\MsColor] (m1) circle (.1);
\filldraw[thick,fill=white,draw=\MsColor] (m2) circle (.1);
\filldraw[thick,fill=white,draw=\AsColor] (a1) circle (.1);
\filldraw[thick,fill=white,draw=\AsColor] (a2) circle (.1);
\filldraw[thick,\AsColor] (x1) circle (.05);
\filldraw[thick,\AsColor] (x2) circle (.05);
\filldraw[thick,\AsColor] (x3) circle (.05);
\filldraw[thick,\AsColor] (x4) circle (.05);
\draw[rounded corners,dashed,cyan] (-.6,0.6) rectangle +(1,.8);
}
\arrow[dd,Rightarrow,"\alpha^\dag",cyan]
\\
&&&
\tikzmath{
\draw[thick,\AsColor] (0,0) coordinate (a1) -- +(0,.1) arc (180:90:.2) coordinate (m1) arc (90:0:.2) coordinate (m2);
\draw[thick,\AsColor] (m1) -- +(0,.2) coordinate (m3) arc (-90:0:.2) arc (180:90:.2) coordinate (m6);
\draw[thick,\AsColor] (m3) arc (-90:-180:.2) arc (180:90:.6) coordinate (m4);
\draw[thick,\MsColor] (m4) -- +(0,.2) coordinate (m5) -- +(0,-1.4) coordinate (x1);
\draw[thick,\MsColor] (m5) -- +(0,.5) coordinate (x2);
\draw[thick,\AsColor] (m5) arc (-90:-180:.2) -- +(0,.1) coordinate (a2);
\filldraw[thick,fill=white,draw=\AsColor] (a1) circle (.1);
\filldraw[thick,fill=white,draw=\AsColor] (a2) circle (.1);
\filldraw[thick,fill=\AsColor,draw=\AsColor] (m1) circle (.05);
\filldraw[thick,fill=\AsColor,draw=\AsColor] (m2) circle (.05);
\filldraw[thick,fill=\AsColor,draw=\AsColor] (m3) circle (.05);
\filldraw[thick,fill=\AsColor,draw=\AsColor] (m4) circle (.05);
\filldraw[thick,fill=\AsColor,draw=\AsColor] (m5) circle (.05);
\filldraw[thick,fill=\AsColor,draw=\AsColor] (m6) circle (.05);
\filldraw[thick,fill=white,draw=\MsColor] (x1) circle (.1);
\filldraw[thick,fill=white,draw=\MsColor] (x2) circle (.1);
\draw[rounded corners,dashed,blue] (-.2,0.1) rectangle +(.75,.3);
\draw[rounded corners,dashed,cyan] (-.2,0.6) rectangle +(1,.8);
}
\arrow[r,Rightarrow,"\rho",blue]
\arrow[d,Rightarrow,"\alpha",cyan]
&
\tikzmath{
\draw[thick,\AsColor] (0,0) coordinate (a1) -- +(0,.3) coordinate (m3) arc (-90:0:.2) arc (180:90:.2) coordinate (m6);
\draw[thick,\AsColor] (m3) arc (-90:-180:.2) arc (180:90:.6) coordinate (m4);
\draw[thick,\MsColor] (m4) -- +(0,.2) coordinate (m5) -- +(0,-1.2) coordinate (x1);
\draw[thick,\MsColor] (m5) -- +(0,.5) coordinate (x2);
\draw[thick,\AsColor] (m5) arc (-90:-180:.2) -- +(0,.1) coordinate (a2);
\filldraw[thick,fill=white,draw=\AsColor] (a1) circle (.1);
\filldraw[thick,fill=white,draw=\AsColor] (a2) circle (.1);
\filldraw[thick,fill=\AsColor,draw=\AsColor] (m3) circle (.05);
\filldraw[thick,fill=\AsColor,draw=\AsColor] (m4) circle (.05);
\filldraw[thick,fill=\AsColor,draw=\AsColor] (m5) circle (.05);
\filldraw[thick,fill=\AsColor,draw=\AsColor] (m6) circle (.05);
\filldraw[thick,fill=white,draw=\MsColor] (x1) circle (.1);
\filldraw[thick,fill=white,draw=\MsColor] (x2) circle (.1);
\draw[rounded corners,dashed,cyan] (-.4,0.4) rectangle +(1,.8);
}
\arrow[d,Rightarrow,"\alpha",cyan]
\\
&
\tikzmath{
\draw[thick,draw=\AsColor] (0,.9) coordinate (a1) -- +(0,-.1) arc (180:270:.2) coordinate (m1);
\draw[thick,draw=\AsColor] (.2,.4) coordinate (m2) arc (90:180:.4) -- +(0,-.1) coordinate (a2);
\draw[thick,draw=\AsColor] (0,0) coordinate (m3) arc (180:90:.2) coordinate (m4);
\draw[thick,draw=\MsColor] (.2,-.2) coordinate (x1) -- +(0,1.3) coordinate (x2);
\filldraw[thick,fill=white,draw=\AsColor] (a1) circle (.1);
\filldraw[thick,fill=white,draw=\AsColor] (a2) circle (.1);
\filldraw[thick,fill=\AsColor,draw=\AsColor] (m1) circle (.05);
\filldraw[thick,fill=\AsColor,draw=\AsColor] (m2) circle (.05);
\filldraw[thick,fill=\AsColor,draw=\AsColor] (m3) circle (.05);
\filldraw[thick,fill=\AsColor,draw=\AsColor] (m4) circle (.05);
\filldraw[thick,fill=white,draw=\MsColor] (x1) circle (.1);
\filldraw[thick,fill=white,draw=\MsColor] (x2) circle (.1);
\draw[rounded corners,dashed,orange] (-.35,0.1) rectangle +(.55,.25);
\draw[rounded corners,dashed,cyan] (-.4,0.05) rectangle +(.7,.45);
\draw[rounded corners,dashed,blue] (-.05,-0.1) rectangle +(.35,.35);
}
\arrow[r,Rightarrow,"\alpha",cyan]
\arrow[drrr,Rightarrow,"\lambda",swap,blue]
\arrow[urr,Rightarrow,"\eta_{\mu_\cA}",orange]
&
\tikzmath{
\draw[thick,\AsColor] (0,0) coordinate (a1) -- +(0,.1) arc (180:90:.2) coordinate (m1) arc (90:0:.2) coordinate (m2) arc (0:90:.2) arc (180:90:.3) coordinate (m3);
\draw[thick,\MsColor] (m3) -- +(0,.2) coordinate (m4) -- +(0,-.8) coordinate (x1);
\draw[thick,\MsColor] (m4) -- +(0,.5) coordinate (x2);
\draw[thick,\AsColor] (m4) arc (-90:-180:.2) coordinate (a2);
\filldraw[thick,fill=white,draw=\AsColor] (a1) circle (.1);
\filldraw[thick,fill=white,draw=\AsColor] (a2) circle (.1);
\filldraw[thick,fill=\AsColor,draw=\AsColor] (m1) circle (.05);
\filldraw[thick,fill=\AsColor,draw=\AsColor] (m2) circle (.05);
\filldraw[thick,fill=\AsColor,draw=\AsColor] (m3) circle (.05);
\filldraw[thick,fill=\AsColor,draw=\AsColor] (m4) circle (.05);
\filldraw[thick,fill=white,draw=\MsColor] (x1) circle (.1);
\filldraw[thick,fill=white,draw=\MsColor] (x2) circle (.1);
\draw[rounded corners,dashed,blue] (-.2,0.1) rectangle +(.75,.3);
\draw[rounded corners,dashed,orange] (-.15,0.15) rectangle +(.7,.2);
}
\arrow[drr,Rightarrow,"\rho",blue]
\arrow[r,bend left=10,Rightarrow,"\eta_{\mu_\cA}",orange]
&
\tikzmath{
\draw[thick,\AsColor] (0,0) coordinate (a1) -- +(0,.1) arc (180:90:.2) coordinate (m1) arc (90:0:.2) coordinate (m2) arc (0:90:.2) -- +(0,.2) coordinate (m5) arc (-90:450:.2) coordinate (m6) arc (180:90:.3) coordinate (m3);
\draw[thick,\MsColor] (m3) -- +(0,.2) coordinate (m4) -- +(0,-1.4) coordinate (x1);
\draw[thick,\MsColor] (m4) -- +(0,.5) coordinate (x2);
\draw[thick,\AsColor] (m4) arc (-90:-180:.2) coordinate (a2);
\filldraw[thick,fill=white,draw=\AsColor] (a1) circle (.1);
\filldraw[thick,fill=white,draw=\AsColor] (a2) circle (.1);
\filldraw[thick,fill=\AsColor,draw=\AsColor] (m1) circle (.05);
\filldraw[thick,fill=\AsColor,draw=\AsColor] (m2) circle (.05);
\filldraw[thick,fill=\AsColor,draw=\AsColor] (m3) circle (.05);
\filldraw[thick,fill=\AsColor,draw=\AsColor] (m4) circle (.05);
\filldraw[thick,fill=\AsColor,draw=\AsColor] (m5) circle (.05);
\filldraw[thick,fill=\AsColor,draw=\AsColor] (m6) circle (.05);
\filldraw[thick,fill=white,draw=\MsColor] (x1) circle (.1);
\filldraw[thick,fill=white,draw=\MsColor] (x2) circle (.1);
\draw[rounded corners,dashed,blue] (-.2,0.1) rectangle +(.75,.3);
\draw[rounded corners,dashed] (-.2,0.4) rectangle +(.75,.6);
}
\arrow[r,Rightarrow,"\rho",blue]
\arrow[l, bend left=10,Rightarrow,"\epsilon_{\mu_\cA}"]
&
\tikzmath{
\draw[thick,\AsColor] (0,0) coordinate (a1) -- +(0,.3) coordinate (m1) arc (-90:450:.2) coordinate (m2) arc (180:90:.3) coordinate (m3);
\draw[thick,\MsColor] (m3) -- +(0,.2) coordinate (m4) -- +(0,.7) coordinate (x1) -- +(0,-1.2) coordinate (x2);
\draw[thick,\AsColor] (m4) arc (-90:-180:.2) -- +(0,.1) coordinate (a2);
\filldraw[thick,fill=white,draw=\AsColor] (a1) circle (.1);
\filldraw[thick,fill=white,draw=\AsColor] (a2) circle (.1);
\filldraw[thick,fill=\AsColor,draw=\AsColor] (m1) circle (.05);
\filldraw[thick,fill=\AsColor,draw=\AsColor] (m2) circle (.05);
\filldraw[thick,fill=\AsColor,draw=\AsColor] (m3) circle (.05);
\filldraw[thick,fill=\AsColor,draw=\AsColor] (m4) circle (.05);
\filldraw[thick,fill=white,draw=\MsColor] (x1) circle (.1);
\filldraw[thick,fill=white,draw=\MsColor] (x2) circle (.1);
\draw[rounded corners,dashed] (-.4,0.2) rectangle +(.75,.6);
}
\arrow[d,Rightarrow,"\epsilon_{\mu_\cA}",swap]
&
\tikzmath{
\draw[thick,\AsColor] (0,0) coordinate (aa1) arc (180:90:.2) coordinate (m1) arc (90:0:.2) -- +(0,-.1) coordinate (mm2) -- +(0,0) arc (0:90:.2) arc (180:90:.4) coordinate (m3);
\draw[thick,\MsColor] (m3) -- +(0,.2) coordinate (m4) -- +(0,-.9) coordinate (x1);
\draw[thick,\MsColor] (m4) -- +(0,.5) coordinate (x2);
\draw[thick,\AsColor] (m4) arc (-90:-180:.2) coordinate (a2);
\filldraw[thick,fill=white,draw=\AsColor] (mm2) circle (.1);
\filldraw[thick,fill=white,draw=\AsColor] (a2) circle (.1);
\filldraw[thick,fill=\AsColor,draw=\AsColor] (m1) circle (.05);
\filldraw[thick,fill=\AsColor,draw=\AsColor] (aa1) circle (.05);
\filldraw[thick,fill=\AsColor,draw=\AsColor] (m3) circle (.05);
\filldraw[thick,fill=\AsColor,draw=\AsColor] (m4) circle (.05);
\filldraw[thick,fill=white,draw=\MsColor] (x1) circle (.1);
\filldraw[thick,fill=white,draw=\MsColor] (x2) circle (.1);
\draw[rounded corners,dashed,cyan] (-.1,.1) rectangle +(.8,.6);
}
\arrow[dr,Rightarrow,"\alpha^\dag",cyan]
&
\tikzmath{
\draw[thick,\AsColor] (0,0) coordinate (a1) -- +(0,.1) arc (180:90:.2) coordinate (x1);
\draw[thick,\MsColor] (x1) -- +(0,-.5) coordinate (m1) -- +(0,.8) coordinate (x2) -- +(0,1.1) coordinate (m2);
\draw[thick,\AsColor] (x2) arc (90:180:.2) arc (0:-90:.2) coordinate (x3) -- +(0,-.2) coordinate (x4) -- +(0,0) arc (-90:-180:.2) -- +(0,.3) coordinate (a2);
\filldraw[thick,fill=white,draw=\MsColor] (m1) circle (.1);
\filldraw[thick,fill=white,draw=\MsColor] (m2) circle (.1);
\filldraw[thick,fill=white,draw=\AsColor] (a1) circle (.1);
\filldraw[thick,fill=white,draw=\AsColor] (a2) circle (.1);
\filldraw[thick,\AsColor] (x1) circle (.05);
\filldraw[thick,\AsColor] (x2) circle (.05);
\filldraw[thick,\AsColor] (x3) circle (.05);
\filldraw[thick,\AsColor] (x4) circle (.05);
\draw[rounded corners,dashed] (-.5,.6) rectangle +(.85,.55);
}
\arrow[d,Rightarrow,"\kappa^{-1}"]
\\
\tikzmath{
\draw[thick,draw=\AsColor] (0,0) coordinate (a1) -- +(0,.4) coordinate (a2);
\draw[thick,draw=\MsColor] (.3,-.3) coordinate (m1) -- +(0,1) coordinate (m2);
\filldraw[thick,fill=white,draw=\AsColor] (a1) circle (.1);
\filldraw[thick,fill=white,draw=\AsColor] (a2) circle (.1);
\filldraw[thick,fill=white,draw=\MsColor] (m1) circle (.1);
\filldraw[thick,fill=white,draw=\MsColor] (m2) circle (.1);
\draw[rounded corners,dashed,blue] (.15,0) rectangle +(.3,.3);
}
\arrow[rrrr,Rightarrow,"\eta_{\mu_\cM}"]
\arrow[uuu,Rightarrow,"\lambda^\dag",blue]
&&&&
\tikzmath{
\draw[thick,draw=\AsColor] (0,0) coordinate (a1) -- +(0,.1) arc (180:90:.2) coordinate (m1);
\draw[thick,draw=\AsColor] (0,.8) coordinate (a2) -- +(0,-.1) arc (180:270:.2) coordinate (m2);
\draw[thick,draw=\MsColor] (.2,-.2) coordinate (x1) -- +(0,1.2) coordinate (x2);
\filldraw[thick,fill=white,draw=\AsColor] (a1) circle (.1);
\filldraw[thick,fill=white,draw=\AsColor] (a2) circle (.1);
\filldraw[thick,fill=\AsColor,draw=\AsColor] (m1) circle (.05);
\filldraw[thick,fill=\AsColor,draw=\AsColor] (m2) circle (.05);
\filldraw[thick,fill=white,draw=\MsColor] (x1) circle (.1);
\filldraw[thick,fill=white,draw=\MsColor] (x2) circle (.1);
\draw[rounded corners,dashed,blue] (-.1,.1) rectangle +(.3,.3);
}
\arrow[ur,Rightarrow,"\lambda^\dag",swap,blue]
&&
\tikzmath{
\draw[thick,draw=\AsColor] (0,0) coordinate (a1) -- +(0,.1) arc (180:90:.2) coordinate (x1);
\draw[thick,draw=\MsColor] (x1) -- +(0,.4) coordinate (x2) -- +(0,.6) coordinate (x3) -- +(0,1.1) coordinate (m1) -- +(0,-.5) coordinate (m2);
\draw[thick,draw=\AsColor] (x2) arc (90:180:.2) coordinate (x4);
\draw[thick,draw=\AsColor] (x3) arc (-90:-180:.2) -- +(0,.1) coordinate (a2);
\filldraw[thick,fill=white,draw=\AsColor] (a1) circle (.1);
\filldraw[thick,fill=white,draw=\AsColor] (a2) circle (.1);
\filldraw[thick,fill=white,draw=\MsColor] (m1) circle (.1);
\filldraw[thick,fill=white,draw=\MsColor] (m2) circle (.1);
\filldraw[thick,fill=\AsColor,draw=\AsColor] (x1) circle (.05);
\filldraw[thick,fill=\AsColor,draw=\AsColor] (x2) circle (.05);
\filldraw[thick,fill=\AsColor,draw=\AsColor] (x3) circle (.05);
\filldraw[thick,fill=\AsColor,draw=\AsColor] (x4) circle (.05);
\draw[rounded corners,dashed,blue] (-.1,0.4) rectangle +(.4,.4);
}
\arrow[ll,Rightarrow,"\lambda",blue]
\end{tikzcd}
\]
The result follows.
\end{proof}

\begin{cor}\label{cor:underlying-module-isometric}
When $\fC=2\Hilb$ and $(\cA,\Tr^\cA)\in 2\Hilb$ is an $\rmH^*$-algebra,
and $(\cM,\Tr^\cM)$ is a unital $\cA$-module, 
$(|\cM|, \Tr^{|\cM|})$
is a unital unitary $(|\cA|, \vee,\psi_{|\cA|})$-module equipped with a module trace.
Moreover, the canonical unitary $\cA\cong|\cA|$ module equivalence $\cM\cong |\cM|$ is isometric.
\end{cor}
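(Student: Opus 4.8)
The plan is to deduce this corollary directly from the two immediately preceding propositions (\ref{prop:underlying-hstar-udf} and \ref{prop:underlying-module-trace}) together with the already-established unitary equivalences, rather than re-proving anything from scratch. By Proposition \ref{prop:hstarmfc-to-hstaralg} (run in reverse via Corollary \ref{cor:underlying-isometric}) we already know $(|\cA|,\vee,\psi_{|\cA|})$ is an $\rmH^*$-multifusion category and that $\cA\cong|\cA|$ is an isometric monoidal equivalence. Proposition \ref{prop:underlying-module-trace} already supplies that $|\cM|:=\fC(1_\fC\to \cM)$ is a unital unitary $|\cA|$-module equipped with the module trace $\Tr^{|\cM|}$. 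So the only genuinely new content in the corollary is the \emph{isometry} claim: that the canonical unitary module equivalence $\cM\cong|\cM|$, given on objects by $m\mapsto(-\otimes m\colon H\mapsto H\otimes m)$ and on morphisms by $f\mapsto(-\otimes f)$, intertwines $\Tr^\cM$ with $\Tr^{|\cM|}$.

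\textbf{Reduction of the isometry claim.} First I would record that the canonical functor $\cM\to|\cM|$ is already known to be a unitary $\cA\cong|\cA|$-module equivalence; this is the module-category analog of the statement used in Corollary \ref{cor:underlying-isometric} that $\cA\cong|\cA|$ is unitary monoidal, and the module structure is the evident one sending the $\cA$-action on $\cM$ to the $|\cA|$-action on $|\cM|$ coming from Proposition \ref{prop:underlying-module-trace}. It then suffices to check trace-preservation $\Tr^\cM_m(f)=\Tr^{|\cM|}_{-\otimes m}(-\otimes f)$ for all $m\in\cM$ and $f\in\cM(m\to m)$, since both sides are already genuine unitary module traces and the underlying functor is an equivalence. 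As in Corollary \ref{cor:underlying-isometric}, I would first use the unitarity of the canonical tensorator of the UAF to replace $\eta_{\iota\otimes_{|\cA|}m}$-type units with $\eta_m$, reducing the formula for $\Tr^{|\cM|}$ from Proposition \ref{prop:underlying-module-trace} to the clean expression
\[
\Tr^{|\cM|}_{-\otimes m}(-\otimes f)
=
\Psi^{2\Hilb}_{\Hilb}\!\left(
\tikzmath{\draw[rounded corners, dotted] (0,0) rectangle (.5,.5);}
\xRightarrow{\eta_m}
\tikzmath{\draw[thick] (0,0) -- (0,.4); \filldraw[fill=white,thick] (0,0) circle (.08); \filldraw[fill=white,thick] (0,.4) circle (.08);}
\xRightarrow{f}
\tikzmath{\draw[thick] (0,0) -- (0,.4); \filldraw[fill=white,thick] (0,0) circle (.08); \filldraw[fill=white,thick] (0,.4) circle (.08);}
\xRightarrow{\eta_m^\dag}
\tikzmath{\draw[rounded corners, dotted] (0,0) rectangle (.5,.5);}
\right),
\]
where $\eta_m$ is the unit of the unitary adjunction $-\otimes m\dashv^\dag(-\otimes m)^*$ in $2\Hilb$.

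\textbf{The computation mirroring Corollary \ref{cor:underlying-isometric}.} The final step is then essentially identical to the GNS-type calculation at the end of the proof of Corollary \ref{cor:underlying-isometric}: by Sub-Example \ref{sub-ex:UnitaryAdjointOf-otimesA} the leg $(\eta_m)_\bbC$ is the mate of $\id_m$ under $\cM(\bbC\otimes m\to m)\cong\Hilb(\bbC\to\cM(m\to m))$, and unfolding $\Psi^{2\Hilb}_\Hilb$ via Sub-Example \ref{ex:WeightOn2Hilb} together with \cite[Lem.~2.7]{MR4750417} gives
\[
\Tr^\cM_m(f)
=\langle\id_m\mid f\rangle_{\cM(m\to m)}
=\Tr^\Hilb_\bbC\!\left((\eta_m)_\bbC^\dag\circ(\id_{m^*}\otimes f)\circ(\eta_m)_\bbC\right),
\]
which is exactly the displayed formula above. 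I expect \emph{the main obstacle} to be bookkeeping rather than anything conceptual: verifying that the module structure transported along $\cM\cong|\cM|$ really is the $|\cA|$-action of Proposition \ref{prop:underlying-module-trace} (so that the two module traces are being compared as module traces, not merely as linear functionals), and keeping the suppressed coherence isomorphisms $\phi,\lambda,\rho$ and the tensorator of the UAF straight when passing between $\eta_m$ and the unit of $-\otimes m$. Once that identification is in place, the trace-preservation is the same line as in Corollary \ref{cor:underlying-isometric}, and the result follows.
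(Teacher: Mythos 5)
Your proposal is correct and is exactly the argument the paper intends: the paper's own ``proof'' simply states that it is almost identical to Corollary \ref{cor:underlying-isometric}, and you have fleshed out precisely that mirroring --- citing Proposition \ref{prop:underlying-module-trace} for the module trace, Sub-Example \ref{sub-ex:UnitaryAdjointOf-otimesA} to identify $(\eta_m)_\bbC$ with $\mate(\id_m)$, and Sub-Example \ref{ex:WeightOn2Hilb} together with \cite[Lem.~2.7]{MR4750417} for the final GNS-type computation. The only (harmless) superfluity is your tensorator-reduction step: unlike the algebra case, where one must unfold $\psi_{|\cA|}\circ\tr_R^\vee$ and pass from $\eta_\mu\circ\eta_a\circ\eta_\iota$ to $\eta_{\iota\otimes_{|\cA|}a}$, the trace $\Tr^{|\cM|}$ of Proposition \ref{prop:underlying-module-trace} is already defined directly in terms of $\eta_m$, so no such reduction is needed.
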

\begin{proof}
We leave this proof to the reader, as it is almost identical to the proof of Corollary \ref{cor:underlying-isometric}.
\end{proof}

%%%%%%%%%%%%%%%%%%%%%%%%%

\begin{proof}[Proof of Theorem \ref{thmalpha:H*AlgebrasIn2Hilb}]
Proposition \ref{prop:hstarmfc-to-hstaralg} gives an assignment of objects of $\sH^*\mFC$ to objects of $\HstarAlg(\rmB2\Hilb)$.
Additionally, a bimodule category for the $\rmH^*$-multifusion category is naturally a bimodule for the underlying $\rmH^*$-algebra (just by forgetting the bimodule trace).
This assembles into a 3-functor $U:\sH^*\mFC \to \HstarAlg(\rmB2\Hilb)$.
By Corollaries \ref{cor:underlying-isometric} and \ref{cor:underlying-module-isometric}, 
the 3-functor $|-|$ lands in $\sH^*\mFC$
and the composite $|U(-)|$ is equivalent to the identity.
Moreover, this equivalence is isometric on objects and 1-cells.
(Here, we use the folding trick to apply the above module result to bimodules.)
It only remains to be seen that there is a natural equivalence of 3-functors $U(|-|) \cong \id_{\HstarAlg(\rmB2\Hilb)}$.
This follows from Corollary \ref{cor:Separable2AlgebrasIn2Vec}.
Notice that the unitarity conditions in Definition \ref{defn:Hstaralg-in-pre3hilb} are all properties, not structure, and so the equivalence $U(|\cA|) \cong \cA$ in Corollary \ref{cor:Separable2AlgebrasIn2Vec} is still an equivalence of $\rmH^*$-algebras.
\end{proof}

%%%%%%%%%%%%%%%%%%%%%%%%%%%%%%%%%%%%%%%%%%%%%%%

\bibliographystyle{alpha}
{\footnotesize{
\bibliography{../../bibliography/bibliography}

\newcommand{\etalchar}[1]{$^{#1}$}
\begin{thebibliography}{FHJF{\etalchar{+}}24}

\bibitem[Amb45]{MR13235}
Warren Ambrose.
\newblock Structure theorems for a special class of {B}anach algebras.
\newblock {\em Trans. Amer. Math. Soc.}, 57:364--386, 1945.
\newblock \mathscinet{MR13235} \doi{10.2307/1990182}.

\bibitem[Bae97]{MR1448713}
John~C. Baez.
\newblock Higher-dimensional algebra. {II}. {$2$}-{H}ilbert spaces.
\newblock {\em Adv. Math.}, 127(2):125--189, 1997.
\newblock \mathscinet{MR1448713} \doi{10.1006/aima.1997.1617}.

\bibitem[Bar14]{1409.2148}
Bruce Bartlett.
\newblock Quasistrict symmetric monoidal 2-categories via wire diagrams, 2014.
\newblock \arxiv{1409.2148}.

\bibitem[BBJ19]{MR3975865}
Daniel Barter, Jacob~C. Bridgeman, and Corey Jones.
\newblock Domain walls in topological phases and the {B}rauer-{P}icard ring for
  {${\rm Vec} (\mathbb{Z}/p\mathbb{Z})$}.
\newblock {\em Comm. Math. Phys.}, 369(3):1167--1185, 2019.
\newblock \mathscinet{MR3975865} \doi{10.1007/s00220-019-03338-2}.

\bibitem[BD95]{MR1355899}
John~C. Baez and James Dolan.
\newblock Higher-dimensional algebra and topological quantum field theory.
\newblock {\em J. Math. Phys.}, 36(11):6073--6105, 1995.
\newblock \mathscinet{MR1355899} \arXiv{q-alg/9503002} \doi{10.1063/1.531236}.

\bibitem[BKLR15]{MR3308880}
Marcel Bischoff, Yasuyuki Kawahigashi, Roberto Longo, and Karl-Henning Rehren.
\newblock {\em Tensor categories and endomorphisms of von {N}eumann
  algebras---with applications to quantum field theory}, volume~3 of {\em
  SpringerBriefs in Mathematical Physics}.
\newblock Springer, Cham, 2015.
\newblock \mathscinet{MR3308880} \doi{10.1007/978-3-319-14301-9}.

\bibitem[BS10]{MR2664619}
John~C. Baez and Michael Shulman.
\newblock Lectures on {$n$}-categories and cohomology.
\newblock In {\em Towards higher categories}, volume 152 of {\em IMA Vol. Math.
  Appl.}, pages 1--68. Springer, New York, 2010.
\newblock \mathscinet{MR2664619} \arxiv{math/0608420}.

\bibitem[CGGH23]{MR4616673}
Sebastiano Carpi, Tiziano Gaudio, Luca Giorgetti, and Robin Hillier.
\newblock Haploid algebras in {$C^*$}-tensor categories and the {S}chellekens
  list.
\newblock {\em Comm. Math. Phys.}, 402(1):169--212, 2023.
\newblock \mathscinet{MR4616673} \doi{10.1007/s00220-023-04722-9}
  \arxiv{2211.12790}.

\bibitem[CHPJP22]{MR4419534}
Quan Chen, Roberto Hern\'{a}ndez~Palomares, Corey Jones, and David Penneys.
\newblock Q-system completion for {$\rm C^*$} 2-categories.
\newblock {\em J. Funct. Anal.}, 283(3):Paper No. 109524, 2022.
\newblock \mathscinet{MR4419534} \doi{10.1016/j.jfa.2022.109524}
  \arxiv{2105.12010}.

\bibitem[CP22]{MR4369356}
Quan Chen and David Penneys.
\newblock Q-system completion is a 3-functor.
\newblock {\em Theory Appl. Categ.}, 38:Paper No. 4, 101--134, 2022.
\newblock \mathscinet{MR4369356} \doi{10.1002/num.22828} \arxiv{2106.12437}.

\bibitem[D{\'{e}}c22]{MR4372801}
Thibault~D. D{\'{e}}coppet.
\newblock Multifusion categories and finite semisimple 2-categories.
\newblock {\em J. Pure Appl. Algebra}, 226(8):Paper No. 107029, 16, 2022.
\newblock \mathscinet{MR4372801} \doi{10.1016/j.jpaa.2022.107029}
  \arxiv{2012.15774}.

\bibitem[D{\'e}c23]{MR4600461}
Thibault~D. D{\'e}coppet.
\newblock The {M}orita theory of fusion 2-categories.
\newblock {\em High. Struct.}, 7(1):234--292, 2023.
\newblock \mathscinet{MR4600461} \doi{10.21136/HS.2023.07} \arxiv{2208.08722}.

\bibitem[DR18]{1812.11933}
Christopher~L. Douglas and David~J. Reutter.
\newblock Fusion 2-categories and a state-sum invariant for 4-manifolds, 2018.
\newblock \arxiv{1812.11933}.

\bibitem[DSPS20]{MR4254952}
Christopher~L. Douglas, Christopher Schommer-Pries, and Noah Snyder.
\newblock Dualizable tensor categories.
\newblock {\em Mem. Amer. Math. Soc.}, 268(1308):vii+88, 2020.
\newblock \mathscinet{MR4254952} \doi{10.1090/memo/1308} \arxiv{1312.7188}.

\bibitem[ENO10]{MR2677836}
Pavel Etingof, Dmitri Nikshych, and Victor Ostrik.
\newblock Fusion categories and homotopy theory.
\newblock {\em Quantum Topol.}, 1(3):209--273, 2010.
\newblock With an appendix by Ehud Meir, \mathscinet{2677836}
  \doi{10.4171/QT/6} \arxiv{0909.3140}.

\bibitem[Fer]{2404.05193}
Giovanni Ferrer.
\newblock Foundations for operator algebraic tricategories.
\newblock \arxiv{2404.05193}.

\bibitem[FHJF{\etalchar{+}}24]{2403.01651}
Giovanni Ferrer, Brett Hungar, Theo Johnson-Freyd, Cameron Krulewski, Lukas
  M\"uller, Nivedita, David Penneys, David Reutter, Claudia Scheimbauer, Luuk
  Stehouwer, and Chetan Vuppulury.
\newblock Dagger $n$-categories.
\newblock \arxiv{2403.01651}, 2024.

\bibitem[GJF19]{1905.09566}
Davide Gaiotto and Theo Johnson-Freyd.
\newblock Condensations in higher categories, 2019.
\newblock \arxiv{1905.09566}.

\bibitem[GLR85]{MR808930}
P.~Ghez, R.~Lima, and J.~E. Roberts.
\newblock {$W\sp \ast$}-categories.
\newblock {\em Pacific J. Math.}, 120(1):79--109, 1985.
\newblock \mathscinet{MR808930}.

\bibitem[GMP{\etalchar{+}}23]{MR4598730}
Pinhas Grossman, Scott Morrison, David Penneys, Emily Peters, and Noah Snyder.
\newblock The {E}xtended {H}aagerup fusion categories.
\newblock {\em Ann. Sci. \'{E}c. Norm. Sup\'{e}r. (4)}, 56(2):589--664, 2023.
\newblock \mathscinet{MR4598730} \doi{10.24033/asens.2541} \arxiv{1810.06076}.

\bibitem[Gur13]{MR3076451}
Nick Gurski.
\newblock {\em Coherence in three-dimensional category theory}, volume 201 of
  {\em Cambridge Tracts in Mathematics}.
\newblock Cambridge University Press, Cambridge, 2013.
\newblock \mathscinet{MR3076451} \doi{10.1017/CBO9781139542333}.

\bibitem[Gut19]{1903.05777}
Peter Guthmann.
\newblock The tricategory of formal composites and its strictification, 2019.
\newblock \arxiv{1903.05777}.

\bibitem[GYZ24]{MR4724964}
Luca Giorgetti, Wei Yuan, and XuRui Zhao.
\newblock Separable algebras in multitensor {$\rm C^*$}-categories are
  unitarizable.
\newblock {\em AIMS Math.}, 9(5):11320--11334, 2024.
\newblock \mathscinet{MR4724964} \doi{10.3934/math.2024555} \arxiv{2312.12019}.

\bibitem[HBJP23]{MR4640433}
Peter Huston, Fiona Burnell, Corey Jones, and David Penneys.
\newblock Composing topological domain walls and anyon mobility.
\newblock {\em SciPost Phys.}, 15(3):Paper No. 076, 85, 2023.
\newblock \mathscinet{MR4640433} doi{10.21468/SciPostPhys.15.3.076}
  \arxiv{2208.14018}.

\bibitem[HPT23]{2307.13822}
Andr\'e Henriques, David Penneys, and James Tener.
\newblock Classification of finite depth objects in bicommutant categories via
  anchored planar algebras.
\newblock \arXiv{2307.13822}, 2023.

\bibitem[HPT24]{MR4750417}
Andr\'{e} Henriques, David Penneys, and James Tener.
\newblock Unitary {A}nchored {P}lanar {A}lgebras.
\newblock {\em Comm. Math. Phys.}, 405(6):Paper No. 137, 2024.
\newblock \mathscinet{MR4750417} \doi{10.1007/s00220-024-04985-w}
  \arxiv{2301.11114}.

\bibitem[HV19]{MR3971584}
Chris Heunen and Jamie Vicary.
\newblock {\em Categories for quantum theory}, volume~28 of {\em Oxford
  Graduate Texts in Mathematics}.
\newblock Oxford University Press, Oxford, 2019.
\newblock An introduction, \mathscinet{MR3971584}
  \doi{10.1093/oso/9780198739623.001.0001}.

\bibitem[JF22]{MR4444089}
Theo Johnson-Freyd.
\newblock On the classification of topological orders.
\newblock {\em Comm. Math. Phys.}, 393(2):989--1033, 2022.
\newblock \mathscinet{MR4444089} \doi{10.1007/s00220-022-04380-3}
  \arxiv{2003.06663}.

\bibitem[JP17]{MR3687214}
Corey Jones and David Penneys.
\newblock Operator algebras in rigid {$\rm C^*$}-tensor categories.
\newblock {\em Comm. Math. Phys.}, 355(3):1121--1188, 2017.
\newblock \mathscinet{MR3687214} \doi{10.1007/s00220-017-2964-0}
  \arxiv{1611.04620}.

\bibitem[JP19]{MR3948170}
Corey Jones and David Penneys.
\newblock Realizations of algebra objects and discrete subfactors.
\newblock {\em Adv. Math.}, 350:588--661, 2019.
\newblock \mathscinet{MR3948170} \doi{10.1016/j.aim.2019.04.039}
  \arxiv{1704.02035}.

\bibitem[JP20]{MR4079745}
Corey Jones and David Penneys.
\newblock Q-systems and compact {W}*-algebra objects.
\newblock In {\em Topological phases of matter and quantum computation}, volume
  747 of {\em Contemp. Math.}, pages 63--88. Amer. Math. Soc., Providence, RI,
  2020.
\newblock \mathscinet{MR4079745} \doi{10.1090/conm/747/15039}
  \arxiv{1707.02155}.

\bibitem[JPR23]{MR4535015}
Corey Jones, David Penneys, and David Reutter.
\newblock A 3-categorical perspective on {$G$}-crossed braided categories.
\newblock {\em J. Lond. Math. Soc. (2)}, 107(1):333--406, 2023.
\newblock \mathscinet{MR4535015} \doi{10.1112/jlms.12687} \arxiv{2009.00405}.

\bibitem[JY21]{MR4261588}
Niles Johnson and Donald Yau.
\newblock {\em 2-dimensional categories}.
\newblock Oxford University Press, Oxford, 2021.
\newblock \mathscinet{2002.06055} \doi{10.1093/oso/9780198871378.001.0001}
  \arxiv{MR4261588}.

\bibitem[Kon14]{MR3246855}
Liang Kong.
\newblock Anyon condensation and tensor categories.
\newblock {\em Nuclear Phys. B}, 886:436--482, 2014.
\newblock \mathscinet{MR3246855} \doi{10.1016/j.nuclphysb.2014.07.003}
  \arxiv{1307.8244}.

\bibitem[KZZZ24]{2403.07813}
Liang Kong, Zhi-Hao Zhang, Jiaheng Zhao, and Hao Zheng.
\newblock Higher condensation theory.
\newblock \arxiv{2403.07813}, 2024.

\bibitem[Lon89]{MR1027496}
Roberto Longo.
\newblock Index of subfactors and statistics of quantum fields. {I}.
\newblock {\em Comm. Math. Phys.}, 126(2):217--247, 1989.
\newblock \mathscinet{MR1027496}.

\bibitem[Lon94]{MR1257245}
Roberto Longo.
\newblock A duality for {H}opf algebras and for subfactors. {I}.
\newblock {\em Comm. Math. Phys.}, 159(1):133--150, 1994.
\newblock \mathscinet{MR1257245}.

\bibitem[LR97]{MR1444286}
R.~Longo and J.~E. Roberts.
\newblock A theory of dimension.
\newblock {\em $K$-Theory}, 11(2):103--159, 1997.
\newblock \mathscinet{MR1444286} \doi{10.1023/A:1007714415067}
  \arxiv{funct-an/9604008}.

\bibitem[Lur09]{MR2555928}
Jacob Lurie.
\newblock On the classification of topological field theories.
\newblock In {\em Current developments in mathematics, 2008}, pages 129--280.
  Int. Press, Somerville, MA, 2009.
\newblock \mathscinet{MR2555928} \arxiv{0905.0465}.

\bibitem[NY18]{MR3933035}
Sergey Neshveyev and Makoto Yamashita.
\newblock Categorically {M}orita equivalent compact quantum groups.
\newblock {\em Doc. Math.}, 23:2165--2216, 2018.
\newblock \mathscinet{MR3933035} \arxiv{1704.04729}.

\bibitem[Pen20]{MR4133163}
David Penneys.
\newblock Unitary dual functors for unitary multitensor categories.
\newblock {\em High. Struct.}, 4(2):22--56, 2020.
\newblock \mathscinet{MR4133163} \arxiv{1808.00323}.

\bibitem[Reu23]{MR4538281}
David Reutter.
\newblock Uniqueness of {U}nitary {S}tructure for {U}nitarizable {F}usion
  {C}ategories.
\newblock {\em Comm. Math. Phys.}, 397(1):37--52, 2023.
\newblock \mathscinet{MR4538281} \doi{10.1007/s00220-022-04425-7}
  \arxiv{1906.09710}.

\bibitem[Row05]{MR2180373}
Eric~C. Rowell.
\newblock On a family of non-unitarizable ribbon categories.
\newblock {\em Math. Z.}, 250(4):745--774, 2005.
\newblock \mathscinet{MR2180373} \doi{10.1007/s00209-005-0773-1}
  \arxiv{math/0403217}.

\bibitem[Sch13]{MR3019263}
Gregor Schaumann.
\newblock Traces on module categories over fusion categories.
\newblock {\em J. Algebra}, 379:382--425, 2013.
\newblock \mathscinet{MR3019263} \doi{10.1016/j.jalgebra.2013.01.013}
  \arxiv{1206.5716}.

\bibitem[Sch22]{MR4327964}
Andrew Schopieray.
\newblock Non-pseudounitary fusion.
\newblock {\em J. Pure Appl. Algebra}, 226(5):Paper No. 106927, 19, 2022.
\newblock \mathscinet{MR4327964} \doi{10.1016/j.jpaa.2021.106927}
  \arxiv{2010.02958}.

\bibitem[SS23]{2304.02928}
Luuk Stehouwer and Jan Steinebrunner.
\newblock Dagger categories via anti-involutions and positivity.
\newblock arxiv{2304.02928}, 2023.

\bibitem[Ver22]{MR4482713}
Dominic Verdon.
\newblock A covariant {S}tinespring theorem.
\newblock {\em J. Math. Phys.}, 63(9):Paper No. 091705, 49, 2022.
\newblock \mathscinet{MR4482713} \doi{10.1063/5.0071215} \arxiv{2108.09872}.

\bibitem[Zit07]{MR2298822}
Pasquale~A. Zito.
\newblock 2-{$C^*$}-categories with non-simple units.
\newblock {\em Adv. Math.}, 210(1):122--164, 2007.
\newblock \mathscinet{MR2298822} \doi{10.1016/j.aim.2006.05.017}
  \arxiv{math/0509266}.

\end{thebibliography}
%\bibliography{bibliography}
}}
\end{document}